\newcommand{\pwFuk}{\mathcal{F}}
\def\hbar{\bar{h}}
\def\iso{\buildrel \sim\over\to}
\def\GS{{\mathfrak{S}}}
\def\Ggl{{\mathfrak{gl}}}
\def\Gsl{{\mathfrak{sl}}}
\def\CA{{\mathcal{A}}}
\def\CB{{\mathcal{B}}}
\def\CC{{\mathcal{C}}}
\def\CD{{\mathcal{D}}}
\def\CE{{\mathcal{E}}}
\def\CF{{\mathcal{F}}}
\def\CH{{\mathcal{H}}}
\def\CM{{\mathcal{M}}}
\def\CP{{\mathcal{P}}}
\def\CS{{\mathcal{S}}}
\def\CU{{\mathcal{U}}}
\def\CV{{\mathcal{V}}}
\def\CW{{\mathcal{W}}}
\def\CX{{\mathcal{X}}}
\def\CZ{{\mathcal{Z}}}
\def\BC{{\mathbf{C}}}
\def\BF{{\mathbf{F}}}
\def\BR{{\mathbf{R}}}
\def\BZ{{\mathbf{Z}}}
\def\Ba{{\mathbf{a}}}
\def\eps{\varepsilon}
\def\add{\operatorname{add}\nolimits}
\def\can{{\mathrm{can}}}
\def\colim{\operatorname{colim}\nolimits}
\def\cone{\operatorname{cone}\nolimits}
\def\diff{\operatorname{diff}\nolimits}
\def\mdiff{\operatorname{\!-diff}\nolimits}
\def\End{\operatorname{End}\nolimits}
\def\GL{\operatorname{GL}\nolimits}
\def\gr{{\operatorname{gr}\nolimits}}
\def\Hom{\operatorname{Hom}\nolimits}
\def\Id{\operatorname{Id}\nolimits}
\def\id{\operatorname{id}\nolimits}
\def\mMOD{\operatorname{\!-Mod}\nolimits}
\def\opp{{\operatorname{opp}\nolimits}}
\def\pt{\operatorname{pt}\nolimits}
\def\Sets{\operatorname{Sets}\nolimits}
\def\St{\operatorname{St}\nolimits}
\def\Sym{\operatorname{Sym}\nolimits}
\def\ie{{\em i.e.}}
\def\tE{{\tilde{E}}}
\def\CHom{{{\mathcal H}om}}
\newtheorem{thm}{Theorem}[subsection]
\newtheorem{lemma}[thm]{Lemma}
\newtheorem{cor}[thm]{Corollary}
\newtheorem{prop}[thm]{Proposition}
\newtheorem{defi}[thm]{Definition}
\theoremstyle{definition}
\newtheorem{rem}[thm]{Remark}
\newtheorem{example}[thm]{Example}
\newtheorem{convention}[thm]{Convention}
\newtheorem{properties}[thm]{Properties}
\numberwithin{equation}{subsection}
\def\dm{\operatorname{dm}\nolimits}
\def\supp{\operatorname{supp}\nolimits}
\def\nil{\operatorname{nil}\nolimits}
\def\pt{\operatorname{pt}\nolimits}
\def\dotimes{\protect{\hspace{0.07cm}\otimes\hspace{-0.380cm}\bigcirc}}
\def\CHH{\CH\hspace{-0.20cm}\CH}
\def\bij#1{\chi(#1)}
\def\indexnot#1#2{\index{#1@$#2$ |textbf  {\hskip0.5cm} \textbf }}
\begin{document}
\title{Higher representations and cornered Heegaard Floer homology}
\author{Andrew Manion}
\address{A.M.: Department of Mathematics, North Carolina State University, 2108 SAS Hall, Raleigh, NC 27695, USA.}
\email{ajmanion@ncsu.edu}
\author{Rapha\"el Rouquier}
\address{R.R.: UCLA Mathematics Department, Los Angeles, CA 90095-1555, USA.}
\email{rouquier@math.ucla.edu}

\thanks{The first and second author thank the NSF for its support (grant DMS-1702305).
The first author gratefully acknowledges support from the NSF (grant DMS-1502686).
The second author gratefully acknowledges support from the NSF
(grant DMS-1161999) and from the Simons Foundation (grant \#376202).
This material is based upon work supported by the NSF (Grant No. 1440140),
while the second author was in residence at the Mathematical Sciences Research
Institute in Berkeley, California, during the Spring 2018.}

\date\today

\maketitle
\begin{abstract}
We develop the $2$-representation theory of the odd one-dimensional super Lie algebra
	$\Ggl(1|1)^+$
	and show it controls the Heegaard Floer theory of surfaces of Lipshitz,
	Ozsv\'ath and Thurston \cite{LiOzTh1}.
	Our main tool is the construction of a tensor product for $2$-representations.
	We show it corresponds to a gluing operation for surfaces, or for the chord
	diagrams of arc decompositions. This provides an extension of
	Heegaard Floer theory to dimension one, expanding the work of Douglas, Lipshitz and Manolescu
	\cite{DouMa,DouLiMa}.
\end{abstract}

\setcounter{tocdepth}{3}
\tableofcontents



%





\section{Introduction}

\subsection{Higher representations}

While Lie algebra representations and their tensor products have long played an important role in
mathematics, their connection with low-dimensional topology is more recent. This involves quantum
groups, which provide a deformation of the classical Lie theory. Reshetikhin--Turaev's theory
give rise to invariants of links and
$3$-manifolds \cite{ReTu}.

Crane and Frenkel \cite{CrFr} conjectured that there should be a ``higher'' representation theory where vector spaces are replaced by categories, and this would provide invariants of $4$-manifolds. The notion of higher representations was introduced first for type $A$ \cite{ChRou} and then for general
Kac--Moody algebras \cite{Rou1,KhoLau}. In a work in preparation \cite{Rou3}, the second author gives a construction of a tensor product for higher representations of Kac--Moody algebras, in an $\infty$-categorical setting. An important feature is that the category underlying a tensor product of higher representations $\mathcal{V}$ and $\mathcal{V}'$ of $\mathfrak{g}$ depends on the action of the positive part $\mathfrak{g}^+$ of $\mathfrak{g}$ on $\mathcal{V}$ and $\mathcal{V}'$, and not just on the categories $\mathcal{V}$ and $\mathcal{V}'$ themselves.
Evidence for Crane and Frenkel's program has also been provided by the work of Khovanov
\cite{Kho1}, Webster \cite{We} and others.

In this article, we consider the case of the super Lie algebra $\mathfrak{gl}(1|1)$. We do not discuss the notion of higher representations of $\mathfrak{gl}(1|1)$ (cf \cite{Rou3}), but we focus on the positive part $\mathfrak{gl}(1|1)^+=\mathbb{C}e$, a one-dimensional odd super Lie algebra. The notion of a higher representation of $\mathfrak{gl}(1|1)^+$ is due to Khovanov \cite{Kho2}: it is the data of a differential category $\CV$
over $\mathbb{F}_2$ together with a differential endofunctor $E$ and an endomorphism $\tau$ of $E^2$ with $d(\tau)=1$ satisfying $\tau^2 = 0$ and braid relations. So, a higher representation provides
an endofunctor whose square is homotopic to $0$. An equivalent definition is that of an action of
the monoidal category $\CU$ generated by an object $E$ and a map $\tau:E^2\to E^2$ satisfying the
conditions above.

We will allow a more general type of action where $E$ is given by a $(\CV,\CV)$-bimodule.

\subsection{Higher tensor products}

    We define a notion of tensor product $\dotimes$ of higher representations of $\mathfrak{gl}(1|1)^+$,
    endowing the $2$-category of $2$-representations of $\Ggl(1|1)^+$ on differential categories
    with a structure of monoidal $2$-category.
    This does not require working in an $\infty$-categorical setting, as in the Kac--Moody case.

\medskip
    Given $\mathcal{V}_1$ and $\mathcal{V}_2$ two higher representations,
    we construct a higher representation $\mathcal{V}_1\dotimes\mathcal{V}_2$. A
    typical object of this category is a pair
    $(M_1\otimes M_2,\pi)$ where $M_1\otimes M_2$ is an
    object of the ordinary
    tensor product $\mathcal{V}_1\otimes\mathcal{V}_2$ and $\pi$ is a closed map
    $M_1\otimes E_2(M_2)\to E_1(M_1)\otimes M_2$ compatible with $\tau$.
    More generally, one considers objects
    obtained from those by taking cones and direct summands.
    The image by $E$ of the pair above is $(\mathrm{cone}(\pi),\pi')$ for some $\pi'$.

    \smallskip
    This construction generalizes immediately to differential categories endowed with
    two commuting structures of higher representations, but we need a more general
    construction dealing with two lax-commuting higher representations
    to handle general gluings of surfaces. We provide three increasingly subtle versions of such a
    construction.
In general, we obtain a differential category without the (full) structure of
higher representation.

\smallskip
Starting with two structures of higher representations given by endofunctors $E_1$ and
$E_2$ on a differential category
$\mathcal{W}$ and a map $\sigma:E_2E_1\to E_1E_2$ (suitably compatible with $\tau$'s),
we define a differential category
$\Delta_\sigma(\mathcal{W})$ by proceeding as in the tensor product case. It will 
have a structure of higher representation if $\sigma$ is invertible.

\smallskip
The notion of right higher representation coincides with that of (left)
higher representation, but it leads to a different version of the construction above.
We start with the same structures as above, but write $F_1$ instead of $E_1$ and
$\lambda$ instead of $\sigma$. We define a differential category 
$\Delta_\lambda(\mathcal{W})$ with
typical objects pairs $(M,(\upsilon)_{i\ge 1})$ where $M$ is an object of $\mathcal{W}$ and
$\upsilon_i:E_2^iF_2^i(M)\to M$ is a system of compatible maps with respect to
$\lambda$ and $\tau$. In order to define a structure of higher representation,
we need $F_1$ to have a right adjoint $E_1$. Using this adjunction, $\lambda$
gives rise to $\sigma:E_2E_1\to E_1E_2$ and, when $\sigma$ is invertible, we obtain
a structure of higher representation on $\Delta_\lambda(\mathcal{W})$.

\smallskip
Finally, starting with a lax action of $\mathcal{U}\times\mathcal{U}$ on $\mathcal{W}$,
we define a differential category $\Delta_E(\mathcal{W})$.

\smallskip
Our constructions extend an earlier construction of Douglas-Manolescu \cite{DouMa}. They provided a
construction of the category underlying a tensor product.

\smallskip
One of the applications of tensor products in higher representation theory is the construction of complicated categories from simpler ones. This is illustrated below in the reconstruction of partially wrapped Fukaya categories of symmetric powers of surfaces from more basic algebras. We formulate the problem in terms of surfaces with an arc decomposition, and then reformulate it again in terms of certain singular curves. We provide another example, the construction of nil affine Hecke algebras from nil Hecke algebras (in type $A$).








\subsection{Fukaya categories}\label{sec:FukayaCatsIntro}



The main examples of higher representations of $\mathfrak{gl}(1|1)^+$ that we introduce here are on partially wrapped Fukaya categories of symmetric powers of surfaces. These
are $A_\infty$-categories and the results of \S\ref{sec:FukayaCatsIntro} will
be made more precise in \S\ref{se:HFintro}, where we work with differential categories.

Let $\Sigma$ be a compact oriented surface with a finite collection $M$ of marked points in its boundary, and assume that each component of $F$ contains at least one point of $M$.

In the pictures below, we expand each point of $M$ to an interval in the boundary of $\Sigma$. We draw
the complement of these intervals in dotted light orange.
Here are two views of a genus-one surface with one boundary component and $M$ consisting of one point.

\begin{center}
\includegraphics[scale=0.75]{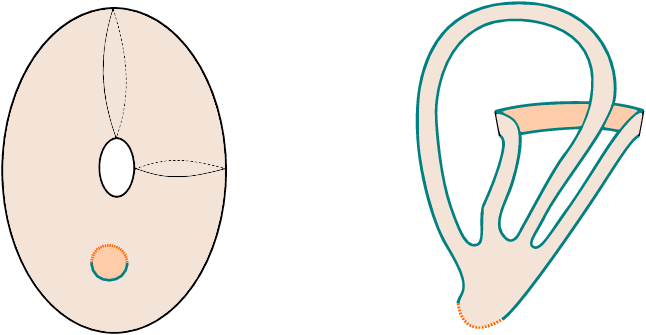}
\end{center}

For any $k \geq 0$, Auroux \cite[Section 3.1]{Au2} considers a partially wrapped Fukaya category $\pwFuk(\mathrm{Sym}^k(\Sigma),M)$ of $\mathrm{Sym}^k(\Sigma)$ with set of stops
$M\times \mathrm{Sym}^{k-1}(\Sigma)$.
We write $\pwFuk(\mathrm{Sym}^*(\Sigma),M)$ for the direct sum of these categories over all $k \geq 0$ (they vanish for $k$ large enough).

Given a component $I$ of $\partial \Sigma \setminus M$, we
define a higher action of $\mathfrak{gl}(1|1)^+$ on $\pwFuk(\mathrm{Sym}^*(\Sigma),M)$.





Consider $(\Sigma_1,M_1,I_1)$ and $(\Sigma_2,M_2,I_2)$ two surfaces with chosen intervals as above.
We form a new surface $(\Sigma,M)$ with a chosen interval $I$ by gluing $I_1$ and $I_2$ to the two legs of an open pair of pants.

\begin{thm}\label{thm:IntroMainPairing}
    There is an equivalence of triangulated categories
    \[
    \pwFuk(\mathrm{Sym}^*(\Sigma),M) \simeq \pwFuk(\mathrm{Sym}^*(\Sigma_1),M_1) \dotimes \pwFuk(\mathrm{Sym}^*(\Sigma_2),M_2)
    \]
    compatible with the structure of higher representations of $\mathfrak{gl}(1|1)^+$.
\end{thm}

Theorem~\ref{thm:IntroMainPairing} extends a result of Douglas--Manolescu \cite{DouMa}; their theorem corresponds to the special case of Theorem~\ref{thm:IntroMainPairing} in which $\Sigma_1$ and $\Sigma_2$ have only one boundary circle and one marked point each. They prove an equivalence of categories without
the statement on compatibility of higher actions.

More generally, given $(\Sigma,M)$ with two disjoint chosen intervals $I_1, I_2$ in $\partial \Sigma \setminus M$, we form a new surface $(\overline{\Sigma}, \overline{M})$ by gluing the two legs of an open pair of pants to $I_1$ and $I_2$. Theorem~\ref{thm:IntroMainPairing} generalizes to say that $\mathcal{F}(\mathrm{Sym}^*\overline{\Sigma},\overline{M})$ is equivalent to $\Delta \mathcal{F}(\mathrm{Sym}^*\Sigma,M)$ with its diagonal action.

This makes it possible to recover $\pwFuk(\mathrm{Sym}^*(\Sigma),M)$ for any $(\Sigma,M)$ from
the case of a disk with two points in the boundary.
As an illustration, consider the genus-one surface $(\Sigma,M)$ shown above; the partially wrapped Fukaya category $\pwFuk(\mathrm{Sym}^*(\Sigma),M)$ is described by the ``torus algebra'', a standard example in bordered Heegaard Floer homology. We can cut $(\Sigma,M)$ along three arcs as shown; we are left with two rectangles. Gluing the cuts back together, the torus algebra can then be recovered from one application of the tensor product followed by two applications of the more general $\Delta$ construction. 

\begin{center}
\includegraphics[scale=0.75]{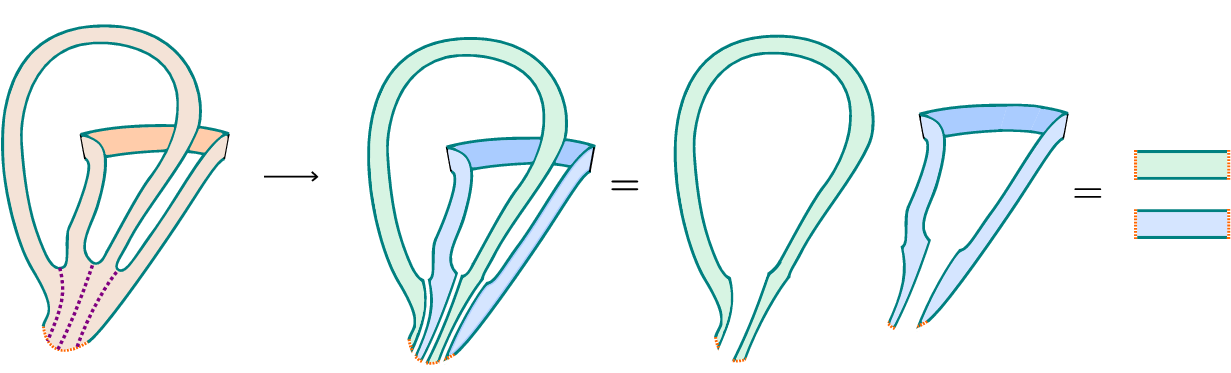}
\end{center}

\begin{center}
\includegraphics{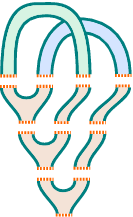}
\end{center}

In the case of the first symmetric power $\Sym^1(\Sigma)=\Sigma$, 
a general construction of Fukaya categories by a gluing procedure
is given by Haiden, Katzarkov and Kontsevich in \cite{HaiKaKon}.

A general theory of partially wrapped Fukaya categories and how they glue is provided by Ganatra,
Pardon and Shende in \cite{GaPaShe}, but this doesn't apply directly to our case.

\subsection{Heegaard Floer homology}\label{se:HFintro}





Heegaard Floer homology, defined by Ozsv{\'a}th--Szab{\'o} \cite{OsSz1,OsSz2,OsSz3},
is a set of invariants for 3- and 4-dimensional manifolds. For a 3-manifold $Y$, the Heegaard Floer invariant of $Y$ (an abelian group) is defined by choosing a Heegaard decomposition of $Y$ as two handlebodies glued along a genus-$g$ surface $\mathcal{H}$, then computing a Lagrangian intersection Floer homology group between two Lagrangian submanifolds in $\mathrm{Sym}^g(\mathcal{H})$ induced by the two handlebodies.

In bordered Heegaard Floer homology \cite{LiOzTh1, Za}, there are also extended Heegaard Floer invariants for 2d surfaces and 3d cobordisms. Let $F$ be the data of a surface $(\Sigma,M)$ with a set of
points $M\subset\partial \Sigma$ as above, equipped with a choice of arc decomposition. To
such a surface, bordered Heegaard Floer associates a differential algebra $A(F)$.
Auroux \cite{Au2} has shown that the algebra $A(F)$ is the endomorphism algebra of a generating object
of $\mathcal{F}(\mathrm{Sym}^*(\Sigma),M)$ determined
by the arc decomposition. 





    
   
   \smallskip
    Our constructions are based on the combinatorics of $A(F)$ and we do not work directly with
    $\mathcal{F}(\mathrm{Sym}^*(\Sigma),M)$. Given a component of
    $\partial \Sigma \setminus M$, we define a differential bimodule $E$ over $A(F)$, and a bimodule endomorphism $\tau$ of $E \otimes_{A(F)} E$, that yield a higher representation of
    $\mathfrak{gl}(1|1)^+$.




    Theorem \ref{thm:IntroMainPairing} follows now from the following result.

\begin{thm}
	\label{th:introglueAF}
    If $F_1$ and $F_2$ are surfaces with arc decompositions glued as in
	Theorem~\ref{thm:IntroMainPairing} to form $F$, then
    \[
    A(F) \cong A(F_1) \dotimes A(F_2)
    \]
    as higher representations of $\mathfrak{gl}(1|1)^+$.
\end{thm}




\subsection{Singular curves}

Let $Z$ be a singular oriented curve. We associate to $Z$ a differential algebra $A(Z)=\bigoplus_{i\ge 0}A_k(Z)$. 

The algebra $A_k(Z)$ has a basis given by ``braids": these are pairs $(I,([\zeta_i])_{i\in I})$, where $I$ is a set of $k$ singular points of $Z$ and $[\zeta_i]$ is a homotopy class of smooth oriented paths starting at $i$ and ending at a singular point. We require that the end points of $\zeta_i$ and $\zeta_j$ are distinct if $i\neq j$. 

We define $d(I,([\zeta_i]))$ to be the sum over intersection points between paths $\zeta_i$ of the 
braid obtained by resolving the intersection point.

The composition $(I',[\zeta'_{i'}])\circ (I,[\zeta_i])$ is $0$ or $(I,[\zeta'_{\zeta_i(1)}\circ\zeta_i])$ if a number of conditions are satisfied:
\begin{itemize}
    \item $\{\zeta_i(1)\}=I'$
    \item the paths $\zeta'_{\zeta_i(1)}\circ\zeta_i$ are smooth
    \item there are no representatives in the homotopy class of the concatenated paths with fewer intersections than $\{\zeta'_{\zeta_i(1)}\circ\zeta_i\}_{i\in I}$.
\end{itemize}

\smallskip
    \includegraphics{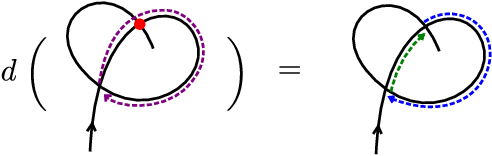}

A singular curve $Z$ with a worst ordinary double points gives rise to a sutured surface $F(Z)$ with an arc decomposition (cf the case of a torus below) and we have $A(F(Z))=A(Z)$: the algebras $A(Z)$ generalize
those of \cite{LiOzTh1, Za}.

    \includegraphics[width=0.7\linewidth]{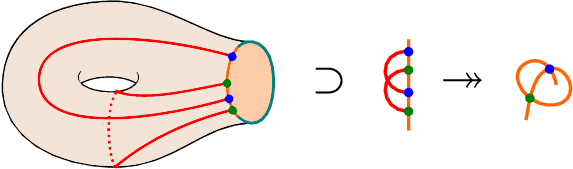}

Given a closed embedding of $(0,1]$ (resp. $[-1,0)$) in $Z$ avoiding singular points, we construct a higher representation of $\mathfrak{gl}(1|1)^+$ on $A(Z)$. The bimodule $E$ has a basis given by braids where one path starts at $1$ (resp. ends at $-1$).

Given embeddings of $[-1,0)$ and $(0,1]$ in $Z$ as above, we can construct a singular curve $\bar{Z}$ by attaching $[-1,1]$ to $Z$ along $[-1,0)\cup(0,1]$.
Our results on algebras associated to the gluing of surfaces is a consequence of the more general result below on singular curves.

\begin{thm}
    There is an isomorphism of higher representations $A(\bar{Z})\iso \Delta A(Z)$.
\end{thm}

 
A version of this result allowing partially oriented singular curves contains as a special case the construction of nil affine Hecke algebras from nil Hecke algebras.

The reconstruction of the partially wrapped Fukaya categories for the torus depicted in
\S \ref{sec:FukayaCatsIntro} corresponding to the following decomposition of the corresponding singular
curve:

\smallskip

    \includegraphics{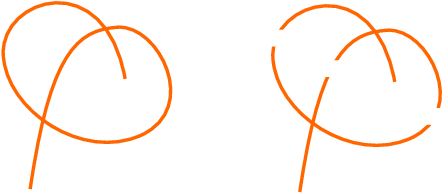}

\subsection{Extended TQFT and further remarks}

We expect that our constructions (in particular
Theorem \ref{thm:IntroMainPairing}) will be part of a $4$-dimensional TQFT.

This article is the first step towards a higher representation-theoretic reconstruction of
Heegaard Floer theory, which would be a fulfilment of the program of
Crane and Frenkel for $\Ggl(1|1)$. This article focuses on dimensions $1$ and $2$, where homotopic
phenomena can be avoided.

It is natural to ask if our constructions extend to dimension $0$.  Work in progress of the first author and Reeshad Arian  \cite{ArMa} shows this is possible at the decategorified level.

\smallskip
We are pursuing two directions for the extension to dimension $3$.

In a work in preparation, the first author extends the tensor product functoriality
to the $A_\infty$-setting. This is a key step to construct morphisms of $2$-representations from
Heegaard Floer diagrams by cutting them into pieces.

Work in preparation of the second author \cite{Rou4}
provides a construction of invariants of links in $S^3$. Appropriate $t$-structures are used
to handle the homotopic phenomena. 

Note that Ellis, Pevtkova and V\'ertesi in
 \cite{ElPeVe} construct homological invariants of tangles in the setting of bordered Heegaard Floer
theory in which $\Ggl(1|1)$-categorifications appear.
Note also that \cite{DouLiMa} considered $3$-manifolds with codimension-$2$ corners in the setting of
\cite{DouMa}.



\smallskip

We expect that our algebraic constructions will provide a blueprint for the construction of higher categorical structures and $3$ and $4$-dimensional invariants associated to (ordinary) simple Lie algebras.









\subsection{Structure of the article}
We gather in \S\ref{se:notations} a number of basic definitions and facts involving differential
categories and bimodules. Most differential vector spaces we encounter come with
bases, and we formalize this aspect in the notion of ``differential pointed sets"
and corresponding differential pointed categories.

We consider Hecke algebras in \S\ref{se:Hecke}.
We study in \S\ref{se:diffnilHecke} the differential
algebra structure on nil Hecke algebras of Coxeter groups over a field of characteristic $2$
and we describe adjunctions
for induction and restriction functors, in the case of finite Coxeter groups. An important
fact is that those Hecke algebras are the graded algebras associated with the filtration
of the group algebra with respect to the length function.
The remainder
of \S\ref{se:Hecke} is devoted to the case of symmetric groups and their affine versions.
We introduce in \S\ref{se:positive} positive submonoids of the affine symmetric groups and we
provide a description by generators and relations of their nil Hecke algebras.

Section \S\ref{se:2reptheory} is devoted to the development of the $2$-representation theory
of $\Ggl(1|1)^+$. We introduce the monoidal category $\CU$.
Our main construction is that of a tensor
product operation on $2$-representations, and more generally, of a diagonal
action given two (lax) commuting $2$-representation structures. We also consider a more complicated
``dual" construction in \S\ref{se:dualdiagonal}.
In \S\ref{se:bimodule2rep},
we recast
our functorial constructions into bimodule constructions. We formulate our constructions
in the differential ungraded setting.

In \S\ref{se:Hecke2rep}, we construct bimodules and $2$-representations associated with nil Hecke
algebras.
In \S\ref{se:regular2rep}, we describe explicitly the structures of $2$-representation coming
from the left and the right action of the monoidal category $\CU$ on itself and we show that
the diagonal category arising from these commuting left and right actions corresponds
to Hecke algebras of positive affine symmetric groups. It is a remarkable fact that those
can be recovered from the Hecke algebras of the ordinary symmetric groups.
We introduce
in \S\ref{se:nilHeckecat} a categorical version of affine symmetric groups and their Hecke
algebras.

We develop in \S\ref{se:strandalgebras} an extension of Lipshitz-Ozsv{\'a}th-Thurston 
\cite{LiOzTh1} and Zarev's \cite{Za} theory of
strand algebras associated with matched circles and intervals. Instead of considering
curves with matchings, we consider the corresponding quotient spaces, where the matched
points are identified. We start in \S\ref{se:1dimensionalspaces}
with $1$-dimensional spaces, which we define
as complements of a finite set of points in a $1$-dimensional finite CW-complex. In
\S\ref{se:curves}, we define our objects of interest, the singular curves. They
are $1$-dimensional spaces together with an additional structure at singular points, and
a partially defined orientation.
They arise as quotients of smooth curves, or, equivalently, as curves in $\BR^n$
with transverse intersections of branches. This leads to a notion of admissible paths,
those paths that lift to a smooth model for the curve (\S\ref{se:curvepaths}). We
introduce in \S\ref{se:strands} the differential categories of strands associated to a curve. They are
defined as graded categories associated with a filtered category, in a way similar
to the constructions of \S\ref{se:diffnilHecke}. We show in \S\ref{se:StrandsS1} that strand
categories
on unoriented $S^1$ correspond to the categories built from nil Hecke algebras of affine
symmetric groups.

The final section \S\ref{se:2repstrand} shows that the strand category of a glued curve is
obtained
as a tensor (or more general diagonal) construction from the strand category of the
original curve. This
provides some sort of $1$-dimensional field theory, which is really part of
a $2$-dimensional field theory for surfaces with extra structure.
This gives a categorical mechanism by which strand categories can be computed by 
cutting the curve into basic building blocks. We start in \S\ref{se:leftaction} by
constructing a structure of $2$-representation associated with an unoriented ``end" of a curve.
We describe in \S\ref{se:gluing} how the strand categories behave under the gluing of two ends
of a curve. This requires to solve a combinatorial generators and relation problem
generalizing Proposition \ref{le:generatorsrelationsHnhat+}.
When the gluing operation does not create an $S^1$, we show in
\S\ref{se:actiontensor} that the 
resulting $2$-representation is the one obtained from the diagonal action.

\subsection{Acknowledgments}

We thank Ciprian Manolescu for several useful conversations.

\section{Differential and pointed structures}
\label{se:notations}

\subsection{Differential algebras and categories}
\subsubsection{Categories}
\label{se:cat}
Let $\CC$ be a category. We denote by $\CC^\opp$ the opposite category. We identify $\CC$
with a full subcategory of
$\Hom(\CC^\opp,\mathrm{Sets})$ via the Yoneda embedding $c\mapsto \Hom(-,c)$.

Given $(L,R)$ a pair of adjoint functors, we denote the unit of the adjunction by
$\eta_{L,R}$\indexnot{eta}{\eta_{L,R}} and the counit by $\eps_{L,R}$\indexnot{eps}{\eps_{L,R}}.

When $\CC$ is enriched in abelian groups,
we denote by $\add(\CC)$ the smallest full subcategory of $\Hom(\CC^\opp,\mathrm{Sets})$
containing $\CC$ and closed under finite coproducts and isomorphisms.

\smallskip
Let $\CX$ be a $2$-category. We denote by $\CX^\opp$ the $2$-category with same objects
and $\CHom(x,y)=\CHom(x,y)^\opp$.
We denote by $\CX^{\mathrm{rev}}$ the $2$-category with the same
objects and with $\CHom(x,y)=\CHom(y,x)$ for $x$ and $y$ two objects of $\CX$ (so that the
composition of $1$-arrows is reversed).

\smallskip
Let $\CC{at}$ be the $2$-category of categories. There is an equivalence
$\CC{at}\iso\CC{at}^\opp$ sending a category $\CC$ to $\CC^\opp$.

Let $\CC{at}^r$ (resp. $\CC{at}^l$)
be the $2$-full $2$-subcategory of $\CC{at}$ with $1$-arrows
those functors that admit a left (resp. right) adjoint. There is an equivalence of
$2$-categories $\CC{at}^r\iso (\CC{at}^l)^{\mathrm{rev }\opp}$. It is the identity on objects and
sends a functor to a left adjoint.

\subsubsection{Differential categories}
Let $k$\indexnot{k}{k} be a field of characteristic $2$. We write
$\otimes$ for $\otimes_k$.

A {\em differential module}\index[ter]{differential module} is a $k$-vector space
$M$ endowed with an endomorphism
$d$ satisfying $d^2=0$. We put $Z(M)=\ker d$. An element $m$ of $M$ is said
to be {\em closed}\index[ter]{closed element} when $d(m)=0$.
We define $\Hom$-spaces in the category $k\mdiff$ of differential modules by
$\Hom_{k\mdiff}(M,M')=\Hom_{k\mMOD}(M,M')$. That $k$-module has a differential given by
$\Hom(d_M,M')+\Hom(M,d_{M'})$.  We define the category $Z(k\mdiff)$ as the subcategory of
$k\mdiff$ with same objects as $k\mdiff$ and $\Hom_{Z(k\mdiff)}(M,M')=Z(\Hom_{k\mdiff}(M,M'))$.

The tensor product of vector spaces and the permutation
of factors equip $k\mdiff$ and $Z(k\mdiff)$ with a structure of symmetric monoidal category.

A {\em differential category}\index[ter]{differential category} is a category enriched over 
$Z(k\mdiff)$.

\smallskip
Let $\CV$ and $\CV'$ be two differential categories.
We denote by $\Hom(\CV,\CV')$ the differential category
of ($k$-linear) differential functors $\CV\to\CV'$. Its $\Hom$ spaces are
$k$-linear natural transformations.

We denote
by $\CV\otimes\CV'$ the differential category with
set of objects $\mathrm{Obj}(\CV)\times\mathrm{Obj}(\CV')$ and with
$\Hom_{\CV\otimes\CV'}((v_1,v'_1),(v_2,v'_2))=\Hom_{\CV}(v_1,v_2)\otimes
\Hom_{\CV'}(v'_1,v'_2)$.

\medskip
We denote by $\CV\mdiff=\Hom(\CV,k\mdiff)$\indexnot{diff}{\CV-\mathrm{diff}} the category
of {\em $\CV$-modules}\index[ter]{$\CV$-modules}.
There is a fully faithful embedding $v\mapsto \Hom_\CV(-,v):\CV\to\CV^\opp\mdiff$
and we identify $\CV$ with its image.

Note that $\mathrm{add}(\CV)$\indexnot{add}{\mathrm{add}(\CV)} identifies
with the smallest full subcategory of $\CV^\opp\mdiff$ containing $\CV$ and closed under
finite direct sums and isomorphisms.

\smallskip
There is a differential functor $\otimes_\CV:\CV^\opp\mdiff\otimes\CV\mdiff\to k\mdiff$.
Given $M\in\CV^\opp\mdiff$ and $N\in\CV\mdiff$, there is an exact sequence of
differential $k$-modules
$$\bigoplus_{f\in\Hom_\CV(v_1,v_2)}M(v_2)\otimes N(v_1)
\xrightarrow{\substack{a\otimes b
\mapsto M(f)(a)\otimes b\\-a\otimes N(f)(b)}}
\bigoplus_{v\in\CV}M(v)\otimes N(v)\to M\otimes_\CV N\to 0.$$

Given $v\in\CV$, we have 
$\Hom(-,v)\otimes_\CV N=N(v)$ and $M\otimes_{\CV}\Hom(v,-)=M(v)$.

\medskip
Recall that a category is {\em idempotent complete}\index[ter]{idempotent complete category}
if all idempotent maps have images.

We denote by $\CV^i$\indexnot{i}{\CV^i} the {\em idempotent completion}\index[ter]{idempotent completion}
of $\CV$: this
is the smallest full subcategory of $\CV^\opp\mdiff$ containing $\CV$
and closed under direct summands and isomorphisms.
The $2$-functor $\CV\mapsto\CV^i$ is left adjoint to the embedding of
idempotent-complete differential categories in differential categories.

\subsubsection{Objects}
\label{se:objects}
Given $v_1,v_2$ two
objects of $\CV$ and given $f\in Z\Hom_\CV(v_1,v_2)$, the {\em cone}\index[ter]{cone} of $f$
is the
object $\mathrm{cone}(\Hom_{\CV}(-,f))$
of $\CV^\opp\mdiff$ denoted by ${\xy (0,0)*{v_1\oplus v_2},
\ar@/^/^{f}(-4,2)*{};(4,2)*{}, \endxy}$.
We say that $\CV$ is {\em strongly pretriangulated}\index[ter]{strongly pretriangulated
category} if the cone of
any map of $\CV$ is isomorphic to an object of $\CV$.
Note that $\CV^\opp\mdiff$ is strongly pretriangulated.

We denote by $\bar{\CV}$\indexnot{V}{\bar{\CV}}
the smallest full strongly pretriangulated subcategory of
$\CV^\opp\mdiff$ closed under taking isomorphic objects and containing $\CV$.
 Note that
$(\bar{\CV})^i$ is strongly pretriangulated. Note also that if $\CV$ is a full
subcategory of a strongly pretriangulated $\CV'$, then $\CV$ is strongly pretriangulated if
the cone in $\CV'$ of a map between objects of $\CV$ is isomorphic to
an object of $\CV$.

\smallskip
Let $v_1,\ldots,v_n$ be objects of $\CV$ and $f_{ij}\in\Hom_{\CV}(v_j,v_i)$
for $i<j$. Assume $d(f_{ij})=\sum_{i<r<j}f_{ir}\circ f_{rj}$ for all $i<j$.
We define the {\em twisted object}\index[ter]{twisted object}
$[v_n\oplus\cdots\oplus v_1,\left(\begin{matrix}
	0  \\
	f_{n-1,n}& \ddots \\
	\vdots &\ddots& 0 \\
	f_{1,n}&\hdots& f_{1,2} & 0
\end{matrix}\right)]$
of $\bar{\CV}$ inductively on $n$ as the cone
of 
$$(f_{n-1,n},\ldots,f_{1,n}):v_n\to
[v_{n-1}\oplus\cdots\oplus v_1,\left(\begin{matrix}
	0  \\
	f_{n-2,n-1}& \ddots \\
	\vdots &\ddots& 0 \\
	f_{1,n-1}&\hdots& f_{1,2} & 0
\end{matrix}\right)].$$

The objects of $\bar{\CV}$ are the objects of $\CV^\opp\mdiff$ isomorphic to
a twisted object of $\CV$.

\smallskip
If $\CV'$ is strongly pretriangulated, then the restriction functor
$\Hom(\bar{\CV},\CV')\to\Hom(\CV,\CV')$ is an equivalence. So,
$\CV\mapsto\bar{\CV}$ is left adjoint to the embedding of
strongly pretriangulated differential categories in differential categories.

\subsubsection{Algebras}
\label{se:Algebras}
Let $A$ be a differential algebra. We denote by $A\mdiff$\indexnot{diff}{A-\mathrm{diff}} the category
of (left) differential $A$-modules. Note that $\Hom_{A\mdiff}(M,M')$ is the differential
$k$-module of $A$-linear maps $M\to M'$. This is an idempotent-complete
strongly pretriangulated differential category. We say
that a differential $A$-module is {\em strictly perfect}\index[ter]{strictly perfect module}
if it is in $(\bar{A})^i$, where $A$ denotes the full subcategory of $A\mdiff$
with a unique object $A$.

\medskip

A differential category $\CC$ with one object $c$ is the
same as the data of a differential algebra $A=\End_\CC(c)$.
When $\CC$ has a unique object $c$ and $A=\End_\CC(c)$, then there is an isomorphism
$A\mdiff\iso\CC\mdiff,\ M\mapsto (c\mapsto M)$.

More generally, a differential category $\CC$ can be viewed as a
``differential algebra with several objects". More 
precisely, there is an equivalence from the category of 
differential categories $\CC$ with finitely many objects (arrows are differential functors)
to the category of differential algebras $A$ equipped with a finite set $I$ of orthogonal
idempotents with
sum $1$ (arrows $(A,I)\to (A',I')$ are non-unital morphisms of differential algebras $f:A\to A'$
such that $f(I)\subset I'$):
\begin{itemize}
	\item to $\CC$, we associate $A=\bigoplus_{c,c'\in\CC}\Hom_\CC(c,c')$ and $I$
		the set of projectors on objects of $\CC$;
	\item to $(A,I)$, 
we associate the differential category $\CC$ with set of objects $I$ and
$\Hom_\CC(e,f)=fAe$.
\end{itemize}

\subsubsection{$G$-graded differential structures}

We define a {\em $\BZ$-monoid}\index[ter]{$\BZ$-monoid} $G$ to be a
monoid $G$ endowed with an action of the group $\BZ$, denoted by
$g\mapsto g+n$ for
$g\in G$ and $n\in\BZ$, and such that $(g+n)(g'+n')=gg'+n+n'$. Note that
$e_G+\BZ$ is a central submonoid of $G$, where $e_G$ denotes the unit of $G$.
So, the data above is equivalent to the data of a morphism of monoids
$\BZ\to Z(G)$. This is itself determined by the image of $1$, a central invertible
element $\upsilon$ of $G$.

\smallskip
We define a {\em differential $G$-graded $k$-module}\index[ter]{differential $G$-graded
structure} to be a $G$-graded $k$-module $M$ together with a differential module structure
such that $d(M_g)\subset M_{g+1}$ (cf \cite[\S 2.5]{LiOzTh1}).

Given $g\in G$, we define $M\langle g\rangle$
\indexnot{Mg}{M\protect\langle g\protect\rangle, \protect\langle g\protect\rangle M}
to be the differential $G$-graded $k$-module
given by $(M\langle g\rangle)_h=M_{hg}$. Similarly, we define
$\langle g\rangle M$ by $(\langle g\rangle M)_h=M_{gh}$.

\smallskip
We define similarly the notion of {\em differential $G$-graded algebra},
of {\em differential $G$-graded category}, etc.

When $G=\BZ$ and $\upsilon=1$, we recover the usual notion of differential graded $k$-module,
etc.

\smallskip
Let $G_1$ and $G_2$ be two $\BZ$-monoids. We define
$G_1\times_{\BZ}G_2$ as the quotient of $G_1\times G_2$ by the
equivalence relation $(g_1,g_2+n)\sim (g_1+n,g_2)$ for $g_1,g_2\in G$ and $n\in\BZ$. Denote by 
$p:G_1\times G_2\to G_1\times_{\BZ}G_2$ the quotient map, a morphism of monoids.
There is a structure of $\BZ$-monoid on $G_1\times_{\BZ}G_2$ given by
$p(g_1,g_2)+1=p(g_1+1,g_2)=p(g_1,g_2+1)$.

Let $M_i$ be a differential $G_i$-graded $k$-module for $i\in\{1,2\}$.
We define a structure of differential $(G_1\times_{\BZ}G_2)$-module on
the differential module $M_1\otimes M_2$ by setting
$(M_1\otimes M_2)_g=\bigoplus_{(g_1,g_2)\in p^{-1}(g)} (M_1)_{g_1}\otimes (M_2)_{g_2}$.

\subsection{Bimodules}
\subsubsection{Algebras}
\label{se:diffalg}
Let $\mathrm{Alg}$ be the $2$-category with objects
the differential algebras, and $\Hom_{\mathrm{Alg}}(A,A')$ the
category of $(A',A)$-bimodules. The composition of $1$-arrows
is the tensor product of differential bimodules.

\medskip
Given $M$ an $(A',A)$-bimodule, we put
$M^\vee=\Hom_{A^\opp}(M,A)$\indexnot{Mv}{M^\vee}, an $(A,A')$-bimodule.

There is a morphism of $(A',A)$-bimodules
$$M\to \Hom_A(M^\vee,A),\ m\mapsto (\zeta\mapsto \zeta(m)).$$
It is an isomorphism if $M$ is finitely generated and projective as a
(non-differential) $A^\opp$-module.

There is a morphism of functors
$$\Hom_A(M^\vee,A)\otimes_A -\to \Hom_A(M^\vee,-),\
f\otimes r\mapsto (\zeta\mapsto f(\zeta)r).$$
It is an isomorphism if $M^\vee$ is finitely generated and projective
as a (non-differential) $A$-module.

Combining those two morphisms, we obtain a morphism of functors
$$M\otimes_A -\to \Hom_A(M^\vee,-)$$
that is an isomorphism  if $M$ is finitely generated and projective as a
(non-differential) $A^\opp$-module.
So, when this holds, we have an adjoint pair
$(M^\vee\otimes_{A'}-,M\otimes_A-)$,
with corresponding unit $\eta:A'\to M\otimes_A M^\vee$ and counit
$\eps:M^\vee\otimes_{A'}M\to A$. In other terms, the bimodule $M^\vee$ is a left dual of
$M$\index[ter]{left dual bimodule}.

\smallskip
Note conversely that given $M$ such that $(M^\vee\otimes_{A'}-,M\otimes_A-)$ is an adjoint
pair, then $M^\vee$ is a finitely generated projective $A$-module because
$\Hom_A(M^\vee,-)$ is exact and commutes with direct sums, hence $M\simeq
\Hom_A(M^\vee,A)$ is finitely generated and projective as an $A^\opp$-module.

We say that $M$ is {\em right finite} \index[ter]{right finite bimodule}
when it is finitely generated and projective as an
$A^\opp$-module. We say that $M$ is {\em left finite}  \index[ter]{left finite bimodule}
when it is finitely generated and projective as an $A'$-module.

\smallskip
Consider the $2$-full subcategory $\mathrm{Alg}^r$ (resp. $\mathrm{Alg}^l$)
of $\mathrm{Alg}$ with same objects
and $1$-arrows the right (resp. left) finite bimodules.
There is an equivalence of $2$-categories $\mathrm{Alg}^r\iso (\mathrm{Alg}^l)^{\mathrm{rev }\opp}$.
It is the identity on objects and sends a bimodule $M$ to $M^\vee$.

\subsubsection{Categories}

Let $\CC$ and $\CC'$ be differential categories.
A {\em $(\CC,\CC')$-bimodule} \index[ter]{$(\CC,\CC')$-bimodule} is a differential functor
$\CC\otimes\CC^{\prime\opp}\to k\mdiff$.
There is a $2$-category $\mathrm{Bimod}$\indexnot{Bimod}{\mathrm{Bimod}}
of differential categories and bimodules.
Its objects are differential categories and
$\CHom_{\mathrm{Bimod}}(\CC,\CC')$ is the differential category of
$(\CC',\CC)$-bimodules. Composition is given by tensor product: given $\CC''$ a 
differential category, $M$ a $(\CC,\CC')$-bimodule and $N$ a $(\CC',\CC'')$-bimodule,
we put
$$\bigl(M\otimes_{\CC'}N\bigr)(c,c'')=M(c,-)\otimes_{\CC'}N(-,c'').$$

There is an equivalence of $2$-categories
$\mathrm{Bimod}\iso\mathrm{Bimod}^{\mathrm{rev}}$ sending 
a differential category $\CC$ to $\CC^\opp$ and a $(\CC,\CC')$-bimodule to the
same functor, viewed as a $(\CC^{\prime\opp},\CC^\opp)$-bimodule.

\medskip
The bimodule $\Hom:\CC\otimes\CC^\opp\to k\mdiff,\ (c_1,c_2)\mapsto
\Hom_\CC(c_2,c_1)$ is an identity for the tensor product.
The canonical isomorphism of
$(\CC,\CC)$-bimodules $\Hom\otimes_\CC\Hom\iso\Hom$ is given by
$$\Hom_\CC(-,c_1)\otimes_\CC \Hom_\CC(c_2,-)\to \Hom(c_2,c_1),\
((f:d\to c_1)\otimes (g:c_2\to d)\mapsto f\circ g.$$

\medskip
Let $M$ be a $(\CC',\CC)$-bimodule. We define the $(\CC,\CC')$-bimodule
$M^\vee$ by 
$$M^\vee(c,c')=\Hom_{\CC^{\opp}\mdiff}(M(c',-),\Hom_{\CC}(-,c)).$$

There is a morphism of $(\CC,\CC)$-bimodules
$\eps_M:M^\vee\otimes_{\CC'}M\to \Hom$ given by
\begin{align*}
	\eps_M(c_1,c_2):M^\vee(c_1,-)\otimes_{\CC'}M(-,c_2)&\to \Hom(c_2,c_1)\\
	(M(c',-)\xrightarrow{f}\Hom(-,c_1))\otimes m&\mapsto f(c_2)(m)
	\text{ for }m\in M(c',c_2).
\end{align*}

Given $L\in\CC\mdiff$ and $L'\in\CC'\mdiff$, we have a morphism functorial in $L$ and $L'$
$$\Hom(L',M\otimes_{\CC}L)\xrightarrow{M^\vee\otimes -}
\Hom(M^\vee\otimes_{\CC'}L',M^\vee\otimes_{\CC'}M\otimes_{\CC}L)
\xrightarrow{\Hom(M^\vee\otimes_{\CC'}L',\eps_M)}
\Hom(M^\vee\otimes_{\CC'}L',L).$$

\smallskip
We say that $M$ is {\em right finite}\index[ter]{right finite bimodule}
if the morphism above is an isomorphism for all $L$ and $L'$. When this holds, the functor
$M^\vee\otimes_{\CC'}-$ is left adjoint to $M\otimes_{\CC}-$ and $M^\vee$ is
left dual to $M$ \index[ter]{left dual bimodule}. We also write
${^\vee N}=M$ where $N=M^\vee$.
We say that $M$ is {\em left finite}\index[ter]{left finite bimodule} if
it is a right finite $(\CC^{\prime\opp},\CC^\opp)$-bimodule.

\medskip
Let $M$ be a $(\CC,\CC)$-bimodule. We define the differential category 
$T_{\CC}(M)$\indexnot{TCM}{T_{\CC}(M)}. Its objects are
those of $\CC$ and
$$\Hom_{T_{\CC}(M)}(c_1,c_2)=\bigoplus_{i\ge 0}M^i(c_1,c_2).$$

\subsubsection{Bimodules and functors}
\label{se:bimodfunctors}
There is a $2$-functor from $\mathrm{Alg}$ to $\mathrm{Bimod}$: it sends $A$ to
the differential category $\CC_A$ with one object $c_A$ and $\End(c_A)=A$. It sends
an $(A',A)$-bimodule $M$ to the $(\CC_{A'},\CC_A)$-bimodule $\CC_M$ given by
$\CC_M(c_A,c_{A'})=M$. This $2$-functor provides isomorphisms of categories
$\Hom_{\mathrm{Alg}}(A,A')\iso\Hom_{\mathrm{Bimod}}(\CC_A,\CC_{A'})$.

\smallskip
There is a $2$-fully faithful $2$-functor from the $2$-category of differential
categories to $\mathrm{Bimod}^{\mathrm{rev}}$: it sends $\CC$ to $\CC$ and
$F:\CC\to\CC'$ to
the $(\CC,\CC')$-bimodule $(c,c')\mapsto\Hom(c',F(c))$.

There is a $2$-fully faithful $2$-functor from
$\mathrm{Bimod}$ to the $2$-category of differential categories:
it sends $\CC$ to $\CC\mdiff$ and $M$ a $(\CC',\CC)$-bimodule to $M\otimes_{\CC}-:\CC\mdiff
\to\CC'\mdiff$.

\smallskip
Composing the $2$-functor $\mathrm{Alg}\to\mathrm{Bimod}$ and the $2$-functor from
$\mathrm{Bimod}$ to the $2$-category of differential categories, we obtain
a differential $2$-functor from $\mathrm{Alg}$ to
the $2$-category of differential categories: it sends 
$A$ to $A\mdiff$ and
it sends an $(A',A)$-bimodule $M$ to the functor $M\otimes_A-:
A\mdiff\to A'\mdiff$. Note that this $2$-functor is $2$-fully faithful.

\subsection{Pointed sets and categories}
\subsubsection{Pointed sets}
\label{se:pointedsets}
A {\em pointed set} is a set with a distinguished element $0$. The category
$\Sets^\bullet$\indexnot{sets}{\Sets^\bullet} of pointed sets has objects pointed sets and
arrows those maps that
preserve the distinguished element. 

It has coproducts: $\bigvee S_i$ is the quotient of $\coprod S_i$ by
the relation identifying the $0$-objects of the $S_i$'s.

We define $\bigwedge S_i$ as the quotient of
$\prod S_i$ by the relation identifying an element with $(0)_i$ if
one of its components is $0$.
There is a canonical isomorphism $S\wedge \{0,\ast\}\iso S$.
This provides the category of pointed sets with a structure of symmetric monoidal
category (the tensor product of $S_1$ and $S_2$ is $S_1\wedge S_2$)
and there is a symmetric monoidal functor from the category of sets
to the category of pointed sets $E\mapsto E_+=E\sqcup\{0\}$.

\smallskip
Given $S$ a pointed set and $k$ a commutative ring, we denote by
$k[S]$ the quotient of the free $k$-module with basis $S$ by the $k$-submodule
generated by the distinguished element of $S$. This gives a coproduct preserving
monoidal functor from the category of pointed sets to the category of $k$-modules.

\smallskip
Assume $k$ is finite. Let $S$ and $S'$ be two pointed sets. We say that a $k$-linear
map $f:k[S]\to k[S']$ is {\em bounded} \index[ter]{bounded map} if
there is $N>0$ such that for all $s\in S$, the set of elements of $S'$ that have
a non-zero coefficient in $f(s)$ has fewer than $N$ elements.

The functor
$k[-]$ induces a bijection from $k[\Hom_{\Sets^\bullet}(S,S')]$ to
the subspace of bounded maps in $\Hom_{k\mMOD}(k[S],k[S'])$.

\subsubsection{Gradings and filtrations}
Let $G$ be a set.
A $G$-graded pointed set is a pointed set $S$ together with pointed subsets
$S_g$ for $g\in G$
such that $S=\bigcup_{g\in G} S_g$ and $S_g\cap S_h=\{0\}$ for $g\neq h$.

Given a map $f:G\to G'$ and $S$ a $G$-graded pointed set, we define a structure
of $G'$-graded pointed set on $S$ by setting $S_{g'}=\{0\}\cup\bigcup_{g\in f^{-1}(g')}S_g$.

Given $G_1$ and $G_2$ two sets and $S_i$ a $G_i$-graded pointed set for $i\in\{1,2\}$,
then $S_1\wedge S_2$ is a $(G_1\times G_2)$-graded pointed set with
$(S_1\wedge S_2)_{(g_1,g_2)}=(S_1)_{g_1}\wedge (S_2)_{g_2}$.

\smallskip
Assume $G$ is a monoid. Given two $G$-graded pointed sets $S$ and $T$, there
is a structure of $(G\times G)$-graded pointed set on $S\wedge T$. Via the multiplication
map, we obtain a structure of $G$-graded pointed set on $S\wedge T$. This
makes the category of $G$-graded pointed sets into a monoidal category with 
unit object the pointed set $S=\{0,\ast\}$ with $S_1=S$ and $S_g=\{0\}$ for $g\neq 1$.

\medskip
Let $G$ be a poset.
A $G$-filtered set (resp. pointed set) is a set (resp. a pointed set)
$S$ together with subsets (resp. pointed subsets)
$S_{\ge g}$ for $g\in G$ such that $S_{\ge g}\subset S_{\ge g'}$ if $g>g'$ and
such that given $s\in S$ (resp. $s\in S\setminus\{0\}$),
the set $\{g\in G\ |\ s\in S_{\ge g}\}$ is non-empty and has a maximal element,
which we denote by $\deg(s)$.

Note that a structure of $G$-filtered set on a set (resp. a pointed set)
$S$ is the same as the data of a map $S\to G$ (resp. a map $S\setminus\{0\}\to G$).

\smallskip
The associated $G$-graded pointed set is
$\gr S=\{0\}\sqcup S$ (resp. $\gr S=S$) with 
$$(\gr S)_g=\{0\}\sqcup \{s\in S\ |\ \deg(s)=g\}\ 
(\text{resp. }
(\gr S)_g=\{s\in S\setminus\{0\}\ |\ \deg(s)=g\}).$$

If $G$ is a (partially) ordered monoid, then the category of $G$-filtered sets
(resp. pointed sets)
is a monoidal category with $(S\wedge T)_{\ge g}$ the image of $\coprod_{g_1,g_2\in G,
g_1g_2\ge g}(S_{\ge g_1}\times T_{\ge g_2})$ in $S\wedge T$. Its unit object
is the set $S=\{\ast\}$ (resp. the pointed set $S=\{0,\ast\}$)
with $S_{\ge g}=S$ if $1\ge g$ 
and $S_{\ge g}=\emptyset$ (resp. $S_{\ge g}=\{0\}$) otherwise.

\smallskip
There is a monoidal functor
$S\mapsto \gr S$ from the monoidal category of
$G$-filtered sets (resp. pointed sets) to the monoidal category of $G$-graded
pointed sets. Given $f:S\to T$ a map between $G$-filtered sets (resp.
pointed sets), the map $\gr f:\gr S\to \gr T$ is given for $s\in (\gr S)_g$ by
$(\gr f)(s)=f(s)$ if $f(s)\in (\gr T)_g$ and
$(\gr f)(s)=0$ otherwise.

Note also that given a commutative ring $k$
there is a monoidal functor $S\mapsto k[S]$ from the category of $G$-graded pointed sets
to the category of $G$-graded $k$-modules.

\subsubsection{Pointed categories}
\label{se:pointedcat}
A {\em pointed category} is a category enriched in pointed sets. We define similarly
$G$-graded pointed categories, etc. The monoidal functors $\CV_1\to\CV_2$ defined above
provide a construction from a category enriched in $\CV_1$ of a category
enriched in $\CV_2$. Let us describe this more explicitly.

\smallskip
$\bullet\ $Given a $G$-filtered category (or a $G$-filtered pointed category) $\CC$, we
have a $G$-graded pointed category $\gr\CC$. Its objects are the same as those of 
$\CC$ and $\Hom_{\gr\CC}(c,c')=\gr\Hom_{\CC}(c,c')$.

\smallskip
$\bullet\ $Given a pointed category $\CC$, we denote by $k[\CC]$ the associated $k$-linear category:
its objects are those of $\CC$ and $\Hom_{k[\CC]}(c,c')=k[\Hom_\CC(c,c')]$.
If $\CC$ is a $G$-graded pointed category, then $k[\CC]$ is a $k$-linear $G$-graded category.

\smallskip
$\bullet\ $Given a category $\CC$, the associated pointed category $\CC_+$\indexnot{C+}{\CC_+} has the same
objects as $\CC$ and $\Hom_{\CC_+}(c,c')=\Hom_{\CC}(c,c')\sqcup\{0\}$.

\smallskip
Consider a family $\{\CC_i\}$ of pointed categories. We have a pointed category
$\bigwedge\CC_i$ with object set $\prod \mathrm{Obj}(\CC_i)$ and 
$\Hom_{\bigwedge\CC_i}((c_i),(c'_i))=\bigwedge \Hom_{\CC_i}(c_i,c'_i)$.
Similarly, we have a pointed category
$\bigvee\CC_i$ with object set $\coprod \mathrm{Obj}(\CC_i)$ and 
given $c\in\CC_r$ and $c'\in\CC_s$, we have
$$\Hom_{\bigwedge\CC_i}(c,c')=\begin{cases}
	\Hom_{\CC_r}(c,c') & \text{ if }r=s\\
	\{0\} & \text{ otherwise.}
\end{cases}$$

\smallskip
Note that the data of a structure of $G$-filtered pointed category on a pointed category
$\CC$ is the same as the data of a map $\deg$ from the set of non-zero maps
of $\CC$ to $G$ such that $\deg(\beta\circ\alpha)\ge\deg(\beta)\deg(\alpha)$ for any
two composable maps $\alpha$ and $\beta$ such that $\beta\circ\alpha\neq 0$.

Given a $G$-filtered pointed category $\CC$ with degree function $\deg$
and given a morphism of (partially) ordered 
monoids $f:G\to H$, we obtain a structure of $H$-filtered pointed category on $\CC$
with degree function $f\circ\deg$.

\medskip
Note that the category $\Sets^\bullet$ has a structure of pointed category: the
distinguished map between two pointed sets is the map with image $0$.

\subsubsection{Differential pointed categories}
We define a {\em differential pointed set}\index[ter]{differential pointed set}
to be a pointed set $S$ together with
a bounded endomorphism $d$ of $\BF_2[S]$ satisfying $d^2=0$.

\smallskip
Given $S$ and $S'$ two differential pointed sets, then $S\vee S'$ and
$S\wedge S'$ have structures of differential pointed sets coming from the canonical
isomorphisms $\BF_2[S\vee S']\iso \BF_2[S]\oplus \BF_2[S']$ and $\BF_2[S\wedge S']\iso
\BF_2[S]\otimes \BF_2[S']$. 

\smallskip
We define the category $\mathrm{diff}$\indexnot{diff}{\mathrm{diff}}
of differential pointed sets:  its objects are differential pointed sets and
maps the maps of pointed sets. 
There is a functor $\BF_2[-]:\mathrm{diff}\to \BF_2\mdiff$.
Let $S$ and $S'$ be two differential pointed sets. Because the differentials on $\BF_2[S]$
and $\BF_2[S']$ are bounded, the vector space
$\BF_2[\Hom_{\Sets^\bullet}(S,S')]$ identifies with a subspace of
$\Hom_{\BF_2\mMOD}(\BF_2[S],\BF_2[S'])$ that is stable under the
differential $\Hom(d_{\BF_2[S]},-)+\Hom(-,d_{\BF_2[S']})$.

\smallskip
We define $Z(\mathrm{diff})$ as the subcategory of
$\mathrm{diff}$ with same objects as $\mathrm{diff}$ and with
$\Hom_{Z(\mathrm{diff})}(S,S')$ the subset of maps in the kernel of $d$ (where
we view $\Hom_{\mathrm{diff}}(S,S')$ inside $\Hom_{\BF_2\mMOD}(\BF_2[S],\BF_2[S'])$).
The categories
$\mathrm{diff}$ and $Z(\mathrm{diff})$ have a structure of symmetric monoidal category
coming from those on pointed sets and differential modules.

\smallskip
We define a {\em differential pointed category}\index[ter]{differential pointed category}
to be a category enriched in $Z(\mathrm{diff})$.
This is the same as a pointed category $\CV$ together
with a differential on $\BF_2[\CV]$ endowing it with a structure of differential
category.
The $2$-functor $\CV\mapsto\BF_2[\CV]$ from the $2$-category of
differential pointed categories to the $2$-category of
differential categories is $2$-faithful and $2$-conservative.

\smallskip
Note that the category $\mathrm{diff}$ is a differential pointed category:

\smallskip
All our constructions below for differential pointed categories
are compatible with the corresponding constructions for differential categories,
via the $2$-functor $\BF_2[?]$.

\smallskip
Given $G$ a $\BZ$-monoid, we will also consider
{\em differential $G$-graded pointed sets}\index[ter]{differential $G$-graded pointed
strctures}: these are differential pointed
sets $S$
with a structure of $G$-graded pointed set such that $d(S_g)]\subset\BF_2[S_{g+1}]$ for
$g\in G$.
We have a corresponding notion of {\em differential $G$-graded pointed category}.

\medskip
Let $\CV$ be a differential pointed category. 
We say that a map of $\CV$ is {\em closed} if its image in $\BF_2[\CV]$ is closed.
Given $f:S\to S'$ a closed map of differential pointed sets, we define the
cone $\cone(f)$ of $f$ as the pointed set $S\vee S'$ with differential on
$\BF_2[S\vee S']=\BF_2[S]\oplus \BF_2[S']$ given by 
$\left(\begin{matrix}d_{\BF_2[S]} & 0 \\f & d_{\BF_2[S']}\end{matrix}\right)$.

\smallskip
We define a $\CV$-module to be a differential pointed functor (\ie, a functor
enriched in $Z(\mathrm{diff})$) $\CV\to \mathrm{diff}$.
We denote by $\CV\mdiff$ the category of $\CV$-modules.

Given $f:v_1\to v_2$ a closed map in $\CV$, we define 
$\cone(f)=\cone(\Hom_{\CV}(f,-))\in\CV\mdiff$.

\medskip
Let $M$ be a $\CV^\opp$-module and $N$ a $\CV$-module. We define the differential pointed
set $M\wedge_\CV N$ as the coequalizer of
$$\xymatrix{
\bigvee_{f\in\Hom_{\CV}(v_1,v_2)}(M(v_2)\wedge N(v_1))
\ar@<0.5ex>[rrr]^-{a\wedge b\mapsto M(f)(a)\wedge b}
\ar@<-0.5ex>[rrr]_-{a\wedge b\mapsto a\wedge N(f)(b)}
&&&
\bigvee_{v\in\CV}(M(v)\wedge N(v)).
}$$

Given $\CV'$ a differential pointed category, we define a
$(\CV,\CV')$-bimodule to be a differential pointed functor
$\CV\bigwedge\CV^{\prime\opp}\to\mathrm{diff}$.

Given $\CV''$ a differential pointed category, $N$ a $(\CV,\CV')$-bimodule and
$M$ a $(\CV',\CV'')$-bimodule, then $N\wedge_{\CV'}M$ is a $(\CV,\CV'')$-bimodule.
This gives rise to a $2$-category $\mathrm{Bimod}^\bullet$\indexnot{Bimod}{\mathrm{Bimod}^\bullet}
of differential pointed categories and bimodules, with
a $2$-fully faithful functor to the $2$-category of differential pointed categories and
a $2$-faithful functor $\BF_2[-]$ to the $2$-category $\mathrm{Bimod}$.


\medskip
Let $M$ be a $(\CV,\CV)$-bimodule. We define a differential pointed category 
$T_{\CV}(M)$\indexnot{TVM}{T_{\CV}(M)}. Its objects are those of $\CV$ and
$$\Hom_{T_{\CV}(M)}(v_1,v_2)=\bigvee_{i\ge 0}
M^i(v_1,v_2).$$

\subsubsection{Pointed structures as $\BF_2$-structures with a basis}

Let us reformulate the definitions of the previous sections in terms of $\BF_2$-vector
spaces with a basis.

The functor $\BF_2[-]$ gives an equivalence from the category of pointed sets to
the category with objects $\BF_2$-vector spaces with a basis and where maps
are $\BF_2$-linear maps sending a basis element to a basis element or $0$.

Under this equivalence, we have the following correspondences:
\begin{itemize}
	\item a coproduct of pointed spaces corresponds to a direct sum with basis the union
		of bases
	\item a wedge product of pointed spaces corresponds to a tensor product with basis
		the product of bases
	\item a $G$-graded pointed set corresponds to a $G$-graded $\BF_2$-vector space
		with a basis consisting of homogeneous elements
	\item a $G$-filtered pointed set corresponds to a $G$-filtered $\BF_2$-vector space
		$V$, ie a family $\{V_{\ge g}\}_{g\in G}$ of subspaces of $V$ with
		$V_{\ge g}\subset V_{\ge g'}$ if $g>g'$,
		with a basis $B$ such that $B\cap V_{\ge g}$ is a basis of $V_{\ge g}$
		for all $g\in G$ and such that given $v\in V\setminus\{0\}$, the set
		$\{g\in G\ |\ V_{\ge g}\neq 0\}$ is non-empty and has a maximal element
	\item a differential pointed set corresponds to 
an $\BF_2$-vector space with a basis together with a bounded differential.
\end{itemize}

\subsection{Symmetric powers}
\label{se:symmetricpowers}
Let $\CC$ be a pointed category. We define a pointed category
$S(\CC)$. Its objects are finite families $I$ of distinct objects of $\CC$.
We put
$$\Hom_{S(\CC)}(I,J)=\bigvee_{\phi}\bigwedge_{i\in I}\Hom_{\CC}(i,\phi(i))$$
where $\phi$ runs over the set of bijections $I\iso J$.

An element of $\Hom_{S(\CC)}(I,J)$ is a pair
$(\phi,f)$ where $\phi:I\iso J$ is a bijection and
$f\in\prod_{i\in I}\Hom_{\CC}(i,\phi(i))$. All pairs with 
$f_i=0$ for some $i$ are identified, and they form the $0$-element of
$\Hom_{S(\CC)}(I,J)$.
The composition is given by
$(\psi,g)\circ (\phi,f)=(\psi\phi,(g_{\phi(i)}\circ f_i)_{i\in I})$.

Given a functor $F:\CC\to\CC'$ of pointed categories that is injective
on the set of objects, we obtain a
functor $S(F):S(\CC)\to S(\CC')$ of pointed categories. If in addition $F$ is
faithful, then $S(F)$ is faithful.

\medskip
Given a commutative ring $k$ and a $k$-linear category $\CD$, we define a
$k$-linear category $S_k(\CD)$.
Its objects are finite families $I$ of distinct objects of $\CD$.
We put 
$$\Hom_{S_k(\CD)}(I,J)=\bigoplus_{\phi:I\iso J}\bigotimes_{i\in I}
\Hom_{\CD}(i,\phi(i)).$$

The composition is defined as in the case of pointed categories above.

Consider a functor $F:\CD\to\CD'$ of $k$-linear categories that is injective on
the set of objects. We obtain a
functor $S_k(F):S_k(\CD)\to S_k(\CD')$ of pointed categories.
If $\Hom$-spaces in $\CD$ and $\CD'$ are flat over $k$ and $F$ is faithful, then
$S_k(F)$ is faithful.

\medskip
Given a pointed category $\CC$, there is an isomorphism of
$k$-linear categories $k[S(\CC)]\iso S_k(k[\CC])$.

\section{Hecke algebras}
\label{se:Hecke}
In this section, we define and study variations of the nil affine Hecke algebra of
$\GL_n$. From \S \ref{se:diffHecke} onwards, all additive structures will be defined
over $k=\BF_2$.

\subsection{Differential graded nil Hecke algebras}
\label{se:diffnilHecke}
We discuss here the case of general Coxeter groups. The results will be used only for
types $A_n$ and $\tilde{A}_n$.

\subsubsection{Coxeter groups}
We refer to \cite[\S 5 and \S 7.1--7.3]{Hu} for basic properties of Coxeter groups and
Hecke algebras.
Recall that a {\em Coxeter group} $(W,S)$ is the data of a group $W$ with a
subset $S\subset W$ such that $W$ has a presentation with generating set $S$ and
relations
$$s^2=1,\ \underbrace{sts\cdots}_{m_{st}\text{ terms}} =
\underbrace{tst\cdots }_{m_{st}\text{ terms}}
\text{ when }st \text{ has order }m_{st}\ \text{ for }s,t\in S.$$

A {\em reduced expression} of an element $w\in W$ is a decomposition
$w=s_{i_1}\cdots s_{i_l}$ such that $s_{i_r}\in S$ for $r=1,\ldots,l$ and
such that $l$ is minimal with this property. The integer $l$ is
the {\em length} $\ell(w)$\indexnot{lw}{\ell(w)} of $w$.

The Chevalley-Bruhat (partial) order on $W$ is defined as follows\indexnot{<}{w<w'}.
Let $w',w\in W$ and let $w=s_{i_1}\cdots s_{i_l}$ be a reduced decomposition.
We say that $w'\le w$ if there is $l'\le l$ and an increasing injection
$f:\{1,\ldots,l'\}\to\{1,\ldots,l\}$ such that $w'=s_{i_{f(1)}}\cdots
s_{i_{f(l')}}$. This is independent of the choice of the reduced decomposition
of $w$.

\subsubsection{Hecke algebras}
\label{se:Heckedef}

Let $R=\BZ[\{a_s,b_s\}_{s\in S}]$ where $a_s$ and $b_s$ are indeterminates with
$a_s=a_{s'}$ and $b_s=b_{s'}$ if $s$ and $s'$ are conjugate in $W$.

The Hecke algebra 
$H=H(W)$ of $(W,S)$
is the $R$-algebra generated by $\{T_s\}_{s\in S}$ with relations
$$T_s^2+a_sT_s+b_s=0,\ \underbrace{T_sT_tT_s\cdots}_{m_{st}\text{ terms}} =
\underbrace{T_tT_sT_t\cdots }_{m_{st}\text{ terms}}
\text{ when }st \text{ has order }m_{st}.$$

Given a reduced decomposition $w=s_{i_1}\cdots s_{i_l}$, we put
$T_w=T_{s_{i_1}}\cdots T_{s_{i_l}}$\indexnot{Tw}{T_w}. This element
is independent of the choice of the
reduced decomposition of $w$. The set $\{T_w\}_{w\in W}$ is a basis of $H$.

\smallskip
Let $\iota:H\iso H^\opp$ be the algebra automorphism defined by $T_s\mapsto T_s$
for $s\in S$.

\medskip
Let $I$ be a subset of $S$. We denote by $W_I$ the subgroup of $W$ generated by
$I$. The group $W_I$, together with $I$, is a Coxeter group and the length function on
$W_I$ is the restriction of that on $W$ \cite[\S 1.10]{Hu}.

We put $R_I=\BZ[\{a_{s,I},b_{s,I}\}_{s\in I}]$ where $a_{s,I}$ and $b_{s,I}$ are
indeterminates with
$a_{s,I}=a_{s',I}$ and $b_{s,I}=b_{s',I}$ if $s$ and $s'$ are conjugate in $W_I$.
There is a morphism of rings $R_I\to R,\ a_{s,I}\mapsto a_s,\ b_{s,I}\mapsto b_s$.

We denote by $H_I=H_I(W)$ the $R$-subalgebra of $H$ generated by $\{T_s\}_{s\in I}$.
There is an isomorphism of $R$-algebras $R\otimes_{R_I}H(W_I)\iso H_I(W),\ T_w\mapsto T_w$.

\medskip
We assume for the remainder of \S\ref{se:Heckedef} that $W$ is finite.
In this case, there is a unique element $w_S$ of $W$ with maximal length
\cite[\S 1.8]{Hu} and we denote by $N$ its length. We have $w_S^2=1$ and $w_S Sw_S=S$.
There is an automorphism of algebras
$$\iota_S:H\iso H,\ T_v\mapsto T_{w_S\cdot v\cdot w_S}.$$

We denote by $w_I$ the longest element of $W_I$ and by $N_I$ its length.
We denote by $W^I$ (resp. ${^IW}$) the set of elements $v\in W$ such that $v$ has minimal
length in $vW_I$ (resp. $W_Iv$). Note that $W^I\iso W/W_I,\ v\mapsto vW_I$
\cite[Proposition 1.10]{Hu}.

\smallskip

\subsubsection{Traces}
\label{se:traces}

We assume in \S\ref{se:traces} that $W$ is finite.

Given $J\subset I$,
we define an $R$-linear map
$$t_{I,J}:H_I\to H_J,\
T_v\mapsto\begin{cases}
	T_{w_J w_I v} & \text{ if } v\in w_I\cdot W_J \\
	0 & \text{ otherwise.}
\end{cases}$$

The next proposition shows this is relative Frobenius form (cf eg \cite[\S 2.3.2]{Rou1}).

\begin{prop}
	\label{pr:trace}
	We have $t_{S,J}=t_{I,J}\circ t_{S,I}$.

	Given $h\in H$ and $x\in W_I$, we have
	$$t_{S,I}(hT_x)=t_{S,I}(h)T_x,\
	t_{S,I}(T_{w_Sw_I\cdot x\cdot w_Iw_S} h)=T_x t_{S,I}(h).$$
	Given $h'\in H$ commuting with $H_I$, we have
	$t_{S,I}(hh')=t_{S,I}(\iota_S(h')h)$.

	There is an isomorphism of $R$-modules
	$$\hat{t}_{S,I}:H\iso \Hom_{H_I^\opp}(H,H_I),\ h\mapsto
	(h'\mapsto t_{S,I}(hh'))$$
	with
	$$\hat{t}_{S,I}(T_{w_Sw_I\cdot x\cdot w_Iw_S} h T_y)=T_x\hat{t}_{S,I}(h)T_y
	\text{ for }x\in W_I \text{ and }y\in W.$$
\end{prop}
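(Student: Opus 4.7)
My plan is to organize the proof around the coset decomposition $W=W^I\cdot W_I$ together with the observation that $w_Sw_I$ is the unique element of $W^I$ lying in the coset $w_SW_I$. Since $\ell(ux)=\ell(u)+\ell(x)$ for $u\in W^I$ and $x\in W_I$ by \cite[Proposition~1.10]{Hu}, we have $T_uT_x=T_{ux}$, so $H=\bigoplus_{u\in W^I}T_uH_I$ is free as a right $H_I$-module. Unwinding the definition of $t_{S,I}$ identifies it with the $H_I$-linear projection onto the $T_{w_Sw_I}$-component: for $h=\sum_u T_uh_u$ with $h_u\in H_I$, we have $t_{S,I}(h)=h_{w_Sw_I}$. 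The first displayed identity (right $H_I$-linearity) is immediate from this characterization. The composition formula $t_{S,J}=t_{I,J}\circ t_{S,I}$ then follows by iterating projections along the finer decomposition $W^J=W^I\cdot W_I^J$, corresponding to the reduced factorization $w_Sw_J=(w_Sw_I)(w_Iw_J)$ of the longest element of $W^J$.

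The main obstacle is the second displayed identity (twisted left-equivariance) $t_{S,I}(T_yh)=T_x\,t_{S,I}(h)$ with $y=w_Sw_Ixw_Iw_S$, which I would prove by induction on $\ell(x)$; the case $\ell(x)=0$ is trivial. A reduced expression $x=sx'$ with $s\in I$ yields a reduced factorization $y=s''y'$ where $s''=w_Sw_Isw_Iw_S\in S$ and $y'=w_Sw_Ix'w_Iw_S$, since conjugation by $w_S$ on $W$ and by $w_I$ on $W_I$ are length-preserving. This reduces the inductive step to the base case $x=s\in I$, which by right $H_I$-linearity further reduces to $h=T_u$ with $u\in W^I$. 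When $u=w_Sw_I$, length-additivity gives $T_{s''}T_{w_Sw_I}=T_{w_Sw_Is}=T_{w_Sw_I}T_s$, so both sides equal $T_s$. When $u\neq w_Sw_I$, the crucial combinatorial fact
\[
s''u\in w_SW_I \iff uW_I=s''w_SW_I=w_SW_I \iff u=w_Sw_I
\]
(using $s''w_S=w_Sw_Isw_I$ and $w_Isw_I\in W_I$) forces a contradiction, so $t_{S,I}$ annihilates every $T_v$-summand of $T_{s''}T_u$ --- including the $-a_{s''}T_u$ term produced by the quadratic relation when $\ell(s''u)<\ell(u)$.

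For the Nakayama-style symmetry $t_{S,I}(hh')=t_{S,I}(\iota_S(h')h)$ when $h'$ commutes with $H_I$, I would invoke the standard identity $\tau(ab)=\tau(\iota_S(b)a)$ for the Frobenius form $\tau:=t_{S,\emptyset}$ on $H$ (cf.\ \cite[\S 2.3.2]{Rou1}), combined with the composition $\tau=t_{I,\emptyset}\circ t_{S,I}$ and the twisted left-equivariance just proved. By nondegeneracy of $t_{I,\emptyset}$ on $H_I$, it suffices to verify $t_{I,\emptyset}(T_z\,t_{S,I}(hh'))=t_{I,\emptyset}(T_z\,t_{S,I}(\iota_S(h')h))$ for every $z\in W_I$. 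Twisted left-equivariance rewrites both sides as $\tau$-pairings $\tau(\alpha(T_z)hh')$ and $\tau(\alpha(T_z)\iota_S(h')h)$ with $\alpha(T_z):=T_{w_Sw_Izw_Iw_S}=\iota_S(T_{w_Izw_I})$; their difference equals $\tau([\iota_S(h'),\alpha(T_z)]h)$ after applying $\tau$-symmetry, and this vanishes since $\iota_S(h')$ commutes with $\iota_S(H_I)\ni\alpha(T_z)$ by the hypothesis. The isomorphism $\hat{t}_{S,I}$ is well-defined and satisfies the stated equivariance formally from the preceding steps; both source and target are free $R$-modules of rank $|W|$, injectivity follows by taking $z=1$ above (yielding $\tau(hh')=0$ for all $h'$, whence $h=0$ by nondegeneracy of $\tau$), and bijectivity then follows from the Frobenius-extension structure of $H$ over $H_I$.
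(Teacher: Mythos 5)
Your overall strategy closely parallels the paper's proof. Identifying $t_{S,I}$ with the $H_I$-linear projection onto the $T_{w_Sw_I}$-summand of $H=\bigoplus_{u\in W^I}T_uH_I$ gives right $H_I$-linearity and the composition $t_{S,J}=t_{I,J}\circ t_{S,I}$ the same way the paper does (via the reduced factorization $w_Sw_J=(w_Sw_I)(w_Iw_J)$). Your induction on $\ell(x)$ for the twisted left-equivariance --- reducing to $x=s\in I$ and $h=T_u$ with $u\in W^I$ and using the coset observation $s''u\in w_SW_I\Leftrightarrow u=w_Sw_I$ to kill both $T$-summands arising from the quadratic relation when $\ell(s''u)<\ell(u)$ --- is a clean variant of the paper's direct argument, which instead kills all summands of $T_{x'}T_{v'}$ for $v'\neq w^I$ by a Bruhat-order analysis. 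Your derivation of the Nakayama symmetry $t_{S,I}(hh')=t_{S,I}(\iota_S(h')h)$ is essentially the paper's, except that you invoke $\tau(ab)=\tau(\iota_S(b)a)$ as a standard fact while the paper proves it inline by an explicit induction; the paper's citation of \cite[\S 2.3.2]{Rou1} is for the notion of relative Frobenius form, not for that specific identity, so you should either give a precise reference for the identity or reproduce the short inductive argument.

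The genuine gap is the final step. After proving injectivity of $\hat{t}_{S,I}$ you assert that bijectivity ``follows from the Frobenius-extension structure of $H$ over $H_I$,'' which is circular: that Frobenius-extension structure is exactly what the isomorphism $\hat{t}_{S,I}$ is supposed to establish. Moreover, over $R=\BZ[\{a_s,b_s\}_{s\in S}]$ an injective $R$-linear map between free modules of the same finite rank need \emph{not} be surjective (multiplication by $a_s$ already fails), so ``injective plus equal ranks'' cannot by itself yield an isomorphism. The paper closes the gap by proving surjectivity directly: the elements $\hat{t}_{S,I}(T_{w^Iv_0^{-1}})$, $v_0\in W^I$, are unitriangular with respect to the dual basis of $\{T_v\}_{v\in W^I}$, hence generate $\Hom_{H_I^\opp}(H,H_I)$ as a left $H_I$-module, and the image of $\hat{t}_{S,I}$ is a left $H_I$-submodule by the twisted equivariance; a surjective $R$-linear map between free $R$-modules of the same finite rank is then automatically bijective. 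A route closer to your argument would be to observe that $\hat{\tau}=\psi\circ\hat{t}_{S,I}$, where $\psi\colon\Hom_{H_I^\opp}(H,H_I)\to\Hom_R(H,R)$ is postcomposition with $t_{I,\emptyset}$; since $\tau$ is a perfect $R$-bilinear form $\hat{\tau}$ is an isomorphism, and since $t_{I,\emptyset}$ is perfect and $H$ is $H_I$-free $\psi$ is an isomorphism, whence so is $\hat{t}_{S,I}$.
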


\begin{proof}

	Define $w^I=w_Sw_I$ and ${^Iw}=w_Iw_S$, so that ${^Iw}\cdot w^I=1$.
	We have $w^I\in W^I$.

	Let $v\in W$. There is a unique decomposition $v=v'v''$ where 
	$\ell(v)=\ell(v')+\ell(v'')$, $v''\in W_I$ and
	$v'\in W^I$ \cite[Proposition 1.10]{Hu}. Furthermore,
	$\ell(v')<\ell(w^I)$ unless $v'=w^I$. We have $t_{S,I}(T_v)=\delta_{v',w^I}T_{v''}$.

	\smallskip
	There is a unique decomposition $v''=v_1v_2$ with 
	$\ell(v'')=\ell(v_1)+\ell(v_2)$, $v_2\in W_J$ and $v_1$ has minimal
	length in $v''W_J$. We have $v=(v'v_1)v_2$ where $\ell(v)=\ell(v'v_1)+\ell(v_2)$
	and $v'v_1$ has minimal length in $vW_J$. Furthermore,
	$v'v_1=w^J$ if and only if $v'=w^I$ and $v_1=w_Iw_J$.
	It follows that 
	$$t_{I,J}\circ t_{S,I}(T_v)=\delta_{v',w^I}t_{I,J}(T_{v''})=
	\delta_{v',w^I}\delta_{v_1,w_Iw_J}T_{v_2}=t_{S,J}(T_v).$$
	This shows the first statement of the lemma.

	\smallskip
	We have $T_{v''}T_x\in H_I$, hence
	$$t_{S,I}(T_vT_x)=t_{S,I}(T_{v'}(T_{v''}T_x))=\delta_{v',w^I}
	T_{v''}T_x=t_{S,I}(T_v)T_x.$$
	This shows the second statement of the lemma.

	\smallskip
	Let $x'=w^I\cdot x\cdot {^Iw}$. We have $\ell(w^I\cdot x\cdot {^Iw})=\ell(x)$.
	Since $T_{x'}T_v$ is a linear combination of elements $T_{yz}$ with
	$y\le x'$ and $z\le v$, it follows that if 
	$v'\neq w^I$, then $T_{x'}T_{v'}$ is a linear combination of elements
	$T_{w^I\cdot y\cdot {^Iw}z}$ with $y\in W_I$ and $z{\not\in}w^IW_I$, hence
	of elements $T_u$ with $u{\not\in} w^I W_I$. So, if $v'\neq w^I$, then
	$t_{S,I}(T_{x'}T_v)=0$.

	Assume now $v'=w^I$. We have $T_{x'}T_v=T_{w^I\cdot x\cdot {^Iw}}T_{w^I}T_{v''}=
	T_{w^I\cdot x}T_{v''}=T_{w^I}T_xT_{v''}$ because 
	$\ell(x'\cdot w^I)=\ell(w^I\cdot x)=\ell(w^I)+\ell(x)=\ell(x')+\ell(w^I)$. We deduce
	that $t_{S,I}(T_{x'}T_v)=T_xT_{v''}=T_x t_{S,I}(T_{v})$.
	This shows the third statement of the lemma.

	\smallskip
	Let $v_0\in W^I$.  We have $\ell(w^I)=\ell(w^I v_0^{-1})+\ell(v_0)$.
	Let $v\in W^I$. Note that
	$T_{w^Iv_0^{-1}}T_v=T_{w^Iv_0^{-1}v}$ or
	$T_{w^Iv_0^{-1}}T_v$ is a linear combination of $T_w$'s with
	$\ell(w)<\ell(w^Iv_0^{-1})+\ell(v)$.
	It follows that if $t_{S,I}(T_{w^I v_0^{-1}}T_v)=0$ if $\ell(v)<\ell(v_0)$
	or $\ell(v)=\ell(v_0)$ and $v\neq v_0$. We have also 
	$t_{S,I}(T_{w^I v_0^{-1}}T_{v_0})=1$.

	Since $H$ is a free right $H_I$-module with basis $\{T_v\}_{v\in W^I}$,
	we deduce that $\hat{t}_{S,I}$ is surjective.
	Since $\hat{t}_{S,I}$ is an $R$-module morphism between free $R$-modules
	of the same finite rank, it follows that it is an isomorphism.
	This shows the fifth statement of the lemma.

	\smallskip
	Let $s\in S$ and $v\in W$. Let $s'=w_S\cdot s\cdot w_S\in S$.
	If $v{\not\in}\{w_S,w_S\cdot s\}$, then $t_{S,\emptyset}(T_vT_s)=0$ and
	$t_{S,\emptyset}(T_{s'}T_v)=0$. 
	If $v=w_S\cdot s$, then $T_vT_s=T_{w_S}=T_{s'}T_v$.
	If $v=w_S$, then $t_{S,\emptyset}(T_vT_s)=a_s=t_{S,\emptyset}(T_{s'}T_v)$.
	So, we have shown that $t_{S,\emptyset}(T_vT_s)=t_{S,\emptyset}(T_{s'}T_v)$.
	It follows by induction on $\ell(w)$ that
	$t_{S,\emptyset}(T_vT_w)=t_{S,\emptyset}(T_{w_S\cdot w\cdot w_S}T_v)$ for all
	$w\in W$.
	
	Consider now $h'\in H$ commuting with $H_I$. Let $h''\in H_I$.
	We have 
\begin{multline*}
t_{I,\emptyset}(t_{S,I}(hh')h'')=t_{I,\emptyset}(t_{S,I}(hh'h''))=
t_{S,\emptyset}(hh''h')=t_{S,\emptyset}(\iota_S(h')hh'')= \\
=t_{I,\emptyset}(t_{S,I}(\iota_S(h')hh''))=t_{I,\emptyset}(t_{S,I}(\iota_S(h')h)h'').
\end{multline*}
It follows that $\hat{t}_{I,\emptyset}(t_{S,I}(hh'))=\hat{t}_{I,\emptyset}(
t_{S,I}(\iota_S(h')h))$, hence $t_{S,I}(hh')=t_{S,I}(\iota_S(h')h)$. This completes the proof
of the lemma.
\end{proof}

We put $t^+_{I,J}=t_{I,J}$.
We define an $R$-linear map
$$t^-_{I,J}:H_I\to H_J,\
T_v\mapsto\begin{cases}
	T_{vw_Iw_J} & \text{ if } v\in W_J\cdot w_I \\
	0 & \text{ otherwise.}
\end{cases}$$

 We have $t^-_{I,J}(h)=\iota(t^+_{I,J}(\iota(h)))$.

We put $\hat{t}^+_{S,I}=\hat{t}_{S,I}$. We have an isomorphism of $R$-modules
	$$\hat{t}^-_{S,I}:H\iso \Hom_{H_I^\opp}(H,H_I),\ h\mapsto
	(h'\mapsto t^-_{S,I}(hh'))$$
	with
	$$\hat{t}_{S,I}^-(T_x h T_y)=T_x\hat{t}_{S,I}^-(h)T_y
	\text{ for }x\in W_I \text{ and }y\in W.$$

\medskip
Consider $I,J\subset S$ with $I\subset J$ or $J\subset I$.
We define an $(H_I,H_J)$-bimodule $L^\pm(I,J)$ with underlying $R$-module $H$.
We put $a=0$ if $\pm=+$ and $a=1$ if $\pm=-$.

If $I\subset J$, then the right action of $H_J$ is by right multiplication and the left
action of $h\in H_I$ is by left multiplication by $(\iota_J\iota_I)^a(h)$.

If $J\subset I$, then the left action of $H_I$ is by left multiplication and the right
action of $h\in H_J$ is by right multiplication by $(\iota_I\iota_J)^a(h)$.

Note that $L^\pm(I,J)$ is free of finite rank as a left module and as a right module.

\smallskip

There is an isomorphism of $(H,H_I)$-bimodules
$$L^\pm(I,S)^\vee=\Hom_{H^\opp}(L^\pm(I,S),H)\iso L^\pm(S,I),\ \zeta\mapsto \zeta(1).$$

\smallskip
The next result follows immediately from Proposition \ref{pr:trace}.

\begin{cor}
	\label{co:dual}
The map $\hat{t}^\pm_{S,I}$ is an isomorphism of $(H_I,H)$-bimodules
	$$L^\mp(I,S)\iso L^\pm(S,I)^\vee=\Hom_{H_I^\opp}(L^\pm(S,I),H_I).$$
\end{cor}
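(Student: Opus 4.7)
The plan is that Proposition~\ref{pr:trace} already does the hard work: it supplies the $R$-linear bijection $\hat{t}^\pm_{S,I}$ together with explicit equivariance formulas. All that remains is a direct verification that, after identifying source and target with the bimodules named in the corollary, those formulas encode $(H_I,H)$-bimodule compatibility.

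First I would spell out the bimodule structures. The target $L^\pm(S,I)^\vee = \Hom_{H_I^\opp}(L^\pm(S,I),H_I)$ carries left $H_I$-action $(x \cdot \zeta)(m) = x\zeta(m)$ and right $H$-action $(\zeta \cdot h)(m) = \zeta(hm)$, the latter coming from the (standard, in both $\pm$ cases) left $H$-action on $L^\pm(S,I)=H$. The source $L^\mp(I,S)$ is $H$ with right $H$-action by right multiplication; its left $H_I$-action is left multiplication by $(\iota_S\iota_I)(T_x)=T_{w_Sw_Ixw_Iw_S}$ in the $+$ case of the corollary's map (since $L^-(I,S)$ uses $a=1$) and by $T_x$ in the $-$ case (since $L^+(I,S)$ uses $a=0$).

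Next I would check that $\hat{t}^\pm_{S,I}(h)$ actually lies in $\Hom_{H_I^\opp}(L^\pm(S,I),H_I)$, i.e.\ is right $H_I$-linear for the correct twist on $L^\pm(S,I)$. In the $+$ case, $L^+(S,I)$ has the standard right $H_I$-action and the first displayed identity of Proposition~\ref{pr:trace}, $t^+_{S,I}(h'x)=t^+_{S,I}(h')x$ for $x\in H_I$, settles it. In the $-$ case, one must check $t^-_{S,I}(u\cdot(\iota_S\iota_I)(T_v))=t^-_{S,I}(u)\,T_v$ for $v\in W_I$; this is extracted from $t^-_{S,I}=\iota\circ t^+_{S,I}\circ\iota$ by applying $\iota$ (which sends $T_{w_Sw_Ivw_Iw_S}$ to $T_{w_Sw_Iv^{-1}w_Iw_S}$) and invoking the left-equivariance of $t^+_{S,I}$ from Proposition~\ref{pr:trace}. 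This right-twist identity for $t^-_{S,I}$ is the only step not stated verbatim in Proposition~\ref{pr:trace}, and it is the single genuine obstacle.

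The remaining right $H$- and left $H_I$-equivariances of $\hat{t}^\pm_{S,I}$ are then immediate. For the right $H$-action, associativity gives $\hat{t}^\pm_{S,I}(hT_y)(h')=t^\pm_{S,I}(hT_yh')=\hat{t}^\pm_{S,I}(h)(T_yh')$, which matches the induced right $H$-action on the dual. For the left $H_I$-action, in the $+$ case the formula $\hat{t}^+_{S,I}(T_{w_Sw_Ixw_Iw_S}\,h)=T_x\hat{t}^+_{S,I}(h)$ from Proposition~\ref{pr:trace} (with $y=1$) precisely intertwines the twisted left $H_I$-action on $L^-(I,S)$ with the standard one on $L^+(S,I)^\vee$; in the $-$ case the parallel formula $\hat{t}^-_{S,I}(T_x h)=T_x\hat{t}^-_{S,I}(h)$ gives the equivariance directly. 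Combined with the $R$-linear isomorphism already supplied by Proposition~\ref{pr:trace}, this completes the proof.
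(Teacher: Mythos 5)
Your proof is correct and in the spirit of the paper's treatment, which gives no argument beyond ``follows immediately from Proposition~\ref{pr:trace}.'' Your identification of $t^-_{S,I}\bigl(u\,(\iota_S\iota_I)(T_v)\bigr)=t^-_{S,I}(u)\,T_v$ (for $v\in W_I$) as the one identity not stated verbatim in the proposition, and its derivation from $t^-_{S,I}=\iota\circ t^+_{S,I}\circ\iota$ together with the left equivariance $t^+_{S,I}(T_{w_Sw_Ixw_Iw_S}h)=T_x t^+_{S,I}(h)$, is accurate; the remaining checks are, as you say, a direct reading of the equivariance formulas of Proposition~\ref{pr:trace} and the remark following it against the $(\iota_S\iota_I)^a$-twists built into the definition of $L^\pm(I,S)$ and $L^\pm(S,I)$.
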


The results above can be formulated in terms of dual bases.
Note that $\{T_w\}_{w\in W^I}$ is a basis of the free right $H_I$-module $H$, while
$\{T_w\}_{w\in {^IW}}$ is a basis of the free left $H_I$-module $H$.

We have
$$t_{S,I}^+(T_{w_Sw_Iv^{-1}}T_w)=\delta_{v,w} \text{ and }
t_{S,I}^-(T_{v'}T_{w^{\prime -1}w_Iw_S})=\delta_{v',w'} \text{ for }v,w\in W^I
\text{ and }v',w'\in {^IW}.$$
We deduce that the basis $(T_{w_Sw_Iw^{-1}})_{w\in W^I}$ when $\pm=+$ (resp.
$(T_w)_{w\in {^IW}}$ when $\pm=-$)
of the free left $H_I$-module $L^\mp(I,S)$ is dual
to the basis $(T_w)_{w\in W^I}$ when $\pm=+$ (resp. $(T_{w^{-1}w_Iw_S})_{w\in {^IW}}$ when
$\pm=-$)
of the free right $H_I$-module $L^\pm(S,I)$, via the pairing providing the
isomorphism of Corollary \ref{co:dual}.

\smallskip
The counit of the adjoint pair 
$(L^{\mp}(I,S)\otimes_{H}-,L^\pm(S,I)\otimes_{H_I}-)$
is given by the morphism of $(H_I,H_I)$-bimodules
$$L^\mp(I,S)\otimes_{H}L^\pm(S,I)\to H_I,\ a\otimes b\mapsto t^\pm_{S,I}(ab)$$
while the unit is given by the morphism of $(H,H)$-bimodules
$$H\to L^\pm(S,I)\otimes_{H_I}L^\mp(I,S),\ 1\mapsto
\begin{cases}
	\sum_{w\in W^I}T_w\otimes T_{w_Sw_Iw^{-1}} & \text{ if }\pm=+ \\
	\sum_{w\in {^IW}}T_{w^{-1}w_Iw_S}\otimes T_w & \text{ if }\pm=-.
\end{cases}$$

\subsubsection{Nil Hecke algebras}
We define the {\em nil Hecke algebra}\index[ter]{nil Hecke algebra}
$H_\BZ^{\mathrm{nil}}(W)$ of $(W,S)$
as the $\BZ$-algebra $H(W)\otimes_R R/(a_s,b_s)_{s\in S}$. This
is the $\BZ$-algebra generated by $\{T_s\}_{s\in S}$ with relations
$$T_s^2=0,\ \underbrace{T_sT_tT_s\cdots}_{m_{st}\text{ terms}} =
\underbrace{T_tT_sT_t\cdots }_{m_{st}\text{ terms}}
\text{ when }st \text{ has order }m_{st}.$$
This is a $\BZ_{\le 0}$-graded algebra with $T_w$ in degree $-\ell(w)$ for $w\in W$.

The multiplication is given as follows:
\begin{equation}
	\label{eq:multnilHecke}
T_w T_{w'}=\begin{cases}
	T_{ww'} & \text{ if }\ell(ww')=\ell(w)+\ell(w') \\
	0 & \text{ otherwise.}
\end{cases}
\end{equation}

Consider the filtration of the group algebra $\BZ[W]$ where 
$\BZ[W]^{\ge -i}$ is spanned by group elements $w\in W$ with 
$\ell(w)\le i$, for $i\in\BZ_{\ge 0}$.
The associated $\BZ_{\le 0}$-graded algebra is $H_\BZ^{\mathrm{nil}}(W)$ and 
$T_w$ is the image of $w\in W$ in the degree $-\ell(w)$ homogeneous component of
$H_\BZ^{\mathrm{nil}}(W)$.

\subsubsection{Differential}
\label{se:diffHecke}
Let $H^{\mathrm{nil}}(W)=\BF_2\otimes H_\BZ^{\mathrm{nil}}(W)$.
We define a linear map $d:H^{\mathrm{nil}}(W)\to H^{\mathrm{nil}}(W)$
by 
$$d(T_w)=\sum_{w'<w,\ \ell(w')=\ell(w)-1}T_{w'}\indexnot{dT}{d(T_w)}.$$

\begin{prop}
	\label{pr:diffnilHecke}
	The map $d$ defines a structure of differential graded algebra on 
	$H^{\mathrm{nil}}(W)$.
\end{prop}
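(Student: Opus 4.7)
The plan is to reduce the verification to the construction of a single derivation $d'$ of $H^{\mathrm{nil}}(W)$ with $d'(T_s)=1$ for all $s\in S$. The proposition has three components: $d$ has degree $+1$, $d$ satisfies the Leibniz rule, and $d^2=0$. The degree statement is immediate from the definition ($T_s$ sits in degree $-1$ and $1=T_e$ in degree $0$). The identity $d^2=0$ will then follow from Leibniz via the standard fact that $d^2$ is itself a derivation (in characteristic $2$ the cross-term $d(x)d(y)$ appears twice in $d^2(xy)$ and cancels) together with the elementary computation $d^2(T_s)=d(1)=0$. The substantive point is therefore Leibniz, which I plan to obtain by identifying $d$ with the unique derivation $d'$ of $H^{\mathrm{nil}}(W)$ satisfying $d'(T_s)=1$.

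I would construct $d'$ by defining it via $d'(T_s)=1$ and Leibniz on the free associative algebra $\BF_2\langle T_s\mid s\in S\rangle$, then checking that it sends the two-sided ideal of defining relations into itself. For the nil relation this is immediate: $d'(T_s^2)=2T_s=0$ in characteristic $2$. For a braid relation $\underbrace{T_sT_tT_s\cdots}_{m_{st}}=\underbrace{T_tT_sT_t\cdots}_{m_{st}}$, Leibniz produces on each side a sum of $m_{st}$ one-letter deletions. The extremal deletions at positions $1$ and $m_{st}$ together contribute $\underbrace{T_tT_sT_t\cdots}_{m_{st}-1}+\underbrace{T_sT_tT_s\cdots}_{m_{st}-1}$, which is the same on both sides of the braid relation, whereas each interior deletion makes two equal generators adjacent and so produces a word containing $T_s^2$ or $T_t^2$, landing in the ideal generated by the nil relations. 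This secures the descent of $d'$ to $H^{\mathrm{nil}}(W)$.

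To match $d'$ with $d$, given a reduced expression $w=s_{i_1}\cdots s_{i_k}$ iterated Leibniz gives
\[
  d'(T_w)=\sum_{j=1}^k T_{s_{i_1}}\cdots \widehat{T_{s_{i_j}}}\cdots T_{s_{i_k}}.
\]
By the multiplication rule (\ref{eq:multnilHecke}), each summand is $T_v$ when $s_{i_1}\cdots\hat{s}_{i_j}\cdots s_{i_k}$ is itself a reduced expression (with product $v$) and $0$ otherwise. The strong exchange property \cite[Theorem 5.8]{Hu} puts the positions $j$ yielding a reduced deletion in bijection with the covers $v\lessdot w$ of $w$ in Bruhat order, each cover arising exactly once, so $d'(T_w)=\sum_{v\lessdot w}T_v=d(T_w)$.

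The main obstacle is the braid-relation descent in the second step: the cancellation of interior deletions is delicate and requires characteristic $2$, since only then does the squared generator created by each interior deletion lie in the nil ideal. Once this step is established, the identification of $d'$ with $d$ via strong exchange, and the conclusion $d^2=0$ via the derivation-of-a-derivation argument, are formal.
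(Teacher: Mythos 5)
Your proof is correct and takes a genuinely different route from the paper's. The paper verifies the Leibniz rule directly, first for $d(T_wT_s)$ with $ws>w$ by invoking the Bruhat-order lemma from \cite[Theorem 5.10]{Hu} (which identifies the length-$\ell(w)$ elements below $ws$ as $\{w\}$ together with the $s$-translates of the length-$(\ell(w)-1)$ elements below $w$), then extends to $vs<v$ and to general products by induction, and finally deduces $d^2=0$ by induction from $d^2(T_s)=0$. You instead build the map top-down as the unique derivation $d'$ on the free algebra with $d'(T_s)=1$, check it preserves the defining ideal (nil relation via characteristic $2$, braid relation via the cancellation of extremal deletions and the vanishing of interior deletions in the nil ideal), and then identify $d'$ with $d$ via strong exchange. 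The descent calculation is clean and arguably more conceptual than the paper's case analysis, and your $d^2=0$ argument (that $d^2$ is itself a derivation killing the generators) is the same as the paper's inductive one, just phrased more structurally.

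One point you should spell out rather than assert: the claim that "each cover arises exactly once" in the sum $\sum_j T_{s_{i_1}}\cdots\widehat{T_{s_{i_j}}}\cdots T_{s_{i_k}}$. Strong exchange gives surjectivity onto the set of covers, but in characteristic $2$ you also need to rule out a cover appearing from two distinct positions $j<j'$ (which would cancel). The elementary argument is: if $s_{i_1}\cdots\hat s_{i_j}\cdots s_{i_k}$ and $s_{i_1}\cdots\hat s_{i_{j'}}\cdots s_{i_k}$ are both reduced and equal as group elements, then $s_{i_{j+1}}\cdots s_{i_{j'}}=s_{i_j}\cdots s_{i_{j'-1}}$, whence $s_{i_j}s_{i_{j+1}}\cdots s_{i_{j'}}$ is not reduced, contradicting that $w=s_{i_1}\cdots s_{i_k}$ was reduced. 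This is exactly the third statement of the paper's Corollary \ref{co:comparisond}, which is proved there by the same independent argument, so there is no circularity — but your proposal needs it as an ingredient, not a consequence.
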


\begin{proof}
	Let $w\in W$ and $s\in S$ with $ws>w$.
	We have
	$d(T_wT_s)=d(T_{ws})=\sum_{w'<ws,\ \ell(w')=\ell(w)}T_{w'}$. We have
	\cite[Theorem 5.10]{Hu}
	$$\{w'\in W\ | w'<ws,\ \ell(w')=\ell(w)\}=\{w''s\ |\ w''<w,\ w''<w''s,\ \ell(w'')
	=\ell(w)-1\}\sqcup \{w\}.$$
	It follows that $d(T_wT_s)=d(T_w)T_s+T_w=d(T_w)T_s+T_wd(T_s)$.

	\smallskip
	Consider now $v\in W$ and $s\in S$ with $vs<v$.
	We have $d(T_v)=d(T_{vs}T_s)=d(T_{vs})T_s+T_{vs}$ by the result above.
	It follows that $d(T_v)T_s+T_vd(T_s)=T_{vs}T_s+T_v=0=d(T_vT_s)$.

	We deduce that $d(T_wT_{w'})=d(T_w)T_{w'}+T_wd(T_{w'})$ for all
	$w,w'\in W$.

	Since $d^2(T_s)=0$ for $s\in S$, it follows that by induction that $d^2=0$.
\end{proof}

The following corollary shows that the computation of $d(T_w)$ can be done
using the Leibniz rule, given a reduced decomposition of $w$. The terms that do not
vanish are exactly the terms given in the original definition of $d(T_w)$.

\begin{cor}
	\label{co:comparisond}
Let $w=s_{i_1}\cdots s_{i_l}$ be a reduced expression of $w\in W$.
We have
$$d(T_w)=\sum_{r=1}^l T_{i_1}\cdots T_{i_{r-1}}T_{i_{r+1}}T_{i_l}.$$
We have $T_{i_1}\cdots T_{i_{r-1}}T_{i_{r+1}}T_{i_l}\neq
0$ if and only if $s_{i_1}\cdots s_{i_{r-1}}s_{i_{r+1}}\cdots s_{i_l}$ is reduced,
\ie, if and only if $\ell(s_{i_1}\cdots s_{i_{r-1}}s_{i_{r+1}}\cdots s_{i_l})=\ell(w)-1$.

Given $r,r'$ with $s_{i_1}\cdots s_{i_{r-1}}s_{i_{r+1}}\cdots s_{i_l}=
s_{i_1}\cdots s_{i_{r'-1}}s_{i_{r'+1}}\cdots s_{i_l}$ reduced, we have
$r=r'$.
\end{cor}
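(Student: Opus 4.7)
The plan is to establish the three assertions in order. For the Leibniz-style expansion, since $w=s_{i_1}\cdots s_{i_l}$ is reduced, the multiplication rule \eqref{eq:multnilHecke} gives $T_w=T_{s_{i_1}}T_{s_{i_2}}\cdots T_{s_{i_l}}$, and from the definition of $d$ we have $d(T_s)=T_1=1$ for every $s\in S$ (the only element $w'<s$ of length $\ell(s)-1=0$ is $1$). Applying Proposition \ref{pr:diffnilHecke}, which says that $d$ is a derivation, to this product by induction on $l$ yields
\[
d(T_w)=\sum_{r=1}^l T_{i_1}\cdots T_{i_{r-1}}\,d(T_{i_r})\,T_{i_{r+1}}\cdots T_{i_l}=\sum_{r=1}^l T_{i_1}\cdots T_{i_{r-1}}T_{i_{r+1}}\cdots T_{i_l}.
\]

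For the vanishing criterion, I would appeal directly to \eqref{eq:multnilHecke}: a product of simple generators $T_{j_1}T_{j_2}\cdots T_{j_m}$ equals $T_{s_{j_1}\cdots s_{j_m}}$ when the expression $s_{j_1}\cdots s_{j_m}$ is reduced, and equals $0$ otherwise. Applied to $j=(i_1,\dots,\hat{i}_r,\dots,i_l)$, this gives exactly the stated equivalence: the $r$-th summand is non-zero iff $s_{i_1}\cdots\hat{s}_{i_r}\cdots s_{i_l}$ is reduced, iff its length equals $\ell(w)-1$. For the uniqueness statement, I would invoke the strong exchange condition for Coxeter groups (see e.g.\ \cite[Theorem 5.8]{Hu}): given a reduced expression $w=s_{i_1}\cdots s_{i_l}$ and a reflection $t$ with $tw<w$, there is a \emph{unique} index $r$ such that $tw=s_{i_1}\cdots\hat{s}_{i_r}\cdots s_{i_l}$. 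Setting $t_r=(s_{i_1}\cdots s_{i_{r-1}})s_{i_r}(s_{i_1}\cdots s_{i_{r-1}})^{-1}$, a direct computation shows $s_{i_1}\cdots\hat{s}_{i_r}\cdots s_{i_l}=t_r w$, so an equality of reduced deletions forces $t_r=t_{r'}$, and the uniqueness clause of strong exchange then gives $r=r'$.

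The main obstacle is the third assertion: it is not formal, and in particular it cannot be extracted from the identity between the two expressions for $d(T_w)$ by a counting argument, since we work over $\BF_2$ and parity-counting only yields an "odd number of deletions" statement rather than uniqueness. An honest appeal to the strong exchange condition is therefore needed; the first two parts are essentially bookkeeping on top of Proposition \ref{pr:diffnilHecke} and \eqref{eq:multnilHecke}.
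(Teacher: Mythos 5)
Your first two parts match the paper exactly: the Leibniz expansion is Proposition~\ref{pr:diffnilHecke} applied to the reduced factorization, and the vanishing criterion is immediate from~(\ref{eq:multnilHecke}). Your remark that uniqueness cannot be recovered from the $\BF_2$-identity between the two formulas for $d(T_w)$ by parity counting is also a correct and worthwhile observation.

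For the third assertion you take a genuinely different route from the paper. You identify the deletion of $s_{i_r}$ with left multiplication by the reflection $t_r=(s_{i_1}\cdots s_{i_{r-1}})\,s_{i_r}\,(s_{i_1}\cdots s_{i_{r-1}})^{-1}$, and then invoke the classical fact that for a reduced expression the reflections $t_1,\ldots,t_l$ are pairwise distinct. That is correct. Be aware, though, that the distinctness of the $t_j$ is usually stated as a separate lemma (in Humphreys it is Lemma~5.6, not part of the statement of Theorem~5.8, which only asserts existence of an index), so the attribution should be adjusted. The paper instead argues directly: assuming $r<r'$, it cancels the common prefix $s_{i_1}\cdots s_{i_{r-1}}$ and common suffix $s_{i_{r'+1}}\cdots s_{i_l}$ from the equality of deleted words to obtain $s_{i_{r+1}}\cdots s_{i_{r'}}=s_{i_r}\cdots s_{i_{r'-1}}$; multiplying on the left by $s_{i_r}$ then shows that $s_{i_r}s_{i_{r+1}}\cdots s_{i_{r'}}$ has length at most $r'-r-1$, contradicting that it is a factor of the reduced word for $w$. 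The paper's argument is a self-contained inline version of the distinctness lemma; yours reaches for the lemma as a black box. Both are valid and comparably short, but the paper's version keeps the proof independent of the theory of reflections, while yours is arguably cleaner if the reader already has Lemma~5.6 of Humphreys in hand.
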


\begin{proof}
	The first statement follows from Proposition \ref{pr:diffnilHecke}.
	The second statement is a property of the multiplication of $T_w$'s.

	For the third statement, let us assume $r<r'$. We have 
	$s_{i_{r+1}}\cdots s_{i_{r'}}=s_{i_r}\cdots s_{i_{r'-1}}$ reduced, hence
	$s_{i_r}s_{i_{r+1}}\cdots s_{i_{r'}}$ is not reduced, a contradiction.
\end{proof}

\smallskip

\begin{rem}
Note that the algebra $H^{\mathrm{nil}}(W)$ is acyclic if $S\neq\emptyset$.

	Note also that one can introduce a family of commuting differentials $d_s$ for
	$s\in S$ modulo conjugacy by setting $d_s(T_t)=1$ if $t\in S$ is conjugate to
	$s$ and $d_s(T_t)=0$ otherwise.
\end{rem}

The specialization over $\BF_2$ 
at $a_s=b_s=0$ of the bimodules $L^\pm(I,J)$ of \S\ref{se:traces} acquire a
structure of differential graded bimodules, using the differential graded structure 
of $H^{\mathrm{nil}}(W)$.
We keep
the same notation for those differential graded specialized bimodules and for the maps $t$ and
$\hat{t}$.

\begin{prop}
	\label{pr:tracediff}
	If $W$ is finite, then 
	$$t_{S,I}:H^{\mathrm{nil}}(W)\to H^{\mathrm{nil}}(W_I)\langle N-N_I\rangle$$
	is a morphism of differential graded $\BF_2$-modules and
	Corollary \ref{co:dual} provides an isomorphism of differential graded
	$(H^{\mathrm{nil}}(W_I),H^{\mathrm{nil}}(W))$-bimodules
	$$\hat{t}_{S,I}^\pm:L^{\mp}(I,S)\iso L^{\pm}(S,I)^\vee\langle N-N_I\rangle.$$
\end{prop}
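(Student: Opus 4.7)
The plan is to prove the proposition in three stages: first establishing that $t_{S,I}$ itself is a chain map $H^{\mathrm{nil}}(W)\to H^{\mathrm{nil}}(W_I)\langle N-N_I\rangle$, then promoting $\hat t^+_{S,I}$ to an isomorphism of differential graded bimodules via the Leibniz rule, and finally deducing the case $\pm=-$ using the anti-involution $\iota$. The main obstacle is concentrated in the first stage, where one needs a combinatorial bound on Bruhat predecessors inside $W^I$; the remaining steps are formal.

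For the first stage, recall from the proof of Proposition \ref{pr:trace} that $H$ is free of finite rank as a right $H_I$-module with basis $\{T_{v'}\}_{v'\in W^I}$, and that $t_{S,I}(\sum_{v'\in W^I}T_{v'}h_{v'})=h_{w^I}$ under this basis. Writing $h\in H$ in that form and applying the Leibniz rule reduces the claim to showing that $d(T_{v'})h_{v'}$ has trivial $T_{w^I}$-component for every $v'\in W^I$ and $h_{v'}\in H_I$. Expanding $d(T_{v'})=\sum_{u\lessdot v'}T_u$ and decomposing each $u=u'u''$ with $u'\in W^I$, $u''\in W_I$, $\ell(u)=\ell(u')+\ell(u'')$, we obtain $T_uh_{v'}=T_{u'}(T_{u''}h_{v'})$, which is a right-$H_I$-multiple of $T_{u'}$. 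The key length bound is
\[
\ell(u')\le\ell(u)=\ell(v')-1<\ell(v')\le N-N_I=\ell(w^I),
\]
using that $w^I=w_Sw_I$ is the unique longest element of $W^I$. Hence $u'\neq w^I$ and the $T_{w^I}$-coefficient vanishes. Only the $T_{v'}d(h_{v'})$ terms then contribute, giving $d(h_{w^I})=d(t_{S,I}(h))$; the shift $\langle N-N_I\rangle$ is forced by $\ell(v'')=\ell(v)-(N-N_I)$ when $v=v'v''$ with $v'=w^I$.

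For the second stage, Corollary \ref{co:dual} already identifies $\hat t^+_{S,I}$ as a bimodule isomorphism, so it remains to check that it commutes with $d$. For $h,h'\in H$, working in characteristic $2$ where no signs appear,
\[
d(\hat t^+_{S,I}(h))(h')=d(t^+_{S,I}(hh'))+t^+_{S,I}(h\,d(h'))=t^+_{S,I}(d(hh'))+t^+_{S,I}(h\,d(h'))=t^+_{S,I}(d(h)\,h'),
\]
using stage one and the Leibniz rule $d(hh')=d(h)h'+h\,d(h')$, and this equals $\hat t^+_{S,I}(d(h))(h')$. For the third stage (case $\pm=-$), I would invoke the anti-involution $\iota:H\iso H^{\opp}$, $T_w\mapsto T_{w^{-1}}$, which is a chain map since $w'\lessdot w\Leftrightarrow(w')^{-1}\lessdot w^{-1}$. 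The relation $t^-=\iota\circ t^+\circ\iota$ then shows $t^-$ is a chain map, and the same Leibniz computation shows $\hat t^-_{S,I}$ is a morphism of differential graded bimodules.
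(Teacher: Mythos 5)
Your proof is correct and takes essentially the same approach as the paper. Stage one mirrors the paper's argument exactly: both take the parabolic decomposition with respect to $W^I$, apply the Leibniz rule, and kill the $d(T_{v'})$-contribution by the length bound $\ell(u')<\ell(v')\le N-N_I=\ell(w^I)$, which forces $u'\ne w^I$. The paper does not spell out stages two and three, and your Leibniz computation for $\hat t^+_{S,I}$ (using that the two $t^+_{S,I}(h\,d(h'))$ terms cancel over $\BF_2$) and your $\iota$-conjugation to deduce the $\pm=-$ case correctly supply those details.
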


\begin{proof}
	Let $v\in W$. There is a unique decomposition $v=v'v''$ where 
	$\ell(v)=\ell(v')+\ell(v'')$, $v''\in W_I$ and
	$v'\in W^I$.

	We have $d(T_v)=d(T_{v'})T_{v''}+T_{v'}d(T_{v''})$. If $u\in W$ and
	$u<v'$, then $u{\not\in}w_S W_I$. It follows that 
$$t_{S,I}(d(T_v))= t_{S,I}(T_{v'}d(T_{v''}))=\delta_{v',w^I}d(T_{v''})=d(t_{S,I}(T_v)).$$
\end{proof}

\subsubsection{Differential graded pointed Hecke monoid}

Let $W^{\nil}$\indexnot{Wnil}{W^{\nil}} be the pointed $\BZ_{\le 0}$-graded monoid with
underlying pointed set 
$\{T_w\}_{w\in W}\coprod\{0\}$ and multiplication given by (\ref{eq:multnilHecke}).
This is the pointed monoid $\gr W$ associated to the filtration on $W$ given by
$W^{\ge -i}=\{w\in W\ |\ \ell(w)\le i\}$ and there is an identification
$\BF_2[W^{\nil}]=H^{\nil}(W)$ making $W^{\nil}$ into a differential graded pointed monoid.


\subsection{Extended affine symmetric groups}
\subsubsection{Finite case}
Fix $n\ge 0$. The symmetric group $\GS_n$ is a Coxeter group with generating set $\{(1,2),\ldots,
(n-1,n)\}$.

Its differential nil Hecke algebra $H_n$\indexnot{Hn}{H_n} is the $k$-algebra generated
by $T_1,\ldots,T_{n-1}$ with relations
\begin{equation}
	\label{eq:relationsHn}
T_i^2=0,\ T_iT_j=T_jT_i \text{ if }|i-j|>1 \text{ and }
T_iT_{i+1}T_i=T_{i+1}T_iT_{i+1}
\end{equation}
and with differential given by $d(T_i)=1$.

The algebra $H_n$ has a basis $(T_w)_{w\in\GS_n}$.

\subsubsection{Definition}
\label{se:defextSn}
Let $n\ge 1$. We denote by $\hat{\GS}_n$\indexnot{Sn}{\hat{\GS}_n}
the extended affine symmetric group:
this is the subgroup of the group of permutations of $\BZ$ with elements those
bijections
$\sigma:\BZ\iso\BZ$ such that $\sigma(n+r)=n+\sigma(r)$ for all $r\in\BZ$.

Given $i,j\in\BZ$ with $i-j{\not\in}n\BZ$, we denote by $s_{ij}$\indexnot{sij}{s_{ij}}
the element
of $\hat{\GS}_n$ defined by
$$s_{ij}(r)=\begin{cases}
	j-i+r & \text{ if }r=i\pmod n \\
	i-j+r & \text{ if }r=j\pmod n\\
	r & \text{otherwise.}
\end{cases}$$
Note that $s_{i+n,j+n}=s_{i,j}$, $s_{ij}=s_{ji}$ and $s_{ij}^2=1$.

The symmetric group $\GS_n$ identifies with the subgroup of $\hat{\GS}_n$ of permutations
$\sigma$ such that $\sigma(\{1,\ldots,n\})=\{1,\ldots,n\}$.
We have a surjective morphism $\hat{\GS}_n\to\GS_n$ sending $\sigma$ to the induced
permutation of $\BZ/n$. We identify its kernel with $\BZ^n$ via the injective morphism
$$\BZ^n\to\hat{\GS}_n,\ (\lambda_1,\ldots,\lambda_n)\mapsto (\{1,\ldots,n\}\ni i\mapsto
i+n\lambda_i).$$
We have $\hat{\GS}_n=\BZ^n\rtimes \GS_n$.

\medskip
Assume $n\ge 2$.
Let $W_n$\indexnot{Wn}{W_n} be the Coxeter group of type $\hat{A}_{n-1}$: it is generated
by $\{s_a\}_{a\in\BZ/n}$ with relations
$$s_a^2=1,\ s_as_b=s_bs_a \text{ if }a\neq b\pm 1$$
$$s_as_{a+1}s_a=s_{a+1}s_as_{a+1}\ (\text{ for }n>2 ).$$

\medskip
Consider the semi-direct product $W_n\rtimes\langle c\rangle$ of $W_n$
by an infinite cyclic group generated by an element $c$, with
relation $cs_ac^{-1}=s_{a+1}$.

\begin{lemma}
	\label{le:isoWnS_n}
There is an isomorphism of groups

$$W_n\rtimes\langle c\rangle\iso \hat{\GS}_n,\
c\mapsto (j\mapsto j+1),\
	s_{i+n\BZ}\mapsto s_{i,i+1} \text{ for }i\in\{1,\ldots,n\}.$$
\end{lemma}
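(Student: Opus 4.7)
The plan is to define $\phi: W_n \rtimes \langle c \rangle \to \hat{\GS}_n$ on generators as stated, verify $\phi$ respects the defining relations (well-definedness), and then establish bijectivity. For well-definedness, one checks directly in $\hat{\GS}_n$ that the transpositions $\sigma_i := s_{i,i+1}$ satisfy the Coxeter relations of type $\hat{A}_{n-1}$: each $\sigma_i$ is an involution by construction; the commutation $\sigma_i \sigma_j = \sigma_j \sigma_i$ for $i \not\equiv j, j\pm 1 \pmod n$ follows from the disjointness of supports modulo $n$; and, for $n > 2$, the braid relations $\sigma_i \sigma_{i+1} \sigma_i = \sigma_{i+1} \sigma_i \sigma_{i+1}$ reduce to routine casework on three adjacent residues. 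The semidirect product relation $c s_a c^{-1} = s_{a+1}$ translates to $\tau \sigma_i \tau^{-1} = \sigma_{i+1}$, with $\tau: j \mapsto j+1$, which one verifies by applying both sides to an arbitrary $r \in \BZ$ and splitting into cases according to $r \pmod n$.

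For surjectivity, let $G$ denote the image of $\phi$. Then $G$ contains all $\sigma_i$, hence contains the subgroup $\GS_n \subset \hat{\GS}_n$ generated by $\sigma_1, \ldots, \sigma_{n-1}$, and also contains $\tau$. Using the decomposition $\hat{\GS}_n = \BZ^n \rtimes \GS_n$ recalled above the lemma, a short computation shows $\tau = e_1 \cdot w$, where $e_1 = (1, 0, \ldots, 0) \in \BZ^n$ and $w = \sigma_1 \sigma_2 \cdots \sigma_{n-1}$ is the cyclic permutation $j \mapsto j+1 \pmod n$. Hence $e_1 = \tau w^{-1} \in G$, and conjugating $e_1$ by elements of $\GS_n$ yields all standard basis vectors of $\BZ^n$. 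Thus $\BZ^n \subset G$, and $G = \hat{\GS}_n$.

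For injectivity, define $\delta : \hat{\GS}_n \to \BZ$ by $\delta(\sigma) = \frac{1}{n} \sum_{i=1}^n (\sigma(i) - i)$; this is integer-valued because $\sigma$ permutes residues mod $n$, and a group homomorphism because $\sigma(j) - j$ depends only on $j \pmod n$. One computes $\delta(\sigma_i) = 0$ for every $i$ and $\delta(\tau) = 1$, so writing any element of $W_n \rtimes \langle c \rangle$ uniquely as $w c^k$, the equation $\phi(w c^k) = 1$ forces $k = 0$. It remains to show $\phi|_{W_n}$ is injective, i.e., that the subgroup of $\hat{\GS}_n$ generated by $\sigma_0, \sigma_1, \ldots, \sigma_{n-1}$ realizes the Coxeter presentation of type $\hat{A}_{n-1}$. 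This is the classical identification of the affine Weyl group $W(\hat{A}_{n-1})$ as the subgroup $\ker \delta$ of $\hat{\GS}_n$, and it is the main technical input of the argument; it can be obtained from the faithful action on the root lattice of $\hat{A}_{n-1}$, where $\sigma_i$ acts as the corresponding simple reflection.
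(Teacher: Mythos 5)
Your proof is correct and leans on the same classical input as the paper's proof, namely the identification of the affine Coxeter group of type $\hat A_{n-1}$ with the subgroup $\ker\delta\subset\hat{\GS}_n$ of ``degree-zero'' periodic permutations; the paper cites Lusztig / Bj\"orner--Brenti for this and you invoke the action on the root lattice, which amounts to the same fact. The degree homomorphism you call $\delta$ is exactly the quantity $N/n$ used in the paper for both surjectivity and injectivity. The one genuine difference is in the surjectivity step: the paper multiplies an arbitrary $\sigma\in\hat{\GS}_n$ by a power of $c$ to lower its degree to zero and then appeals to the same classical isomorphism $W_n\iso\ker\delta$; you instead use the decomposition $\hat{\GS}_n=\BZ^n\rtimes\GS_n$ recalled just before the lemma, observe that the image contains $\GS_n$ and $\tau$, extract $e_1=\tau w^{-1}\in\BZ^n$ with $w=\sigma_1\cdots\sigma_{n-1}$, and conjugate to get all of $\BZ^n$. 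This makes your surjectivity argument self-contained --- it does not use the classical identification at all --- which is a minor gain in modularity, at the cost of a few extra lines; injectivity still needs the classical fact in both proofs. You also spell out the verification that the generators satisfy the affine Coxeter relations and the relation $\tau\sigma_i\tau^{-1}=\sigma_{i+1}$, which the paper declares ``immediate''; that is fine and complete. One small cosmetic point: the paper uses $s_a$ for the Coxeter generators of $W_n$ and $s_{i,j}$ for the specific transpositions in $\hat{\GS}_n$, while you write $\sigma_i$ throughout --- when writing this up you should align with the paper's notation, but the mathematics is sound.
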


\begin{proof}
	Denote by $f$ the map of the lemma.
	By \cite[\S 3.6]{Lus} (cf also \cite[Proposition 8.3.3]{BjBr}), the
	restriction of $f$ to $W_n$ induces an isomorphism with
	the subgroup of $\hat{\GS}_n$ of elements $\sigma$ such that
	$\sum_{i=1}^n(\sigma(i)-i)=0$. It is immediate to check that
	$f$ extends to a morphism of groups 
	$W_n\rtimes\langle c\rangle\to \hat{\GS}_n$.

	Consider $\sigma\in\hat{\GS}_n$ and let $N=\sum_{i=1}^n(\sigma(i)-i)$.
	Note that $n|N$. Put $\sigma'=\sigma f(c)^{-N/n}$. We have
	$\sigma'\in f(W_n)$, so $f$ is surjective.
	Let $\sigma=f(wc^d)$. We have $\sum_{i=1}^n(\sigma(i)-i)=nd$.
	So, if $\sigma=1$, then $d=0$, hence $w\in\ker(f)\cap W_n=1$.
	This shows that $f$ is injective.
\end{proof}

We will identify $W_n\rtimes\langle c\rangle$ and $\hat{\GS}_n$ via the isomorphism
of Lemma \ref{le:isoWnS_n}.

We put $W_1=1$, so that $\hat{\GS}_1\simeq\langle c\rangle=W_1\rtimes\langle c\rangle$.
We also put $\hat{\GS}_0=1$.

	\subsubsection{Diagrammatic representation}

	The permutations of $\BZ$ can be 
	described as collections of strands in $[-1,1]\times\BR$ going leftwards from integer points on 
	the vertical line $x=1$ to integer points on the vertical line $x=-1$. Thanks to their $n$-periodicity, those permutations
	that are elements of $\hat{\GS}_n$ can also
	be encoded in a collection of strands drawn on a cylinder, going from right to left,
	by passing to the quotient of the vertical strip $[-1,1]\times\BR$ by the vertical
	action by translation of $n\BZ$.

	Here are some elements of $\hat{\GS}_3$:

$$\includegraphics[scale=0.7]{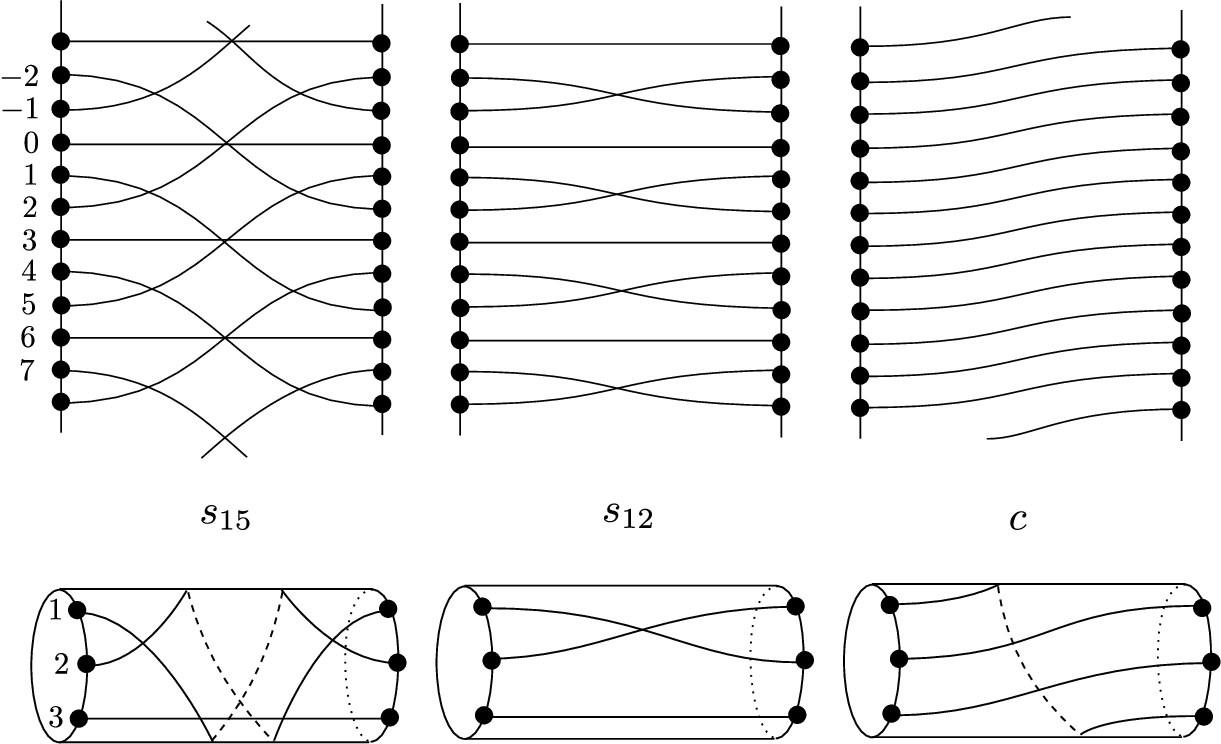}$$

	\smallskip
The multiplication $\sigma\sigma'$ of $\sigma$ and $\sigma'$ in $\hat{\GS}_n$ corresponds to
the concatenation of the diagram of $\sigma$ put to the left of the diagram of $\sigma'$ as 
in the following example:
$$\includegraphics[scale=0.7]{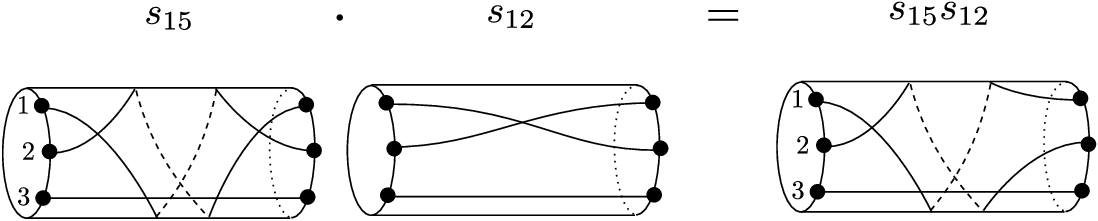}$$

	\smallskip
	The defining relations for $\hat{\GS}_n$ are depicted as follows
$$\includegraphics[scale=0.7]{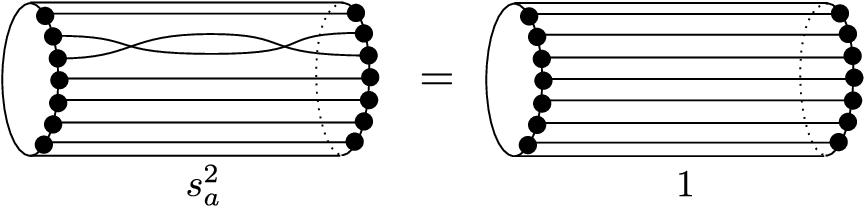}$$
$$\includegraphics[scale=0.7]{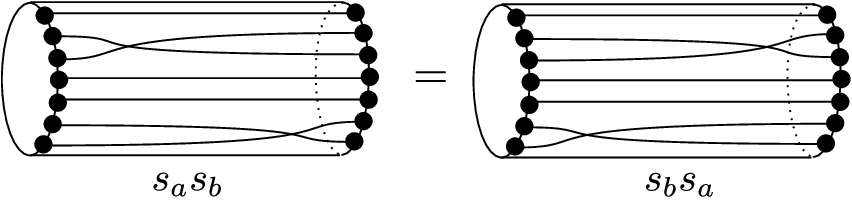}$$
$$\includegraphics[scale=0.7]{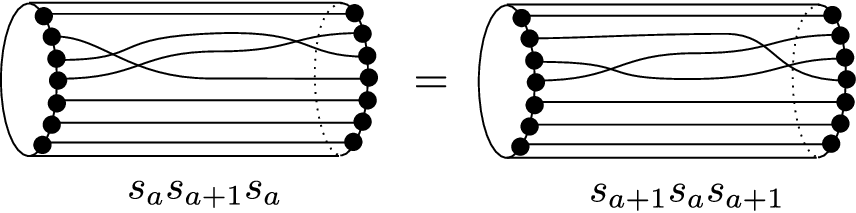}$$
$$\includegraphics[scale=0.7]{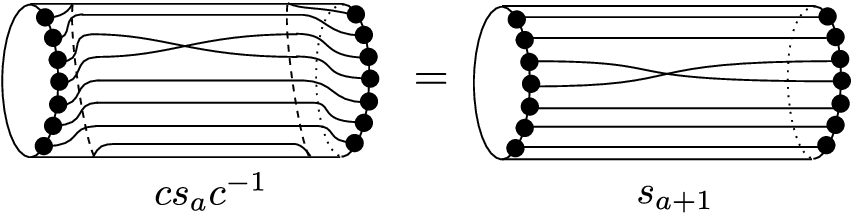}$$

	\smallskip
	The elements of $\GS_n$ correspond to diagrams whose strands do not go in the back
	of the cylinder, hence can be drawn on a rectangle. For example, $s_{12}$ above can
	be represented as follows:
$$\includegraphics[scale=0.6]{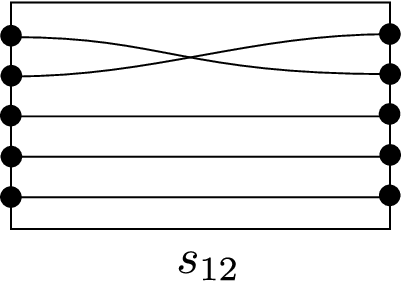}$$

\subsubsection{Length}
Assume now again that $n\ge 1$.
We extend the length function on the Coxeter group $W_n$ to
one on $W_n\rtimes\langle c\rangle$ by setting
$\ell(wc^d)=\ell(w)$ for $w\in W_n$ and $d\in\BZ$. Note that the action of
$c$ on $W_n$ preserves lengths.
Similarly, we extend the Chevalley-Bruhat order on 
$W_n\rtimes\langle c\rangle$ by setting $w'c^{d'}<wc^d$ if
$w'<w$ and $d'=d$ and we consider the corresponding order on $\hat{\GS}_n$.
Note that the action of $c$ on $W_n$ preserves the order, hence
$w'c^{d'}<wc^d$ if and only if $c^{d'}w'<c^dw$.

\begin{lemma}
	\label{le:middleaffineSn}
	Let $\sigma',\sigma''\in \hat{\GS}_n$ and $\sigma=\sigma'\sigma''$.
	Assume 
	$\ell(\sigma)=\ell(\sigma')+\ell(\sigma'')$.
	Let $a\in\BZ/n$ such that $\ell(\sigma s_a)<\ell(\sigma)$
	and $\ell(\sigma'' s_a)>\ell(\sigma'')$.

	Let $\alpha''=\sigma''s_a$ and
	$\alpha'=\sigma' \sigma'' s_a \sigma^{\prime\prime -1}$.
	We have $\sigma=\alpha'\alpha''$ and
	$\ell(\sigma)=\ell(\alpha')+\ell(\alpha'')$.
\end{lemma}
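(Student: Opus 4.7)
The first claim $\sigma = \alpha'\alpha''$ is immediate from $s_a^2 = 1$: by construction $\alpha'\alpha'' = \sigma'\sigma'' s_a (\sigma'')^{-1} \cdot \sigma'' s_a = \sigma'\sigma''$. The content is therefore the length additivity $\ell(\sigma) = \ell(\alpha') + \ell(\alpha'')$. Since $\ell(\alpha'') = \ell(\sigma'') + 1$ by hypothesis, this reduces to showing $\ell(\alpha') = \ell(\sigma') - 1$. The plan is to translate everything into a statement inside the Coxeter group $W_n$ via the isomorphism $\hat{\GS}_n \simeq W_n \rtimes \langle c \rangle$ of Lemma \ref{le:isoWnS_n} and then invoke the strong exchange condition.

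Write $\sigma' = w' c^{d'}$ and $\sigma'' = w'' c^{d''}$ with $w', w'' \in W_n$, and set $u = w'$, $v = c^{d'} w'' c^{-d'} \in W_n$, and $b = a + d' + d'' \in \BZ/n$. Since the conjugation action of $c$ on $W_n$ preserves length, $\ell(\sigma') = \ell(u)$ and $\ell(\sigma'') = \ell(v)$, and since $\sigma = u v c^{d'+d''}$ we get $\ell(\sigma) = \ell(uv)$. A short computation using $c s_a c^{-1} = s_{a+1}$ gives $\alpha'' = w'' s_{a+d''} c^{d''}$ and $\alpha' = u \cdot v s_b v^{-1} \cdot c^{d'}$, so the hypotheses translate to $\ell(uv) = \ell(u) + \ell(v)$, $\ell(uv s_b) < \ell(uv)$, and $\ell(v s_b) > \ell(v)$ in $W_n$, and the goal becomes $\ell(u \cdot v s_b v^{-1}) = \ell(u) - 1$.

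Choose reduced expressions $u = s_{i_1}\cdots s_{i_p}$ and $v = s_{j_1}\cdots s_{j_q}$; by length additivity their concatenation is a reduced expression for $uv$ of length $p+q$. Since $\ell(uv s_b) = \ell(uv) - 1$, the strong exchange condition produces a reduced expression for $uv s_b$ by deleting a single letter from this concatenation. The deleted letter cannot lie among the $s_{j_k}$'s, for then $v s_b$ would equal $s_{j_1}\cdots \hat{s}_{j_k}\cdots s_{j_q}$, forcing $\ell(v s_b) \le q-1 < \ell(v)$, contrary to hypothesis. Hence some $s_{i_k}$ in the $u$-segment is removed, yielding $uv s_b v^{-1} = s_{i_1}\cdots \hat{s}_{i_k}\cdots s_{i_p}$ and $\ell(uv s_b v^{-1}) \le p-1$. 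For the opposite inequality, factor $uv s_b = (uv s_b v^{-1}) \cdot v$ and use subadditivity of $\ell$ together with $\ell(uv s_b) = p+q-1$ and $\ell(v) = q$ to force $\ell(uv s_b v^{-1}) \ge p-1$. Thus $\ell(\alpha') = p - 1 = \ell(\sigma') - 1$ and summing gives $\ell(\alpha') + \ell(\alpha'') = \ell(\sigma)$.

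The main obstacle is not conceptual but organizational: one must carefully track the semidirect product decomposition $\hat{\GS}_n = W_n \rtimes \langle c \rangle$ so as to reduce the mixed statement about $\hat{\GS}_n$ cleanly to a classical application of the strong exchange condition inside the genuine Coxeter group $W_n$; once the reduction is in place, the Coxeter-theoretic argument is routine.
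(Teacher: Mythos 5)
Your proof is correct and follows essentially the same strategy as the paper's: reduce to the Coxeter group $W_n$, apply the Exchange Condition to delete a letter from a reduced expression for $\sigma'\sigma''$, use $\ell(\sigma'' s_a) > \ell(\sigma'')$ to rule out deletion from the $\sigma''$-segment, and then close with a subadditivity count to force $\ell(\alpha') = \ell(\sigma') - 1$. You are somewhat more explicit than the paper about the reduction to $W_n$ (carrying out the conjugation by $c^{d'}$ carefully, where the paper simply asserts one may assume everything lies in $W_n$), but the argument is the same.
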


\begin{proof}
	Multiplying if necessary $\sigma'$ and $\sigma''$ by a power of
	$c$, we can assume $\sigma$, $\sigma'$ and $\sigma''$ are in 
	$W_n$.

	Let $\sigma'=s_{a_1}\cdots s_{a_m}$ and $\sigma''=s_{a_{m+1}}\cdots
	s_{a_d}$ be two reduced decompositions. The Exchange Lemma
	\cite[Theorem 5.8]{Hu} shows that there is $i$ such that
	$\sigma s_a=s_{a_1}\cdots s_{a_{i-1}}s_{a_{i+1}}\cdots s_{a_d}$.

	If $i>m$, then $\sigma''s_a=s_{a_{m+1}}\cdots
	s_{a_{i-1}}s_{a_{i+1}}\cdots s_{a_d}$ and this contradicts
	$\ell(\sigma'' s_a)>\ell(\sigma'')$. So, $i\le m$.
	We have
	$\sigma s_a=s_{a_1}\cdots s_{a_{i-1}}s_{a_{i+1}}\cdots s_{a_m}
	\sigma''$. We deduce that $\alpha'=s_{a_1}\cdots s_{a_{i-1}}s_{a_{i+1}}\cdots s_{a_m}$ has length $m-1$ and the lemma follows.
\end{proof}

Given $\sigma\in\hat{\GS}_n$, we put $L(\sigma)=\{(i,j)\in\BZ\times\BZ\ |\
i<j,\ \sigma(i)>\sigma(j)\}$\indexnot{Ls}{L(\sigma)}. This set has a diagonal action of
$n\BZ$ by translation.
We put $\tilde{L}(\sigma)=\{(i,j)\in L(\sigma)\ |\ 1\le i\le n\}$. The canonical
map $\tilde{L}(\sigma)\to L(\sigma)/n\BZ$ is bijective.

\smallskip
The next lemma is a variation on classical results
(cf \cite[Lemma 4.2.2]{Sh}, \cite[Proposition 8.3.6]{BjBr} and \cite[\S 2.2]{BjBr}).

\begin{lemma}
	\label{le:lengthaffine}
	Let $\sigma\in\hat{\GS}_n$. We have $L(\sigma)=L(c^d\sigma)$ for all $d\in\BZ$ and
	$$\ell(\sigma)=|\tilde{L}(\sigma)|=
	\sum_{0\le i<j<n} \bigl|{\lfloor\frac{\sigma(j)-\sigma(i)}{n}\rfloor}\bigr|.$$

	If $(i,j)\in L(\sigma)$, then $\sigma s_{ij}<\sigma$.

\smallskip
	Assume $\sigma=c^dw$ and $w=s_{a_1}\cdots s_{a_l}$ is a reduced decomposition of $w\in W_n$.
	Given $1\le r\le l$, let $i_r\in\{1,\ldots,n\}$ with $i_r+n\BZ=a_r$.

	The set $\{\bigl(
s_{a_l}\cdots s_{a_{r+1}}(i_r),s_{a_l}\cdots s_{a_{r+1}}(i_r+1)\bigr)\}_{1\le r\le l}$ is a subset of $L(\sigma)$. This induces a bijection
$$\{\bigl(
	(s_{a_l}\cdots s_{a_{r+1}}(i_r),s_{a_l}\cdots s_{a_{r+1}}(i_r+1)\bigr)\}_{1\le r\le l}\iso L(\sigma)/n\BZ.$$
\end{lemma}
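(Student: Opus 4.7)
The plan is to address the four assertions in order, since each builds on the previous.

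Part (1) is immediate: $c^d$ acts on $\BZ$ as translation by $d$, and translation preserves strict inequalities, so $(i,j)\in L(c^d\sigma)$ iff $\sigma(i)+d>\sigma(j)+d$ iff $(i,j)\in L(\sigma)$.

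For Part (2), using (1) I may assume $\sigma=w\in W_n$; then $\ell(w)=|\tilde L(w)|$ is the classical inversion-counting description of the Coxeter length of the affine symmetric group (cf.\ the cited works of Shi and Bj\"orner--Brenti). The bijection $\tilde L(\sigma)\iso L(\sigma)/n\BZ$ follows because $\{1,\ldots,n\}$ is a complete set of residues and $|i-i'|<n$ forces $k=0$ whenever a pair $(i',j')$ lies in the $n\BZ$-orbit of $(i,j)$. To extract the right-hand formula, I count the orbits in $L(\sigma)/n\BZ$ whose residues mod $n$ form a fixed pair $\{i_0,j_0\}$ with $0\le i_0<j_0<n$: orbits are parametrized by representatives of the form $(i_0,j_0+nm)$ with $m\ge 0$ or $(j_0,i_0+nm)$ with $m\ge 1$. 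Using that $\sigma(j_0)-\sigma(i_0)$ is never divisible by $n$ (since $\sigma$ projects to a bijection on $\BZ/n$ and $i_0\ne j_0$), the inversion condition becomes a linear inequality in $m$ whose solution count in each case sums to exactly $|\lfloor(\sigma(j_0)-\sigma(i_0))/n\rfloor|$.

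For Part (3), I again reduce to $\sigma=w\in W_n$ via (1). The element $s_{ij}$ is a product of simple generators $s_a$, so $s_{ij}\in W_n$ and $w s_{ij}\in W_n$. The hypothesis $(i,j)\in L(w)$, rephrased through the faithful action of $W_n$ on $\BZ$, says that $w$ has an inversion at the reflection $s_{ij}$; by the standard Coxeter-group criterion (cf.\ \cite[Theorem 5.10]{Hu}) this gives $\ell(ws_{ij})<\ell(w)$ and $ws_{ij}<w$, hence $\sigma s_{ij}<\sigma$ by our extension of the Bruhat order.

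For Part (4), I note that both the proposed set and $L(\sigma)/n\BZ$ have cardinality $l=\ell(\sigma)$ (by Part (2)), so it suffices to prove that each proposed pair lies in $L(\sigma)$ and that the $l$ pairs represent pairwise distinct orbits. Setting $u_r=s_{a_l}\cdots s_{a_{r+1}}$, the identity $\sigma\circ u_r=c^d s_{a_1}\cdots s_{a_r}$ together with the fact that $s_{a_r}$ swaps $i_r$ with $i_r+1$ reduces the inversion inequality $\sigma(u_r(i_r))>\sigma(u_r(i_r+1))$ to the reducedness of $s_{a_1}\cdots s_{a_r}$; the ordering $u_r(i_r)<u_r(i_r+1)$ similarly reduces to the reducedness of $s_{a_r}s_{a_{r+1}}\cdots s_{a_l}$. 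For distinctness modulo $n\BZ$, suppose for contradiction that the orbits associated with $r<r'$ coincide. Using the $n$-equivariance of elements of $\hat\GS_n$, the equality forces $\mu=s_{a_r}\cdots s_{a_{r'-1}}$ to send $(i_{r'},i_{r'}+1)$ to an $n\BZ$-translate of $(i_r,i_r+1)$; multiplying by $s_{a_r}$ on the left then shows that $s_{a_{r+1}}\cdots s_{a_{r'-1}}$ has $(i_{r'},i_{r'}+1)$ as an inversion, contradicting the reducedness of $s_{a_{r+1}}\cdots s_{a_{r'}}$.

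The main obstacle I expect is Part (4), particularly the careful interaction between $n$-periodicity of $\hat\GS_n$ and the Coxeter combinatorics of reduced expressions needed to establish distinctness; the other parts either follow from direct computation or standard Coxeter-theoretic facts.
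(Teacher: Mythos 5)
Your proposal takes a genuinely different route from the paper's. The paper proves $\ell(\sigma)=|\tilde L(\sigma)|$, the bijection, and the Bruhat-order statement together via a two-stage induction: first an inequality $\ell(\sigma)\le|\tilde L(\sigma)|$ by locating an adjacent inversion, and then a second induction on $\ell(\sigma)$ that simultaneously establishes equality and the bijection with $\{(u_r(i_r),u_r(i_r+1))\}_r$, from which it derives $\sigma s_{ij}<\sigma$ as a corollary. You instead modularize: you outsource the equality $\ell=|\tilde L|$ to the cited classical results, prove the Bruhat statement directly from the reflection/descent criterion in $W_n$ (after reducing to $W_n$ via part (1) and noting $s_{ij}\in W_n$), and handle part (4) by membership-plus-distinctness-plus-cardinality. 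Your reduction of the inversion inequalities in part (4) to reducedness of the prefix $s_{a_1}\cdots s_{a_r}$ (respectively the suffix $s_{a_r}\cdots s_{a_l}$) is correct and clean.

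However, the distinctness argument in part (4) has a concrete error: the element that sends $(i_{r'},i_{r'}+1)$ to an $n\BZ$-translate of $(i_r,i_r+1)$ is $u_r^{-1}u_{r'}=s_{a_{r+1}}\cdots s_{a_{r'}}$, not $s_{a_r}\cdots s_{a_{r'-1}}$ as you wrote. With your $\mu$, the claim "$\mu$ sends $(i_{r'},i_{r'}+1)$ to a translate of $(i_r,i_r+1)$" is unjustified, so the ensuing "multiply by $s_{a_r}$ on the left" step has nothing to stand on. The fix is routine: take $\mu=s_{a_{r+1}}\cdots s_{a_{r'}}$, observe that $\mu s_{a_{r'}}=s_{a_{r+1}}\cdots s_{a_{r'-1}}$ then sends $(i_{r'},i_{r'}+1)$ to a translate of $(i_r+1,i_r)$, so $s_{a_{r+1}}\cdots s_{a_{r'-1}}$ has a right descent at $s_{a_{r'}}$, contradicting the reducedness of the subword $s_{a_{r+1}}\cdots s_{a_{r'}}$. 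So you reach the same final contradiction, but through a misidentified intermediate element; as written the step would fail to compile into a valid proof.
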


\begin{proof}
		Consider a pair $(i,j)\in L(\sigma)$ with $1\le i\le n$ and such that
	$(i,j'){\not\in}L(\sigma)$ and $(j',j){\not\in}L(\sigma)$ for $i<j'<j$.
	Given $j'$ with $i<j'<j$, we have $\sigma(i)<\sigma(j')<\sigma(j)$, a contradiction.
	It follows that $j=i+1$. We have
$$L(\sigma)=\bigl(\{(i,i+1)\}+n\BZ\bigr)\coprod (s_{i,i+1},s_{i,i+1})(L(\sigma s_{i,i+1})).$$
	We deduce
	by induction on $|\tilde{L}(\sigma)|$ that $\ell(\sigma)\le |\tilde{L}(\sigma)|$.

	We prove the statements on
	$\{\bigl( s_{a_l}\cdots s_{a_{r+1}}(i_r),s_{a_l}\cdots s_{a_{r+1}}(i_r+1)\bigr)\}_{1\le r\le l}$
	by induction on $\ell(\sigma)$. By induction, the statements hold for
	$\sigma s_{a_l,a_l+1}$. In particular, $\ell(\sigma s_{a_l,a_l+1})=
	|\tilde{L}(\sigma s_{a_l,a_l+1})|$. It follows that
	$\ell(\sigma)=\ell(\sigma s_{a_l,a_l+1})+1>|\tilde{L}(\sigma s_{a_l,a_l+1})|$.
	Assume $(i_l,i_l+1){\not\in}L(\sigma)$. It follows that
$L(\sigma s_{a_l,a_l+1})=s_{a_l,a_l+1}(L(\sigma))\coprod \bigl(\{(i_l,i_l+1)\}+n\BZ\bigr)$,
	hence $|\tilde{L}(\sigma)|<|\tilde{L}(\sigma s_{a_l,a_l+1})|=\ell(\sigma s_{a_l,a_l+1})=
	\ell(\sigma)-1$, a contradiction. It follows that $(i_l,i_l+1)\in L(\sigma)$,
	hence
	$$L(\sigma)=s_{a_l,a_l+1}(L(\sigma s_{i_l,i_l+1}))\coprod 
	\bigl(\{(i_l,i_l+1)\}+n\BZ\bigr).$$
The last statement of the lemma follows now by induction.

\smallskip
	Consider now $(i,j)\in L(\sigma)$. Up to translating $(i,j)$ diagonally by
	$n\BZ$, we can assume there is $r$ such that 
	$i=s_{a_l}\cdots s_{a_{r+1}}(i_r)$ and 
	$j=s_{a_l}\cdots s_{a_{r+1}}(i_r+1)$. So
	$\sigma s_{i,j}=c^d s_{a_1}\cdots s_{a_{r-1}}s_{a_{r+1}}\cdots s_{a_l}$,
	hence $\sigma s_{i,j}<\sigma$. The lemma follows.
\end{proof}

\begin{lemma}
	\label{le:lengthoneaffine}
	Given $\sigma,\sigma'\in\hat{\GS}_n$,
	we have $\sigma'<\sigma$ and
	$\ell(\sigma')=\ell(\sigma)-1$ if and only if
	there is $(j_1,j_2)\in L(\sigma)$ such that $\sigma'=\sigma s_{j_1,j_2}$ and
	\begin{itemize}
		\item $j_2-j_1<n$ or $\sigma(j_1)-\sigma(j_2)<n$ and
		\item given $i\in\BZ$ with $j_1<i<j_2$, we have
			$\sigma(j_1)<\sigma(i)$ or $\sigma(i)<\sigma(j_2)$.
	\end{itemize}
\end{lemma}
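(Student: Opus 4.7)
My plan is to first reduce the statement, via a standard Coxeter-group argument, to the problem of computing $\ell(\sigma) - \ell(\sigma s_{j_1, j_2})$ for $(j_1, j_2) \in L(\sigma)$, and then prove an explicit formula for that length drop that immediately yields the characterization. The subword property of the Bruhat order on $W_n$ (which extends to $W_n\rtimes\langle c\rangle$ since $c$ normalizes $W_n$ and preserves length) says that $\sigma' < \sigma$ with $\ell(\sigma') = \ell(\sigma)-1$ if and only if $\sigma'$ is obtained from a reduced expression for $\sigma$ by deleting a single simple reflection. By Lemma~\ref{le:lengthaffine}, such a deletion produces $\sigma s_{j_1,j_2}$ for the corresponding pair $(j_1, j_2) \in L(\sigma)$, and conversely every $(j_1, j_2) \in L(\sigma)$ arises in this way. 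So the lemma reduces to characterizing those $(j_1, j_2) \in L(\sigma)$ for which $\ell(\sigma s_{j_1, j_2}) = \ell(\sigma) - 1$.

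The key identity I would establish is
\[
\ell(\sigma) - \ell(\sigma s_{j_1,j_2}) = 1 + 2\min(M, N) + 2|B|,
\]
where $M = \lfloor (j_2-j_1)/n \rfloor$, $N = \lfloor (\sigma(j_1)-\sigma(j_2))/n \rfloor$, and $B = \{i \in \BZ : j_1 < i < j_2,\ \sigma(j_2) < \sigma(i) < \sigma(j_1)\}$. Condition~(1) of the statement is exactly $\min(M, N) = 0$ and condition~(2) is exactly $|B| = 0$, so the lemma follows immediately from this formula.

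To prove the identity, I would set $\tau = s_{j_1,j_2}$ and count $L(\sigma)\triangle L(\sigma\tau)$ modulo the diagonal $n\BZ$-action, whose orbits count inversions by Lemma~\ref{le:lengthaffine}. Since $\tau$ fixes every integer not congruent to $j_1$ or $j_2 \pmod n$, only pairs with at least one coordinate in $(j_1+n\BZ)\cup(j_2+n\BZ)$ can contribute, and I would split these into two types. Type~(a) pairs have \emph{both} coordinates in this union --- of the form $(j_1+kn, j_2+\ell n)$ or $(j_2+kn, j_1+\ell n)$ --- and inversion membership translates to an integer inequality on $\ell-k$ bounded by $M$ and $N$; a direct count (using $j_1\not\equiv j_2\pmod n$, so the relevant quotients are non-integer) yields $M+N+1$ orbits in $L(\sigma)$ against $|M-N|$ in $L(\sigma\tau)$, for a net drop of $2\min(M,N)+1$. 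Type~(b) pairs have exactly one coordinate in the union; after normalizing that coordinate to $j_1$ or $j_2$ by an $n\BZ$-shift, the status-change condition in each of four sub-cases is uniformly $\sigma(j_2) < \sigma(\text{other}) < \sigma(j_1)$, and a signed sum of two loss and two gain contributions reduces by inclusion-exclusion to $2|B|$: each element of $B$ counts in both loss sub-cases and neither gain sub-case, while elements of the status-change set outside $[j_1, j_2]$ contribute once to losses and once to gains and cancel.

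The main obstacle will be the orbit bookkeeping in type~(a): both pair orderings $(j_1+kn, j_2+\ell n)$ and $(j_2+kn, j_1+\ell n)$ must be tracked, and the regimes $M \ge N$ and $N \ge M$ contribute asymmetrically to $L(\sigma\tau)$ (only one ordering is nonempty in each regime). The type~(b) analysis is conceptually simpler but requires careful signed bookkeeping to exhibit the cancellation of the outside contributions and to confirm that the blocker count is precisely doubled.
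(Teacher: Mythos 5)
Your proposal is correct and takes essentially the same route as the paper: the paper's proof establishes the same disjoint-union decomposition of $L(\sigma)$ relative to $L(\sigma s_{j_1,j_2})$ by the same case analysis on whether one or both coordinates are congruent to $j_1$ or $j_2$ modulo $n$, which (after counting orbits) is exactly your identity $\ell(\sigma)-\ell(\sigma s_{j_1,j_2}) = 1 + 2\min(M,N) + 2|B|$. Your reduction to the length-drop computation via the subword property and Lemma~\ref{le:lengthaffine} also matches the paper's use of a reduced decomposition to identify $\sigma'$ as $\sigma s_{j_1,j_2}$.
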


\begin{proof}
	Consider $(j_1,j_2)\in L(\sigma)$ and
	let $s=s_{j_1,j_2}$.
	Consider integers $i<j$ with $i-j{\not\in}n\BZ$.
	
	If $s(i)<s(j)$, then
	$(i,j)\in L(\sigma)$ if and only if $s(i,j)=(s(i),s(j))\in L(\sigma s)$.

	Assume now $s(i)>s(j)$.	We have three possibilities:

	$\bullet\ $ $i-j_1\in n\BZ$, $j-j_2{\not\in}n\BZ$: we have
	$(i,j)\in L(\sigma)$ if and only if $(i,j)\in L(\sigma s)$ or
	$\sigma(i)>\sigma(j)>\sigma s(i)$ (and then $(i,j){\not\in}L(\sigma s)$)

	$\bullet\ $ $i-j_1{\not\in} n\BZ$, $j-j_2\in n\BZ$:  we have
	$(i,j)\in L(\sigma)$ if and only if $(i,j)\in L(\sigma s)$ or
	$\sigma s(j)>\sigma(i)>\sigma(j)$ (and then $(i,j){\not\in}L(\sigma s)$).

	$\bullet\ $ $i=j_1+nr$, $j=j_2+nr'$ with $r,r'\in \BZ$: we have
	$(i,j)\in L(\sigma)$ if and only if $(i,j)\in L(\sigma s)$ or
	$\sigma(j_1)-\sigma(j_2)>n(r'-r)>\sigma(j_2)-\sigma(j_1)$
	(and then $(i,j){\not\in}L(\sigma s)$).

 We deduce there is an injective map $a:L(\sigma s)\to L(\sigma)$
	given by 
	$$a((i,j))=\begin{cases}
		(i,j)&\text{ if } s(i)>s(j) \\
		s(i,j) & \text{ otherwise}
	\end{cases}$$
	and 
	\begin{multline*}
	L(\sigma)=a(L(\sigma s))\sqcup \coprod_{|r|<\min\bigl(\frac{j_2-j_1}{n},
	\frac{\sigma(j_1)-\sigma(j_2)}{n}\bigr)}\bigl((j_1+nr,j_2)+n\BZ\bigr)\sqcup\\
	\coprod_{\substack{j_1<i<j_2\\ \sigma(j_1)>\sigma(i)>\sigma(j_2)}}
	\Bigl(\bigl((j_1,i)+n\BZ\bigl)\sqcup\bigl((i,j_2)+n\BZ\bigr)\Bigr).
\end{multline*}
	Note that $a(L(\sigma s))\sqcup ((j_1,j_2)+n\BZ)\subset L(\sigma)$. 
	
	\medskip
	Let us now prove the lemma.
	We have $\sigma=c^dw$ and
	$\sigma'=c^{d'}w'\in\hat{\GS}_n$ for some $w,w'\in W_n$.
	Assume $\sigma'<\sigma$ and $\ell(\sigma')=\ell(\sigma)-1$.
	We have $d=d'$, $w'<w$ and $\ell(w')=\ell(w)-1$. It follows that
	there is a reduced decomposition $w=s_{a_1}\cdots s_{a_l}$ and $r\in\{1,\ldots,l\}$
	such that $w'=s_{a_1}\cdots s_{a_{r-1}}s_{a_{r+1}}\cdots s_{a_l}$.
	Let $j_1=s_{a_l}\cdots s_{a_{r+1}}(i_r)$ and
	$j_2=s_{a_l}\cdots s_{a_{r+1}}(i_r+1)$. We have $(j_1,j_2)\in
	L(\sigma)$ and $\sigma'=\sigma s_{j_1,j_2}$ (Lemma \ref{le:lengthaffine}).

	The discussion above shows that $\{i\in\BZ\ |\ j_1<i<j_2,\
	\sigma(j_1)>\sigma(i)>\sigma(j_2)\}=\emptyset$ and
	$\min\bigl(\frac{j_2-j_1}{n}, \frac{\sigma(j_1)-\sigma(j_2)}{n}\bigr)<1$.
	The lemma follows.
\end{proof}

\begin{example}
	The elements of $\tilde{L}(\sigma)$ are in bijection with intersection points between
	strands of a ``good diagram" representing $\sigma$. Here, we define a strand
	diagram to be good if no more than two strands intersect at a given point and if the
	diagram minimizes the total number of intersection points. Similarly,
	the elements of $L(\sigma)$ correspond to intersections in an unfolded good strand
	diagram. 
	
	These descriptions can be deduced from Lemma \ref{le:intersectionlength} below, that
	shows those
	statements hold for pairs of strands. Now, the intersection point set for a good
	diagram is the disjoint union over intersection sets between pairs of strands, and
	a good diagram minimizes the intersection number among good diagrams if and only of
	each pair of strands minimizes its intersection number.
	
	For example:
	$$\includegraphics[scale=0.9]{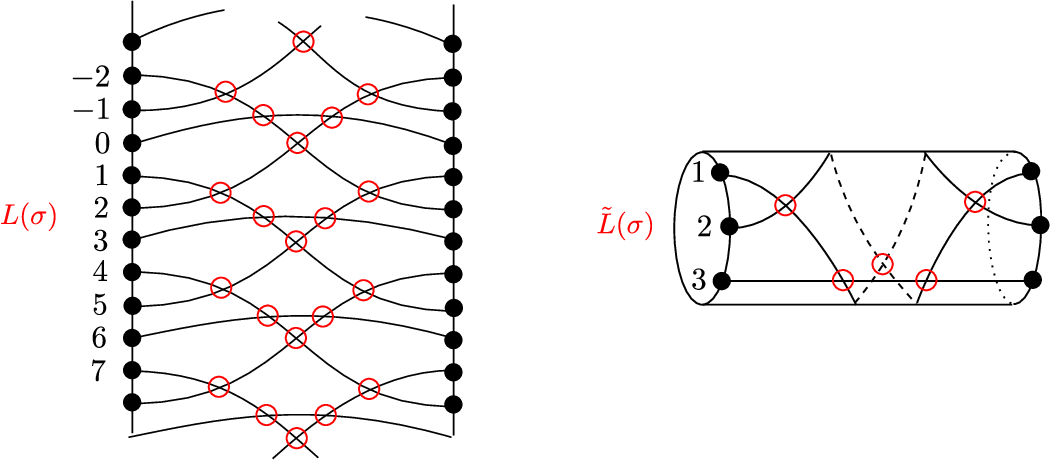}$$
\end{example}

\subsubsection{Extended affine Hecke algebra}
We let $c$ act on the differential graded algebra
$H^{\mathrm{nil}}(W_n)$ by $c(T_a)=T_{a+1}$.
Let $\hat{H}_n=H^{\mathrm{nil}}(W_n)\rtimes\langle c\rangle$\indexnot{Hn}{\hat{H}_n}.
For $n\ge 2$, it
is the differential graded $\BF_2$-algebra generated by
$\{T_a\}_{a\in\BZ/n}$\indexnot{Ta}{T_a} and $c^{\pm 1}$ with relations
$$T_a^2=0,\ cT_a=T_{a+1}c,\ T_aT_b=T_bT_a \text{ if }a\neq b\pm 1$$
$$T_aT_{a+1}T_a=T_{a+1}T_aT_{a+1}\ (\text{ for }n>2\ )$$
and differential $d(T_a)=1$, $d(c)=0$. The element $c$ has degree $0$,
while $T_a$ has degree $-1$. Note that $\hat{H}_1=\BF_2[\hat{\GS}_1]=
\BF_2\langle c\rangle$,
a differential graded algebra in degree $0$ with $d=0$.

\medskip
Let $w\in W_n$, $d\in\BZ$ and $w'=wc^d$. We put $T_{w'}=T_w c^d$. We also put
$T_\sigma=T_w c^d$ for $\sigma=wc^d$.
The set $\{T_{\sigma}\}_{\sigma\in\hat{\GS}_n}$ is a basis of $\hat{H}_n$.

\begin{rem}
Define a filtration on $\BF_2[\hat{\GS}_n]$ with
$(\BF_2[\hat{\GS}_n])^{\ge -i}$ the subspace spanned by group elements
$w\in \hat{\GS}_n$ with $\ell(w)\le i$.
The associated graded algebra is $\hat{H}_n$.
\end{rem}

%

We put $\hat{H}_0=\BF_2$.

\begin{rem}
	The group $\hat{\GS}_n$ is more classically described as a semi-direct product
	$\BZ^n\rtimes\GS_n$ (cf \S\ref{se:defextSn})
	coming from its description as the extended affine Weyl group
	of $\GL_n$. The nil affine Hecke algebra of $\GL_n$ associated with this
	description (cf e.g. \cite[\S 2.2.2]{Rou2})
	is {\em not} isomorphic to $\hat{H}_n$. When considering
	invertible (instead of $0$) parameters, the two algebras are isomorphic.
\end{rem}

\begin{example}
	An element $T_\sigma$ of $\hat{H}_n$ will be representated by a good strand diagram
	for $\sigma$.
	The multiplication of $T_\sigma$ and $T_{\sigma'}$ is obtained by
	concatenating the
	diagrams of $\sigma$ and $\sigma'$ (as in the multiplication of $\sigma$ and
	$\sigma'$). If the corresponding diagram is good, then $T_{\sigma}T_{\sigma'}=
	T_{\sigma''}$, where $\sigma''$ is represented by the concatenated diagram.
	Otherwise, $T_\sigma T_{\sigma'}=0$. For example:
$$\includegraphics[scale=0.9]{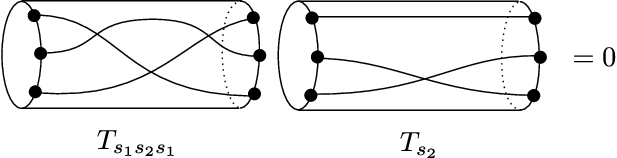}$$
	
\end{example}

\subsubsection{Positive versions}
\label{se:positive}
Let $\hat{\GS}_n^+$\indexnot{Sn+}{\hat{\GS}_n^+} be the submonoid of
$\hat{\GS}_n$ of permutations $\sigma$
such that $\sigma(\BZ_{>0})\subset\BZ_{>0}$.
Note that $\hat{\GS}_n^+$ is stable under left and right multiplication by
$\GS_n$.

There is 
a decomposition $\hat{\GS}_n^+=(\BZ_{\ge 0})^n\rtimes\GS_n$.

We have $s_{r-1}s_{r-2}\cdots s_1cs_{n-1}s_{n-2}\cdots s_r=(\underbrace{0,\ldots,0,1,0,
\ldots,0}_{\mathrm{pos. }r})\in(\BZ_{\ge 0})^n$ for $r\in\{1,\ldots,n\}$, hence
$v$ restricts to an isomorphism
from the submonoid of $W_n\rtimes\langle c\rangle$ generated by $s_1,\ldots,s_{n-1},c$ to
$\hat{\GS}_n^+$.


\medskip
Let $\hat{H}_n^+=\bigoplus_{w\in\hat{\GS}_n^+}\BF_2 T_w$,
\indexnot{Hn+}{\hat{H_n^+}}
an $\BF_2$-subspace of $\hat{H}_n$ containing $H_n$.

\begin{prop}
	\label{le:generatorsrelationsHnhat+}
	$\hat{H}_n^+$ is a differential graded subalgebra of $\hat{H}_n$.

	The algebra $\hat{H}_n^+$ has a presentation with generators $T_1,\ldots,T_{n-1},c$
	and relations
	$$T_i^2=0,\ T_iT_j=T_jT_i \text{ if }|i-j|>1,\
	T_iT_{i+1}T_i=T_{i+1}T_iT_{i+1} (\text{ if }n>2\ )$$
	$$cT_i=T_{i+1}c\text{ for }1\le i<n-1
	\text{ and }c^2T_{n-1}=T_1c^2.$$
\end{prop}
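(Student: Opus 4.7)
For the first claim, multiplicative closure is immediate: since $\hat{\GS}_n^+$ is a submonoid of $\hat{\GS}_n$, each product $T_wT_{w'}$ with $w,w'\in\hat{\GS}_n^+$ equals $0$ or $T_{ww'}$, both lying in $\hat{H}_n^+$. For closure under $d$, write $w=uc^r$ with $u\in W_n$ and compute $d(T_w)=d(T_u)c^r$ using $d(c)=0$; by Lemma \ref{le:lengthoneaffine}, each nonzero term is of the form $T_{ws_{J_1,J_2}}$ for some $(J_1,J_2)\in L(w)$ (obtained by conjugating the reflections arising in $d(T_u)$ by $c^r$). I would then verify directly that $ws_{J_1,J_2}\in\hat{\GS}_n^+$ whenever $w\in\hat{\GS}_n^+$ and $(J_1,J_2)\in L(w)$: for $i>0$ with $i\not\equiv J_1,J_2\pmod n$ the element $s_{J_1,J_2}$ fixes $i$, so $ws_{J_1,J_2}(i)=w(i)>0$; for $i=J_1+kn>0$, the inequality $J_1<J_2$ yields $J_2+kn>J_1+kn>0$ and hence $w(J_2+kn)>0$; the case $i\equiv J_2\pmod n$ is symmetric.

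For the presentation, let $A$ denote the algebra defined by the stated generators and relations. I would first verify that these relations hold in $\hat{H}_n^+$: the Hecke relations among the $T_i$'s are inherited from $H_n\subset \hat{H}_n^+$; the relation $cT_i=T_{i+1}c$ for $1\le i\le n-2$ is inherited from $\hat{H}_n$; and the relation $c^2T_{n-1}=T_1c^2$ follows from two applications of the $\hat{H}_n$-relation $cT_a=T_{a+1}c$ (indices in $\BZ/n$): $c^2T_{n-1}=c(cT_{n-1})=c(T_0c)=(cT_0)c=T_1c^2$. This produces an algebra map $\phi:A\to\hat{H}_n^+$ sending each generator to the element of the same name.

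Surjectivity of $\phi$ uses that $\hat{\GS}_n^+$ is generated as a monoid by $s_1,\ldots,s_{n-1},c$ (cf.\ \S\ref{se:positive}); for each $w=uc^r\in\hat{\GS}_n^+$ one has $r\ge 0$ (positivity of $w$ together with the fact that the $w(i)$ for $i=1,\ldots,n$ form a complete set of positive residues mod $n$ forces $\sum_{i=1}^n(w(i)-i)\ge 0$), and a positive word in $\{s_1,\ldots,s_{n-1},c\}$ with exactly $\ell(w)=\ell(u)$ occurrences of $s_i$'s can be built from a reduced $W_n$-expression of $u$ by eliminating the $s_0$'s via $s_0c=cs_{n-1}$ together with $cs_i=s_{i+1}c$, consuming the $r$ available $c$'s; the corresponding monomial in $A$ maps to $T_w$ under $\phi$. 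The main obstacle is injectivity. I would construct an explicit spanning set $\{\tilde T_w\}_{w\in\hat{\GS}_n^+}$ of $A$ by fixing, for each $w=\lambda\sigma\in(\BZ_{\ge 0})^n\rtimes\GS_n$, a canonical monomial combining products of the expressions $e_r=s_{r-1}\cdots s_1cs_{n-1}\cdots s_r$ for the components of $\lambda$ with a standard reduced word for $\sigma$. Spanning would be shown by rewriting an arbitrary monomial in $A$ using (i) $cT_i=T_{i+1}c$ to commute individual $c$'s past $T_i$ for $i\le n-2$, (ii) $c^2T_{n-1}=T_1c^2$ to commute pairs of $c$'s past $T_{n-1}$, and (iii) the Hecke relations to normalize products of $T_i$'s. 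The delicate step is verifying confluence via a diamond-lemma analysis of critical pairs, principally involving the interaction of (ii) with the braid relation $T_{n-2}T_{n-1}T_{n-2}=T_{n-1}T_{n-2}T_{n-1}$ (when $n\ge 3$) and with $T_{n-1}^2=0$. Once spanning is established, $\phi(\tilde T_w)=T_w$ combined with linear independence of $\{T_w\}$ in $\hat{H}_n^+$ yields linear independence of $\{\tilde T_w\}$ in $A$ and hence injectivity of $\phi$.
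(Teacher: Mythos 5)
Your argument for the first claim (differential closure of $\hat{H}_n^+$) is correct and more direct than the paper's: there, one introduces the subalgebra $H$ generated by $T_1,\ldots,T_{n-1},c$, which is trivially a DG subalgebra since $d(T_i)=1$ and $d(c)=0$ already lie in $H$, and proves $H=\hat{H}_n^+$ by induction on $\ell(w)+|w|$ where $|w|=\sum_{i=1}^n w(i)$; that induction simultaneously handles the surjectivity of $\rho$. In your direct check, note that the case $i\equiv J_2\pmod n$ is not a mirror image of $i\equiv J_1\pmod n$: since $J_1+kn$ may be nonpositive, you must invoke the inversion $w(J_1)>w(J_2)$ rather than $J_1<J_2$ to conclude $w(J_1+kn)=w(J_1)+kn>w(J_2)+kn>0$.

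For the presentation itself you identify the correct strategy: build the surjection $\phi:A\to\hat{H}_n^+$, exhibit a spanning set of $A$ indexed by $\hat{\GS}_n^+$, and observe that $\phi$ carries this set onto the basis $\{T_\sigma\}_{\sigma\in\hat{\GS}_n^+}$. However, essentially the entire technical content of the paper's proof is the spanning statement (Lemma~\ref{le:generatorAn}), established by a detailed case analysis of how right multiplication by $t_j$ and by $b$ transforms the monomials $t_w\gamma_{I_m}\cdots\gamma_{I_1}$. You flag this step as ``delicate'' and propose, but do not carry out, a diamond-lemma confluence analysis; and it is not evident that your candidate spanning family built from the $e_r$'s admits a tractable rewriting system, since the $\gamma_I$'s in the paper are chosen specifically to make right multiplication close up within the family. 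This leaves a genuine gap in the injectivity argument. Your surjectivity sketch (``eliminating the $s_0$'s'') is also heuristic rather than complete: the rewrite $s_ic=cs_{i-1}$ shifts subscripts downward and can create fresh $s_0$'s, so termination is not obvious, whereas the paper's induction on $\ell(w)+|w|$ terminates verifiably by peeling off a factor $T_i$ or $c$ at each step.
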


The remainder of \S\ref{se:positive} will be devoted to the proof of 
Proposition \ref{le:generatorsrelationsHnhat+}.

\smallskip
	Let $A_n$ be the $k$-algebra with generators $t_1,\ldots,t_{n-1},b$
	and relations
	$$t_i^2=0,\ t_it_j=t_jt_i \text{ if }|i-j|>1,\
	t_it_{i+1}t_i=t_{i+1}t_it_{i+1}(\text{ if }n>2\ )$$
	$$bt_i=t_{i+1}b\text{ for }1\le i<n-1
	\text{ and }b^2t_{n-1}=t_1b^2.$$

\smallskip
Given $i\in\{1,\ldots,n\}$,  we put $\beta_i=bt_{n-1}\cdots t_i$.
Given $I\subset\{1,\ldots,n\}$ non-empty with elements $1\le i_1<\cdots <i_r\le n$, we put
$\gamma_I=\beta_{i_1+r-1}\beta_{i_2+r-2}\cdots\beta_{i_r}$.
Note that $\gamma_{\{1,\ldots,n\}}=b^n$.

\smallskip
There is a morphism of algebras $H_n\to A_n,\ T_i\mapsto t_i$ and we denote by $t_w$ the 
image of $T_w$ for $w\in\GS_n$.

\begin{example}
	The elements of $\hat{\GS}_n^+$ correspond to strand diagrams where the strands wind
	positively around the cylinder. The relation $c^2T_{n-1}=T_1c^2$ is illustrated below:
	$$\includegraphics[scale=0.8]{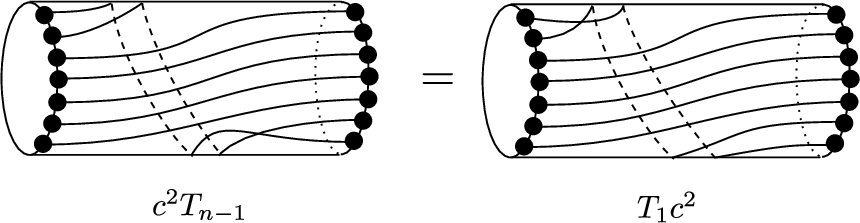}$$

	We describe some elements $w\in\hat{\GS}_7^+$ and the image of $T_w$ in $A_7$:
	$$\includegraphics[scale=0.9]{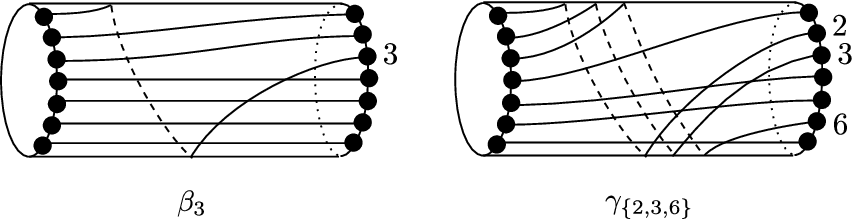}$$

	The element $(0,0,0,0,1,0,0)\in(\BZ_{\ge 0})^7$ corresponds to the following element
	of $\GS_7^+$:
	$$\includegraphics[scale=0.8]{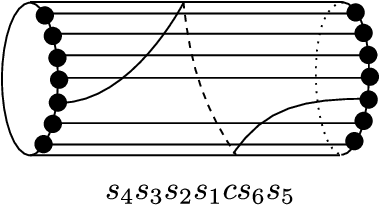}$$

\end{example}

\begin{lemma}
	\label{le:generatorAn}
	The set $\{t_w\gamma_{I_m}\cdots \gamma_{I_1}\}$ with $w\in\GS_n$, $m\ge 0$ and
	$I_1\subset\{1,\ldots,n\}$, $I_r\subset\{1,\ldots,|I_{r-1}|\}$ for $1<r\le m$
	generates $A_n$ as a $k$-vector space.
\end{lemma}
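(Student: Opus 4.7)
The plan is to show that the $k$-linear subspace $V\subseteq A_n$ spanned by the elements $t_w\gamma_{I_m}\cdots\gamma_{I_1}$ coincides with all of $A_n$. Since $1=t_e\in V$ (taking $m=0$), it suffices to verify that $V$ is closed under left multiplication by each of the generators $t_1,\ldots,t_{n-1},b$; by induction on monomial length this will give $V=A_n$.

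Closure under left multiplication by each $t_i$ is essentially formal. Given a normal form $t_w\gamma_{I_m}\cdots\gamma_{I_1}$, the product $t_i\cdot t_w$ equals either $0$ or $t_{s_iw}$ by the nil Hecke relations satisfied by $t_1,\ldots,t_{n-1}$ inside $A_n$, while the tail $\gamma_{I_m}\cdots\gamma_{I_1}$ is untouched; hence the result again lies in $V$.

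Closure under left multiplication by $b$ is the substantive content, and I would reduce it to a core assertion in two stages. First, a sub-induction on $\ell(w)$ using the commutation $bt_j=t_{j+1}b$ for $1\le j\le n-2$ (pushing $b$ rightward through a reduced expression of $w$ until it passes through or gets stuck against the first $t_{n-1}$) shows that $b\cdot t_w$ can be expressed as a $k$-linear combination of elements $t_{w'}\cdot\beta_i\cdot t_u$ with $\beta_n=b$, where $w',u\in\GS_n$ and $i\in\{1,\ldots,n\}$. After applying the $t_i$-closure to absorb $t_u$ on the right and $t_{w'}$ on the left, the problem reduces to showing that $\beta_i\cdot\gamma_{I_m}\cdots\gamma_{I_1}\in V$ for every $i$.

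The main obstacle is this last reduction. My approach is to analyze $\beta_i\cdot\gamma_{I_m}$ case by case using the decomposition $\hat{\GS}_n^+=(\BZ_{\ge 0})^n\rtimes\GS_n$, under which $\beta_j=e_1\cdot(1,2,\ldots,j)$ and $\gamma_I$ has $(\BZ_{\ge 0})^n$-part $e_1+\cdots+e_{|I|}$. In the favorable case (when the shifts keep all indices positive), $\beta_i\cdot\gamma_{I_m}$ merges into a single $\gamma_{I'_m}$ with $|I'_m|=|I_m|+1$, and the layered condition $|I_{m-1}|\le|I_m|<|I'_m|$ is automatic. In the obstructed case one must invoke the relation $b^2t_{n-1}=t_1b^2$, whose simplest incarnation reads $\beta_n\beta_{n-1}=t_1b^2=t_1\gamma_{\{n-1,n\}}$; more generally, iterated use of this relation lets a $\beta_n$ hop past a $\beta$ anchored at index $1$, emitting a $t_1$ that is absorbable into the leading $t_w$ factor and yielding either an internal merger inside $\gamma_{I_m}$ or a new leading single-element layer $\gamma_{\{j\}}$ of size $1\le|I_m|$ prepended to the product. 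Iterating resolves $\beta_i\cdot\gamma_{I_m}\cdots\gamma_{I_1}$ into a linear combination of normal forms; the delicate part is the bookkeeping needed to ensure that the decreasing-size constraints $|I_{r+1}|\le|I_r|$ are preserved through every reshuffle, which is the technical heart of the argument.
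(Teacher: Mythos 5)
Your proposal takes a genuinely different route from the paper: you propose closing the span under \emph{left} multiplication by the generators, whereas the paper closes it under \emph{right} multiplication. The reductions you carry out are sound up to a point. Closure under left multiplication by $t_i$ is indeed formal: $t_i t_w$ equals $t_{s_i w}$ or $0$ by the nil Hecke relations, and the tail of $\gamma$'s is untouched. Your sub-induction showing $b\, t_w = t_{w'}\beta_i$ or $0$ is also correct; the formula $\beta_i t_j$ always produces a single $t$ (or nothing) to the \emph{left} of a new $\beta_{i'}$, so in fact no trailing factor $t_u$ ever appears and that part of your normal form is spurious. The genuine problem, which you acknowledge yourself, is that the proposal stops exactly where the content of the lemma begins.

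The gap is concrete: you reduce to showing $\beta_i\gamma_{I_m}\cdots\gamma_{I_1}\in V$, and then describe the outcome you hope for ("an internal merger inside $\gamma_{I_m}$ or a new leading single-element layer"), but you do not establish it, and you explicitly defer "the bookkeeping needed to ensure that the decreasing-size constraints are preserved through every reshuffle, which is the technical heart of the argument." That bookkeeping \emph{is} the lemma. In the paper's right-multiplication proof, the analogous bookkeeping is actually carried out: there are explicit formulas for $\beta_i t_j$, for $\gamma_I t_j$ (equation (\ref{eq:gammaT}), producing $u\gamma_{I'}$ with $|I'|=|I|$ and $\max I'\le\max I$), for $\gamma_I\beta_s$ when $s\ge\max I$, and finally a closed formula for $\gamma_{I_m}\cdots\gamma_{I_1}b$ showing exactly how each $I_r$ shifts and why the nesting constraints $I_r\subset\{1,\ldots,|I_{r-1}|\}$ survive.

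It is also worth noting that the left--right asymmetry here is not cosmetic. On the right, $b=\beta_n$ first meets $\gamma_{I_1}$, and the constraint $I_1\subset\{1,\ldots,n\}$ is the loosest one, so $n\ge\max(I_1)$ automatically and the merge formula for $\gamma_I\beta_s$ applies directly. On the left, $\beta_i$ first meets $\gamma_{I_m}$, whose constraint $I_m\subset\{1,\ldots,|I_{m-1}|\}$ is the tightest; a direct merge $\beta_i\gamma_{I_m}=\gamma_{\{i-|I_m|\}\cup I_m}$ requires $1\le i-|I_m|<\min I_m$, which forces $1\notin I_m$, and in the remaining cases one must repeatedly invoke $b^2t_{n-1}=t_1b^2$ and track emitted $t_1$'s propagating leftward through all the previously absorbed factors. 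There is no anti-automorphism of $A_n$ that swaps the two sides and preserves the proposed spanning set (the involution $T_\sigma\mapsto T_{\sigma^{-1}}$ on $\hat{H}_n$ does not preserve $\hat{H}_n^+$), so one cannot transport the paper's right-multiplication computation for free. Your route may well be completable, but as written it is a plan without its central computation, so it does not yet constitute a proof.
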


\begin{proof}
	Let $i\in\{1,\ldots,n\}$ and $j\in\{1,\ldots,n-1\}$. We have
	$$\beta_i t_j= \begin{cases}
		t_{j+1}\beta_i & \text{ if }j<i-1\\
		\beta_{i-1} & \text{ if }j=i-1 \\
		0 & \text{ if }j=i \\
		t_j\beta_i & \text{ if }j>i.
	\end{cases}$$

	Consider $I\subset\{1,\ldots,n\}$ non-empty with elements $1\le i_1<\cdots <i_r\le n$.
	We put $i_0=0$ and $i_{r+1}=n+1$.

	Consider $j\in\{1,\ldots,n-1\}$. 
	Fix $k\in\{0,\ldots,r\}$ such that $i_k\le j<i_{k+1}$.
	Let us show that
	\begin{equation}
		\label{eq:gammaT}
	\gamma_I t_j= \begin{cases}
		t_{j+r-k}\gamma_I & \text{ if }i_k<j<i_{k+1}-1 \\
		0 & \text{ if } i_k=j<i_{k+1}-1 \\
		\gamma_{\{i_1<\cdots<i_k<i_{k+1}-1<i_{k+2}<\cdots<i_r\}} & \text{ if }i_k<j=i_{k+1}-1 \\
		t_k\gamma_I & \text{ if }i_k=j=i_{k+1}-1.
	\end{cases}
	\end{equation}

	We have
	$$\gamma_I t_j=
	\beta_{i_1+r-1}\cdots\beta_{i_{k+1}+r-k-1}t_{j+r-k-1}\beta_{i_{k+2}+r-k-2}\cdots
	\beta_{i_r}.$$
	If $j<i_{k+1}-1$, then $\beta_{i_{k+1}+r-k-1}t_{j+r-k-1}=
	t_{j+r-k}\beta_{i_{k+1}+r-k-1}$ and we deduce the first two equalities in
	(\ref{eq:gammaT}). Assume now $j=i_{k+1}-1$. We have
	$\beta_{i_{k+1}+r-k-1}t_{j+r-k-1}=\beta_{i_{k+1}+r-k-2}$ and the third equality in
	(\ref{eq:gammaT}) follows.
	The last equality from the fact that given $i\in\{1,\ldots,n-1\}$, we have
	\begin{align*}
	\beta_{i+1}\beta_i&=b^2t_{n-2}\cdots t_it_{n-1}\cdots t_i=
b^2t_{n-1}\cdots t_it_{n-1}\cdots t_{i+1}=t_1b^2t_{n-2}\cdots t_it_{n-1}\cdots t_{i+1}\\
		&=t_1\beta_{i+1}^2.
	\end{align*}

	We deduce that $\gamma_I t_j=u\gamma_{I'}$ for some $I'\subset\{1,\ldots,n\}$
	with $|I'|=|I|$ and $\max(I')\le\max(I)$ and $u\in\{0,1,t_1,\ldots,t_{n-1}\}$.

	Fix $s\in\{1,\ldots,n\}$ with $s\ge \max(I)$.
	We have
	$$\gamma_I \beta_s=\begin{cases}
		\beta_r\gamma_{\{i_2-1,\ldots,i_r-1,s\}} & \text{ if }
		1\in I \\
		\gamma_{(I-1)\cup\{s\}} &\text{ otherwise.}
	\end{cases}$$

	\smallskip
	Consider $I_1,\ldots,I_m$ as in the lemma. Let $k$ be minimal such that
	$1{\not\in}I_k$. We put $k=m+1$ if there is no such $k$.
	Define $u=\gamma_{\{|I_m|\}}$ if $k=m+1$ and $u=1$ otherwise.
	Put $I_0=\{1,\ldots,n\}$. Recall that $b=\beta_n$.
	We have
	$$\gamma_{I_m}\cdots\gamma_{I_1}b=
	u\gamma_{I'_m}\cdots\gamma_{I'_1}$$
	where
	$I'_r=\{i-1|i\in I_r\setminus\{1\}\}\cup\{|I_{r-1}|\}$ for $1\le r<k$,
	$I'_k=\{i-1|i\in I_k\}\cup\{|I_{k-1}|\}$ and
	$I'_r=I_r$ for $r>k$.

	\smallskip
	We deduce that the set $B=\{t_w\gamma_{I_m}\cdots \gamma_{I_1}\}$ of the lemma
	is stable under right multiplication by $t_j$ for 
	$j\in\{1,\ldots,n-1\}$ and by $b$. Since $B$ contains $1$, it follows that $B$
	is a generating family for $A_n$ as an $\BF_2$-vector space.
\end{proof}

\begin{rem}
	An example of the description of $\gamma_It_j$ in the proof of
	Lemma \ref{le:generatorAn} is
	given below:

$$\includegraphics[scale=0.8]{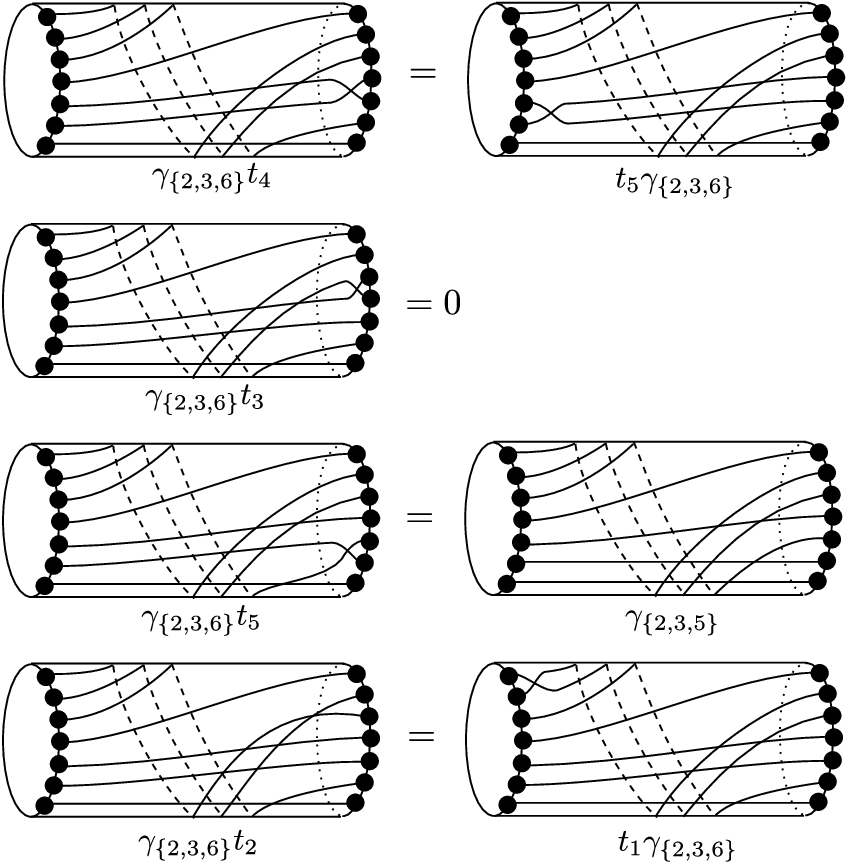}$$
\end{rem}

\begin{proof}[Proof of Proposition \ref{le:generatorsrelationsHnhat+}]
	Let $H$ be the subalgebra of $\hat{H}_n$ generated by $T_1,\ldots,T_{n-1},c$.
	This is a differential graded subalgebra of $\hat{H}_n$.
	Given $w\in\hat{\GS}_n$, let $|w|=\sum_{i=1}^n w(i)$. 
	Let $w\in\hat{\GS}_n^+$, $w\neq 1$. We show by induction on $\ell(w)+|w|$ that
	$T_w\in H$.

	Assume $\ell(ws_i)<\ell(w)$ for some $i\in\{1,\ldots,n-1\}$. We have
	$ws_i\in\hat{\GS}_n^+$ and $|ws_i|=|w|$, hence by induction $T_{ws_i}\in
	H$. We deduce that $T_w=T_{ws_i}T_i\in H$.

	Otherwise, we have $0<w(1)<\cdots<w(n)$, hence $w(n)>n$ since $w\neq 1$. It 
	follows that $wc^{-1}\in\hat{\GS}_n^+$ and $|wc^{-1}|<|w|$, hence
	$T_{wc^{-1}}\in H$ by induction. So $T_w=T_{wc^{-1}}T_c\in H$.

	We have shown that $\hat{H}_n^+\subset H$.
	Since $\hat{H}_n^+$ is stable
	under right multiplication by $T_c$ and by $T_i$ for $i\in\{1,\ldots,n-1\}$, it
	follows that $H=\hat{H}_n^+$.

	\medskip
	There is a surjective morphism of algebras $\rho:A_n\to\hat{H}_n^+,\
	t_i\mapsto T_i,\ b\mapsto c$.
	Given $I=\{i_1<\cdots<i_r\}$ a non-empty subset of $\{1,\ldots,n\}$, we put
	$$c_I=(cs_{n-1}\cdots s_{i_1+r-1})(cs_{n-1}\cdots s_{i_2+r-2})\cdots 
	(cs_{n-1}\cdots s_{i_r})\in \hat{\GS}_n.$$
	We have $c_I(i_l)=n+l$ for $1\le l\le r$ and
	$c_I(j)=j+r-k$ if $i_k<j<i_{k+1}$ (where we put $i_0=0$ and $i_{r+1}=n+1$).

	Let $E$ be the set of families $(I_1,\ldots,I_m)$ where 
	$m\ge 0$, $I_1\subset\{1,\ldots,n\}$ and $I_r\subset\{1,\ldots,|I_{r-1}|\}$
	for $1<r\le m$.

	Given $w\in\GS_n$ and $(I_1,\ldots,I_m)\in E$, we have
	$\rho(t_w\gamma_{I_1}\cdots\gamma_{I_m})=T_wT_{c_{I_1}}\cdots T_{c_{I_m}}$
	and that element is either $T_{wc_{I_1}\cdots c_{I_m}}$ or $0$.

	We define a map $\phi:(\BZ_{\ge 0})^n\to E$. Let $a\in(\BZ_{\ge 0})^n$.
	Let $m=\max\{a(i)\}_{1\le i\le n}$. We put $I_1=a^{-1}(\BZ_{\ge 1})$ and we
	define inductively $I_r$ for $2\le r\le m$ by 
	$I_r=c_{I_{r-1}}\cdots c_{I_1}(a^{-1}(\BZ_{\ge r}))$. 
	We put $\phi(a)=(I_1,\ldots,I_m)$.
	We have 
	$$c_{I_m}\cdots c_{I_1}(i)=na(i)+|a^{-1}(\BZ_{>a(i)})|+
	(\text{position of }i\text{ in }a^{-1}(a(i))).$$

	We define a map $\psi:E\to (\BZ_{\ge 0})^n$. Let $(I_1,\ldots,I_m)\in E$.
	We define $a\in(\BZ_{\ge 0})^n$ by $a(i)=\lfloor\frac{c_{I_m}\cdots c_{I_1}(i)-1}{n}
	\rfloor$ and we put $\psi(I_1,\ldots,I_m)=a$. The maps $\psi$ and $\phi$ are
	inverse bijections. We deduce that
	the map $E\to (\GS_n\setminus \hat{\GS}_n^+)$ sending $(I_1,\ldots,I_m)$
	to the class of $c_{I_m}\cdots c_{I_1}$ is bijective. It follows that
	the map $\GS_n\times E\to\hat{\GS}_n^+,\ (w,(I_1,\ldots,I_m))\mapsto 
	wc_{I_m}\cdots c_{I_1}$ is bijective.

	If $\rho(t_w\gamma_{I_m}\cdots\gamma_{I_1})=
	T_{wc_{I_m}\cdots c_{I_1}}=0$ for some $w\in\GS_n$ and $(I_1,\ldots,I_m)\in E$,
	then the bijectivty of the map above shows that the image of
	$\rho$ is the span of a proper subset of a basis of $\hat{H}_n^+$, contradicting
	the surjectivity of $\rho$.

	This shows that the elements $\rho(t_w\gamma_{I_m}\cdots\gamma_{I_1})$ are
	distinct basis elements of $\hat{H}_n^+$, hence $\rho$ is an isomorphism.
\end{proof}

\begin{rem}
	The same method as the one used in the proof of Proposition 
	\ref{le:generatorsrelationsHnhat+} shows that
	$\hat{\GS}_n^+$ is the free $(\GS_n,\GS_n)$-monoid
on a generator $c$ with relations $c\cdot s_r=s_{r+1}\cdot c$ for $r\in\{1,\ldots,n-1\}$ and
$c^2\cdot s_{n-1}=s_1\cdot c^2$.
\end{rem}

\subsubsection{Pointed versions}
Given $n\ge 0$, we put
$H_n^\bullet=(\GS_n)^{\mathrm{nil}}$\indexnot{Hn}{H_n^\bullet}. This
is the quotient of the free pointed monoid generated by $T_1,\ldots,T_{n-1}$
by the relations (\ref{eq:relationsHn}). 
The differential is given by $d(T_i)=1$.
Note that $k[H_n^\bullet]=H_n$ and
$H_n^\bullet=\{0\}\cup\{T_w\}_{w\in\GS_n}$.

\smallskip
We define $\hat{\GS}_n^{\nil}$\indexnot{Sn}{\hat{\GS}_n^{\nil}}
to be the differential graded pointed monoid with
underlying differential pointed set $\{T_\sigma\}_{\sigma\in\hat{\GS}_n}\coprod\{0\}$
and multiplication, grading and differential that of $\hat{H}_n$.

We define $\hat{\GS}_n^{+,\nil}$\indexnot{Sn}{\hat{\GS}_n^{+,\nil}}
to be its differential graded pointed submonoid with non-zero
elements those that stabilize $\BZ_{>0}$.

\section{$2$-representation theory}
\label{se:2reptheory}
We recall that $k$ is a field of characteristic $2$.

\subsection{Monoidal category}
\subsubsection{Definition}
\label{se:defmoncat}
Let $\CU$\indexnot{U}{\CU} be the differential strict monoidal category generated by an object $e$ and a map $\tau:e^2\to e^2$ \indexnot{\tau}{\tau} subject to the
relations
\begin{equation}
\label{eq:definingtau}
d(\tau)=1,\ \tau^2=0 \text{ and }e\tau\circ \tau e\circ e\tau=
\tau e\circ e\tau \circ \tau e.
\end{equation}

There are isomorphisms of differential monoidal categories
$\opp:\CU\iso\CU^\opp$ and $\mathrm{rev}:\CU\iso\CU^{\mathrm{rev}}$\indexnot{opp}{\opp}
\indexnot{rev}{\mathrm{rev}}given on generators by
$e\mapsto e$ and $\tau\mapsto\tau$.

\medskip
The following result is clear.

\begin{prop}
The objects of the category $\CU$ are the $e^n$, $n\ge 0$. We have
$\Hom(e^n,e^m)=0$ if $n\neq m$ and there is an isomorphism of differential
algebras 
$$H_n\xrightarrow{\sim}\End(e^n),\ T_i\mapsto e^{i-1}\tau e^{n-i-1}.$$
There is a commutative diagram
$$\xymatrix{
	H_m\otimes H_n\ar[r]^-{T_i\otimes T_j\mapsto T_iT_{m+j}}
	\ar[d]_{\can}^\sim & H_{m+n}\ar[d]_\sim^{\can} \\
	\End(E^m)\otimes\End(E^n)\ar[r]_-{\otimes} & \End(E^{m+n})
	}$$
\end{prop}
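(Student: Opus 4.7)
The plan is to identify objects and vanishing $\Hom$-spaces directly from the universal presentation, construct a candidate isomorphism $\phi_n\colon H_n\to\End(e^n)$, and then establish that it is an isomorphism by exhibiting a concrete model $\CU'$ whose endomorphism algebras are by construction the $H_n$. Since $\CU$ is strict monoidal with a single generating object $e$ and generating morphism $\tau\colon e^2\to e^2$, every object is of the form $e^n$ for some $n\ge 0$, and every morphism is a linear combination of compositions of morphisms $e^a\tau e^b$. Each such elementary morphism preserves the total power of $e$, so $\Hom(e^n,e^m)=0$ for $n\ne m$.

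For the map $\phi_n$, set $\phi_n(T_i)=e^{i-1}\tau e^{n-i-1}$. The relation $T_i^2=0$ follows from $\tau^2=0$; the far commutation $T_iT_j=T_jT_i$ for $|i-j|>1$ from the interchange law in strict monoidal categories; and the braid relation from its analog for $\tau$. The differential is preserved because $d(e^{i-1}\tau e^{n-i-1})=e^{i-1}d(\tau)e^{n-i-1}=1_{e^n}$, matching $d(T_i)=1$. So $\phi_n$ is a well-defined morphism of differential algebras; it is surjective since $\End(e^n)$ is spanned by compositions of the morphisms $e^{i-1}\tau e^{n-i-1}$.

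For injectivity, I construct a differential strict monoidal category $\CU'$ with objects $\BZ_{\ge 0}$, with $\Hom_{\CU'}(m,n)=0$ for $m\ne n$ and $\End_{\CU'}(n)=H_n$. Composition is algebra multiplication in $H_n$, and the tensor product on morphisms is induced by the map $H_m\otimes H_n\to H_{m+n}$, $T_i\otimes 1\mapsto T_i$, $1\otimes T_j\mapsto T_{m+j}$. The images of $H_m$ and $H_n$ in $H_{m+n}$ commute (generators of one are at index distance at least $2$ from generators of the other), so this is an algebra homomorphism and $\otimes$ is bifunctorial; compatibility of the differential with $\otimes$ follows from its Leibniz property. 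The element $T_1\in H_2=\End_{\CU'}(2)$ satisfies $d(T_1)=1$, $T_1^2=0$, and the braid relation in $H_3$, so by the universal property of $\CU$ there is a differential strict monoidal functor $F\colon\CU\to\CU'$ with $F(e)=1$ and $F(\tau)=T_1$. On endomorphism algebras $F$ sends $e^{i-1}\tau e^{n-i-1}$ to $T_i$, so $F\circ\phi_n=\id_{H_n}$; combined with surjectivity, $\phi_n$ is an isomorphism.

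The compatibility square in the statement follows immediately from this construction: both legs compute the monoidal product on morphisms in $\CU$, which under $\phi$ corresponds to the horizontal map $H_m\otimes H_n\to H_{m+n}$ used to define $\otimes$ in $\CU'$. The main obstacle is verifying that $\CU'$ is a well-defined differential strict monoidal category; the delicate point is bifunctoriality of $\otimes$, which reduces to the commutation inside $H_{m+n}$ of the subalgebras generated by $\{T_1,\ldots,T_{m-1}\}$ and $\{T_{m+1},\ldots,T_{m+n-1}\}$.
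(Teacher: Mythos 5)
Your proof is correct. The paper gives no proof, merely labeling the result as ``clear,'' and your argument is the standard one: the generating object and morphism force every object to be an $e^n$ and every morphism to be degree-preserving; the presentation of $H_n$ maps onto $\End(e^n)$ by inspection of the defining relations of $\CU$; and injectivity is obtained by the universal property applied to the ``model'' category $\CU'$ built from the algebras $H_n$ themselves, giving a retraction $F\circ\phi_n=\mathrm{id}$. You correctly flag that the only real content is checking $\CU'$ is a well-defined differential strict monoidal category, with bifunctoriality of $\otimes$ reducing to commutation of the images of $H_m$ and $H_n$ in $H_{m+n}$ (automatic since the generating indices are at distance at least two), and you verify Leibniz compatibility of $d$ with $\otimes$. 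The commuting square then holds by construction of $\CU'$, since the horizontal arrow is precisely the tensor multiplication used to define $\otimes$ in the model.
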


The isomorphism $\opp:\CU\iso\CU^\opp$ gives rise to the
isomorphism of differential algebras \indexnot{opp}{\opp}
$$\opp:H_n\xrightarrow{\sim}H_n^\opp,\ T_i\mapsto T_i.$$

The isomorphism $\mathrm{rev}:\CU\iso\CU^{\mathrm{rev}}$ gives rise to the
isomorphism of differential algebras
$$\iota_n:H_n\xrightarrow{\sim}H_n,\ T_i\mapsto T_{n-i}\indexnot{iota_n}{\iota_n}.$$

The functor $-\otimes E^n$ induces an injective morphism of differential algebras
$H_r=\End(E^r)\to H_{r+n}=\End(E^{r+n}),\ T_i\mapsto T_i$ and we will identify $H_r$ with a subalgebra
of $H_{r+n}$ via this morphism.

The functor $E^n\otimes -$ induces a morphism of differential algebras
$$f_n:H_r=\End(E^r)\to H_{n+r}=\End(E^{n+r}),\ T_i\mapsto T_{n+i}\indexnot{f_n}{f_n}.$$

Note that $H_n$ commutes with $f_n(H_r)$ and that
$f_n=\iota_{n+r}\circ\iota_r$.

\subsubsection{$2$-representations}
Let $\CV$ be a differential category.

\begin{defi}
A {\em $2$-representation}\index[ter]{$2$-representation on a differential category} on $\CV$ is the data
of a strict monoidal differential functor $\CU\to\End(\CV)$.
\end{defi}

The data of a
$2$-representation on $\CV$ is the same as the data of a
differential endofunctor $E$ of $\CV$ and of $\tau=\tau_E\in\End(E^2)$ 
\indexnot{\tau}{\tau} \indexnot{\tau_E}{\tau_E}
satisfying
(\ref{eq:definingtau}).

Note that a $2$-representation on $\CV$ extends to a $2$-representation
on $\bar{\CV}$ and on $\CV^i$
(uniquely up to an equivalence unique up to isomorphism).

\smallskip
A {\em morphism of $2$-representations}\index[ter]{morphism of $2$-representations}
$(\CV,E,\tau)\to(\CV',E',\tau)$ is the data of a
differential functor $\Phi:\CV\to\CV'$ and of an isomorphism
of functors $\varphi:\Phi E\iso E'\Phi$ (with $d(\varphi)=0$) such that
$\tau'\Phi\circ E'\varphi\circ \varphi E=E'\varphi\circ \varphi E\circ \Phi\tau:
\Phi E^2\to E^{\prime 2}\Phi$.

\begin{example}
Let $\CV=k\mdiff$ and $E=\tau=0$. This is the ``trivial''
$2$-representation.
\end{example}

Let $\CV$ be a $2$-representation. The {\em opposite} $2$-representation\index[ter]{opposite
$2$-representation}
is $(\CV\mdiff,E',\tau')$, where
$E'(\zeta)=\zeta E$ and $\tau'(\zeta)=\zeta\tau\in\End(E^{\prime 2}(\zeta))$
for $\zeta\in\CV\mdiff$. Note that
the canonical functor $\CV\to (\CV\mdiff)\mdiff,\ v\mapsto (\zeta\mapsto
\zeta(v))$ is a fully faithful morphism of $2$-representations.

\smallskip
Assume $E$ has a left adjoint $E^\vee$. We still denote by $\tau$ the endomorphism of
$(E^\vee)^2$ corresponding to $\tau$ (cf \S\ref{se:cat}).  The pair $(E^\vee,\tau)$ defines the
{\em left dual $2$-representation}\index[ter]{left dual $2$-representation} of $(E,\tau)$.
Similarly, if $E$ has a right adjoint ${^\vee E}$, we obtain a 
{\em right dual $2$-representation}\index[ter]{right dual $2$-representation} 
$({^\vee E},\tau)$ of $(E,\tau)$.

\begin{rem}
	One can also consider a {\em lax $2$-representation} on $\CV$: this is the data
	of a lax monoidal differential functor $\CU\to\End(\CV)$.
\end{rem}

\begin{rem}
	The category $\CU$ has a structure of differential graded monoidal category
	with $\tau$ in degree $-1$ and one can consider (lax) $2$-representations on
	differential graded categories.
\end{rem}

\subsubsection{Pointed case}

We denote by $\CU^\bullet$\indexnot{U}{\CU^\bullet} the strict monoidal differential pointed category generated
by an object $e$ and a map $\tau\in\End(e^2)$ subject to the relations 
(\ref{eq:definingtau}). Its objects are the $e^n$, $n\ge 0$,
$\Hom(e^n,e^m)=0$ for $m\neq n$ and $\End(e^n)=H_n^\bullet$.

\medskip
Let $\CV$ be a differential pointed category.

A {\em $2$-representation} on $\CV$\index[ter]{$2$-representation on a differential pointed
category} is the data of a strict monoidal
differential pointed functor $\CU^\bullet\to\End(\CV)$.
This is equivalent to the data of an endofunctor $E$ of the differential pointed category
$\CV$ and $\tau\in\End(E^2)$ such that 
$(E,\tau)$ induce a $2$-representation on $k[\CV]$.

\subsection{Lax cocenter}
\subsubsection{Lax bi-$2$-representations}
\label{se:2birep}
A {\em lax bi-$2$-representation} on $\CV$ \index[ter]{lax bi-$2$-representation} is a lax monoidal differential
functor $E:\CU\otimes\CU\to\End(\CV)$. It corresponds to the data
of
\begin{itemize}
	\item differential endofunctors $E_{i,j}=E(e^i\otimes e^j)$ of $\CV$
	\item morphisms of differential algebras $H_i\otimes H_j\to
		\End(E_{i,j})$
	\item morphisms of differential functors $\mu_{(i,j),(i',j')}:E_{i,j} E_{i',j'}\to E_{i+i',j+j'}$
\end{itemize}
such that
\begin{enumerate}
	\item
$\mu_{(i,j),(i',j')}$ is equivariant for the action of $(H_i\otimes H_j)\otimes (H_{i'}\otimes H_{j'})$, where
the action on $E_{i+i',j+j'}$ is the restriction of the action of $H_{i+i'}\otimes H_{j+j'}$ via
the morphism $(a\otimes b)\otimes (a'\otimes b')\mapsto a f_i(a')\otimes b f_j(b')$
\item $\mu_{(i+i',j+j'),(i'',j'')}\circ(\mu_{(i,j),(i',j')}E_{i'',j''})=
	\mu_{(i,j),(i'+i'',j'+j'')}\circ (E_{i,j}\mu_{(i',j'),(i'',j'')})$.
\end{enumerate}

\medskip
Consider two actions of $\CU$ given by $(F_1,\tau_1)$ and
$(E_2,\tau_2)$ on $\CV$ and a closed morphism of functors
$\lambda:F_1E_2\to E_2F_1$ such that
the following diagrams commute:
\begin{equation}
	\label{eq:diaglambda}
\xymatrix{
	F_1^2E_2\ar[r]^-{F_1\lambda}\ar[d]_{\tau_1E_2} & F_1E_2F_1
	\ar[r]^-{\lambda F_1} &
E_2F_1^2\ar[d]^{E_2\tau_1} \\
F_1^2E_2\ar[r]_-{F_1\lambda} & F_1E_2F_1\ar[r]_-{\lambda F_1} &
E_2F_1^2
}
\ \ \ \ \
\xymatrix{
F_1E_2^2\ar[r]^-{\lambda E_2}\ar[d]_{F_1 \tau_2} & E_2F_1E_2
\ar[r]^-{E_2\lambda} & E_2^2F_1\ar[d]^{\tau_2F_1} \\
F_1E_2^2\ar[r]_-{\lambda E_2} & E_2F_1E_2
\ar[r]_-{E_2\lambda} & E_2^2F_1
}
\end{equation}

\begin{rem}
	The data of $\lambda$ and the required relations
	are described graphically as:
$$\includegraphics[scale=0.9]{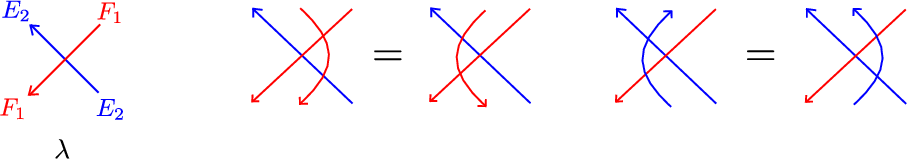}$$
\end{rem}

Define morphisms
$$\lambda_{i,1}=(\lambda F_1^{i-1})\circ\cdots\circ (F_1^{i-2}\lambda F_1)
\circ(F_1^{i-1}\lambda): F_1^iE_2\to E_2F_1^i$$
and
$$\lambda_{i,j}=(E_2^{j-1}\lambda_{i,1})\circ\cdots\circ(E_2\lambda_{i,1}E_2^{j-2})\circ
(\lambda_{i,1}E_2^{j-1}) :F_1^i E_2^j\to E_2^jF_1^i.$$

We define a lax bi-$2$-representation on $\CV$ by
$E_{i,j}=E_2^iF_1^j$. The actions of $H_i$ on $E_2^i$ and $H_j$ on $F_1^j$
provide an action of $H_i\otimes H_j$ on $E_{i,j}$ and 
$\mu_{(i,j),(i',j')}=E_2^i\lambda_{j,i'}F_1^{j'}$:
$$\mu_{(i,j),(i',j')}:
E_2^iF_1^jE_2^{i'}F_1^{j'}\xrightarrow{E_2^i\lambda_{j,i'}F_1^{j'}}
E_2^iE_2^{i'}F_1^jF_1^{j'}=E_2^{i+i'}F_1^{j+j'}.$$

\begin{rem}
	One can also consider the notion of colax $2$-representation.
	A colax $2$-representation on $\CV$ is the same data
	as a lax $2$-representation on $\CV^\opp$.
\end{rem}

\subsubsection{Category}
Let $\CW$ be a differential category endowed with a lax action $(E_{i,j})$
of $\CU^2$.

We define a differential category
$\Delta_E\CW$\indexnot{Delta}{\Delta_E\CW}.

\smallskip
$\bullet\ $ The objects of $\Delta_E\CW$ are pairs $(m,\varsigma)$ where
$m\in \overline{\CW}^i$ and
$\varsigma\in Z\Hom_{\overline{\CW}^i}(E_{0,1}E_{1,0}(m),m)$
such that for all $i\ge 1$, there exists $\varsigma_i\in
Z\Hom_{\overline{\CW}^i}(E_{i,i}(m),m)$ such that
the composition $b_i$
\begin{equation}
	\label{eq:compbeta}
	b_i:(E_{0,1}E_{1,0})^i(m)\xrightarrow{(E_{0,1}E_{1,0})^{i-1}\varsigma}
(E_{0,1}E_{1,0})^{i-1}(m)\xrightarrow{(E_{0,1}E_{1,0})^{i-2}\varsigma}\cdots
\to E_{0,1}E_{1,0}(m)\xrightarrow{\varsigma}m
\end{equation}
is equal to 
$$(E_{0,1}E_{1,0})^i(m)\xrightarrow{\can}E_{i,i}(m)\xrightarrow{\varsigma_i}m$$
and $\varsigma_i\circ (T_r \otimes 1)=\varsigma_i\circ (1\otimes T_r)$ for $1\le r<i$.

\smallskip
$\bullet\ $ $\Hom_{\Delta_E\CW}((m,\varsigma),(m',\varsigma'(m))$ is the
differential submodule of $\Hom_{\overline{\CW}^i}(m,m')$ of
elements
$f$ such that the following diagram commutes
$$\xymatrix{
	E_{0,1}E_{1,0}(m)\ar[r]^-{\varsigma}\ar[d]_{E_{0,1}E_{1,0}f} &m \ar[d]^{f} \\
	E_{0,1}E_{1,0}(m')\ar[r]^-{\varsigma} &m'
}$$

The composition of maps is defined by restricting that of 
$\overline{\CW}^i$.
So, we have a faithful forgetful
functor $\omega:\Delta_E\CW\to\overline{\CW}^i,\ (m,\varsigma)\mapsto m$.
Note that $\Delta_E\CW$ is strongly pretriangulated and idempotent-complete.

\begin{rem}
	\label{re:swapUU}
%
	Note that applying the self-equivalence $(a,b)\mapsto (b,a)$ of $\CU^2$ provides
	another lax action $E'$ of $\CU^2$ on $\CW$. The corresponding differential
	category $\Delta_{E'}\CW$ is not equivalent to $\Delta_E\CW$ in general.
\end{rem}

\subsection{Diagonal action}
\label{se:diagonal}

\subsubsection{Category}
Consider a differential category $\CW$ endowed with
two actions of $\CU$ given by $(E_1,\tau_1)$ and
$(E_2,\tau_2)$ and a closed morphism of functors
$\sigma:E_2E_1\to E_1E_2$ such that the following diagrams commute:
\begin{equation}
	\label{eq:diagsigma}
\xymatrix{
E_2^2E_1\ar[r]^-{E_2\sigma}\ar[d]_{\tau_2E_1} & E_2E_1E_2\ar[r]^-{\sigma E_2} &
E_1E_2^2\ar[d]^{E_1\tau_2} \\
E_2^2E_1\ar[r]_-{E_2\sigma} & E_2E_1E_2\ar[r]_-{\sigma E_2} &
E_1E_2^2
}
\ \ \ \ \
\xymatrix{
E_2E_1^2\ar[r]^-{\sigma E_1}\ar[d]_{E_2 \tau_1} & E_1E_2E_1
\ar[r]^-{E_1\sigma} & E_1^2E_2\ar[d]^{\tau_1E_2} \\
E_2E_1^2\ar[r]_-{\sigma E_1} & E_1E_2E_1
\ar[r]_-{E_1\sigma} & E_1^2E_2
}
\end{equation}

\begin{rem}
	The data of $\sigma$ and the relations can be described graphically as follows:
$$\includegraphics[scale=0.9]{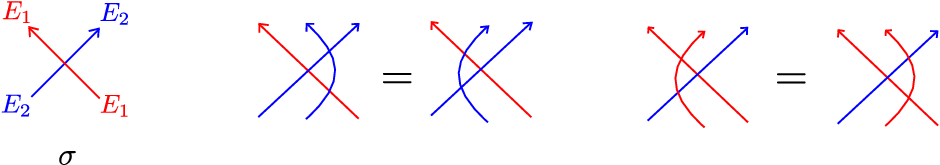}$$
\end{rem}

We define a differential category
$\CV=\Delta_\sigma\CW$\indexnot{Delta}{\Delta_\sigma\CW}.

\smallskip
$\bullet\ $ The objects of $\CV$ are pairs $(m,\pi)$ where
$m\in \overline{\CW}^i$ and
$\pi\in Z\Hom_{\overline{\CW}^i}(E_2(m),E_1(m))$
such that the following diagram commutes
\begin{equation}
	\label{eq:Deltasigmacommutationtau}
\xymatrix{
E_2^2(m)\ar[rr]^-{E_2\pi} \ar[d]_{\tau_2} &&
 E_2E_1(m)\ar[r]^-{\sigma} & E_1E_2(m)
\ar[rr]^-{E_1\pi} && E_1^2(m)\ar[d]^{\tau_1} \\
E_2^2(m)\ar[rr]_-{E_2\pi} && E_2E_1(m)\ar[r]_-{\sigma} & E_1E_2(m)
\ar[rr]_-{E_1\pi} && E_1^2(m)
}
\end{equation}

\smallskip
$\bullet\ $ $\Hom_\CV((m,\pi),(m',\pi'))$ is the
differential submodule of $\Hom_{\overline{\CW}^i}(m,m')$ of
elements
$f$ such that the following diagram commutes
$$\xymatrix{
E_2(m)\ar[r]^-{\pi}\ar[d]_{E_2f} & E_1(m)
\ar[d]^{E_1f} \\
E_2(m')\ar[r]_-{\pi'} & E_1(m')\\
}$$

The composition of maps is defined by restricting that of 
$\overline{\CW}^i$.
So, we have a faithful forgetful
functor $\omega=\omega_\sigma:\CV\to\overline{\CW}^i,\ (m,\pi)\mapsto m$.
Note that $\CV$ is strongly pretriangulated and idempotent-complete.

\begin{rem}
	The structure of objects and maps in $\CV$ can be described graphically as follows:
$$\includegraphics[scale=0.65]{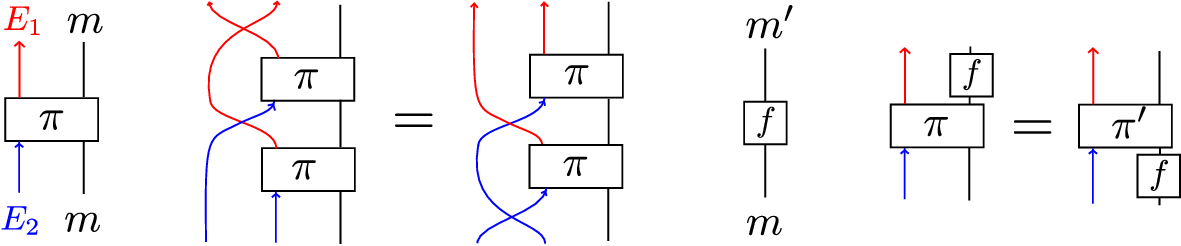}$$
\end{rem}

\begin{rem}
	\label{re:sigmatobirep}
	Assume $E_1$ admits a left adjoint $F_1$. The data of the map $\sigma:E_2E_1\to E_1E_2$ 
	corresponds by adjunction to the data of a map 
$$\lambda:
	F_1 E_2\xrightarrow{\bullet\eta_1}F_1 E_2E_1F_1
\xrightarrow{F_1\sigma F_1}F_1 E_1E_2F_1\xrightarrow{\eps_1\bullet} E_2F_1.$$
	The commutativity of the diagrams (\ref{eq:diagsigma}) is equivalent to the commutativity of the
	diagrams (\ref{eq:diaglambda}).
	Assume the diagrams commute. We obtain a lax bi-$2$-representation $(E_{i,j})$ on
	$\CW$ (cf \S\ref{se:2birep}).
	
	Let $(m,\varsigma)\in \Delta_E\CW$. We have an adjunction isomorphism
	$$\phi:\Hom(E_2(m),E_1(m))\iso \Hom(F_1E_2(m),m).$$
	Let $\pi=\phi^{-1}(\varsigma)\in Z\Hom(E_2(m),E_1(m))$.
	The object $(m,\pi)$ is in $\Delta_\sigma\CW$ and
	$(m,\varsigma)\mapsto(m,\pi)$ defines a fully faithful functor of differential
	categories $\Delta_E\CW\to \Delta_\sigma\CW$.

	\smallskip
	Assume now $\lambda$ is invertible. The canonical map $f_i:(E_{0,1}E_{1,0})^i\to E_{i,i}$ is
	invertible.  Let $\varsigma_i=b_i\circ f_i^{-1}$. Consider $r\in\{1,\ldots,i-1\}$.
	We have
	\begin{align*}
		\varsigma_i\circ (T_r\otimes 1)&=b_{r-1}\circ (E_{01}E_{10})^{r-1}(\varsigma_2\circ
	(T_1\otimes 1)\circ f_2)\circ (E_{01}E_{10})^{r+1}b_{i-r-1}\circ f_i^{-1}\\
		&=b_{r-1}\circ (E_{01}E_{10})^{r-1}(\varsigma_2\circ
	(1\otimes T_1)\circ f_2)\circ (E_{01}E_{10})^{r+1}b_{i-r-1}\circ f_i^{-1}\\
		&=\varsigma_i\circ (1\otimes T_r)
	\end{align*}
	As a consequence, the functor above is an isomorphism of differential categories
	$\Delta_E\CW\iso \Delta_\sigma\CW$.
\end{rem}

\subsubsection{$1$-arrows}
\label{se:1arrows}
We define now a differential functor $E:\CV\to\CV$.

$\bullet\ $ Let $(m,\pi)\in\CV$.
Let $m'={\xy (0,0)*{E_2(m)\oplus E_1(m)},
\ar@/^/^{\pi}(-8,3)*{};(7,3)*{}, \endxy}$. We define
$$\pi'=\left(\begin{matrix}\sigma\circ E_2\pi\circ \tau_2 &
\sigma \\0 & \tau_1\circ E_1\pi\circ\sigma\end{matrix}\right):
E_2(m')\to E_1(m')$$
$$
{\xy 
(-50,-10)*{\pi':},
(0,0)*{E_2^2(m)\oplus E_2E_1(m)},
(0,-20)*{E_1E_2(m)\oplus E_1^2(m)},
\ar@/^/^{E_2\pi}(-8,3)*{};(7,3)*{},
\ar@/_/_{E_1\pi}(-8,-23)*{};(7,-23)*{},
\ar_{\sigma\circ E_2\pi\circ \tau_2}(-12,-3)*{};(-12,-17)*{},
\ar^{\tau_1\circ E_1\pi\circ\sigma}(11,-3)*{};(11,-17)*{},
\ar_{\sigma}(10,-3)*{};(-11,-17)*{},
 \endxy}
$$

\begin{rem}
	The graphical description of $\pi'$ is the following:
$$\includegraphics[scale=0.7]{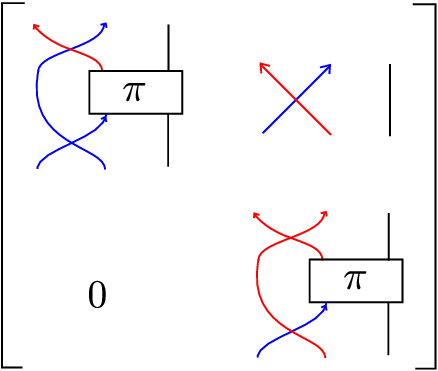}$$
\end{rem}

\begin{lemma}
	\label{le:dualisobject}
$(m',\pi')$ is an object of $\CV$.
\end{lemma}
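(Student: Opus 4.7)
The plan is to verify the two conditions defining an object of $\CV=\Delta_\sigma\CW$: that $\pi'$ is closed as a morphism $E_2(m')\to E_1(m')$, and that it satisfies the hexagonal compatibility $\tau_1\circ E_1\pi'\circ\sigma\circ E_2\pi' = E_1\pi'\circ\sigma\circ E_2\pi'\circ\tau_2$ as morphisms $E_2^2(m')\to E_1^2(m')$.

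For closedness, expand $\pi'$ as a $2\times 2$ matrix between the twisted objects $E_2(m')=[E_2^2(m)\oplus E_2E_1(m),E_2\pi]$ and $E_1(m')=[E_1E_2(m)\oplus E_1^2(m),E_1\pi]$ of $\overline{\CW}^i$; the twists contribute extra off-diagonal summands to the differential of a matrix morphism in addition to the componentwise $d$. Using $d(\sigma)=d(\pi)=0$ and $d(\tau_i)=\mathrm{id}$, the derivatives of the $(1,1)$- and $(2,2)$-entries reduce respectively to $\sigma\circ E_2\pi$ and $E_1\pi\circ\sigma$, which exactly cancel the twist-induced contributions $f_{12}\circ E_2\pi=\sigma\circ E_2\pi$ and $E_1\pi\circ f_{12}=E_1\pi\circ\sigma$ in characteristic $2$. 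The $(1,2)$-entry is $d(\sigma)=0$, and the remaining $(2,1)$-entry collapses to $E_1\pi\circ\sigma\circ E_2\pi\circ\tau_2+\tau_1\circ E_1\pi\circ\sigma\circ E_2\pi$, which vanishes by the hexagonal relation for $\pi$ available because $(m,\pi)\in\CV$.

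For the hexagonal relation for $\pi'$, first observe that by naturality of $\tau_1$, $\tau_2$ and $\sigma$ applied to the closed map $\pi$, each of these transformations acts diagonally on the twisted objects $E_iE_j(m')$. The desired identity thus reduces to four entrywise equations between morphisms of components of $E_2^2(m')$ and $E_1^2(m')$. The $(2,1)$-entries vanish on both sides since the $(2,1)$-entry of $\pi'$ is $0$. For each of the three remaining entries, the plan is to: (a) use the hexagonal relations (\ref{eq:diagsigma}) for $\sigma$ to slide the ambient $\tau_2$ or $\tau_1$ through the chains $\sigma E_2\circ E_2\sigma$ or $E_1\sigma\circ\sigma E_1$; (b) use naturality of $\sigma$ with respect to $\pi$ to rewrite $E_1E_2\pi\circ\sigma E_2$ as $\sigma E_1\circ E_2E_1\pi$ and similarly; (c) apply $E_1$ or $E_2$ to the hexagonal relation for $\pi$ to convert an inner factor $E_j(\tau_1\circ E_1\pi\circ\sigma\circ E_2\pi)$ into $E_j(E_1\pi\circ\sigma\circ E_2\pi\circ\tau_2)$; and (d) conclude via the braid relation $\tau_iE_i\circ E_i\tau_i\circ\tau_iE_i=E_i\tau_i\circ\tau_iE_i\circ E_i\tau_i$ coming from $H_3$, which identifies the residual triples of $\tau_i$'s on the two sides.

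The main obstacle is the second step, which is bookkeeping-intensive even though no single substitution is deep: each entry requires a slightly different sequence of the above moves, and one must track carefully which ambient $\tau_i$ is sliding through which pair of $\sigma$'s. A string-diagram presentation, in the style of the graphical remarks elsewhere in the paper, is the cleanest way to organize the verification in parallel across the four entries and to make the symmetry between the $(1,1)$- and $(2,2)$-cases manifest. By contrast, the closedness calculation is a short direct computation.
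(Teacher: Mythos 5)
Your proposal follows the paper's strategy exactly: verify $d(\pi')=0$ entrywise (which you carry out correctly, including the cancellation of the diagonal derivatives against the twist terms coming from the off-diagonal $\sigma$, and the vanishing of the $(2,1)$-contribution by the hexagon for $\pi$), then check $\tau_1\circ E_1\pi'\circ\sigma(m')\circ E_2\pi' = E_1\pi'\circ\sigma(m')\circ E_2\pi'\circ\tau_2$ entry by entry after noting that $\tau_1$, $\tau_2$ and $\sigma(m')$ act diagonally on the twisted objects. The paper's explicit chains of substitutions for the $(1,1)$-, $(1,2)$- and $(2,2)$-entries use precisely the moves you list in (a)--(d) — the hexagons (\ref{eq:diagsigma}) for $\sigma$, naturality of $\sigma$ and $\tau_i$ with respect to $\pi$, the hexagon for $\pi$, and the braid relation — together with $\tau_i^2=0$, which the $(1,2)$-entry needs to insert and then absorb a zero term; your outline is correct but you do not carry out the chains of equalities, and once the plan is fixed those chains are all the paper's proof actually consists of.
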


\begin{proof}
Note that $d(\pi')=0$.

	Let $a=\tau_1\circ E_1\pi'\circ \sigma(m')\circ E_2\pi'$
	and $b=E_1\pi'\circ \sigma(m')\circ E_2\pi'\circ \tau_2$.
We have 
\begin{align*}
a_{11}&=
\tau_1E_2\circ E_1\sigma\circ E_1E_2\pi\circ E_1\tau_2\circ \sigma E_2\circ
	E_2\sigma\circ E_2^2\pi\circ E_2\tau_2\\
	&=\tau_1E_2\circ E_1\sigma\circ E_1E_2\pi\circ\sigma E_2\circ E_2\sigma\circ
	\tau_2 E_1\circ E_2^2\pi\circ E_2\tau_2\\
	&=\tau_1E_2\circ E_1\sigma\circ \sigma E_1\circ E_2E_1\pi\circ E_2\sigma\circ
	E_2^2\pi\circ \tau_2E_2\circ E_2\tau_2\\
	&=E_1\sigma\circ \sigma E_1\circ E_2\tau_1\circ E_2E_1\pi\circ E_2\sigma\circ
	E_2^2\pi\circ \tau_2E_2\circ E_2\tau_2\\
	&=E_1\sigma\circ \sigma E_1\circ E_2E_1\pi\circ E_2\sigma\circ
	E_2^2\pi\circ E_2\tau_2\circ \tau_2 E_2\circ E_2\tau_2\\
	&=E_1\sigma\circ E_1E_2\pi\circ\sigma E_2\circ E_2\sigma\circ
	E_2^2\pi\circ \tau_2 E_2\circ E_2\tau_2\circ \tau_2 E_2\\
	&=E_1\sigma\circ E_1E_2\pi\circ\sigma E_2\circ E_2\sigma\circ
	\tau_2 E_1\circ E_2^2\pi\circ  E_2\tau_2\circ \tau_2 E_2\\
&=E_1\sigma\circ
	E_1E_2\pi\circ E_1\tau_2\circ \sigma E_2\circ E_2\sigma\circ E_2^2\pi\circ
E_2\tau_2\circ \tau_2E_2=b_{11},
\end{align*}

\begin{align*}
a_{12}&=
\tau_1E_2\circ E_1\sigma\circ E_1E_2\pi\circ E_1\tau_2\circ \sigma E_2\circ E_2\sigma+
\tau_1E_2\circ E_1\sigma\circ \sigma E_1\circ E_2\tau_1\circ E_2E_1\pi\circ 
	E_2\sigma\\
	&=\tau_1E_2\circ E_1\sigma\circ E_1E_2\pi\circ \sigma E_2\circ E_2\sigma\circ
	\tau_2 E_1+
\tau_1^2E_2\circ E_1\sigma\circ \sigma E_1\circ E_2E_1\pi\circ E_2\sigma\\
	&=\tau_1E_2\circ E_1\sigma\circ \sigma E_1\circ E_2E_1\pi\circ E_2\sigma\circ
	\tau_2 E_1 \\
&=
E_1\sigma\circ E_1E_2\pi\circ \sigma E_2\circ E_2\sigma\circ\tau_2^2 E_1
	+ E_1\sigma\circ \sigma E_1\circ E_2\tau_1\circ E_2E_1\pi\circ E_2\sigma
	\circ \tau_2 E_1\\
&=
E_1\sigma\circ E_1E_2\pi\circ E_1\tau_2\circ \sigma E_2\circ E_2\sigma\circ\tau_2 E_1
	+ E_1\sigma\circ \sigma E_1\circ E_2\tau_1\circ E_2E_1\pi\circ E_2\sigma
	\circ \tau_2 E_1
=b_{12},
\end{align*}

$$a_{21}=0=b_{21} \text{ and }$$
\begin{align*}
a_{22}&=
\tau_1 E_1\circ E_1\tau_1\circ E_1^2\pi\circ E_1\sigma\circ \sigma E_1\circ E_2\tau_1\circ
E_2E_1\pi\circ E_2\sigma\\
	&=
\tau_1 E_1\circ E_1\tau_1\circ E_1^2\pi\circ E_1\sigma\circ \sigma E_1\circ E_2\tau_1\circ
E_2E_1\pi\circ E_2\sigma\\
	&=\tau_1 E_1\circ E_1\tau_1\circ \tau_1 E_1\circ E_1^2\pi\circ E_1\sigma
\circ E_1E_2\pi\circ \sigma E_2\circ E_2\sigma\\
	&=E_1\tau_1\circ \tau_1 E_1\circ E_1\tau_1\circ  E_1^2\pi\circ E_1\sigma
\circ E_1E_2\pi\circ \sigma E_2\circ E_2\sigma\\
	&=E_1\tau_1\circ \tau_1 E_1\circ E_1^2\pi\circ E_1\sigma
\circ E_1E_2\pi\circ E_1\tau_2\circ \sigma E_2\circ E_2\sigma\\
	&=E_1\tau_1\circ E_1^2\pi\circ \tau_1 E_2\circ E_1\sigma
\circ E_1E_2\pi\circ \sigma E_2\circ E_2\sigma\circ \tau_2 E_1\\
	&=E_1\tau_1\circ E_1^2\pi\circ \tau_1 E_2\circ E_1\sigma
\circ \sigma E_1\circ E_2E_1\pi\circ E_2\sigma\circ \tau_2 E_1\\
	&=E_1\tau_1\circ E_1^2\pi\circ E_1\sigma
\circ \sigma E_1\circ E_2\tau_1\circ E_2E_1\pi\circ E_2\sigma\circ \tau_2 E_1
	=b_{22}.
\end{align*}
The lemma follows.
\end{proof}

\begin{rem}
	The equalities established in the proof of the lemma above have the following
	graphical description:
$$\includegraphics[scale=0.85]{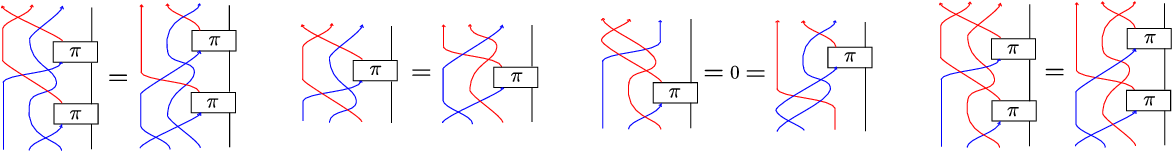}$$
\end{rem}

 We put
$E(m,\pi)=(m',\pi')$.

$\bullet\ $ Given $f\in\Hom_{\CV}((m,\pi),(\tilde{m},\tilde{\pi}))$,
we put $E(f)=\left(\begin{matrix}E_2f&0\\0&E_1f
\end{matrix}\right)$:

$$
{\xy (0,0)*{E_2(m)\oplus E_1(m)},
(0,-20)*{E_2(\tilde{m})\oplus E_1(\tilde{m})},
\ar@/^/^{\pi}(-8,3)*{};(7,3)*{},
\ar@/_/_{\tilde{\pi}}(-8,-23)*{};(7,-23)*{},
\ar_{E_2f}(-12,-3)*{};(-12,-17)*{},
\ar^{E_1f}(11,-3)*{};(11,-17)*{},
 \endxy}
$$

\begin{lemma}
	\label{le:dualEfunctor}
	We have $E(f)\in \Hom_{\CV}(E(m,\pi),E(\tilde{m},\tilde{\pi}))$.
The construction makes $E$ into a differential endofunctor of $\CV$.
\end{lemma}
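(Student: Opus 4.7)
The plan is to verify three things in sequence: (i) that $E(f)$ satisfies the commutation relation defining morphisms in $\CV$, so $E(f) \in \Hom_\CV(E(m,\pi), E(\tilde m,\tilde\pi))$; (ii) that $E$ is functorial; and (iii) that $E$ is compatible with the differential. The nontrivial content is (i); (ii) and (iii) are immediate from the diagonal shape of $E(f)$ and from $E_1$, $E_2$ being differential functors.

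For (i), I need to check that $\tilde\pi' \circ E_2 E(f) = E_1 E(f) \circ \pi'$, viewed as a $2\times 2$ matrix identity between maps from $E_2(m') = E_2^2(m) \oplus E_2 E_1(m)$ to $E_1(\tilde m') = E_1 E_2(\tilde m) \oplus E_1^2(\tilde m)$. Since $E_2 E(f)$ and $E_1 E(f)$ are both diagonal, the four entries to verify are:
\begin{itemize}
\item Top-left: $\sigma \circ E_2 \tilde\pi \circ \tau_2 \circ E_2^2 f = E_1 E_2 f \circ \sigma \circ E_2\pi \circ \tau_2$, which follows from naturality of $\tau_2$ (giving $\tau_2 \circ E_2^2 f = E_2^2 f \circ \tau_2$), the morphism property of $f$ (giving $E_2 \tilde\pi \circ E_2^2 f = E_2 E_1 f \circ E_2\pi$, after applying $E_2$), and naturality of $\sigma$ (giving $\sigma \circ E_2 E_1 f = E_1 E_2 f \circ \sigma$).
\item Top-right: $\sigma \circ E_2 E_1 f = E_1 E_2 f \circ \sigma$, which is again naturality of $\sigma$.
\item Bottom-left: $0 = 0$, trivially.
\item Bottom-right: $\tau_1 \circ E_1 \tilde\pi \circ \sigma \circ E_2 E_1 f = E_1^2 f \circ \tau_1 \circ E_1 \pi \circ \sigma$, by naturality of $\sigma$, applying $E_1$ to the morphism relation $\tilde\pi \circ E_2 f = E_1 f \circ \pi$, and naturality of $\tau_1$.
\end{itemize}

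For (ii), given composable $f : (m,\pi) \to (\tilde m,\tilde\pi)$ and $g : (\tilde m,\tilde\pi) \to (m'',\pi'')$, the equalities $E(g \circ f) = E(g) \circ E(f)$ and $E(\id_{(m,\pi)}) = \id_{E(m,\pi)}$ are immediate from the diagonal formula for $E(f)$ and functoriality of $E_1, E_2$. For (iii), since $d$ acts entrywise and $E_i(d(f)) = d(E_i(f))$ by the assumption that $E_1, E_2$ are differential, we obtain $d(E(f)) = E(d(f))$. I expect no genuine obstacle: the entire argument is a bookkeeping exercise using naturality of the three structure maps $\sigma$, $\tau_1$, $\tau_2$ and the defining relation for morphisms in $\CV$; the only step that requires slight care is the top-left entry, where three naturality squares must be assembled in the correct order.
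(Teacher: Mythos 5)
Your proof is correct and takes essentially the same route as the paper's, which simply draws the $2\times 2$ diagram and asserts its commutativity; you have usefully spelled out the four entry checks. One small caveat on part (iii): the differential on $\Hom_{\overline{\CW}^i}(m',\tilde m')$ does not literally act entrywise, because $m'$ and $\tilde m'$ are cones and their internal differentials contribute an off-diagonal term $\tilde\pi\circ E_2 f + E_1 f\circ\pi$ to $d(E(f))$; this term vanishes precisely because $f$ is a morphism in $\CV$, and only then does $d(E(f))=E(d(f))$ follow from $E_1,E_2$ being differential functors. The conclusion is right, but the phrase ``since $d$ acts entrywise'' glosses over the one place where the defining condition on $f$ is actually used in (iii).
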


\begin{proof}
	The lemma follows from the commutativity of the following diagram:

	$${\xy
		(0,0)*{E_2^2(m) \oplus E_2E_1(m)},
		(60,0)*{E_1E_2(m)\oplus E_1^2(m)},
		(0,-20)*{E_2^2(\tilde{m}) \oplus E_2E_1(\tilde{m})},
		(60,-20)*{E_1E_2(\tilde{m})\oplus E_1^2(\tilde{m})},
		\ar@/^2pc/^{\sigma\circ E_2\pi\circ\tau_2}(-8,3)*{};(50,3)*{},
		\ar@/^2pc/^{\tau_1\circ E_1\pi\circ\sigma}(8,3)*{};(68,3)*{},
		\ar@/^1pc/_{\sigma}(8,3)*{};(50,3)*{},
		\ar_{E_2^2f}(-10,-3)*{};(-10,-17)*{},
		\ar^{E_2E_1f}(8,-3)*{};(8,-17)*{},
		\ar_{E_1E_2f}(50,-3)*{};(50,-17)*{},
		\ar^{E_1^2f}(68,-3)*{};(68,-17)*{},
		\ar@/_2pc/_{\sigma\circ E_2\tilde{\pi}\circ\tau_2}(-8,-23)*{};(50,-23)*{},
		\ar@/_2pc/_{\tau_1\circ E_1\tilde{\pi}\circ\sigma}(8,-23)*{};(68,-23)*{},
		\ar@/_1pc/^{\sigma}(8,-23)*{};(50,-23)*{},
		\endxy}$$
\end{proof}

\subsubsection{$2$-arrows}
\label{se:diagonalaction}

We assume in \S\ref{se:diagonalaction} that $\sigma$ is invertible.

We define an endomorphism $\tau$ of $\omega E^2$.
Let $(m,\pi)\in\CV$. We have
$E^2(m,\pi)=(m'',\pi'')$
where $m''=[E_2^2(m)\oplus E_2E_1(m)\oplus
E_1E_2(m)\oplus E_1^2(m),\partial]$
and
$$\partial=\left(\begin{matrix}
0 \\
E_2\pi & 0 \\
\sigma\circ E_2\pi\circ \tau_2&\sigma & 0 \\
0 & \tau_1\circ E_1\pi\circ \sigma &
E_1\pi & 0 \end{matrix}\right).$$

We define an endomorphism $\tau$ of $m''$ by
\begin{equation}
	\label{eq:firstdeftau}
	\tau=\left(\begin{matrix}
\tau_2 &0&0&0\\
	0&0&\sigma^{-1}&0\\
0&0&0&0\\
0&0&0&\tau_1
\end{matrix}\right).
\end{equation}

\begin{thm}
	\label{th:tauisendo}
The endomorphism $\tau$ of $m''$
defines an endomorphism of $E^2$.
The data $(\Delta_\sigma\CW,E,\tau)$ is an idempotent-complete
strongly pretriangulated $2$-representation.
\end{thm}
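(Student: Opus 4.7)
The plan is to verify, piece by piece, the three defining conditions for $(\Delta_\sigma\CW, E, \tau)$ to be a $2$-representation. The assertions that $\Delta_\sigma\CW$ is strongly pretriangulated and idempotent-complete were recorded when the category was constructed, and Lemmas \ref{le:dualisobject} and \ref{le:dualEfunctor} give that $E$ is a differential endofunctor. What remains to check is: (i) that $\tau_{(m,\pi)}$ is a morphism in $\Delta_\sigma\CW$ for each $(m,\pi)$; (ii) naturality of $\tau$ in $(m,\pi)$; and (iii) the relations $d(\tau)=1$, $\tau^2=0$, and the braid relation coming from (\ref{eq:definingtau}).

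I would first address (i) by writing out the twist $\pi^{(2)}$ of $E^2(m,\pi)$ as a $2\times 2$ block matrix $E_2(m'')\to E_1(m'')$ built from $\pi'$ via the recipe of \S\ref{se:1arrows}, and then checking the square-commutation $E_1\tau_{(m,\pi)}\circ\pi^{(2)}=\pi^{(2)}\circ E_2\tau_{(m,\pi)}$ entry-by-entry. The entries involving the $\sigma^{-1}$ component of $\tau$ reduce, after multiplying through by $\sigma$, to the coherence identities (\ref{eq:diagsigma}); the remaining entries are handled by the defining diagram for $(m,\pi)\in\Delta_\sigma\CW$ together with $\tau_1^2=\tau_2^2=0$. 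For (ii), I note that $E^2(f)$ is the block-diagonal matrix $\mathrm{diag}(E_2^2 f,\, E_2 E_1 f,\, E_1 E_2 f,\, E_1^2 f)$, so naturality of $\tau$ reduces to naturality of $\tau_1$, $\tau_2$ and $\sigma$; for the last, $d(\sigma)=0$ gives $d(\sigma^{-1})=0$, whence naturality of $\sigma^{-1}$ too.

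For (iii), the identity $\tau^2=0$ is immediate: the only nonzero entries of $\tau$ lie in positions $(1,1),(2,3),(4,4)$ (ordering the summands of $m''$ as $E_2^2(m), E_2E_1(m), E_1E_2(m), E_1^2(m)$), and the nonzero blocks of $\tau^2$ reduce to $\tau_2^2$ and $\tau_1^2$, which vanish. For $d(\tau)=1$, I would compute $d$ on $\End_{\bar{\CW}^i}(m'')$ as the sum of the entrywise differential and the commutator $[\partial,\tau]$. The entrywise part contributes $1_{E_2^2}$ at $(1,1)$, $1_{E_1^2}$ at $(4,4)$, and $0$ at $(2,3)$. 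The commutator part contributes $\sigma\circ\sigma^{-1}=1_{E_1E_2}$ at $(3,3)$ and $\sigma^{-1}\circ\sigma=1_{E_2E_1}$ at $(2,2)$, while the only off-diagonal contributions, $E_2\pi\circ\tau_2$ in position $(2,1)$ and $\tau_1\circ E_1\pi$ in position $(4,3)$, each appear twice in $\partial\tau+\tau\partial$ and so cancel in characteristic $2$. The total is the identity of $m''$.

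The main obstacle is the braid relation $E\tau\circ\tau E\circ E\tau=\tau E\circ E\tau\circ\tau E$, which must be verified on $E^3(m,\pi)$. Its underlying twisted object is an $8$-summand complex indexed by words $\varepsilon\in\{1,2\}^3$. On the two pure summands $\varepsilon=(i,i,i)$, both sides restrict to the braid relation $T_1T_2T_1=T_2T_1T_2$ for $\tau_i$, which holds by hypothesis. On the $6$ mixed summands, the two sides differ only in compositions of $\sigma^{\pm 1}$ with $\tau_1$ or $\tau_2$, and the equality follows from inverting the coherence diagrams (\ref{eq:diagsigma}) to extract hexagon-type identities such as $(\tau_1 E_2)\circ(E_1\sigma)\circ(\sigma E_1)=(E_1\sigma)\circ(\sigma E_1)\circ(E_2\tau_1)$ and its $\tau_2$-counterpart. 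Each mixed block corresponds to one such identity, and combined with the braid relations on the pure summands this completes the verification.
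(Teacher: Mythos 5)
Your proposal follows the same strategy as the paper's proof: reduce everything to block-matrix computations on the $4$- and $8$-summand twisted complexes, and discharge the resulting coefficient identities using the hypotheses on $\tau_1$, $\tau_2$, $\sigma$. The decomposition of $d(\tau)$ into the entrywise differential plus $[\partial,\tau]$ and the resulting bookkeeping (the diagonal entries each contributing the identity, the off-diagonal ones cancelling in pairs in characteristic $2$) is exactly the computation the paper does, and the observation that the non-trivial braid-relation checks reduce to the braid axiom on the pure summands $E_i^3$ together with the inverted coherence squares (\ref{eq:diagsigma}) matches the paper's explicit verification of $a_{11}$, $a_{25}$, $a_{47}$, and the pure-$E_1^3$ entry.

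Two small imprecisions worth flagging, neither of which damages the argument. First, in the braid-relation step you phrase the non-pure checks as happening "on the $6$ mixed summands," but $E\tau$ and $\tau E$ are not diagonal matrices, so the relation is not tested summand-by-summand; once the matrices are written out, all but four of the $64$ entries vanish on both sides for free, and only two of the non-zero checks are at mixed positions (namely $(2,5)$ and $(4,7)$ in the ordering $E_2^3, E_2^2E_1, E_2E_1E_2, E_2E_1^2, E_1E_2^2, E_1E_2E_1, E_1^2E_2, E_1^3$), both of which are precisely the inverted forms of (\ref{eq:diagsigma}) you identify. Second, the closing remark that "$d(\sigma)=0$ gives $d(\sigma^{-1})=0$, whence naturality of $\sigma^{-1}$" conflates being closed with being a natural transformation; naturality of $\sigma^{-1}$ follows from $\sigma$ being a natural \emph{isomorphism}, while $d(\sigma^{-1})=0$ is the separate fact needed to keep $\tau$ a morphism in $Z(k\mdiff)$. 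Both facts are needed, but they are independent.
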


\begin{proof}
The non-zero coefficients of $\pi''$ are
\begin{align*}
\pi''_{11}&=\sigma E_2\circ E_2\sigma\circ E_2^2\pi\circ E_2\tau_2\circ\tau_2E_2\\
\pi''_{22}&=\sigma E_1\circ E_2\tau_1\circ E_2E_1\pi\circ E_2\sigma\circ \tau_2E_1\\
\pi''_{33}&=\tau_1E_2\circ E_1\sigma\circ E_1E_2\pi\circ E_1\tau_2\circ \sigma E_2\\
\pi''_{44}&=\tau_1E_1\circ E_1\tau_1\circ E_1^2\pi\circ E_1\sigma\circ \sigma E_1
\end{align*}
$$\pi''_{12}=\sigma E_2\circ E_2\sigma\circ \tau_2E_1,\
\pi''_{13}=\sigma E_2,\ \pi''_{24}=\sigma E_1,\
\pi''_{34}= \tau_1E_2\circ E_1\sigma\circ\sigma E_1.$$

Let $a=E_1\tau\circ \pi''$ and $b=\pi''\circ
E_2\tau$. We have
\begin{align*}
a_{11}&=\sigma E_2\circ E_2\sigma\circ E_1\tau_2\circ E_2^2\pi\circ
E_2\tau_2\circ\tau_2E_2=
\sigma E_2\circ E_2\sigma\circ E_2^2\pi\circ\tau_2E_2\circ 
E_2\tau_2\circ\tau_2E_2\\
	&=
\sigma E_2\circ E_2\sigma\circ E_2^2\pi\circ E_2\tau_2\circ\tau_2E_2\circ E_2\tau_2
=b_{11}
\end{align*}
$$a_{12}=E_1\tau_2\circ\sigma E_2\circ E_2\sigma\circ \tau_2E_1=0=b_{12}$$
$$a_{13}=E_1\tau_2\circ \sigma E_2=b_{13}$$
	\begin{align*}
a_{23}&=E_1\sigma^{-1}\circ \tau_1E_2\circ E_1\sigma\circ E_1E_2\pi\circ E_1\tau_2\circ
		\sigma E_2\\
&=E_1\sigma^{-1}\circ \tau_1E_2\circ E_1\sigma\circ E_1E_2\pi\circ 
		\sigma E_2\circ E_2\sigma\circ \tau_2 E_1\circ E_2\sigma^{-1} \\
&=E_1\sigma^{-1}\circ \tau_1E_2\circ E_1\sigma\circ \sigma E_2\circ E_2E_1\pi\circ 
\circ E_2\sigma\circ \tau_2 E_1\circ E_2\sigma^{-1} \\
		&=\sigma E_1\circ E_2\tau_1\circ E_2E_1\pi\circ E_2\sigma\circ \tau_2E_1\circ E_2\sigma^{-1}
	=b_{23}
	\end{align*}
$$a_{24}=E_1\sigma^{-1}\circ\tau_1E_2\circ E_1\sigma\circ\sigma E_1=
	\sigma E_1\circ E_2\tau_1=b_{24}$$
\begin{align*}
a_{44}&=E_1\tau_1\circ\tau_1 E_1\circ E_1\tau_1\circ E_1^2\pi\circ E_1\sigma\circ
\sigma E_1 =
\tau_1 E_1\circ E_1\tau_1\circ \tau_1E_1\circ E_1^2\pi\circ E_1\sigma\circ \sigma E_1\\
&=\tau_1 E_1\circ E_1\tau_1\circ E_1^2\pi\circ E_1\sigma\circ \sigma E_1\circ E_2\tau_1= b_{44}
\end{align*}
All the other coefficients of $a$ and $b$ vanish. We deduce that
$a=b$, hence $\tau$ is an endomorphism of $E^2(m,\pi)$.
It follows easily that $\tau$ defines an endomorphism of $E^2$.

\smallskip
We have $\tau^2=0$ and 
\begin{align*}
d(\tau)&=\left(\begin{matrix}
d(\tau_2) &0&0&0\\
0&0&0&0\\
0&0&0&0\\
0&0&0&d(\tau_1)
\end{matrix}\right)+\tau\circ\partial+\partial\circ\tau \\
&=\left(\begin{matrix}
\id &  & &  \\
2E_2\pi\circ \tau_2 &\id&&  \\
\sigma\circ E_2\pi\circ \tau_2^2&&\id\\
&\tau_1^2\circ E_1\pi&2\tau_1\circ E_1\pi\circ\sigma&\id 
\end{matrix}\right)\\
&=\id.
\end{align*}

\smallskip
We have 
$E^3(m,\pi)=([m''',\delta'],\pi''')$, where
$$m'''=E_2^3(m)\oplus E_2^2E_1(m)\oplus
E_2E_1E_2(m)\oplus E_2E_1^2(m)\oplus
E_1E_2^2(m)\oplus E_1E_2E_1(m)\oplus E_1^2E_2(m)
\oplus E_1^3(m).$$

We have
$$\tau E=
\left(\begin{matrix}
\tau_2 E_2&0&0&0&0&0&0&0\\
0& \tau_2E_1&0&0&0&0&0&0\\
	0&0&0&0&\sigma^{-1}E_2&0&0&0\\
	0&0&0&0&0&\sigma^{-1}E_1&0&0\\
0&0&0&0&0&0&0&0\\
0&0&0&0&0&0&0&0\\
0&0&0&0&0&0&\tau_1E_2 & 0\\
0&0&0&0&0&0&0&\tau_1E_1
\end{matrix}\right)
$$
and
$$E\tau=\left(\begin{matrix}
E_2\tau_2 &0&0&0&0&0&0&0\\
	0&0&E_2\sigma^{-1}&0&0&0&0&0 \\
0&0&0&0&0&0&0&0\\
0&0&0&E_2\tau_1 &0&0&0&0\\
0&0&0&0&E_1\tau_2&0&0&0\\
	0&0&0&0&0&0&E_1\sigma^{-1}&0 \\
0&0&0&0&0&0&0&0\\
0&0&0&0&0&0&0&E_1\tau_1
\end{matrix}\right)$$
Let $a=(E\tau)\circ(\tau E)\circ (E\tau)$ and $b=(\tau E)\circ (E\tau)\circ
(\tau E)$. We have
$$a_{11}=E_2\tau_2\circ\tau_2E_2\circ E_2\tau_2=
\tau_2E_2\circ E_2\tau_2\circ\tau_2 E_2=b_{11}$$
$$a_{44}=E_1\tau_1\circ\tau_1E_1\circ E_1\tau_1=
\tau_1E_1\circ E_1\tau_1\circ \tau_1E_1=b_{44}$$
$$a_{25}=E_2\sigma^{-1} \circ \sigma^{-1}E_2\circ E_1\tau_2=
\tau_2E_1\circ E_2\sigma^{-1}\circ \sigma^{-1}E_2=b_{25}$$
$$a_{47}=E_2\tau_1 \circ \sigma^{-1}E_1\circ E_1\sigma^{-1}=
\sigma^{-1}E_1\circ E_1\sigma^{-1}\circ \tau_1 E_2=
b_{47}$$
and all the other coefficients of $a$ and $b$ vanish. It follows
that $a=b$.
This completes the proof of the theorem.
\end{proof}

\subsubsection{Functoriality}
\label{se:functorialitytensor}
We consider two differential categories $\CW$ and $\CW'$ endowed with
actions $(E_i,\tau_i)$ and $(E'_i,\tau'_i)$ of $\CU$ for $i\in\{1,2\}$
and closed morphisms of functors
$\sigma:E_2E_1\to E_1E_2$ and 
$\sigma':E'_2E'_1\to E'_1E'_2$ 
making (\ref{eq:diagsigma}) and the similar diagram for $\sigma'$ commute.

\smallskip
Let $\Phi:\CW\to\CW'$ be a differential functor and
$\varphi_i:\Phi E_i\iso E'_i\Phi$ be closed isomorphisms of functors making
$(\Phi,\varphi_i)$ into morphisms of $2$-representations for $i\in\{1,2\}$.
Assume 
\begin{equation}
	\label{eq:assumptionmorphism}
	(E'_1\varphi_2)\circ (\varphi_1E_2)\circ (\Phi\sigma)=(\sigma'\Phi)\circ
(E'_2\varphi_1)\circ(\varphi_2E_1):\Phi E_2E_1\to E'_1E'_2\Phi.
\end{equation}

\smallskip

\begin{prop}
	\label{pr:morphism2rep}
	There is a differential functor $\Delta\Phi:
	\Delta_\sigma\CW\to\Delta_{\sigma'}\CW'$ given by
	$(m,\pi)\mapsto (\Phi(m),\varphi_1(m)\circ \Phi(\pi)\circ \varphi_2(m)^{-1})$.

There is a closed isomorphism of functors 
$$\varphi=\left(\begin{matrix}\varphi_2 \\ & \varphi_1\end{matrix}\right): 
	\Delta\Phi E\iso E'\Delta\Phi.$$
	If $\sigma$ and $\sigma'$ are invertible, then $(\Delta\Phi,\varphi)$ defines a morphism
of $2$-representations $\Delta_\sigma\CW\to\Delta_{\sigma'}\CW'$.
\end{prop}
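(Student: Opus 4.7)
The proof proceeds by unpacking the definitions of $\Delta_\sigma\CW$ and of the endofunctor $E$ from \S\ref{se:diagonal}--\S\ref{se:diagonalaction} and running a short sequence of diagram chases. To show that $\Delta\Phi$ is a well-defined differential functor, I first set $\pi_\Phi := \varphi_1(m) \circ \Phi(\pi) \circ \varphi_2(m)^{-1}$ for $(m,\pi) \in \Delta_\sigma\CW$; closedness of $\pi_\Phi$ is immediate from closedness of $\pi$ and of $\varphi_1, \varphi_2$. To check that $(\Phi(m), \pi_\Phi)$ satisfies the hexagonal diagram defining objects of $\Delta_{\sigma'}\CW'$, I would apply $\Phi$ to the hexagon for $(m,\pi)$ and replace each $\Phi\tau_i$ by $\tau'_i\Phi$ conjugated by $\varphi_i$ (using the $2$-representation morphism condition on each $(\Phi, \varphi_i)$), each $\Phi\sigma$ by $\sigma'\Phi$ conjugated by the $\varphi_i$'s (using (\ref{eq:assumptionmorphism})), and each $\Phi E_i(\pi)$ by $E'_i\pi_\Phi$ via naturality of $\varphi_i$. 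Functoriality on arrows is a direct consequence of naturality of $\varphi_1$ and $\varphi_2$: any $f$ intertwining $\pi$ with $\tilde\pi$ yields $\Phi(f)$ intertwining $\pi_\Phi$ with $\tilde\pi_\Phi$.

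Next, I would verify that $\varphi = \left(\begin{matrix} \varphi_2 & \\ & \varphi_1 \end{matrix}\right)$ is a closed isomorphism $\Delta\Phi \cdot E \to E' \cdot \Delta\Phi$. Evaluated at $(m,\pi)$, the source has underlying twisted object $\Phi\bigl([E_2(m) \oplus E_1(m), \pi]\bigr)$, while the target's underlying object is $[E'_2\Phi(m) \oplus E'_1\Phi(m), \pi_\Phi]$. The diagonal matrix $\varphi$ is a closed morphism between these two twisted objects in $\overline{\CW'}^i$ precisely because $\pi_\Phi \circ \varphi_2 = \varphi_1 \circ \Phi(\pi)$, which is the definition of $\pi_\Phi$. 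For $\varphi$ to be a morphism in $\Delta_{\sigma'}\CW'$, I still need to check that it intertwines the $\pi'$-structures of source and target; this reduces to four entrywise identities, each of which follows from one of the three available inputs, namely the $\tau_i$-compatibility of $\varphi_i$, naturality of $\varphi_i$, or (\ref{eq:assumptionmorphism}). Naturality of $\varphi$ in $(m,\pi)$ is entrywise the naturality of $\varphi_1$ and $\varphi_2$.

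Assuming finally that $\sigma$ and $\sigma'$ are invertible, the last step is to verify the $2$-representation morphism condition $\tau'\Delta\Phi \circ E'\varphi \circ \varphi E = E'\varphi \circ \varphi E \circ \Delta\Phi\tau$. By (\ref{eq:firstdeftau}) the matrix for $\tau$ on $\omega E^2(m,\pi)$ has diagonal $(\tau_2,0,0,\tau_1)$ with a single nonzero off-diagonal entry $\sigma^{-1}$ in position $(2,3)$, and similarly for $\tau'$. Comparing entries on the four summands $E'_iE'_j\Phi(m)$, the two diagonal $\tau_i$-entries follow from the $\tau$-compatibility built into $(\Phi,\varphi_i)$ being a morphism of $2$-representations, while the off-diagonal $\sigma^{-1}$-entry requires inverting (\ref{eq:assumptionmorphism}) so as to relate $\sigma^{-1}$ and $\sigma^{\prime -1}$ through $\varphi_1$ and $\varphi_2$. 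This is precisely where the invertibility hypothesis enters.

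The main obstacle is not conceptual but rather the bookkeeping in paragraph~2: the four-entry verification that $\varphi$ intertwines the $\pi'$-structures requires moving composites of the form $\sigma \circ E_2\pi \circ \tau_2$ across $\varphi$ one factor at a time, in the same spirit as the matrix computations carried out in Lemma~\ref{le:dualisobject} and Theorem~\ref{th:tauisendo}. Once the dictionary between the three compatibility inputs and the four matrix entries is organized, all the required identities are routine diagram chases.
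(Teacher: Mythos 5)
Your proposal is correct and takes essentially the same approach as the paper: push $\pi$ forward by conjugation through the $\varphi_i$, verify the hexagon for $\pi_\Phi$ by applying $\Phi$ and commuting $\varphi_1$, $\varphi_2$ past the $\tau_i$, $\sigma$, and $\pi$ factors using the two $2$-representation-morphism conditions, (\ref{eq:assumptionmorphism}), and naturality, then do the same entrywise matrix bookkeeping for $\varphi$ and for $\tau$ that the paper carries out explicitly. Your observation that the $(2,3)$-entry comparison $\sigma^{-1}$ versus $\sigma^{\prime -1}$ is where invertibility of $\sigma$ and $\sigma'$ actually enters is a helpful sharpening of a point the paper leaves implicit.
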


\begin{proof}
	Let $(m,\pi)$ be an object of $\Delta_\sigma\CW$. Let $\pi'=
	\varphi_1(m)\circ \Phi(\pi)\circ \varphi_2^{-1}(m)$, an element of $Z\Hom_{\overline{\CW'}^i}(
	E'_2\Phi(m),E'_1\Phi(m))$.
	
We have
		$$
		(E'_1\pi')\circ (\sigma'\Phi(m))\circ (E'_2\pi')=$$
		\begin{align*}
			&=
	(E'_1\varphi_1(m))\circ	(E'_1\Phi\pi)\circ (E'_1\varphi_2^{-1}(m))
	\circ (\sigma'\Phi(m))\circ (E'_2\varphi_1(m)) \circ (E'_2\Phi\pi)\circ(E'_2\varphi_2^{-1}(m))\\
			&=
	(E'_1\varphi_1(m))\circ	(E'_1\Phi\pi)\circ (\varphi_1(E_2(m)))
	\circ (\Phi\sigma(m))\circ (\varphi_2^{-1}(E_1(m)))\circ
	(E'_2\Phi\pi)\circ(E'_2\varphi_2^{-1}(m))\\
			&=
	(E'_1\varphi_1(m))\circ	(\varphi_1(E_1(m)))\circ \Phi\bigl( (E_1\pi)\circ
	\sigma(m)\circ (E_2\pi)\bigr)\circ (\varphi_2^{-1}(E_2(m)))\circ
	(E'_2\varphi_2^{-1}(m))
		\end{align*}

It follows that
	$$(E'_1\pi')\circ (\sigma'\Phi(m))\circ (E'_2\pi')\circ (\tau'_2\Phi(m))=
	(\tau'_1\Phi(m))\circ(E'_1\pi')\circ (\sigma'\Phi(m))\circ (E'_2\pi'),$$
	hence $(\Phi(m),\pi')$ is an object of $\Delta_{\sigma'}\CW'$. We put 
	$\Delta\Phi(m,\pi)=(\Phi(m),\pi')$.

	\smallskip
	Let $f\in\Hom_{\Delta_\sigma\CW}((m,\pi),(\tilde{m},\tilde{\pi}))$. We have a
	commutative diagram
$$\xymatrix{
	E'_2\Phi(m)\ar[r]^{\varphi_2^{-1}(m)}\ar[d]_{E'_2\Phi(f)} & 
	\Phi E_2(m)\ar[r]^{\Phi\pi}\ar[d]^{\Phi E_2(f)} & 
	\Phi E_1(m)\ar[r]^{\varphi_1(m)}\ar[d]^{\Phi E_1(f)} &
	E'_1\Phi(m)\ar[d]^{E'_1\Phi(f)} \\
	E'_2\Phi(\tilde{m})\ar[r]_{\varphi_2^{-1}(\tilde{m})} & 
	\Phi E_2(\tilde{m})\ar[r]_{\Phi\tilde{\pi}} & 
	\Phi E_1(\tilde{m})\ar[r]_{\varphi_1(\tilde{m})} &
	E'_1\Phi(\tilde{m})
}$$
	and it follows that $\Phi(f)\in\Hom_{\Delta_{\sigma'}\CW'}(\Delta\Phi(m,\pi),
	\Delta\Phi(\tilde{m},\tilde{\pi}))$.
	We put $(\Delta\Phi)(f)=\Phi(f)$. This makes $\Delta\Phi$ into a
	differential functor $\Delta_\sigma\CW\to\Delta_{\sigma'}\CW'$.

	\smallskip
	We have 
	$$(\Delta\Phi) (E(m,\pi))=({\xy (0,0)*{\Phi(E_2(m))\oplus \Phi(E_1(m))},
	\ar@/^/^{\Phi(\pi)}(-7,3)*{};(7,3)*{}, \endxy},\beta),$$
	$$\beta=\left(\begin{matrix}
\varphi_1 E_2\circ \Phi \sigma\circ \Phi E_2\pi\circ\Phi \tau_2\circ \varphi_2^{-1}E_2 &
\varphi_1 E_2\circ \Phi\sigma\circ\varphi_2^{-1}E_1 \\ 0 &
\varphi_1 E_1\circ \Phi \tau_1\circ \Phi E_1\pi\circ\Phi \sigma\circ \varphi_2^{-1}E_1
	\end{matrix}\right)$$
	and
	$$E'((\Delta\Phi)(m,\pi))=({\xy (0,0)*{E'_2(\Phi(m))\oplus E'_1(\Phi(m))},
	\ar@/^/^{\varphi_1(m)\circ\Phi(\pi)\circ\varphi_2(m)^{-1}}(-7,3)*{};(7,3)*{}, \endxy},\beta'),$$
	$$\beta'=\left(\begin{matrix}
	\sigma'\Phi\circ E'_2(\varphi_1\circ\Phi\pi\circ \varphi_2^{-1})\circ\tau'_2\Phi &
		\sigma'\Phi \\
		0 & \tau'_1\Phi\circ E'_1(\varphi_1\circ\Phi\pi\circ\varphi_2^{-1})\circ
		\sigma'\Phi
	\end{matrix}\right))$$

	We have 
$$\beta'\left(\begin{matrix}E'_2\varphi_2 & 0 \\ 0 & E'_2\varphi_1 \end{matrix}\right)=
	\left(\begin{matrix}E'_1\varphi_2 & 0 \\ 0 & E'_1\varphi_1 \end{matrix}\right)
		\beta,$$
	hence
	$\left(\begin{matrix}\varphi_2(m) \\ & \varphi_1(m)\end{matrix}\right)$
	defines
	a closed isomorphism $\Delta\Phi(E(m,\pi))\iso E'(\Delta\Phi(m,\pi))$. The naturality
	of $\varphi_1$ and $\varphi_2$ implies immediately that of $\varphi$.

	\smallskip
We have $\tau'_i\Phi\circ E'_i\varphi_i\circ \varphi_i E_i=E'_i\varphi_i\circ \varphi_iE_i
	\circ\Phi\tau_i$ for $i\in\{1,2\}$. Together with (\ref{eq:assumptionmorphism}),
	it follows that
$\tau'(\Delta\Phi)\circ E'\varphi\circ \varphi E=E'\varphi\circ \varphi E\circ
(\Delta\Phi)\tau$, hence
	$(\Delta\Phi,\varphi)$ defines a morphism of $2$-representations.
\end{proof}

\begin{rem}
	The data of $\varphi_1$ and $\varphi_2$, the relations they are required to
	satisfy, and the map $\pi'$ in the proof of the proposition
	are described graphically as:
$$\includegraphics[scale=0.9]{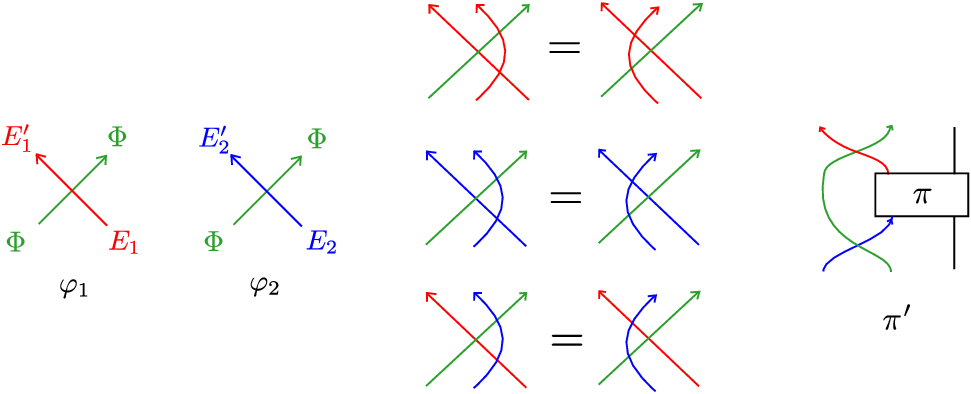}$$
\end{rem}

The following proposition is immediate.

\begin{prop}
	\label{pr:Deltafaithful}
	If $\Phi$ is faithful, then $\Delta\Phi$ is faithful.
\end{prop}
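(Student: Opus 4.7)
The plan is to reduce the statement to the faithfulness of the induced functor on the idempotent-completed pretriangulated hulls, via the faithful forgetful functor. By construction in \S\ref{se:diagonal}, there is a faithful differential functor $\omega:\Delta_\sigma\CW\to\overline{\CW}^i,\ (m,\pi)\mapsto m$ (and analogously $\omega':\Delta_{\sigma'}\CW'\to\overline{\CW'}^i$), and on $\Hom$ spaces, $\Hom_{\Delta_\sigma\CW}((m,\pi),(\tilde m,\tilde\pi))$ is a sub-differential-module of $\Hom_{\overline{\CW}^i}(m,\tilde m)$. Moreover, the definition $(\Delta\Phi)(f)=\Phi(f)$ from the proof of Proposition \ref{pr:morphism2rep} means that the square
\[
\xymatrix{
\Delta_\sigma\CW \ar[r]^-{\Delta\Phi}\ar[d]_-{\omega} & \Delta_{\sigma'}\CW' \ar[d]^-{\omega'} \\
\overline{\CW}^i \ar[r]_-{\bar\Phi} & \overline{\CW'}^i
}
\]
commutes, where $\bar\Phi$ denotes the canonical extension of $\Phi$ to idempotent-completed pretriangulated hulls.

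Given this square, if $(\Delta\Phi)(f)=0$, then $\bar\Phi(\omega(f))=\omega'((\Delta\Phi)(f))=0$. Once we know $\bar\Phi$ is faithful, this forces $\omega(f)=0$, and then the faithfulness of $\omega$ gives $f=0$. So the task reduces to proving that $\bar\Phi:\overline{\CW}^i\to\overline{\CW'}^i$ is faithful whenever $\Phi:\CW\to\CW'$ is.

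For the faithfulness of $\bar\Phi$, I would argue using the explicit description of objects and morphisms in \S\ref{se:objects}. Every object of $\bar{\CW}$ is isomorphic to a twisted object $[v_n\oplus\cdots\oplus v_1,(f_{ij})]$ built from objects $v_i$ of $\CW$, and a morphism between two such twisted objects is described (by the inductive cone construction) as a matrix of morphisms in $\CW$. The functor $\bar\Phi$, being the unique extension through the left adjoint property of $\CV\mapsto\bar\CV$, acts on such a matrix entrywise via $\Phi$. Faithfulness of $\Phi$ therefore forces the matrix to vanish entry by entry, so $\bar\Phi$ is faithful on $\bar\CW$. Passing to the idempotent completion preserves this: a morphism $(m,e)\to(m',e')$ in $(\bar\CW)^i$ is a morphism $f:m\to m'$ in $\bar\CW$ with $e'fe=f$, and $\bar\Phi(f)=0$ implies $f=0$ by what we just showed.

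I expect no substantial obstacle here; the argument is essentially bookkeeping, and the only thing to be careful about is to track that the extension $\Phi\mapsto\bar\Phi$ genuinely acts entrywise on the matrix representation of morphisms between twisted objects and that the forgetful functor $\omega$ is faithful by the very definition of $\Hom_{\Delta_\sigma\CW}$ as a sub-differential-module. Both points are immediate from \S\ref{se:objects} and \S\ref{se:diagonal}, which is why the proposition is stated without proof.
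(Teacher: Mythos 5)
Your proof is correct and fills in the details behind what the paper labels ``immediate.'' The forgetful functor $\omega:\Delta_\sigma\CW\to\overline{\CW}^i$ is faithful by the very definition of the $\Hom$-spaces (they are differential submodules of $\Hom_{\overline{\CW}^i}(m,m')$), the square commutes because the proof of Proposition \ref{pr:morphism2rep} defines $(\Delta\Phi)(f)=\Phi(f)$ on the nose, and the extension $\bar\Phi$ of $\Phi$ to $\overline{\CW}^i$ is faithful since it preserves the underlying direct-sum decompositions and therefore acts entrywise on the matrix description of morphisms between twisted objects, with the idempotent completion adding nothing new. The only small thing worth saying explicitly is that the entrywise action follows from additivity of $\bar\Phi$ together with the fact that the components of a matrix morphism are recovered by pre- and post-composing with the (non-closed) inclusions and projections of the direct summands; but as you note this is bookkeeping, and there is no hidden obstruction.
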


\subsubsection{Associativity}
\label{se:associativity}

We consider a differential category $\CW$ together
with three actions $(E_i,\tau_i)$, $1\le i\le 3$ of $\CU$.

We assume given $\sigma_{ij}:E_iE_j\iso E_jE_i$ for $i\neq j$ such that
\begin{equation}
	\label{eq:braid}
	\sigma_{ij}\sigma_{ji}=\id \text{ for all }i\neq j \text{ and }
E_3\sigma_{12}\circ \sigma_{13}E_2\circ E_1\sigma_{23}=\sigma_{23}E_1\circ E_2\sigma_{13}\circ
\sigma_{12}E_3.
\end{equation}
This ensures that by composing $\sigma$'s, we obtain a transitive system of isomorphisms between
$E_iE_jE_k$'s for $\{i,j,k\}=\{1,2,3\}$.

We also assume the analogs of the diagram (\ref{eq:diagsigma}) for the map $\sigma_{ij}$ commute.

\smallskip
Let $(m,\pi)\in \Delta_{\sigma_{21}}(\CW)$.
We define $\pi'\in Z\Hom(E_2E_3(m),E_1E_3(m))$ as the composition
$E_2E_3(m)\xrightarrow{\sigma_{23}}E_3E_2(m)\xrightarrow{E_3(\pi)}E_3E_1(m)
\xrightarrow{\sigma_{31}}E_1E_3(m)$.

We have a commutative diagram
$$\xymatrix{
	E_2^2E_3(m)\ar[r]^{E_2(\pi')}\ar[d]_{E_2\sigma_{23}} & E_2E_1E_3(m)\ar[r]^{\sigma_{21}E_3}& E_1E_2E_3(m) \ar[d]_{E_1\sigma_{23}}
	\ar[r]^{E_1(\pi')} & E_1^2E_3(m) \\
E_2E_3E_2(m)\ar[r]^{E_2E_3(\pi)}\ar[d]_{\sigma_{23}E_2} & E_2E_3E_1(m)\ar[u]^{E_2\sigma_{31}}\ar[d]_{\sigma_{23}E_1} & E_1E_3E_2(m)\ar[r]^{E_1E_3(\pi)} & E_1E_3E_1(m)\ar[u]^{E_1\sigma_{31}}\\
E_3E_2^2(m) \ar[r]_{E_3E_2(\pi)}& E_3E_2E_1(m) \ar[r]_{E_3\sigma_{21}} & E_3E_1E_2(m)
\ar[u]^{\sigma_{31}E_2}\ar[r]_{E_3E_1(\pi)} & E_3E_1^2(m)\ar[u]^{\sigma_{31}E_1}\\
}$$
It follows that $(E_3(m),\pi')$ defines an object of $\Delta_{\sigma_{21}}(\CW)$ and we
put $\tE_3(m,\pi)=(E_3(m),\pi')$.
Given $f$ a map in $\Delta_{\sigma_{21}}(\CW)$, the map $E_3(f)$ is actually
in $\Delta_{\sigma_{21}}(\CW)$ and this defines $\tE_3(f)$.

We have defined an endofunctor $\tE_3$ of $\Delta_{\sigma_{21}}(\CW)$.

\smallskip
There is a commutative diagram
$$\xymatrix{
	E_2E_3^2(m)\ar[r]^{\sigma_{23}E_3}\ar[d]_{E_2\tau_3} & E_3E_2E_3(m)\ar[r]^{E_3\sigma_{23}} &
	E_3^2E_2(m)\ar[r]^{E_3^2(\pi)}\ar[d]_{\tau_3 E_2} & E_3^2E_1(m)\ar[r]^{E_3\sigma_{31}}
	\ar[d]_{\tau_3 E_1} & E_3E_1E_3(m)\ar[r]^{\sigma_{31}E_3} & E_1 E_3^2(m)\ar[d]^{E_1\tau_3} \\
	E_2E_3^2(m)\ar[r]_{\sigma_{23}E_3} & E_3E_2E_3(m)\ar[r]_{E_3\sigma_{23}} &
	E_3^2E_2(m)\ar[r]_{E_3^2(\pi)} & E_3^2E_1(m)\ar[r]_{E_3\sigma_{31}}
 & E_3E_1E_3(m)\ar[r]_{\sigma_{31}E_3} & E_1 E_3^2(m) \\
}$$
It follows that $\tau_3$ defines an endomorphism of $\tE_3^2$. So, $(\tE_3,\tau_3)$ defines
a $2$-representation on $\Delta_{\sigma_{21}}(\CW)$.

\smallskip
Let $E_{21}$ denote the functor $\tE$ of $\Delta_{\sigma_{21}}(\CW)$.
We have 
$$\tE_3E_{21}(m,\pi)=({\xy (0,0)*{E_3E_2(m)\oplus E_3E_1(m)},
\ar@/^/^{E_3(\pi)}(-7,3)*{};(7,3)*{}, \endxy},\pi'),$$
where
$$\pi'=\left(\begin{matrix}\sigma_{31}E_2\circ E_3(\sigma_{21}\circ E_2\pi\circ \tau_2)\circ \sigma_{23}E_2 &
\sigma_{31}E_2\circ E_3\sigma_{21}\circ \sigma_{23}E_1 \\
0 & \sigma_{31}E_1\circ E_3(\tau_1\circ E_1\pi\circ\sigma_{21})\circ \sigma_{23}E_1\end{matrix}\right)$$
and
$$E_{21}\tE_3(m,\pi)=({\xy (0,0)*{E_2E_3(m)\oplus E_1E_3(m)},
\ar@/^/^{\sigma_{31}\circ E_3(\pi)\circ \sigma_{23}}(-7,3)*{};(7,3)*{}, \endxy},\pi'')$$
where
$$\pi''=
\left(\begin{matrix}\sigma_{21}E_3\circ E_2(\sigma_{31}\circ E_3(\pi)\circ \sigma_{23})\circ \tau_2 E_3 &
\sigma_{21}E_3 \\0 & \tau_1 E_3\circ \sigma_{13}E_1\circ E_1E_3\pi\circ E_1\sigma_{23}\circ
\sigma_{21}E_3\end{matrix}\right)
\bigl)
.$$

We have commutative diagrams
$$\xymatrix{
	E_2^2E_3(m)\ar[r]^{\tau_2E_3}\ar[d]_{E_2\sigma_{23}}& E_2^2E_3(m)\ar[r]^{E_2\sigma_{23}}&
	E_2E_3E_2(m)\ar[r]^{E_2E_3\pi}\ar[d]_{\sigma_{23}E_2}
	& E_2E_3E_1(m)\ar[r]^{E_2\sigma_{31}}\ar[d]_{\sigma_{23}E_1}& E_2E_1E_3(m)\ar[r]^{\sigma_{12}E_3}&E_1E_2E_3(m)
	\ar[d]^{E_1\sigma_{23}}\\
	 E_2E_3E_2(m)\ar[r]_{\sigma_{23}E_2}&E_3E_2^2(m)\ar[r]_{E_3\tau_2}&
	E_3E_2^2(m)\ar[r]_{E_3E_2\pi}& E_3E_2E_1(m)\ar[r]_{E_3\sigma_{21}}& E_3E_1E_2(m)
	\ar[r]_{\sigma_{31}E_2}& E_1E_3E_2(m)
}$$
$$\xymatrix{
	E_2E_1E_3(m)\ar[r]^{\sigma_{21}E_3}\ar[d]_{E_2\sigma_{13}}& E_1E_2E_3(m)
	\ar[r]^{E_1\sigma_{23}}&
	E_1E_3E_2(m)\ar[r]^{E_1E_3\pi}\ar[d]_{\sigma_{13}E_2}
	& E_1E_3E_1(m)\ar[r]^{E_1\sigma_{31}}\ar[d]_{\sigma_{13}E_1}& E_1^2E_3(m)\ar[r]^{\tau_1E_3}&
	E_1^2E_3(m) \ar[d]^{E_1\sigma_{13}}\\
	 E_2E_3E_1(m)\ar[r]_{\sigma_{23}E_1}&E_3E_2E_1(m)\ar[r]_{E_3\sigma_{21}}&
	E_3E_1E_2(m)\ar[r]_{E_3E_1\pi}& E_3E_1^2(m)\ar[r]_{E_3\tau_1}& E_3E_1^2(m)
	\ar[r]_{\sigma_{31}E_1}& E_1E_3E_1(m)
}$$

So, $\left(\begin{matrix}\sigma_{23}&0\\0&\sigma_{13}\end{matrix}\right)$
	defines an isomorphism $E_{21}\tE_3(m,\pi)\iso \tE_3E_{21}(m,\pi)$.
It provides an isomorphism of functors $\sigma_{21,3}:E_{21}\tE_3\iso \tE_3E_{21}$.

\smallskip
Replacing $1$ by $2$, $2$ by $3$ and $3$ by $1$, the construction above provides
a $2$-representation  $(\tE_1,\tau_1)$ of $\Delta_{\sigma_{32}}(\CW)$ and we denote
by $E_{32}$ the endofunctor $\tE$ of $\Delta_{32}(\CW)$. We have an isomorphism
$\sigma_{32,1}:E_{32}\tE_1\iso \tE_1E_{32}$.

\medskip
Let us now define another $2$-representation. The justifications for the constructions
below will be given in the proof of Proposition \ref{pr:associativity}.

We define a differential category $\Delta_{123}(\CW)$. 
Its objects are quadruples $(m,\pi_{21},\pi_{31},\pi_{32})$ where
 $m\in\overline{\CW}^i$, $\pi_{ij}:E_i(m)\to E_j(m)$ satisfy
	$$d(\pi_{21})=d(\pi_{32})=0,\ d(\pi_{31})=\pi_{21}\circ \pi_{32}$$
	and given $i,j,k,l\in\{1,2,3\}$ with $j-l\ge i-k>0$,
	we have an equality between maps $E_iE_j(m)\to E_kE_l(m)$:
$$\sigma_{lk}\circ E_l\pi_{ik}\circ\sigma_{il}\circ E_i\pi_{jl}+
E_k\pi_{jl}\circ \sigma_{jk}\circ E_j\pi_{ik}\circ \sigma_{ij}+\delta_{jk}
E_j\pi_{il}\circ\sigma_{ij}+\delta_{il}\sigma_{lk}\circ E_i\pi_{jk}=0
$$
where we put  $\sigma_{rr}=\tau_r$.

Note that the equality for $(i,j,k,l)=(3,2,2,1)$ is equivalent to the one for $(i,j,k,l)=(2,3,1,2)$.

\smallskip
We define $\Hom_{\Delta_{123}(\CW)}((m,\pi_{21},\pi_{31},\pi_{32}),
(m',\pi'_{21},\pi'_{31},\pi'_{32}))$ to be the differential submodule of
$\Hom_\CW(m,m')$ of maps $f$ such that $E_jf\circ \pi_{ij}=\pi'_{ij}\circ E_if$
for all $i>j$.

\smallskip
We define a differential endofunctor $E$ of $\Delta_{123}(\CW)$ by
$E(m,\pi_{21},\pi_{31},\pi_{32})=(m',\pi'_{21},\pi'_{31},\pi'_{32})$ where
$$m'={\xy (0,0)*{E_3(m)\oplus E_2(m)\oplus E_1(m)},
\ar@/^2pc/^{\pi_{31}}(-17,3)*{};(17,3)*{},
\ar@/^0.5pc/^{\ \pi_{32}}(-16,3)*{};(-1,3)*{},
\ar@/^0.5pc/^{\pi_{21}}(0,3)*{};(16,3)*{},
\endxy}$$

$$\pi'_{31}:{\xy (0,10)*{E_3^2(m)\oplus E_3E_2(m)\oplus E_3E_1(m)},
(0,-10)*{E_1E_3(m)\oplus E_1E_2(m)\oplus E_1^2(m)},
\ar_{\sigma_{31}\circ E_3\pi_{31}\circ\tau_3}(-22,7)*{};(-22,-7)*{},
\ar_{\sigma_{31}}(20,7)*{};(-20,-7)*{},
\ar^{\tau_1\circ E_1\pi_{31}\circ\sigma_{31}}(22,7)*{};(22,-7)*{},
\endxy}$$

$$\pi'_{32}:{\xy (0,10)*{E_3^2(m)\oplus E_3E_2(m)\oplus E_3E_1(m)},
(0,-10)*{E_2E_3(m)\oplus E_2^2(m)\oplus E_2E_1(m)},
\ar_{\sigma_{32}\circ E_3\pi_{32}\circ\tau_3}(-22,7)*{};(-22,-7)*{},
\ar_{\sigma_{32}}(-4,7)*{};(-20,-7)*{},
\ar^{\sigma_{12}\circ E_1\pi_{32}\circ\sigma_{31}}(22,7)*{};(22,-7)*{},
\ar^{\tau_2\circ E_2\pi_{32}\circ\sigma_{32}}(-2,7)*{};(-2,-7)*{},
\endxy}$$

$$\pi'_{21}:{\xy (0,10)*{E_2E_3(m)\oplus E_2^2(m)\oplus E_2E_1(m)},
(0,-10)*{E_1E_3(m)\oplus E_1E_2(m)\oplus E_1^2(m)},
\ar_{\sigma_{31}\circ E_3\pi_{21}\circ\sigma_{23}}(-22,7)*{};(-22,-7)*{},
\ar^{\tau_1\circ E_1\pi_{21}\circ\sigma_{21}}(22,7)*{};(22,-7)*{},
\ar_{\sigma_{21}}(20,7)*{};(0,-7)*{},
\ar_{\sigma_{21}\circ E_2\pi_{21}\circ\tau_2\!}(-2,7)*{};(-2,-7)*{},
\endxy}$$

We define an endomorphism $\tau$ of $E^2$ as follows. We have
$E^2(m,\pi_{21},\pi_{31},\pi_{32})=(m'',\pi''_{21},\pi''_{31},\pi''_{32})$ where
(ignoring differentials)
$$m''=E_3^2(m)\oplus E_3E_2(m)\oplus E_3E_1(m)\oplus E_2E_3(m)\oplus E_2^2(m)
\oplus E_2E_1(m)\oplus E_1E_3(m)\oplus E_1E_2(m)\oplus E_1^2(m).$$
We define the endomorphism $\tau$ of $m''$ by
\begin{equation}
	\label{eq:tau123}
\tau=\left(\begin{matrix}
	\tau_3 \\
	&&&\sigma_{23}\\
	&&&&&&\sigma_{13}\\
	\\
	&&&&\tau_2\\
	&&&&&&&\sigma_{12}\\
	\\
	\\
	&&&&&&&&\tau_1
\end{matrix}\right).
\end{equation}

\begin{prop}
	\label{pr:associativity}
	The construction above defines a $2$-representation on $\Delta_{123}(\CW)$.

	We have isomorphisms of $2$-representations
	$\Delta_{\sigma_{21,3}^{-1}}\Delta_{\sigma_{21}}(\CW)\iso\Delta_{123}(\CW)$
	and $\Delta_{\sigma_{32,1}}\Delta_{\sigma_{32}}(\CW)\iso\Delta_{123}(\CW)$
	whose underlying functors make the following diagram commutative
	$$\xymatrix{
		\Delta_{\sigma_{21,3}^{-1}}\Delta_{\sigma_{21}}(\CW)\ar[r]^-{\sim}
		\ar[dr]_{\omega}& \Delta_{123}(\CW)\ar[d]_\omega &
	\Delta_{\sigma_{32,1}}\Delta_{\sigma_{32}}(\CW) \ar[l]_-\sim\ar[dl]^{\omega}\\
	& \CW	
}$$
\end{prop}

\begin{proof}
	Replacing $\CW$ by $\overline{\CW}^i$, we can assume it is strongly pretriangulated and
	idempotent-complete.

	The category $\Delta_{\sigma_{21,3}^{-1}}\Delta_{\sigma_{21}}(\CW)$ has objects
	$((m,\pi_{21}),\pi_3)$ where $m\in\CW$, $\pi_{21}:E_2(m)\to E_1(m)$ and
	$\pi_3:\tE_3(m,\pi_{21})\to E_{21}(m,\pi_{21})$ satisfy
	$$d(\pi_{21})=d(\pi_3)=0$$
	and the diagram (\ref{eq:Deltasigmacommutationtau}) commutes for $\pi_{21}$ and
	for $\pi_3$.

	For $i\in\{1,2\}$, let $\pi_{3i}$ be the composition of $\pi_3$ with the projection
	onto $E_i(m)$. We have $d(\pi_{32})=0$ and $d(\pi_{31})=\pi_{21}\circ
	\pi_{32}$.

	\smallskip
	The commutativity of (\ref{eq:Deltasigmacommutationtau}) for $\pi_{21}$ is the
	commutativity of

$$\xymatrix{
	E_2^2(m)\ar[rr]^-{E_2\pi_{21}} \ar[d]_{\tau_2} &&
	E_2E_1(m)\ar[r]^-{\sigma_{21}} & E_1E_2(m)
	\ar[rr]^-{E_1\pi_{21}} && E_1^2(m)\ar[d]^{\tau_1} \\
	E_2^2(m)\ar[rr]_-{E_2\pi_{21}} && E_2E_1(m)\ar[r]_-{\sigma_{21}} & E_1E_2(m)
	\ar[rr]_-{E_1\pi_{21}} && E_1^2(m)
}$$

	The maps $\pi_{3i}:E_3(m)\to E_i(m)$ for $i\in\{1,2\}$ give rise to a map
	$$\left(\begin{matrix}\pi_{32}\\ \pi_{31}\end{matrix}\right)\in
		\Hom_{\Delta_{\sigma_{21}}(\CW)}(\tE_3(m,\pi_{21}),E_{21}(m,\pi_{21}))$$
		if and only if the composition
	$$\xymatrix{
		E_2E_3(m) \ar[r]^{\sigma_{23}} & E_3E_2(m)\ar[r]^{E_3\pi_{21}}&
		E_3E_1(m)\ar[r]^{\sigma_{31}} & E_1E_3(m)\ar[r]^{E_1\pi_{32}} & 
		E_1E_2(m)}$$
	is equal to the sum of the following two maps:
	$$\xymatrix{
		E_2E_3(m)\ar[r]^{E_2\pi_{32}} &
		E_2^2(m) \ar[r]^{\tau_2} & E_2^2(m)\ar[r]^{E_2\pi_{21}} & E_2E_1(m)
		\ar[r]^{\sigma_{21}} & E_1E_2(m)
		}$$
	and
		$$\xymatrix{
			E_2E_3(m) \ar[r]^{E_2\pi_{31}} &
		E_2E_1(m) \ar[r]^{\sigma_{21}} & E_1E_2(m)
		}$$
	and the following diagram commutes:
	$$\xymatrix{
		E_2E_3(m) \ar[r]^{\sigma_{23}}\ar[d]_{E_2\pi_{31}} & E_3E_2(m)\ar[r]^{E_3\pi_{21}}&
		E_3E_1(m)\ar[r]^{\sigma_{31}} & E_1E_3(m)\ar[d]^{E_1\pi_{31}} \\
		E_2E_1(m) \ar[r]_{\sigma_{21}} & E_1E_2(m)\ar[r]_{E_1\pi_{21}} & E_1^2(m)
		\ar[r]_{\tau_1} & E_1^2(m)
		}$$

	The commutativity of (\ref{eq:Deltasigmacommutationtau}) for $\pi_3$ is equivalent to the
	commutativity of the following diagrams:
$$\xymatrix{
	E_3^2(m)\ar[rr]^-{E_3\pi_{31}} \ar[d]_{\tau_3} &&
	E_3E_1(m)\ar[r]^-{\sigma_{31}} & E_1E_3(m)
	\ar[rr]^-{E_3\pi_{31}} && E_1^2(m)\ar[d]^{\tau_1} \\
	E_3^2(m)\ar[rr]_-{E_3\pi_{31}} && E_3E_1(m)\ar[r]_-{\sigma_{31}} & E_1E_3(m)
	\ar[rr]_-{E_3\pi_{31}} && E_1^2(m)
}$$

$$\xymatrix{
	E_3^2(m)\ar[rr]^-{E_3\pi_{32}} \ar[d]_{\tau_3} &&
	E_3E_2(m)\ar[r]^-{\sigma_{32}} & E_2E_3(m)
	\ar[rr]^-{E_3\pi_{32}} && E_2^2(m)\ar[d]^{\tau_2} \\
	E_3^2(m)\ar[rr]_-{E_3\pi_{32}} && E_3E_2(m)\ar[r]_-{\sigma_{32}} & E_2E_3(m)
	\ar[rr]_-{E_3\pi_{32}} && E_2^2(m)
}$$

$$\xymatrix{
	E_3^2(m)\ar[r]^{E_3\pi_{31}} \ar[d]_{\tau_3} & E_3E_1(m)\ar[r]^{\sigma_{31}}& 
	E_1E_3(m)\ar[r]^{E_1\pi_{32}} & E_1E_2(m)\ar[d]^{\sigma_{12}} \\
	E_3^2(m)\ar[r]_{E_3\pi_{32}} & E_3E_2(m)\ar[r]_{\sigma_{32}} & E_2E_3(m)
	\ar[r]_{E_2\pi_{31}} & E_2E_1(m)
}$$
and the vanishing of the following composition:
$$E_3^2(m)\xrightarrow{\tau_3} E_3^2(m)\xrightarrow{E_3\pi_{31}}E_3E_1(m)
\xrightarrow{\sigma_{31}}E_1E_3(m)\xrightarrow{E_1\pi_{32}}E_1E_2(m).$$

Note that the vanishing of that composition follows from the commutativity
of the diagram immediately above.

\smallskip
We deduce that the objects of $\Delta_{\sigma_{21,3}^{-1}}\Delta_{\sigma_{21}}(\CW)$
	can be described as quadruples $(m,\pi_{21},\pi_{31},\pi_{32})$ where
 $m\in\CW$, $\pi_{ij}:E_i(m)\to E_j(m)$ satisfy
	$$d(\pi_{21})=d(\pi_{32})=0,\ d(\pi_{31})=\pi_{21}\circ \pi_{32}$$
	and given $i,j,k,l\in\{1,2,3\}$ with $j-l\ge i-k>0$,
	we have an equality between maps $E_iE_j(m)\to E_kE_l(m)$:
$$\sigma_{lk}\circ E_l\pi_{ik}\circ\sigma_{il}\circ E_i\pi_{jl}+
E_k\pi_{jl}\circ \sigma_{jk}\circ E_j\pi_{ik}\circ \sigma_{ij}+\delta_{jk}
E_j\pi_{il}\circ\sigma_{ij}+\delta_{il}\sigma_{lk}\circ E_i\pi_{jk}=0
$$
where we put  $\sigma_{rr}=\tau_r$.

This provides an isomorphism of categories 
$\Delta_{\sigma_{21,3}^{-1}}\Delta_{\sigma_{21}}(\CW)\iso\Delta_{123}(\CW)$.

\medskip
Let us now describe the action of $E$ on
$\Delta_{\sigma_{21,3}^{-1}}\Delta_{\sigma_{21}}(\CW)$.

We have
$E((m,\pi_{21}),\pi_3)=(m',\pi')$ where
$$m'={\xy (0,0)*{\tE_3(m,\pi_{21})\oplus E_{21}(m,\pi_{21})},
\ar@/^/^{\pi_3}(-12,3)*{};(12,3)*{},
\endxy}$$
$$=
({\xy (0,0)*{E_3(m)\oplus E_2(m)\oplus E_1(m)},
\ar@/^2pc/^{\pi_{31}}(-17,3)*{};(17,3)*{},
\ar@/^0.5pc/^{\ \ \pi_{32}}(-16,3)*{};(-1,3)*{},
\ar@/^0.5pc/^{\pi_{21}}(0,3)*{};(16,3)*{},
\endxy},
{\xy (0,10)*{E_2E_3(m)\oplus E_2^2(m)\oplus E_2E_1(m)},
(0,-10)*{E_1E_3(m)\oplus E_1E_2(m)\oplus E_1^2(m)},
\ar_{\sigma_{31}\circ E_3\pi_{21}\circ \sigma_{23}}(-20,7)*{};(-20,-7)*{},
\ar_{\sigma_{21}\circ E_2\pi_{21}\circ\tau_2\!\!}(0,7)*{};(0,-7)*{},
\ar^{\tau_1\circ E_1\pi_{21}\circ\sigma_{21}}(20,7)*{};(20,-7)*{},
\ar^{\sigma_{21}}(18,7)*{};(2,-7)*{},
\endxy})$$

\vskip 0.2cm

$$\pi':
{\xy (0,10)*{\tE_3^2(m,\pi_{21})\oplus \tE_3 E_{21}(m,\pi_{21})},
(0,-10)*{E_{21}\tE_3(m,\pi_{21})\oplus E_{21}^2(m,\pi_{21})},
\ar_{\sigma_{21,3}^{-1}\circ \tE_3\pi_3\circ \tau_3}(-15,7)*{};(-15,-7)*{},
\ar^{\tau_{E_{21}}\circ E_{21}\pi_3\circ \sigma_{21,3}^{-1}}(15,7)*{};(15,-7)*{},
\ar_{\sigma_{21,3}^{-1}}(13,7)*{};(-13,-7)*{},
\endxy}$$
\vskip 0.2cm

$$={\xy (0,15)*{E_3^2(m)\oplus E_3E_2(m)\oplus E_3E_1(m)},
(0,-15)*{E_2E_3(m)\oplus E_1E_3(m)\oplus E_2^2(m)\oplus E_2E_1(m)\oplus E_1E_2(m)
\oplus E_1^2(m)},
\ar@/_1pc/_{\sigma_{32}\circ E_3\pi_{32}\circ \tau_3}(-22,12)*{};(-49,-12)*{},
\ar@/_1pc/^(0.4){\!\!\!\sigma_{31}\circ E_3\pi_{31}\circ\tau_3}(-20,12)*{};(-31,-12)*{},
\ar@/_1.5pc/^(0.7){\!\!\!\!\sigma_{32}}(-6,12)*{};(-47,-12)*{},
\ar^(0.8){\!\!\tau_2\circ E_2\pi_{32}\circ\sigma_{32}}(-4,12)*{};(-10,-12)*{},
\ar^(0.6){\!\!\sigma_{12}\circ E_1\pi_{32}\circ\sigma_{31}}(17,12)*{};(10,-12)*{},
\ar^(0.2){\!\!\sigma_{31}}(15,12)*{};(-25,-12)*{},
\ar^{\tau_1\circ E_1\pi_{21}\circ\sigma_{31}}(19,12)*{};(50,-12)*{},
\endxy}$$

Via the isomorphism of categories above, this corresponds to the functor $E$ on
$\Delta_{123}(\CW)$.

\smallskip

The endomorphism $\tau$ of $E^2((m,\pi_{21}),\pi_3)$ is
$${\xy (0,10)*{\tE_3^2(m,\pi_{21})\oplus \tE_3E_{21}(m,\pi_{21})\oplus
E_{21}\tE_3(m,\pi_{21}) \oplus E_{21}^2(m,\pi_{21})},
(0,-10)*{\tE_3^2(m,\pi_{21})\oplus \tE_3E_{21}(m,\pi_{21})\oplus
E_{21}\tE_3(m,\pi_{21}) \oplus E_{21}^2(m,\pi_{21})},
\ar_{\tau_3}(-42,7)*{};(-42,-7)*{},
\ar^{\tau_{E_{21}}}(43,7)*{};(43,-7)*{},
\ar^{\sigma_{21,3}}(15,7)*{};(-15,-7)*{},
\endxy}$$

$$={\xy (0,10)*{E_3^2(m)\oplus E_3E_2(m)\oplus E_3E_1(m)\oplus
E_2E_3(m)\oplus E_1E_3(m) \oplus E_2^2(m)\oplus E_2E_1(m)\oplus E_1E_2(m)\oplus E_1^2(m)},
(0,-10)*{E_3^2(m)\oplus E_3E_2(m)\oplus E_3E_1(m)\oplus
E_2E_3(m)\oplus E_1E_3(m) \oplus E_2^2(m)\oplus E_2E_1(m)\oplus E_1E_2(m)\oplus E_1^2(m)},
\ar_{\tau_3}(-82,7)*{};(-82,-7)*{},
\ar_{\sigma_{23}}(-22,7)*{};(-62,-7)*{},
\ar^{\sigma_{13}}(-2,7)*{};(-42,-7)*{},
\ar_{\tau_2}(21,7)*{};(21,-7)*{},
\ar^{\tau_1}(79,7)*{};(79,-7)*{},
\ar_{\sigma_{12}}(58,7)*{};(38,-7)*{},
\endxy}$$

This coincides with the endomorphism $\tau$ of the endofunctor $E^2$ of $\Delta_{123}(\CW)$.

\medskip
	The category $\Delta_{\sigma_{32,1}}\Delta_{\sigma_{32}}(\CW)$ has objects
	pairs $((m,\pi_{32}),\pi_1)$ where $m\in\CW$, $\pi_{32}:E_3(m)\to E_2(m)$ and
	$\pi_1:E_{32}(m,\pi_{32})\to \tE_1(m,\pi_{32})$ satisfy
	$$d(\pi_{32})=d(\pi_1)=0$$
	and the diagram (\ref{eq:Deltasigmacommutationtau}) commutes for $\pi_{32}$ and
	for $\pi_1$.

	For $i\in\{2,3\}$, let $\pi_{i1}$ be the composition of the inclusion
	$E_i(m)\to E_{32}(m)$ with $\pi_1$.
	We have $d(\pi_{21})=0$ and $d(\pi_{31})=\pi_{21}\circ
	\pi_{32}$.

	As in the case of the category $\Delta_{\sigma_{21,3}^{-1}}\Delta_{\sigma_{21}}(\CW)$,
	the objects of $\Delta_{\sigma_{32,1}}\Delta_{\sigma_{32}}(\CW)$
	can be described as quadruples $(m,\pi_{21},\pi_{31},\pi_{32})$ where
 $m\in\CW$, $\pi_{ij}:E_i(m)\to E_j(m)$ satisfy
	$$d(\pi_{21})=d(\pi_{32})=0,\ d(\pi_{31})=\pi_{21}\circ \pi_{32},$$
	the composition
	$$E_3E_2(m)\xrightarrow{E_3\pi_{21}}E_3E_1(m)\xrightarrow{\sigma_{31}}E_1E_3(m)
	\xrightarrow{E_1\pi_{32}}E_1E_2(m)\xrightarrow{\sigma_{12}}E_2E_1(m)$$
	is equal to the sum of the following two maps
	$$E_3E_2(m)\xrightarrow{\sigma_{32}}E_2E_3(m)\xrightarrow{E_2\pi_{32}}E_2^2(m)
	\xrightarrow{\tau_2}E_2^2(m)\xrightarrow{E_2\pi_{21}}E_2E_1(m)$$
	and
	$$E_3E_2(m)\xrightarrow{\sigma_{32}}E_2E_3(m)\xrightarrow{E_2\pi_{31}}E_2E_1(m),$$
	the following diagrams commute
	$$\xymatrix{
		E_3^2(m)\ar[r]^{\tau_3}\ar[d]_{E_3\pi_{31}} & E_3^2(m)\ar[r]^{E_3\pi_{32}}&
		E_3E_2(m)\ar[r]^{\sigma_{32}} & E_2E_3(m)\ar[d]^{E_2\pi_{31}} \\
		E_3E_1(m) \ar[r]_{\sigma_{31}} & E_1E_3(m)\ar[r]_{E_1\pi_{32}} &
		E_1E_2(m)\ar[r]_{\sigma_{12}} & E_2E_1(m)}$$

	$$\xymatrix{
		E_3^2(m)\ar[rr]^-{E_3\pi_{32}} \ar[d]_{\tau_3} &&
	E_3E_2(m)\ar[r]^-{\sigma_{32}} & E_2E_3(m)
	\ar[rr]^-{E_3\pi_{32}} && E_2^2(m)\ar[d]^{\tau_2} \\
	E_3^2(m)\ar[rr]_-{E_3\pi_{32}} && E_3E_2(m)\ar[r]_-{\sigma_{32}} & E_2E_3(m)
	\ar[rr]_-{E_3\pi_{32}} && E_2^2(m)
}$$

$$\xymatrix{
	E_3^2(m)\ar[rr]^-{E_3\pi_{31}} \ar[d]_{\tau_3} &&
	E_3E_1(m)\ar[r]^-{\sigma_{31}} & E_1E_3(m)
	\ar[rr]^-{E_3\pi_{31}} && E_1^2(m)\ar[d]^{\tau_1} \\
	E_3^2(m)\ar[rr]_-{E_3\pi_{31}} && E_3E_1(m)\ar[r]_-{\sigma_{31}} & E_1E_3(m)
	\ar[rr]_-{E_3\pi_{31}} && E_1^2(m)
}$$

$$\xymatrix{
	E_2^2(m)\ar[rr]^-{E_2\pi_{21}} \ar[d]_{\tau_2} &&
	E_2E_1(m)\ar[r]^-{\sigma_{21}} & E_1E_2(m)
	\ar[rr]^-{E_1\pi_{21}} && E_1^2(m)\ar[d]^{\tau_1} \\
	E_2^2(m)\ar[rr]_-{E_2\pi_{21}} && E_2E_1(m)\ar[r]_-{\sigma_{21}} & E_1E_2(m)
	\ar[rr]_-{E_1\pi_{21}} && E_1^2(m)
}$$

$$\xymatrix{
	E_2E_3(m) \ar[r]^{E_2\pi_{31}} \ar[d]_{\sigma_{23}} & E_2E_1(m)\ar[r]^{\sigma_{21}}&
	E_1E_2(m)\ar[r]^{E_1\pi_{21}} & E_1^2(m)\ar[d]^{\tau_1}\\
E_3E_2(m) \ar[r]_{E_3\pi_{21}} & E_3E_1(m)\ar[r]_{\sigma_{31}} & E_1E_3(m)
\ar[r]_{E_1\pi_{31}} & E_1^2(m)
}$$
and the following composition vanishes:
$$E_3E_2(m)\xrightarrow{E_3\pi_{21}} E_3E_1(m)\xrightarrow{\sigma_{31}}E_1E_3(m)
\xrightarrow{E_1\pi_{31}}E_1^2(m)\xrightarrow{\tau_1}E_1^2(m).$$

The vanishing of that composition follows from the commutativity of the diagram
immediately above.

\smallskip
This description of objects provides an isomorphism of categories
$\Delta_{\sigma_{32,1}}\Delta_{\sigma_{32}}(\CW)\iso\Delta_{123}(\CW)$.

\medskip
Let us now describe the action of $E$ on 
$\Delta_{\sigma_{32,1}}\Delta_{\sigma_{32}}(\CW)$.

We have
$E((m,\pi_{32}),\pi_1)=(m',\pi')$ where
$$m'={\xy (0,0)*{E_{32}(m,\pi_{32})\oplus \tE_1(m,\pi_{32})},
\ar@/^/^{\pi_1}(-12,3)*{};(12,3)*{},
\endxy}$$
$$=
({\xy (0,0)*{E_3(m)\oplus E_2(m)\oplus E_1(m)},
\ar@/^2pc/^{\pi_{31}}(-17,3)*{};(17,3)*{},
\ar@/^0.5pc/^{\ \ \pi_{32}}(-16,3)*{};(-1,3)*{},
\ar@/^0.5pc/^{\pi_{21}}(0,3)*{};(16,3)*{},
\endxy},
{\xy (0,10)*{E_3^2(m)\oplus E_3E_2(m)\oplus E_3E_1(m)},
(0,-10)*{E_2E_3(m)\oplus E_2^2(m)\oplus E_2E_1(m)},
\ar_{\sigma_{32}\circ E_3\pi_{32}\circ \tau_3}(-20,7)*{};(-20,-7)*{},
\ar^{\!\!\tau_2\circ E_2\pi_{32}\circ\sigma_{32}}(0,7)*{};(0,-7)*{},
\ar^{\sigma_{12}\circ E_1\pi_{32}\circ\sigma_{31}}(20,7)*{};(20,-7)*{},
\ar_{\sigma_{32}}(-2,7)*{};(-18,-7)*{},
\endxy})$$

\vskip 0.2cm
$$\pi':
{\xy (0,10)*{E_{32}^2(m,\pi_{32})\oplus E_{32}\tE_1(m,\pi_{32})},
(0,-10)*{\tE_1E_{32}(m,\pi_{32})\oplus \tE_1^2(m,\pi_{32})},
\ar_{\sigma_{32,1}\circ E_{32}\pi_1\circ \tau_{E_{32}}}(-15,7)*{};(-15,-7)*{},
\ar^{\tau_1\circ \tE_1\pi_1\circ \sigma_{32,1}}(15,7)*{};(15,-7)*{},
\ar_{\sigma_{32,1}}(13,7)*{};(-13,-7)*{},
\endxy}$$
\vskip 0.2cm

$$={\xy (0,15)*{E_3^2(m)\oplus E_3E_2(m)\oplus
E_2E_3(m)\oplus E_2^2(m)\oplus E_3E_1(m)\oplus E_2E_1(m)},
(0,-15)*{E_1E_3(m)\oplus E_1E_2(m) \oplus E_1^2(m)},
\ar@/_1pc/_{\sigma_{31}\circ E_3\pi_{31}\circ \tau_3}(-49,12)*{};(-22,-12)*{},
\ar_(0.7){\sigma_{31}\!\!\!}(24,12)*{};(-16,-12)*{},
\ar@/^0.4pc/_(0.6){\tau_1\circ E_1\pi_{31}\circ\sigma_{31}\!\!}(30,12)*{};(20,-12)*{},
\ar@/_1pc/_(0.3){\sigma_{31}\circ E_3\pi_{21}\circ\sigma_{23}\!\!}(-12,12)*{};(-20,-12)*{},
\ar_(0.2){\sigma_{21}\circ E_2\pi_{21}\circ\tau_2\!\!}(8,12)*{};(0,-12)*{},
\ar@/^1.5pc/_(0.2){\sigma_{21}\!\!}(45,12)*{};(2,-12)*{},
\ar@/^1.5pc/^{\tau_1\circ E_1\pi_{31}\circ\sigma_{21}}(47,12)*{};(22,-12)*{},
\endxy}$$

Via the isomorphism of categories above, this corresponds to the functor $E$ on
$\Delta_{123}(\CW)$.

\smallskip

The endomorphism $\tau$ of $E^2((m,\pi_{32}),\pi_1)$ is
$${\xy (0,10)*{E_{32}^2(m,\pi_{32})\oplus E_{32}\tE_1(m,\pi_{32})\oplus
\tE_1E_{32}(m,\pi_{32}) \oplus \tE_1^2(m,\pi_{32})},
(0,-10)*{E_{32}^2(m,\pi_{32})\oplus E_{32}\tE_1(m,\pi_{32})\oplus
\tE_1E_{32}(m,\pi_{32}) \oplus \tE_1^2(m,\pi_{32})},
\ar_{\tau_{E_{32}}}(-42,7)*{};(-42,-7)*{},
\ar^{\tau_1}(43,7)*{};(43,-7)*{},
\ar^{\sigma_{32,1}^{-1}}(15,7)*{};(-15,-7)*{},
\endxy}$$

$$={\xy (0,10)*{E_3^2(m)\oplus E_3E_2(m)\oplus E_2E_3(m)\oplus
E_2^2(m)\oplus E_3E_1(m) \oplus E_2E_1(m)\oplus E_1E_3(m)\oplus E_1E_2(m)\oplus E_1^2(m)},
(0,-10)*{E_3^2(m)\oplus E_3E_2(m)\oplus E_2E_3(m)\oplus
E_2^2(m)\oplus E_3E_1(m) \oplus E_2E_1(m)\oplus E_1E_3(m)\oplus E_1E_2(m)\oplus E_1^2(m)},
\ar_{\tau_3}(-82,7)*{};(-82,-7)*{},
\ar_{\sigma_{23}}(-42,7)*{};(-62,-7)*{},
\ar_{\sigma_{13}}(38,7)*{};(-2,-7)*{},
\ar_{\tau_2}(-22,7)*{};(-22,-7)*{},
\ar^{\tau_1}(79,7)*{};(79,-7)*{},
\ar_{\sigma_{12}}(58,7)*{};(18,-7)*{},
\endxy}$$

This coincides with the endomorphism $\tau$ of the endofunctor $E^2$ of $\Delta_{123}(\CW)$.
\end{proof}

\subsection{Dual diagonal action}
\label{se:dualdiagonal}
\subsubsection{Category}

Consider two actions of $\CU$ given by $(F_1,\tau_1)$ and
$(E_2,\tau_2)$ on $\CW$ and a closed morphism of functors
$\lambda:F_1E_2\to E_2F_1$ such that
diagrams (\ref{eq:diaglambda}) commute.
As in \S\ref{se:2birep}, we have maps
$\mu_{i,j}=\mu_{(i,i),(j,j)}:E_2^iF_1^iE_2^jF_1^j\to E_2^{i+j}F_1^{i+j}$.


\medskip
We define a differential category 
$\Delta_{\lambda}\CW$\indexnot{Delta}{\Delta_\lambda\CW}.
Its objects are pairs $(m,\varsigma)$ where $m\in \overline{\CW}^i$ and
$\varsigma=(\varsigma_i)_{i\ge 1}$,
$\varsigma_i\in Z\Hom_{\overline{\CW}^i}(E_2^iF_1^i(m),m)$, satisfies that 
\begin{itemize}
	\item for all $i,j\ge 1$, we have 
		$\varsigma_i\circ E_2^iF_1^i\varsigma_j=\varsigma_{i+j}\circ\mu_{i,j}$
\item $\varsigma_i\circ T_rF_1^i=\varsigma_i\circ E_2^iT_r$ for all $1\le r<i$.
\end{itemize}

\smallskip
We define
 $\Hom_{\Delta_\lambda\CW}((m,\varsigma),(m',\varsigma'))$ to be the
differential submodule of $\Hom_{\overline{\CW}^i}(m,m')$ of
elements
$f$ such that for all $i\ge 1$, the following diagram commutes
$$\xymatrix{
	E_2^iF_1^i(m)\ar[r]^-{\varsigma_i}\ar[d]_{E_2^iF_1^if} &m \ar[d]^{f} \\
	E_2^iF_1^i(m')\ar[r]_-{\varsigma'_i} &m'
}$$

The composition of maps is defined to be that of 
$\overline{\CW}^i$.

\begin{rem}
	The structure of objects in $\Delta_\lambda \CW$ can be described graphically as follows:
$$\includegraphics[scale=0.82]{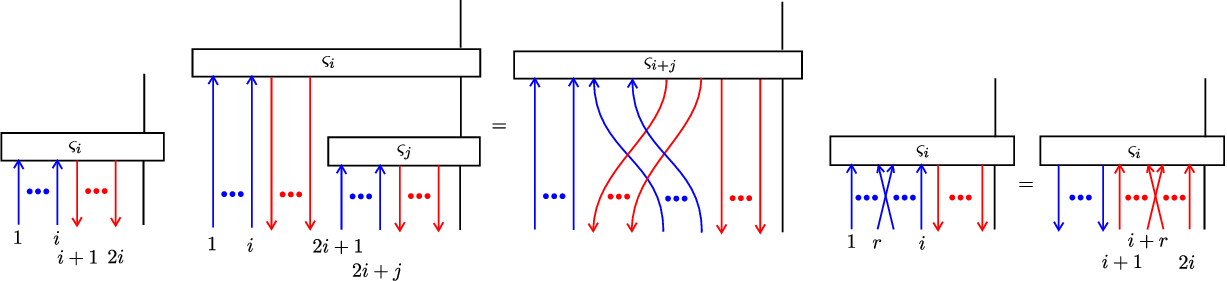}$$
\end{rem}

\begin{rem}
The maps $\mu_{i,j}$
make $A=\bigoplus_{i\ge 0}E_2^iF_1^i$ into a monoid in the monoidal category of
	endofunctors of $\overline{\CW}^i$, when $\overline{\CW}^i$ has enough direct sums. If $\overline{\CW}^i$ has enough
colimits, we have an induced monoid $\bar{A}=\bigoplus_{i\ge 0}(E_2^iF_1^i)
\otimes_{H_i\otimes H_i^\opp}H_i$.
Now, the category $\Delta_\lambda\CW$ is the category of $\bar{A}$-modules in
	$\overline{\CW}^i$.
\end{rem}

\begin{rem}
	\label{re:bi2repswap}
Let us define a lax bi-$2$-representation $E_{i,j}=E_2^jF_1^i$ on $\CW$
as deduced from the one defined
in \S\ref{se:2birep} by applying the swap automorphism of $\CU\times\CU$ (cf
Remark \ref{re:swapUU}).

There is a faithful differential functor $\Delta_\lambda\CW\to\Delta_E\CW,\
	(m,\varsigma)\mapsto (m,\varsigma_1)$.
\end{rem}

\subsubsection{Adjoint}
\label{se:adjointbimod}
We assume $F_1$ has a right adjoint $E_1$ and
denote by $\eps_1$ and $\eta_1$ the counit and unit of the adjunction. We denote by
$\tau_1$ the endomorphism of $E_1^2$ corresponding by adjunction to the endomorphism
$\tau_1$ of $F_1^2$. The pair $(E_1,\tau_1)$ provides an action of $\CU$ on $\CW$.

\begin{rem}
	The maps $\eta_1$, $\eps_1$, the relations they satisfy, and $\lambda$, $\sigma$ and $\rho$
	are described graphically as:
$$\includegraphics[scale=0.90]{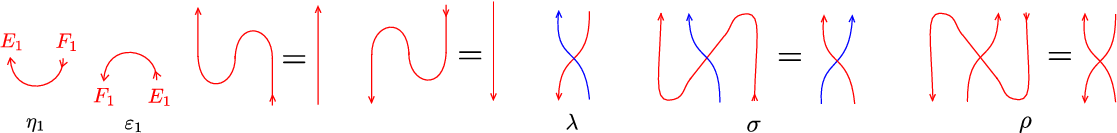}$$
\end{rem}

We denote by $\sigma$ the composition
\begin{equation}
	\label{eq:defsigma}
\sigma:E_2E_1\xrightarrow{\eta_1E_2E_1} E_1F_1E_2E_1 \xrightarrow{E_1\lambda E_2}
E_1E_2F_1E_1\xrightarrow{E_1E_2\eps_1} E_1E_2
\end{equation}
and by $\rho$ the composition
\begin{equation}
	\label{eq:defrho}
\rho:F_1E_1\xrightarrow{F_1E_1\eta_1}F_1E_1^2F_1\xrightarrow{F_1\tau_1 F_1}
F_1E_1^2F_1\xrightarrow{\eps_1E_1F_1}E_1F_1.
\end{equation}
The diagram (\ref{eq:diagsigma}) is commutative.

\begin{lemma}
	\label{le:braidsrhosigmalambda}
	We have
$$E_1\lambda\circ \rho E_2\circ F_1\sigma=\sigma F_1\circ E_2\rho\circ \lambda E_1
	\text{ and }
\rho F_1\circ F_1\rho\circ \tau_1 E_1=E_1\tau_1\circ\rho F_1\circ F_1\rho.$$
\end{lemma}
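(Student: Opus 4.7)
The plan is to prove both identities by unfolding the definitions of $\sigma$ and $\rho$ and then performing diagram chases that use naturality, the zigzag identities for the adjunction $(F_1,E_1)$, and the braid and nil relations satisfied by $\tau_1$. Both identities are graphically obvious string-diagram manipulations once one draws $\rho$ as a crossing of $F_1$ and its right adjoint $E_1$ together with a twist $\tau_1$, $\sigma$ as the $E_1$-version of $\lambda$ obtained by rotating the $F_1$-strand using $\eta_1$ and $\eps_1$, and the defining relations among $\tau_1,\tau_2,\lambda$. The task is to formalize these pictures algebraically.

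For the first identity, I would expand
\[
\sigma=(E_1 E_2\eps_1)\circ (E_1\lambda E_2)\circ (\eta_1 E_2 E_1)
\]
in both $F_1\sigma$ on the left-hand side and $\sigma F_1$ on the right-hand side. The left-hand side becomes a long composition on $F_1 E_2 E_1$ consisting of: insert $\eta_1$ near $E_1$, slide $\lambda$ across the $E_2$, absorb $\eps_1$, then apply $\rho E_2$, then $E_1\lambda$. Using naturality of $\eta_1$ and $\eps_1$ (to slide them past the unrelated factors $F_1$ and $E_1$) this reorganizes so that the two $\lambda$'s land on the same $E_2$-strand and the adjunction units cancel by zigzag. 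The right-hand side, expanded analogously, reorganizes to the same composition. The key identity reduces to a commuting square that is precisely naturality of $\lambda$ applied to $\eps_1$ together with naturality of $\rho$, plus one zigzag cancellation.

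For the second identity, I would expand
\[
\rho=(\eps_1 E_1 F_1)\circ (F_1\tau_1 F_1)\circ (F_1 E_1\eta_1)
\]
on each of the two occurrences of $\rho$ on both sides of
\[
\rho F_1\circ F_1\rho\circ \tau_1 E_1=E_1\tau_1\circ\rho F_1\circ F_1\rho.
\]
After slotting in the definitions, each side becomes a composition on $F_1^2 E_1$ involving two copies of $\eta_1$, two copies of $\eps_1$, and three applications of $\tau_1$ to strings of $F_1$'s (two from the $\rho$'s and one from $\tau_1 E_1$ or $E_1\tau_1$). Using the naturality of $\eta_1$ and $\eps_1$ to move them to the boundary, the interior of the composition reduces on the left to $F_1\tau_1 F_1\circ \tau_1 F_1^2\circ F_1\tau_1 F_1$ (acting on an appropriate string of $F_1$'s) and on the right to $F_1^2\tau_1\circ F_1\tau_1 F_1\circ \tau_1 F_1^2$ conjugated by a zigzag. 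These are equated by the braid relation $\tau_1 F_1\circ F_1\tau_1\circ\tau_1 F_1=F_1\tau_1\circ\tau_1 F_1\circ F_1\tau_1$ coming from the 2-representation axioms (\ref{eq:definingtau}).

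The main obstacle in both cases is the bookkeeping rather than any new idea: one must be careful about which $F_1$-strand each $\tau_1$ acts on after moving the $\eta_1$'s and $\eps_1$'s, and one must keep track of the order of the factors $F_1,E_1,E_2$ throughout. In practice, I would write each composition as a string diagram, verify the rearrangements graphically using only planar isotopy, the zigzags $\eps_1 F_1\circ F_1\eta_1=\id_{F_1}$ and $E_1\eps_1\circ\eta_1 E_1=\id_{E_1}$, the braid relation for $\tau_1$, and the diagrams (\ref{eq:diaglambda}), and then transcribe the graphical identity into the desired algebraic equality.
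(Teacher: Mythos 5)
Your strategy matches the paper's proof exactly: expand $\sigma$ and $\rho$ via their definitions in terms of $\lambda$, $\eta_1$, $\eps_1$, $\tau_1$, then rearrange using naturality of the adjunction morphisms, the zigzag identities, the compatibility of $\lambda$ with $\tau_1$ encoded in (\ref{eq:diaglambda}), and the braid relation for $\tau_1$. One bookkeeping caveat on the second identity: the paper's computation brings all three copies of $\tau_1$ onto a single $E_1^3$ (sandwiched between $F_1^2(\cdot)F_1^2$), where the braid relation $\tau_1 E_1\circ E_1\tau_1\circ\tau_1 E_1 = E_1\tau_1\circ\tau_1 E_1\circ E_1\tau_1$ applies directly; your claimed reduction to $F_1\tau_1 F_1\circ \tau_1 F_1^2\circ F_1\tau_1 F_1$ versus $F_1^2\tau_1\circ F_1\tau_1 F_1\circ \tau_1 F_1^2$ corresponds to $T_2T_1T_2$ versus $T_3T_2T_1$, which are not equated by the braid relation alone, so the careful string-diagram verification you defer would reveal that the outer $\tau_1 E_1$ (resp.\ $E_1\tau_1$) must first be moved across the adjunction and slid to be adjacent to the two $\tau_1$'s coming from the $\rho$'s before the braid relation can be invoked.
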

\begin{proof}
	We have
	$$E_1\lambda\circ \rho E_2\circ F_1\sigma=$$
	\begin{align*}
		&=E_1E_2F_1\eps_1\circ 
	E_1\lambda F_1E_1\circ F_1E_1^2F_1\lambda E_1\circ F_1\tau_1 F_1^2E_2E_1\circ
	F_1E_1\eta_1 F_1E_2E_1\circ F_1\eta_1E_2E_2 \\
		&= E_1E_2F_1\eps_1\circ  E_1\lambda F_1E_1\circ F_1E_1^2F_1\lambda E_1\circ
F_1E_1^2\tau_1 E_2E_1\circ F_1E_1\eta_1 F_1E_2E_1\circ F_1\eta_1E_2E_2\\
		&=E_1E_2F_1\eps_1\circ E_1\lambda F_1E_1\circ E_1F_1\lambda E_1
\circ E_1\tau_1 E_2E_1 \circ \eta_1 F_1E_2E_1\circ \eps_1F_1E_2E_1\circ
	F_1\eta_1E_2E_2\\
		&=E_1E_2F_1\eps_1\circ E_1\lambda F_1E_1\circ E_1F_1\lambda E_1
\circ E_1\tau_1 E_2E_1 \circ \eta_1 F_1E_2E_1\\
		&=E_1E_2F_1\eps_1\circ E_1E_2\tau_1 E_1\circ E_1\lambda F_1E_1\circ
	E_1F_1\lambda E_1 \circ  \eta_1 F_1E_2E_1\\
		&=E_1E_2F_1\eps_1\circ E_1E_2F_1\eps_1E_1F_1\circ E_1E_2F_1^2E_1\eta_1\circ
	E_1E_2\tau_1 E_1\circ E_1\lambda F_1E_1\circ
	E_1F_1\lambda E_1 \circ  \eta_1 F_1E_2E_1\\
		&=E_1E_2F_1\eps_1\circ E_1E_2F_1\eps_1E_1F_1\circ E_1E_2\tau_1E_1^2F_1\circ
	E_1E_2F_1^2E_1\eta_1\circ E_1\lambda F_1E_1\circ
	E_1F_1\lambda E_1 \circ  \eta_1 F_1E_2E_1\\
	&=E_1E_2F_1\eps_1\circ E_1E_2F_1\eps_1E_1F_1\circ E_1E_2F_1^2\tau_1F_1\circ
	E_1E_2F_1^2E_1\eta_1\circ E_1\lambda F_1E_1\circ
	E_1F_1\lambda E_1 \circ  \eta_1 F_1E_2E_1\\
		&=\sigma F_1\circ E_2\rho\circ \lambda E_1.
	\end{align*}

	We have
	\begin{align*}
		\rho F_1\circ F_1\rho\circ \tau_1 E_1&=
\eps_1E_1F_1^2\circ F_1\eps_1E_1^2F_1^2\circ \tau_1E_1^3F_1^2\circ F_1^2E_1\tau_1F_1^2
\circ F_1^2E_1^2\eta_1F_1\circ F_1^2\tau_1F_1\circ F_1^2E_1\eta_1\\
		&=\eps_1E_1F_1^2\circ F_1\eps_1E_1^2F_1^2\circ F_1^2\tau_1E_1F_1^2\circ F_1^2E_1\tau_1F_1^2
\circ F_1^2E_1^2\eta_1F_1\circ F_1^2\tau_1F_1\circ F_1^2E_1\eta_1\\
		&=\eps_1E_1F_1^2\circ F_1\eps_1E_1^2F_1^2\circ F_1^2(\tau_1E_1\circ E_1\tau_1
\circ\tau_1E_1)F_1^2\circ F_1^2E_1^2\eta_1 F_1\circ F_1^2E_1\eta_1\\
		&=\eps_1E_1F_1^2\circ F_1\eps_1E_1^2F_1^2\circ F_1^2(E_1\tau_1
\circ\tau_1E_1\circ E_1\tau_1)F_1^2\circ F_1^2E_1^2\eta_1 F_1\circ F_1^2E_1\eta_1\\
		&=\eps_1E_1F_1^2\circ F_1\eps_1E_1^2F_1^2\circ F_1^2(E_1\tau_1
\circ\tau_1E_1)F_1^2\circ F_1^2E_1^3\tau_1\circ F_1^2E_1^2\eta_1 F_1\circ F_1^2E_1\eta_1\\
		&=E_1\tau_1\circ\rho F_1\circ F_1\rho.
	\end{align*}
\end{proof}

\subsubsection{Relations}

	Let $\CM$ be the strict monoidal pointed category generated by objects $a_l$ for
	$1\le l\le 3$ and maps $\lambda_{lm}:a_la_m\to a_ma_l$ for $l\le m$ with
	relations $\lambda_{ll}^2=0$ and
	$$\lambda_{mn}l\circ m\lambda_{ln}\circ \lambda_{lm}n=
	n\lambda_{lm}\circ \lambda_{ln}m\circ l\lambda_{mn}
	\text{ for }l\le m\le n.$$

\begin{lemma}
	\label{le:monoidalmaps}
	We have a pointed faithful strict monoidal functor
	$$H:\CM\to \CU^\bullet,\ a_l\mapsto e,\
	\lambda_{lm}\mapsto\tau.$$
	Given $l_1,\ldots,l_r,m_1,\ldots,m_r\in\{1,2,3\}$, the non-zero elements of
	$H(\Hom_\CM(a_{l_1}\cdots a_{l_r},a_{m_1}\cdots a_{m_r}))\subset H_r^\bullet$
	are those $T_w$ with $w\in\GS_r$ such that for all $i,j\in\{1,\ldots,r\}$ with $i<j$ and
	$w(i)>w(j)$, we have $l_i\le l_j$.
\end{lemma}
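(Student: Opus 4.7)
The plan is to prove the lemma in three steps: verify $H$ is well-defined, characterize the non-zero elements in the image of $H$ via the inversion condition, and establish faithfulness using Matsumoto's theorem.

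For well-definedness, the assignment $a_l\mapsto e$, $\lambda_{lm}\mapsto\tau$ sends $\lambda_{ll}^2=0$ to $\tau^2=0$ and the braid relation $\lambda_{mn}l\circ m\lambda_{ln}\circ\lambda_{lm}n=n\lambda_{lm}\circ\lambda_{ln}m\circ l\lambda_{mn}$ to $\tau e\circ e\tau\circ\tau e=e\tau\circ\tau e\circ e\tau$; both hold in $\CU^\bullet$ by (\ref{eq:definingtau}). For the characterization, any non-zero $f\in\Hom_\CM(a_{l_1}\cdots a_{l_r},a_{m_1}\cdots a_{m_r})$ is a composition of elementary moves of the form $\id\otimes\lambda_{l',m'}\otimes\id$ applied at an adjacent pair of positions in the current letter sequence, requiring the left label to be $\le$ the right one. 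Its image under $H$ is a product $T_{i_1}\cdots T_{i_k}$ in $H_r^\bullet$, which by (\ref{eq:multnilHecke}) equals $T_w$ with $w=s_{i_1}\cdots s_{i_k}$ if the word is reduced, and $0$ otherwise. Forward implication: if $H(f)=T_w\neq 0$ then the sequence of swaps is a reduced expression for $w$, and for each inversion $(i,j)$ of $w$ there is a unique step at which the strands originally at positions $i$ and $j$ are swapped; the strand with smaller original position is on the left at that moment, so the swap is an instance of $\lambda_{l_i,l_j}$, forcing $l_i\le l_j$. Converse: given $w$ satisfying the condition (and compatible with $(l,m)$), any reduced expression for $w$ produces a valid composition of $\lambda$'s in $\CM$ with image $T_w$, by the same strand analysis.

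For faithfulness, it remains to show that two reduced expressions for the same valid $w$ define the same morphism in $\CM$. By Matsumoto's theorem, any two reduced expressions are connected by commutation moves $s_is_j=s_js_i$ for $|i-j|\ge 2$ and braid moves $s_is_{i+1}s_i=s_{i+1}s_is_{i+1}$, with all intermediate words reduced. Commutation moves are automatic from the interchange law of the strict monoidal category $\CM$. For each braid move, the three strands at current positions $i,i+1,i+2$ undergo a full reversal, so all three pairs of their original positions are inversions of $w$; the inversion hypothesis then forces their three current labels to satisfy $L\le M\le N$, and the defining braid relation of $\CM$ equates the two sides. Since distinct $w$'s have distinct $T_w$'s in $H_r^\bullet$, $H$ is faithful. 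The main obstacle is this last step: verifying via the strand bookkeeping that every braid move involved in the Matsumoto rearrangement has its three labels in non-decreasing order, which follows cleanly from the inversion hypothesis applied to the three pairwise inversions contributed by the braid.
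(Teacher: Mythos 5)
Your overall approach---characterize the image via strand/inversion bookkeeping and then invoke the word problem for $\GS_n$ to establish faithfulness---is the same one the paper uses. The well-definedness check, the forward and converse implications for the inversion criterion, and the treatment of commutation and braid moves along a Matsumoto chain are all correct, and you give a nice explicit derivation of $L\le M\le N$ for the three strands in a braid move (all three pairs are inversions of $w$, and since in a reduced word they have not crossed before the braid move, they sit in original order; the inversion hypothesis then orders the labels). The paper phrases the same point differently, observing directly that $\tilde{T}_i\tilde{T}_{i+1}\tilde{T}_i$ is well-defined in $\CM$ if and only if $\tilde{T}_{i+1}\tilde{T}_i\tilde{T}_{i+1}$ is.

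However, there is a genuine gap in the faithfulness step. You only treat the case in which the two morphisms have the same \emph{non-zero} image $T_w$. For a pointed faithful functor you must also show that $H(f)=0$ forces $f=0$: if $f=\tilde{T}_{i_1}\cdots\tilde{T}_{i_r}$ is a valid composition in $\CM$ whose underlying word $s_{i_1}\cdots s_{i_r}$ is not reduced (so $H(f)=T_{i_1}\cdots T_{i_r}=0$), you must show $f=0$ in $\CM$. Your mechanism for verifying that each braid move is applicable in $\CM$ is tied to the reduced case: you argue that the three strands in the reversal are pairwise inversions and have not crossed before, so they sit in original order. In a non-reduced word two strands may cross several times, the relative-order argument fails, and you cannot conclude $L\le M\le N$ from the inversion hypothesis. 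The paper's local observation is the better tool here precisely because it carries no reducedness hypothesis: the two sides of a braid move impose the same condition $l_i\le l_{i+1}\le l_{i+2}$ on the current labels, so well-definedness is preserved in both directions for any word. With that one can transport a non-reduced but well-defined word, via braid and commutation moves, to a word containing two consecutive swaps at the same position; the first such swap forces $p\le q$ and the second $q\le p$ for the two labels involved, so both are $\lambda_{pp}$, and $\lambda_{pp}^2=0$ kills the composite. This is a short step, but it is exactly the part that your inversion-based bookkeeping does not reach.
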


\begin{proof}
	Given the defining relations for $\CU^\bullet$, the construction of the
	lemma does define (uniquely) a monoidal functor $H$.

	Fix $l_1,\ldots,l_n\in\{1,\ldots,3\}$. Given $i\in\{1,\ldots,n-1\}$ such that
	$l_i\le l_{i+1}$, we put $\tilde{T}_i=a_{l_1}\cdots a_{l_{i-1}}
	\lambda_{l_i,l_{i+1}}a_{l_{i+2}}\cdots a_{l_n}$. Note that $\tilde{T}_i\tilde{T}_{i+1}\tilde{T}_i$ is
	well-defined if and only if $l_i\le l_{i+1}\le l_{i+2}$, hence if and only if
	$\tilde{T}_{i+1}\tilde{T}_i\tilde{T}_{i+1}$ is well-defined. As a consequence, given
	$i_1,\ldots,i_r,j_1,\ldots,j_s\in\{1,\ldots,n-1\}$ such that
	$\tilde{T}_{i_1}\cdots \tilde{T}_{i_r}$ and $\tilde{T}_{j_1}\cdots\tilde{T}_{j_s}$ are
	well-defined and $T_{i_1}\cdots T_{i_r}=T_{j_1}\cdots T_{j_s}$, then we have
	$\tilde{T}_{i_1}\cdots \tilde{T}_{i_r}=\tilde{T}_{j_1}\cdots\tilde{T}_{j_s}$. This shows the
	faithfulness of $H$.

	\smallskip
	Consider $i_1,\ldots,i_r$ such that $\tilde{T}_{i_1}\cdots \tilde{T}_{i_r}$ is well-defined and 
	non-zero. Let $w=s_{i_1}\cdots s_{i_r}\in\GS_n$. We show by induction on $r$ that
	given $(i,j)\in\tilde{L}(w)$, we have $l_i\le l_j$.
	
	Let $w'=s_{i_1}\cdots s_{i_{r-1}}$.
	Put $d=i_r$ and $w'=ws_d$.
	Since $T_{i_1}\cdots T_{i_r}\neq 0$, we
	have $r=\ell(w)$. We have $\tilde{L}(w)=\{(d,d+1)\}\coprod s_d(\tilde{L}(w'))$ by
	Lemma \ref{le:lengthaffine}. We have a well-defined map
	$\tilde{T}_{i_1}\cdots \tilde{T}_{i_{r-1}}$ from $a_{l_1}\cdots a_{l_{d-1}}a_{l_{d+1}}
	a_{l_d}a_{l_{d+2}}\cdots a_{l_n}$. It follows by induction that given $(i,j)\in \tilde{L}(w')$, we
	have $l_{s_d(i)}\le l_{s_d(j)}$. Since
	$\tilde{L}(w)=\{(d,d+1)\}\coprod s_d(\tilde{L}(w'))$ (Lemma \ref{le:lengthaffine}), we deduce that
	$l_i\le l_j$ for all $(i,j)\in \tilde{L}(w)$.

	\smallskip
	Consider now $w\in\GS_n$ such that given $(i,j)\in\tilde{L}(w)$, we have $l_i\le l_j$.
	Let $w=s_{i_1}\cdots s_{i_r}$ be a reduced decomposition of $w$. 
	We show by induction on $r$ that $\tilde{T}_{i_1}\cdots\tilde{T}_{i_r}$ is well-defined.
	As before, we define $d$ and $w'$. By induction on $r$, the element
	$\tilde{T}_{i_1}\cdots\tilde{T}_{i_{r-1}}$ gives a well-defined map from
	$a_{l_1}\cdots a_{l_{d-1}}a_{l_{d+1}} a_{l_d}a_{l_{d+2}}\cdots a_{l_n}$. Since
	$(d,d+1)\in \tilde{L}(w)$, it follows that $l_d\le l_{d+1}$, hence
	$\tilde{T}_d$ is a well-defined map from $a_{l_1}\cdots a_{l_n}$. We deduce that 
	$\tilde{T}_{i_1}\cdots\tilde{T}_{i_r}$. This shows that $T_w$ is in the image of $H$.
\end{proof}

Given $l_1,\ldots,l_r,m_1,\ldots,m_r\in\{1,2,3\}$ and $w\in\GS_r$ satisfying the
assumptions of Lemma \ref{le:monoidalmaps},
we put $\lambda_w=H^{-1}(T_w)$.

\smallskip
We denote by $\CM'$ the strict monoidal $k$-linear category obtained from $k[\CM]$ by
adding maps $\eps:a_1a_3\to 1$ and $\eta:1\to a_3a_1$ and relations
$$a_3\eps\circ \eta a_3=\id,\ \eps a_1\circ a_1\eta=\id$$
$$\lambda_{23}=a_3a_2\eps\circ a_3\lambda_{12}a_3\circ\eta a_2a_3,\
\lambda_{13}=\eps a_3a_1\circ a_1\lambda_{33}a_1\circ a_1a_3\eta$$
$$\lambda_{11}=\eps a_1^2\circ a_1\eps a_3a_1^2\circ a_1^2\lambda_{33}a_1^2\circ a_1^2a_3\eta a_1
\circ a_1^2\eta.$$

There is a monoidal duality, i.e. a monoidal equivalence $\CM^{\prime\opp}\iso\CM'$ given by
$$a_1\mapsto a_3,\ a_2\mapsto a_2,\ a_3\mapsto a_1,\ \lambda_{12}\mapsto\lambda_{23},\ 
\lambda_{23}\mapsto\lambda_{12},\ \lambda_{13}\mapsto\lambda_{13}$$
$$\lambda_{11}\mapsto\lambda_{33},\ \lambda_{22}\mapsto\lambda_{22},\
\lambda_{33}\mapsto\lambda_{11},\ \eps\mapsto\eta,\ \eta\mapsto\eps.$$

\begin{lemma}
	\label{le:etaepsrhosigmalambda0}
	Let $G_1,\ldots,G_n\in\{a_1,a_2,a_3\}$. We have
	$$\lambda_{(1\cdots n+1)}\circ G_1\cdots G_n \eta=
	\lambda_{(n+2\cdots 2)}\circ \eta G_1\cdots G_n:
	G_1\cdots G_n\to a_3 G_1\cdots G_n a_1$$
	and
	$$\eps G_1\cdots G_n\circ \lambda_{(2\cdots n+2)}=G_1\cdots G_n\eps\circ
	\lambda_{(n+1\cdots 1)}:a_1G_1\cdots G_n a_3\to G_1\cdots G_n.$$
\end{lemma}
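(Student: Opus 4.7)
The plan is to prove the first identity by induction on $n$ and deduce the second from it via the monoidal duality $\CM^{\prime\opp}\iso\CM'$ recalled above, which exchanges $\eta$ with $\eps$, sends $a_i\mapsto a_{4-i}$, and carries each $\lambda_{lm}$ to $\lambda_{4-m,4-l}$. Applied to the first identity, this duality produces (after relabeling the $G_i$'s by their duals and reversing their order) precisely the second identity, so no separate argument is needed.

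The base case $n=0$ is immediate since both $(1)$ and $(2)$ are trivial permutations and both sides reduce to $\eta$. For $n\ge 1$, set $B=G_2\cdots G_n$ and use the reduced factorizations $\pi_1=(1,2)\cdot(2,3,\ldots,n+1)$ and $\pi_2=(n+2,n+1,\ldots,3)\cdot(2,3)$ in $\GS_{n+2}$. Each factor satisfies the ordering condition of Lemma \ref{le:monoidalmaps}, giving
\[
\lambda_{\pi_1}=(\lambda_{G_1,a_3}\,B\,a_1)\circ(G_1\,\lambda_{(1,\ldots,n)}\,a_1),\qquad \lambda_{\pi_2}=(a_3\,G_1\,\lambda')\circ(a_3\,\lambda_{a_1,G_1}\,B),
\]
where $\lambda'\colon a_1B\to Ba_1$ is the cycle $(n,n-1,\ldots,1)\in\GS_n$ sliding $a_1$ across $B$. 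Bifunctoriality of $\otimes$ together with the inductive hypothesis applied to $B$ (which gives $\lambda_{(1,\ldots,n)}\circ B\eta=(a_3\,\lambda')\circ\eta B$) rewrites the LHS first as $(\lambda_{G_1,a_3}\,B\,a_1)\circ(G_1\,a_3\,\lambda')\circ(G_1\,\eta\,B)$, and a further bifunctorial commutation reorganizes this into $(a_3\,G_1\,\lambda')\circ(\lambda_{G_1,a_3}\,a_1\,B)\circ(G_1\,\eta\,B)$. The factorization of $\pi_2$ directly expresses the RHS as $(a_3\,G_1\,\lambda')\circ(a_3\,\lambda_{a_1,G_1}\,B)\circ(\eta\,G_1\,B)$. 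Comparing the two expressions, the inductive step reduces to the $n=1$ identity
\[
(\lambda_{G_1,a_3}\,a_1)\circ G_1\eta=(a_3\,\lambda_{a_1,G_1})\circ\eta G_1,
\]
tensored on the right with $\id_B$.

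The $n=1$ identity is the main obstacle and is settled by a three-case analysis on $G_1\in\{a_1,a_2,a_3\}$, using the relations of $\CM'$ expressing $\lambda_{23}$, $\lambda_{13}$, and $\lambda_{11}$ in terms of $\eta$, $\eps$, $\lambda_{12}$, and $\lambda_{33}$. For $G_1=a_2$ I substitute $\lambda_{23}=a_3a_2\eps\circ a_3\lambda_{12}a_3\circ\eta a_2a_3$ into the LHS and use bifunctoriality to commute the $\lambda_{12}$-crossing past the newly created right-hand $\eta$; this produces an interior $a_3a_2a_1\eta$ followed by $a_3a_2\eps a_1$, which collapses to the identity by the zigzag relation $\eps a_1\circ a_1\eta=\id_{a_1}$, leaving $(a_3\lambda_{12})\circ\eta a_2$ as required. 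The cases $G_1=a_3$ and $G_1=a_1$ are analogous, using the definition of $\lambda_{13}$ together with the opposite zigzag $a_3\eps\circ\eta a_3=\id_{a_3}$, and the definition of $\lambda_{11}$ together with both zigzag identities respectively.
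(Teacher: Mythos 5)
Your proof is correct and matches the paper's argument step for step: the same three-case $n=1$ base computation from the defining relations of $\CM'$ and the zigzag identities, the same factorizations $\pi_1=(1,2)\cdot(2,\ldots,n+1)$ and $\pi_2=(n+2,\ldots,3)\cdot(2,3)$ with bifunctoriality reducing the inductive step to the $n=1$ relation, and the same appeal to the monoidal duality $\CM^{\prime\opp}\iso\CM'$ for the second identity. One minor slip worth flagging: that duality is a monoidal functor and hence preserves the tensor order, so applying it to the first identity only replaces each $G_i$ by its dual without reversing the sequence $G_1,\ldots,G_n$ (concretely, $D(\lambda_w)$ is $\lambda_{w^{-1}}$ on the dualized tensor, not a conjugate by the longest element); this is harmless here since the $G_i$ already range over all of $\{a_1,a_2,a_3\}$, but the phrase ``reversing their order'' is not what the duality does.
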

\begin{proof}
	We have
	\begin{align*}
		a_3\lambda_{13}\circ \eta a_3&=
	a_3\eps a_3a_1\circ a_3a_1\lambda_{33}a_1\circ a_3a_1a_3\eta\circ \eta a_3\\
		&=a_3\eps a_3a_1\circ \eta a_3^2a_1\circ \lambda_{33}a_1\circ a_3\eta\\
		&=\lambda_{33}a_1\circ a_3\eta
	\end{align*}

	\begin{align*}
		\lambda_{13} a_1\circ a_1\eta&=
		\eps a_3a_1^2\circ a_1\lambda_{33}a_1^2\circ a_1a_3\eta a_1\circ a_1\eta\\
		&=\eps a_3a_1^2\circ a_1a_3^2\lambda_{11}\circ a_1a_3\eta a_1\circ a_1\eta\\
		&=a_3\lambda_{11}\circ\eps a_3a_1^2\circ a_1a_3\eta a_1\circ a_1\eta\\
		&=a_3\lambda_{11}\circ\eta a_1\circ \eps a_1\circ a_1\eta\\
		&=a_3\lambda_{11}\circ\eta a_1
	\end{align*}

	\begin{align*}
		\lambda_{23} a_1\circ a_2\eta&=
		a_3a_2\eps a_1\circ a_3\lambda_{12} a_3a_1\circ \eta a_2a_3a_1\circ a_2\eta\\
		&=a_3a_2\eps a_1\circ a_3a_2a_1\eta\circ a_3\lambda_{12}\circ\eta a_2\\
		&=a_3\lambda_{12}\circ\eta a_2
	\end{align*}

	It follows that the first statement of the lemma holds when $n=1$. Consider now $n\ge 2$. We prove
	the first statement of the lemma by induction on $n$.
	We have
	\begin{align*}
		\lambda_{(n+2\cdots 2)}\circ \eta G_1\cdots G_n&=\lambda_{(n+2\cdots 3)}\circ
	(\lambda_{(23)}\circ \eta G_1)G_2\cdots G_n\\
		&=\lambda_{(n+2\cdots 3)}\circ (\lambda_{(12)}\circ G_1\eta)G_2\cdots G_n\\
		&=\lambda_{(12)}\circ G_1(\lambda_{(n+1\cdots 2)}\circ \eta G_2\cdots G_n)\\
		&=\lambda_{(12)}\circ G_1(\lambda_{(1\cdots n)}\circ G_2\cdots G_n\eta)\\
		&=\lambda_{(1\cdots n+1)}\circ G_1\cdots G_n\eta
	\end{align*}

	The second statement of the lemma follows by applying the duality of $\CM'$.
\end{proof}

\smallskip
Lemmas \ref{le:braidsrhosigmalambda} and \ref{le:monoidalmaps} show that there is
a $k$-linear monoidal functor $R:\CM'\to\CW$
$$a_1\mapsto F_1,\ a_2\mapsto E_2,\ a_3\mapsto E_1,\ 
\lambda_{12}\mapsto\lambda, \lambda_{23}\mapsto\sigma,\ \lambda_{13}\mapsto\rho,\ 
\lambda_{11}\mapsto\tau_1,\ \lambda_{22}\mapsto\tau_2,\ \lambda_{33}\mapsto\tau_1$$
$$\eta\mapsto\eta_1,\ \eps\mapsto\eps_1.$$

Given $l_1,\ldots,l_r,m_1,\ldots,m_r\in\{1,2,3\}$ and $w\in\GS_r$ satisfying the
assumptions of Lemma \ref{le:monoidalmaps},
we still denote by $\lambda_w$ the element $R(\lambda_w)$.

Lemma \ref{le:etaepsrhosigmalambda0} has the following consequence.
\begin{lemma}
	\label{le:etaepsrhosigmalambda}
	Let $G_1,\ldots,G_n\in\{E_1,E_2,F_1\}$. We have
	$$\lambda_{(1\cdots n+1)}\circ G_1\cdots G_n \eta_1=
	\lambda_{(n+2\cdots 2)}\circ \eta_1 G_1\cdots G_n:
	G_1\cdots G_n\to E_1 G_1\cdots G_n F_1$$
	and
	$$\eps_1 G_1\cdots G_n\circ \lambda_{(2\cdots n+2)}=G_1\cdots G_n\eps_1\circ
	\lambda_{(n+1\cdots 1)}:F_1G_1\cdots G_n E_1\to G_1\cdots G_n.$$
\end{lemma}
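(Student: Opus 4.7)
The plan is to deduce the statement directly by applying the monoidal functor $R:\CM'\to\CW$ (defined just before the lemma) to the two identities established in Lemma \ref{le:etaepsrhosigmalambda0}. Since $R$ sends $a_1\mapsto F_1$, $a_2\mapsto E_2$, $a_3\mapsto E_1$, $\eta\mapsto\eta_1$, $\eps\mapsto\eps_1$, and by our naming convention $\lambda_w\mapsto\lambda_w$, every symbol appearing in the source identity has a tautological image in the target identity. The only preliminary check is that the elements $\lambda_{(1\cdots n+1)}$, $\lambda_{(n+2\cdots 2)}$, $\lambda_{(2\cdots n+2)}$, $\lambda_{(n+1\cdots 1)}$ are indeed defined in $\CM'$ in the relevant source/target configurations, which amounts to verifying (via Lemma \ref{le:monoidalmaps}) the combinatorial compatibility condition on $(l_i,l_j)$ for the cycle $w$ involved.

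More precisely, for the first equality I would observe that in $\CM'$ we have $G_1\cdots G_n\eta:G_1\cdots G_n\to G_1\cdots G_n a_3 a_1$ and $\eta G_1\cdots G_n:G_1\cdots G_n\to a_3 a_1 G_1\cdots G_n$, and the postcompositions $\lambda_{(1\cdots n+1)}$ and $\lambda_{(n+2\cdots 2)}$ produce elements of $\Hom(G_1\cdots G_n,a_3 G_1\cdots G_n a_1)$ — the permutations $(1\cdots n+1)$ and $(n+2\cdots 2)$ respectively move the rightmost $a_3$ past the $G_i$'s and the leftmost $a_1$ past the $G_i$'s, and in both cases the admissibility condition of Lemma \ref{le:monoidalmaps} (namely $l_i\le l_j$ whenever $i<j$ and $w(i)>w(j)$) is satisfied since the $a_1$ factor has smallest index and $a_3$ has largest. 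Thus the equation holds in $\CM'$ by Lemma \ref{le:etaepsrhosigmalambda0}, and applying $R$ yields the first identity of the lemma. The second identity is obtained analogously by applying $R$ to the second identity of Lemma \ref{le:etaepsrhosigmalambda0}.

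The only real content to check is that there are no sign or orientation subtleties — but since everything takes place over a field of characteristic $2$ and $R$ is a strict monoidal functor which preserves all the generating morphisms by construction, these identities transport cleanly. No step here is a genuine obstacle; the work was done in Lemma \ref{le:etaepsrhosigmalambda0} and in setting up $R$ via Lemmas \ref{le:braidsrhosigmalambda} and \ref{le:monoidalmaps}. Accordingly, the proof should be essentially a one-line invocation: ``Apply the monoidal functor $R$ to the identities of Lemma \ref{le:etaepsrhosigmalambda0}.''
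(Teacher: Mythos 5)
Your proposal is correct and coincides exactly with the paper's own argument: the paper presents Lemma~\ref{le:etaepsrhosigmalambda} without a displayed proof, introducing it with the sentence ``Lemma~\ref{le:etaepsrhosigmalambda0} has the following consequence'' immediately after constructing the monoidal functor $R:\CM'\to\CW$. Applying $R$ to the two identities of Lemma~\ref{le:etaepsrhosigmalambda0} is precisely the intended one-line proof; the admissibility check you mention is already part of the content of Lemma~\ref{le:etaepsrhosigmalambda0} (via Lemma~\ref{le:monoidalmaps}), so it transports for free.
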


\subsubsection{$1$-arrows}
Let $(m,\varsigma)\in\Delta_\lambda \CW$.
Let $\pi=\pi(\varsigma)$ be the composition
$$\pi:E_2(m)\xrightarrow{E_2\eta_1}E_2E_1F_1(m)\xrightarrow{\sigma F_1}
E_1E_2F_1(m)\xrightarrow{E_1\varsigma_1}E_1(m).$$
Note that $\pi$ is also equal to the composition
$$\pi:E_2(m)\xrightarrow{\eta_1E_2}E_1F_1E_2(m)\xrightarrow{E_1\lambda}
E_1E_2F_1(m)\xrightarrow{E_1\varsigma_1}E_1(m)$$
since $E_1\lambda E_1F_1\circ \eta_1 E_2E_1F_1\circ E_2\eta_1=
E_1E_2F_1\eta_1 \circ E_1\lambda\circ \eta_1 E_2$ and
$E_1E_2F_1\eta_1\circ E_1E_2F_1\eps_1=\id_{E_1E_2F_1}$.

\smallskip
The pair $(m,\pi)$ defines an object of $\Delta_\sigma\CW$.
We obtain a faithful differential functor
$\Gamma:\Delta_\lambda\CW\to\Delta_\sigma\CW,\ (m,\varsigma)\mapsto (m,\pi)$.

\begin{rem}
	The construction of $\pi$ from $\varsigma_1$ is illustrated below.
$$\includegraphics[scale=1.3]{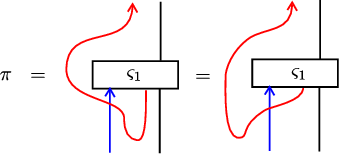}$$
\end{rem}

\smallskip
We define now a differential functor $E:\Delta_\lambda\CW\to\Delta_\lambda\CW$.

Let $(m,\varsigma)\in\Delta_\lambda\CW$.
Let $m'={\xy (0,0)*{E_2(m)\oplus E_1(m)},
\ar@/^/^{\pi}(-8,3)*{};(7,3)*{}, \endxy}$ where $\pi=\pi(\varsigma_1)$. Given
$i\ge 1$, we define
	
	$$\varsigma'_i=\left(\begin{matrix}
		E_2\varsigma_i\circ \lambda_{(1\cdots 2i+1)} & 
		\sum_{r=1}^{i} E_2\varsigma_{i-1}\circ
		E_2^iF_1^{i-1}\eps_1\circ \lambda_{(1\cdots r)(2i\cdots i+r)}\\
		0 & E_1\varsigma_i\circ \lambda_{(1\cdots 2i+1)}
	\end{matrix}\right)
		:E_2^iF_1^i(m')\to m'$$

\begin{lemma}
$(m',\varsigma')$ is an object of $\Delta_\lambda\CW$.
\end{lemma}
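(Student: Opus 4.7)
The plan is to verify the three defining conditions for $(m',\varsigma')$ to be an object of $\Delta_\lambda\CW$: (i) each $\varsigma'_i$ is a closed morphism in $\overline{\CW}^i$ from $E_2^iF_1^i(m')$ to $m'$; (ii) $\varsigma'_i\circ E_2^iF_1^i\varsigma'_j = \varsigma'_{i+j}\circ\mu_{i,j}$ for all $i,j\ge 1$; and (iii) $\varsigma'_i\circ T_r F_1^i = \varsigma'_i\circ E_2^iT_r$ for $1\le r<i$. Throughout I would use the monoidal functor $R:\CM'\to\CW$ to reduce identities among composites of $\lambda$, $\sigma$, $\rho$, $\tau_1$, $\tau_2$, $\eta_1$, $\eps_1$ to braid identities in the pointed category $\CM'$, exploiting Lemmas \ref{le:monoidalmaps} and \ref{le:etaepsrhosigmalambda}.

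Write $\varsigma'_i$ as an upper-triangular matrix with diagonal entries $\alpha_i = E_2\varsigma_i\circ\lambda_{(1\cdots 2i+1)}$ and $\delta_i = E_1\varsigma_i\circ\lambda_{(1\cdots 2i+1)}$ and upper-right entry $\beta_i$ (the $i$-term sum). Since $m'$ is the cone of $\pi$ and $E_2^iF_1^i(m')$ is correspondingly the cone of $E_2^iF_1^i\pi$, closedness of $\varsigma'_i$ amounts to four identities: $\pi\circ\alpha_i = \delta_i\circ E_2^iF_1^i\pi$, $d(\alpha_i) = \beta_i\circ E_2^iF_1^i\pi$, $d(\delta_i) = \pi\circ\beta_i$, and $d(\beta_i)=0$. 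The first is proved by expanding $\pi = E_1\varsigma_1\circ\sigma F_1\circ E_2\eta_1$, using Lemma \ref{le:etaepsrhosigmalambda} to drag $\eta_1$ through $\lambda_{(1\cdots 2i+1)}$, and then applying the multiplicativity relation $\varsigma_i\circ E_2^iF_1^i\varsigma_1 = \varsigma_{i+1}\circ\mu_{i,1}$. The identity $d(\beta_i)=0$ follows by the Leibniz rule from the closedness of $\varsigma_{i-1}$, of $\eps_1$, and of the individual $\lambda_w$-factors, with the remaining terms cancelling via $\tau_i^2=0$ and the braid relations in $\CM'$.

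The crux is $d(\alpha_i) = \beta_i\circ E_2^iF_1^i\pi$. Since $\varsigma_i$ is closed, $d(\alpha_i) = E_2\varsigma_i\circ d(\lambda_{(1\cdots 2i+1)})$, and by Proposition \ref{pr:diffnilHecke} and Corollary \ref{co:comparisond}, $d(T_{(1\cdots 2i+1)})$ is a sum over reduced single deletions in the reduced expression $s_{2i}s_{2i-1}\cdots s_1$. These $2i$ terms split into $i$ pairs indexed by $r\in\{1,\ldots,i\}$; within each pair, one deletion produces a $\lambda$-configuration that, after applying the relation $\lambda_{23} = E_1E_2\eps_1\circ E_1\lambda E_1\circ\eta_1 E_2E_1$ and the $\GS_{i-1}$-symmetry of $\varsigma_{i-1}$, matches the $r$-th summand of $\beta_i\circ E_2^iF_1^i\pi$, while the other deletion combines with it via the monoidal relations of $\CM'$. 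The identity $d(\delta_i) = \pi\circ\beta_i$ is proved dually.

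For multiplicativity, the $(1,1)$ and $(2,2)$ blocks reduce via Lemma \ref{le:monoidalmaps} to concatenation of $\lambda$-braids into $\lambda_{(1\cdots 2(i+j)+1)}$ followed by the multiplicativity of $\varsigma$. The $(1,2)$ block admits three types of contributions (from $\beta_j$, from $\beta_i$ composed with $\alpha_j$, and from $\delta_i$ composed with $\beta_j$), and the $i+j$ summands of $\beta_{i+j}$ distribute correctly across these after reindexing. Condition (iii) holds on each block by combining the $\GS_i$-symmetry of $\varsigma_i$ with Lemma \ref{le:monoidalmaps}. The main obstacle is the combinatorial pairing in the proof of $d(\alpha_i) = \beta_i\circ E_2^iF_1^i\pi$: carefully tracking which single deletions in $s_{2i}\cdots s_1$ give reduced expressions in $\GS_{2i+1}$ and how $\eps_1$ contracts adjacent $F_1$--$E_1$ pairs is the principal combinatorial content of the proof.
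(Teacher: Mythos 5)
Your overall skeleton matches the paper's proof: reduce closedness of $\varsigma'_i$ as a map between cones to four block identities, then verify the $T_r$-symmetry and the multiplicativity $\varsigma'_{i+j}\circ\mu_{ij}=\varsigma'_i\circ E_2^iF_1^i\varsigma'_j$ block by block, working in the $\lambda$-braid calculus. You also correctly flag all four closedness identities, including the $(2,1)$ one $\pi\circ\alpha_i=\delta_i\circ E_2^iF_1^i\pi$, which the paper treats as immediate; your sketch for it (push $\eta_1$ through $\lambda$-braids and invoke multiplicativity of $\varsigma$) is the right idea.

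The central claim of your "crux" paragraph, however, is incorrect. The differential on $\CW$ is computed by the Leibniz rule on the elementary crossings, and under the functor $R:\CM'\to\CW$ the only elementary crossings with nonzero differential are the self-crossings $\lambda_{11},\lambda_{22},\lambda_{33}$, which map to $\tau_1,\tau_2,\tau_1$; the cross-crossings $\lambda_{12}=\lambda$ and $\lambda_{23}=\sigma$ are closed. In the braid $\lambda_{(1\cdots 2i+1)}:E_2^iF_1^iE_2\to E_2^{i+1}F_1^i$, the distinguished (rightmost $E_2$) strand makes $i$ crossings over $F_1$'s (all of type $\lambda_{12}$, each contributing nothing) and $i$ crossings over $E_2$'s (all of type $\lambda_{22}$, each contributing). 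So $d(\lambda_{(1\cdots 2i+1)})$ is a sum of $i$ terms, not $2i$, and there is no pairing of deletions; the $i$ terms are matched one-for-one with the $i$ summands of $\beta_i\circ E_2^iF_1^i\pi$ after expanding $\pi$ and using the $\GS_\bullet$-symmetry of $\varsigma$ together with $\eta_1\eps_1$-cancellation. Your invocation of Proposition \ref{pr:diffnilHecke} and Corollary \ref{co:comparisond} concerns $d(T_w)$ in the nil Hecke algebra, where every crossing is a $\tau$ with $d(\tau)=1$; this does not transfer to $\lambda_w$ in $\CW$, where different crossings have different differentials.

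The same confusion affects your treatment of $d(\beta_i)=0$: the factors $\lambda_{(1\cdots r)(2i\cdots i+r)}$ are not closed, as they contain $\lambda_{22}$- and $\lambda_{11}$-crossings ($r-1$ and $i-r$ of each respectively). The correct mechanism is that the $i(i-1)$ resulting terms of $d(\beta_i)$ organize into two sums $A$ and $B$ over pairs $1\le s<r\le i$ which coincide after a reindexing, not a cancellation via $\tau^2=0$. Finally, a smaller slip: the $(1,2)$ block of $\varsigma'_i\circ E_2^iF_1^i\varsigma'_j$ has two contribution types, $\alpha_i\circ E_2^iF_1^i\beta_j$ and $\beta_i\circ E_2^iF_1^i\delta_j$, not three, and your list does not match either.
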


\begin{proof}
	We have
	\begin{align*}
		d((\varsigma'_i)_{11})&=E_2\varsigma_i\circ 
	d(\tau_2 E_2^{i-1}\circ\cdots\circ E_2^{i-1}\tau_2)F_1^i\circ \lambda_{(i+1\cdots 2i+1)}\\
		&=\sum_{r=1}^i E_2\varsigma_i\circ \lambda_{(1\cdots r)(r+1\cdots i+1)}
\circ \lambda_{(i+1\cdots 2i+1)}\\
		&=\sum_{r=1}^i E_2\varsigma_i\circ \lambda_{(1\cdots r)(r+1\cdots i+1)}
\circ \lambda_{(i+1\cdots 2i+1)}\\
		&=\sum_{r=1}^i E_2\varsigma_i\circ \lambda_{(1\cdots r)(2i+1\cdots i+r+1)}
\circ \lambda_{(i+1\cdots 2i+1)}\\
		&=E_2\varsigma_i\circ \lambda_{(1\cdots i)}
	\end{align*}

	$$(\varsigma'_i)_{12}\circ E_2^iF_1^i\pi=$$
	\begin{align*}
		&=\sum_{r=1}^i
	E_2\varsigma_{i-1}\circ E_2^iF_1^{i-1}\varsigma_1\circ \lambda_{(2i,2i+1)}\circ
	E_2^iF_1^{i-1}\eps_1F_1E_2\circ E_2^iF_1^i\eta_1E_2\circ \lambda_{(1\cdots r)
	(2i\cdots i+r)}\\
		&=\sum_{r=1}^iE_2\varsigma_i\circ\lambda_{(i+1\cdots 2i)}\circ\lambda_{(2i,2i+1)}\circ
\lambda_{(1\cdots r) (2i\cdots i+r)}\\
		&=E_2\varsigma_i\circ \lambda_{(1\cdots i)}\\
		&=d((\varsigma'_i)_{11}).
	\end{align*}

	\begin{align*}
		d((\varsigma'_i)_{22})&=E_1\varsigma_i\circ \lambda_{(1\cdots i+1)}\circ
	E_2^id(\rho F_1^{i-1}\circ F_1\rho F_1^{i-2}\circ\cdots\circ F_1^{i-1}\rho)\\
		&=\sum_{r=1}^i E_1\varsigma_i\circ
	\lambda_{(1\cdots i+r)}\circ E_2^iF_1^{r-1}\eta_1F_1^{i-r}\circ
	E_2^i F_1^{r-1}\eps_1F_1^{i-r}\circ \lambda_{(i+r+1\cdots 2i+1)}\\
		&=\sum_{r=1}^i E_1\varsigma_i\circ
	\lambda_{(i+r+1\cdots 2)}\circ \eta_1E_2^iF_1^{i-1}\circ
	E_2^i F_1^{i-1}\eps_1\circ \lambda_{(2i\cdots i+r)}\\
		&=\sum_{r=1}^i E_1\varsigma_i\circ
	\lambda_{(i+r+1\cdots i+2)}\circ\lambda_{(i+2\cdots 2)}\circ
	\eta_1E_2^iF_1^{i-1}\circ
	E_2^i F_1^{i-1}\eps_1\circ \lambda_{(2i\cdots i+r)}\\
		&=\sum_{r=1}^i E_1\varsigma_i\circ
	\lambda_{(2\cdots r+1)}\circ\lambda_{(i+2\cdots 2)}\circ
	\eta_1E_2^iF_1^{i-1}\circ
	E_2^i F_1^{i-1}\eps_1\circ \lambda_{(2i\cdots i+r)}\\
		&= E_1\varsigma_i\circ \lambda_{(i+2\cdots 2)}\circ
	\eta_1E_2^iF_1^{i-1}\circ
	E_2^i F_1^{i-1}\eps_1\circ \lambda_{(2i\cdots i+1)}
	\end{align*}
	
	\begin{align*}
		\pi\circ (\varsigma'_i)_{12}&=\sum_{r=1}^i E_1\varsigma_1\circ
	\lambda_{(12)}\circ E_2\eta_1\circ E_2\varsigma_{i-1}\circ 
	E_2^iF_1^{i-1}\eps_1\circ \lambda_{(1\cdots r)(2i\cdots i+r)}\\
		&=\sum_{r=1}^i E_1\varsigma_1\circ
	\lambda_{(23)}\circ \eta_1E_2\circ E_2\varsigma_{i-1}\circ 
	E_2^iF_1^{i-1}\eps_1\circ \lambda_{(1\cdots r)(2i\cdots i+r)}\\
		&=\sum_{r=1}^i E_1\varsigma_1\circ E_1E_2F_1\varsigma_{i-1}\circ \lambda_{23}\circ
	\eta_1 E_2^iF_1^{i-1}\circ
	E_2^iF_1^{i-1}\eps_1\circ \lambda_{(1\cdots r)(2i\cdots i+r)}\\
		&=\sum_{r=1}^i E_1\varsigma_i\circ \lambda_{(i+2\cdots 3)}\circ \lambda_{23}\circ
	\lambda_{(3\cdots r+2)}\circ \eta_1 E_2^iF_1^{i-1}\circ
	E_2^iF_1^{i-1}\eps_1\circ \lambda_{(2i\cdots i+r)}\\
		&=E_1\varsigma_i\circ \lambda_{(i+2\cdots 2)}
	\circ \eta_1 E_2^iF_1^{i-1}\circ
	E_2^iF_1^{i-1}\eps_1\circ \lambda_{(2i\cdots i+r)}=d((\varsigma'_i)_{22}).
	\end{align*}

	We have
	$$d((\varsigma'_i)_{12})=A+B$$
	where 
	\begin{align*}
		A&=\sum_{1\le s<r\le i}E_2\varsigma_{i-1}\circ
	E_2^iF_1^{i-1}\eps_1\circ \lambda_{(1\cdots s)(s+1\cdots r)(2i\cdots i+r)}\\
		&=\sum_{1\le s<r\le i}E_2\varsigma_{i-1}\circ\lambda_{(s+1\cdots r)}\circ
	E_2^iF_1^{i-1}\eps_1\circ \lambda_{(1\cdots s)(2i\cdots i+r)}\\
		&=\sum_{1\le s<r\le i}E_2\varsigma_{i-1}\circ\lambda_{(i+r-1\cdots s+i)}\circ
	E_2^iF_1^{i-1}\eps_1\circ \lambda_{(1\cdots s)(2i\cdots i+r)}\\
		&=\sum_{1\le s<r\le i}E_2\varsigma_{i-1}\circ
	E_2^iF_1^{i-1}\eps_1\circ \lambda_{(1\cdots s)(2i\cdots i+r)(i+r-1\cdots i+s)}
	\end{align*}
	and
	$$B=
	\sum_{\substack{1\le r'\le i\\ 1\le s'\le i-r'}} E_2\varsigma_{i-1}\circ
	E_2^iF_1^{i-1}\eps_1\circ \lambda_{(1\cdots r')(2i\cdots i+r'+s')(i+r'+s'-1\cdots
	i+r')}$$
	So $A=B$ and $d((\varsigma'_i)_{12})=0$.

	We have shown that $d(\varsigma'_i)=0$,

	\medskip
	Fix $r\in\{1,\ldots,i\}$. We put
	$b_r=E_2\varsigma_{i-1}\circ E_2^iF_{i-1}\eps_1\circ\lambda_{(1\cdots r)(2i\cdots i+r)}:
	E_2^iF_1^iE_1(m)\to E_2(m)$.

	Consider $s\in\{1,\ldots,i-1\}$.

	If $s>r$, we have
	\begin{align*}
		b_r(T_s\otimes 1)&=
	E_2\varsigma_{i-1}\circ\lambda_{(s,s+1)}
	\circ E_2^iF_{i-1}\circ\lambda_{(1\cdots r)(2i\cdots i+r)}\\
		&=E_2\varsigma_{i-1}\circ\lambda_{(i+s-1,i+s)}
	\circ E_2^iF_{i-1}\circ\lambda_{(1\cdots r)(2i\cdots i+r)}\\
		&=E_2\varsigma_{i-1}
	\circ E_2^iF_{i-1}\circ\lambda_{(i+s-1,i+s)}\lambda_{(1\cdots r)(2i\cdots i+r)}\\
		&=E_2\varsigma_{i-1}
	\circ E_2^iF_{i-1}\circ\lambda_{(1\cdots r)(2i\cdots i+r)}\lambda_{(i+s,i+s+1)}\\
		&=b_r(1\otimes T_s).
	\end{align*}

	If $s<r-1$, we have
	\begin{align*}
		b_r(1\otimes T_s)&=
	E_2\varsigma_{i-1}\circ\lambda_{(i+s,i+s+1)}
	\circ E_2^iF_{i-1}\circ\lambda_{(1\cdots r)(2i\cdots i+r)}\\
		&=E_2\varsigma_{i-1}\circ\lambda_{(s+1,s+2)}
	\circ E_2^iF_{i-1}\circ\lambda_{(1\cdots r)(2i\cdots i+r)}\\
		&=E_2\varsigma_{i-1}
	\circ E_2^iF_{i-1}\circ\lambda_{(s+1,s+2)}\circ\lambda_{(1\cdots r)(2i\cdots i+r)}\\
		&=E_2\varsigma_{i-1}
	\circ E_2^iF_{i-1}\circ\lambda_{(1\cdots r)(2i\cdots i+r)}\circ
	\lambda_{(s,s+1)}\\
		&=b_r(T_s\otimes 1).
	\end{align*}

	We have
	$$b_r(T_{r-1}\otimes 1)=E_2\varsigma_{i-1}
	\circ E_2^iF_{i-1}\circ\lambda_{(1\cdots r)(2i\cdots i+r)}
	\circ\lambda_{(r-1,r)}=0$$

$$b_r(1\otimes T_r)=
	E_2\varsigma_{i-1}
	\circ E_2^iF_{i-1}\circ\lambda_{(1\cdots r)(2i\cdots i+r)}\circ
	\lambda_{(i+r,i+r+1)}=0$$

$$b_r(1\otimes T_{r-1})=E_2\varsigma_{i-1}
	\circ E_2^iF_{i-1}\circ\lambda_{(1\cdots r)(2i\cdots i+r-1)}
	=b_{r-1}(T_{r-1}\otimes 1).$$

	\smallskip
We have shown that $(\varsigma_i)_{12}(1\otimes T_s)=(\varsigma_i)_{12}(T_s\otimes 1)$.

\smallskip
We have
\begin{align*}
	(\varsigma'_i)_{11}(T_s\otimes 1)&=E_2\varsigma_i\circ\lambda_{(s+1,s+2)}\circ
\lambda_{(1\cdots 2i+1)}\\
	&= E_2\varsigma_i\circ\lambda_{(i+s+1,i+s+2)}\circ \lambda_{(1\cdots 2i+1)}\\
	&= E_2\varsigma_i\circ\lambda_{(1\cdots 2i+1)}\lambda_{(i+s,i+s+1)}\\
	&=(\varsigma'_i)_{11}(1\otimes T_s).
\end{align*}

Similarly,
$$(\varsigma'_i)_{22}(T_s\otimes 1)=(\varsigma'_i)_{11}(1\otimes T_s).$$

	So $\varsigma_i (1\otimes T_s)=\varsigma_i (T_s\otimes 1)$.

\medskip	
		Let $l\in\{1,2\}$. We have
$$(\varsigma'_{i+j})_{ll}\circ\mu_{ij}=E_l\varsigma_{i+j}\circ
\lambda_w$$
where $w(r)=r$ and $w(i+r)=i+r+j+1$ for $1\le r\le i$,
$w(2i+r)=i+r$ and $w(2i+j+r)=2i+j+r+1$ for $1\le r\le j$ and
$w(2i+2j+1)=i+j+1$.

We have
\begin{align*}
	(\varsigma'_i)_{ll}\circ(\varsigma'_j)_{ll}&=
E_l\varsigma_i\circ E_lE_2^iF_1^i\varsigma_j\circ\lambda_{(1\cdot 2i+1)}\circ
\lambda_{(2i+1\cdots 2i+2j+1)}\\
	&=E_l\varsigma_{i+j}\circ \lambda_{w'}\circ\lambda_{(1\cdot 2i+1)}\circ
\lambda_{(2i+1\cdots 2i+2j+1)}
\end{align*}
where $w'(r)=r$ for $1\le r\le i+1$, $w'(i+1+r)=i+j+1+r$ for $1\le r\le i$,
$w'(1+2i+r)=1+i+r$ and $w'(1+2i+j+r)=1+2i+j+r$ for $1\le r\le j$.

It follows that $(\varsigma'_{i+j})_{ll}\circ\mu_{ij}=(\varsigma'_i)_{ll}\circ(\varsigma'_j)_{ll}$.

\smallskip
	Given $l\le l'\le 1$, we put
	$b_{l',l}=E_2\varsigma_{l'-1}\circ E_2^{l'}F_{l'-1}\eps_1\circ
	\lambda_{(1\cdots l)(2l'\cdots l'+l)}: E_2^{l'}F_1^{l'}E_1(m)\to E_2(m)$.
 We denote by $w_{l_1,l_2}$ the permutation of $\GS_{l_1+l_2}$ given by
 $s\mapsto s+l_2$ for $1\le s\le l_1$ and $s\mapsto s-l_1$ for $l_1+1\le s\le l_1+l_2$.

Consider $r\in\{1,\ldots,i\}$. We have
\begin{align*}
	b_{i,r}\circ(\varsigma'_j)_{22}&=E_2\varsigma_{i-1}\circ E_2^iF_1^{i-1}\varsigma_j\circ
E_2^iF_1^{i-1}\eps_1E_2^jF_1^j\circ \lambda_{(1\cdots r)(2i\cdots i+r)}\circ
\lambda_{(2i+1\cdots 2i+2j+1)}\\
	&=E_2\varsigma_{i+j-1}\circ \lambda_{w_{i-1,j}}\circ
E_2^iF_1^{i-1}\eps_1E_2^jF_1^j\circ \lambda_{(2i+1\cdots 2i+2j+1)}\circ
\lambda_{(1\cdots r)(2i\cdots i+r)}\\
	&=E_2\varsigma_{i+j-1}\circ E_2^i\lambda_{w_{i-1,j}}F_1^j\circ
E_2^iF_1^{i-1}E_2^jF_1^{j-1}\eps_1\circ \lambda_{(2i+2j\cdots 2i)}\circ
\lambda_{(1\cdots r)(2i\cdots i+r)}\\
	&=E_2\varsigma_{i+j-1}\circ E_2^i\lambda_{w_{i-1,j}}F_1^j\circ
E_2^iF_1^{i-1}E_2^jF_1^{j-1}\eps_1\circ \lambda_{(2i+2j\cdots i+r)}\\
	&=E_2\varsigma_{i+j-1}\circ E_2^{i+j}F_1^{i+j-1}\eps_1\circ E_2^i\lambda_{w_{i-1,j}}
F_1^{j+1}E_1\circ
\lambda_{(2i+2j\cdots i+r)}\\
	&=E_2\varsigma_{i+j-1}\circ E_2^{i+j}F_1^{i+j-1}\eps_1\circ 
\lambda_{(2i+2j\cdots i+j+r)}\circ E_2^i\lambda_{w_{i,j}}F_1^jE_1\\
	&=b_{i+j,r}\circ\mu_{i,j}.
\end{align*}

Consider $r\in\{1,\ldots,j\}$. We have
\begin{align*}
	(\varsigma'_i)_{11}\circ b_{j,r}&=
E_2\varsigma_i\circ E_2^{i+1}F_1^i\varsigma_{j-1}\circ\lambda_{(1\cdots 2i+1)}\circ 
E_2^iF_1^iE_2^jF_1^{j-1}\eps_1\circ\lambda_{(2i+1\cdots 2i+r)(2i+2j\cdots 2i+j+r)}\\
	&=E_2\varsigma_{i+j-1}\circ E_2^{i+1}\lambda_{w_{i,j-1}}F_1^{j-1}\circ\lambda_{(1\cdots 2i+1)}\circ 
E_2^iF_1^iE_2^jF_1^{j-1}\eps_1\circ\lambda_{(2i+1\cdots 2i+r)(2i+2j\cdots 2i+j+r)}\\
	&=E_2\varsigma_{i+j-1}\circ E_2^{i+j}F_1^{i+j-1}\eps_1\circ
E_2^{i+1}\lambda_{w_{i,j-1}}F_1^jE_1\circ\lambda_{(1\cdots 2i+1)}\circ 
\lambda_{(2i+1\cdots 2i+r)(2i+2j\cdots 2i+j+r)}\\
	&=E_2\varsigma_{i+j-1}\circ E_2^{i+j}F_1^{i+j-1}\eps_1\circ
\lambda_{(1\cdots i+r)(2i+2j\cdots 2i+j+r)}\circ E_2^i\lambda_{w_{i,j}}F_1^jE_1\\
	&=b_{i+j,j+r}\circ\mu_{i,j}.
\end{align*}

\smallskip
It follows that 
for all $i,j\ge 1$, we have 
		$\varsigma_i\circ E_2^iF_1^i\varsigma_j=\varsigma_{i+j}\circ\mu_{i,j}$.
\end{proof}

\begin{rem}
	The graphical description of $\varsigma'$ is the following:
$$\includegraphics[scale=1.2]{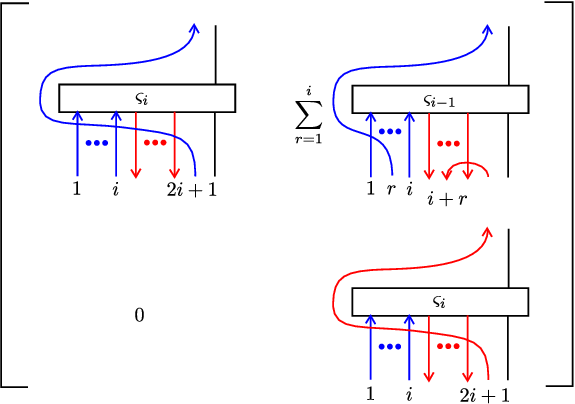}$$
\end{rem}

 Given $f\in\Hom_{\Delta_\lambda\CW}((m,\varsigma),(\tilde{m},\tilde{\varsigma}))$,
we put $E(f)=\left(\begin{matrix}E_2(f)&0\\0&E_1(f)
\end{matrix}\right)$.

\begin{lemma}
	We have $E(f)\in \Hom_{\Delta_\lambda\CW}(E(m,\varsigma),E(\tilde{m},\tilde{\varsigma}))$.
The construction makes $E$ into a differential endofunctor of $\Delta_\lambda\CW$.
\end{lemma}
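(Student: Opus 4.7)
The plan is to prove two assertions in turn: first that $E(f)$ lies in the correct Hom space in $\Delta_\lambda\CW$, and second that $E$ respects identities, composition, and the differential. For the second, functoriality is immediate from the diagonal matrix form of $E(f)$ together with functoriality of $E_1$ and $E_2$, and $d(E(f))=0$ whenever $d(f)=0$ follows from $E_1,E_2$ being differential functors, so the real content is in the first assertion.

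To show $E(f)$ is a morphism, I would verify two compatibilities. First, $E(f)$ must be closed as a map in $\overline{\CW}^i$ between the twisted objects $m'=\mathrm{cone}(\pi)$ and $\tilde{m}'=\mathrm{cone}(\tilde{\pi})$. Since $f$ is closed and $E_1,E_2$ are differential, it suffices to check the off-diagonal condition $\tilde{\pi}\circ E_2(f)=E_1(f)\circ\pi$. Using the first formula for $\pi=E_1\varsigma_1\circ\sigma F_1\circ E_2\eta_1$ (and similarly for $\tilde{\pi}$), this reduces to naturality of $\sigma$ and $\eta_1$ combined with the morphism condition $f\circ\varsigma_1=\tilde{\varsigma}_1\circ E_2F_1(f)$ coming from $f\in\Hom_{\Delta_\lambda\CW}((m,\varsigma),(\tilde{m},\tilde{\varsigma}))$.

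Second, I would check $E(f)\circ\varsigma'_i=\tilde{\varsigma}'_i\circ E_2^iF_1^i(E(f))$ for every $i\ge 1$ by expanding both sides as $2\times 2$ matrices and comparing entries. For the diagonal entries $(l,l)$, $l\in\{1,2\}$, both sides equal $E_l\tilde{\varsigma}_i\circ\lambda_{(1\cdots 2i+1)}\circ E_2^iF_1^iE_l(f)$: on the left this uses $f\circ\varsigma_i=\tilde{\varsigma}_i\circ E_2^iF_1^i(f)$ followed by pulling $E_l$ outside; on the right one applies naturality of $\lambda_{(1\cdots 2i+1)}$ (which holds by the monoidal functor $R:\CM'\to\CW$ of Lemma~\ref{le:monoidalmaps}) to move $E_2^iF_1^iE_l(f)$ past $\lambda_{(1\cdots 2i+1)}$. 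The $(1,2)$ entry is the only non-trivial off-diagonal contribution: each summand is $E_2\varsigma_{i-1}\circ E_2^iF_1^{i-1}\eps_1\circ\lambda_{(1\cdots r)(2i\cdots i+r)}$, and equating $E(f)\circ$ this with $\tilde{\varsigma}'_i\circ E_2^iF_1^iE_1(f)$ reduces, by the same argument, to naturality of $\eps_1$ and the $\lambda_w$ together with the morphism condition for $f$ applied to $\varsigma_{i-1}$.

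The main obstacle is purely bookkeeping: matching all matrix entries while keeping track of the many instances of $\lambda_w$ and $\eps_1$ that appear in $\varsigma'_i$. Nothing new is needed beyond naturality of the various structure maps and the hypothesis that $f$ intertwines all the $\varsigma_i$ and $\tilde{\varsigma}_i$; the whole verification is a routine consequence of the fact that $\varsigma'_i$ was built functorially from $\varsigma$ via $E_1$, $E_2$, $\eps_1$, and the $\lambda_w$.
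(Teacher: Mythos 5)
Your verifications are correct, and this is essentially the approach the paper takes: the paper's proof is exactly the commutativity of the $\varsigma'_i$-diagram. However, the attribution of the two checks to the two assertions of the lemma is off, and your opening paragraph overstates what follows from $E_1,E_2$ being differential functors. Membership $E(f)\in\Hom_{\Delta_\lambda\CW}(E(m,\varsigma),E(\tilde{m},\tilde{\varsigma}))$ requires \emph{only} the intertwining with the $\varsigma'_i$ (your ``second compatibility''); the ambient space $\Hom_{\overline{\CW}^i}(m',\tilde{m}')$ is by definition a differential module containing all maps, not just closed ones, so closedness of $E(f)$ plays no role in this assertion. What the check $\tilde{\pi}\circ E_2 f = E_1 f\circ\pi$ is actually needed for is the second assertion: since the cone differentials on $m'$ and $\tilde{m}'$ have off-diagonal entries $\pi$ and $\tilde{\pi}$, the difference $d(E(f))-E(d(f))$ has off-diagonal term $\tilde{\pi}\circ E_2 f + E_1 f\circ\pi$, and this vanishes only because $f$ intertwines $\varsigma_1$ with $\tilde{\varsigma}_1$ — it does \emph{not} follow from $E_1,E_2$ being differential functors alone, as you assert in your first paragraph. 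Since you do carry out exactly that verification (just filed under the wrong heading), the proof as a whole is complete; reassign it to the differential-functor claim. A trivial remark: naturality of $\lambda_{(1\cdots 2i+1)}$ and the other $\lambda_w$ with respect to maps in $\overline{\CW}^i$ is automatic once they are produced as natural transformations of functors; Lemma~\ref{le:monoidalmaps} concerns the relations among the $\lambda_w$, not their naturality.
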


\begin{proof}
	The lemma follows from the commutativity of the following diagram:

	$${\xy
		(0,0)*{E_2^iF_1^iE_2(m) \oplus E_2^iF_1^iE_1(m)},
		(120,0)*{E_2(m)\oplus E_1(m)},
		(0,-20)*{E_2^iF_1^iE_2(\tilde{m}) \oplus E_2^iF_1^iE_1(\tilde{m})},
		(120,-20)*{E_2(\tilde{m})\oplus E_1(\tilde{m})},
		\ar@/^2pc/^(0.25){E_2\varsigma_i\circ \lambda_{(1\cdots 2i+1)}}(-15,3)*{};(110,3)*{},
		\ar@/^2pc/^(0.75){E_1\varsigma_i\circ \lambda_{(1\cdots 2i+1)}}(15,3)*{};(128,3)*{},
		\ar@/^1pc/_{\sum_{r=1}^i E_2\varsigma_{i-1}\circ
		E_2^iF_1^{i-1}\eps_1\circ \lambda_{(1\cdots r)(2i\cdots i+r)}}(15,3)*{};(110,3)*{},
		\ar_{E_2^iF_1^iE_2f}(-10,-3)*{};(-10,-17)*{},
		\ar^{E_2^iF_1^iE_1f}(8,-3)*{};(8,-17)*{},
		\ar_{E_2f}(110,-3)*{};(110,-17)*{},
		\ar^{E_1f}(128,-3)*{};(128,-17)*{},
		\ar@/_2pc/_(0.25){E_2\tilde{\varsigma}_i\circ \lambda_{(1\cdots 2i+1)}}(-15,-23)*{};(110,-23)*{},
		\ar@/_2pc/_(0.75){E_1\tilde{\varsigma}_i\circ \lambda_{(1\cdots 2i+1)}}(15,-23)*{};(128,-23)*{},
		\ar@/_1pc/^{\sum_{r=1}^i E_2\tilde{\varsigma}_{i-1}\circ
		E_2^iF_1^{i-1}\eps_1\circ \lambda_{(1\cdots r)(2i\cdots i+r)}}(15,-23)*{};(110,-23)*{},
		\endxy}$$
\end{proof}

\begin{lemma}
	\label{le:GammaE}
	We have $E\circ\Gamma=\Gamma\circ E$.
\end{lemma}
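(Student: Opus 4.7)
The plan is a direct verification that the two functors $\Gamma\circ E$ and $E\circ\Gamma$ from $\Delta_\lambda\CW$ to $\Delta_\sigma\CW$ agree on objects and morphisms. The key observation is that both produce the same underlying twisted object $m'=[E_2(m)\oplus E_1(m),\pi]$ with $\pi=\pi(\varsigma_1)=E_1\varsigma_1\circ\sigma F_1\circ E_2\eta_1$: the construction of $E$ on $\Delta_\lambda\CW$ in \S\ref{se:dualdiagonal} uses precisely this $\pi$ as the cone twist, while $\Gamma$ extracts the same $\pi$ from $\varsigma_1$. So it remains to show that $\pi(\varsigma'_1)=\pi'$ as $2\times 2$ matrices of maps $E_2(m')\to E_1(m')$, with $\pi'$ the explicit matrix from \S\ref{se:1arrows} and $\varsigma'_1$ the first structure map defined in the construction of $E$ on $\Delta_\lambda\CW$.

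The off-diagonal entries are handled easily. The $(2,1)$-entry is zero on both sides by upper-triangularity. For the $(1,2)$-entry, specializing the formula for $(\varsigma'_i)_{12}$ to $i=r=1$ (using $\lambda_{(1)(2)}=\id$ and $\varsigma_0=\id_m$) yields $(\varsigma'_1)_{12}=E_2\eps_1$, so $(\pi(\varsigma'_1))_{12}=E_1E_2\eps_1\circ\sigma F_1E_1\circ E_2\eta_1 E_1$. Applying naturality of $\sigma$ to the counit morphism $\eps_1:F_1E_1\to\id$ rewrites this as $\sigma\circ E_2E_1\eps_1\circ E_2\eta_1 E_1=\sigma\circ E_2(E_1\eps_1\circ\eta_1 E_1)=\sigma$ by the triangle identity of the $(F_1,E_1)$-adjunction, matching $(\pi')_{12}=\sigma$.

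For the diagonal entries, I first decompose $\lambda_{(1\,2\,3)}$ on the two relevant inputs via Lemma \ref{le:monoidalmaps} and the reduced decomposition $(1\,2\,3)=s_1s_2$ in $\GS_3$: on $E_2F_1E_2$ one obtains $\lambda_{(1\,2\,3)}=\tau_2F_1\circ E_2\lambda$, and on $E_2F_1E_1$ one obtains $\lambda_{(1\,2\,3)}=\sigma F_1\circ E_2\rho$. Substituting into $(\varsigma'_1)_{11}=E_2\varsigma_1\circ\lambda_{(1\,2\,3)}$ and $(\varsigma'_1)_{22}=E_1\varsigma_1\circ\lambda_{(1\,2\,3)}$, then plugging into $\pi(\varsigma'_1)=E_1\varsigma'_1\circ\sigma F_1\circ E_2\eta_1$, one compares the resulting expressions to $(\pi')_{11}=\sigma\circ E_2\pi\circ\tau_2$ and $(\pi')_{22}=\tau_1\circ E_1\pi\circ\sigma$, where it is convenient to use the alternative form $\pi=E_1\varsigma_1\circ E_1\lambda\circ\eta_1 E_2$. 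The identifications follow by combining: naturality of $\sigma,\lambda,\eta_1,\eps_1$, the defining compositions (\ref{eq:defsigma}) and (\ref{eq:defrho}) of $\sigma$ and $\rho$, the commutation relations (\ref{eq:diagsigma}) of $\sigma$ with $\tau_1,\tau_2$, the braid identities of Lemma \ref{le:braidsrhosigmalambda}, and the triangle identities.

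On morphisms, agreement is tautological: both $E$ on $\Delta_\lambda\CW$ and on $\Delta_\sigma\CW$ send $f$ to the block-diagonal matrix $\begin{pmatrix}E_2f & 0\\ 0 & E_1f\end{pmatrix}$, and $\Gamma$ is the identity on underlying morphisms. The chief technical obstacle is the diagonal-entry calculation above, and most intricate for the $(2,2)$-entry: one unfolds $\rho$ via (\ref{eq:defrho}) to bring in $\tau_1$, uses Lemma \ref{le:braidsrhosigmalambda} to commute $\tau_1$ through the resulting composition, and combines with triangle identities and naturality of $\sigma$ to obtain $\tau_1\circ E_1\pi\circ\sigma$. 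The $(1,1)$-entry is handled in parallel by expanding $\sigma$ via (\ref{eq:defsigma}) and using (\ref{eq:diagsigma}) to move $\tau_2$ into the correct position.
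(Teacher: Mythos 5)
Your proposal is correct and follows essentially the same strategy as the paper's proof: observe that both composites produce the same underlying twisted object $m'=\cone(\pi(\varsigma_1))$, then verify entrywise that the $2\times 2$ matrix $\pi(\varsigma'_1)$ equals $\pi'$, using the decompositions of $\lambda_{(1\,2\,3)}$ on $E_2F_1E_2$ and $E_2F_1E_1$, the two expressions for $\pi$, naturality, the triangle identities, (\ref{eq:defrho}), (\ref{eq:diaglambda})/(\ref{eq:diagsigma}), and Lemma \ref{le:braidsrhosigmalambda}. The paper works primarily from the alternative form $\pi=E_1\varsigma_1\circ E_1\lambda\circ\eta_1E_2$ while you start from the defining form for the off-diagonal entry, but the calculations are equivalent and the ingredients you list are precisely those the paper deploys.
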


\begin{proof}
	Let $(m,\varsigma)\in\Delta_\lambda\CW$. We have $E(m,\pi)=(m',\pi')$ where $m'=\cone(\pi)$ and $\pi'$ is given in
	\S\ref{se:1arrows}. We have $\Gamma\circ E(m,\varsigma)=(m',\pi'')$ where
	
	$$\pi''_{12}=E_1E_2\eps_1\circ E_1\lambda E_1\circ \eta_1 E_2E_1=\sigma,\ \pi''_{21}=0$$

	\begin{align*}
		\pi''_{11}&=E_1E_2\varsigma_1\circ E_1E_2\lambda\circ E_1\lambda E_2\circ E_1F_1\tau_2\circ\eta_1 E_2^2\\
		&=E_1E_2\varsigma_1\circ E_1E_2\lambda\circ E_1\lambda E_2\circ \eta_1 E_2^2\circ\tau_2\\
		&=E_1E_2\varsigma_1\circ E_1E_2\lambda\circ \sigma F_1E_2\circ E_2\eta_1 E_2\circ\tau_2\\
		&=\sigma\circ E_2E_1\varsigma_1\circ E_2E_1\lambda\circ E_2\eta_1 E_2\circ\tau_2\\
		&=\pi'_{11}
	\end{align*}

	\begin{align*}
		\pi''_{22}&=E_1^2\varsigma_1\circ E_1^2\lambda\circ E_1\rho E_2\circ E_1F_1\sigma\circ \eta_1E_2E_1 \\
		&=E_1^2\varsigma_1\circ E_1^2\lambda\circ E_1\rho E_2\circ \eta_1E_1E_2\circ\sigma \\
		&=E_1^2\varsigma_1\circ E_1^2\lambda\circ \tau_1 F_1E_2\circ E_1\eta_1E_2\circ\sigma \\
		&=\tau_1\circ E_1^2\varsigma_1\circ E_1^2\lambda\circ E_1\eta_1E_2\circ\sigma \\
		&=\pi'_{22}
	\end{align*}
	It follows that $\pi''=\pi'$.
\end{proof}

\subsubsection{$2$-arrows}
\label{se:dualdiagonalaction}

We assume in \S\ref{se:dualdiagonalaction} that $\sigma$ is invertible.

Given $(m,\varsigma)\in\Delta_\lambda\CW$, write $E^2(m,\varsigma)=(m'',\varsigma'')$.
The formula (\ref{eq:firstdeftau}) defines an endomorphism $\tau$
of $m''$.

\begin{lemma}
	\label{le:dualtau}
Given $i\ge 1$, we have
	$\tau\circ\varsigma''_i=\varsigma''_i\circ E_2^iF_1^i\tau$.
\end{lemma}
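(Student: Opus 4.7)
The plan is to verify the identity by direct computation, writing $\varsigma''_i$ as a $4\times 4$ upper-triangular matrix of morphisms in $\overline{\CW}^i$ indexed by the summands $(1,2,3,4)=(E_2^2(m), E_2E_1(m), E_1E_2(m), E_1^2(m))$ of $m''$, and checking the identity entry by entry. By (\ref{eq:firstdeftau}), the endomorphism $\tau$ has nonzero entries only at positions $(1,1),(2,3),(4,4)$, equal respectively to $\tau_2$, $\sigma^{-1}$, $\tau_1$; this restricts the set of entries to verify.

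First, I would unpack $\varsigma''_i$ by applying the defining formula for $E$ on $\Delta_\lambda\CW$ twice: once to pass from $\varsigma_i$ to $\varsigma'_i$, and then from $\varsigma'_i$ to $\varsigma''_i$. Each entry of the resulting $4\times 4$ matrix is a composition of $E_\alpha E_\beta \varsigma_j$ (with $j\in\{i,i-1,i-2\}$), iterated braid morphisms $\lambda_w$ arising from products of cycles as in the $\xi'_i$-formula, and factors of $\eps_1$ for the off-diagonal terms. The block upper-triangular shape of $\varsigma'_i$ (its lower-left entry vanishes) forces $(\varsigma''_i)_{kl}=0$ for several pairs, and in particular the entire third row of $\varsigma''_i$ vanishes outside positions $(3,3)$ and $(3,4)$. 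The matrix products $\tau\circ\varsigma''_i$ and $\varsigma''_i\circ E_2^iF_1^i\tau$ are then computed and compared entry by entry.

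For the diagonal identities at positions $(1,1)$ and $(4,4)$, where $\tau_2$ (resp.\ $\tau_1$) acts on the outermost pair of $E_2$ (resp.\ $E_1$) strands, the verification reduces to showing that $\tau_\alpha$ slides past the composition $E_\alpha^2\varsigma_i\circ\lambda_w$: naturality of $\lambda_w$ commutes $\tau_\alpha$ past the braid, after which the $H_i\otimes H_i$-equivariance $\varsigma_i\circ T_rF_1^i = \varsigma_i\circ E_2^iT_r$ handles the remaining swap past $\varsigma_i$. The identities involving the entry $\sigma^{-1}$ of $\tau$ mix the $E_2E_1$ and $E_1E_2$ summands; these are handled using the compatibility of $\sigma,\rho,\lambda$ from Lemma \ref{le:braidsrhosigmalambda} together with the $\eps_1$--$\lambda$ and $\eta_1$--$\lambda$ identities of Lemma \ref{le:etaepsrhosigmalambda}, which are precisely designed to move $\sigma$ and $\sigma^{-1}$ through the braided compositions defining $\varsigma''_i$.

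The main obstacle is the verification for the strictly upper-triangular entries $(1,3),(1,4),(2,4)$, whose formulas involve sums over $r=1,\ldots,i$ of braided compositions containing $\eps_1$. One must establish the equality of these sums between $\tau\circ\varsigma''_i$ and $\varsigma''_i\circ E_2^iF_1^i\tau$ by matching the terms produced on each side, which requires careful bookkeeping of how the permutations $(1\cdots r)(2i\cdots i+r)$ interact with the crossings $\tau_\alpha$ and with $\sigma^{-1}$ occurring on the outermost two strands. In characteristic $2$, cross-terms may cancel in pairs, so the organization of the sum is essential.
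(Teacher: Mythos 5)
Your proposal follows essentially the same route as the paper: a direct entry-by-entry verification of the $4\times 4$ matrix identity for $\tau\circ\varsigma''_i=\varsigma''_i\circ E_2^iF_1^i\tau$, using the sparsity of $\tau$ from (\ref{eq:firstdeftau}) and the upper-triangularity of $\varsigma''_i$ to dispose of the structural zeros, then checking the surviving entries via naturality of the $\lambda_w$, the $H_i\otimes H_i$-equivariance of $\varsigma_i$, and the $\eta_1$/$\eps_1$ sliding identities of Lemma~\ref{le:etaepsrhosigmalambda}. One small omission to flag: beyond the entries $(1,3),(1,4),(2,4)$ you single out, the entries $a_{12}$ and $b_{34}$ also require genuine computation to show they vanish (they are not structurally zero on both sides), and the paper treats them in full; keep them on the list when you execute the plan.
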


\begin{proof}
	Let $A=\tau\circ\varsigma''_i$ and $B=\varsigma''_i\circ E_2^iF_1^i\tau$.

	We have
	$$a_{21}=a_{22}=a_{31}=a_{32}=a_{33}=a_{34}=a_{41}=a_{42}=a_{43}=0$$

	\begin{align*}
		a_{11}&=\lambda_{(12)}\circ E_2^2\varsigma_i\circ \lambda_{(2\cdots 2i+2)}\circ
	\lambda_{(1\cdots 2i+1)}\\
		&=E_2^2\varsigma_i\circ \lambda_{(1\cdots 2i+2)}\circ
	\lambda_{(1\cdots 2i+1)}
	\end{align*}
	
	\begin{align*}
		a_{12}&=\sum_{r=1}^i \lambda_{(12)}\circ E_2^2\varsigma_{i-1}\circ E_2^{i+1}F_1^{i-1}\eps_1\circ
	\lambda_{(2\cdots r+1)}\circ\lambda_{(2i+1\cdots i+r+1)}\circ \lambda_{(1\cdots 2i+1)}\\
		&=\sum_{r=1}^i E_2^2\varsigma_{i-1}\circ E_2^{i+1}F_1^{i-1}\eps_1\circ\lambda_{(12)}\circ
	\lambda_{(1\cdots i+1)}\circ\lambda_{(1\cdots r)}\circ\lambda_{(2i+1\cdots i+r+1)}\circ \lambda_{(i+1\cdots 2i+1)}\\
		&=0
	\end{align*}

	\begin{align*}
		a_{13}&=\sum_{r=1}^i \lambda_{(12)}\circ E_2^2\varsigma_{i-1}\circ\lambda_{(2\cdots 2i)}\circ 
	E_2^iF_1^{i-1}\eps_1 E_2 \circ \lambda_{(1\cdots r)(2i\cdots i+r)}\\
		&=\sum_{r=1}^i E_2^2\varsigma_{i-1}\circ\lambda_{(1\cdots 2i)}\circ 
	E_2^iF_1^{i-1}\eps_1 E_2 \circ \lambda_{(1\cdots r)(2i\cdots i+r)}\circ
	\lambda_{(2i+1,2i+2)}\circ E_2^iF_1^{i-1}\sigma^{-1}\\
		&=\sum_{r=1}^i E_2^2\varsigma_{i-1}\circ\lambda_{(1\cdots 2i)}\circ 
	E_2^iF_1^{i-1}E_2\eps_1 \circ \lambda_{(2i,2i+1)}\circ\lambda_{(2i\cdots i+r)}\circ
	\lambda_{(1\cdots r)}\circ E_2^iF_1^{i-1}\sigma^{-1}\\
		&=\sum_{r=1}^i E_2^2\varsigma_{i-1}\circ
	E_2^{i+1}F_1^{i-1}\eps_1 \circ \lambda_{(1\cdots 2i+1)}\circ\lambda_{(2i\cdots i+r)}\circ
	\lambda_{(1\cdots r)}\circ E_2^iF_1^{i-1}\sigma^{-1}
	\end{align*}

	\begin{align*}
		a_{14}&=\sum_{\substack{1\le r\le i\\ 1\le s<i}} \lambda_{(12)}\circ E_2^2\varsigma_{i-2}\circ
	E_2^iF_1^{i-2}\eps_1\circ \lambda_{(2\cdots s+1)(2i-1\cdots i+s)}\circ E_2^iF_1^{i-1}\eps_1E_1\circ
	\lambda_{(1\cdots r)(2i\cdots i+r)}\\
		&=\sum_{\substack{1\le r\le i\\ 1\le s<i}} E_2^2\varsigma_{i-2}\circ
	E_2^iF_1^{i-2}\eps_1\circ E_2^iF_1^{i-1}\eps_1E_1\circ\lambda_{(1\cdots s+1)}\circ
	\lambda_{(1\cdots r)}\circ \lambda_{(2i-1\cdots i+s)}\circ \lambda_{(2i\cdots i+r)}\\
		&=\sum_{1\le r\le s<i} E_2^2\varsigma_{i-2}\circ
	E_2^iF_1^{i-2}\eps_1\circ E_2^iF_1^{i-1}\eps_1E_1\circ\lambda_{(2\cdots r+1)}\circ
	\lambda_{(1\cdots s+1)}\circ \lambda_{(2i\cdots i+r)}\circ \lambda_{(2i\cdots i+s+1)}
	\end{align*}

	\begin{align*}
		a_{23}&=\sigma^{-1}\circ  E_1E_2\varsigma_i\circ \lambda_{(2\cdots 2i+2)}\circ
	\lambda_{(1\cdots 2i+1)}\\
		&=\sigma^{-1}\circ  E_1E_2\varsigma_i\circ \lambda_{(2\cdots 2i+2)}\circ
	\lambda_{(1\cdots 2i+1)}\circ\lambda_{(2i+1,2i+2)}\circ E_2^iF_1^i\sigma^{-1}\\
		&=\sigma^{-1}\circ  E_1E_2\varsigma_i\circ \lambda_{(12)}\circ
	\lambda_{(2\cdots 2i+2)}\circ\lambda_{(1\cdots 2i+1)}\circ E_2^iF_1^i\sigma^{-1}\\
		&=E_1E_2\varsigma_i\circ
	\lambda_{(2\cdots 2i+2)}\circ\lambda_{(1\cdots 2i+1)}\circ E_2^iF_1^i\sigma^{-1}
	\end{align*}

	\begin{align*}
		a_{24}&=\sum_{r=1}^i \sigma^{-1}\circ E_1E_2\varsigma_{i-1}\circ E_1E_2^iF_1^{i-1}\eps_1\circ
	\lambda_{(2\cdots r+1)}\circ\lambda_{(2i+1\cdots i+r+1)}\circ \lambda_{(1\cdots 2i+1)}\\
		&=\sum_{r=1}^i E_2E_1\varsigma_{i-1}\circ E_2E_1E_2^{i-1}F_1^{i-1}\eps_1\circ
	\sigma^{-1}E_2^{i-1}F_1^iE_1\circ
	\lambda_{(1\cdots 2i+1)}\circ\lambda_{(1\cdots r)}\circ \lambda_{(2i\cdots i+r)}\\
		&=\sum_{r=1}^i E_2E_1\varsigma_{i-1}\circ E_2E_1E_2^{i-1}F_1^{i-1}\eps_1\circ
	\lambda_{(2\cdots 2i+1)}\circ\lambda_{(1\cdots r)}\circ \lambda_{(2i\cdots i+r)}
	\end{align*}

	\begin{align*}
		a_{44}&=\lambda_{(12)}\circ E_1^2\varsigma_i\circ \lambda_{(2\cdots 2i+2)}\circ
	\lambda_{(1\cdots 2i+1)}\\
		&=E_1^2\varsigma_i\circ \lambda_{(1\cdots 2i+2)}\circ
	\lambda_{(1\cdots 2i+1)}
	\end{align*}

	\smallskip
	We have
	$$b_{21}=b_{12}=b_{22}=b_{31}=b_{32}=b_{33}=b_{41}=b_{42}=b_{43}=0$$

	\begin{align*}
		b_{11}&=E_2^2\varsigma_i\circ \lambda_{(2\cdots 2i+2)}\circ
	\lambda_{(1\cdots 2i+1)}\circ \lambda_{(2i+1,2i+2)}\\
		&=E_2^2\varsigma_i\circ \lambda_{(1\cdots 2i+2)}\circ
	\lambda_{(1\cdots 2i+1)}
	\end{align*}

	\begin{align*}
		b_{13}&=\sum_{r=1}^i E_2^2\varsigma_{i-1}\circ E_2^{i+1}F_1^{i-1}\eps_1\circ
	\lambda_{(2\cdots r+1)}\circ\lambda_{(2i+1\cdots i+r+1)}\circ \lambda_{(1\cdots 2i+1)}\circ
	E_2^iF_1^i\sigma^{-1}\\
		&=\sum_{r=1}^i E_2^2\varsigma_{i-1}\circ E_2^{i+1}F_1^{i-1}\eps_1\circ
	\lambda_{(1\cdots 2i+1)}\circ\lambda_{(2i\cdots i+r)}\circ \lambda_{(1\cdots r)}\circ
	E_2^iF_1^i\sigma^{-1}
	\end{align*}

	\begin{align*}
		b_{14}&=\sum_{\substack{1\le r\le i\\ 1\le s<i}} E_2^2\varsigma_{i-2}\circ
	E_2^iF_1^{i-2}\eps_1\circ \lambda_{(2\cdots s+1)(2i-1\cdots i+s)}\circ E_2^iF_1^{i-1}\eps_1E_1\circ
	\lambda_{(1\cdots r)(2i\cdots i+r)}\circ \lambda_{(2i+1,2i+2)}\\
		&=\sum_{\substack{1\le r\le i\\ 1\le s<i}} E_2^2\varsigma_{i-2}\circ
	E_2^iF_1^{i-2}\eps_1\circ E_2^iF_1^{i-1}\eps_1E_1\circ \lambda_{(2i+1,2i+2)}\circ
	\lambda_{(2\cdots s+1)(2i-1\cdots i+s)}\circ \lambda_{(1\cdots r)(2i\cdots i+r)}\\
		&=\sum_{\substack{1\le r\le i\\ 1\le s<i}} E_2^2\varsigma_{i-2}\circ
	E_2^iF_1^{i-2}\eps_1\circ E_2^iF_1^{i-1}\eps_1E_1\circ \lambda_{(2i-1,2i)}\circ
	\lambda_{(2\cdots s+1)(2i-1\cdots i+s)}\circ \lambda_{(1\cdots r)(2i\cdots i+r)}\\
		&=\sum_{1\le s<r\le i} E_2^2\varsigma_{i-2}\circ
	E_2^iF_1^{i-2}\eps_1\circ E_2^iF_1^{i-1}\eps_1E_1\circ \lambda_{(2\cdots s+1)}\circ
	\lambda_{(1\cdots r)}\circ \lambda_{(2i\cdots i+s)}\circ \lambda_{(2i\cdots i+r)}\\
		&=\sum_{1\le r'\le s'\le i} E_2^2\varsigma_{i-2}\circ
	E_2^iF_1^{i-2}\eps_1\circ E_2^iF_1^{i-1}\eps_1E_1\circ \lambda_{(2\cdots r'+1)}\circ
	\lambda_{(1\cdots s'+1)}\circ \lambda_{(2i\cdots i+r')}\circ \lambda_{(2i\cdots i+s'+1)}
	\end{align*}

	$$b_{23}=E_2E_1\varsigma_i\circ
	\lambda_{(2\cdots 2i+2)}\circ \lambda_{(1\cdots 2i+1)}\circ E_2^iF_1^i\sigma^{-1}$$

	\begin{align*}
		b_{24}&=\sum_{r=1}^iE_2E_1\varsigma_{i-1}\circ\lambda_{(2\cdots 2i)}\circ E_2^iF_1^{i-1}\eps_1E_1
	\circ \lambda_{(1\cdots r)(2i\cdots i+r)}\circ\lambda_{(2i+1,2i+2)}\\
		&=\sum_{r=1}^iE_2E_1\varsigma_{i-1}\circ\lambda_{(2\cdots 2i)}\circ E_2^iF_1^{i-1}E_1\eps_1
	\circ\lambda_{(2i,2i+1)}\circ \lambda_{(1\cdots r)(2i\cdots i+r)}\\
		&=\sum_{r=1}^iE_2E_1\varsigma_{i-1}\circ E_2^iF_1^{i-1}E_1\eps_1
	\circ\lambda_{(2\cdots 2i+1)}\circ \lambda_{(1\cdots r)(2i\cdots i+r)}
	\end{align*}

	\begin{align*}
		b_{34}&=\sum_{r=1}^i E_1E_2\varsigma_{i-1}\circ E_1E_2^iF_1^{i-1}\eps_1\circ
	\lambda_{(2\cdots r+1)}\circ\lambda_{(2i+1\cdots i+r+1)}\circ \lambda_{(1\cdots 2i+1)}
	\circ \lambda_{(2i+1,2i+2)}\\
		&=\sum_{r=1}^i E_1E_2\varsigma_{i-1}\circ E_1E_2^iF_1^{i-1}\eps_1\circ
	\lambda_{(1\cdots 2i+2)}\circ\lambda_{(1\cdots r)}\circ \lambda_{(2i\cdots i+r)}\\
		&=\sum_{r=1}^i E_1E_2\varsigma_{i-1}\circ \lambda_{(1\cdots 2i)}\circ E_2^iF_1^{i-1}E_1\eps_1\circ
	\lambda_{(2i,2i+1)}\circ\lambda_{(2i+1,2i+2)}\circ\lambda_{(1\cdots r)}\circ \lambda_{(2i\cdots i+r)}\\
		&=\sum_{r=1}^i E_1E_2\varsigma_{i-1}\circ \lambda_{(1\cdots 2i)}\circ E_2^iF_1^{i-1}\eps_1E_1\circ
	\lambda_{(2i+1,2i+2)}\circ\lambda_{(2i+1,2i+2)}\circ\lambda_{(1\cdots r)}\circ \lambda_{(2i\cdots i+r)}\\
		&=0
	\end{align*}

	\begin{align*}
		b_{44}&=E_1^2\varsigma_i\circ \lambda_{(2\cdots 2i+2)}\circ
	\lambda_{(1\cdots 2i+1)}\circ \lambda_{(2i+1,2i+2)}\\
		&=E_1^2\varsigma_i\circ \lambda_{(1\cdots 2i+2)}\circ
	\lambda_{(1\cdots 2i+1)}
	\end{align*}

	We deduce that $A=B$ and the lemma follows.
\end{proof}

Lemma \ref{le:dualtau} shows that $\tau$ defines an endomorphism of $E^2(m,\varsigma)$ for
all $(m,\varsigma)\in\Delta_\lambda\CW$.
The functor $\Gamma$ is faithful, $\Gamma E^2=E^2\Gamma$ (Lemma \ref{le:GammaE}) and $\tau$
commutes with $\Gamma$. It follows that $\tau$ is functorial.

\smallskip
Theorem \ref{th:tauisendo} has the following consequence.

\begin{thm}
	\label{th:maindual}
The data $(\Delta_\lambda\CW,E,\tau)$ is an idempotent-complete
strongly pretriangulated $2$-representation.
\end{thm}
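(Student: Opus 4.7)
The plan is to reduce everything to Theorem \ref{th:tauisendo} via the faithful differential functor $\Gamma:\Delta_\lambda\CW\to\Delta_\sigma\CW$ constructed earlier in \S\ref{se:dualdiagonal}. By Lemma \ref{le:GammaE}, $\Gamma$ intertwines the endofunctors $E$ on both sides, hence also $E^2$ and $E^3$. Moreover, the endomorphism $\tau$ of $\omega E^2$ on $\Delta_\lambda\CW$ is defined by exactly the same matrix formula (\ref{eq:firstdeftau}) as on $\Delta_\sigma\CW$, so $\tau$ on $\Delta_\lambda\CW$ is compatible with $\Gamma$ by construction. Lemma \ref{le:dualtau} has already verified the nontrivial step that this $\tau$ commutes with every $\varsigma''_i$, so $\tau$ is a well-defined endomorphism of $E^2(m,\varsigma)$ inside $\Delta_\lambda\CW$.

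With these compatibilities in hand, the first step is to deduce naturality of $\tau$ on $\Delta_\lambda\CW$. For $f\in\Hom_{\Delta_\lambda\CW}((m,\varsigma),(\tilde m,\tilde\varsigma))$, naturality amounts to an equation in $\Hom_{\overline{\CW}^i}(E^2(m),E^2(\tilde m))$; the same equation holds after applying $\Gamma$, by Theorem \ref{th:tauisendo} applied to $\Gamma(f)$, and faithfulness of $\Gamma$ then transports it back.

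Next I would verify the three defining relations of a $2$-representation ($d(\tau)=1$, $\tau^2=0$ and the braid relation $E\tau\circ\tau E\circ E\tau=\tau E\circ E\tau\circ\tau E$) for every object $(m,\varsigma)$ of $\Delta_\lambda\CW$. Each relation is an identity of $2$-arrows in $\overline{\CW}^i$; it is already proved to hold on $\Delta_\sigma\CW$ by Theorem \ref{th:tauisendo}, and is preserved by $\Gamma$. Since $\Gamma$ is faithful, each relation descends to $\Delta_\lambda\CW$. This gives the structure of a $2$-representation in the sense of \S\ref{se:defmoncat}.

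Finally, I would check that $\Delta_\lambda\CW$ is strongly pretriangulated and idempotent-complete. This is essentially formal: an object of $\Delta_\lambda\CW$ is the data of $m\in\overline{\CW}^i$ together with a compatible family $(\varsigma_i)$, and both cones (as in the construction $(m,\varsigma)\mapsto E(m,\varsigma)$) and direct summands can be given canonical $\varsigma$-structures by restricting/projecting the maps $\varsigma_i$, while morphisms are defined by a commutation condition that is preserved under taking cones and summands in $\overline{\CW}^i$. The main obstacle in the whole argument is really Lemma \ref{le:dualtau}, but since that has already been carried out, the theorem follows formally from Theorem \ref{th:tauisendo} and the faithfulness of $\Gamma$.
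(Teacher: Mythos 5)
Your proposal is correct and is essentially the paper's argument: Lemma \ref{le:dualtau} gives that $\tau$ is a well-defined endomorphism of $E^2(m,\varsigma)$, and then the paper too invokes faithfulness of $\Gamma$ together with $\Gamma E^2 = E^2\Gamma$ (Lemma \ref{le:GammaE}) and compatibility of $\tau$ with $\Gamma$ to get functoriality and transport the relations from Theorem \ref{th:tauisendo}. Your writeup simply makes explicit the steps the paper compresses into the single line ``Theorem \ref{th:tauisendo} has the following consequence.''
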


The following proposition is a consequence of Lemma \ref{le:GammaE} and the construction of $\tau$.

\begin{prop}
	\label{pr:oldtodual}
The functor $\Gamma:\Delta_\lambda\CW\to\Delta_\sigma\CW$ induces a morphism of
$2$-representations.
\end{prop}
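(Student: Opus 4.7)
The plan is to unpack the definition of morphism of $2$-representations and show that the obvious candidate data $(\Gamma,\varphi=\id)$ works, with essentially no further computation needed.

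First, by Lemma \ref{le:GammaE}, we have the strict equality $E\circ\Gamma=\Gamma\circ E$ of differential functors $\Delta_\lambda\CW\to\Delta_\sigma\CW$: both send $(m,\varsigma)$ to the object with underlying $m'=\cone(\pi(\varsigma_1))$ in $\overline{\CW}^i$, and the relevant structure map (either $\pi'$ in the sense of \S\ref{se:1arrows}, or the image of $\varsigma'$ under $\Gamma$) agrees on the nose. Consequently, we may take $\varphi=\id:\Gamma E\iso E\Gamma$; this is automatically closed, and the naturality squares in the definition of morphism of $2$-representations hold trivially.

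It remains to verify the compatibility $\tau\Gamma=\Gamma\tau$ as endomorphisms of the functor $\Gamma E^2=E^2\Gamma$. By iterating Lemma \ref{le:GammaE}, both $\Gamma E^2(m,\varsigma)$ and $E^2\Gamma(m,\varsigma)$ share the common underlying object
$$m''=[E_2^2(m)\oplus E_2E_1(m)\oplus E_1E_2(m)\oplus E_1^2(m),\partial]$$
in $\overline{\CW}^i$. The endomorphism $\tau$ is defined in both \S\ref{se:diagonalaction} and \S\ref{se:dualdiagonalaction} by the \emph{same} matrix formula \eqref{eq:firstdeftau} on this underlying object. Since $\Gamma$ is faithful and acts as the identity on the underlying object and morphism level, the two $\tau$'s are identified under $\Gamma$.

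There is essentially no hard step: the well-definedness of $\tau$ on each side (i.e., that \eqref{eq:firstdeftau} commutes with the respective structure maps on $m''$) is precisely the content of Theorems \ref{th:tauisendo} and \ref{th:maindual}, which we are entitled to assume. The only minor point to check is that the composite $\tau\Gamma\circ E\varphi\circ\varphi E=E\varphi\circ\varphi E\circ \Gamma\tau$ reduces, under $\varphi=\id$, to the pointwise equality just established; this is immediate.
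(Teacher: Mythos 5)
Your proposal is correct and follows essentially the same route as the paper: the paper's proof is exactly that Lemma \ref{le:GammaE} gives $E\Gamma=\Gamma E$, and since the endomorphism $\tau$ on both $\Delta_\lambda\CW$ and $\Delta_\sigma\CW$ is given by the same matrix formula \eqref{eq:firstdeftau} on the common underlying object $m''$, taking $\varphi=\id$ makes the compatibility with $\tau$ immediate. Nothing to add.
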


\subsection{Tensor product and internal $\Hom$}

Let us give two applications of the construction of \S\ref{se:diagonal}.
Let $(\CV_1,E_1,\tau_1)$ and $(\CV_2,E_2,\tau_2)$ be 
idempotent-complete strongly pretriangulated $2$-representations.

\smallskip
We view $\CV_1\otimes\CV_2$ as endowed with two strictly commuting 
actions of $\CU$ given by $(E_1\otimes 1,\tau_1\otimes 1)$ and
$(1\otimes E_2,1\otimes\tau_2)$: the isomorphism $\sigma:
(1\otimes E_2)\circ(E_1\otimes 1)\iso
(E_1\otimes 1)\circ (1\otimes E_2)$ is the identity.

We define the {\em tensor product $2$-representation}\index[ter]{tensor product
$2$-representation}
$$\CV_1\dotimes\CV_2=\Delta_\sigma(\CV_1\otimes\CV_2)\indexnot{\dotimes}{\dotimes}.$$

Given $(\Phi_i,\varphi_i):\CV_i\to\CV'_i$ a morphism of $2$-representations for $i\in\{1,2\}$,
Proposition \ref{pr:morphism2rep} provides
a morphism of $2$-representations $\CV_1\dotimes\CV_2\to \CV'_1\dotimes\CV'_2$.

Given $\CV_1$, $\CV_2$ and $\CV_3$ $2$-representations, Proposition \ref{pr:associativity} provides
an isomorphism 
$$(\CV_1\dotimes\CV_2)\dotimes\CV_3\iso \CV_1\dotimes(\CV_2\dotimes\CV_3)$$
that commutes with forgetful functors $\omega$.

\smallskip
Since the forgetful functors $\omega$ are faithful, we deduce that
idempotent-complete strongly pretriangulated $2$-representations form a monoidal $2$-category.


\medskip
Consider now $\Hom(\CV_1,\CV_2)$. It is endowed with two strictly commuting
structures of $2$-representations: the first one is given by
$((\Phi\mapsto \Phi\circ E_1),\Phi\tau_1)$ and the second one by
$((\Phi\mapsto E_2\circ\Phi),\tau_2\Phi)$. The isomorphism $\sigma$ is the identity.

We define the {\em internal $\Hom$ $2$-representation}\index[ter]{internal $\Hom$ $2$-representation}
$$\CHH{om}(\CV_1,\CV_2)=\Delta\Hom(\CV_1,\CV_2)\indexnot{Hom}{\CHH{om}}.$$

\smallskip

The category $\CHH{om}(\CV_1,\CV_2)$ has objects pairs
$(\Phi,\pi)$ where $\Phi:\CV_1\to\overline{\CV_2}^i$ is a differential
functor and $\pi:E_2\Phi\to \Phi E_1$ is a closed
natural transformation of functors such that  
$$\tau_1\Phi\circ \pi E_1\circ E_2\pi=
\pi E_1\circ E_2\pi\circ\tau_2\Phi:E_2^2\Phi\to \Phi E_1^2.$$

Note that $\Hom_\CU(\CV_1,\CV_2)$ is the full subcategory of
$\CHH{om}(\CV_1,\CV_2)$ with objects pairs $(\Phi,\pi)$ where
$\Phi$ takes values in $\CV_2$ and $\pi$ is invertible.

\smallskip
Given $(\Phi_1,\varphi_1):\CV'_1\to\CV_1$ and
$(\Phi_2,\varphi_2):\CV_2\to\CV'_2$ two morphisms of $2$-representations,
Proposition \ref{pr:morphism2rep} provides
a morphism of $2$-representations $\CHH{om}(\CV_1,\CV_2)\to\CHH{om}(\CV'_1,\CV'_2)$.

%
%
%

\section{Bimodule $2$-representations}
\label{se:bimodule2rep}
\subsection{Differential algebras}
\label{se:differentialalgebras}

\subsubsection{$2$-representations}
\label{se:2repalgebra}
Let $A$ be a differential algebra.

\begin{defi}
	\label{de:2repalg}
	A $2$-representation on $A$\index[ter]{$2$-representation on a differential algebra}
	is the data of a differential $(A,A)$-bimodule
	$E$ and of an endomorphism $\tau$ of the $(A,A)$-bimodule $E\otimes_A E$ such that
	$$\tau^2=0,\ d(\tau)=\id \text{ and }
(E\otimes\tau)\circ (\tau\otimes E)\circ (E\otimes\tau)=
(\tau\otimes E)\circ (E\otimes\tau)\circ (\tau\otimes E).$$

	We say that the $2$-representation is {\em right finite} if
 $E$ is finitely generated and projective as a (non-differential) $A^\opp$-module.
\end{defi}

Consider a $2$-representation on $A$.
Note that $E\otimes_A-$ is a differential endofunctor of
$A\mdiff$, and $\tau$ defines an endomorphism of $(E\otimes_A -)^2$. This gives
a structure of $2$-representation on $A\mdiff$. It
restricts to a $2$-representation on $(\bar{A})^i$ if
$E$ is strictly perfect as a differential $A$-module.

\smallskip
Note that there is a morphism of differential algebras
$$H_n\to\End_{A\otimes A^\opp}(E^n),\ T_i\mapsto
E^{n-i-1}\otimes \tau\otimes E^{i-1}.$$

\smallskip
Let $A'$ be another differential algebra with a $2$-representation $(E',\tau')$.
We define a {\em morphism of $2$-representations}\index[ter]{morphism of $2$-representations}
from $(A,E,\tau)$ to $(A',E',\tau')$ to be
an $(A',A)$-bimodule $P$ together with a closed isomorphism of $(A',A)$-bimodules $\varphi:
P\otimes_A E\iso E'\otimes_{A'}P$ such that
\begin{equation}
	\label{eq:taumorphism}
\tau'P\circ E'\varphi\circ \varphi E=E'\varphi\circ \varphi E\circ P\tau:PE^2\to E^{\prime
2}P.
\end{equation}
Note that such a pair $(P,\varphi)$ gives rise to a morphism of $2$-representations
$(P\otimes_A-,\varphi):(A\mdiff,E\otimes_A-,\tau)\to (A'\mdiff,E'\otimes_{A'}-,\tau')$.

We obtain a differential $2$-category of $2$-representations on differential algebras.

\smallskip
The {\em opposite $2$-representation}\index[ter]{opposite $2$-representation}
is the data $(A',E',\tau')$
where $A'=A^\opp$, $E'=E$ and $\tau'=\tau$.
Note that $(A,E,\tau)$ coincides with its double dual.

\smallskip
Assume now the $2$-representation is right finite.
We have two morphisms of $(A,A)$-bimodules $\eta:A\to E\otimes_A
E^\vee$ and $\eps:E^\vee\otimes_A E\to A$ (unit and counit of adjunction).
 We have a morphism of $(A,A)$-bimodules 
$\rho:E^\vee E\to EE^\vee$ defined as the composition
$$\rho:E^\vee E\xrightarrow{\bullet\eta}E^\vee EEE^\vee
\xrightarrow{E^\vee\tau E^\vee}E^\vee EEE^\vee\xrightarrow{\eps\bullet}
EE^\vee.$$

There is a canonical isomorphism of differential algebras
$\End(E^2)^\opp\iso \End((E^\vee)^2)$ and we still denote by $\tau$
the endomorphism of $(E^\vee)^2$ corresponding to $\tau$.

\smallskip
We define the {\em left dual $2$-representation}
\index[ter]{left dual $2$-representation} on $A$ with the bimodule $E^\vee$ and
the endomorphism $\tau$.

\subsection{Lax cocenter}
\label{se:laxcocenterbimod}
Let $B$ be a differential algebra. A {\em lax bi-$2$-representation} on $B$\index[ter]{lax bi-$2$-representation}
is the data of
\begin{itemize}
	\item differential $(B,B)$-bimodules $E_{i,j}$ for $i,j\ge 0$
	\item morphisms of differential algebras $H_i\otimes H_j\to
		\End(E_{i,j})$
	\item morphisms $\mu_{(i,j),(i',j')}:E_{i,j} E_{i',j'}\to E_{i+i',j+j'}$
		satisfying properties (1) and (2) of \S\ref{se:2birep}.
\end{itemize}

Consider a lax bi-$2$-representation $E$.
Note that the functors $(E_{i,j}\otimes_B-)$ provide a structure of lax bi-$2$-representation on 
$B\mdiff$.

\smallskip
We define the differential algebra
$A=\Delta_E(B)$\indexnot{Delta}{\Delta_E(B)} as the
quotient of the tensor algebra $T_B(E_{0,1}E_{1,0})$ by
the two-sided ideal generated by $\bigoplus_{i\ge 0}K_i$, where $K_i$ is the kernel of the composition
$$(E_{0,1}E_{1,0})^i\xrightarrow{\can}E_{i,i}\xrightarrow{\can}
E_{i,i}/((T_r\otimes 1)x-(1\otimes T_r)x)_{x\in E_{i,i},\ 1\le r<i}.$$

We have $A^0=B$ and $A$ is generated by $A^0$ and $A^1=(E_{0,1}E_{1,0})/K_1$ as an algebra.

\medskip
Let $(M,\varsigma)$ be an object of $\Delta_{E\otimes_B-}(B\mdiff)$. The action of
$T_B(E_{0,1}E_{1,0})$ on $M$ vanishes on $K_i$ for all $i$, hence defines an action of $A$ on $M$. This gives a fully faithful differential functor
$\Delta_{E\otimes_B-}(B\mdiff)\to (\Delta_E(B))\mdiff$. If the canonical injective morphism
of differential $(B,B)$-bimodules
\begin{equation}
	\label{eq:diagonalpower}
	(E_{0,1}E_{1,0})^i/K_i\to  E_{i,i}/((T_r\otimes 1)x-(1\otimes T_r)x)_{x\in E_{i,i},\ 
1\le r<i}
\end{equation}
is a split injection for all $i\ge 1$, then the functor above is an isomorphism
$$\Delta_{E\otimes_B-}(B\mdiff)\iso (\Delta_E(B))\mdiff.$$

\subsection{Diagonal action}
\label{se:tensoralgebras}
\subsubsection{Algebra}
\label{se:tensoralgebra1}
Let $B$ be a differential algebra endowed with two $2$-representations
$(F_1,\tau_1)$ and $(E_2,\tau_2)$ together with a closed morphism
$\lambda:F_1E_2\to E_2F_1$ such that
the diagrams (\ref{eq:diaglambda}) commute.

\smallskip
We define the algebra
$A=\Delta'_\lambda(B)$\indexnot{Delta'}{\Delta'_{\lambda}(B)} as the quotient of the tensor
algebra $T_B(F_1E_2)$
by the two-sided ideal generated by the image of the composition
$$F_1^2E_2^2\xrightarrow{\tau_1 E_2^2-F_1^2\tau_2}
F_1^2E_2^2\xrightarrow{F_1\lambda E_2}
(F_1 E_2)^2.$$

We have $A^0=B$ and $A^1=F_1E_2$.

%


\medskip
Let $B'$ be a differential algebra endowed with two $2$-representations
$(F'_1,\tau'_1)$ and $(E'_2,\tau'_2)$ together with a closed morphism
$\lambda':F'_1E'_2\to E'_2F'_1$ such that the analogs of 
the diagrams (\ref{eq:diaglambda}) commute. Let $A'=\Delta'_{\lambda'}(B')$.
Let $P$ be a $(B',B)$-bimodule and
$\varphi_1:PF_1\iso F'_1P$ and $\varphi_2:PE_2\iso E'_2P$ be two closed isomorphisms
of bimodules such that $(P,\varphi_1)$ and $(P,\varphi_2)$ are morphisms of 
$2$-representations and such that
$$\lambda' P\circ F'_1\varphi_2\circ \varphi_1 E_2=E'_2\varphi_1\circ\varphi_2F_1\circ 
P\lambda:PF_1E_2\to E'_2F'_1P.$$

The isomorphism $F'_1\varphi_2\circ \varphi_1 E_2:PF_1E_2\iso F'_1E_2'P$ induces an
isomorphism of $(B',B)$-bimodules $f:P\otimes_B T_B(F_1 E_2)\iso
T_{B'}(F'_1 E'_2)\otimes_{B'}P$. This isomorphism $f$ endows the right $T_B(F_1E_2)$-module
$P\otimes_B T_B(F_1 E_2)$ with a commuting left action of $T_{B'}(F'_1E'_2)$.
The isomorphism $f$ induces an isomorphism 
$$P\otimes_B T_B(F_1 E_2)\otimes_{T_B(F_1E_2)}A\iso
A'\otimes_{T_{B'}(F'_1 E'_2)}T_{B'}(F'_1 E'_2)\otimes_{B'}P.$$
So, we obtain a structure of $(A',A)$-bimodule on $P\otimes_B A$.

\begin{rem}
	The data of $\varphi_1$ and $\varphi_2$ and the relations they are required to
	satisfy are described graphically as:
$$\includegraphics[scale=0.9]{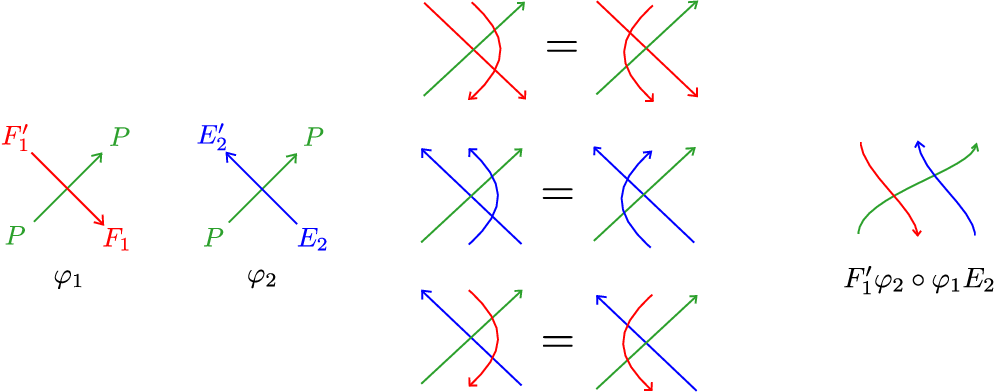}$$
\end{rem}

\subsubsection{Left dual}
\label{se:leftdual}
Let $B$ be a differential algebra endowed with two $2$-representations
$(E_1,\tau_1)$ and $(E_2,\tau_2)$, the first of which is right finite.

We consider the data
of $\sigma\in Z\Hom(E_2E_1,E_1E_2)$ such that the diagrams (\ref{eq:diagsigma})
commute.

We define 
\begin{equation}
	\label{eq:gamma}
\lambda:E_1^\vee E_2\xrightarrow{\bullet\eta_1}E_1^\vee E_2E_1E_1^\vee
\xrightarrow{E_1^\vee\sigma E_1^\vee}E_1^\vee E_1E_2E_1^\vee\xrightarrow{\eps_1\bullet}
E_2E_1^\vee.
\end{equation}

Let $A=\Delta_\sigma(B)=\Delta'_{\lambda}(B)$\indexnot{Deltas}{\Delta_{\sigma}(B)}. This is the graded quotient
of the tensor algebra $T_B(E_1^\vee E_2)$ by
the ideal generated by the image of the composition
$$(E_1^\vee)^2E_2^2\xrightarrow{\tau_1 E_2^2-(E_1^\vee)^2\tau_2}
(E_1^\vee)^2E_2^2\xrightarrow{E_1^\vee\lambda E_2}
(E_1^\vee E_2)^2.$$

The algebra $A$ is generated by $A^0=B$ and $A^1=E_1^\vee E_2$.

\medskip
Let $L$ be a differential $B$-module. The data
of a structure of $A$-module on $L$ extending the action of $B$
is the same as the data of a morphism of
$B$-modules
$\varsigma:E_1^\vee E_2\otimes_BL\to L$
such that $d(\varsigma)=0$ and the following diagram commutes
\begin{equation}
\label{eq:diagbeta}
\xymatrix{
&(E_1^\vee)^2E_2^2 L \ar[rr]^-{E_1^\vee\lambda E_2 }&&
(E_1^\vee E_2)^2 L
\ar[rr]^-{E_1^\vee E_2 \varsigma} && E_1^\vee E_2L
\ar[dr]^{\varsigma} \\
(E_1^\vee)^2E_2^2 L\ar[ur]^{\tau_1 E_2^2} \ar[dr]_{(E_1^\vee)^2\tau_2 } 
&&&&&& L \\
& (E_1^\vee)^2E_2^2 L\ar[rr]_-{E_1^\vee\lambda E_2 }&&
(E_1^\vee E_2)^2 L
\ar[rr]_-{E_1^\vee E_2 \varsigma} && E_1^\vee E_2L
\ar[ur]_{\varsigma}
}
\end{equation}
This gives us an identification (isomorphism of categories)
between differential $A$-modules and pairs consisting of a
differential $B$-module $L$ and a map $\varsigma$ as above.

\smallskip
Consider the adjunction isomorphism
$$\phi:\Hom_B(E_2L,E_1L)\iso
\Hom_B(E_1^\vee E_2L, L)$$
Let $\pi\in Z\Hom_B(E_2L,E_1L)$
and let $\varsigma=\phi(\pi)\in
Z\Hom_B(E_1^\vee E_2L,L)$.
The commutativity of the diagram (\ref{eq:diagbeta}) is equivalent to
the commutativity of the diagram
\begin{equation}
\label{eq:diagalpha}
\xymatrix{
E_2^2L \ar[r]^-{E_2\pi}\ar[d]_{\tau_2} & E_2E_1L\ar[r]^-{\sigma} &
E_1E_2L\ar[r]^-{E_1\pi}& E_1^2L 
\ar[d]^{\tau_1} \\
E_2^2L \ar[r]_-{E_2\pi} & E_2E_1L\ar[r]_-{\sigma}
& E_1E_2L\ar[r]_-{E_1\pi} &E_1^2L 
}
\end{equation}
This gives us an identification (isomorphism of categories)
between differential $A$-modules and pairs $[L,\pi]$ where
$L$ is a differential $B$-module,
$\pi\in Z\Hom_B(E_2L,E_1L)$ 
and the diagram (\ref{eq:diagalpha}) commutes.
We have obtained the following lemma.

\begin{lemma}
	\label{le:diagonalbimodules}
The construction $(m,\pi)\mapsto [m,\pi]$ defines an isomorphism of
differential categories $\Phi:\Delta_\sigma(B\mdiff)\to (\Delta_\sigma B)\mdiff$.
\end{lemma}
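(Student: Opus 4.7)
The plan is to assemble the isomorphism from the two equivalences already sketched immediately before the lemma, promoting each into an isomorphism of differential categories and then composing them.

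First I would make precise the identification between differential $A$-modules and pairs $(L,\varsigma)$. By the construction of $A=\Delta_\sigma(B)=\Delta'_{\lambda}(B)$ as the quotient of the tensor algebra $T_B(E_1^\vee E_2)$ by the two-sided ideal $J$ generated by the image of $E_1^\vee \lambda E_2\circ(\tau_1 E_2^2-(E_1^\vee)^2\tau_2)$, the data of a $T_B(E_1^\vee E_2)$-module structure on a differential $B$-module $L$ is the same as a closed $B$-linear map $\varsigma:E_1^\vee E_2\otimes_B L\to L$; and the condition that this action descends to $A$ is precisely that $\varsigma$ annihilates the image of the relation map on $(E_1^\vee)^2 E_2^2\otimes_B L$, which is exactly the commutativity of diagram~(\ref{eq:diagbeta}). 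Morphisms of $A$-modules are closed $B$-linear maps compatible with the $E_1^\vee E_2$-actions, i.e.\ compatible with $\varsigma$.

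Second, I would translate everything by adjunction. Right finiteness of $(E_1,\tau_1)$ (in the sense of Definition~\ref{de:2repalg}) produces the adjoint pair $(E_1^\vee\otimes_B-,E_1\otimes_B-)$ of \S\ref{se:diffalg}, and hence a natural isomorphism of differential $k$-modules
\[
\phi:\Hom_B(E_2L,E_1L)\iso\Hom_B(E_1^\vee E_2 L,L),\quad \pi\mapsto \eps_1 L\circ E_1^\vee\pi,
\]
with inverse $\varsigma\mapsto E_1\varsigma\circ \eta_1 E_2 L$. Setting $\varsigma=\phi(\pi)$, I need to check that (\ref{eq:diagbeta}) holds if and only if (\ref{eq:diagalpha}) holds. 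This is a direct unwinding: using the definition (\ref{eq:gamma}) of $\lambda$ in terms of $\sigma$ together with the triangular identities for $(\eta_1,\eps_1)$, the hexagon obtained by transporting (\ref{eq:diagbeta}) across $\phi$ reduces to (\ref{eq:diagalpha}) after cancelling the $\eta_1,\eps_1$ that appear on both sides. Naturality of $\phi$ in $L$ then shows that a $B$-linear $f:L\to L'$ satisfies $f\circ\varsigma=\varsigma'\circ(E_1^\vee E_2\otimes f)$ if and only if $E_1 f\circ\pi=\pi'\circ E_2 f$.

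Combining these two steps, the assignment $(m,\pi)\mapsto[m,\pi]$ sends objects of $\Delta_\sigma(B\mdiff)$ bijectively to objects of $(\Delta_\sigma B)\mdiff$, and on morphism spaces it is the identity map $\Hom_B(m,m')\to\Hom_B(m,m')$ restricted to the respective subspaces, which by the second step coincide. Composition and the differential on $\Hom$ spaces are those of $B\mdiff$ on both sides, so $\Phi$ is a (strict) isomorphism of differential categories.

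The main obstacle I anticipate is the second step: writing out honestly that the hexagon (\ref{eq:diagbeta}), when pushed through $\phi$, becomes the square (\ref{eq:diagalpha}). This is a purely formal diagram chase, but it requires carefully tracking the insertion of $\eta_1$ and $\eps_1$ implicit in the definition of $\lambda$ and invoking the two triangular identities at the right places; once this is done, everything else is bookkeeping.
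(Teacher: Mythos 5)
Your proposal is correct and follows the paper's own route: the two paragraphs immediately preceding the lemma establish exactly the two identifications you spell out (quotient-algebra modules as pairs $(L,\varsigma)$, then adjunction to turn $\varsigma$ into $\pi$ and to show $(\ref{eq:diagbeta})\Leftrightarrow(\ref{eq:diagalpha})$), and the lemma is stated as their immediate consequence. Your extra care with naturality of $\phi$ on morphisms is the same bookkeeping the paper leaves implicit in the phrase "identification (isomorphism of categories)."
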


We will show that the structure of $2$-representation on
$\Delta_\sigma(B\mdiff)$ comes from a structure of $2$-representation
on $\Delta_\sigma B$, when $\sigma$ is invertible.

\begin{rem}
	The map $\varsigma$, the relations it is required to satisfy, and the relation
	$\varsigma=\phi(\pi)$ 
	are described graphically as:
$$\includegraphics[scale=0.70]{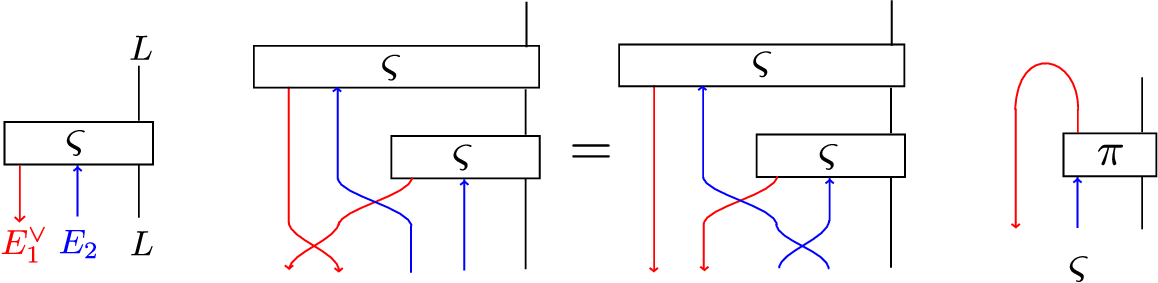}$$
\end{rem}

\subsubsection{Action}
\label{se:action}

We define the closed morphism of $(B,A)$-bimodules
$u:E_2\otimes_{B}A\to E_1\otimes_{B}A$ as the adjoint to
the multiplication map $E_1^\vee E_2\otimes_B A\to A$.
We define $E$ as the cone of $u$. 

We define a morphism of $(B,A)$-bimodules
$v:E_2\otimes_B E\to E_1\otimes_B E$ by
$$v_{11}:E_2^2\otimes_BA\xrightarrow{\tau_2\otimes 1}
E_2^2\otimes_BA\xrightarrow{E_2\eta_1\bullet}
E_2E_1E_1^\vee E_2\otimes_BA\xrightarrow{\sigma\bullet}
E_1E_2E_1^\vee E_2\otimes_BA\xrightarrow{E_1E_2\mathrm{mult.}}
E_1E_2\otimes_BA$$
$$v_{12}:E_2E_1\otimes_BA\xrightarrow{\sigma\otimes 1}
E_1E_2\otimes_BA$$
$$v_{21}=0$$
$$v_{22}:E_2E_1\otimes_BA\xrightarrow{\sigma\otimes 1}
E_1E_2\otimes_BA\xrightarrow{E_1\eta_1\bullet}
E_1^2E_1^\vee E_2\otimes_BA\xrightarrow{\tau_1\bullet}
E_1^2E_1^\vee E_2\otimes_BA\xrightarrow{E_1^2\mathrm{mult.}}
E_1^2\otimes_BA
$$

Note that the morphism $v$ corresponds, by adjunction, to the morphism
$w:E_1^\vee E_2\otimes_B E\to E$ defined as follows
$$w_{11}:E_1^\vee E_2^2\otimes_B A\xrightarrow{E_1^\vee\tau_2\otimes 1}
E_1^\vee E_2^2\otimes_B A\xrightarrow{\lambda E_2\otimes 1}
E_2E_1^\vee E_2\otimes_B A\xrightarrow{E_2\mathrm{mult.}}
E_2\otimes_B A$$
$$w_{12}:E_1^\vee E_2E_1\otimes_BA\xrightarrow{E_1^\vee \sigma\bullet}
E_1^\vee E_1E_2\otimes_BA\xrightarrow{\eps_1\bullet} E_2\otimes_B A,\
w_{21}=0$$
$$w_{22}:E_1^\vee E_2E_1\otimes_BA\xrightarrow{E_1^\vee\sigma\otimes 1}
E_1^\vee E_1E_2\otimes_BA
\xrightarrow{\rho_1\bullet}
E_1E_1^\vee E_2\otimes_BA
\xrightarrow{E_1\mathrm{mult.}}
E_1\otimes_B A.$$

\begin{lemma}
The pair $[E,v]$ gives $E$ a structure of differential $(A,A)$-bimodule via Lemma
\ref{le:diagonalbimodules}. Furthermore,  there is an isomorphism of functors
$\Phi \CE\iso (E\otimes_A -)\Phi:\Delta_\sigma(B\mdiff)\to A\mdiff$.
\end{lemma}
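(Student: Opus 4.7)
The plan is to reduce both assertions to the general construction of \S\ref{se:diagonal} applied to $\CW=B\mdiff$, combined with the isomorphism of Lemma \ref{le:diagonalbimodules}.

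First I would observe that $A$ itself, as a left $A$-module via its left regular action, corresponds under $\Phi^{-1}$ to a pair $(A,\pi_A)\in\Delta_\sigma(B\mdiff)$, where $\pi_A:E_2\otimes_B A\to E_1\otimes_B A$ is the adjoint of the multiplication map $E_1^\vee E_2\otimes_B A\to A$. The validity of diagram (\ref{eq:diagalpha}) for $(A,\pi_A)$ is equivalent, via the adjunction $\phi$, to the commutativity of (\ref{eq:diagbeta}) for the multiplication map, which is precisely the relation defining $A=\Delta_\sigma(B)$.

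Next I would apply the endofunctor $\CE$ on $\Delta_\sigma(B\mdiff)$ of \S\ref{se:1arrows} to $(A,\pi_A)$. Its underlying object is $\cone(\pi_A)=E_2\otimes_B A\oplus E_1\otimes_B A=E$ as a differential left $B$-module, and I would verify by direct comparison that the block-matrix morphism $\pi'_A$ produced in \S\ref{se:1arrows} coincides with the map $v$ defined in the statement. For instance, using $\pi_A=E_1(\mathrm{mult})\circ\eta_1(E_2\otimes_B A)$ together with naturality of $\sigma$ with respect to $\mathrm{mult}:E_1^\vee E_2\otimes_B A\to A$ gives
$$\sigma\circ E_2\pi_A\circ\tau_2=E_1E_2(\mathrm{mult})\circ\sigma(E_1^\vee E_2\otimes_B A)\circ E_2\eta_1(E_2\otimes_B A)\circ\tau_2,$$
which is exactly $v_{11}$; the remaining three block entries match similarly. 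By Lemma \ref{le:dualisobject} this gives $(E,v)\in\Delta_\sigma(B\mdiff)$, so Lemma \ref{le:diagonalbimodules} encodes a left $A$-action on $E$ extending its left $B$-action. The right $A$-action comes from the $A$-factor in $E=\cone(u)$; it commutes with the left action because every ingredient appearing in $v$ ($\sigma$, $\eta_1$, $\tau_i$, $\mathrm{mult}$) is a morphism of $(B,B)$-bimodules tensored with the identity on the right $A$-argument. Hence $E$ acquires the required $(A,A)$-bimodule structure.

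For the natural isomorphism $\Phi\CE\iso(E\otimes_A-)\Phi$, I would observe that both sides send $(m,\pi)$ to a differential $A$-module whose underlying $B$-module is $\cone(\pi)$: on the left by the construction of $\CE$, and on the right because $E\otimes_A[m,\pi]=\cone(u\otimes_A[m,\pi])$ with $u\otimes_A[m,\pi]=\pi$, since $u$ is the adjoint of the multiplication and tensoring with $[m,\pi]$ converts this multiplication to $\varsigma=\phi(\pi)$, whose adjoint is $\pi$. It then suffices, by Lemma \ref{le:diagonalbimodules}, to check that the two left $A$-module structures agree, i.e.\ that $v\otimes_A m=\pi'$ as morphisms $E_2\cone(\pi)\to E_1\cone(\pi)$; each block follows from a short naturality argument for $\sigma$ together with the identity $\pi=E_1\varsigma\circ\eta_1(E_2m)$, exactly paralleling the verification $\pi'_A=v$ above. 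Naturality in $(m,\pi)$ is then automatic since all maps involved are functorial.

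The main technical point is the pointwise identification $\pi'_A=v$ (and its generalization $v\otimes_A m=\pi'$); the computation is routine once the notation $E_2\eta_1\bullet$, $E_1\eta_1\bullet$ in the definition of $v$ is unwound and the naturality of $\sigma$ is applied in the correct tensor factor, but each of the four block entries must be verified individually.
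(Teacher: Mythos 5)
Your proof is correct and takes essentially the same approach as the paper: identify $(E,v)$ with $\CE(A,u)$, deduce that it is an object of $\Delta_\sigma(B\mdiff)$ (so the $T_B(E_1^\vee E_2)$-action factors through $A$), and match the resulting $A$-module structures. The only difference is in bookkeeping: you make the block-by-block identification $v=\pi'_A$ explicit via naturality of $\sigma$ and $\tau_1$ and derive $d(v)=0$ from Lemma \ref{le:dualisobject}, whereas the paper verifies $d(v)=0$ directly and asserts $\CE(A,u)=(E,v)$ without spelling out the naturality rewrite.
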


\begin{proof}
The vanishing of $d(v)_{11}$ and $d(v)_{22}$ follows from 
$d(\tau_1)=\id$ and $d(\tau_2)=\id$. The vanishing of $d(v)_{12}$ is
clear. Finally, the vanishing of $d(v)_{21}$ follows from the 
commutativity of the diagram (\ref{eq:diagalpha}).
Since $d(v)=0$, we have obtained a structure of differential
$(T_B(E_1^\vee E_2),A)$-bimodule on $E$.

\smallskip
The object of $\Delta_\sigma(B\mdiff)$ corresponding to $A$ via Lemma \ref{le:diagonalbimodules} is
$(A,u)$.
We have $\CE(A,u)=(E,v)$, where $\CE$ is the endofunctor defining the
$2$-representation on $\Delta_\sigma(B\mdiff)$.
	Since $(E,v)$ is an object
of $\Delta_\sigma(B\mdiff)$, it follows that the action of
$T_B(E_1^\vee E_2)$ on $E$ factors
through an action of $A$. So, $E$ has a structure of differential
$(A,A)$-bimodule and we have an isomorphism of functors
$\Phi \CE\iso (E\otimes_A -)\Phi:\Delta_\sigma(B\mdiff)\to A\mdiff$.
\end{proof}

\begin{rem}
	The maps $v$ and $w$ 
	are described graphically as:
$$\includegraphics[scale=0.85]{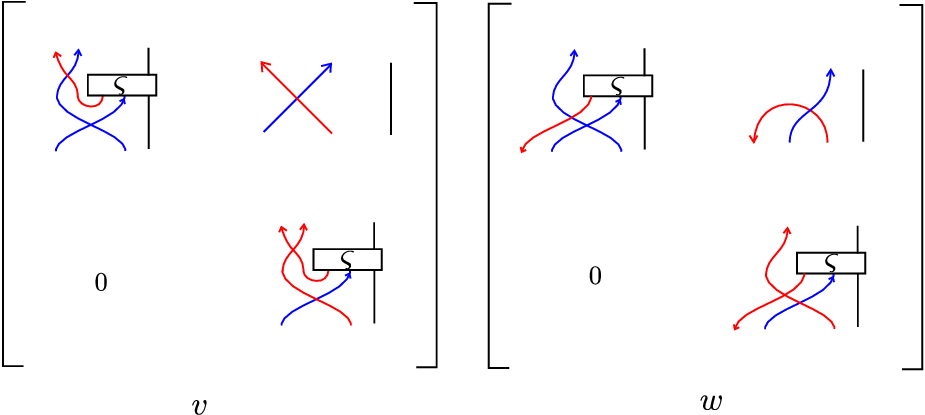}$$
\end{rem}

\smallskip
We assume now that $\sigma$ is invertible.
We define $\tau$ an endomorphism of $(B,A)$-bimodules of 
$E_2^2\otimes_{B}A\oplus E_2E_1\otimes_BA\oplus
E_1E_2\otimes_BA\oplus E_1^2\otimes_BA$ by
\begin{equation}
\label{eq:deftau}
\tau=\left(\begin{matrix}
\tau_2\otimes 1 &0&0&0\\
	0&0&\sigma^{-1}\otimes 1&0\\
0&0&0&0\\
0&0&0&\tau_1\otimes 1
\end{matrix}\right).
\end{equation}

\begin{prop}
The pair $(E,\tau)$ defines
a $2$-representation on $A$ and $\Phi$ induces a isomorphism of
$2$-representations $\Delta_\sigma(B\mdiff)\iso (\Delta_\sigma B)\mdiff$.
	If $E_2$ is right finite, then $E$ is right finite.
\end{prop}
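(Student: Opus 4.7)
The plan is to apply Theorem \ref{th:tauisendo} to the ambient differential category $\CW = B\mdiff$, endowed with the two $\CU$-actions $(E_1\otimes_B -, \tau_1)$ and $(E_2\otimes_B -, \tau_2)$ and the invertible closed natural transformation induced by $\sigma$. This produces an idempotent-complete strongly pretriangulated $2$-representation $(\Delta_\sigma(B\mdiff), \CE, \tau_{\mathrm{f}})$, in which the functor endomorphism $\tau_{\mathrm{f}}$ is given, on $\CE^2(m,\pi)$, by the explicit matrix formula \eqref{eq:firstdeftau}.

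Next, I would invoke the lemma immediately preceding the proposition, which yields a natural isomorphism $\Phi\,\CE \iso (E\otimes_A -)\,\Phi$, where $\Phi:\Delta_\sigma(B\mdiff)\iso A\mdiff$ is the isomorphism of differential categories from Lemma \ref{le:diagonalbimodules}. Transporting $\tau_{\mathrm{f}}$ along this isomorphism produces a natural endomorphism of $(E\otimes_A -)^2 \simeq (E\otimes_A E)\otimes_A -$. By a Yoneda-type argument (a natural endomorphism of $M\otimes_A -$ with $M$ an $(A,A)$-bimodule is determined by its value at $A$, and that value gives an $(A,A)$-bimodule endomorphism of $M$), this transported endomorphism descends uniquely to an endomorphism $\tilde\tau$ of the $(A,A)$-bimodule $E\otimes_A E$.

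To identify $\tilde\tau$ with the explicit $\tau$ of \eqref{eq:deftau}, I would evaluate at the object $(A,u)\in\Delta_\sigma(B\mdiff)$ (which corresponds to $A\in A\mdiff$ under $\Phi$). The underlying right $A$-module of $\CE^2(A,u)$ is exactly $E_2^2\otimes_B A \oplus E_2E_1\otimes_B A \oplus E_1E_2\otimes_B A \oplus E_1^2\otimes_B A$ (with the twisted differential), and the definition of $\tau_{\mathrm{f}}$ by \eqref{eq:firstdeftau} restricts on this underlying module to precisely the matrix \eqref{eq:deftau}. Consequently the explicit $\tau$ is a well-defined endomorphism of the $(A,A)$-bimodule $E\otimes_A E$, and the three axioms $\tau^2=0$, $d(\tau)=\id$, and the braid relation on $E^{\otimes 3}$ are inherited from the corresponding identities for $\tau_{\mathrm{f}}$ supplied by Theorem \ref{th:tauisendo}. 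Thus $(E,\tau)$ satisfies Definition \ref{de:2repalg}, and by construction $\Phi$ is simultaneously the isomorphism of differential categories of Lemma \ref{le:diagonalbimodules} and compatible with the transported $2$-representation data, hence an isomorphism of $2$-representations.

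For the right finiteness statement: each summand $E_i\otimes_B A$ ($i\in\{1,2\}$) of the cone defining $E$ is finitely generated and projective as an $A^\opp$-module whenever $E_i$ is so as a $B^\opp$-module. This holds for $i=1$ by the standing hypothesis that the first $2$-representation is right finite, and for $i=2$ by the additional assumption that $E_2$ is right finite. Since cones of maps between finitely generated projective right modules remain so, $E$ is right finite. The main obstacle is the identification of the transported $\tilde\tau$ with the explicit matrix \eqref{eq:deftau}, but this is by design: the diagonal algebra $\Delta_\sigma(B)$ and the diagonal category $\Delta_\sigma(B\mdiff)$ were set up in parallel precisely so that, once Theorem \ref{th:tauisendo} is available, the present statement reduces to unwinding the definitions.
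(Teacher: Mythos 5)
Your proposal is correct and matches the paper's own (much terser) proof: the key point in both is that the explicit matrix $\tau$ of \eqref{eq:deftau} agrees with the endomorphism $\tau_{\mathrm{f}}$ of $\CE^2$ supplied by Theorem \ref{th:tauisendo} under the isomorphism $\Phi\CE\iso (E\otimes_A -)\Phi$, so all required identities transport, and right finiteness follows because the underlying non-differential $A^\opp$-module of the cone is $E_2\otimes_B A\oplus E_1\otimes_B A$. Your explicit mention of the Yoneda/2-full-faithfulness step (a natural endomorphism of $(E\otimes_A -)^2$ on $A\mdiff$ comes from a unique $(A,A)$-bimodule endomorphism of $E\otimes_A E$) makes precise what the paper leaves implicit, but it is the same argument.
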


\begin{proof}
The fact that $\tau$ defines an endomorphism of $(A,A)$-bimodules of $E^2$ satisfying
the appropriate relations follows from the fact that it agrees with
the endomorphism of $\CE^2$ defining the $2$-representation on
$\Delta_\sigma(B\mdiff)$. We deduce that
$(E,\tau)$ is a $2$-representation on $A$ and $\Phi$ is a morphism
of $2$-representations.

\smallskip
Note that $E$ is finitely generated and projective as a (non-differential)
	$A^\opp$-module if $E_1$ and $E_2$ are finitely generated and projective
	$B^\opp$-modules.
\end{proof}

\begin{rem}
	Consider three $2$-representations $(E_i,\tau_i)_{1\le i\le 3}$ on a differential algebra $B$
	together with closed morphisms $\sigma_{ij}:E_iE_j\iso E_jE_i$ for $i\neq j$ satisfying
	(\ref{eq:braid}).
	Assume $E_1$ and $E_2$ are right finite.
	We will construct a triple tensor product $2$-representation.

	Define 
	$$\lambda_{ij}:E_i^\vee E_j\xrightarrow{E_i^\vee E_j \eta_i}E_i^\vee E_j E_i E_i^\vee
	\xrightarrow{E_i^\vee \sigma_{ji}E_i^\vee}E_i^\vee E_i E_j E_i^\vee
	\xrightarrow{\eps_i E_jE_i^\vee} E_jE_i^\vee$$
	and denote by $\sigma_{ij}^\vee:E_i^\vee E_j^\vee\to E_j^\vee E_i^\vee$ the map adjoint
	to $\sigma_{ij}$.

	Let $A'=T_B(E_1^\vee E_2\oplus E_2^\vee E_3\oplus E_1^\vee E_3)$. There is a derivation
	$\partial$ of $A'$ whose restriction to $B\oplus E_1^\vee E_2\oplus E_2^\vee E_3$ is $0$
	and whose restriction to $E_1^\vee E_3$ is 
	$$\partial:E_1^\vee E_3\xrightarrow{E_1^\vee \eta_2 E_3}(E_1^\vee E_2)(E_2^\vee E_3).$$

	Define $A''$ to be the
	differential algebra with underlying algebra $A'$ and with differential $\partial+d_{A'}$.

	Let $E$ be the set of quadruples $(i,j,k,l)$ with
	$i,j,k,l\in\{1,2,3\}$, $j-l\ge i-k>0$ and $(i,j,k,l)\neq (2,3,1,2)$.
	Given such a quadruple, we define
	$$f_{ijkl}:E_l^\vee E_k^\vee E_i E_j\xrightarrow{\sigma_{lk}^\vee E_iE_j}E_k^\vee E_l^\vee E_iE_j
	\xrightarrow{E_k\lambda_{li} E_j}(E_k^\vee E_i)(E_l^\vee E_j)$$
	$$g_{ijkl}:E_l^\vee E_k^\vee E_i E_j\xrightarrow{E_l^\vee E_k^\vee \sigma_{ij}}
	E_l^\vee E_k^\vee E_jE_i
	\xrightarrow{E_l\lambda_{kj} E_i}(E_l^\vee E_j)(E_k^\vee E_i)$$
	$$h_{3221}:E_1^\vee E_2^\vee E_3 E_2\xrightarrow{E_1^\vee\lambda_{23}E_2}E_1^\vee
	E_3E_2^\vee E_2\xrightarrow{E_1^\vee E_3\eps_2}E_1^\vee E_3$$
	where we put $\sigma_{rr}=\tau_r$ and $\sigma_{rr}^\vee=\tau_r$.
	We define $I''$ to be the two-sided ideal generated by the images of
	$f_{ijkl}+g_{ijkl}+\delta_{jk}h_{3221}$ for $(i,j,k,l)\in E$.
	We put $A=A''/I''$.

	As in \S \ref{se:leftdual}, we have an isomorphism of differential categories
	$\Delta_{123}(B\mdiff)\iso A\mdiff$ (cf \S \ref{se:associativity}).

	\smallskip
	We obtain a bimodule $2$-representation on $A$ as in \S \ref{se:associativity}.
	We define the differential $(B,A)$-bimodule
	$$E={\xy (0,0)*{E_3 \otimes_B A\oplus E_2\otimes_B A\oplus E_1\otimes_B A},
\ar@/^2pc/^{\pi_{31}}(-25,3)*{};(25,3)*{},
\ar@/^0.5pc/^{\ \pi_{32}}(-24,3)*{};(2,3)*{},
\ar@/^0.5pc/^{\pi_{21}\ \ }(3,3)*{};(24,3)*{},
\endxy}$$
where
	$$\pi_{ij}:E_i\otimes_B A\xrightarrow{\eta_j \id} E_jE_j^\vee E_i\otimes_B A
	\xrightarrow{E_j\mathrm{ mult}}E_j\otimes_B A.$$
We extend the left action of $B$ to an action of $A$ by letting the action maps
$$E_i^\vee E_j\otimes_B E
	\to E$$
	for $i<j$ be given by
$${\xy (0,10)*{E_1^\vee E_3^2\otimes_B A\oplus E_1^\vee E_3E_2\otimes_B A\oplus
	E_1^\vee E_3E_1\otimes_B A},
	(0,-10)*{E_3 \otimes_B A\oplus E_2\otimes_B A\oplus E_1\otimes_B A},
	\ar_{E_3\text{ mult}\circ \lambda_{13}E_3\circ E_1^\vee\tau_3}(-27,7)*{};(-22,-7)*{},
	\ar^{E_1\mathrm{ mult}\circ \rho_1 E_3\circ E_1^\vee\sigma_{31}}(29,7)*{};(20,-7)*{},
	\ar_{E_3\eps_1\circ \lambda_{13}E_1\ \ }(27,7)*{};(-20,-7)*{},
	\endxy}$$
$${\xy (0,10)*{E_2^\vee E_3^2\otimes_B A\oplus E_2^\vee E_3E_2\otimes_B A\oplus
	E_2^\vee E_3E_1\otimes_B A},
	(0,-10)*{E_3 \otimes_B A\oplus E_2\otimes_B A\oplus E_1\otimes_B A},
	\ar_{E_3\text{ mult}\circ \lambda_{23}E_3\circ E_2^\vee\tau_3}(-27,7)*{};(-22,-7)*{},
	\ar@/_1pc/^{\!\!E_2\mathrm{ mult}\circ \rho_2 E_3\circ E_2^\vee\sigma_{32}}(1,7)*{};(0,-7)*{},
	\ar_(0.4){E_3\eps_2\circ \lambda_{23}E_2\!}(-1,7)*{};(-20,-7)*{},
	\ar@/^1pc/^{E_1\mathrm{ mult}\circ \lambda_{21}E_3\circ E_2^\vee\sigma_{31}}(25,7)*{};(20,-7)*{},
	\endxy}$$
	$${\xy (0,10)*{E_1^\vee E_2E_3\otimes_B A\oplus E_1^\vee E_2^2\otimes_B A\oplus
	E_1^\vee E_2E_1\otimes_B A},
	(0,-10)*{E_3 \otimes_B A\oplus E_2\otimes_B A\oplus E_1\otimes_B A},
	\ar@/_1pc/_{E_3\mathrm{ mult}\circ \lambda_{13}E_2\circ E_1^\vee\sigma_{23}}(-30,7)*{};(-22,-7)*{},
	\ar_(0.3){E_2\text{ mult}\circ \lambda_{12}E_2\circ E_1^\vee\tau_2\!\!}(1,7)*{};(0,-7)*{},
	\ar@/^0.5pc/_(0.3){E_2\eps_1\circ \lambda_{12}E_1\!}(27,7)*{};(2,-7)*{},
	\ar@/^1pc/^{E_1\mathrm{ mult}\circ \rho_1 E_2\circ E_1^\vee\sigma_{21}}(29,7)*{};(20,-7)*{},
	\endxy}$$
	Finally, we define the endomorphism $\tau$ of $E^2$ as in (\ref{eq:tau123}).
\end{rem}

\subsubsection{Tensor product case}
\label{se:tensoralg}

Let $A_1$ and $A_2$ be two differential algebras equipped with
structures of $2$-representations $(E_i,\tau_i)$, $i=1,2$. 

Let $B=A_1\otimes A_2$. It is endowed with commuting
$2$-representations $(E_1\otimes A_2, \tau_1\otimes 1)$ and
$(A_1\otimes E_2,1\otimes\tau_2)$: the isomorphism $\sigma$ is
induced by the swap map $E_2\otimes E_1\xrightarrow{\sim}E_1\otimes E_2,\
a_2\otimes a_1\mapsto a_1\otimes a_2$.
The tensor product identifies $(A_1\mdiff)\otimes (A_2\mdiff)$ with a full
subcategory of $B\mdiff$.

\smallskip
Assume $E_1$ is right finite.  The map $\lambda$ is an isomorphism.
We put $A_1\dotimes A_2=\Delta'_\lambda(B)$. It is
the quotient
of the tensor algebra $T_{A_1\otimes A_2}(E_1^\vee\otimes E_2)$ by
the ideal generated by $p\tau_2(q)-\tau_1(p)q$ for $p\in (E_1^\vee)^{\otimes 2}$
and $q\in (E_2)^{\otimes 2}$.
The underlying differential module is
$$A=\bigoplus_{i\ge 0}(E_1^\vee)^i\otimes_{H_i}E_2^i.$$
The multiplication is defined by
$$\bigl((E_1^i)^\vee\otimes_{H_i}E_2^i\bigr)\otimes
\bigl((E_1^j)^\vee\otimes_{H_j}E_2^j\bigr)\to
(E_1^{i+j})^\vee\otimes_{H_{i+j}}E_2^{i+j},\
(a_1\otimes a_2)\otimes (b_1\otimes b_2)\mapsto (a_1b_1)\otimes (a_2b_2).$$

We have 
$$E={\xy (0,0)*{\bigl(\bigoplus_{i\ge 0}(E_1^\vee)^i\otimes_{H_i}E_2E_2^i \bigr)\oplus
\bigl(\bigoplus_{i\ge 0}E_1(E_1^\vee)^i\otimes_{H_i}E_2^i\bigr)},
	\ar@/^/^{\eta_1\otimes 1}(-28,4)*{};(24,4)*{}, \endxy}.$$
The right action of $A$ on $E$ is given by right multiplication, while the left action of
$E_1^\vee\otimes E_2$ on $A_1\otimes E_2\oplus E_1\otimes A_2\subset E$ is given by
$$(E_1^\vee\otimes E_2)\otimes_{A_1\otimes A_2}(A_1\otimes E_2)\xrightarrow[\sim]{\can}
E_1^\vee\otimes E_2^2\xrightarrow{1\otimes\tau_2} E_1^\vee\otimes E_2^2$$
$$(E_1^\vee\otimes E_2)\otimes_{A_1\otimes A_2}(E_1\otimes A_2)\xrightarrow[\sim]{\can}
E_1^\vee E_1\otimes E_2\xrightarrow{(\eps_1,\rho_1)}A_1\otimes E_2\oplus E_1E_1^\vee\otimes E_2.$$

We have 
$$E^2={\xy (0,0)*{\bigl(\bigoplus(E_1^\vee)^i\otimes_{H_i}E_2^2E_2^i\bigr) \oplus
\bigl(\bigoplus E_1(E_1^\vee)^i\otimes_{H_i}E_2E_2^i\bigr) \oplus
\bigl(\bigoplus E_1(E_1^\vee)^i\otimes_{H_i}E_2E_2^i\bigr) \oplus
\bigl(\bigoplus E_1^2(E_1^\vee)^i\otimes_{H_i}E_2^i\bigr)},
	\ar@/^/^(0.7){\eta_1(E_1^\vee)^i\otimes E_2^{2+i}}(-64,4)*{};(-24,4)*{},
	\ar@/^2pc/^{\eta_1(E_1^\vee)^i\otimes \tau_2E_2^i}(-67,4)*{};(20,4)*{},
	\ar@/^/^(0.3){E_1\eta_1(E_1^\vee)^i\otimes E_2^{1+i}}(24,3)*{};(64,3)*{},
	\ar@/^2pc/^{(E_1\rho_1\circ\eta_1E_1)(E_1^\vee)^i\otimes E_2^{1+i}}(-14,4)*{};(64,4)*{},
	\ar@/^/^{1}(-14,3)*{};(20,3)*{},
	\endxy}.$$

The endomorphism $\tau$ of $E^2$ is given on 
$$\bigl((E_1^\vee)^i\otimes_{H_i}E_2^2E_2^i\bigr) \oplus
\bigl(E_1(E_1^\vee)^i\otimes_{H_i}E_2E_2^i\bigr) \oplus
 \bigl(E_1(E_1^\vee)^i\otimes_{H_i}E_2E_2^i\bigr) \oplus
 \bigl(E_1^2(E_1^\vee)^i\otimes_{H_i}E_2^i\bigr)$$
 by
$$\tau=\left(\begin{matrix}
1\otimes \tau_2 E_2^i &0&0&0\\
	0&0& 1&0\\
0&0&0&0\\
	0&0&0&\tau_1 (E_1^\vee)^i\otimes 1
\end{matrix}\right).$$

\medskip
This construction provides the differential $2$-category of right finite $2$-representations
on differential algebras with a monoidal structure.

%

\subsection{Dual diagonal action}
\label{se:dualbimodule}

\subsubsection{Algebra}
Let $B$ be a differential algebra endowed with two $2$-representations
$(F_1,\tau_1)$ and $(E_2,\tau_2)$ together with a closed morphism
$\lambda:F_1E_2\to E_2F_1$ such that
the diagrams (\ref{eq:diaglambda}) commute.

\smallskip

We define the differential algebra

$$A=\Delta_{\lambda}(B)=\bigoplus_{i\ge 0}(E_2^iF_1^i)/
((T_r\otimes 1)x-(1\otimes T_r)x)_{x\in E_2^iF_1^i,\ 
1\le r<i}\indexnot{Delta}{\Delta_{\lambda}(B)}.$$
Its multiplication is given by the maps
$\mu_{i,j}=
\mu_{(i,i),(j,j)}E_2^iF_1^iE_2^jF_1^j\to E_2^{i+j}F_1^{i+j}$ defined in \S\ref{se:2birep}.

\smallskip
Given $M$ a differential $A$-module and given $i\ge 1$,
we have differential $B$-module maps $\varsigma_i:E_2^iF_1^i\otimes_BM\to M$. These
make $(M,(\varsigma_i)_i)$ into an object of $\Delta_\lambda(B\mdiff)$ and
provides an isomorphism of differential categories
$\Delta_{\lambda}(B)\mdiff\iso \Delta_\lambda(B\mdiff)$.

\begin{rem}
	\label{re:lambdatobirep}
As in Remark \ref{re:bi2repswap}, we obtain a lax bi-$2$-representation on $B$ by setting
$E_{i,j}=E_2^jF_1^i$.
We have an injective morphism of differential algebras
	$\Delta_E(B)\to \Delta_\lambda(B)$.

	\smallskip
	Assume the morphisms (\ref{eq:diagonalpower}) are isomorphisms for all $i$ (this
	holds for example if $\lambda$ is an isomorphism).
	Then we have a canonical isomorphism $\Delta_E(B)\iso \Delta_\lambda(B)$.
	The algebra $\Delta_{\lambda}(B)$ is generated by $B$ and $E_2F_1$.
\end{rem}

The map $\lambda$ extends (uniquely)
to a morphism of algebras $\Delta'_\lambda(B)\to\Delta_\lambda(B)$ that is the identity
on $B$.
If $\lambda$ is an isomorphism, then this map is an isomorphism
$\Delta'_\lambda(B)\iso\Delta_\lambda(B)$.  

\subsubsection{Left dual}
\label{se:leftdualbimod}
We assume now that $F_1$ is left finite and we put $E_1={^\vee F}_1$. Consider
$\sigma\in Z\Hom(E_2E_1,E_1E_2)$ defined as in (\ref{eq:defsigma}).

Let $\pi:E_2\otimes_BA\to E_1\otimes_B A$ be the closed morphism of
$(B,A)$-bimodules given as a composition
$$\pi:E_2\otimes_BA\xrightarrow{E_2\eta_1}E_2E_1F_1\otimes_B A
\xrightarrow{\sigma F_1} E_1E_2E_1^\vee\otimes_B A\xrightarrow{E_1
\mathrm{mult}}E_1\otimes_B A.$$
We put $E=\cone(\pi)$. Given $i\ge 1$, we define a morphism of
$(B,B)$-bimodules
$\varsigma_i:E_2^iF_1^iE\to E$

	$$\varsigma_i=\left(\begin{matrix}
		E_2\mathrm{mult}\circ \lambda_{(1\cdots 2i+1)} & 
		\sum_{r=1}^{i} E_2\mathrm{mult}\circ
		E_2^iF_1^{i-1}\eps_1\circ \lambda_{(1\cdots r)(2i\cdots i+r)}\\
		0 & E_1\mathrm{mult}\circ \lambda_{(1\cdots 2i+1)}
	\end{matrix}\right)$$

The following lemma is a consequence of Lemmas \ref{le:dualisobject} and
\ref{le:dualEfunctor} applied to $m=A$.

	\begin{lemma}
		The $\varsigma_i$'s define a left action of $A$ on $E$, giving $E$
		a structure of differential $(A,A)$-bimodule.
	\end{lemma}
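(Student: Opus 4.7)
The plan is to recognize the data $(E,(\varsigma_i))$ as the image of the regular $A$-module under the ``$1$-arrows'' endofunctor of \S\ref{se:dualdiagonal}, specialized to the categorical setting $\CW = B\mdiff$, and then translate the already-proven categorical statements into the asserted bimodule structure.

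First, I work in $\CW = B\mdiff$, equipped with the $2$-representations $(F_1,\tau_1)$ and $(E_2,\tau_2)$ and the closed morphism $\lambda$. Under the isomorphism $\Delta_\lambda(B)\mdiff \iso \Delta_\lambda(B\mdiff)$ recalled at the start of \S\ref{se:dualbimodule}, the left regular $A$-module corresponds to a pair $(A,\varsigma^A)$ whose structure maps $\varsigma^A_i : E_2^i F_1^i \otimes_B A \to A$ are induced from the multiplication $E_2^i F_1^i \to A^i \subset A$ of the algebra $A = \Delta_\lambda(B)$.

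Second, I compare the bimodule construction in \S\ref{se:leftdualbimod} with that of \S\ref{se:dualdiagonal} applied to $(A,\varsigma^A)$. Since $\varsigma^A_1$ is exactly the multiplication $E_2 F_1 \otimes_B A \to A$ and $F_1 = E_1^\vee$, the defining formula for $\pi$ in \S\ref{se:leftdualbimod} is literally $\pi(\varsigma^A_1)$. Therefore $E = \cone(\pi)$ is the underlying $B$-module of the pair $(m',\varsigma')$ produced by the endofunctor $E$ of \S\ref{se:dualdiagonal}, and substituting $E_\ell \varsigma^A_i = E_\ell(\mathrm{mult})$ into the matrix formula for $\varsigma'_i$ reproduces the matrix defining $\varsigma_i$ in the present statement.

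Third, I invoke the ``$1$-arrows'' lemmas of \S\ref{se:dualdiagonal} (the analogues, in the dual diagonal setting, of Lemmas~\ref{le:dualisobject} and~\ref{le:dualEfunctor}): they assert that $(m',\varsigma')$ lies in $\Delta_\lambda(\CW)$ and that the construction is functorial. Applied to $(A,\varsigma^A)$, this yields that each $\varsigma_i$ is closed, that $\varsigma_i \circ (T_r \otimes 1) = \varsigma_i \circ (1 \otimes T_r)$ for $1 \le r < i$, and that $\varsigma_i \circ E_2^i F_1^i \varsigma_j = \varsigma_{i+j} \circ \mu_{i,j}$. Passing back through $\Delta_\lambda(B\mdiff) \iso \Delta_\lambda(B)\mdiff$, these are exactly the relations needed for the $\varsigma_i$ to define a differential left $A$-module structure on $E$.

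Finally, to upgrade $E$ to an $(A,A)$-bimodule I note that $\pi$ is a morphism of $(B,A)$-bimodules (as stated just before the lemma), so $E$ carries a compatible right $A$-module structure. Each matrix entry of $\varsigma_i$ is built from right-$A$-linear maps only: the multiplication on $A$, the unit $\eta_1$, the counit $\eps_1$ tensored with $A$, and the braid maps $\lambda_w$, all regarded as morphisms of $(B,A)$-bimodules. Hence the left $A$-action commutes with the right one, yielding the desired bimodule. The genuinely laborious step, namely the diagrammatic verification that the $\varsigma'_i$ satisfy multiplicativity and the symmetry under $T_r$, has been absorbed once and for all in \S\ref{se:dualdiagonal}; here we are simply specializing it to $m = A$.
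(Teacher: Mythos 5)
Your proposal is correct and matches the paper's intended argument: the paper disposes of this lemma in one line, declaring it ``a consequence of Lemmas~\ref{le:dualisobject} and~\ref{le:dualEfunctor} applied to $m=A$,'' and your reconstruction fills in exactly what that specialization amounts to. You correctly noticed that the labeled lemmas \ref{le:dualisobject} and \ref{le:dualEfunctor} physically live in the $\Delta_\sigma$ (\S\ref{se:1arrows}) subsection while the formula for $\varsigma_i$ here is the $\Delta_\lambda$ version, so the statements actually being invoked are the two \emph{unlabeled} lemmas in \S\ref{se:dualdiagonal} (``$(m',\varsigma')$ is an object of $\Delta_\lambda\CW$'' and the functoriality lemma); your phrase ``the analogues, in the dual diagonal setting'' is the right reading of a slightly imprecise citation in the source. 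The chain you lay out -- identify the regular $A$-module with $(A,\varsigma^A)$ under $\Delta_\lambda(B)\mdiff \iso \Delta_\lambda(B\mdiff)$, observe $\pi=\pi(\varsigma^A_1)$ and that $\varsigma_i$ is literally $\varsigma'_i$ with $\varsigma^A_j$ substituted, invoke the \S\ref{se:dualdiagonal} lemmas, then note that every ingredient of $\varsigma_i$ is a $(B,A)$-bimodule map so the right action of $A$ commutes -- is the proof the paper is gesturing at. The only thing worth tightening is the last paragraph: the bimodule claim is cleanest phrased as saying that $\varsigma_i$ is defined on $E_2^iF_1^iE$ as a morphism of $(B,A)$-bimodules, with the right $A$-action on $E=\cone(\pi)$ coming from that on $A$ via $E_2\otimes_B A\oplus E_1\otimes_B A$; this is what makes left and right actions commute.
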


	Note that the isomorphism of differential categories
	$\Delta_\lambda(B)\mdiff\iso\Delta_\lambda(B\mdiff)$ commutes with $E$.
	
	\medskip
	Assume now $\sigma$ is an isomorphism. We define $\tau$ a $(B,A)$-bimodule
	endomorphism of $E^2$ as in (\ref{eq:deftau}).

	Theorem \ref{th:maindual} has the following consequence.

	\begin{thm}
		The data $(E,\tau)$ defines a $2$-representation on $\Delta_\lambda(B)$.
\end{thm}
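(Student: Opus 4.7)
The plan is to deduce this from Theorem \ref{th:maindual} applied to $\CW = B\mdiff$, via the categorical-to-algebraic translation for modules over $\Delta_\lambda(B)$. The key dictionary is the isomorphism of differential categories $\Phi : \Delta_\lambda(B)\mdiff \iso \Delta_\lambda(B\mdiff)$ recorded at the beginning of \S\ref{se:dualbimodule}, which sends a differential $A$-module $M$ to the pair $(M,(\varsigma_i))$ where $\varsigma_i : E_2^i F_1^i \otimes_B M \to M$ comes from the left $A$-action.

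First I would check that under $\Phi$ the functor $E \otimes_A -$ corresponds to the endofunctor (call it $\CE$) constructed in \S\ref{se:dualdiagonal} on $\Delta_\lambda(B\mdiff)$. This is essentially a tautology: the preceding lemma built the $(A,A)$-bimodule structure on $E = \cone(\pi)$ by applying Lemmas \ref{le:dualisobject} and \ref{le:dualEfunctor} to $m = \Phi(A)$, so the $\varsigma$-structure recorded on $E$ is exactly the one $\CE$ puts on $\CE(\Phi(A))$. Right $A$-linearity and naturality in $M$ then extend the identification $\Phi \circ (E \otimes_A -) \iso \CE \circ \Phi$ to arbitrary $A$-modules. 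Next I would match the $\tau$'s: the block matrix defining the bimodule endomorphism of $E \otimes_A E$ in formula (\ref{eq:deftau}) is identical, entry by entry, to the block matrix in formula (\ref{eq:firstdeftau}) defining the categorical $\tau$ on $\CE^2$, and the four summands $E_2^2 \otimes_B A$, $E_2 E_1 \otimes_B A$, $E_1 E_2 \otimes_B A$, $E_1^2 \otimes_B A$ are precisely the four pieces of $\CE^2(\Phi(A))$.

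With the correspondences $(E \otimes_A -) \leftrightarrow \CE$ and $\tau \leftrightarrow \tau$ in hand, Theorem \ref{th:maindual} applied to $\CW = B\mdiff$ immediately yields that $(E \otimes_A -, \tau)$ is a $2$-representation on $A\mdiff$, equivalently that $(E, \tau)$ is a $2$-representation on $A$. The only substantive point not already visible from the explicit matrix formula is $(A,A)$-bilinearity of $\tau$ (as opposed to mere $(B,A)$-bilinearity); but this is exactly the content of the statement that $\tau$ defines a morphism in $\Delta_\lambda(B\mdiff)$, which was established in the proof of Theorem \ref{th:maindual} via Lemma \ref{le:dualtau}. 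The remaining relations $\tau^2 = 0$, $d(\tau) = \id$, and the braid relation were also verified there. I do not expect any real obstacle beyond the bookkeeping needed to line up the two sets of formulas.
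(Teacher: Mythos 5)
Your proposal is correct and follows the paper's own route: the paper presents this theorem as a direct consequence of Theorem \ref{th:maindual}, via precisely the observation you use — that the isomorphism $\Delta_\lambda(B)\mdiff \iso \Delta_\lambda(B\mdiff)$ commutes with $E$, and that $\tau$ as defined in (\ref{eq:deftau}) matches the categorical $\tau$ of (\ref{eq:firstdeftau}). Your identification of the left $A$-linearity of $\tau$ as the substantive point, with Lemma \ref{le:dualtau} supplying it, is exactly the hidden content the paper's terse "has the following consequence" relies on.
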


	Note that we have an isomorphism of $2$-representations 
	$\Delta_\lambda(B)\mdiff\iso\Delta_\lambda(B\mdiff)$.

	\smallskip
	Consider the $(\Delta_\sigma(B),\Delta_\lambda(B))$-bimodule $\Delta_\lambda(B)$, where the right
	action is given by multiplication and the left action by multiplication preceded by the
morphism of algebras $\Delta_\sigma(B)=\Delta'_\lambda(B)\to\Delta_\lambda(B)$. 
It follows from Proposition \ref{pr:oldtodual} that this bimodule  induces a morphism of
$2$-representations from $\Delta_\lambda(B)$ to $\Delta_\sigma(B)$.

\subsection{Differential categories}

\subsubsection{Bimodule $2$-representations}
\label{se:bimod2rep}
All the definitions and constructions of \S\ref{se:differentialalgebras}--\ref{se:dualbimodule} extend from
the setting of differential algebras to that of differential categories. We will
describe this explicitly.

\medskip
We view the monoidal category $\CU$ as a $2$-category with one object $\ast$.

\begin{defi}
	A {\em bimodule $2$-representation}\index[ter]{bimodule $2$-representation}
	is the data of a $2$-functor
$\Upsilon:\CU\to\mathrm{Bimod}$.
	
It is {\em right finite} if $\Upsilon(e)$ is right finite.
\end{defi}

We say that $\Upsilon$ is a bimodule $2$-representation on $\Upsilon(\ast)$.

\smallskip
Bimodule $2$-representations form a differential $2$-category.

\medskip
Let $\CC$ be a differential category.
There are equivalences of differential $2$-categories between

\begin{itemize}
	\item the $2$-category of bimodule $2$-representations $\Upsilon$ on $\CC$
	\item the $2$-category with objects differential functors $M:\CC\times\CC^\opp\times
		\CU\to k\mdiff$ together with 
		\begin{itemize}
			\item isomorphisms $\mu_{m,n}:M(c,-,e^m)\otimes_{\CC}M(-,c',e^n)\iso M(c,c',e^{n+m})$
		functorial in $c$ and $c'$, compatible with the canonical morphism
		$\End(e^m)\otimes\End(e^n)\to\End(e^{n+m})$ and satisfying
	$\mu_{l,n+m}\circ(\id\otimes \mu_{m,n})=\mu_{m+l,n}\circ(\mu_{l,m}\otimes\id)$
\item an isomorphism $\mu_0:M(-,-,e^0)\iso\Id$ such that $\mu_{m,0}=\mathrm{mult}\circ
	(M(c,-,e^m)\otimes\mu_0)$ and $\mu_{0,m}=\mathrm{mult}\circ (\mu_0\otimes M(-,c,e^m))$
		\end{itemize}
	\item the $2$-category of pairs $(E,\tau)$ where $E$ is a $(\CC,\CC)$-bimodule and 
		$\tau\in\End(E^2)$ satisfies (\ref{eq:definingtau}).
\end{itemize}

The category $\CH{om}((\CC,E,\tau),(\CC',E',\tau'))$ of $1$-arrows in the third $2$-category
above has objects pairs $(P,\varphi)$ where $P$ is a $(\CC',\CC)$-bimodule and
$\varphi:P\otimes_\CC E\iso E'\otimes_{\CC'}P'$ is a closed isomorphism of
$(\CC',\CC)$-bimodules satisfying (\ref{eq:taumorphism}).
We leave it to the reader to describe $1$-arrows in the second $2$-category above.
In these $2$-categories, the $2$-arrows are morphisms of (non-differential) bimodules or
functors compatible with the additional structure.

\medskip
The equivalences are given by
$$\Upsilon\mapsto (M:(c_1,c_2,e^n)\mapsto \Upsilon(e^n)(c_1,c_2)),\
M\mapsto (E=M(-,-,e),\tau=M(-,-,\tau))$$
$$E\mapsto (\Upsilon:e^n\mapsto E^n).$$
We will use the terminology ``bimodule $2$-representation" for either one of those three
equivalent structures.

\smallskip
Note that a $2$-representation $\Upsilon:\CU\to\End(\CC)$ gives rise to a
bimodule $2$-representation $M$ on $\CC$ given by $M(c_1,c_2,e^n)=\Hom_\CC(c_2,
\Upsilon\circ\mathrm{rev}(e^n)(c_1))$ (cf \S\ref{se:bimodfunctors}).
Note also that a bimodule $2$-representation $M$ on a differential category $\CC$
gives rise to a $2$-representation $\Upsilon:\CU\to\End(\CC\mdiff)$ given by
$\Upsilon(e^n)=M(-,-,e^n)\otimes_\CC -$.

\subsubsection{Diagonal action}
A {\em bimodule lax bi-$2$-representation}\index[ter]{bimodule lax bi-$2$-representation} is a lax differential $2$-functor
$\Upsilon:\CU\otimes\CU\to\mathrm{Bimod}$. We say it is a bimodule lax bi-$2$-representation on $\Upsilon(
\ast\otimes\ast)$.

A bimodule lax bi-$2$-representation on $\CC$ is the same as the data of
\begin{itemize}
	\item $(\CC,\CC)$-bimodules $E_{i,j}$ for $i,j\ge 0$
	\item morphisms of differential algebras $H_i\otimes H_j\to
		\End(E_{i,j})$
	\item morphisms $\mu_{(i,j),(i',j')}:E_{i,j} E_{i',j'}\to E_{i+i',j+j'}$
		satisfying properties (1) and (2) of \S\ref{se:2birep}.
\end{itemize}

We define 
the differential category 
$\Delta_E(\CC)$\indexnot{Delta}{\Delta_E(\CC)}
as the additive category quotient of $T_{\CC}(E_{0,1}E_{1,0})$ by the ideal of maps
generated by the kernels of the compositions
$$(E_{0,1}E_{1,0})^i(c_1,c_2)\xrightarrow{\can}E_{i,i}(c_1,c_2)\xrightarrow{\can} E_{i,i}(c_1,c_2)
/((T_r\otimes 1)x-(1\otimes T_r)x)_{x\in E_{i,i},\ 1\le r<i}.$$

\medskip

Assume now $\CC$ is a differential category endowed with two structures
$(F_1,\tau_1)$ and $(E_2,\tau_2)$ of bimodule $2$-representations
together with a closed morphism
$\lambda:F_1E_2\to E_2F_1$ such that
the diagrams (\ref{eq:diaglambda}) commute.

We define the differential category 
$\Delta'_\lambda(\CC)$\indexnot{Delta'}{\Delta'_\lambda(\CC)}
as the additive category quotient of $T_\CC(F_1E_2)$ by the ideal of maps
generated by the image of the composition
$$F_1^2E_2^2(c_1,c_2)\xrightarrow{\tau_1 E_2^2-F_1^2\tau_2}
F_1^2E_2^2(c_1,c_2)\xrightarrow{F_1\lambda E_2}
(F_1 E_2)^2(c_1,c_2).$$

\smallskip
We have a differential category  $\CC'=\bigoplus_{i\ge 0}E_2^iF_1^i$. Its objects are those of $\CC$ and
$\Hom_{\CC'}(c_1,c_2)=\bigoplus_{i\ge 0}E_2^iF_1^i(c_1,c_2)$. The multiplication is induced by
the maps $\mu_{i,j}$.
We define the differential category 
$\Delta_\lambda(\CC)$\indexnot{Delta}{\Delta_\lambda(\CC)}
as the additive category quotient of $\bigoplus_{i\ge 0}E_2^iF_1^i$ by the ideal of maps
generated by the images of $T_r\otimes 1-1\otimes T_r:E_2^iF_1^i\to E_2^iF_1^i$ for $1\le r<i$.

\medskip
Assume now $\CC$ is a differential category endowed with two structures
$(E_1,\tau_1)$ and $(E_2,\tau_2)$ of bimodule $2$-representations, the first of which
is right finite. Consider $\sigma:E_2E_1\to E_1E_2$ closed such that the diagrams
(\ref{eq:diagsigma}) commute.
We define $\lambda:E_1^\vee E_2\to E_2E_1^\vee$ as in (\ref{eq:gamma}).

\smallskip
$\bullet\ $We put $\Delta_\sigma(\CC)=\Delta'_\lambda(\CC)$.
As in \S \ref{se:action}, we define a $(\Delta_\sigma\CC,\CC)$-bimodule $E$ and extend it to a
$(\Delta_\sigma\CC,\Delta_\sigma\CC)$-bimodule.
Assume finally that $\sigma$ is invertible. We construct in addition
an endomorphism $\tau$ of $E^2$. We obtain
a bimodule $2$-representation on $\Delta_\sigma\CC$ and an isomorphism of $2$-representations
$\Delta_\sigma(\CC\mdiff)\iso \Delta_\sigma(\CC)\mdiff$. The $2$-representation is right finite
if $E_2$ is right finite.

\smallskip
As in \S\ref{se:tensoralg}, we have a monoidal structure on the differential $2$-category
of right finite bimodule $2$-representations.

\medskip
$\bullet\ $We drop now the assumption that $\sigma$ is invertible. We define as in \S \ref{se:leftdualbimod} a
$(\Delta_\lambda\CC,\Delta_\lambda\CC)$-bimodule $E$. Assume $\sigma$ is invertible. We obtain
an endomorphism $\tau$ of $E^2$ and a bimodule $2$-representation on $\Delta_\lambda\CC$.

\subsection{Pointed categories}
\label{se:2reppointed}

Let $\CV$ be a differential pointed category.
A {\em bimodule $2$-represesentation} on $\CV$\index[ter]{bimodule $2$-representation} is the data of a strict monoidal
differential pointed functor from the $2$-category with one object given by $\CU^\bullet$
to $\mathrm{Bimod}^\bullet$.
Note that a bimodule $2$-representation on $\CV$ gives rise to a bimodule $2$-representation
on $k[\CV]$.

\medskip
A {\em bimodule lax bi-$2$-representation}\index[ter]{bimodule lax bi-$2$-representation} is a lax differential pointed $2$-functor
$\Upsilon:\CU^\bullet\wedge\CU^\bullet\to\mathrm{Bimod}^\bullet$. We say it is a bimodule lax bi-$2$-representation on $\Upsilon(
\ast\wedge\ast)$.

A bimodule lax bi-$2$-representation on $\CV$ is the same as the data of
\begin{itemize}
	\item $(\CV,\CV)$-bimodules $E_{i,j}$ for $i,j\ge 0$
	\item morphisms of differential pointed algebras $H_i\wedge H_j\to
		\End(E_{i,j})$
	\item morphisms $\mu_{(i,j),(i',j')}:E_{i,j} E_{i',j'}\to E_{i+i',j+j'}$
		satisfying properties (1) and (2) of \S\ref{se:2birep}.
\end{itemize}

We define 
the differential pointed category 
$\Delta_E(\CV)$\indexnot{Delta}{\Delta_E(\CV)}
as the quotient of $T_{\CV}(E_{0,1}E_{1,0})$ by the equivalence relation generated by 
$f\sim f'$ if $(f,f')$ is in the equalizer of a
composition
$$(E_{0,1}E_{1,0})^i(c_1,c_2)\xrightarrow{\can}E_{i,i}(c_1,c_2)\xrightarrow{\can} E_{i,i}(c_1,c_2)
/((T_r\wedge 1)x\sim (1\wedge T_r)x)_{x\in E_{i,i},\ 1\le r<i}.$$

\medskip
Consider a differential pointed category $\CV$ endowed with two bimodule $2$-representations
$(F_1,\tau_1)$ and $(E_2,\tau_2)$ and a closed morphism $\lambda:F_1E_2\to E_2F_1$
such that the diagrams (\ref{eq:diaglambda}) commute.

We define 
the differential pointed category 
$\Delta'_\lambda(\CV)$\indexnot{Delta'}{\Delta'_\lambda(\CV)}
as the quotient of $T_{\CV}(F_1E_2)$ by the equivalence relation
generated by 
$$(F_1\lambda E_2)\circ (\tau_1 E_2^2)(f)\sim
(F_1\lambda E_2)\circ (F_1^2\tau_2)(f)
\text{ for }f\in F_1^2E_2^2(c_1,c_2) \text{ and }c_1,c_2\in\CV.$$

\smallskip
We define the differential pointed category 
$\Delta_\lambda(\CV)$\indexnot{Delta}{\Delta_\lambda(\CV)}. We consider first the differential pointed
category with same objects as $\CV$ and pointed set of maps $v_1\to v_2$ given by
$\bigvee_{i\ge 0}E_2^iF_1^i(v_1,v_2)$. The category $\Delta_\lambda(\CV)$ is the quotient of that
category by the equivalence relation generated by $(T_r\wedge 1)(f)\sim (1\wedge T_r)(f)$ for
$f\in E_2^iF_1^i$ and $1\le r<i$.

\smallskip
Note that there is a canonical isomorphism of differential categories for $?\in\{\emptyset,\prime\}$
$$k[\Delta^?_\lambda(\CV)]\iso \Delta^?_\lambda(k[\CV])$$

%

\subsection{Douglas-Manolescu's algebra-modules}
\label{se:algebramodules}

Let us recall some aspects of Douglas-Manolescu's theory \cite{DouMa}.

Note
that Douglas and Manolescu work in the differential graded setting, and we
translate their constructions to the differential setting.

\smallskip

Their nil-Coxeter $2$-algebra \cite[\S 2.2]{DouMa}
can be viewed as the same data as our monoidal
category $\CU$ (cf \cite[Remark 2.4]{DouMa}).
A bottom-algebra module \cite[\S 2.4]{DouMa} for the nil-Coxeter $2$-algebra
is the same data as a lax bimodule $2$-representation on a differential algebra $A$, where
a lax bimodule $2$-representation on $A$ is defined to be a lax $2$-functor 
$\Upsilon:\CU\to\mathrm{Bimod}$ with $\Upsilon(1)$ the differential category with
one object whose endomorphism ring is $A$. They also consider top-algebra modules,
where $\CU$ above is replaced by $\CU^\opp$. Using the isomorphism
$\CU\iso\CU^\opp$ (\S \ref{se:defmoncat}), a top-algebra module can be viewed as a bottom-algebra
module, hence as a lax bimodule $2$-representation.

Douglas and Manolescu define a tensor product of a top algebra-module
and a bottom algebra-module \cite[Definition 2.11]{DouMa}. This corresponds to
our construction of
a differential algebra $A$ as a tensor product $\dotimes$. Note that they do not
endow this tensor product with any algebra-module structure.

\section{Hecke $2$-representations}
\label{se:Hecke2rep}
\subsection{Regular $2$-representations}
\label{se:regular2rep}
\subsubsection{Bimodules}
\label{se:Lrn}
Fix $r,n\ge 0$. We define some bimodules $L^\pm(r,n)$ and $R^\pm(r,n)$ with
underlying differential graded module $H_{r+n}$, following \S \ref{se:traces} and
Proposition \ref{pr:tracediff}.

\smallskip
We endow $L^+(r,n)$ (resp. $L^-(r,n)$) with a structure of differential graded
$(H_r\otimes H_n,H_{r+n})$-bimodule where
\begin{itemize}
	\item $H_{r+n}$ acts by right multiplication
	\item $h\in H_r$ acts by left multiplication by $h$ (resp. by $f_n(h)$)
	\item $h\in H_n$ acts by left multiplication by $f_r\circ\iota_n(h)$ (resp. by $h$).
\end{itemize}

\smallskip

We endow $R^+(r,n)$ (resp. $R^-(r,n)$) with a structure of differential graded
$(H_{r+n},H_r\otimes H_n)$-bimodule where
\begin{itemize}
	\item $H_{r+n}$ acts by left multiplication
	\item $h\in H_r$ acts by right multiplication by $h$ (resp. by $f_n(h)$)
	\item $h\in H_n$ acts by right multiplication by $f_r\circ\iota_n(h)$ (resp. by $h$).
\end{itemize}

\begin{example}
	Elements of $L^\pm(r,n)$ and $R^\pm(r,n)$ can be represented by good strand 
	diagrams in a rectangle, as in the examples below.
	$$\includegraphics[scale=0.55]{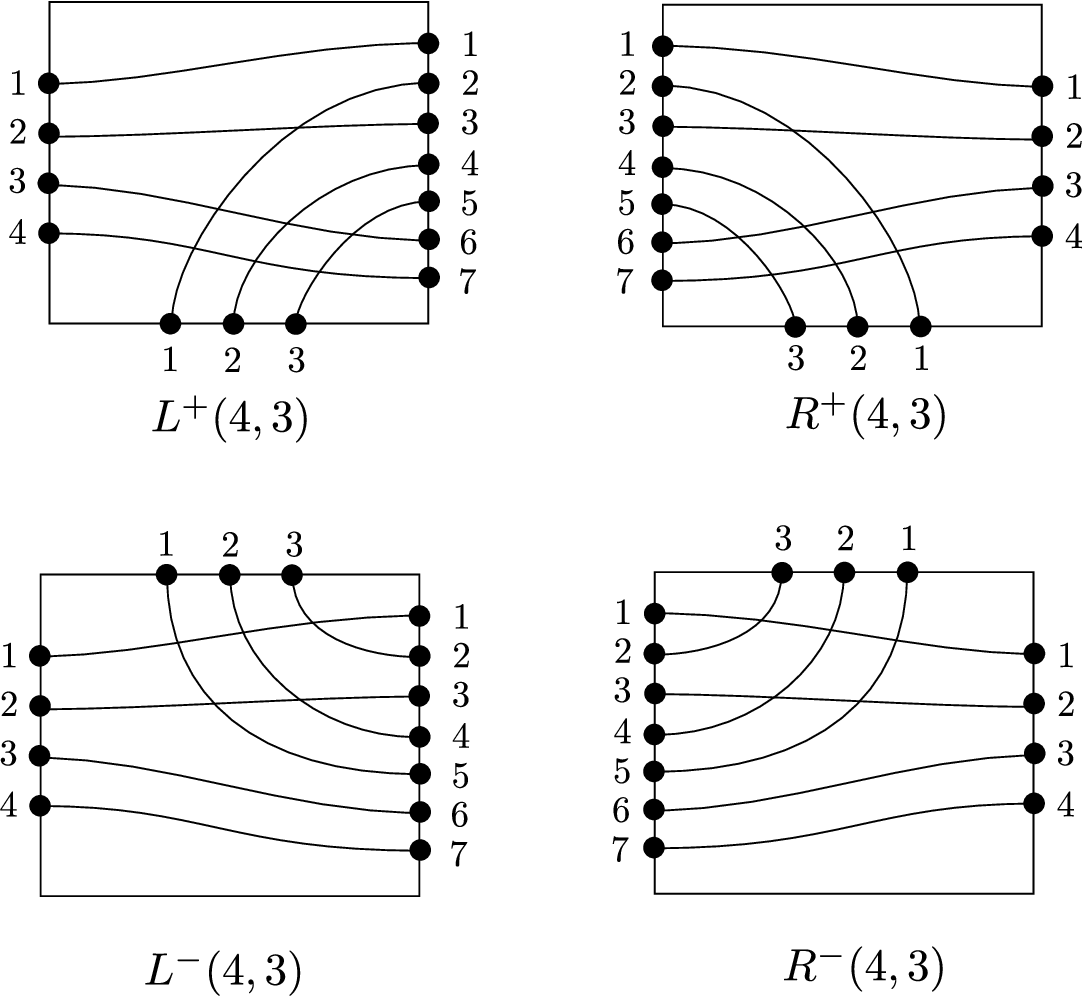}$$
	The actions are obtained by concatenation of diagrams (note that a diagram that is
	not good represents $0$), as in the example below, where we first apply the reflection
	of the rectangle swapping the top and the bottom, then rotate 
	$90$ degrees anticlockwise the diagram of $h'$:
$$\includegraphics[scale=0.75]{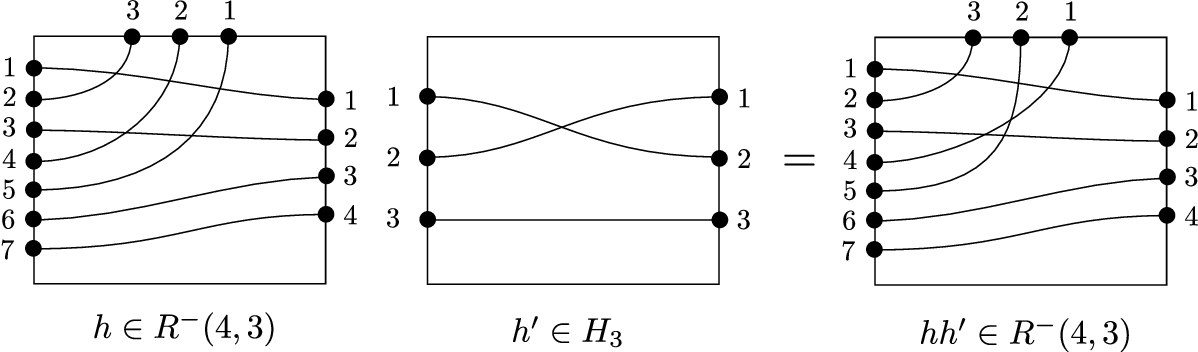}$$

\end{example}

\smallskip
These bimodules coincide with (the nil version of) the bimodules introduced in \S\ref{se:traces}, after restricting
the action of $H_r\otimes H_n$ to $H_r$:
$$L^\pm(r,n)=L^\pm(I,S) \text{ and } R^\pm(r,n)=L^\pm(S,I)
\text{ where }S=\{s_1,\ldots,s_{r+n-1}\} \text{ and }I=\{s_1,\ldots,s_{r-1}\}.$$

\medskip

Given $m\ge 0$, we denote by $w_m\in\GS_m$ the longest element, \ie,
$w_m(i)=m-i+1$.
We have two morphisms of differential graded $\BF_2$-modules (cf Proposition \ref{pr:tracediff})
$$t_{r+n,r}^\pm=t_{S,I}^\pm:H_{r+n}\to H_r\langle \frac{1}{2}n(2r+n-1)\rangle$$ given by

$$t_{r+n,r}^+(T_w)= \begin{cases}
	T_{w_rw_{r+n}w}  & \text{ if }w\in w_{r+n}\GS_r \\
	0 & \text{ otherwise}
\end{cases}
\text{ and }
t_{r+n,r}^-(T_w)= \begin{cases}
	T_{ww_{r+n}w_r}  & \text{ if }w\in \GS_r w_{r+n} \\
	0 & \text{ otherwise}
\end{cases}
$$

\begin{example}
	Let us describe some examples of $t_{7,4}^\pm(T_w)$:
$$\includegraphics[scale=0.6]{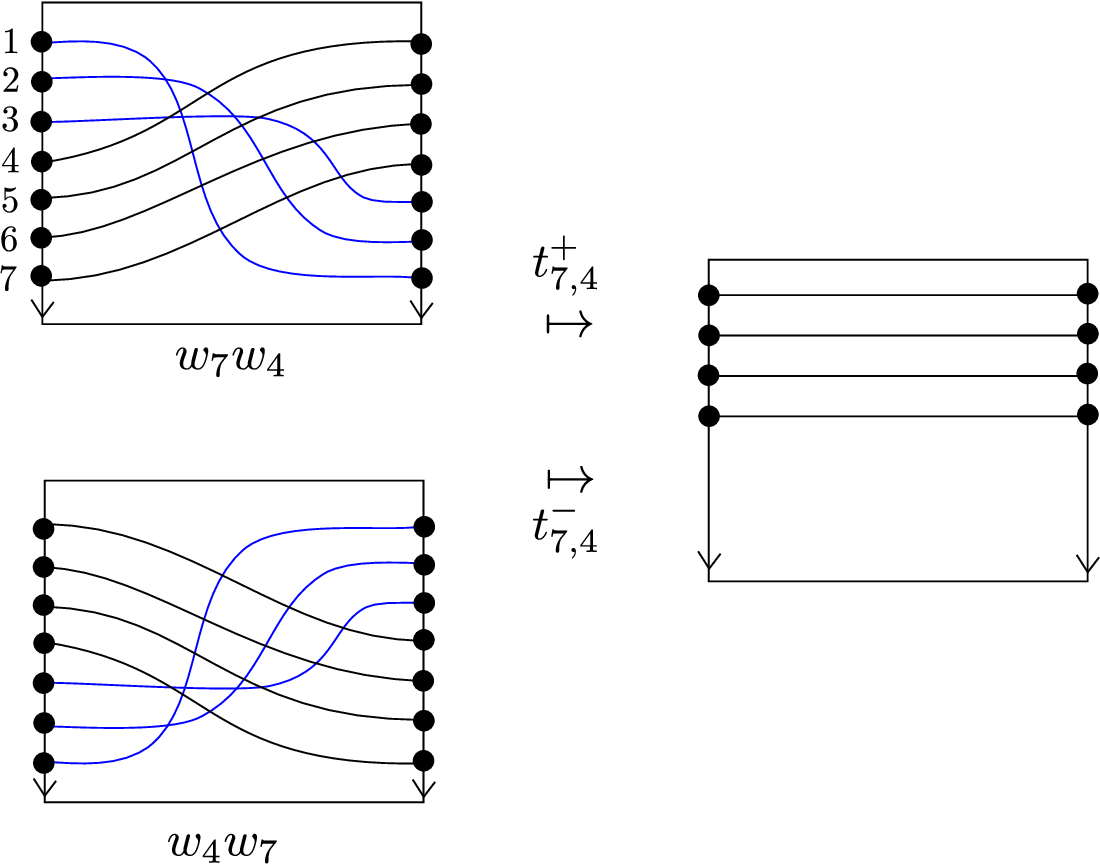}$$
\end{example}

It is immediate that there is an isomorphism of differential graded
$(H_{r+n},H_r\otimes H_n)$-modules
$$\Hom_{H_{r+n}^\opp}(L^\pm(r,n),H_{r+n})\iso R^\pm(r,n),f\mapsto f(1)$$
and it follows from Proposition \ref{pr:tracediff} that there is
an isomorphism of differential graded
$(H_r\otimes H_n,H_{r+n})$-modules
$$L^\mp(r,n)\iso\Hom_{H_r^\opp}(R^\pm(r,n),H_r)\langle \frac{1}{2}n(2r+n-1)\rangle,\ h\mapsto (h'\mapsto t_{n+r,r}^\pm(hh')).$$

\subsubsection{Twisted description}
We describe now $L^+(r,n)$ as a twisted free $(H_r\otimes H_n)$-module.

Consider $E\subset\{1,\ldots,r+n\}$ with $|E|=r$. Let $w_E\in\GS_{r+n}$ be the permutation
such that $w_E(E)=\{1,\ldots,r\}$ and
the restrictions of $w_E$ to $E$ and to $\{1,\ldots,r+n\}\setminus E$ are increasing.
If $E=\{i_1<\cdots<i_r\}$, then we have a reduced decomposition
$$w_E=(s_r\cdots s_{i_r-1})(s_{r-1}\cdots s_{i_{r-1}-1})\cdots (s_2\cdots s_{i_2-1})
(s_1\cdots s_{i_1-1})$$
and 
$$\tilde{L}(w_E)=
\coprod_{b=1}^r \bigl((\{1,\ldots,i_b-1\}\setminus\{i_1,\ldots,i_{b-1}\})
\times\{i_b\}\bigr).$$
There is a bijection
$$\beta:\GS_r\times\GS_n\times\{E\subset\{1,\ldots,r+n\}\ |\ |E|=r\}\iso \GS_{r+n},
(v,v',E)\mapsto vf_r(v')w_E$$
where $f_r(v')\in\GS_{n+r}$ is given by $f_r(v')(i)=i$ for $i\le r$ and
$f_r(v')(r+i)=r+v'(i)$ for $1\le i\le n$.
We have $\ell(\beta(v,v',E))=\ell(v)+\ell(v')+\ell(w_E)$.

\smallskip
Given $(a,i_b)\in \tilde{L}(w_E)$, we
define $v(E,a,b)\in\GS_r$ and $v'(E,a,b)\in\GS_n$ as follows.
Let $b'\in\{1,\ldots,r\}$ be minimal such that $a<i_{b'}$. We define
$v(E,a,b)$ to be the cycle $(b,b-1,\ldots,b')$ and
$v'(E,a,b)$ to be the cycle $(a-b'+1,a-b'+2,\ldots,i_b-b)$.
We have
$$w_Es_{a,i_b}=v(E,a,b)f_r(v'(E,a,b))w_{(E\cup\{a\})\setminus\{i_b\}}$$
and $\ell(w_E)-\ell(w_{(E\cup\{a\})\setminus\{i_b\}})=i_b-a$.

\medskip
Given $m\ge 1$, we define a free differential $(H_r\otimes H_n)$-module
$$V_m=\bigoplus_{E\subset\{1,\ldots,r+n\},\ |E|=r,\ \ell(w_E)=m-1}(H_r\otimes H_n)b_E.$$

Given $m'<m$,
we define $f_{m',m}:V_m\to V_{m'}$
as the morphism of $(H_r\otimes H_n)$-modules given by
$$b_E\mapsto \sum_{\substack{i\in E,\ j\in \{1,\ldots,r+n\}\setminus E\\ i-j=m-m'}}
(T_{v(E,j,i)}\otimes T_{v'(E,j,i)}) b_{(E\cup\{j\})\setminus\{i\}}.$$

We will show below (Lemma \ref{le:decompositionL+}) that
$d(f_{m',m})=\sum_{m>m''>m'}f_{m'm''}\circ f_{m''m}$.
We denote by $V$ the differential $(H_r\otimes H_n)$-module obtained as the corresponding
twisted object $[\bigoplus V_m, (f_{m'm})]$ (cf \S\ref{se:objects}).
We have $V=\bigoplus_m V_m$ as a $(H_r\otimes H_n)$-module and
$d_V=\sum_m d_{V_m}+\sum_{m,m'}f_{m',m}$.

\begin{lemma}
	\label{le:decompositionL+}
	The maps $(f_{m'm})$ define a twisted object $V=[\bigoplus V_m, (f_{m'm})]$.
There is an isomorphism of differential $(H_r\otimes H_n)$-modules
	$$V\iso L^+(r,n),\ (h\otimes h')b_E\mapsto hf_r(\iota_n(h'))T_{w_E}
	\text{ for }h\in H_r \text{ and }h'\in H_n.$$
\end{lemma}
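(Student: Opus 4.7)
The plan is to use the bijection $\beta$ to first establish an isomorphism of (non-differential) $(H_r \otimes H_n)$-modules $\varphi: \bigoplus_m V_m \to L^+(r,n)$, sending $(h \otimes h') b_E$ to $h f_r(\iota_n(h')) T_{w_E}$, and then transfer the differential on $L^+(r,n)$ through $\varphi$. This way the twisted-object condition $d(f_{m',m}) = \sum_{m > m'' > m'} f_{m',m''} \circ f_{m'',m}$ will follow automatically from $d^2 = 0$ on $L^+(r,n) \subset H_{r+n}$, rather than needing to be checked by brute-force algebra on the cycle formulae.

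The main computation is to show that under $\varphi$ the differential transferred to $\bigoplus_m V_m$ equals $\sum_m d_{V_m} + \sum_{m' < m} f_{m',m}$. I would start by computing $d(T_{w_E})$ using Corollary \ref{co:comparisond} applied to the reduced expression
$$w_E = (s_r \cdots s_{i_r - 1})(s_{r-1} \cdots s_{i_{r-1}-1}) \cdots (s_1 \cdots s_{i_1 - 1}).$$
The key subsidiary claim is that every inversion $(a,i_b) \in \tilde{L}(w_E)$ is a Bruhat cover of $w_E$, so
$$d(T_{w_E}) = \sum_{(a,i_b) \in \tilde{L}(w_E)} T_{w_E s_{a,i_b}}.$$
By Lemma \ref{le:lengthoneaffine} (finite case), it suffices to check that for each $c$ with $a < c < i_b$ either $w_E(a) < w_E(c)$ or $w_E(c) < w_E(i_b) = b$, and this is immediate because $w_E$ is increasing on $E$ and on its complement: if $c \in E$ then $c = i_{b'}$ with $b' < b$ so $w_E(c) < b$, while if $c \notin E$ then $w_E(a) < w_E(c)$. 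Each $T_{w_E s_{a,i_b}}$ is then rewritten via the additive-length decomposition $w_E s_{a,i_b} = v(E,a,b)f_r(v'(E,a,b))w_{E'}$ with $E' = (E \cup \{a\})\setminus \{i_b\}$ (so $\ell(w_E) - \ell(w_{E'}) = i_b - a$) as $T_{v(E,a,b)} f_r(T_{v'(E,a,b)}) T_{w_{E'}}$, which under $\varphi^{-1}$ becomes $(T_{v(E,a,b)} \otimes T_{v'(E,a,b)}) b_{E'}$ once the $\iota_n$-twist in the definition of $\varphi$ is accounted for; indexing by $m - m' = i_b - a$ recovers exactly the prescribed map $f_{m',m}$.

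With this identification in hand, $\delta := \varphi^{-1}\circ d \circ \varphi$ satisfies $\delta^2 = 0$, and projecting the homogeneous components $V_m \to V_{m'}$ of this identity yields the desired relations on $(f_{m',m})$; this both certifies that $V = [\bigoplus_m V_m, (f_{m',m})]$ is a legitimate twisted object and, by construction, promotes $\varphi$ to an isomorphism of differential $(H_r \otimes H_n)$-modules $V \iso L^+(r,n)$. The main obstacle is the cover-counting step: verifying that each inversion of the Grassmannian permutation $w_E$ is a length-one Bruhat cover even though the transposition $s_{a,i_b}$ itself is typically non-simple. Everything else reduces to bookkeeping with the explicit cycles $v(E,a,b)$ and $v'(E,a,b)$ and the compatibility of $\iota_n$ with these consecutive cycles under the twist in $\varphi$.
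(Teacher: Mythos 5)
Your argument follows the same route as the paper's (very terse) proof: establish the non-differential $(H_r\otimes H_n)$-module isomorphism from the length-additive decomposition, verify it intertwines the differentials, and read off the twisted-object compatibility from $d^2=0$ on $L^+(r,n)$. You usefully fill in two points the paper passes over: the check, via Lemma~\ref{le:lengthoneaffine}, that every inversion of the Grassmannian permutation $w_E$ is a Bruhat cover (so that $d(T_{w_E})$ really is the full sum over $\tilde L(w_E)$), and the explicit derivation of $d(f_{m',m})=\sum_{m'<m''<m} f_{m',m''}\circ f_{m'',m}$ from $\delta^2=0$ once the isomorphism of differential modules is in hand. Your verification of the cover condition for $w_E$ (split by $c\in E$ versus $c\notin E$) is correct.

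The one step you should not wave at is ``once the $\iota_n$-twist in the definition of $\varphi$ is accounted for.'' Tracking it carefully shows it does \emph{not} simply go away: the map $\varphi$ sends $(T_{v(E,a,b)}\otimes T_{v'(E,a,b)})b_{E'}$ to $T_{v(E,a,b)}\,f_r\bigl(\iota_n(T_{v'(E,a,b)})\bigr)\,T_{w_{E'}}$, whereas the corresponding summand of $d(T_{w_E})$ obtained from the factorization $w_Es_{a,i_b}=v(E,a,b)f_r(v'(E,a,b))w_{E'}$ is $T_{v(E,a,b)}\,f_r\bigl(T_{v'(E,a,b)}\bigr)\,T_{w_{E'}}$, and $\iota_n$ does not fix $T_{v'(E,a,b)}$ when the cycle $v'(E,a,b)$ has length $\ge 3$ (for instance $r=1$, $n=3$, $E=\{4\}$, $(a,i_b)=(1,4)$ gives $v'=(1,2,3)$ with $\iota_3(T_1T_2)=T_2T_1\ne T_1T_2$). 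This looks like a small slip in the stated formula for $f_{m',m}$ (or, equivalently, in the orientation of the cycle $v'$) rather than an obstacle to the approach, but a complete proof must make the $\iota_n$ bookkeeping explicit and repair the formula accordingly rather than asserting that it cancels.
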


\begin{proof}
The length property of the bijection $\beta$ above shows that the map of the lemma
is an isomorphism of $(H_r\otimes H_n)$-modules.
	Since 
$$d(T_{w_E})=\sum_{i\in E,\ j\in \{1,\ldots,r+n\}\setminus E,\ j<i}
	T_{w_Es_{i,j}},$$
	it follows that the map of the lemma intertwines $d_V$ and the differential
	of $L^+(r,n)$. The lemma follows.
\end{proof}

There is a dual version of Lemma \ref{le:decompositionL+}. In particular, there is
a decomposition of right $(H_r\otimes H_n)$-modules
$$R^+(r,n)=
\bigoplus_{E\subset\{1,\ldots,r+n\},\ |E|=r}T_{w_E^{-1}}
(H_r\otimes f_r(H_n))$$

\subsubsection{Actions}
There is a ``left" $2$-representation on $\CU$
$$\Upsilon^-:\CU\to\End(\CU),\ e^n\mapsto e^n\otimes-$$
and a ``right" $2$-representation on $\CU$
$$\Upsilon^+:\CU\xrightarrow[\sim]{\mathrm{rev}}\CU^{\mathrm{rev}}
\xrightarrow{e^n\mapsto -\otimes e^n}\End(\CU).$$

The bimodule $2$-representation $L^\pm$ associated to $\Upsilon^\pm$ is given by 
$$L^\pm(e^r,e^s,e^n)=\delta_{s,r+n}L^\pm(r,n)$$
and it is left and right finite.
Its left dual is isomorphic to
the bimodule $2$-representation $R^\pm$ given by
$$R^\pm(e^s,e^r,e^n)=\delta_{s,r+n}R^\pm(r,n)$$
while its right dual is isomorphic to $R^\mp\langle-\frac{1}{2}n(2r+n-1)\rangle$
(note that the action of $\CU$ on the duals is obtained
from the natural action of $\CU^{\mathrm{rev}\opp}$ by
applying the isomorphism $\mathrm{rev}\circ\opp$).

\subsubsection{Gluing}
\label{se:glueingregular}
The $2$-representations $\Upsilon^+$ and $\Upsilon^-$ commute
strictly. Let us describe this in terms of bimodules.

We consider the bimodule $2$-representations
$E_1=\bigoplus_{s\ge 0}L^-(s,1)$ and $E_2=\bigoplus_{s\ge 0}L^+(s,1)$ as above.
There is a canonical isomorphism $E_1^\vee\iso \bigoplus_{s\ge 0}
R^-(s,1)$ and we identify those bimodules.

Define $\sigma:E_2E_1\iso E_1E_2$ as the isomorphism such that for $s\ge 1$, the
following diagram of morphism of
$(H_{s-1},H_{s+1})$-bimodules is commutative:
$$\xymatrix{
	L^+(s-1,1)\otimes_{H_s}L^-(s,1)\ar[rr]^-\sigma_-\sim \ar[dr]_{a\otimes b
	\mapsto ab}^-\sim&&
L^-(s-1,1)\otimes_{H_s}L^+(s,1)\ar[dl]^{a\otimes b\mapsto ab}_-\sim \\
 &H_{s+1} 
 }$$
Note that the left action of $a\in H_{s-1}$ on $H_{s+1}$ is given by
left multiplication by $f_1(a)$. It is immediate to check that the diagrams
(\ref{eq:diagsigma}) commute.

\medskip
As in (\ref{eq:gamma}), the morphism $\sigma$ gives a
morphism of functors 
$$\lambda:R^-(-_1,-,e)\otimes L^+(-,-_2,e) \to L^+(-_1,-,e)\otimes R^-(-,-_2,e)$$
where 
$\lambda(e^s,e^s)$ is
given by the following morphism of differential graded $(H_s,H_s)$-bimodules
$$\xymatrix{
	R^-(e^s,-,e)\otimes L^+(-,e^s,e)=R^-(s-1,1)\otimes_{H_{s-1}}L^+(s-1,1)\ar[d] &
a\otimes b\ar@{|->}[d] \\
L^+(e^s,-,e)\otimes R^-(-,e^s,e) = L^+(s,1)\otimes_{H_{s+1}}R^-(s,1) &
a\otimes f_1(b)
}$$

\begin{rem}
	An example of a diagrammatic description of $\lambda$ is given below:
$$\includegraphics[scale=0.45]{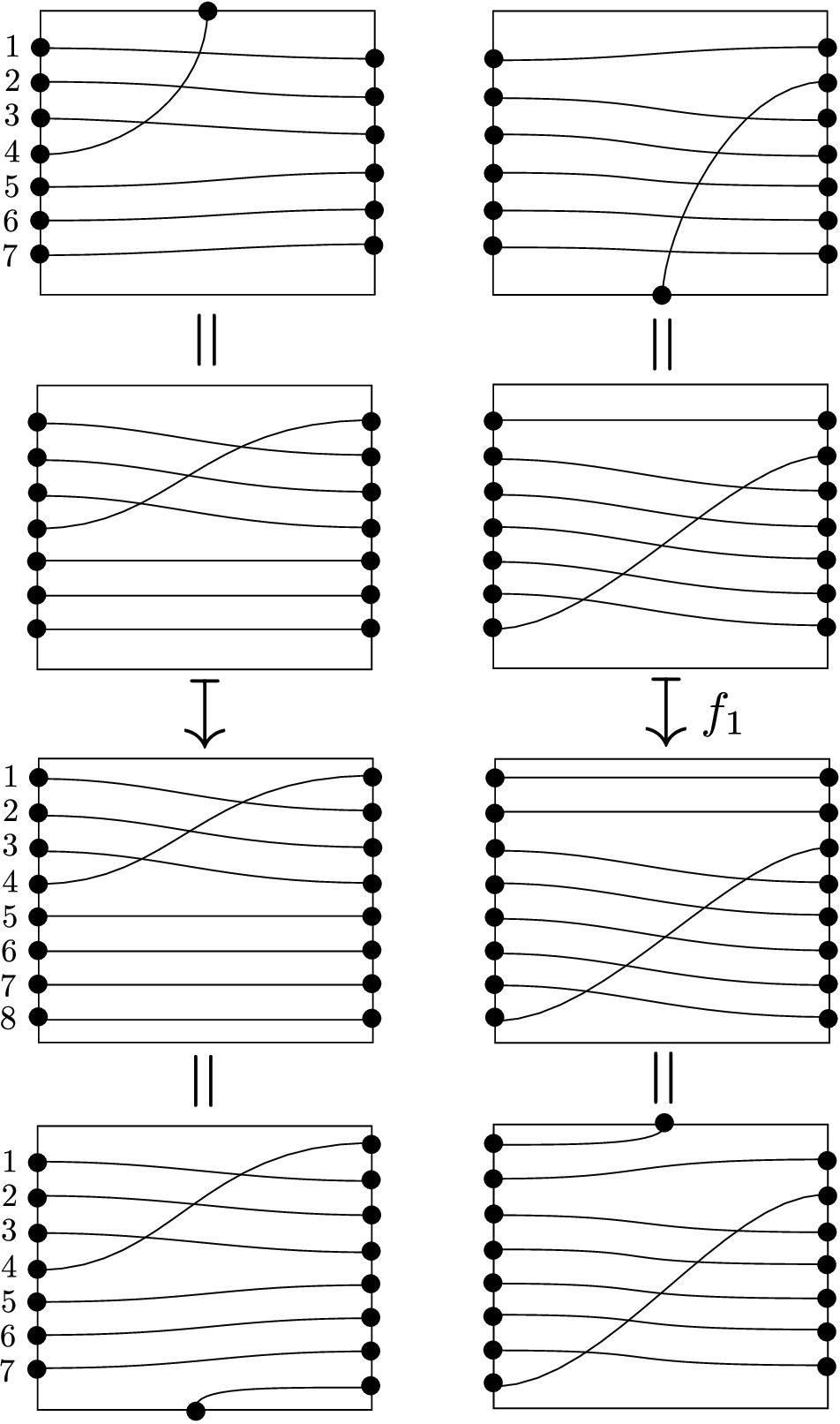}$$
\end{rem}

The morphism 
\begin{multline*}
(R^-(-,-,e)\lambda L^+(-,-,e))\circ(R^-(-,-,e)^2\tau-\tau L^+(-,-,e)^2):\\
R^-(-,-,e)^2L^+(-,-,e)^2\to (R^-(-,-,e)L^+(-,-,e))^2\langle -1\rangle
\end{multline*}
is on $(e^s,e^s)$ the morphism of differential graded $(H_s,H_s)$-bimodules
\begin{multline*}
	R^-(s-1,1)\otimes_{H_{s-1}}R^-(s-2,1)\otimes_{H_{s-2}}L^+(s-2,1)\otimes_{H_{s-1}}
L^+(s-1,1) \to \\
R^-(s-1,1)\otimes_{H_{s-1}}L^+(s-1,1)\otimes_{H_s}
R^-(s-1,1)\otimes_{H_{s-1}}L^+(s-1,1)\langle -1\rangle
\end{multline*}
given by
$$ 1\otimes 1\otimes 1\otimes 1\mapsto
 T_1\otimes 1\otimes 1\otimes 1-
1\otimes 1\otimes 1\otimes T_{s-1}.$$

\medskip
Given $s\ge 1$, 
let $M_s=R^-(s-1,1)\otimes_{H_{s-1}}L^+(s-1,1)$, a differential graded
$(H_s,H_s)$-bimodule. When $s\ge 2$,
we define $\kappa=1\otimes 1\otimes 1\otimes T_{s-1}-T_1\otimes 1\otimes 1\otimes 1\in 
M_s\otimes_{H_s}M_s$. We put $\kappa=0$ when $s=1$.
We put $M_0=0$.

\begin{lemma}
	\label{le:quotientgluing}
	There is a morphism of differential graded $(H_s,H_s)$-bimodules
	$M_s\to \hat{H}_s^+$ given by  $a\otimes b\mapsto acb$ for $a,b\in H_s$. It induces an
	isomorphism of differential graded algebras and of differential graded
	$(H_s,H_s)$-bimodules
	$T_{H_s}(M_s)/(\kappa)\iso \hat{H}_s^+$.
\end{lemma}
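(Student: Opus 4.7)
The plan is: (i) verify that $a\otimes b\mapsto acb$ is a well-defined morphism of differential graded $(H_s,H_s)$-bimodules $M_s\to\hat{H}_s^+$; (ii) extend it via the universal property of the tensor algebra to an algebra morphism from $T_{H_s}(M_s)$ and check it vanishes on the ideal generated by $\kappa$; (iii) compare presentations to conclude it induces an isomorphism, using Proposition~\ref{le:generatorsrelationsHnhat+}.

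For (i), the balancing relation defining $M_s = R^-(s-1,1)\otimes_{H_{s-1}}L^+(s-1,1)$ is $af_1(h)\otimes b = a\otimes hb$ for $h\in H_{s-1}$, because the right action of $H_{s-1}$ on $R^-(s-1,1)$ is by $f_1$-shifted multiplication while the left action on $L^+(s-1,1)$ is direct. Under $a\otimes b\mapsto acb$ this becomes the condition $f_1(h)c = ch$ in $\hat{H}_s^+$, which on the generators $h=T_i$ ($1\le i\le s-2$) reads $T_{i+1}c = cT_i$ — exactly one of the defining relations of $\hat{H}_s^+$ by Proposition~\ref{le:generatorsrelationsHnhat+}. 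Compatibility with the $(H_s,H_s)$-action is built into the formula, while the grading and differential match because $c$ sits in degree $0$ with $d(c)=0$, matching $d(1\otimes 1)=0$.

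For (ii), extend the bimodule map to a graded algebra morphism $T_{H_s}(M_s)\to\hat{H}_s^+$, explicitly sending the degree-$k$ tensor $(a_1\otimes b_1)\otimes_{H_s}\cdots\otimes_{H_s}(a_k\otimes b_k)$ to $a_1cb_1\cdots a_kcb_k$. Under the identification $M_s\otimes_{H_s}M_s = H_s\otimes_{H_{s-1}}H_s\otimes_{H_{s-1}}H_s$, the two summands of $\kappa$ map as $1\otimes 1\otimes 1\otimes T_{s-1}\mapsto c^2 T_{s-1}$ and $T_1\otimes 1\otimes 1\otimes 1\mapsto T_1 c^2$, which agree in $\hat{H}_s^+$ by the last defining relation $c^2 T_{s-1}=T_1 c^2$ from Proposition~\ref{le:generatorsrelationsHnhat+}. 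Hence the map factors through $T_{H_s}(M_s)/(\kappa)$.

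For (iii), set $\bar{c} = 1\otimes 1\in M_s$. By definition of the balanced tensor product along $f_1\colon H_{s-1}\hookrightarrow H_s$, the bimodule $M_s = H_s\otimes_{H_{s-1}}H_s$ is the free $(H_s,H_s)$-bimodule on $\bar{c}$ subject only to $T_{i+1}\bar{c} = \bar{c}T_i$ for $1\le i\le s-2$. Therefore $T_{H_s}(M_s)$ admits the presentation $H_s\langle \bar{c}\rangle/(T_{i+1}\bar{c}-\bar{c}T_i)$, and modding out by $\kappa$ adjoins the single relation $\bar{c}^2 T_{s-1}=T_1\bar{c}^2$. This presentation matches the one for $\hat{H}_s^+$ in Proposition~\ref{le:generatorsrelationsHnhat+} under $\bar{c}\leftrightarrow c$, giving the isomorphism. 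The only subtle point — and the main obstacle, though a mild one — is verifying the claimed bimodule presentation of $M_s$; it is essentially tautological, but uses crucially that $T_1\in H_s$ is \emph{not} in the image of $f_1$, so no extra balancing relations are forced beyond those listed.
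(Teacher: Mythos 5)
Your proof is correct and follows essentially the same route as the paper: after verifying the bimodule map is well-defined and factors through $(\kappa)$, both arguments rest on the presentation of $\hat{H}_s^+$ from Proposition~\ref{le:generatorsrelationsHnhat+}. The paper uses that presentation to build an explicit inverse $g\colon\hat{H}_s^+\to T_{H_s}(M_s)/(\kappa)$ and observes that $f$ and $g$ are mutually inverse on generators, whereas you read off a matching presentation of $T_{H_s}(M_s)/(\kappa)$ directly (using that $T_{H_s}(-)$, as a left adjoint, carries the bimodule presentation of $M_s$ to an algebra presentation); these are two phrasings of the same comparison. One small caveat: your closing claim that the argument ``uses crucially that $T_1$ is not in the image of $f_1$'' is a red herring. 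The bimodule presentation of $M_s=H_s\otimes_{H_{s-1}}H_s$ is exactly the universal property of the balanced tensor product; the generating set of balancing relations reduces to $i=1,\ldots,s-2$ simply because $H_{s-1}$ is generated by $T_1,\ldots,T_{s-2}$, and this holds irrespective of whether $T_1$ lies in the image of $f_1$.
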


\begin{proof}
	We have $acT_ib=aT_{i+1}cb$ for $i\in\{1,\ldots,s-2\}$. This shows the first
	statement of the lemma.

	We have now a morphism of differential graded algebras and of $(H_s,H_s)$-bimodules
	$f':T_{H_s}(M_s)\to\hat{H}_s^+$ induced by the morphism $M_s\to \hat{H}_s^+$. 
	We have 
	$$f'((1\otimes 1)\otimes (1\otimes T_{s-1}))=c^2T_{s-1}=T_1c^2=f'((T_1\otimes 1)\otimes
	(1\otimes 1)),$$
	hence
	$f'(\kappa)=0$. So, $f$ induces a morphism of algebras
	$f:T_{H_s}(M_s)/(\kappa)\to\hat{H}_s^+$.
	
	On the other hand, $\hat{H}_s^+$ is
	the free algebra generated by $H_s$ and $c$ with the relations 
	$cT_i=T_{i+1}c$ for $i\in\{1,\ldots,s-2\}$ and $c^2T_{s-1}=T_1c^2$
	(Proposition \ref{le:generatorsrelationsHnhat+}). Since
	$T_{i+1}\otimes 1=1\otimes T_i$ in $M_s$ for $i\in\{1,\ldots,s-2\}$ and
	$(1\otimes 1)\otimes (1\otimes T_{s-1})=(T_1\otimes 1)\otimes (1\otimes 1)$ in
	$M_s\otimes M_s$, we deduce that there
	is a morphism of algebras $g:\hat{H}_s^+\to T_{H_s}(M_s)/(\kappa),\
	T_i\mapsto T_i,\ c\mapsto 1\otimes 1$. The morphisms $f$ and $g$ are inverse and
	we are done.
\end{proof}

Let $\CH^+$\indexnot{H+}{\CH^+}
be the differential graded pointed category with set of objects $\BZ_{\ge 0}$ and 
$\Hom_{\CH^+}(m,n)=\delta_{mn}\hat{\GS}_n^{+,\mathrm{nil}}$.
Lemma \ref{le:quotientgluing} has the following consequence.

\begin{thm}
	\label{th:tensorU}
The construction of Lemma \ref{le:quotientgluing} induces an isomorphism of differential
	graded pointed categories $\Theta:\Delta'_\lambda(\CU^\bullet)\iso \CH^+$\indexnot{theta}{\Theta}.
\end{thm}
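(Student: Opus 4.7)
The plan is to verify Theorem~\ref{th:tensorU} Hom-space by Hom-space and reduce to Lemma~\ref{le:quotientgluing}. Both categories have $\{e^n\}_{n\geq 0}$ (equivalently $\BZ_{\geq 0}$) as object set, so only the Hom-space bijection needs to be constructed; composition compatibility and respect for the differential and grading will then follow from Lemma~\ref{le:quotientgluing} being an isomorphism of differential graded algebras.

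Setting $F_1 = R^-(-,-,e)$ and $E_2 = L^+(-,-,e)$, I would first compute the bimodule $F_1 E_2$ using the formulas of \S\ref{se:Lrn}. Bimodule composition yields $(F_1 E_2)(e^r, e^s) = 0$ for $r \neq s$, and $(F_1 E_2)(e^s, e^s) = R^-(s-1,1)\otimes_{H_{s-1}}L^+(s-1,1) = M_s$ in the notation of \S\ref{se:glueingregular}. Consequently $T_{\CU^\bullet}(F_1 E_2)(e^s,e^s) = T_{H_s^\bullet}(M_s)$, with vanishing off-diagonal Hom-spaces.

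Next I would pass to $\Delta'_\lambda(\CU^\bullet)$ by imposing the pointed congruence generated by $(F_1\lambda E_2)\circ(\tau_1 E_2^2)(f)\sim (F_1\lambda E_2)\circ(F_1^2\tau_2)(f)$. The explicit formula displayed in \S\ref{se:glueingregular} just before Lemma~\ref{le:quotientgluing} shows that, on $(e^s,e^s)$, this congruence is generated by the single basic element $\kappa\in M_s\otimes_{H_s}M_s$. Applying Lemma~\ref{le:quotientgluing} then identifies the quotient with $\hat{H}_s^+$ as differential graded algebras and $(H_s,H_s)$-bimodules.

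The final step is to lift from algebras to pointed categories, for which I would invoke the canonical identification $k[\Delta'_\lambda(\CV)] \iso \Delta'_\lambda(k[\CV])$ from \S\ref{se:2reppointed}. One observes that the inverse isomorphism $T_i\mapsto T_i$, $c\mapsto 1\otimes 1$ of Lemma~\ref{le:quotientgluing} sends each basis element of $\hat{H}_s^+ = \BF_2[\hat{\GS}_s^{+,\nil}]$ to a well-defined element of the quotient $T_{H_s^\bullet}(M_s)/{\sim_\kappa}$, and that the direct map $a\otimes b\mapsto acb$ sends basis tensors to basis elements of $\hat{H}_s^+$; this is automatic because $\kappa$ is a difference of two distinct basis elements of $M_s\otimes_{H_s}M_s$, so the pointed congruence it generates produces a pointed set still equipped with a basis. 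Assembling over all $s$ yields $\Theta$. I expect no serious obstacle: the technical content is entirely contained in Lemma~\ref{le:quotientgluing} and in the pre-Lemma calculation, and the main item requiring care is the above basis-level bookkeeping needed to match the pointed congruence with the algebraic two-sided ideal $(\kappa)$.
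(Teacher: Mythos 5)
Your proposal is correct and spells out the argument that the paper leaves implicit, since the paper derives Theorem~\ref{th:tensorU} from Lemma~\ref{le:quotientgluing} with no further written proof: your reduction to diagonal Hom-spaces, the identification $(F_1E_2)(e^s,e^s)=M_s$ and hence $T_{\CU^\bullet}(F_1E_2)(e^s,e^s)=T_{H_s^\bullet}(M_s)$, the matching of the congruence with $\kappa$, and the invocation of $k[\Delta'_\lambda(\CV)]\iso\Delta'_\lambda(k[\CV])$ all track exactly what the paper is doing. The one place your reasoning is a little loose is the final pointed-lift step: the clean reason the algebra isomorphism is pointed is not really the shape of $\kappa$, but rather that both $f(a\otimes b)=acb$ and its inverse $g(T_i)=T_i,\ g(c)=1\otimes 1$ are, by the lemma's construction, maps sending basis elements to basis elements or $0$, and mutually inverse maps of this form are automatically bijections of pointed sets.
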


Since $\sigma$ is an isomorphism, we have a diagonal 
bimodule $2$-representation on $\Delta'_\lambda(\CU^\bullet)$ 
(cf \S\ref{se:action}). 
Via the isomorphism of Theorem \ref{th:tensorU}, this corresponds to
the bimodule $2$-representation on $\CH^+$ defined as follows.
Define a differential graded right $\hat{H}_n^+$-module 
$$E_n=
{\xy (0,0)*{\hat{H}_n^+[1]\oplus \hat{H}_n^+}
\ar@/^/^{h\mapsto ch}(-5,4)*{};(5,4)*{}, \endxy}$$

We define a left action of $\hat{H}_{n-1}^+$ on $E_n$ as follows:
$$T_i \text{ acts by }\left(\begin{matrix}T_i&0\\0&T_{i+1}\end{matrix}\right)
	\text{ for }1\le i\le n-2 \text{ and }
c \text{ acts by } \left(\begin{matrix}cT_{n-1}&1\\0&T_1c\end{matrix}\right).$$
	This defines a structure of differential graded
	$(\hat{H}_{n-1}^+,\hat{H}_n^+)$-bimodule on $E_n$.

	\smallskip
	Note that setting $E=0$ corresponds to inverting $c$: this turns
	$\CH^+$ into the differential graded pointed category with same
	objects and with 
	$\Hom_{\CH}(m,n)=\delta_{mn}\hat{\GS}_n^{\mathrm{nil}}$.

\subsection{Nil Hecke category}
\label{se:nilHeckecat}
\subsubsection{Definition}
\label{se:Sncat}
We now define a groupoid of $n$-periodic bijections.

Given $I$ a subset of $\BZ/n$ we denote by $\tilde{I}$ its inverse image in $\BZ$.

Let $\CS_n$\indexnot{Sn}{\CS_n} be the category with objects the subsets of $\BZ/n$ and where
$\Hom_{\CS_n}(I,J)$ is the set of $n$-periodic
bijections $\sigma:\tilde{I}\iso\tilde{J}$. The group $n\BZ$ acts by
translation on $\Hom$-sets.
Note that $\hat{\GS}_n=\End_{\CS_n}(\BZ/n)$. 

Given $i,j\in\tilde{I}$ with $i-j{\not\in}n\BZ$, the element
$s_{ij}\in\hat{\GS}_n$ restricts to an $n$-periodic bijection $\tilde{I}\iso\tilde{I}$,
which we also denote by $s_{ij}$\indexnot{sij}{s_{ij}}.

\medskip
Let $I$ be a subset of $\BZ/n$.
There is a unique increasing bijection
$\beta_I:\{1,\ldots,|I|\}\iso \tilde{I}\cap\{1,\ldots,n\}$. We extend it to an
increasing bijection $\BZ\iso\tilde{I}$ by $\beta_I(r+d|I|)=\beta_I(r)+dn$ for
$r\in\{1,\ldots,|I|\}$ and $d\in\BZ$.
There is an isomorphism of groups
$$F_I:
\hat{\GS}_{|I|}\iso\End_{\CS_n}(I),\ \sigma\mapsto \beta_I\circ\sigma\circ\beta_I^{-1}
\indexnot{FI}{F_I}.$$


\smallskip

\subsubsection{Length}
Consider $\sigma\in\Hom_{\CS_n}(I,J)$. We define
$$L(\sigma)=\{(i,i')\in\tilde{I}^2\ |\ i<i',\ \sigma(i)>\sigma(i')\}$$
\indexnot{Ls}{L(\sigma)}
and $\tilde{L}(\sigma)=\{(i,i')\in L(\sigma)\ |\ 1\le i\le n\}$.
The canonical map $\tilde{L}(\sigma)\to L(\sigma)/n\BZ$ is bijective.
We define
$\ell(\sigma)=|\tilde{L}(\sigma)|\indexnot{ls}{\ell(\sigma)}$.

\begin{lemma}
	\label{le:lengthincrease}
	We have $\ell(\sigma'\circ\sigma)\le \ell(\sigma')+\ell(\sigma)$ for all
$\sigma\in\Hom_{\CS_n}(I,J)$ and $\sigma'\in\Hom_{\CS_n}(J,K)$.
\end{lemma}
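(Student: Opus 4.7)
The plan is to construct an explicit injection from $L(\sigma'\circ\sigma)/n\BZ$ into the disjoint union $L(\sigma)/n\BZ\sqcup L(\sigma')/n\BZ$. This will immediately yield the length inequality since $\ell(-) = |L(-)/n\BZ|$.

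The construction: given $(i,i')\in L(\sigma'\circ\sigma)$, so $i<i'$ in $\tilde{I}$ with $(\sigma'\sigma)(i)>(\sigma'\sigma)(i')$, I would split into two cases according to the sign of $\sigma(i)-\sigma(i')$. If $\sigma(i)>\sigma(i')$, then $(i,i')$ already lies in $L(\sigma)$. Otherwise $\sigma(i)<\sigma(i')$ but $\sigma'(\sigma(i))>\sigma'(\sigma(i'))$, so $(\sigma(i),\sigma(i'))\in L(\sigma')$. Define
\[
\Phi:L(\sigma'\circ\sigma)\to L(\sigma)\sqcup L(\sigma'),\qquad
\Phi(i,i')=\begin{cases}(i,i')\in L(\sigma) & \text{if }\sigma(i)>\sigma(i')\\ (\sigma(i),\sigma(i'))\in L(\sigma') & \text{if }\sigma(i)<\sigma(i').\end{cases}
\]

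Next I would verify injectivity of $\Phi$: two elements landing in the first summand are equal by definition; two landing in the second are equal by bijectivity of $\sigma$; and they cannot land in different summands with the same image since the two summands are disjoint in $L(\sigma)\sqcup L(\sigma')$. Then I would check $n\BZ$-equivariance, which follows directly from the $n$-periodicity of $\sigma$: adding $n$ to both coordinates of $(i,i')$ (respectively of $(\sigma(i),\sigma(i'))$) produces the translate, and the case distinction $\sigma(i)\gtrless\sigma(i')$ is unaffected by the diagonal $n\BZ$-action.

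Passing to the quotient by $n\BZ$ preserves injectivity (an identification in the target requires an identification in the source, within the same summand). Therefore
\[
\ell(\sigma'\circ\sigma)=|L(\sigma'\circ\sigma)/n\BZ|\le |L(\sigma)/n\BZ|+|L(\sigma')/n\BZ|=\ell(\sigma)+\ell(\sigma'),
\]
as desired. There is no real obstacle here; the only point to be careful about is the $n\BZ$-equivariance and the disjointness of the two cases, both of which are immediate. This is essentially the classical inversion-count argument adapted to the $n$-periodic setting, which is exactly how Lemma \ref{le:lengthaffine} was phrased in terms of $\tilde L$ and $L/n\BZ$.
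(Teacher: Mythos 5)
Your proposal is correct and is essentially the paper's own argument: both proceed by splitting $L(\sigma'\circ\sigma)$ according to whether $\sigma$ preserves or reverses the order of the pair, sending the first part identically into $L(\sigma)$ and the second part via $\sigma\times\sigma$ into $L(\sigma')$, and both observe $n\BZ$-equivariance to pass to the finite quotients. The paper additionally records the exact defect $\ell(\sigma')+\ell(\sigma)-\ell(\sigma'\circ\sigma)=2\,|\{(i_1,i_2)\in L(\sigma):\ \sigma'\sigma(i_1)<\sigma'\sigma(i_2)\}/n\BZ|$, but the underlying injection is the same as yours.
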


\begin{proof}
	We have
		\begin{multline*}
		L(\sigma'\circ\sigma)=
	\{(i_1,i_2)\in\tilde{I}^2\ |\ i_1<i_2,\ \sigma(i_1)>\sigma(i_2),\
	\sigma'\circ\sigma(i_1)>\sigma'\circ\sigma(i_2)\}\sqcup \\
	\{(i_1,i_2)\in\tilde{I}^2\ |\ i_1<i_2,\ \sigma(i_1)<\sigma(i_2),\
	\sigma'\circ\sigma(i_1)>\sigma'\circ\sigma(i_2)\} \\
	=\{(i_1,i_2)\in L(\sigma)\ |\ \sigma'\circ\sigma(i_1)>\sigma'\circ\sigma(i_2)\}
		\sqcup(\sigma^{-1}\times\sigma^{-1})\bigl(
		\{(j_1,j_2)\in L(\sigma')\ |\ \sigma^{-1}(j_1)<\sigma^{-1}(j_2)\}\bigr).
		\end{multline*}

		It follows that
		$$\ell(\sigma')+\ell(\sigma)-\ell(\sigma'\circ\sigma)=
	2|\{(i_1,i_2)\in\tilde{I}^2\ |\ i_1<i_2,\ \sigma(i_1)>\sigma(i_2),\
	\sigma'\circ\sigma(i_1)<\sigma'\circ\sigma(i_2)\}/n\BZ|\ge 0.$$
\end{proof}

\smallskip
Let $\sigma\in\Hom_{\CS_n}(I,J)$. We have $\ell(\sigma)=0$ if and only if
$\sigma$ is an increasing bijection.

Given $\tau\in\Hom_{\CS_n}(J,I)$ with $\ell(\tau)=0$, we have
$L(\tau\circ\sigma)=L(\sigma)=(\tau\times\tau)\bigl(L(\sigma\circ\tau)\bigr)$,
hence $\ell(\tau\circ\sigma)=\ell(\sigma\circ\tau)=\ell(\sigma)$. 

Since $L(\tau\circ\sigma)=(\beta_I\times\beta_I)(L(F_I^{-1}(\tau\circ\sigma)))$,
we have $\ell(\sigma)=\ell(F_I^{-1}(\tau\circ\sigma))$.
As a consequence,
we deduce the following result from Lemma \ref{le:lengthaffine}.

\begin{lemma}
	\label{le:formulalength}
	Let $\sigma\in\Hom_{\CS_n}(I,J)$. We have
	$$\ell(\sigma)=
	\sum_{\substack{0\le i_1<i_2<n\\ i_1,i_2\in\tilde{I}}}
	\bigl|{\lfloor\frac{\sigma(i_2)-\sigma(i_1)}{n}\rfloor}\bigr|.$$
\end{lemma}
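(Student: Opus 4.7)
The plan is to reduce to Lemma~\ref{le:lengthaffine} via the identification
$F_I:\hat{\GS}_{|I|}\iso\End_{\CS_n}(I)$ and then transport the resulting formula
using $\beta_I$. Set $d=|I|=|J|$ (equality since an $n$-periodic bijection
$\tilde{I}\iso\tilde{J}$ forces $|\tilde{I}/n\BZ|=|\tilde{J}/n\BZ|$), and let
$\tau:=\beta_I\circ\beta_J^{-1}:\tilde{J}\iso\tilde{I}$ be the unique $n$-periodic
increasing bijection, so $\ell(\tau)=0$. Define
$\hat\sigma:=F_I^{-1}(\tau\circ\sigma)\in\hat{\GS}_d$; explicitly
$\hat\sigma(r)=\beta_J^{-1}(\sigma(\beta_I(r)))$. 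By the observations preceding the
lemma, $\ell(\sigma)=\ell(\tau\circ\sigma)=\ell(\hat\sigma)$.

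Apply Lemma~\ref{le:lengthaffine} to $\hat\sigma$ to obtain
\[
\ell(\hat\sigma)=\sum_{0\le r_1<r_2<d}\Bigl|\Bigl\lfloor
\frac{\hat\sigma(r_2)-\hat\sigma(r_1)}{d}\Bigr\rfloor\Bigr|.
\]
The key identity I would then prove is that
$\lfloor(\beta_J^{-1}(b)-\beta_J^{-1}(a))/d\rfloor=\lfloor(b-a)/n\rfloor$
for $a,b\in\tilde{J}$; this follows from $\beta_J^{-1}(a+kn)=\beta_J^{-1}(a)+kd$
(a consequence of $\beta_J$ being increasing with $\beta_J(r+d)=\beta_J(r)+n$),
so that $\beta_J^{-1}(b)-\beta_J^{-1}(a)\ge kd$ iff $b-a\ge kn$, whence the floors
agree. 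Applying this with $a=\sigma(\beta_I(r_1))$, $b=\sigma(\beta_I(r_2))$ and
changing variables $i_s:=\beta_I(r_s)$ gives
\[
\ell(\sigma)=\sum_{\substack{i_1<i_2\\ i_1,i_2\in T_1}}
\Bigl|\Bigl\lfloor\frac{\sigma(i_2)-\sigma(i_1)}{n}\Bigr\rfloor\Bigr|,
\]
where $T_1:=\beta_I(\{0,\ldots,d-1\})=(\tilde{I}\cap\{1,\ldots,n\})\setminus\{m\}
\cup\{m-n\}$ with $m:=\beta_I(d)=\max(\tilde{I}\cap\{1,\ldots,n\})$.

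It remains to compare this with the analogous sum over
$T_2:=\tilde{I}\cap\{0,\ldots,n-1\}$ appearing in the statement. If
$n\in\tilde{I}$ then $m=n$ and $m-n=0$, so $T_1=T_2$ already; otherwise $T_1$
and $T_2$ differ only by the swap $m\leftrightarrow m-n$, with $m-n<0$ lying
below all other representatives. For this specific ``cycling'' move the sum is
preserved: the lost pairs $(j',m)$ and the gained pairs $(m-n,j')$ match up via
$\sigma(m-n)=\sigma(m)-n$ together with the elementary identity
$|\lfloor 1-x\rfloor|=|\lfloor x\rfloor|$ for $x\in\BR\setminus\BZ$, applicable
because $(\sigma(j')-\sigma(m))/n\notin\BZ$ (since $j'$ and $m$ lie in distinct
cosets of $\tilde{I}/n\BZ$). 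The main obstacle is precisely this
cycling-invariance check; note that the analogous invariance \emph{fails} for
arbitrary translations $j\mapsto j+n$ of a non-extremal representative, so it is
essential that the move in question be of the specific max-to-new-min form
produced by $\beta_I$.
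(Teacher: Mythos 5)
Your proof is correct and follows the paper's own approach: conjugate by $\tau$ and $F_I$ to reduce to some $\hat\sigma\in\hat{\GS}_d$, apply Lemma~\ref{le:lengthaffine}, then transport the sum back along $\beta_I$. The extra care you take in verifying that the resulting sum over $\beta_I(\{0,\ldots,d-1\})$ agrees with the one over $\tilde{I}\cap[0,n)$ -- via the cycling identity $|\lfloor 1-x\rfloor|=|\lfloor x\rfloor|$ for $x\notin\BZ$ applied to the maximal representative -- is a genuine bookkeeping step that the paper's one-line derivation leaves to the reader, and you correctly observe that this invariance is special to the max-to-min move and would fail for a translation of an interior representative.
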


The next lemma relates length and number of intersections of paths on a cylinder.

\begin{lemma}
	\label{le:intersectionlength}
	Let $\sigma\in\Hom_{\CS_n}(I,J)$ where $I=\{i_1+n\BZ,i_2+n\BZ\}$ and
	$J=\{j_1+n\BZ,j_2+n\BZ\}$ with $1\le i_1\neq i_2\le n$,
	$1\le j_1\neq j_2\le n$ and $\sigma(i_r)=j_r\pmod n$ for $r\in\{1,2\}$.
	Fix $\beta:\{i_1,i_2,j_1,j_2\}\to\BR$ increasing with
	$|\beta(u)-\beta(v)|<1$ for all $u,v$.

	Consider $\gamma_r:[0,1]\to\BR$ continuous with
	$\gamma_r(0)=\beta(i_r)$ and $\gamma_r(1)=\beta(j_r)+\frac{\sigma(i_r)-j_r}{n}$
	for $r\in\{1,2\}$. We have
	$$\ell(\sigma)\le
	|\{t\in[0,1]\ |\ e^{2i\pi \gamma_1(t)}=e^{2i\pi\gamma_2(t)}\}|$$
	with equality if, for all $r\in\{1,2\}$, the map
	$\gamma_r$ is affine.
\end{lemma}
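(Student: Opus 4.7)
The plan is to reduce the counting of intersection points to counting integer values of the real-valued difference function and then match this count with the explicit length formula.

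Set $h(t)=\gamma_1(t)-\gamma_2(t)$. A point of the cylinder is hit by both strands at time $t$ exactly when $h(t)\in\BZ$, so the right-hand side of the inequality equals $|h^{-1}(\BZ)|$. First I would check that the endpoints $h(0)$ and $h(1)$ are never integers: we have $h(0)=\beta(i_1)-\beta(i_2)\in(-1,1)\setminus\{0\}$ since $\beta$ is strictly increasing on $\{i_1,i_2,j_1,j_2\}$ with diameter $<1$ and $i_1\neq i_2$; and writing $a_r=(\sigma(i_r)-j_r)/n\in\BZ$, we have $h(1)=\beta(j_1)-\beta(j_2)+(a_1-a_2)$, whose fractional part $\beta(j_1)-\beta(j_2)$ lies in $(-1,1)\setminus\{0\}$ by the same reasoning (since $j_1\neq j_2$). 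Thus $h(0),h(1)\notin\BZ$.

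Next, by the intermediate value theorem, for every integer $k$ strictly between $h(0)$ and $h(1)$, there exists $t_k\in(0,1)$ with $h(t_k)=k$; distinct integers yield distinct values, hence distinct $t_k$'s. Consequently
\[
|h^{-1}(\BZ)|\;\ge\;|\lfloor h(1)\rfloor-\lfloor h(0)\rfloor|,
\]
the latter being the number of integers strictly between the two noninteger endpoints. The heart of the argument is then to verify
\[
|\lfloor h(1)\rfloor-\lfloor h(0)\rfloor|=\ell(\sigma).
\]
For this I would apply Lemma \ref{le:formulalength}. Writing $p_1<p_2$ for the representatives of $I$ in $\{0,\ldots,n-1\}$, the lemma gives $\ell(\sigma)=|\lfloor(\sigma(p_2)-\sigma(p_1))/n\rfloor|$. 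Since $\sigma(i_r)=j_r+na_r$, the expression $(\sigma(i_2)-\sigma(i_1))/n$ equals $(j_2-j_1)/n+(a_2-a_1)$, so its floor depends only on the sign of $j_2-j_1$; similarly $\lfloor h(0)\rfloor$ and $\lfloor h(1)\rfloor$ only depend on the signs of $i_1-i_2$ and $j_1-j_2$. A direct case analysis (the four subcases determined by the orderings of $i_1$ vs.~$i_2$ and $j_1$ vs.~$j_2$, plus the slight bookkeeping needed when $n\in\{i_1,i_2\}$ because then the representative $0$ replaces $n$ and shifts $\sigma$ by $-n$) shows the two quantities agree. This case analysis is the main (though routine) obstacle, since one must handle sign conventions carefully; everything else is immediate.

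Finally, for the equality case when both $\gamma_r$ are affine, note that $h$ is then affine, hence weakly monotonic. If $h$ is constant then $h(0)=h(1)\notin\BZ$ and both sides of the inequality are zero. Otherwise $h$ is strictly monotonic, so $h^{-1}(k)$ is a single point for each integer $k$ in the range of $h$, and the range of $h$ intersected with $\BZ$ consists precisely of the integers strictly between $h(0)$ and $h(1)$ (the endpoints being noninteger). Therefore $|h^{-1}(\BZ)|$ equals the lower bound obtained above, and equality with $\ell(\sigma)$ follows.
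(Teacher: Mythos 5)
Your proof is correct and takes essentially the same approach as the paper: apply the intermediate value theorem to the difference $\gamma_1-\gamma_2$, verify the endpoints are non-integer, and match the count of integer crossings with the formula from Lemma \ref{le:formulalength} via a sign/case analysis (the paper's four cases according to the signs of $j_2-j_1$ and $\sigma(i_2)-\sigma(i_1)$ correspond to your bookkeeping on orderings of the $i$'s and $j$'s together with $a_1-a_2$).
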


\begin{proof}
Without loss of generality, we can assume $i_1<i_2$. The lemma follows
by applying the intermediate value theorem to $\gamma_2(t) - \gamma_1(t)$ and
using Lemma \ref{le:formulalength}, considering four cases according to the signs of
$j_2-j_1$ and $\sigma(i_2)-\sigma(i_1)$.
\end{proof}

\subsubsection{Filtration}
Given $I,J\subset\BZ/n$, we define
$\Hom_{\CS_n^{\ge -r}}(I,J)=\{\sigma\in \Hom_{\CS_n}(I,J)\ |\ 
l(\sigma)\le r\}$ for $r\in\BZ_{\ge 0}$. It follows from Lemma \ref{le:lengthincrease} that
this defines a structure of $\BZ_{\le 0}$-filtered category on $\CS_n$.
We put $\CH_n=\mathrm{gr}\CS_n^\bullet$\indexnot{Hn}{\CH_n}, a pointed $\BZ_{\le 0}$-graded
category.

\smallskip
Note that a map $\sigma$ of length $0$ is invertible in $\CH_n$. Note also
that $F_I$ induces an isomorphism of graded pointed monoids
$\hat{\GS}_{|I|}^{\nil}\iso\End_{\CH_n}(I)$.

\subsubsection{Non-commutative degree}

\smallskip

Let us consider the free abelian groups
$R_n=\bigoplus_{a\in\BZ/n}\BZ \alpha_a$\indexnot{Rn}{R_n} and
$L_n=\bigoplus_{a\in\BZ/n} \BZ\eps_a$\indexnot{Ln}{L_n}. We define a linear map
$\rho:R_n\to L_n$ by $\rho(\alpha_a)=\eps_{a+1}-\eps_a$ and
a representation of the group $R_n$ on $L_n$ given by
$$\alpha_a\cdot\eps_b=(\delta_{a,b}+\delta_{a+1,b})\eps_b.$$
Note that $\delta=\sum_{a\in\BZ/n}\alpha_a\in\ker\rho$ and
$\delta\cdot\eps_b=2\eps_b$ for all $b$.

\smallskip
We define a bilinear map
$$\langle -,-\rangle:R_n\times R_n\to L_n,\
\langle \alpha,\alpha'\rangle=\alpha\cdot \rho(\alpha')\indexnot{<}{\protect\langle\alpha,\beta\protect\rangle}.$$

Let $\Gamma_n'=L_n\times R_n$. We define a group structure on
$\Gamma_n'$ by
$$(l,\alpha)\cdot(l',\alpha')=(l+l'+\langle \alpha,\alpha'\rangle,
\alpha+\alpha').$$

\medskip
Given $I\subset\BZ/n$, we put 
$\eps_I=\sum_{a\in I}\eps_a\in L_n$.
Given $i,j\in\BZ$, we put
$$\alpha_{i,j}=\sum_{i\le r<j}\alpha_{r+n\BZ}-
\sum_{j\le r<i}\alpha_{r+n\BZ}.$$
Note that $\alpha_{i,i+1}=\alpha_{i+n\BZ}$,
$\alpha_{i+n,j+n}=\alpha_{i,j}$ and 
$\alpha_{i,j}+\alpha_{j,k}=\alpha_{i,k}$ for all $i,j,k\in\BZ$.
Note also that $\delta=\alpha_{i,i+n}$ for all $i\in\BZ$. Note finally that
$\rho(\alpha_{i,j})=\eps_{j+n\BZ}-\eps_{i+n\BZ}$.


\medskip
Consider $\sigma\in\Hom_{\CS_n}(I,J)$. We put
$$\llbracket\sigma\rrbracket=\sum_{i\in \tilde{I}\cap [1,n]}
\alpha_{i,\sigma(i)}\in R_n.$$
Note that
$\rho(\llbracket\sigma\rrbracket)=\eps_J-\eps_I$ and
$\llbracket\sigma'\circ\sigma\rrbracket=\llbracket\sigma'\rrbracket+
\llbracket\sigma\rrbracket$ for any 
$\sigma'\in\Hom_{\CS_n}(J,K)$.

\medskip
We define
$$
m(\sigma)=\llbracket\sigma\rrbracket\cdot\eps_I\in L_n\indexnot{ms}{m(\sigma)} \text{ and }
\dm(\sigma)=(-m(\sigma),-\llbracket\sigma\rrbracket)\in
\Gamma'_n \indexnot{deg}{\dm(\sigma)}.$$

\smallskip
%

\begin{lemma}
	\label{le:classesWeyl}
	Let $w\in W_{|I|}$, $m\in\BZ$ and let $\sigma=F_I(wc^m)$
	be the element of $\End_{\CS_n}(I)$
corresponding to $wc^m$.
	We have $\ell(\sigma)=\ell(w)$,
	$\llbracket \sigma\rrbracket=m\cdot\delta$
	and $m(\sigma)=2m\eps_I$.
\end{lemma}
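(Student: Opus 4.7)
The plan is to prove the three assertions in turn, using throughout the two basic properties of the bijection $\beta_I : \BZ \iso \tilde{I}$: it is order-preserving, and it satisfies $\beta_I(x + |I|) = \beta_I(x) + n$.

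For $\ell(\sigma) = \ell(w)$, I would observe that since $\beta_I$ is increasing, the map $(r, s) \mapsto (\beta_I(r), \beta_I(s))$ restricts to a bijection $\tilde L(wc^m) \iso \tilde L(\sigma)$: inversions of $\sigma = \beta_I \circ (wc^m) \circ \beta_I^{-1}$ correspond to inversions of $wc^m$, and the restriction to first coordinate in $\{1, \ldots, n\}$ matches the restriction to first coordinate in $\{1, \ldots, |I|\}$ because $\beta_I(\{1, \ldots, |I|\}) = \tilde{I} \cap [1, n]$. Thus $\ell(\sigma) = |\tilde L(\sigma)| = |\tilde L(wc^m)| = \ell(wc^m) = \ell(w)$ by Lemma \ref{le:lengthaffine} and the definition of $\ell$ on $W_n \rtimes \langle c\rangle$.

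For $\llbracket \sigma \rrbracket = m\delta$, I would introduce $f_I(x) := \alpha_{\beta_I(1), \beta_I(x)}$, so that $\alpha_{\beta_I(r), \beta_I(w(r+m))} = f_I(w(r+m)) - f_I(r)$, and split
$$\llbracket \sigma \rrbracket = \sum_{r=1}^{|I|}\bigl(f_I(w(r+m)) - f_I(r+m)\bigr) + \sum_{r=1}^{|I|}\bigl(f_I(r+m) - f_I(r)\bigr).$$
The second sum telescopes to $m\delta$ via the identity $f_I(x+|I|) = f_I(x) + \delta$ (which follows from $\beta_I(x+|I|) = \beta_I(x) + n$ and $\alpha_{i, i+n} = \delta$). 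For the first sum, the same identity shows that the summand $f_I(w(r')) - f_I(r')$ is $|I|$-periodic in $r'$, so shifting $r' = r+m$ reduces it to $\sum_{r'=1}^{|I|}(f_I(w(r')) - f_I(r'))$. Writing $w(r') = \pi(r') + |I| d_{r'}$ with $\pi$ a permutation of $\{1, \ldots, |I|\}$ and $d_{r'} \in \BZ$, this becomes $\sum_{r'}(f_I(\pi(r')) - f_I(r')) + \delta \sum_{r'} d_{r'}$; the first term vanishes because $\pi$ permutes $\{1, \ldots, |I|\}$, and the second vanishes because $w \in W_{|I|} \subset \hat{\GS}_{|I|}$ forces $\sum_{r'=1}^{|I|}(w(r') - r') = 0$, hence $\sum_{r'} d_{r'} = 0$. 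Claim (3) is then immediate: $m(\sigma) = m\delta \cdot \eps_I = m \sum_{b \in I}(\delta \cdot \eps_b) = 2m \eps_I$.

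The main obstacle is controlling the first sum in the proof of (2); the key input is the characterization of $W_{|I|}$ inside $\hat{\GS}_{|I|}$ as the subgroup of zero total displacement, which is exactly what was established in the proof of Lemma \ref{le:isoWnS_n}. Everything else is a telescoping/periodicity bookkeeping.
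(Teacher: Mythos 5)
Your argument is correct, but for the middle claim $\llbracket\sigma\rrbracket = m\delta$ it takes a genuinely different route from the paper. The paper exploits the additivity $\llbracket\sigma'\circ\sigma\rrbracket = \llbracket\sigma'\rrbracket + \llbracket\sigma\rrbracket$ (stated a few lines above the lemma) and simply checks the two kinds of generators: $\llbracket s_{i,j}\rrbracket = 0$ for $i,j \in \tilde I$ with $i - j \notin n\BZ$, and $\llbracket F_I(c)\rrbracket = \delta$; writing $wc^m$ as a word in these immediately gives $m\delta$. You instead compute $\llbracket\sigma\rrbracket$ directly from its defining sum, splitting through an auxiliary cocycle $f_I(x) = \alpha_{\beta_I(1),\beta_I(x)}$ and using the identity $f_I(x+|I|) = f_I(x) + \delta$ together with the zero-total-displacement characterization of $W_{|I|}$ from the proof of Lemma~\ref{le:isoWnS_n}. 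Both are fine; the paper's version is shorter because it reuses the additivity machinery, whereas yours is more self-contained and makes explicit exactly which structural facts about $W_{|I|} \subset \hat{\GS}_{|I|}$ are being used. The treatments of $\ell(\sigma) = \ell(w)$ (unpacking the fact that $\beta_I\times\beta_I$ identifies $\tilde L(wc^m)$ with $\tilde L(\sigma)$) and of $m(\sigma) = 2m\eps_I$ (reading off $\delta\cdot\eps_b = 2\eps_b$) agree with the paper.
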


\begin{proof}
	The first statement follows from the fact that
	$F_I$ preserves lengths (cf the discussion before Lemma \ref{le:formulalength}).

	Note that $\llbracket s_{i,j}\rrbracket=0$ for
	$i,j\in\tilde{I}$ with $i-j{\not\in}n\BZ$, while
	$\llbracket F_I(c)\rrbracket=\delta$. We deduce that
	$\llbracket \sigma\rrbracket=m\cdot\delta$.

	The last statement of the lemma is immediate.
\end{proof}

\begin{lemma}
	\label{le:degreelength}
	Consider $\sigma\in\Hom_{\CS_n}(I,J)$ and $\sigma'\in\Hom_{\CS_n}(J,K)$.
	We have $\dm(\sigma'\circ\sigma)=\dm(\sigma')\cdot\dm(\sigma)$.
\end{lemma}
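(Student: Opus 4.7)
The plan is to verify the two components of the product $\dm(\sigma')\cdot\dm(\sigma)$ separately, using only the additivity $\llbracket\sigma'\circ\sigma\rrbracket=\llbracket\sigma'\rrbracket+\llbracket\sigma\rrbracket$ and the identity $\rho(\llbracket\sigma\rrbracket)=\eps_J-\eps_I$ already recorded in the paragraph preceding the lemma, together with the definitions of $m(\sigma)$, of $\langle-,-\rangle$ and of the group law on $\Gamma_n'$.

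First I unfold the product. By the definition of the multiplication on $\Gamma_n'=L_n\times R_n$,
\[
\dm(\sigma')\cdot\dm(\sigma)=\bigl(-m(\sigma')-m(\sigma)+\langle\llbracket\sigma'\rrbracket,\llbracket\sigma\rrbracket\rangle,\ \llbracket\sigma'\rrbracket+\llbracket\sigma\rrbracket\bigr).
\]
The equality of the second components with $\llbracket\sigma'\circ\sigma\rrbracket$ is exactly the additivity of $\llbracket-\rrbracket$ noted in the excerpt, so nothing has to be checked there.

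It remains to match the first components, that is, to show
\[
m(\sigma'\circ\sigma)=m(\sigma')+m(\sigma)-\langle\llbracket\sigma'\rrbracket,\llbracket\sigma\rrbracket\rangle.
\]
Using $m(\tau)=\llbracket\tau\rrbracket\cdot\eps_{\mathrm{source}(\tau)}$ and the additivity of $\llbracket-\rrbracket$, the left-hand side expands as $(\llbracket\sigma'\rrbracket+\llbracket\sigma\rrbracket)\cdot\eps_I$, while the right-hand side equals $\llbracket\sigma'\rrbracket\cdot\eps_J+\llbracket\sigma\rrbracket\cdot\eps_I-\llbracket\sigma'\rrbracket\cdot\rho(\llbracket\sigma\rrbracket)$ by the definition $\langle\alpha,\alpha'\rangle=\alpha\cdot\rho(\alpha')$. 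Canceling $\llbracket\sigma\rrbracket\cdot\eps_I$ on both sides reduces the required identity to
\[
\llbracket\sigma'\rrbracket\cdot\eps_I=\llbracket\sigma'\rrbracket\cdot\eps_J-\llbracket\sigma'\rrbracket\cdot\rho(\llbracket\sigma\rrbracket),
\]
and this is immediate from $\rho(\llbracket\sigma\rrbracket)=\eps_J-\eps_I$ and the bilinearity of the action of $R_n$ on $L_n$.

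There is no serious obstacle here: the lemma is a bookkeeping statement whose whole content is that the cocycle $\langle-,-\rangle$ twisting $L_n\times R_n$ into $\Gamma_n'$ has been rigged precisely so that $\dm$ becomes multiplicative, given the two facts about $\llbracket-\rrbracket$ and $\rho$ that were already verified. The only minor point to be careful about is to keep the source/target subsets straight when writing $m(\sigma')=\llbracket\sigma'\rrbracket\cdot\eps_J$ (not $\eps_K$), since $J$ is the source of $\sigma'$; this is where the correction term $\langle\llbracket\sigma'\rrbracket,\llbracket\sigma\rrbracket\rangle$ does its work.
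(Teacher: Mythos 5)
Your proof is correct and follows essentially the same route as the paper's: the paper expands $m(\sigma'\circ\sigma)=(\llbracket\sigma'\rrbracket+\llbracket\sigma\rrbracket)\cdot\eps_I$ and rewrites it as $m(\sigma')+m(\sigma)+\llbracket\sigma'\rrbracket\cdot(\eps_I-\eps_J)$ to arrive at the same identity $m(\sigma')+m(\sigma)-m(\sigma'\circ\sigma)=\llbracket\sigma'\rrbracket\cdot\rho(\llbracket\sigma\rrbracket)$ you reduce to after cancelling $\llbracket\sigma\rrbracket\cdot\eps_I$. The only difference is presentational: you unwind the group law of $\Gamma'_n$ first and the paper unwinds $m(\sigma'\circ\sigma)$ first.
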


\begin{proof}
We have
	$$m(\sigma'\circ\sigma)=\llbracket\sigma'\rrbracket\cdot\eps_I+
	\llbracket\sigma\rrbracket\eps_I=
	m(\sigma')+m(\sigma)+\llbracket\sigma'\rrbracket\cdot(\eps_I-\eps_J),$$
	hence
	$$m(\sigma')+m(\sigma)-m(\sigma'\circ\sigma)=
	\llbracket\sigma'\rrbracket\cdot\rho(\llbracket\sigma\rrbracket).$$
	The lemma follows.
\end{proof}

We put $\Gamma_n=\frac{1}{2}\BZ\times \Gamma'_n$\indexnot{Ga}{\Gamma_n}. We endow
$\Gamma_n$ with a structure of $\BZ$-monoid by using the canonical embedding
$\BZ\hookrightarrow \frac{1}{2}\BZ\hookrightarrow\Gamma_n$.

Given 
$\sigma\in\Hom_{\CS_n}(I,J)$, we put $\deg(\sigma)=(-\ell(\sigma),\dm(\sigma))\in
\Gamma_n\indexnot{deg}{\deg(\sigma)}$.

\medskip
Let $D$ be a subset of $\{1,\ldots,n\}\times\{\pm 1\}$ that embeds in its
projection on $\{1,\ldots,n\}$.
We denote by $\Gamma_D$\indexnot{GaD}{\Gamma_D} the quotient of
$\Gamma_n$ by the subgroup generated by $(0,\eps_{i+n\BZ})+(\frac{1}{2}\nu_i,0)$,
where $(i,\nu_i)\in D$. We identify $\frac{1}{2}\BZ$ with the image of $\frac{1}{2}\BZ\times 0$ in
$\Gamma_D$.
We define a partial order on $\Gamma_D$ by $h\ge g$ if $hg^{-1}$ is in $\frac{1}{2}\BZ_{\ge 0}$.
We denote by $\deg_D(\sigma)$ the image of $\deg(\sigma)$\indexnot{degD}{\deg_D(\sigma)}
in $\Gamma_D$.

Given $E$ a subset of $\{1,\ldots,n\}$, we put $E^+=\{(i,1)\ |\ i\in E\}$.

\smallskip
By Lemmas \ref{le:lengthincrease} and \ref{le:degreelength},
we obtain a $\Gamma_D$-filtration on $\CS_n$ by defining
$$\Hom_{\CS_n^{\ge g}}(I,J)=\{\sigma\in\Hom_{\CS_n}(I,J)\ |\ \deg_D(\sigma)\ge g\}.$$

It follows from Lemma \ref{le:degreelength} that the pointed category $\CH_n$
is isomorphic to the graded pointed category associated to the $\Gamma_D$-filtration of $\CS_n$
(after forgetting the $\Gamma_D$-grading to $\BZ$).

\smallskip
Note that if $D=\emptyset$, then $\Gamma_D=\Gamma_n$, $\deg_D=\deg$ and the
$\BZ_{\le 0}$ grading on $\CH_n$ given by the length can be recovered from
the $\Gamma_n$-grading by using the quotient map
$\Gamma_n\to\Gamma_n/\Gamma'_n=\frac{1}{2}\BZ$.

This quotient map provides a $\BZ$-grading on
the $\Gamma_n$-graded pointed category associated to the $\Gamma_n$-filtration of
$\CS_n$. This $\BZ$-graded pointed category is isomorphic to $\CH_n$.

\begin{rem}
	\label{re:quotientform}
	The bilinear form 
	$\langle\langle -,-\rangle\rangle:R_n\times R_n\to \frac{1}{2}\BZ$
	obtained from $\langle-,-\rangle$ by composing with the
morphism $L_n\to\frac{1}{2}\BZ,\ \eps_i\mapsto -\frac{1}{2}$ is given by
	$\langle\langle\alpha_a,\alpha_b\rangle\rangle=\frac{1}{2}(
	\delta_{b,a+1}-\delta_{b+1,a})$.
It is antisymmetric.
\end{rem}

	\label{le:gradinggroup}
Assume $D=[1,n]^+$. Composing with the quotient map
	$\Gamma_n\twoheadrightarrow\Gamma_D$, 
the embedding $\frac{1}{2}\BZ\hookrightarrow
\Gamma_n,\ r\mapsto (r,0)$ and the quotient map
$\Gamma_n\twoheadrightarrow R_n,\ (r,(l,\alpha))\mapsto \alpha$ induce
	an embedding of $\frac{1}{2}\BZ$ as a central
	subgroup of $\Gamma_D$ with quotient map 
$\Gamma_D\twoheadrightarrow R_n$. So, $\Gamma_{[1,n]^+}$ identifies with the set
$\frac{1}{2}\BZ\times R_n$, with multiplication given by
$(r,\alpha)\cdot (r',\alpha')=(r+r'+\langle\langle \alpha,\alpha'\rangle
\rangle,\alpha+\alpha')$.
When $n\ge 3$, the group $\Gamma_{[1,n]^+}$ has a presentation with generators 
$z=(\frac{1}{2},0)$, $g_a=(0,\alpha_a)$, $a\in\BZ/n$ and relations
$$zg_a=g_az,\
g_ag_bg_a^{-1}g_b^{-1}=\begin{cases}
	z & \text{ if }b=a+1\\
	z^{-1} & \text{ if }b=a-1\\
	1 & \text{ otherwise.}
\end{cases}
$$

\medskip
We define a morphism of groups
$$\epsilon:\Gamma_{[1,n]^+}\to\BZ/2,\
z\mapsto 1,\ g_a\mapsto 1.$$

\begin{lemma}
	\label{le:degepsilon}
	Given $(r,\sum_a v_a\alpha_a)\in \Gamma_{[1,n]^+}$, we have
$\epsilon(r,\sum_a v_a\alpha_a)=2r+\frac{1}{2}\bigl|\{a\in\BZ/n\ |\ v_a+v_{a+1}
\text{ odd}\}\bigr|$.

Given $\sigma\in\Hom_{\CS_n}(I,J)$, we have $\epsilon(\deg_{[1,n]^+}(\sigma))=0$.
\end{lemma}
	
	\begin{proof}
		Denote by $\tilde{\epsilon}$ the map defined by the right
		hand side of the equality of the lemma. 
	
		Let $N$ (resp. $N'$) be the cardinality of the set 
		of $a\in\BZ/n$ such that $v_a+v_{a+1}$
		(resp.  $v'_a+v'_{a+1}$) is odd, where
		$v'_a=v_a+\delta_{ab}$. The integers $N$ and $N'$ are even.
	We have
		\begin{align*}
		\tilde{\epsilon}(r,\sum_a v_a\alpha_a)+
			\tilde{\epsilon}((r,\sum_a v_a\alpha_a)(s,\alpha_b))&=
\tilde{\epsilon}(r,\sum_a v_a\alpha_a)+
		\tilde{\epsilon}(r+s+\frac{1}{2}(v_{b-1}-v_{b+1}),
		\alpha_b+\sum_a v_a\alpha_a)\\
			&=2s+v_{b+1}+v_{b-1}+\frac{1}{2}(N+N').
		\end{align*}
				We have 
		$$N'=
		\begin{cases}
			N+2	& \text{ if }v_{b-1},\ v_b \text{ and }
			v_{b+1} \text{ have the same parity} \\
			N-2	& \text{ if }v_{b-1},\ v_b+1 \text{ and }
			v_{b+1} \text{ have the same parity} \\
			N & \text{otherwise}.
		\end{cases}$$
		It follows that
$$\tilde{\epsilon}(r,\sum_a v_a\alpha_a)+
		\tilde{\epsilon}((r,\sum_a v_a\alpha_a)(s,\alpha_b))=
		2s+1.$$
		We deduce by
		induction on $\sum_{a\in\BZ/n}|v_a|$ 
that $\tilde{\epsilon}(r,\sum_a v_a\alpha_a)=\epsilon(r,\sum_a v_a\alpha_a)$.

\medskip
Given $a,b\in\BZ/n$ and $i\in I$, we have
		$$\alpha_{a,b}\cdot\eps_i=
		\delta_{\substack{i\in\{a,b\}\\ a\neq b}}\ \eps_i
		\mod 2L_n.$$
It follows that $m(\sigma)\equiv\sum_{i\in I\setminus(I\cap J)}\eps_i
	\mod{2L_n}$. Write $\llbracket\sigma\rrbracket=\sum_a v_a\alpha_a$.
	Given $a\in \BZ/n$, the integer $v_a+v_{a+1}$ is odd
	if and only if $a\in I\Delta J$, hence
	$\epsilon(0,\llbracket\sigma\rrbracket)=\frac{1}{2}|I\Delta J|=
		|I\setminus(I\cap J)|$. It follows that 
		$\epsilon(\deg_{[1,n]^+}(\sigma))=0$.
	\end{proof}

	It follows from Lemma \ref{le:degepsilon} that the $\Gamma_n$-grading on
	$\CH_n$ comes from a grading by the kernel of the 
	composition $\Gamma_n\xrightarrow{\can}\Gamma_{[1,n]^+}
	\xrightarrow{\epsilon}\BZ/2$.

\subsubsection{Differential}
\label{se:Heckedifferential}
Given $\sigma\in\Hom_{\CS_n}(I,J)$, let
$D(\sigma)$\indexnot{Ds}{D(\sigma)} be the set of pairs $(i_1,i_2)\in L(\sigma)$ such that
	\begin{itemize}
		\item $i_2-i_1<n$ or $\sigma(i_1)-\sigma(i_2)<n$ and
		\item given $i\in\tilde{I}$
with $i_1<i<i_2$, we have $\sigma(i_1)<\sigma(i)$ or $\sigma(i)<\sigma(i_2)$.
	\end{itemize}

We put $\tilde{D}(\sigma)=D(\sigma)\cap \tilde{L}(\sigma)$. The diagonal action
of $n\BZ$ on $L(\sigma)$ preserves $D(\sigma)$ and we have a canonical bijection
$\tilde{D}(\sigma)\iso D(\sigma)/n\BZ$.

Given $(i_1,i_2)\in L(\sigma)$, we put $\sigma^{i_1,i_2}:=
\sigma\circ s_{i_1,i_2}$\indexnot{si}{\sigma^{i_1,i_2}}.

\smallskip
We define a partial order\indexnot{<}{\sigma'<\sigma} on 
$\Hom_{\CS_n}(I,J)$ as the transitive closure of $\sigma'<\sigma$ if
$\sigma'=\sigma^{i_1,i_2}$ for some $(i_1,i_2)\in D(\sigma)$.

When $I=J=\BZ/n$, 
this coincides with the extended Chevalley-Bruhat order on $\hat{\GS}_n$ 
by Lemma \ref{le:lengthoneaffine} and given $(i_1,i_2)\in L(\sigma)$, we have
$\sigma^{i_1,i_2}<\sigma$ (Lemma \ref{le:lengthaffine}).
The next lemma shows that this holds for general maps in $\CS_n$.

\begin{lemma}
	\label{le:tauBruhat}
	Let $\sigma,\sigma'\in\Hom_{\CS_n}(I,J)$.
	Given $\tau\in\Hom_{\CS_n}(J,I)$ with $\ell(\tau)=0$, we have
	$\sigma'<\sigma$ if and only if $\tau\circ\sigma'<\tau\circ\sigma$
	if and only if $\sigma'\circ\tau<\sigma\circ\tau$.
\end{lemma}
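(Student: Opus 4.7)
The plan is to reduce the lemma to a statement about the covering relation. By definition, $\sigma'<\sigma$ is the transitive closure of the relation $\sigma'=\sigma^{i_1,i_2}$ for $(i_1,i_2)\in D(\sigma)$. So it suffices to show that for any increasing $\tau$ (i.e.\ $\ell(\tau)=0$), composition with $\tau$ on either side establishes a bijection between the covering relations issuing from $\sigma$ and those issuing from $\tau\circ\sigma$ (resp.\ $\sigma\circ\tau$).

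For the left composition, I would first observe $L(\tau\circ\sigma)=L(\sigma)$: indeed, since $\tau$ is increasing, $\tau\sigma(i_1)>\tau\sigma(i_2)$ iff $\sigma(i_1)>\sigma(i_2)$. I would then verify that the two extra conditions cutting out $D$ from $L$ are unchanged when $\sigma$ is replaced by $\tau\circ\sigma$. The condition on intermediate indices $i_1<i<i_2$ is immediate from monotonicity of $\tau$. The condition "$\sigma(i_1)-\sigma(i_2)<n$" requires the key auxiliary claim: for $j_1,j_2\in\tilde{J}$ with $j_1>j_2$, one has $j_1-j_2<n$ if and only if $\tau(j_1)-\tau(j_2)<n$. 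This follows by comparing with $j_2+n\in\tilde{J}$ and using the $n$-periodicity $\tau(j_2+n)=\tau(j_2)+n$ together with monotonicity: $\tau(j_1)<\tau(j_2)+n$ iff $\tau(j_1)<\tau(j_2+n)$ iff $j_1<j_2+n$. Consequently $D(\sigma)=D(\tau\circ\sigma)$, and $\sigma'=\sigma^{i_1,i_2}$ with $(i_1,i_2)\in D(\sigma)$ is equivalent to $\tau\circ\sigma'=(\tau\circ\sigma)^{i_1,i_2}$ with the same pair in $D(\tau\circ\sigma)$.

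For the right composition, pairs in $D(\sigma\circ\tau)$ live in $\tilde{J}^2$; the map $(j_1,j_2)\mapsto(\tau(j_1),\tau(j_2))$ is a bijection $L(\sigma\circ\tau)\iso L(\sigma)$ (by the same monotonicity argument) which I would verify restricts to a bijection $D(\sigma\circ\tau)\iso D(\sigma)$, again using the two auxiliary observations above (the $<n$ condition transports exactly as in the left case, using that $\tau$ is $n$-periodic and increasing). The translation of covering relations then rests on the identity $s_{i_1,i_2}\circ\tau=\tau\circ s_{\tau^{-1}(i_1),\tau^{-1}(i_2)}$, which gives $\sigma^{i_1,i_2}\circ\tau=(\sigma\circ\tau)^{\tau^{-1}(i_1),\tau^{-1}(i_2)}$. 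Thus covering relations correspond bijectively.

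The only substantive obstacle is the preservation of the bound ``$<n$'' under $\tau$, which is the piece that is genuinely specific to the affine (periodic) setting; once that is settled, conditions 1 and the intermediate-index condition are routine consequences of $\tau$ being increasing, and the rest is bookkeeping between the two sides.
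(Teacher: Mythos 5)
Your proof is correct and follows essentially the same route as the paper's: reduce to the covering relation, show $D(\tau\circ\sigma)=D(\sigma)$ and $D(\sigma\circ\tau)=(\tau^{-1}\times\tau^{-1})(D(\sigma))$, and check compatibility of $\sigma^{i_1,i_2}$ with left and right composition by $\tau$. The paper states these identities without justification; your verification of the "$<n$" bound via $n$-periodicity and monotonicity, and the transposition conjugation identity $s_{i_1,i_2}\circ\tau=\tau\circ s_{\tau^{-1}(i_1),\tau^{-1}(i_2)}$, fill in exactly the details the paper leaves implicit.
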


\begin{proof}
	Note that $\tau$ is an increasing bijection since $\ell(\tau)=0$.
	We have $D(\tau\circ\sigma)=D(\sigma)$ and
	given $(i_1,i_2)\in D(\sigma)$,
	we have $(\tau\circ\sigma)^{i_1,i_2}=\tau\circ \sigma^{i_1,i_2}$. This shows
	the first equivalence. The second equivalence follows from the fact that
	$D(\sigma\circ\tau)=(\tau^{-1}\times\tau^{-1})(D(\sigma))$ and
	given $(i_1,i_2)\in D(\sigma)$,
	we have $(\sigma\circ\tau)^{\tau^{-1}(i_1),\tau^{-1}(i_2)}=
	\sigma^{i_1,i_2}\circ\tau$.
\end{proof}

\begin{lemma}
	\label{le:setDforSn}
	Given $\sigma\in\Hom_{\CS_n}(I,J)$, there is a bijection
	$$\tilde{D}(\sigma) \iso \{\sigma'\in\Hom_{\CS_n}(I,J)\ |\ \sigma'<\sigma,\
\ell(\sigma')=\ell(\sigma)-1\},\ 
		(i_1,i_2)\mapsto \sigma^{i_1,i_2}.$$
	Note that
	$$\{\sigma'\in\Hom_{\CS_n}(I,J)\ |\ \sigma'<\sigma,\
\ell(\sigma')=\ell(\sigma)-1\}=
		\{\sigma'\in\Hom_{\CS_n}(I,J)\ |\ \sigma'<\sigma,\
		\deg(\sigma')=\deg(\sigma)+1\}.$$

	Given $(i_1,i_2)\in L(\sigma)$, we have $(i_1,i_2)\in D(\sigma)$ if and only
	if $\deg_D(\sigma)=\deg_D(\sigma^{i_1,i_2})-1$ for some subset (equivalently, for 
	any subset) $D$ of $\{1,\ldots,n\}\times\{\pm 1\}$ that embeds in its
projection on $\{1,\ldots,n\}$.
\end{lemma}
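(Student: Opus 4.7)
The plan is to first reduce to the case $I = J = \BZ/n$ (so that $\sigma \in \hat{\GS}_n$), then derive part (1) from Lemma \ref{le:lengthoneaffine}, part (2) from a direct computation of $\llbracket s_{i_1,i_2}\rrbracket$, and part (3) as a formal consequence of parts (1)--(2) together with an injectivity check on $\frac{1}{2}\BZ\hookrightarrow \Gamma_D$. Concretely, if $|I|\ne|J|$ the hom-set is empty, so we may assume $|I|=|J|=m$. The increasing bijections $\beta_I$ and $\beta_J$ of \S\ref{se:Sncat} conjugate $\sigma$ to an element $\tilde\sigma=\beta_J^{-1}\sigma\beta_I\in\hat{\GS}_m$, and because $\beta_I,\beta_J$ are increasing and satisfy $\beta_I(r+m)=\beta_I(r)+n$, a routine check shows that $L(\sigma)$, $D(\sigma)$, $\tilde L(\sigma)$, $\tilde D(\sigma)$, the length $\ell(\sigma)$, and the operation $\sigma\mapsto\sigma^{i_1,i_2}$ all transport bijectively to the corresponding data of $\tilde\sigma$; Lemma \ref{le:tauBruhat} then transports the Bruhat order. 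So the three assertions reduce to their analogues in $\hat{\GS}_m$.

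For part (1), Lemma \ref{le:lengthoneaffine} identifies $\{\sigma'<\sigma\mid \ell(\sigma')=\ell(\sigma)-1\}$ with the image of $(j_1,j_2)\mapsto \sigma s_{j_1,j_2}$ restricted to those $(j_1,j_2)\in L(\sigma)$ satisfying the two conditions defining $D(\sigma)$, i.e.\ to $D(\sigma)$. Since $s_{j_1,j_2}$ only depends on the $n\BZ$-orbit of $(j_1,j_2)$ and the canonical bijection $\tilde L(\sigma)\iso L(\sigma)/n\BZ$ restricts to $\tilde D(\sigma)\iso D(\sigma)/n\BZ$, the map $(i_1,i_2)\mapsto \sigma^{i_1,i_2}$ from $\tilde D(\sigma)$ is injective with image the target set.

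For part (2), I compute $\llbracket s_{i_1,i_2}\rrbracket\in R_n$ from the definition: only the two orbits $i_1+n\BZ$ and $i_2+n\BZ$ contribute, and using the $n$-periodicity $\alpha_{i+n,j+n}=\alpha_{i,j}$ together with $\alpha_{i,j}+\alpha_{j,i}=0$, the sum collapses to $\alpha_{i_1,i_2}+\alpha_{i_2,i_1}=0$. Lemma \ref{le:degreelength} then yields $\dm(\sigma^{i_1,i_2})=\dm(\sigma)\cdot\dm(s_{i_1,i_2})=\dm(\sigma)$, so $\deg(\sigma^{i_1,i_2})$ and $\deg(\sigma)$ differ only in the $\frac{1}{2}\BZ$-coordinate, by $\ell(\sigma)-\ell(\sigma^{i_1,i_2})$. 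Combined with part (1), which guarantees that every element of $\{\sigma'<\sigma\mid \ell(\sigma')=\ell(\sigma)-1\}$ has the form $\sigma^{i_1,i_2}$, the two descriptions of that set (by length or by degree) become equivalent.

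For part (3), the same calculation gives $\deg_D(\sigma^{i_1,i_2})-\deg_D(\sigma)=\ell(\sigma)-\ell(\sigma^{i_1,i_2})$ in $\Gamma_D$ for any admissible $D$, where the integer is viewed via $\tfrac{1}{2}\BZ\hookrightarrow\Gamma_D$. This embedding is injective because the defining subgroup of $\Gamma_D$ is generated by elements whose $L_n$-components are the distinct basis vectors $\eps_a$ (for $a$ in the projection of $D$), so it intersects $\tfrac{1}{2}\BZ\times 0$ trivially. Hence $\deg_D(\sigma)=\deg_D(\sigma^{i_1,i_2})-1$ is equivalent, for any such $D$, to the integer equality $\ell(\sigma^{i_1,i_2})=\ell(\sigma)-1$, which by part (1) is equivalent to $(i_1,i_2)\in D(\sigma)$ (here I use that $(i_1,i_2)\in L(\sigma)$ already forces $\sigma^{i_1,i_2}<\sigma$ via Lemma \ref{le:lengthaffine} transported by Lemma \ref{le:tauBruhat}); the ``for some / for any'' equivalence follows from the same chain since the condition becomes independent of $D$. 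The main obstacle is bookkeeping in the reduction: one must verify that every piece of combinatorial data --- length, $L$, $D$, Bruhat order, $\llbracket\cdot\rrbracket$, and the degrees $\dm$, $\deg$, $\deg_D$ --- transfers cleanly under $\beta_I,\beta_J$. Once that is in place, no new ideas beyond the cited lemmas and the elementary identity $\alpha_{i,j}+\alpha_{j,i}=0$ are required.
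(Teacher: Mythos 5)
Your proof is correct and follows essentially the same route as the paper: reduce to the affine case by pre/post-composing with increasing length-$0$ bijections (the paper uses $\tau\circ\sigma$ with $\tau:J\to I$ and then $F_I$; you conjugate by $\beta_I,\beta_J$ — these are the same reduction), apply Lemma \ref{le:lengthoneaffine} together with the transport Lemma \ref{le:tauBruhat}, note that $\dm(s_{i_1,i_2})$ is trivial so the $\Gamma'_n$-component of $\deg$ is unchanged, and use that $\tfrac{1}{2}\BZ\hookrightarrow\Gamma_D$ is injective. The only cosmetic difference is that the paper obtains $\llbracket s_{i_1,i_2}\rrbracket=0$ from Lemma \ref{le:classesWeyl} (since $F_I^{-1}(s_{i_1,i_2})$ has order $2$ and hence lies in $W_{|I|}$, so has $m=0$) rather than by your direct computation from the definition of $\llbracket\cdot\rrbracket$ via $\alpha_{i,j}+\alpha_{j,i}=0$; both are equally short.
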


\begin{proof}
	Let $\tau\in\Hom_{\CS_n}(J,I)$ be an increasing bijection.
	We have $D(\tau\circ\sigma)=D(\sigma)$ and
$$\{\sigma''\in\End_{\CS_n}(I)\ |\ \sigma''<\tau\circ\sigma,\ \ell(\sigma'')=
	\ell(\tau\circ\sigma)-1\}=
	\{\tau\circ\sigma'\ |\ \sigma'\in\Hom_{\CS_n}(I,J),\ \sigma'<\sigma,\ \ell(\sigma')=\ell(\sigma)-1\}$$
by Lemma \ref{le:tauBruhat}.
	Since the first statement of the
	lemma holds for $\tau\circ\sigma$ by Lemma \ref{le:lengthoneaffine},
	it holds for $\sigma$.

	The other statements follow from Lemmas \ref{le:classesWeyl} and
	\ref{le:degreelength}.
\end{proof}

\begin{lemma}
	\label{le:middleHeckecategory}
	Consider $\sigma''\in\Hom_{\CS_n}(I,J)$ and
	$\sigma'\in\Hom_{\CS_n}(J,K)$ and let $\sigma=\sigma'\sigma''$.
	Assume $\ell(\sigma)=\ell(\sigma')+\ell(\sigma'')$.

	Let $(i_1,i_2)\in D(\sigma)\setminus (D(\sigma)\cap D(\sigma''))$.
	Let $\alpha''=\sigma'' s_{i_1,i_2}$ and
	$\alpha''=(\sigma')^{\sigma''(i_1),\sigma''(i_2)}$.
	We have $\sigma=\alpha'\alpha''$ and
	$\ell(\sigma)=\ell(\alpha')+\ell(\alpha'')$.
\end{lemma}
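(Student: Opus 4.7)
I would prove the two conclusions separately.

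\textbf{The identity $\sigma=\alpha'\alpha''$.} This is purely algebraic. Conjugation in the category $\CS_n$ (or, equivalently, in $\hat{\GS}_{|I|}$ via the isomorphism $F_I$) sends the reflection $s_{i_1,i_2}$ to $s_{\sigma''(i_1),\sigma''(i_2)}$, that is,
$\sigma''\,s_{i_1,i_2}\,(\sigma'')^{-1}=s_{\sigma''(i_1),\sigma''(i_2)}$. Using this and $s_{i_1,i_2}^2=1$ gives
$\alpha'\alpha''=\sigma' s_{\sigma''(i_1),\sigma''(i_2)}\sigma'' s_{i_1,i_2}=\sigma'\sigma'' s_{i_1,i_2}^2=\sigma$.

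\textbf{Reduction to $\hat{\GS}_r$.} Next I would reduce to the case $I=J=K$ by composing $\sigma''$ and $\sigma'$ with the canonical increasing $n$-periodic bijections between $\tilde{I},\tilde{J},\tilde{K}$, then apply $F_I^{-1}$ to transfer everything into $\hat{\GS}_r$ where $r=|I|$. The key verification here is that such increasing bijections (and $F_I$) have length $0$ and preserve both $L(\cdot)$ and $D(\cdot)$ under the appropriate identifications, in particular because an increasing $n$-periodic bijection $\tau$ satisfies $\tau(a)-\tau(b)<n\iff a-b<n$ for $a>b$ in its domain. Length additivity is also preserved. So we may assume we are in $\hat{\GS}_r$.

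\textbf{Length additivity via the Exchange Property.} In $\hat{\GS}_r=W_r\rtimes\langle c\rangle$, length additivity $\ell(\sigma)=\ell(\sigma')+\ell(\sigma'')$ says that any reduced expressions of $\sigma'$ and $\sigma''$ concatenate to a reduced expression for $\sigma$. By Lemma~\ref{le:setDforSn}, $(i_1,i_2)\in D(\sigma)$ gives $\ell(\sigma s_{i_1,i_2})=\ell(\sigma)-1$; since $s_{i_1,i_2}$ is a reflection in the Coxeter group, the Strong Exchange Property (cf.\ \cite[Theorem 5.8]{Hu}) yields a letter in the concatenated reduced expression for $\sigma$ whose deletion produces a reduced expression for $\sigma s_{i_1,i_2}$. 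The hypothesis $(i_1,i_2)\notin D(\sigma'')$ rules out the deletion occurring inside the $\sigma''$-part (such a deletion would force $\ell(\sigma''s_{i_1,i_2})=\ell(\sigma'')-1$, hence $(i_1,i_2)\in D(\sigma'')$). So the deletion is in the $\sigma'$-part, giving a reduced factorisation $\sigma s_{i_1,i_2}=\alpha'\,\sigma''$ with $\alpha'=\sigma'\,t$ and $\ell(\alpha')=\ell(\sigma')-1$. Matching with $\sigma s_{i_1,i_2}=\sigma'\sigma''s_{i_1,i_2}$ forces $t=\sigma''s_{i_1,i_2}(\sigma'')^{-1}=s_{\sigma''(i_1),\sigma''(i_2)}$, so $\alpha'$ coincides with the element in the lemma and $\ell(\alpha'\sigma'')=\ell(\alpha')+\ell(\sigma'')=\ell(\sigma)-1$.

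\textbf{Closing the length identity.} From $\sigma=\alpha'\sigma''s_{i_1,i_2}=\alpha'\alpha''$, the identity $\ell(\sigma)=\ell(\alpha')+\ell(\alpha'')$ is equivalent to $\ell(\alpha'')=\ell(\sigma'')+1$. To get this I would first show $(i_1,i_2)\notin L(\sigma'')$, ruling out the two sub-cases of $(i_1,i_2)\in L(\sigma'')\setminus D(\sigma'')$: the "middle obstruction" case fails because length-additivity gives $L(\sigma'')\subset L(\sigma)$, but any offending middle element $i$ combined with $(i_1,i_2)\in D(\sigma)$ would place $(i_1,i)$ or $(i,i_2)$ in $L(\sigma'')\setminus L(\sigma)$; the "wide-gap" case where $i_2-i_1\ge n$ and $\sigma''(i_1)-\sigma''(i_2)\ge n$ fails because $(i_1,i_2+n)\in L(\sigma'')\subset L(\sigma)$ yields $\sigma(i_1)-\sigma(i_2)>n$, contradicting $(i_1,i_2)\in D(\sigma)$. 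So $\sigma''(i_1)<\sigma''(i_2)$, and $(i_1,i_2)\in L(\alpha'')$. I would then verify $(i_1,i_2)\in D(\alpha'')$, using the explicit decomposition of $L(\alpha'')$ in terms of $L(\sigma'')$ given in the proof of Lemma~\ref{le:lengthoneaffine}, the bound $\alpha''(i_1)-\alpha''(i_2)<n$ (forced by a parallel wide-gap argument on $\alpha''$), and the non-existence of a bad middle element (forced by $(i_1,i_2)\in D(\sigma)$ via the relation $\sigma=\alpha'\alpha''$). By Lemma~\ref{le:setDforSn} this yields $\ell(\alpha'')=\ell(\sigma'')+1$, and we conclude.

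\textbf{Main obstacle.} The delicate step is the last one: showing $\ell(\alpha'')=\ell(\sigma'')+1$ when $s_{i_1,i_2}$ is a non-simple reflection. For simple reflections this is immediate, but in general $\ell(\sigma''s_{i_1,i_2})$ can differ from $\ell(\sigma'')$ by more than $1$; one must use the full strength of $(i_1,i_2)\in D(\sigma)$ and the length-additivity to rule out all such possibilities via the explicit $L$-formula of Lemma~\ref{le:lengthoneaffine}.
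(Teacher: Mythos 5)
Your outline actually parallels the paper's own (the paper reduces to $I=J=K$ and then cites Lemmas~\ref{le:lengthoneaffine} and \ref{le:middleaffineSn}), and you correctly isolate the crux that the paper leaves implicit: after the Strong Exchange argument yields $\ell(\alpha')=\ell(\sigma')-1$, one still needs $\ell(\alpha'')=\ell(\sigma'')+1$, which is automatic when $s_{i_1,i_2}$ is \emph{simple} (the only case Lemma~\ref{le:middleaffineSn} actually treats, via $\ell(\sigma''s_a)>\ell(\sigma'')$) but not for a general reflection. Your deduction $(i_1,i_2)\notin L(\sigma'')$ from $L(\sigma'')\subset L(\sigma)$ is correct. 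The genuine gap is the final claim $(i_1,i_2)\in D(\alpha'')$: you assert a middle obstruction for $\alpha''$ is excluded "by $(i_1,i_2)\in D(\sigma)$ via $\sigma=\alpha'\alpha''$", but $\alpha'$ is not order-preserving, so a triple $\alpha''(i_1)>\alpha''(i)>\alpha''(i_2)$ need not yield $\sigma(i_1)>\sigma(i)>\sigma(i_2)$ after applying $\alpha'$, and the transfer fails.

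The gap does not appear to be closable along your route, because the stated conclusion seems to fail in this generality. Take $n=3$, $\sigma''=\id$, $\sigma'=\sigma=cs_1s_2\in\hat{\GS}_3$, i.e.\ $\sigma(1)=3$, $\sigma(2)=4$, $\sigma(3)=2$; then $\ell(\sigma)=2=\ell(\sigma')+\ell(\sigma'')$. The pair $(1,3)$ lies in $L(\sigma)$ and satisfies the two $D$-conditions ($3-1<3$; for the middle point $i=2$, $\sigma(1)=3<\sigma(2)=4$), so $(1,3)\in D(\sigma)$, while $D(\id)=\emptyset$. But $\alpha''=s_{1,3}$ has $\ell(\alpha'')=3$, not $\ell(\sigma'')+1=1$: the middle obstruction $\alpha''(1)=3>\alpha''(2)=2>\alpha''(3)=1$ is present, and $(1,3)\in D(\sigma)$ fails to exclude it precisely because $\alpha'=\sigma s_{1,3}$ sends the monotone triple $3,2,1$ to the non-monotone triple $3,4,2$. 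With $\ell(\alpha')=\ell(\sigma s_{1,3})=1$, one gets $\ell(\alpha')+\ell(\alpha'')=4\neq 2=\ell(\sigma)$. So the step you flagged as delicate is indeed where the argument breaks; and the one-line reduction to Lemma~\ref{le:middleaffineSn} in the paper's own proof is not sufficient either, since that lemma is restricted to simple reflections and the "$\ell(\sigma''s_a)=\ell(\sigma'')+1$" step does not extend to arbitrary $(i_1,i_2)\in D(\sigma)\setminus D(\sigma'')$.
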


\begin{proof}
	Assume first $I=J=K$.
	The lemma follows in that case from Lemmas \ref{le:lengthoneaffine} and \ref{le:middleaffineSn}.

	\smallskip
	Consider now the general case.
	There are increasing bijections $\tau:J\to I$ and $\tau':K\to J$.
	We have $D(\sigma)=\tau^{-1}(D(\tau'\sigma\tau))$ and 
	$D(\sigma'')=\tau^{-1}(D(\sigma''\tau))$ (proof of Lemma 5.4.7).
	The lemma follows now from the previous case
	applied to the decomposition
	$\tau'\sigma\tau=(\tau'\sigma')(\sigma''\tau)$.
\end{proof}

Consider $\sigma\in\Hom_{\CH_n}(I,J)$ non-zero. 
We put
$$d(\sigma)=\sum_{(i_1,i_2)\in \tilde{D}(\sigma)}\sigma^{i_1,i_2}\in
\Hom_{\BF_2[\CH_n]}(I,J)\indexnot{ds}{d(\sigma)}.$$

\begin{prop}
	\label{pr:Heckedg}
	The maps $d$ equip
the $\BF_2$-linear $\Gamma_n$-graded category $\BF_2[\CH_n]$ with a differential 
	$\Gamma_n$-graded structure,
hence equip $\CH_n$ with a differential $\Gamma_n$-graded pointed structure.

	Given $I\subset\BZ/n$, the morphism $F_I$ induces
an isomorphism of differential $\BZ$-graded pointed monoids
	$$\hat{\GS}_{|I|}^{\nil}\iso \End_{\CH_n}(I).$$
\end{prop}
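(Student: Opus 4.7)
The plan is to prove the proposition by reducing all compositions in $\CH_n$ to endomorphism monoids and invoking Proposition \ref{pr:diffnilHecke} through the isomorphism $F_I$. I would proceed as follows.

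First, I would check the grading and the isomorphism part. By Lemma \ref{le:setDforSn}, every summand $\sigma^{i_1,i_2}$ in $d(\sigma)$ satisfies $\deg(\sigma^{i_1,i_2}) = \deg(\sigma) + 1$ in $\Gamma_n$, so $d$ indeed increases the $\Gamma_n$-grading by $1$ (and hence also the $\BZ$-grading, via the projection onto $\frac{1}{2}\BZ$). For the isomorphism $F_I : \hat{\GS}_{|I|} \iso \End_{\CS_n}(I)$, it is an isomorphism of groups that preserves length (as noted before Lemma \ref{le:formulalength}). Combining Lemma \ref{le:setDforSn} with Lemmas \ref{le:lengthaffine} and \ref{le:lengthoneaffine} applied to $\hat{\GS}_{|I|}$, the map $F_I$ identifies the length-one Bruhat covers on both sides, so it intertwines the differential of $\hat{\GS}_{|I|}^{\nil}$ (given by $d(T_{wc^m}) = d(T_w)c^m$, consistent with Proposition \ref{pr:diffnilHecke} and $d(c)=0$) with the differential of $\End_{\CH_n}(I)$ defined in \S\ref{se:Heckedifferential}.

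Second, I would establish the relations $d^2 = 0$ and the Leibniz rule for arbitrary morphisms of $\CS_n$. The key reduction is that any $\sigma \in \Hom_{\CS_n}(I,J)$ can be transported into $\End_{\CS_n}(I)$ by composing with the unique increasing bijection $\tau_J : J \to I$, and such $\tau_J$ has length zero. By Lemma \ref{le:tauBruhat}, pre- or post-composition with a length-zero bijection is a bijection of Bruhat covers, so $d(\tau \circ \sigma) = \tau \circ d(\sigma)$ and $d(\sigma \circ \tau) = d(\sigma) \circ \tau$ in $\BF_2[\CH_n]$, and $\tau$ is invertible in $\CH_n$. Applying this twice to a composition $\sigma = \sigma' \circ \sigma''$ with $\sigma'' \in \Hom(I,J)$, $\sigma' \in \Hom(J,K)$, we write $\tau_K \circ \sigma' \circ \sigma'' = (\tau_K \sigma' \tau_J^{-1}) \circ (\tau_J \sigma'')$ as a product in $\End_{\CS_n}(I)$. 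Via the first step, the Leibniz rule and $d^2 = 0$ for this product follow from Proposition \ref{pr:diffnilHecke} applied to $\hat{H}_{|I|}$. Multiplying by $\tau_K^{-1}$ (an invertible length-zero map) and canceling yields the same identities for $\sigma' \circ \sigma''$ in $\BF_2[\CH_n]$.

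The most delicate point, which I expect to be the main obstacle, is making the reduction step clean in the case where $\ell(\sigma' \sigma'') < \ell(\sigma') + \ell(\sigma'')$, so that $\sigma' \circ \sigma'' = 0$ in $\CH_n$: one must check that the right-hand side $d(\sigma') \circ \sigma'' + \sigma' \circ d(\sigma'')$ also vanishes in $\BF_2[\CH_n]$. This is exactly the statement that in the product $(\tau_K \sigma' \tau_J^{-1})(\tau_J \sigma'')$ inside $\hat{\GS}_{|I|}^{\nil}$, the Leibniz rule holds even when the product is zero, which is guaranteed by the differential graded algebra structure on $\hat{H}_{|I|}$ established in Proposition \ref{pr:diffnilHecke} (extended by $d(c) = 0$). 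Once this point is in place, the differential $\Gamma_n$-graded pointed structure on $\CH_n$ follows, and the map $F_I$ of Step~1 is automatically an isomorphism of such structures.
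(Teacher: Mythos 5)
Your proof is correct and follows essentially the same strategy as the paper's: reduce to endomorphism monoids by pre- and post-composing with length-zero (increasing) bijections, and invoke the dg algebra structure on the extended nil Hecke algebra $\hat{H}_{|I|}$ via $F_I$. The only cosmetic difference is that you transport both factors into $\End(I)$ (which requires inserting $\tau_J^{-1}\tau_J$ in the middle), whereas the paper places length-zero maps only on the outside of the composite $\tau'\sigma'\sigma\tau$ and works in $\End(J)$, avoiding explicit inverses; both are valid once one knows length-zero maps are invertible in $\CH_n$ and that $d(\tau\circ\sigma)=\tau\circ d(\sigma)$, $d(\sigma\circ\tau)=d(\sigma)\circ\tau$ (Lemma \ref{le:tauBruhat}).
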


\begin{proof}
	Note that Lemma \ref{le:setDforSn} shows that $d$ is homogeneous of degree $1$.
	The compatibility of $d$ with $F_I$ follows from Lemma \ref{le:lengthoneaffine}.
	
Consider now $\sigma\in\Hom_{\CH_n}(I,J)$ non-zero. There exists
	$\tau\in\Hom_{\CH_n}(J,I)$ with $\ell(\tau)=0$. We have
	$d(\tau\circ\sigma)=\tau\circ d(\sigma)$, hence
	$d^2(\tau\circ\sigma)=\tau\circ d^2(\sigma)$. The compatibility of $F_I$ with $d$
	shows that $d^2(\tau\circ \sigma)=0$. Since $\tau$ is invertible, we deduce
	that $d^2(\sigma)=0$.

	Consider finally $\sigma'\in\Hom_{\CH_n}(J,K)$ and fix 
	$\tau'\in\Hom_{\CH_n}(K,J)$ with $\ell(\tau')=0$.
	We have $d(\tau'\circ\sigma'\circ\sigma\circ\tau)=
	\tau'\circ d(\sigma'\circ\sigma)\circ\tau$ and
	it follows from the compatibility of $F_J$ with $d$ that
	\begin{align*}
		d(\tau'\circ\sigma'\circ\sigma\circ\tau)&=
	F_J\bigl(d(F_J^{-1}(\tau'\circ\sigma'\circ\sigma\circ\tau))\bigr)=
	F_J\bigl(d(F_J^{-1}(\tau'\circ\sigma')\circ F_J^{-1}(\sigma\circ\tau))\bigr)\\
		&=F_J\bigl(d(F_J^{-1}(\tau'\circ\sigma'))\circ
		F_J^{-1}(\sigma\circ\tau)\bigr)+
		F_J\bigl(F_J^{-1}(\tau'\circ\sigma'))\circ d(F_J^{-1}(\sigma\circ\tau)\bigr)\\
		&=d(\tau'\circ\sigma')\circ\sigma\circ\tau+\tau'\circ\sigma'\circ
		d(\sigma\circ\tau).
	\end{align*}
\end{proof}

\begin{example}
	Elements of $\tilde{L}(\sigma)$ correspond to intersections in a representing
	diagram. Given $(i_1,i_2)\in \tilde{L}(\sigma)$, the
	element $\sigma^{i_1,i_2}$ correspond to the diagram obtained by smoothing the
	intersection point corresponding to $(i_1,i_2)$. If $(i_1,i_2){\not\in}
	\tilde{D}(\sigma)$, the element associated to the diagram will vanish in
	$\CH_n$.
$$\includegraphics[scale=0.5]{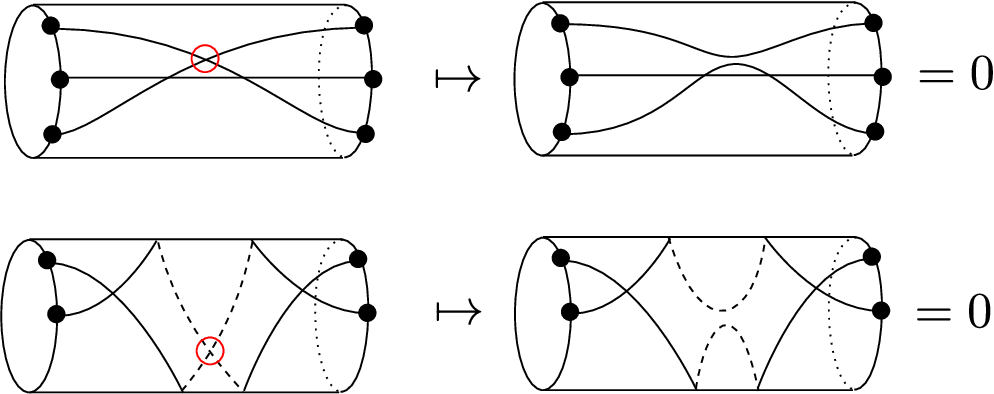}$$
\end{example}

\subsubsection{Change of $n$}

Fix a positive integer $n'\le n$ and an increasing injection
$\alpha:\{1,\ldots,n'\}\hookrightarrow\{1,\ldots,n\}$.
We extend $\alpha$ to an increasing injection $\BZ\to\BZ$ by
$\alpha(r+dn')=\alpha(r)+dn$ for $r\in\{1,\ldots,n'\}$ and $d\in\BZ$.

\medskip
Consider $\alpha:\{1,\ldots,n'\}\hookrightarrow\{1,\ldots,n\}$ an increasing injection as
in \S\ref{se:Sncat}. We define two injective morphisms of groups
$$R_\alpha:R_n\to R_{n'},\ \alpha_{i+n'\BZ}\mapsto\alpha_{\alpha(i),\alpha(i+1)}
\text{ and }
L_\alpha:L_n\to L_{n'},\ \eps_{i+n'\BZ}\mapsto\eps_{i+n\BZ}$$
for $1\le i\le n'$.
We have commutative diagrams
$$\xymatrix{
	R_{n'}\ar[r]^-{\rho}\ar[d]_{R_\alpha} & L_{n'}\ar[d]^{L_\alpha} &
	R_{n'}\times L_{n'}\ar[r]^-{\cdot} \ar[d]_{R_\alpha\times L_\alpha} & 
	  L_{n'}\ar[d]^{L_\alpha} &
	R_{n'}\times R_{n'}\ar[r]^-{\langle -,-\rangle} \ar[d]_{R_\alpha\times R_\alpha} & 
	  L_{n'}\ar[d]^{L_\alpha} 
	  \\
	R_n\ar[r]_-{\rho} & L_n & 
	R_n\times L_n\ar[r]_-{\cdot} & L_n &
	R_n\times R_n\ar[r]_-{\langle -,-\rangle} & L_n &
}$$

As a consequence, we have two injective morphisms of groups
$$\Gamma'_\alpha=L_\alpha\times R_\alpha:\Gamma'_{n'}\to\Gamma'_n\text{ and }
\Gamma_\alpha=\id\times L_\alpha\times R_\alpha:\Gamma_{n'}\to\Gamma_n,$$
the last of which induces an injective morphism of groups
$\Gamma_D\to\Gamma_{(\alpha\times\id)(D)}$, for 
$D$ a subset of $\{1,\ldots,n'\}\times\{\pm 1\}$ that embeds in its
projection on $\{1,\ldots,n'\}$.

\medskip
We define now a fully faithful functor $F=F_\alpha:\CS_{n'}\to\CS_n$\indexnot{Fa}{F_\alpha}.
Given $I$ a subset of $\BZ/n'$, we define $F(I)$ to be the image of
$\alpha(\tilde{I}\cap[1,n'])$ in $\BZ/n$.
Given $\sigma\in\Hom_{\CS_{n'}}(I,J)$,
we put $F(\sigma)=\alpha\circ \sigma\circ\alpha^{-1}$.

Note that the isomorphism of groups $\hat{\GS}_{n'}=\End_{\CS_{n'}}(\BZ/n')\iso
\End_{\CS_n}(F(\BZ/n'))$ induced by $F$ coincides with $F_{F(\BZ/n')}$ defined
in \S\ref{se:Sncat}.

\smallskip

As a consequence, $F_\alpha$ induces a fully faithful graded functor
$\CH_{n'}\to\CH_n$.

\begin{lemma}
Given $n'\le n$ and $\alpha:\{1,\ldots,n'\}\hookrightarrow\{1,\ldots,n\}$ an
	increasing injection, the functor $F_\alpha$ induces a differential
	$\Gamma_n$-graded pointed functor $\CH_{n'}\to \CH_n$.
\end{lemma}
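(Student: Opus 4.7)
The plan is to verify two compatibilities: (i) $F_\alpha$ respects the $\Gamma_n$-grading (viewing $\CH_{n'}$ as $\Gamma_n$-graded via the embedding $\Gamma_\alpha$), and (ii) $F_\alpha$ commutes with the differential $d$. The overarching observation that makes both work is that the extended map $\alpha:\BZ\to\BZ$ is strictly increasing (since $\alpha(r+n')=\alpha(r)+n$ and $\alpha|_{\{1,\ldots,n'\}}$ is strictly increasing with image in $\{1,\ldots,n\}$), and that $\alpha(i_2)-\alpha(i_1)\ge n$ if and only if $i_2-i_1\ge n'$ (the forward direction since $i_2\ge i_1+n'$ gives $\alpha(i_2)\ge \alpha(i_1)+n$, and the converse by strict monotonicity applied to $\alpha(i_2)\ge\alpha(i_1+n')$).

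For (i), I would show, for $\sigma\in\Hom_{\CS_{n'}}(I,J)$, the three equalities $\ell(F_\alpha(\sigma))=\ell(\sigma)$, $\llbracket F_\alpha(\sigma)\rrbracket=R_\alpha(\llbracket\sigma\rrbracket)$ and $m(F_\alpha(\sigma))=L_\alpha(m(\sigma))$. The first follows because the map $(j_1,j_2)\mapsto(\alpha(j_1),\alpha(j_2))$ gives a bijection $L(\sigma)\iso L(F_\alpha(\sigma))$ that restricts to a bijection on the $\tilde{L}$'s (since $1\le j_1\le n'$ corresponds to $1\le\alpha(j_1)\le n$). The second is the assertion $R_\alpha(\alpha_{i,\sigma(i)})=\alpha_{\alpha(i),\alpha(\sigma(i))}$, which is immediate from the definition of $R_\alpha$ on generators and the additivity of $\alpha_{i,j}$. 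The third reduces to checking compatibility of $L_\alpha$ and $R_\alpha$ with the action of $R_n$ on $L_n$, which is a direct case check using that $\alpha(b)\in[\alpha(r),\alpha(r+1))$ iff $b=r$.

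For (ii), the core computation is an identification $\tilde{D}(F_\alpha(\sigma))=(\alpha\times\alpha)(\tilde{D}(\sigma))$ compatible with $(i_1,i_2)\mapsto \sigma^{i_1,i_2}$. The set-theoretic equality splits into verifying conditions (a) and (b) of the definition of $D(\sigma)$: condition (a) transfers by the boxed observation above, and condition (b) transfers by strict monotonicity of $\alpha$ on $\BZ$ combined with $F_\alpha(\sigma)(\alpha(i))=\alpha(\sigma(i))$. The compatibility of the target map then reduces to the identity $\alpha\circ s_{i_1,i_2}=s_{\alpha(i_1),\alpha(i_2)}\circ\alpha$ as maps $\BZ\to\BZ$, whence $F_\alpha(\sigma\circ s_{i_1,i_2})=F_\alpha(\sigma)\circ s_{\alpha(i_1),\alpha(i_2)}$. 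This identity is proved by a three-case analysis on the residue of $r\in\BZ$ modulo $n'$: the cases $r\equiv i_1$ and $r\equiv i_2$ are direct from the periodicity $\alpha(r+kn')=\alpha(r)+kn$, and the third case uses that injectivity of $\alpha$ on $\{1,\ldots,n'\}$ forces $\alpha(r)\not\equiv\alpha(i_1),\alpha(i_2)\pmod n$ whenever $r\not\equiv i_1,i_2\pmod{n'}$.

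Assembling these pieces: $F_\alpha$ preserves composition and the zero morphism, is compatible with the $\Gamma_n$-grading via $\Gamma_\alpha$ (and hence with the underlying $\BZ$-grading via $\ell$), and satisfies $d\circ F_\alpha=F_\alpha\circ d$. The main obstacle is the bookkeeping in checking $D(F_\alpha(\sigma))=(\alpha\times\alpha)(D(\sigma))$, but once the boxed observation and strict monotonicity of $\alpha:\BZ\to\BZ$ are in hand, each verification is routine; no genuine conceptual difficulty arises.
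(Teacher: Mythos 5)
Your proposal is correct and follows essentially the same route as the paper's own (quite terse) proof: establish that $\alpha\times\alpha$ carries $L(\sigma)$, $\tilde{L}(\sigma)$ and $D(\sigma)$ to the corresponding sets for $F_\alpha(\sigma)$, that $R_\alpha$, $L_\alpha$ and $\Gamma_\alpha$ intertwine $\llbracket-\rrbracket$, $m(-)$ and $\deg(-)$, and that $F_\alpha(s_{i_1,i_2})=s_{\alpha(i_1),\alpha(i_2)}$. The paper cites the earlier commutative diagrams for $R_\alpha$, $L_\alpha$ and the pairing and states the set-theoretic identities without elaboration; your version fills in those checks, and the "boxed observation" ($\alpha(i_2)-\alpha(i_1)\ge n$ iff $i_2-i_1\ge n'$, by strict monotonicity and $n'$-to-$n$ periodicity of the extended $\alpha$) is precisely the point that makes the transfer of the defining conditions for $D$ go through.
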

	
\begin{proof}
		Let $\sigma\in\Hom_{\CS_n}(I,J)$.
	We have $L(F_\alpha(\sigma))=(\alpha\times\alpha)(L(\sigma))$, hence
	$\ell(F_\alpha(\sigma))=\ell(\sigma)$. We have $R_\alpha(\llbracket\sigma\rrbracket)=
	\llbracket F_\alpha(\sigma)\rrbracket$, hence
	$L_\alpha(m(\sigma))=m(F_\alpha(\sigma))$. We deduce that
	$\Gamma_\sigma(\deg(\sigma))=\deg(F_\alpha(\sigma))$.

	We have $D(F_\alpha(\sigma))=(\alpha\times\alpha)(D(\sigma))$ and
	$F_\alpha(s_{i_1,i_2})=s_{\alpha(i_1),\alpha(i_2)}$ for $i_1,i_2\in\tilde{I}$
	with $i_1-i_2{\not\in}n\BZ$, hence
	$F_\alpha$ is compatible with $d$.
\end{proof}

\subsection{Positive and finite variants}
\subsubsection{Constructions}
We define now positive and finite variants of the categories.

\smallskip
We define  $\hat{\GS}_n^{++}$\indexnot{Sn+}{\hat{\GS}_n^{++}}
to be the submonoid of $\hat{\GS}_n$ of elements
$\sigma$ such that $\sigma(r)\ge r$ for all $r\in\BZ$.

\smallskip
Let $?\in\{+,++\}$.
We define
$\CS_n^?$\indexnot{Sn+}{\CS_n^+,\ \CS_n^{++}} to be the $\Gamma_n$-filtered
subcategory of $\CS_n$ with same objects as $\CS_n$ and with
maps those $\sigma\in\Hom_{\CS_n}(I,J)$ such that $\sigma(r)>0$ if $?=+$
(resp. $\sigma(r)\ge r$ if $?=++$) for all $r\in \tilde{I}\cap\BZ_{>0}$.
We define $\CH_n^?$\indexnot{Hn+}{\CH_n^+,\ \CH_n^{++}}
as the $\Gamma_n$-graded pointed subcategory of
$\CH_n$ with same objects as $\CH_n$ and non-zero maps those of $\CS_n^?$. Note
that there is a canonical isomorphism of $\Gamma_n$-graded pointed categories
$\gr\CS_n^?\iso \CH_n^?$.

\smallskip
Note that the usual symmetric group
$\GS_n$ identifies with the subgroup of $\hat{\GS}_n$ of
elements $\sigma$ such that $\sigma(\{1,\ldots,n\})=\{1,\ldots,n\}$.
The subalgebra of $\hat{H}_n$ generated by $T_1,\ldots,T_{n-1}$ is isomorphic to $H_n$.

We denote by $\CS_n^f$\indexnot{Snf}{\CS_n^f} the $\Gamma_n$-filtered subcategory of 
$\CS_n$ with same objects as $\CS_n$ and with
maps those $\sigma\in\Hom_{\CS_n}(I,J)$ such that $\sigma(r)\in\{1,\ldots,n\}$
for all $r\in \tilde{I}\cap\{1,\ldots,n\}$. We denote by $\CH_n^f$\indexnot{Hnf}{\CH_n^f}
the corresponding
$\Gamma_n$-graded pointed subcategory of $\CH_n$. There is a canonical isomorphism of
$\Gamma_n$-graded pointed categories $\gr\CS_n^f\iso \CH_n^f$.

\smallskip
We have also subcategories $\CS_n^{f++}=\CS_n^f\cap \CS_n^{++}$\indexnot{Snf+}{\CS_n^{f++}}
of $\CS_n$ and $\CH_n^{f++}=\CH_n^f\cap\CH_n^{++}$\indexnot{Hnf+}{\CH_n^{f++}} of $\CH_n$.

\begin{lemma}
	$\CH_n^f$, $\CH_n^+$, $\CH_n^{++}$ and $\CH_n^{f++}$ are differential
$\Gamma_n$-graded pointed subcategories of $\CH_n$.
\end{lemma}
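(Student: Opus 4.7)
The plan is to verify, for each of the four subcategories $\CH_n^?$ with $?\in\{f,+,++,f{++}\}$, that if $\sigma\in\CS_n^?$ and $(i_1,i_2)\in\tilde{D}(\sigma)$ then $\sigma^{i_1,i_2}\in\CS_n^?$. Since the differential is $d(\sigma)=\sum_{(i_1,i_2)\in\tilde{D}(\sigma)}\sigma^{i_1,i_2}$ and since the terms of this sum are mutually distinct non-zero elements of $\CS_n^?$ by Lemma~\ref{le:setDforSn} and the identification $\gr\CS_n^?\iso\CH_n^?$, this closure property is exactly what is needed.

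The first step is to reduce each defining condition to a check on the fundamental domain $\tilde{I}\cap\{1,\ldots,n\}$, using the $n$-periodicity $\sigma(r+n)=\sigma(r)+n$. This gives: $\sigma\in\CS_n^+$ iff $\sigma(r)>0$ for all $r\in\tilde{I}\cap\{1,\ldots,n\}$; $\sigma\in\CS_n^{++}$ iff $\sigma(r)\ge r$ there; and $\sigma\in\CS_n^f$ iff $\sigma(r)\in\{1,\ldots,n\}$ there. Given $(i_1,i_2)\in\tilde{D}(\sigma)$ with $1\le i_1\le n$ and $i_1<i_2$, I write $i_2=j_2+mn$ with $j_2\in\{1,\ldots,n\}$ and $m\ge 0$. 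The permutation $s_{i_1,i_2}$ restricted to $\tilde{I}\cap\{1,\ldots,n\}$ fixes everything except possibly $i_1$ and $j_2$, sending $i_1\mapsto i_2$ and $j_2\mapsto i_1-mn$. Therefore $\sigma^{i_1,i_2}(i_1)=\sigma(j_2)+mn$ and $\sigma^{i_1,i_2}(j_2)=\sigma(i_1)-mn$, while $\sigma^{i_1,i_2}$ agrees with $\sigma$ at all other points of $\tilde{I}\cap\{1,\ldots,n\}$.

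The verification then splits according to $m$. In the case $m=0$ (so $j_2=i_2\in\{1,\ldots,n\}$), the map $\sigma^{i_1,i_2}$ is obtained from $\sigma$ by swapping its values at $i_1$ and $i_2$, and the three conditions follow directly from the original conditions on $\sigma$ together with the inequality $\sigma(i_1)>\sigma(i_2)$ coming from $(i_1,i_2)\in L(\sigma)$ (for instance, for $? = ++$, one uses $\sigma(i_1)>\sigma(i_2)\ge i_2>i_1$). In the case $m\ge 1$, the decisive input is again $\sigma(i_1)>\sigma(i_2)=\sigma(j_2)+mn$: for $?=+$ this gives $\sigma(i_1)-mn>\sigma(j_2)>0$; for $?=++$ it gives $\sigma(i_1)-mn>\sigma(j_2)\ge j_2$, while $\sigma(j_2)+mn\ge 1+mn>n\ge i_1$ handles the other value.

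For $?=f$ the case $m\ge 1$ cannot occur: if $\sigma\in\CS_n^f$ then $\sigma(j_2)\in\{1,\ldots,n\}$, so $\sigma(i_2)=\sigma(j_2)+mn>n\ge\sigma(i_1)$, contradicting $(i_1,i_2)\in L(\sigma)$. Hence only the case $m=0$ arises, in which swapping values at $i_1,i_2\in\{1,\ldots,n\}$ preserves $\CS_n^f$. Finally $\CH_n^{f++}=\CH_n^f\cap\CH_n^{++}$ is closed under $d$ since each of $\CH_n^f$ and $\CH_n^{++}$ is. The only slightly subtle step is pinning down that $\tilde{D}(\sigma)$ forces $m=0$ in the $f$-case; everything else is a direct inequality chase.
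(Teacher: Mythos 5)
Your argument is correct, and for $\CH_n^f$ and $\CH_n^+$ it takes a genuinely different route from the paper's. The paper first picks an increasing bijection $\tau\in\Hom_{\CS_n^?}(J,I)$ of length zero, uses $d(\tau\circ\sigma)=\tau\circ d(\sigma)$ to reduce to the endomorphism monoid of a single object, and then invokes the isomorphism $F_I:\hat{\GS}_{|I|}^{\nil}\iso\End_{\CH_n}(I)$ of Proposition~\ref{pr:Heckedg}, under which $\End_{\CH_n^f}(I)$ corresponds to the differential subalgebra $H_{|I|}\subset\hat{H}_{|I|}$; this gives the $f$ (and, by a parallel argument, $+$) case without any inequality-chasing. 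You instead verify directly that $\tilde{D}(\sigma)$ is closed under $(i_1,i_2)\mapsto\sigma^{i_1,i_2}$ within each $\CS_n^?$, by reducing to the fundamental domain, decomposing $i_2=j_2+mn$, and tracking the two moved values; the key input in each case is the inequality $\sigma(i_1)>\sigma(i_2)$ coming from $(i_1,i_2)\in L(\sigma)$. For $\CH_n^{++}$ your computation coincides with the paper's. The paper's reduction is shorter once $F_I$ is available and treats $f$ and $+$ uniformly via the endomorphism algebra; your approach is more elementary, handles all four subcategories by the same mechanism, and in particular makes explicit the nice fact that in the $f$ case the pair $(i_1,i_2)\in\tilde{D}(\sigma)$ is forced to lie in $\{1,\ldots,n\}^2$ (i.e.\ $m=0$), which the paper's structural argument leaves implicit.
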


\begin{proof}
	Let $\sigma\in\Hom_{\CH_n^f}(I,J)$. There is $\tau\in\Hom_{\CH_n^f}(J,I)$
	with $\ell(\tau)=0$. We have $d(\tau\circ\sigma)=\tau\circ d(\sigma)$.
	The isomorphism $\hat{H}_n\iso\End_{\BF_2[\CH_n]}(\BZ/n)$ 
	given by Proposition \ref{pr:Heckedg} restricts to an isomorphism of
	differential graded algebras $H_n\iso\End_{\BF_2[\CH_n^f]}(\BZ/n)$. It
	follows that
	$d(\tau\circ\sigma)\in \BF_2[\CH_n^f]$, hence $d(\sigma)\in \BF_2[\CH_n^f]$. So,
	$\BF_2[\CH_n^f]$ is a differential subcategory of $\BF_2[\CH_n]$.

	One shows similarly that $\BF_2[\CH_n^+]$ is a differential subcategory
	of $\BF_2[\CH_n]$.

	\smallskip
	Let $\sigma\in\Hom_{\CH_n^{++}}(I,J)$.
	Let $(i_1,i_2)\in D(\sigma)$ and let $\sigma'=\sigma^{i_1,i_2}$.
	Given $i\in\tilde{I}$, we have $\sigma'(i)=\sigma(i)$ if
	$i\not\in (i_1+n\BZ)\cup (i_2+n\BZ)$, while
	$$\sigma'(i_1)=\sigma(i_2)\ge i_2>i_1 \text{ and }
	\sigma'(i_2)=\sigma(i_1)>\sigma(i_2)\ge i_2.$$
	It follows that $\sigma'\in\Hom_{\CH_n^{++}}(I,J)$, hence
	$d(\sigma)\in \BF_2[\CH_n^{++}]$.
\end{proof}

We extend all previous constructions to the case $n=0$ by setting
$\hat{\GS}_0=\hat{\GS}_0^{++}=\GS_0=1$, $H_0=\hat{H}_0=\BF_2$,
$\CS_0=\CS_0^{++}=\CS_0^f$ is the category with one object $\emptyset$ and one map
and $\CH_0=\CH_0^f=\CH_0^{++}$ is its associated pointed category.

\medskip
	Let
$R^f_n=\bigoplus_{a\in\BZ/n,\ a\neq -1}\BZ\alpha_a$\indexnot{Rf}{R^f_n}.

	Let $\Gamma^f_n=\{(r,(l,\alpha))\ |\ \alpha\in R^f_n\}$,
\indexnot{Gf}{\Gamma_n^f} a subgroup
of $\Gamma_n$. Given $D$ as above, we denote by 
$\Gamma_D^f$\indexnot{GDf}{\Gamma^f_n} the image
of $\Gamma^f_n$ in $\Gamma_D$.

Given $\sigma$ a map in $\CS_n^f$, we have $\deg(\sigma)\in\Gamma^f_n$.
This shows that the $\Gamma_n$-gradings on 
$\CH_n^f$ and $\CH_n^{f++}$ come from $\Gamma_n^f$-gradings.

\subsubsection{Lipshitz-Ozsv\'ath-Thurston's strands algebras}
\label{se:strandsalgebras}

Fix $n\ge 1$.
The differential algebra\indexnot{A(}{\CA(n)}
$$\CA(n)=\End_{\add(\BF_2[\CH_n^{f++}])}(\bigoplus_{I\subset\BZ/n}I)$$
is the opposite of the strands algebra $\CA_{LOT}(n)$ with $n$ places of
\cite[Definition 3.2]{LiOzTh1}.

There is a grading on $\CA_{LOT}(n)$ by a group $G'(n)$
\cite[\S 3.3.1]{LiOzTh1}. This gives rise to a grading by
$G'(n)^{\opp}$ on $\CA(n)$.

The group $G'(n)^\opp$ identifies with the index $2$ subgroup
$\ker\epsilon\cap \Gamma_{[1,n]^+}^f$
of $\Gamma_{[1,n]^+}^f$ via $(r,\alpha)\mapsto (-r,-\alpha)$
(cf Remark \ref{re:quotientform} and the identification of
$\Gamma_{[1,n]^+}$ with the set
$\frac{1}{2}\BZ\times R_n$ before Lemma \ref{le:degepsilon}). Via
this isomorphism, the $G'(n)^{\opp}$-grading on $\CA(n)$ comes from 
our $\Gamma_{[1,n]^+}^f$-grading on $\CH_n^{f++}$.

\section{Strand algebras}
\label{se:strandalgebras}
\subsection{$1$-dimensional spaces}
\label{se:1dimensionalspaces}

\subsubsection{Definitions}
\label{se:1dimspaces}

A manifold is defined to be a topological manifold with boundary
with finitely many connected components, all of which have the same dimension. 
A $1$-dimensional manifold is
a finite disjoint union of copies of $S^1$, $\BR$, $\BR_{\ge 0}$ and $[0,1]$.

\smallskip
Given a point $x$ of a topological space $X$, we put
$C(x)=C_X(x)=\lim_U\pi_0(U-\{x\})$\indexnot{Cx}{C(x)}, where $U$ runs over the
set of open neighbourhoods of $x$.
If $X'$ is a subspace of $X$ containing an open neighbourhood of $x$,
then we have a canonical bijection $C_{X'}(x)\iso C_X(x)$
and we identify those two sets.

We put $T(X)=\coprod_{x\in X}C(x)$\indexnot{Tx}{T(X)} and we denote by $\pt:T(X)\to X$
\indexnot{pt}{\pt} the canonical map.

\begin{defi}
	We define a {\em $1$-dimensional space}\index[ter]{$1$-dimensional space}
	to be a topological space
that is homeomorphic to the complement
of a finite set of points in a $1$-dimensional finite CW-complex, and that has no
connected component that is a point.
\end{defi}

\medskip
Given $E$ a finite subset of $S^1=\{z\in\BC\ |\ ||z||=1\}$, we put
$\St(E)=\bigcup_{e\in E}\BR_{\ge 0}e$\indexnot{St}{\St} and
$\St^\circ(E)=\St(E)-\{0\}$.
These are $1$-dimensional spaces.
Given $n\ge 1$, we put $\St(n)=\St(\{e^{2i\pi r/n}\}_{0\le r<n})$.

\medskip
Let $X$ be a $1$-dimensional space. There is a finite subset $E$ of $X$ such that
$X-E$ is homeomorphic to a finite disjoint union of copies of $\BR$.

Let $x\in X$.
If $U$ is a small enough connected open neighbourhood of $x$, then there is a
homeomorphism $U\iso \St(n_x),\ x\mapsto 0$ for some $n_x=n_{x,X}\ge 1$\indexnot{nx}{n_x}.
In addition,
we have a canonical bijection $C(x)\iso \pi_0(U-\{x\})$ and we identify those two sets
of cardinality $n_x$.

\smallskip

We define the boundary $\partial X=\{x\in X\ |\ n_x=1\}$.
We put $X_{exc}=\{x\in X | n_x\ge 3\}$\indexnot{Xexc}{X_{exc}}.

\begin{defi}
	We say $X$ is {\em non-singular}\index[ter]{non-singular $1$-dimensional space}
	if $X_{exc}=\emptyset$. Note that
$X-X_{exc}$ is a non-singular $1$-dimensional space.
\end{defi}

A $1$-dimensional space is non-singular if and only if it is a $1$-dimensional
manifold.

\begin{defi}
	We say that an open neighbourhood $U$ of $x\in X$ is {\em small}
	\index[ter]{small open neighbourhood} if 
it is homeomorphic to $\St(n_x)$, if
$|\overline{U}-U|=n_x$ and if $n_{x'}=2$ for all $x'\in\overline{U}-\{x\}$.
\end{defi}

Note that every point of a $1$-dimensional space admits a small open neighbourhood.

\subsubsection{Morphisms}
Let $X'$ be a $1$-dimensional space and let $f:X\to X'$ be a continuous map.
Let $X'_f$\indexnot{Xf}{X_f}
be the set of points $x'\in X'$ such that there is no open neighbourhood
$U$ of $x'$ with the property that $f_{|f^{-1}(U)}:f^{-1}(U)\to U$ is a 
homeomorphism. Let $X_f=f^{-1}(X'_f)$.

\begin{lemma}
	\label{le:equivmorphism1space}
	The following conditions are equivalent:
	\begin{enumerate}
		\item there is a finite subset $E_1$ of $X$ such that $f(X-E_1)$ is
			open in $X'$ and $f_{|X-E_1}:X-E_1\to f(X-E_1)$ is a 
			homeomorphism
		\item $X_f$ is finite
		\item there is a finite subset $E_2$ of $X$ such that
			$f_{|X-E_2}:X-E_2\to f(X-E_2)$ is a homeomorphism
		\item given $x\in X$, there is a finite subset $E_x$ of $X-\{x\}$
			such that $f_{|X-E_x}$ is injective
		\item there is a finite subset $E_3$ of $X$ such that $f_{|X-E_3}$
			is injective.
	\end{enumerate}
\end{lemma}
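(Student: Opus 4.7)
I would structure the proof as the cycle $(1)\Rightarrow (3)\Rightarrow (5)\Rightarrow (2)\Rightarrow (1)$ together with the equivalence $(4)\Leftrightarrow (5)$. The chain $(1)\Rightarrow (3)\Rightarrow (5)$ is immediate by setting $E_2 := E_1$ and $E_3 := E_2$, and $(4)\Rightarrow (5)$ follows by fixing any $x_0\in X$ and taking $E_3:=E_{x_0}$. For $(5)\Rightarrow (4)$, given $x\in X$, set $E_x := E_3$ if $x\notin E_3$; otherwise note that injectivity of $f_{|X-E_3}$ implies there is at most one $y\in X-E_3$ with $f(y)=f(x)$, and set $E_x := (E_3\setminus\{x\})\cup\{y\}$ if such a $y$ exists, else $E_x := E_3\setminus\{x\}$.

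For $(1)\Rightarrow (2)$, let $E := f^{-1}(f(E_1))$. Each $y\in f(E_1)$ has at most one preimage in $X-E_1$ by injectivity of the restricted homeomorphism, so $E$ is finite. Setting $V := f(X-E_1)\setminus f(E_1)$ gives an open subset of $X'$ (since $X'$ is Hausdorff and $f(E_1)$ is finite), with $f^{-1}(V)=X-E$; restricting the homeomorphism $f_{|X-E_1}$ shows that $f_{|X-E}:X-E\to V$ is a homeomorphism. Hence every $x\in X-E$ admits $V$ as a neighborhood of $f(x)$ witnessing $x\notin X_f$, giving $X_f\subseteq E$, which is finite.

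For $(2)\Rightarrow (1)$, take $E_1 := X_f$. The set $X-X_f$ is open because $X'_f$ is closed (its complement is the union of the open sets $U$ satisfying the homeomorphism condition). At each $x\in X-X_f$ there is an open $V\ni f(x)$ such that $f_{|f^{-1}(V)}:f^{-1}(V)\to V$ is a homeomorphism, which rules out any $y\neq x$ in $f^{-1}(V)$ with $f(y)=f(x)$. Thus $f_{|X-X_f}$ is injective and, as a globally injective local homeomorphism with open image, is an open embedding.

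The principal obstacle is $(5)\Rightarrow (2)$, which upgrades set-theoretic injectivity on a cofinite set to a local topological normal form. I would set
$$E := E_3\cup f^{-1}(f(E_3))\cup X_{\mathrm{exc}}\cup f^{-1}(X'_{\mathrm{exc}}),$$
which is finite: $E_3$ and $X_{\mathrm{exc}}$ are finite by hypothesis and by the definition of a $1$-dimensional space, $f^{-1}(f(E_3))$ is finite by the argument of the previous step, and $f^{-1}(X'_{\mathrm{exc}})$ is finite because injectivity of $f_{|X-E_3}$ forces the $f$-preimage of any finite subset of $X'$ to be finite. For $x\in X-E$ one has $n_x,n_{f(x)}\in\{1,2\}$, so choosing small open neighborhoods $U\cong \St(n_x)$ of $x$ in $X-E_3$ and $V\cong\St(n_{f(x)})$ of $f(x)$ in $X'-f(E_3)$, and shrinking $U$ so that $f(U)\subset V$, the map $f_{|U}:U\to V$ is a continuous injection between open subsets of $1$-manifolds (possibly with boundary). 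The claim that $f$ is a local homeomorphism at $x$ onto an open subset of $V$ then reduces, by a short case analysis on $(n_x,n_{f(x)})$, to the classical fact that a continuous injection of real intervals is strictly monotone, hence a homeomorphism onto an open subinterval. The inclusion $X_f\subset E$ follows.
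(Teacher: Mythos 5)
Your cycle $(1)\Rightarrow(3)\Rightarrow(5)\Rightarrow(2)\Rightarrow(1)$ together with $(4)\Leftrightarrow(5)$ is a valid way to organize the lemma, and your arguments for $(1)\Rightarrow(2)$, $(2)\Rightarrow(1)$, $(5)\Rightarrow(4)$, and $(4)\Rightarrow(5)$ are all correct. (Your $(2)\Rightarrow(1)$ is in fact more careful than the paper, which simply asserts that $E_2 := X_f$ works for $(2)\Rightarrow(3)$; you supply the argument.)

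The gap is in $(5)\Rightarrow(2)$. You assert that for $x\in X-E$, ``$f$ is a local homeomorphism at $x$ onto an open subset of $V$,'' citing the fact that a continuous injection of real intervals is strictly monotone, ``hence a homeomorphism onto an open subinterval.'' The italicized conclusion is wrong when the domain interval is half-closed: a strictly monotone continuous injection $[0,\eps)\to\BR$ is a homeomorphism onto a half-closed subinterval $[f(0),c)$ or $(c,f(0)]$, which is \emph{not} open in $\BR$. This is exactly the situation $n_x=1$, $n_{f(x)}=2$, i.e.\ $x\in\partial X$ mapping to an interior point of $X'$. Your set $E$ does not exclude such $x$: take $X=\BR_{\ge 0}$, $X'=\BR$, $f$ the inclusion, $E_3=\emptyset$. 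Then $X_{\mathrm{exc}}=X'_{\mathrm{exc}}=\emptyset$, so $E=\emptyset$, yet $X_f=\{0\}$ (no open neighbourhood $U$ of $0\in\BR$ has $f_{|f^{-1}(U)}$ surjective onto $U$), so the asserted inclusion $X_f\subset E$ fails. The fix is straightforward: add $\partial X$ to $E$ (it is finite, being contained in the $0$-cells of the underlying CW complex). Then $n_x=2$ for all $x\in X-E$, and one checks the remaining case $n_{f(x)}=1$ is impossible (a continuous injection of $\BR$ into $\BR$ with $0\mapsto 0$ has image containing an open interval around $0$, which cannot lie in $\BR_{\ge 0}$), so the argument goes through.

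For comparison, the paper proves $(5)\Rightarrow(1)$ rather than $(5)\Rightarrow(2)$: it first handles the model case $X\cong X'\cong\BR$, $E_3=\emptyset$ using the same monotonicity fact, then enlarges $E_3$ to a finite $E_1$ so that $X-E_1$ is a disjoint union of copies of $\BR$ and applies the model case componentwise. The requirement that $X-E_1$ be a disjoint union of $\BR$'s is precisely what sweeps $\partial X\cup X_{\mathrm{exc}}$ into $E_1$ and avoids the boundary issue; your local formulation reaches the same conclusion and is a bit cleaner, but needs the explicit exclusion of $\partial X$.
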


\begin{proof}
	The implication $(1)\Rightarrow(2)$ follows from the fact that
	$X_f\subset f^{-1}(f(E_1))$. For the implication 
	$(2)\Rightarrow(3)$, take $E_2=X_f$. For $(3)\Rightarrow(4)$, take
	$E_x=(X-\{x\})\cap (f^{-1}(f(x))\cup E_2)$.
	The implication $(4)\Rightarrow(5)$ is immediate.

	\smallskip
	Let us show that $(5)\Rightarrow(1)$. 
	Note first that an injective continuous map $\BR\to\BR$ is open
	and a homeomorphism onto its image. It follows that the implication holds
	when $X$ and $X'$ are homeomorphic to $\BR$ and $E_3=\emptyset$.
	
	Consider now the general case. There is a finite subset $E_1$ of $X$ 
	containing $E_3$ such that $X-E_1$ and $X'-f(E_1)$ are homeomorphic to
	a finite disjoint union of copies of $\BR$. By the discussion above, the
	restriction of $f$ to a connected component of $X-E_1$ is open and
	a homeomorphism onto its image, so the same holds for $f_{|X-E_1}$.

\end{proof}

\begin{defi}
We say that $f$ is a 
	{\em morphism of $1$-dimensional spaces}\index[ter]{morphism of $1$-dimensional
	spaces} if it satisfies any of the equivalent
conditions of Lemma \ref{le:equivmorphism1space}.
\end{defi}

Note that 
\begin{itemize}
	\item a composition of morphisms of $1$-dimensional
spaces is a morphism of $1$-dimensional spaces
\item a morphism of $1$-dimensional spaces is invertible if and only if it is
	a homeomorphism.
\end{itemize}

\begin{defi}
We define a {\em $1$-dimensional subspace}\index[ter]{$1$-dimensional subspace} of $X$
	to be a subspace $Y$ with only
finitely many connected components, none of which are points.
\end{defi}

\smallskip
Let us record some basic facts on subspaces.

\begin{lemma}
	\label{le:subspaces}
	\begin{enumerate}
		\item
The image of a morphism of $1$-dimensional spaces is a $1$-dimensional subspace.
		\item
If $Y$ is a $1$-dimensional subspace of $X$, then $Y$ is a $1$-dimensional space and
the inclusion map $Y\hookrightarrow X$ is a morphism of $1$-dimensional spaces.
		\item
Let $f:X\to X'$ be a morphism of $1$-dimensional spaces and
$Y'$ be a $1$-dimensional subspace of $X'$. Let $F$ be the set of connected components
of $f^{-1}(Y')$ that are points. Then $F$ is finite, $Y=f^{-1}(Y')-F$ is
a $1$-dimensional subspace of $X$ and $f_{|Y}:Y\to Y'$
is a morphism of $1$-dimensional spaces.
	\end{enumerate}
\end{lemma}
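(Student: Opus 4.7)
For (1), I would apply condition (1) of Lemma \ref{le:equivmorphism1space} to obtain a finite subset $E_1\subset X$ such that $f(X-E_1)$ is open in $X'$ and $f_{|X-E_1}:X-E_1\to f(X-E_1)$ is a homeomorphism. Since $X$ is a $1$-dimensional space, $X-E_1$ is (up to enlarging $E_1$ to contain $X_{exc}$ and $\partial X$) a finite disjoint union of copies of $\BR$; so $f(X-E_1)$ is an open $1$-dimensional submanifold with finitely many components. Then $f(X)=f(X-E_1)\cup f(E_1)$ has finitely many components because $X$ does. The condition that no component of $f(X)$ is a point will follow by contradiction: if $\{y\}$ were an isolated component, pick $x\in f^{-1}(y)$; using a small neighbourhood $U\cong\St(n_x)$ of $x$ and the connectedness of $f(U)$, one forces $f(U)=\{y\}$, and then every point of $U$ fails the defining property of $X\setminus X_f$, contradicting the finiteness of $X_f$ (condition (2) of Lemma \ref{le:equivmorphism1space}).

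For (2), the main task is to produce a $1$-dimensional finite CW-presentation of $Y$. I would choose a finite subset $E\subset X$ containing $X_{exc}$ and such that $X-E$ is a disjoint union of finitely many copies of $\BR$, using a CW-model $L\supset X$ with $L\setminus X$ finite. The key estimate is that removing a point $x$ of $X$ splits at most one component of $Y$ into at most $n_x$ pieces; since $E$ is finite and $Y$ has only finitely many components, $Y\cap(X-E)$ has finitely many components, each an open subinterval of some copy of $\BR$. Combined with $Y\cap E$ being finite, $Y$ is the disjoint union of finitely many open intervals and finitely many extra points of $E$; adding the (finitely many) missing endpoints of these intervals yields a finite $1$-dimensional CW-complex $K$ containing $Y$ with $K-Y$ finite, so $Y$ is a $1$-dimensional space. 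That the inclusion $Y\hookrightarrow X$ is a morphism then follows immediately from condition (5) of Lemma \ref{le:equivmorphism1space} with $E_3=\emptyset$.

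For (3), I would reduce to (1) and (2). Choose $E_1$ as in (1) for $f$; then on $X-E_1$ the map $f$ is an open embedding, so $f^{-1}(Y')\cap(X-E_1)$ is homeomorphic via $f$ to $Y'\cap f(X-E_1)$, which is open in $Y'$. By (2), $Y'$ is a $1$-dimensional space; by the same counting argument as in (2) (removing the finitely many points of $X'\setminus f(X-E_1)$ and of $\partial Y'$), this open subset has finitely many components, none of which are points. Together with $f^{-1}(Y')\cap E_1$ being finite, this shows $f^{-1}(Y')$ has finitely many components, among which the singleton components (i.e.\ the set $F$) are all contained in $E_1$; hence $F$ is finite. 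Removing $F$ leaves a subspace $Y$ of $X$ with finitely many components, none of which are points, so (2) gives that $Y$ is a $1$-dimensional subspace. Finally, since $f_{|X-E_1}$ is injective, $f_{|Y\setminus E_1}$ is injective, so $f_{|Y}$ satisfies condition (5) of Lemma \ref{le:equivmorphism1space} and is a morphism of $1$-dimensional spaces.

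The main obstacle is part (2): producing a global CW-presentation of $Y$ from purely local data. The cleanest way is to work inside a fixed finite CW-model $L$ of $X$ and control the closure of $Y$ using the finiteness of its component count and the bounded local branching at each point of $E$, so that no pathological limit behaviour (e.g.\ Cantor-like accumulation) can occur.
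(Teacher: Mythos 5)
The paper records this lemma without proof (it appears under ``Let us record some basic facts on subspaces''), so there is no intended argument to compare against; I am only assessing correctness. Your part (1) is sound, and the overall shape of (2) and (3) is the right one. In (2), however, everything rests on the ``key estimate'' (removing a point $x$ of $X$ from $Y$ splits the component of $Y$ through $x$ into at most $n_x$ pieces, or equivalently each connected component $C$ of $Y$ meets each open $1$-cell $\mathring{e}$ of a CW-model $L$ in at most two intervals), and you do not prove it; the appeal to ``bounded local branching'' is not itself an argument, and this is exactly where the hypotheses — finitely many components, none of them points — must do the work. One clean way to prove it: collapse $L\setminus\mathring{e}$ to a point, obtaining a quotient map $q\colon L\to S^1$ that restricts to a homeomorphism $\mathring{e}\iso S^1\setminus\{*\}$; then $q(C)$ is an arc or all of $S^1$, so $C\cap\mathring{e}\cong q(C)\setminus\{*\}$ has at most two components. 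With this in hand, $Y$ meets each open $1$-cell in finitely many intervals and meets the $0$-cells in finitely many points, so $\overline{Y}^{L}$ is a finite $1$-dimensional CW-complex and $\overline{Y}^{L}\setminus Y$ is finite, as you want.

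In (3) there is a genuine error: $X'\setminus f(X-E_1)$ is not finite in general (take $f\colon(0,1)\hookrightarrow\BR$ with $E_1=\emptyset$; the complement is $(-\infty,0]\cup[1,\infty)$), so you cannot ``remove the finitely many points of $X'\setminus f(X-E_1)$.'' The set that is finite is the topological boundary $\partial W$ of $W:=f(X-E_1)$ in $X'$: since $W$ is an open $1$-dimensional subspace of $X'$ (it inherits finitely many components, none of them points, from $X-E_1$), part (2) applied to $W$ gives $\overline{W}\setminus W$ finite. Then $Y'\setminus(\partial W\cap Y')=(Y'\cap W)\sqcup(Y'\cap(X'\setminus\overline{W}))$ is a decomposition into relatively open pieces; the whole set $Y'\setminus(\partial W\cap Y')$ has finitely many components by the counting argument of (2), hence so does $Y'\cap W$, and it has no point components because $Y'$, being a $1$-dimensional space, has no isolated points. ($\partial Y'$ plays no role.) Once this is repaired, the remainder of your argument for (3) — that $F\subset E_1$ hence finite, that $Y$ is a $1$-dimensional subspace, and that $f|_Y$ is a morphism via injectivity of $f|_{Y\setminus E_1}$ — goes through.
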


We now provide a description of the local structure of morphisms of
$1$-dimensional spaces.

\begin{lemma}
	\label{le:localstructuremaps}
	Let $f:X\to X'$ be a morphism of $1$-dimensional spaces and let $x'\in X'$.
	Let $r=|f^{-1}(x')|$.
	There exists
	\begin{itemize}
		\item a small open neighbourhood $U$ of $x'$ and a homeomorphism 
			$a:\St(n_{x'})\iso U$ with $a(0)=x'$,
	\item a family of disjoint subsets $I_0,I_1,\ldots,I_r$
		of $\{e^{2i\pi d/n_{x'}}\}_{0\le d<n_{x'}}$ with $I_l\neq\emptyset$ for
			$1\le l\le r$
	and a homeomorphism
	$b:\St^\circ(I_0)\sqcup
			\St(I_1)\sqcup\cdots\sqcup\St(I_r)\iso f^{-1}(U)$
	\end{itemize}
	such that
	$f_{|f^{-1}(U)}=a\circ g\circ b^{-1}$ where 
	$g:\St^\circ(I_0)\sqcup\St(I_1)\sqcup\cdots\sqcup\St(I_r)\to
	\St(n_{x'})$ is the map whose restriction to $\St^\circ(I_0)$ and $\St(I_l)$ 
	is the inclusion map.

	In particular, the canonical map, still denoted by
	$f:T(X)\to T(X')$ is injective and $f(X_{exc})\subset X'_{exc}$.
\end{lemma}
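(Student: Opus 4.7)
The plan is to reduce to a genuinely local picture around $x'$ and its preimage, then build the required homeomorphisms ray by ray. First, I would show $f^{-1}(x')$ is finite: by Lemma \ref{le:equivmorphism1space} there is a finite $E_3\subset X$ with $f_{|X\setminus E_3}$ injective, so $f^{-1}(x')\subset E_3\cup\{y_0\}$ for at most one further point $y_0$; write $f^{-1}(x')=\{x_1,\ldots,x_r\}$. Using that $X_{exc}$, $X_f$ and $E_3$ are all finite, I would choose a small open neighbourhood $U_0\cong\St(n_{x'})$ of $x'$ with $\overline{U_0}\cap f\bigl((X_{exc}\cup X_f)\setminus\{x_1,\ldots,x_r\}\bigr)=\emptyset$, together with pairwise disjoint small open neighbourhoods $V_l\cong\St(n_{x_l})$ of the $x_l$ contained in $f^{-1}(U_0)$ and disjoint from $E_3\setminus\{x_l\}$.

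Next, I analyse the strand pattern. Each of the $n_{x_l}$ connected components of $V_l\setminus\{x_l\}$ is an open arc whose image is connected in $U_0\setminus\{x'\}$, so lies in a single ray $\{re^{2i\pi d/n_{x'}}:r>0\}$; this yields a map $\phi_l\colon C(x_l)\to C(x')$. Injectivity of $f$ on $X\setminus E_3$ forces each $\phi_l$ to be injective and, applied across distinct $V_l$'s, makes the images $I_l=\phi_l(C(x_l))$ for $1\le l\le r$ pairwise disjoint. Let $W$ be the union of those components of $f^{-1}(U_0)$ containing no $x_l$; by Lemma \ref{le:subspaces} it is a $1$-dimensional subspace, and by the choice of $U_0$ it has no singular points, so its finitely many components are each homeomorphic to $\BR$ or $S^1$. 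Each maps injectively into a single ray $\cong(0,\epsilon)$: $S^1$-components have compact image bounded away from $x'$, while $\BR$-components have image an open interval $(c,d)\subset(0,\epsilon)$. Shrinking $U$ to radius smaller than the finitely many positive values $c$ arising here discards every component with $c>0$, leaving a family whose images are open sub-rays accumulating at $x'$ and producing a set $I_0\subset C(x')$ disjoint from the $I_l$.

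The last step, and the main obstacle, is to simultaneously shrink $U\subset U_0$ and choose radial reparameterisations so the pieces assemble into the model $\St^\circ(I_0)\sqcup\bigsqcup_{l\ge 1}\St(I_l)$. For each strand of $V_l\setminus\{x_l\}$ mapping to a fixed ray of $U_0\setminus\{x'\}$, the restriction of $f$ is a continuous injection between half-open intervals fixing the distinguished endpoint, hence an open embedding onto an initial sub-interval; after shrinking and radially reparameterising it becomes the identity inclusion of $\St(1)$. A parallel argument handles each surviving $\BR$-component contributing to $I_0$. Assembling these local reparameterisations yields the homeomorphisms $a$ and $b$. The final assertions follow directly from the local model: the $\phi_l$ are injections into pairwise disjoint subsets of $C(x')$, so $T(X)\to T(X')$ is injective on the fibre over $x'$ and therefore everywhere; and if $x_l\in X_{exc}$ then $n_{x'}\ge|I_l|=n_{x_l}\ge 3$, so $f(x_l)=x'\in X'_{exc}$.
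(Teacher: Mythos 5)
Your proposal follows essentially the same strategy as the paper's, but organises the shrinking of $U$ differently, and this leaves a gap in the assembly step. The paper's key simplification is to first shrink $U$ so that for every ray $L$ of $U\setminus\{x'\}$ that is hit, $f$ restricts to a homeomorphism $f^{-1}(L)\iso L$ (after noting that $f(f^{-1}(L))$ is open in $L\cong\BR$, so one may arrange $f(f^{-1}(L))\in\{\emptyset,L\}$). This immediately decomposes $f^{-1}(U)$ as $\{x_1,\ldots,x_r\}$ together with one arc per hit ray, from which the $I_l$'s and $b$ are read off. You instead fix the $V_l$'s and $W$ against the original $U_0$, extract the ray-incidence data $\phi_l$ and $I_0$ there, and shrink afterwards.

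The gap: when you then shrink $U$ and "assemble", you implicitly assume every point of $f^{-1}(U)\setminus\{x_1,\ldots,x_r\}$ lies either on a strand of some $V_l\setminus\{x_l\}$ or on a surviving $W$-arc. But $f^{-1}(U_0)\setminus\bigl(\bigcup_m V_m\cup W\bigr)$ is the part of the components $C_l\ni x_l$ of $f^{-1}(U_0)$ lying outside the chosen $V_m$'s, and you have not ruled out that some of this region has image arbitrarily close to $x'$; relatedly, two distinct $x_{l_1},x_{l_2}$ can a priori lie in a single component of $f^{-1}(U_0)$, and you must argue they end up in distinct components of $f^{-1}(U)$ after shrinking, or the disjoint model $\St(I_{l_1})\sqcup\St(I_{l_2})$ cannot match $f^{-1}(U)$. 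Both facts are true (each preimage strand of a hit ray, being a connected set on which $f$ is injective off a finite set hence essentially monotone into $L\cong(0,\epsilon)$, has at most one end approaching $f^{-1}(x')$, and the rest of its image stays bounded away from $x'$), but the argument needs to be made, and the paper's "one arc per ray" normalisation is exactly the device that sidesteps it. A minor further point: if $\partial X\neq\emptyset$, the $W$-components can be half-open intervals, not only $\BR$ or $S^1$; the analysis is the same. The final deductions (injectivity of $T(X)\to T(X')$ and $f(X_{exc})\subset X'_{exc}$) are handled correctly once the local model is in place.
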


\begin{proof}
	Let $E$ be a finite subset of $X$ such that $f^{-1}(f(E))=E$,
	$f(X-E)$ is open in $X'$ and $f_{|X-E}:X-E\to f(X-E)$ is a
	homeomorphism.
	Let $U$ be a small open neighbourhood of $x'$ such that
	$U-\{x'\}\subset X'-f(E)$. Note that
	$f(X)\cap (U-\{x'\})$ is open in $X'$ and
	$f_{|f^{-1}(U-\{x'\})}:f^{-1}(U-\{x'\})\to f(X)\cap (U-\{x'\})$ is a
	homeomorphism.

	\smallskip
	Let $L$ be a connected component of $U-\{x'\}$.
	Note that $f(f^{-1}(L))$ is an open $1$-dimensional subspace of $L$ and
	$L$ is homeomorphic to $\BR$.
	By shrinking $U$, we can assume
	that $f^{-1}(L)=\emptyset$ or $f(f^{-1}(L))=L$. So, we can assume that
	given $L$ a connected component of $U-\{x'\}$ with $f^{-1}(L)\neq\emptyset$, 
	the map $f_{|f^{-1}(L)}:f^{-1}(L)\to L$ is a homeomorphism.

	\smallskip
	Since $U$ is small, there is a homeomorphism 
	$a:\St(n_{x'})\iso U,\ 0\mapsto x'$. Let $\{x_1,\ldots,x_r\}=f^{-1}(x')$ and define
	$$I_l=\{e^{2i\pi d/n_{x'}} | 0\le d<n_{x'},\
	x_l\in \overline{f^{-1}(a(\BR_{>0}e^{2i\pi d/n_{x'}}))}\}$$
	for $l\in\{1,\ldots,r\}$.
	Define 
	$$I_0=\{e^{2i\pi d/n_{x'}} | 0\le d<n_{x'},\ 
	f^{-1}(a(\BR_{>0}e^{2i\pi d/n_{x'}}))\neq \emptyset,\ 
	f^{-1}(x')\cap\overline{f^{-1}(a(\BR_{>0}e^{2i\pi d/n_{x'}}))}=\emptyset\}.$$
	Note that $a$ restricts to a homeomorphism
	$\St(\bigcup_{0\le l\le r}I_r)\iso f(f^{-1}(U))$.

	The composition $a\circ g$ takes values in $f(f^{-1}(U))$.
	Its restriction to $\St^\circ(I_0)$ defines a homeomorphism
	$\St^\circ(I_0)\iso a(\St^\circ(I_0))$. Since 
	$f_{|f^{-1}(a(\St^\circ(I_0)))}:f^{-1}(a(\St^\circ(I_0)))\to
	a(\St^\circ(I_0))$ is a homeomorphism, we have a
	homeomorphism $b_0=(f_{|f^{-1}(a(\St^\circ(I_0)))})^{-1}\circ 
	(a\circ g)_{|\St^\circ(I_0)}:\St^\circ(I_0)\iso f^{-1}(a(\St^\circ(I_0)))$.

	Consider now $l\in\{1,\ldots,r\}$. We construct as above a
	homeomorphism $b'_l:\St^\circ(I_l)\iso f^{-1}(a(\St^\circ(I_l)))$ such that
	$(a\circ g)_{|\St^\circ(I_l)}=f\circ b'_l$. The homeomorphism $b'_l$
	extends uniquely to a homeomorphism $b_l:\St(I_l)\to f^{-1}(a(\St(I_l)))$.
	We define $b=b_0\sqcup b_1\sqcup\cdots\sqcup b_r$. We have
	$f_{|f^{-1}(U)}=a\circ g\circ b^{-1}$.
\end{proof}

\begin{example}
	Here is an example of map $g$ as in Lemma \ref{le:localstructuremaps}:
$$\includegraphics[scale=0.92]{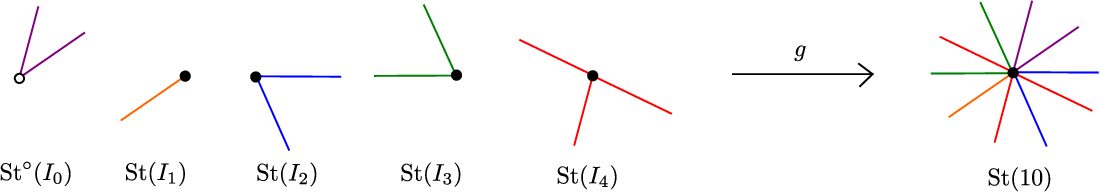}$$
\end{example}

The next two results follow immediately from Lemma \ref{le:localstructuremaps}.

\begin{lemma}
	Let $Y$ be a $1$-dimensional subspace of $X$ and let $y\in Y$.
	Let $I=\{e^{2i\pi d/n_{y,X}}\}_{0\le d<n_{y,Y}}$. There is an open
	neighbourhood $U$ of $y$ in $X$ and a homeomorphism
	$\St(n_{y,X})\iso U,\ 0\mapsto y$ whose restriction to $\St(I)$ is a
	homeomorphism $\St(I)\iso U\cap Y$. We have a commutative diagram
	$$\xymatrix{
		\St(n_{y,X})\ar[r]^-\sim & U \\
		\St(I)\ar@{^{(}->}[u] \ar[r]_-\sim & U\cap Y \ar@{^{(}->}[u]
		}$$
\end{lemma}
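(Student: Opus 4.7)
The plan is to deduce the lemma from Lemma \ref{le:localstructuremaps} applied to the inclusion map $i:Y\hookrightarrow X$, which is a morphism of $1$-dimensional spaces by Lemma \ref{le:subspaces}(2). Applied at the point $y\in X$, whose fibre is the single point $\{y\}\subset Y$ (so $r=1$), it produces a small open neighbourhood $U$ of $y$ in $X$, a homeomorphism $a:\St(n_{y,X})\iso U$ sending $0$ to $y$, disjoint subsets $I_0,I_1$ of $\{e^{2i\pi d/n_{y,X}}\}_{0\le d<n_{y,X}}$ with $I_1\neq\emptyset$, and a homeomorphism $b:\St^\circ(I_0)\sqcup\St(I_1)\iso Y\cap U$ satisfying $a\circ g=i\circ b$, where $g$ is the obvious inclusion.

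The key step is to arrange, by further shrinking of $U$, that $I_0=\emptyset$. Since $y\in Y$, the definition of $n_{y,Y}$ supplies a small open neighbourhood $V$ of $y$ in $Y$ with $V\iso\St(n_{y,Y})$, and $V=Y\cap W$ for some open $W\subset X$. Replacing $U$ by a small open neighbourhood of $y$ in $X$ contained in $W$, the set $Y\cap U$ becomes an open neighbourhood of $y$ inside $V\iso\St(n_{y,Y})$; shrinking once more, we may assume $Y\cap U$ is itself a small open neighbourhood of $y$ in $Y$, hence homeomorphic to $\St(n_{y,Y})$ and in particular connected. Since $\St^\circ(I_0)\sqcup\St(I_1)$ has $|I_0|+1$ connected components when $I_1\neq\emptyset$, this forces $I_0=\emptyset$, and comparing cardinalities of the sets of rays gives $|I_1|=n_{y,Y}$.

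With $I_0=\emptyset$, the relation $a\circ g=i\circ b$ shows that $a$ restricts to a homeomorphism $\St(I_1)\iso Y\cap U$. To replace the arbitrary subset $I_1\subset\{e^{2i\pi d/n_{y,X}}\}_{0\le d<n_{y,X}}$ of size $n_{y,Y}$ by the canonical one $I=\{e^{2i\pi d/n_{y,X}}\}_{0\le d<n_{y,Y}}$, one precomposes $a$ with a self-homeomorphism of $\St(n_{y,X})$ fixing $0$ and permuting the rays so as to send $I$ onto $I_1$; such a homeomorphism exists since $\St(n_{y,X})$ has an evident action of the symmetric group on its set of rays. The resulting homeomorphism $\St(n_{y,X})\iso U$ sends $\St(I)$ onto $Y\cap U$, which is precisely the commutative square of the lemma.

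The only point requiring care is the compatibility of the two notions of smallness—a neighbourhood small in $X$ need not have $Y\cap U$ small in $Y$, and vice versa—but this is handled by the double shrinking above. Once $I_0=\emptyset$, the remainder is formal.
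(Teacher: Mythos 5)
Your overall strategy — applying Lemma \ref{le:localstructuremaps} to the inclusion $i\colon Y\hookrightarrow X$ at $y$, with $r=1$, and then arguing that $I_0=\emptyset$, before relabelling $I_1$ as $I$ by a permutation of the rays of $\St(n_{y,X})$ — is exactly the right one, and the final relabelling step is fine. The weak link is the sentence ``shrinking once more, we may assume $Y\cap U$ is itself a small open neighbourhood of $y$ in $Y$.'' Shrinking $U$ in $X$ does not directly control the shape of $Y\cap U$: an open subset of $V\iso\St(n_{y,Y})$ containing $0$ and contained in an arbitrarily small metric ball around $0$ can a priori still be disconnected, with stray components sitting in the punctured rays of $V$ and having boundary outside $U$. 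To justify the step one would have to show that, for each ray $L$ of $U-\{y\}$, shrinking forces $Y\cap L\in\{\emptyset,L\}$; this uses that $Y-E$ is open in $X$ for a finite $E$ and that $Y\cap L$ has only finitely many components, neither of which your proof mentions. Moreover, after such a shrink you would need the conclusion of Lemma \ref{le:localstructuremaps} for the new $U$, which the statement of that lemma (as opposed to its proof) does not guarantee for every sufficiently small neighbourhood.

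There is a cleaner route that stays entirely inside the conclusion of Lemma \ref{le:localstructuremaps} and requires no further shrinking. The homeomorphism $b\colon\St^\circ(I_0)\sqcup\St(I_1)\iso Y\cap U$, with $i\circ b=a\circ g$, identifies $Y\cap U$ with $a(\St^\circ(I_0)\cup\St(I_1))$ and, since $b$ is a homeomorphism from a topological disjoint union, shows that $a(\St^\circ(I_0))$ is open \emph{and closed} in $Y\cap U$, while $y=a(0)\in a(\St(I_1))$ does not belong to it. Now suppose $c\in I_0$ and let $L_c=\BR_{>0}c$. Then $a(L_c)\subset a(\St^\circ(I_0))\subset Y\cap U$, and $y=a(0)$ lies in the closure of $a(L_c)$ in $X$, hence (being in $Y\cap U$) in the closure of $a(L_c)$ in $Y\cap U$, hence in the closed set $a(\St^\circ(I_0))$ — contradiction. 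So $I_0=\emptyset$, $\St(I_1)\iso Y\cap U$, and therefore $|I_1|=n_{y,Y}$ since $Y\cap U$ is an open neighbourhood of $y$ in $Y$; the rest of your argument then goes through unchanged.
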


\begin{lemma}
Let $f:X\to X'$ be a surjective morphism of $1$-dimensional spaces.
	It induces a bijection $T(X)\iso T(X')$.
\end{lemma}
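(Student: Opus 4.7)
The plan is to deduce the bijection from the local model provided by Lemma \ref{le:localstructuremaps}. That lemma already asserts injectivity of the canonical map $f:T(X)\to T(X')$, so the task reduces to verifying surjectivity.

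Let $(x',c')\in T(X')$ be given. By surjectivity of $f$, the fibre $f^{-1}(x')$ is non-empty, so Lemma \ref{le:localstructuremaps} supplies a small open neighbourhood $U$ of $x'$, a homeomorphism $a:\St(n_{x'})\iso U$ identifying $C(x')$ with the set of directions $\{e^{2\pi id/n_{x'}}\}_{0\le d<n_{x'}}$, preimage points $x_1,\ldots,x_r$ with $r\ge 1$, and a homeomorphism $b:\St^\circ(I_0)\sqcup\St(I_1)\sqcup\cdots\sqcup\St(I_r)\iso f^{-1}(U)$ through which $f$ corresponds to the tautological inclusion. The image of $f:T(X)\to T(X')$ over $x'$ is then visibly $\bigsqcup_{l=1}^r I_l$, since near each $x_l$ the local homeomorphism $\St(I_l)\iso$ (neighbourhood of $x_l$ in $X$) identifies $C(x_l)$ with $I_l$.

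To produce a preimage of $(x',c')$, I need two facts: that $\bigsqcup_{l=0}^r I_l$ exhausts all $n_{x'}$ directions at $x'$, and that $I_0=\emptyset$. The first is immediate from surjectivity of $f$: we have $f(f^{-1}(U))=U$, so every radial ray $a(\mathbb{R}_{>0}e^{2\pi id/n_{x'}})$ has non-empty preimage in $f^{-1}(U)$, forcing $d$ into some $I_l$. The second, vanishing of $I_0$, is the key step: a direction $d\in I_0$ would yield an open ray $b(\St^\circ(\{d\}))\subset X$ whose $f$-image is the ray from $x'$ in direction $d$ but whose natural parameter-limit toward $0$ escapes $\St^\circ(I_0)$ without hitting any $x_l$, since in $f^{-1}(U)$ the component $\St^\circ(I_0)$ is disjoint from each $\St(I_l)$. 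Using local compactness of $X$ (a complement of finitely many points in a finite $1$-dimensional CW-complex) together with the finiteness condition on morphisms of $1$-dimensional spaces, one shows that such a parameter-limit has to produce a point $p\in X$ with $f(p)=x'$ yet $p\notin f^{-1}(U)$, contradicting $f^{-1}(x')\subset f^{-1}(U)$.

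Once $I_0=\emptyset$ is in hand, the direction $d\in\{e^{2\pi ik/n_{x'}}\}$ corresponding to $c'$ lies in some $I_l$ with $l\ge 1$, and the corresponding branch of $\St(I_l)$ at $0$ (under the identification $\St(I_l)\iso$ neighbourhood of $x_l$) furnishes an element of $C(x_l)\subset T(X)$ that maps to $(x',c')$. This yields the required preimage and completes surjectivity. The main obstacle I expect is making the local-compactness argument that forces $I_0=\emptyset$ rigorous: this is where the hypothesis that $f$ is a morphism of $1$-dimensional spaces (rather than merely a continuous surjection) enters essentially, via the finiteness condition (e.g.\ condition (5) of Lemma \ref{le:equivmorphism1space}), which ensures that the preimage ray $b(\St^\circ(\{d\}))$ cannot wander arbitrarily within $X$ but must accumulate in a controlled way.
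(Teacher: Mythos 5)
Your reduction matches the paper's intent: the paper cites only Lemma \ref{le:localstructuremaps}, and you correctly observe that after that lemma supplies injectivity, surjectivity of $T(X)\to T(X')$ comes down to two things, namely that $\bigcup_{l=0}^r I_l$ exhausts all $n_{x'}$ directions at $x'$ (which follows from $f(f^{-1}(U))=U$) and that $I_0=\emptyset$. So you have isolated the single genuine step, which the paper treats as immediate.

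The problem is that the local-compactness argument you sketch for $I_0=\emptyset$ cannot be made to work without an extra hypothesis on $f$. The parameter-limit of a ray $b(\St^\circ(\{d\}))$ with $d\in I_0$ can simply escape $X$: a $1$-dimensional space is the complement of a finite set in a finite CW-complex, so it is not complete, and nothing in conditions (1)--(5) of Lemma \ref{le:equivmorphism1space} forces the escaping ray to accumulate on a point of $f^{-1}(x')$. Concretely, take $X=(0,1)\sqcup(1,2)\sqcup S^1$ and let $X'$ be obtained from $(0,2)\sqcup S^1$ by identifying the point $1\in(0,2)$ with a chosen basepoint $p\in S^1$; let $f:X\to X'$ be the obvious map. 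Then $f$ is injective (hence a morphism, by condition (5) with $E_3=\emptyset$) and surjective, but at the wedge point $w$ of $X'$ we have $n_w=4$ while $f^{-1}(w)=\{p\}$ with $n_p=2$: the two interval directions at $w$ lie in $I_0$, and $T(X)\to T(X')$ misses them. So the statement as given is false for general surjective morphisms, and no argument will close the gap you flagged.

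What saves the statement in the paper's actual application (quotient maps by finite relations, \S\ref{se:quotientsspaces}) is that those maps are closed. Closedness gives $I_0=\emptyset$ directly: if $d\in I_0$ and $L$ is the ray in direction $d$, then $f(\overline{f^{-1}(L)})$ is a closed set containing $L=f(f^{-1}(L))$, hence containing $x'\in\overline{L}$; so $\overline{f^{-1}(L)}$ meets $f^{-1}(x')$, contradicting $d\in I_0$. Both your argument and the lemma as stated need a closedness (equivalently, in this finite-fibre setting, properness) hypothesis on $f$ in addition to surjectivity; your instinct that the morphism axioms must be invoked at this step was right, but they are not by themselves enough.
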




\subsubsection{Quotients}
\label{se:quotientsspaces}

Let $\tilde{X}$ be a $1$-dimensional space and $\sim$ be an equivalence relation on
$\tilde{X}$.

\begin{defi}
	We say that $\sim$ is a {\em finite relation}\index[ter]{finite relation} if
the set of points that are not alone in their equivalence class is finite.
\end{defi}

Assume $\sim$ is a finite relation.
Let $q:\tilde{X}\to X=\tilde{X}/\!\!\sim$ be the quotient map. Note that $X$ is a
$1$-dimensional 
space with 
$$X_{exc}=q(\tilde{X}_{exc})\cup \{x\in X|\ |q^{-1}(x)|>2\}\cup
\{x\in X\ |\ |q^{-1}(x)|=2,\ q^{-1}(x){\not\subset}\partial\tilde{X}\}$$
and
$q$ is a morphism of $1$-dimensional spaces.

Given $x\in X$, the quotient map induces a bijection
$q:\coprod_{\tilde{x}\in q^{-1}(x)}C(\tilde{x})\iso C(x)$.

\medskip

Quotients have a universal property. In particular, we have the following result.

\begin{lemma}
	\label{le:quotientsfactor}
Let $f:X\to X'$ be a morphism of $1$-dimensional spaces. Define an equivalence
	relation on $X$ by $x_1\sim x_2$ if $f(x_1)=f(x_2)$. This defines a finite
	relation on $X$ and $f$ factors uniquely as a composition 
	$f=\bar{f}\circ q$ where $\bar{f}:X/\!\!\sim\ \to X'$ is a morphism of
	$1$-dimensional spaces and $q:X\to X/\!\!\sim$ is the quotient map.
\end{lemma}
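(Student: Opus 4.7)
The plan is to verify the three claims in order: that $\sim$ is a finite relation, that $f$ factors uniquely through the quotient, and that the induced map $\bar f$ is a morphism of $1$-dimensional spaces. The core work lives in the first point; the other two are then essentially formal.

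For the finiteness of $\sim$, I would invoke condition~(1) of Lemma \ref{le:equivmorphism1space} to produce a finite subset $E\subset X$ such that $f_{|X-E}:X-E\to f(X-E)$ is a homeomorphism. A point $x\in X$ whose equivalence class is non-trivial admits some $x'\neq x$ with $f(x)=f(x')$; injectivity of $f$ outside $E$ forces either $x\in E$ or $x'\in E$, and in the latter case $x\in f^{-1}(f(E))$. Hence the set of non-alone points sits inside $E\cup f^{-1}(f(E))$. The main obstacle is now to see that this latter set is finite; for that, Lemma \ref{le:localstructuremaps} is exactly what is needed, since the local model there exhibits each fibre $f^{-1}(y)$ as the finite set $\{0_1,\ldots,0_r\}$ of star centres. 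Thus $f^{-1}(f(E))$ is a finite union of finite sets, and $\sim$ is a finite relation.

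Once finiteness is established, the discussion at the start of \S\ref{se:quotientsspaces} gives automatically that $X/\!\!\sim$ is a $1$-dimensional space and that the canonical projection $q:X\to X/\!\!\sim$ is a morphism of such. Since $f$ is constant on every equivalence class by the definition of $\sim$, the universal property of the quotient topology produces a unique continuous map $\bar f:X/\!\!\sim\ \to X'$ with $\bar f\circ q=f$. This $\bar f$ is injective by construction: if $\bar f([x_1])=\bar f([x_2])$ then $f(x_1)=f(x_2)$, so $x_1\sim x_2$ and $[x_1]=[x_2]$.

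To conclude that $\bar f$ is a morphism of $1$-dimensional spaces, I would apply criterion~(5) of Lemma \ref{le:equivmorphism1space} with the empty exceptional set, since $\bar f$ is already globally injective. Uniqueness of the factorisation is built into the universal property. So apart from the fibre-finiteness step, which rests squarely on Lemma \ref{le:localstructuremaps}, the proof is pure bookkeeping.
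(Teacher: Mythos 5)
Your proof is correct, and it supplies precisely the details the paper leaves implicit: the lemma is stated right after the sentence "Quotients have a universal property," with no proof given, so the intended argument is exactly the universal-property bookkeeping you carry out. The finiteness of $\sim$ and the verification that $\bar f$ is a morphism of $1$-dimensional spaces are the only non-formal points, and you handle both correctly.

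One small remark for economy: you do not actually need Lemma \ref{le:localstructuremaps} to see that fibres of $f$ are finite. Condition (5) of Lemma \ref{le:equivmorphism1space} already gives a finite $E_3$ with $f_{|X-E_3}$ injective, so for any $y\in X'$ the fibre $f^{-1}(y)$ consists of at most one point of $X-E_3$ together with a subset of $E_3$, hence is finite; thus $f^{-1}(f(E))$ is a finite union of finite fibres. This keeps the argument entirely inside Lemma \ref{le:equivmorphism1space}, which is slightly cleaner since the statement of Lemma \ref{le:localstructuremaps} already presupposes $|f^{-1}(x')|$ is finite rather than establishing it. The rest of your argument — quotient by a finite relation is a $1$-dimensional space per \S\ref{se:quotientsspaces}, universal property of the quotient topology gives the unique continuous $\bar f$, injectivity of $\bar f$ by construction, and then criterion (5) of Lemma \ref{le:equivmorphism1space} with empty exceptional set to conclude $\bar f$ is a morphism — is exactly right.
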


The next lemma shows that $1$-dimensional spaces $X$ can be viewed (non-uniquely)
as $1$-dimensional manifolds with a finite relation.

\begin{lemma}
	\label{le:1manifoldsquotients}
	Given $X$ a $1$-dimensional space, there is a $1$-dimensional
	manifold $\hat{X}$ with a finite relation $\sim$
	and an isomorphism $f:\hat{X}/\!\!\sim\ \iso X$
	such that $f(\hat{X}_f)=X_{exc}$.
\end{lemma}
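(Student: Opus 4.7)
The plan is to construct $\hat{X}$ by \emph{ungluing} $X$ at each of its exceptional points. Since $X_{exc}$ is contained in the $0$-skeleton of any CW model for $X$, it is finite. For each $x \in X_{exc}$ choose a small open neighbourhood $U_x$, together with a homeomorphism $\phi_x: \St(n_x) \iso U_x$ sending $0$ to $x$; shrinking if necessary, I would arrange that the $U_x$ are pairwise disjoint. Then $\phi_x$ identifies $U_x \setminus \{x\}$ with $\coprod_{c \in C(x)} \BR_{>0}\cdot c$, a disjoint union of $n_x$ open rays.

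Next, I would form $\hat{X}$ as the pushout of
\[
X \setminus X_{exc} \;\longleftarrow\; \bigsqcup_{x \in X_{exc}} \coprod_{c \in C(x)} \BR_{>0} \cdot c \;\longrightarrow\; \bigsqcup_{x \in X_{exc}} \coprod_{c \in C(x)} \BR_{\ge 0} \cdot c,
\]
where the left-hand map is induced by the $\phi_x$'s. Concretely, $\hat{X}$ replaces each singular point $x$ by $n_x$ separate endpoints, one at the base of each local branch. To check that $\hat{X}$ is a $1$-dimensional manifold: at any point not among the new endpoints, a small neighbourhood in $\hat{X}$ agrees with a small neighbourhood in $X \setminus X_{exc}$, and since every $y \in X \setminus X_{exc}$ has $n_y \in \{1,2\}$, such neighbourhoods are locally modelled on $\BR_{\ge 0}$ or $\BR$; at each new endpoint the pushout construction gives a neighbourhood homeomorphic to $\BR_{\ge 0}$.

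Now define $\sim$ on $\hat{X}$ to have as its only non-trivial classes, for each $x \in X_{exc}$, the $n_x$ endpoints lying over $x$. Only finitely many points of $\hat{X}$ are non-trivially related, so $\sim$ is a finite relation. The pushout construction gives a canonical continuous bijection $f: \hat{X}/\!\!\sim\ \to X$ which is the identity on the non-exceptional part and sends each class to the corresponding $x$. To see $f$ is a homeomorphism, I would argue locally: away from $X_{exc}$ it is the identity, while over $U_x$ it is induced by $\phi_x$ on each branch, hence open. By Lemma~\ref{le:quotientsfactor} (or directly), $f$ is then an isomorphism of $1$-dimensional spaces.

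Finally, interpreting $\hat{X}_f$ via the composite $g: \hat{X} \to \hat{X}/\!\!\sim\ \xrightarrow{f} X$, I would compute the non-local-homeomorphism locus of $g$. On $X \setminus X_{exc}$ the map $g$ is a local homeomorphism by construction; at each $x \in X_{exc}$, any open neighbourhood of $x$ has $n_x \ge 3$ disjoint lifts meeting at endpoints identified by $\sim$, so $g$ cannot be a local homeomorphism there. Hence the singular locus of $g$ in $X$ is exactly $X_{exc}$, and $\hat{X}_f = g^{-1}(X_{exc})$ is the set of new endpoints, whose image is $X_{exc}$, as required. The main subtlety I expect is bookkeeping the pushout topology carefully enough to verify that $f$ is open at the singular points and that $\hat{X}$ really is Hausdorff with the claimed local models; once the local chart $\phi_x$ is in hand, this reduces to the tautological statement that $\St(n_x)$ is the quotient of $\coprod_{c \in C(x)} \BR_{\ge 0}\cdot c$ identifying the $n_x$ origins.
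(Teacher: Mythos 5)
Your proof is correct and takes the same ungluing route as the paper: cut $X$ open at each exceptional point $x$ by gluing $X \setminus X_{exc}$ to a local model $\hat{U}_x$ along $U_x \setminus \{x\}$, for $U_x$ a small neighbourhood of $x$. The only difference is the choice of local model: you always split $x$ completely into $n_x$ closed half-lines (one new boundary endpoint per branch), which corresponds to the trivial equivalence relation on $C(x)$; the paper instead fixes an arbitrary equivalence relation on $C(x)$ whose classes have cardinality at most $2$, so that a pair of branches can optionally be re-joined into a single line through a preimage of $x$. Both choices prove the present lemma, and in both cases the verification that the pushout is a manifold, that the quotient recovers $X$, and that $\hat{X}_f$ sits over exactly $X_{exc}$ is the same local computation. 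The paper's extra parameter is not gratuitous, though: it is reused verbatim in the proof of Proposition~\ref{pr:quotients}, where the relevant choice is the $\iota$-orbit relation on $C(z)$ (all classes of size exactly $2$), which produces a non-singular cover with empty boundary when $X$ underlies a curve. With your trivial relation, each exceptional point would lift to $n_x$ boundary points of $\hat{X}$, so the construction could not directly serve as the non-singular cover in the curve setting.
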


\begin{proof}
Fix, for every $x\in X_{exc}$, a small
open neighbourhood $U_x$ of $x$ and a homeomorphism
$f_x:U_x\iso \mathrm{St}(E_x)$, where $E_x$ is a finite subset of $S^1$.
We choose now an equivalence relation on $E_x$ whose classes have cardinality
	at most $2$. Note that $f_x$ induces a bijection between $C(x)$ and
	$E_x$, hence the equivalence relation can be viewed on $C(x)$.

	Define $\hat{U}_x =\coprod_{E'\in E_x/\!\sim}\St(E')$.
	The map $f_x$ provides an open embedding 
	$$U_x-\{x\}\iso \mathrm{St}^\circ(E_x)
	\iso\coprod_{E'\in E_x/\!\sim}\St^\circ(E')\hookrightarrow \hat{U}_x.$$
We put 
$$\hat{X}=(X-X_{exc})\coprod_{(\coprod_{x\in X_{exc}}(U_x-\{x\}))}
\bigl(\coprod_{x\in X_{exc}}
\hat{U}_x\bigr).$$
Note that $\hat{X}$ is a $1$-dimensional manifold.
Let $q:\hat{X}\to X$ be the canonical map: it
identifies $X$ with the quotient
of $\hat{X}$ by the equivalence relation given by $\hat{x}_1\sim\hat{x}_2$
if $q(\hat{x}_1)=q(\hat{x}_2)$.
	Up to isomorphism, $\hat{X}$ depends only on the choice of 
	an equivalence relation on $C(x)$ for $x\in X_{exc}$.
\end{proof}

\subsubsection{Paths}
\label{se:paths}

\begin{lemma}
	\label{le:nullhomotopic}
	Let $E$ be a finite subset of $X$ and $\gamma$ be a path in $X$ such that
	for all connected components $I$ of $[0,1]\setminus\gamma^{-1}(E)$, the restriction
	of $\gamma$ to $\bar{I}$ is nullhomotopic. Then $\gamma$ is nullhomotopic.
\end{lemma}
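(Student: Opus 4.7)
The plan is to construct a homotopy from $\gamma$ to a path $\gamma'$ whose image is contained in $E$, and then observe that since $E$ is a finite (hence discrete) subspace of $X$ and $[0,1]$ is connected, $\gamma'$ must be constant, which will prove that $\gamma$ is nullhomotopic. Here ``nullhomotopic'' is understood rel endpoints, so each $\gamma|_{\bar I}$ has equal endpoints lying in $E$.

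First I would set $Z=\gamma^{-1}(E)$, a closed subset of $[0,1]$, and write its complement as a countable disjoint union of open intervals $\{I_j\}_{j\in J}$. For each $j$ the hypothesis provides a homotopy $H_j:\bar I_j\times[0,1]\to X$ with $H_j(\cdot,0)=\gamma|_{\bar I_j}$, $H_j(\cdot,1)$ equal to the constant path at some $e_j\in E$, and $H_j(\partial \bar I_j,t)=\{e_j\}$ for all $t$. I would then define $F:[0,1]\times[0,1]\to X$ by $F(s,t)=\gamma(s)$ for $s\in Z$ and $F(s,t)=H_j(s,t)$ for $s\in \bar I_j$; the compatibility on boundaries of $\bar I_j$ with $Z$ is automatic from the rel-endpoint condition. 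Once $F$ is shown continuous, the path $\gamma'=F(\cdot,1)$ takes values in $E$ and, being continuous on a connected domain into a discrete space, is constant at some $e\in E$, giving the desired nullhomotopy.

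The continuity of $F$ on $(\mathrm{int}\,\bar I_j)\times[0,1]$ is automatic, and on the boundary components of $\bar I_j\times\{0\}$ it follows because $H_j(\cdot,0)=\gamma|_{\bar I_j}$. The only delicate issue is continuity at points $(s_0,t_0)$ with $s_0$ an accumulation point of $\bigsqcup_j \bar I_j$, which requires that the individual homotopies $H_j$ have images shrinking to $\gamma(s_0)$. To arrange this, I would exploit the local structure of the $1$-dimensional space $X$: each point admits small open neighbourhoods homeomorphic to some $\St(n)$, which are contractible. Given any open $U\ni\gamma(s_0)$, by continuity of $\gamma$ all but finitely many $\bar I_j$ with $\bar I_j$ close to $s_0$ satisfy $\gamma(\bar I_j)\subset U$. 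Using that the contractible $\St(n)$-neighbourhoods retract inside themselves, one can replace each such $H_j$ by a nullhomotopy with image inside the chosen contractible neighbourhood, hence inside $U$. This is the standard fact that inside a contractible subspace any rel-endpoint nullhomotopic path admits a nullhomotopy with image in that subspace.

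The principal obstacle is this last paragraph: the a priori given $H_j$ may wander far from $\gamma(\bar I_j)$, so one must \emph{modify} them to obtain local control, using the structure of $X$ near points of $E$ and the fact that contractible open subsets of $X$ admit prescribed-small nullhomotopies. Once the family $\{H_j\}$ is thus chosen with controlled images, the verification of continuity of $F$ at the remaining points reduces to a direct neighbourhood-by-neighbourhood check, and the concluding observation that any continuous map $[0,1]\to E$ is constant completes the proof.
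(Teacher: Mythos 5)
Your route is genuinely different from the paper's, and the comparison is instructive because the paper's argument is designed precisely to avoid the obstacle you flag. The paper picks pairwise disjoint, connected, simply connected open neighbourhoods $U_e$ of the points $e\in E$, sets $U=\bigcup_e U_e$, takes $V$ open inside $X\setminus E$ with $U\cup V=X$, and applies the Lebesgue number lemma to $\{\gamma^{-1}(U),\gamma^{-1}(V)\}$: the set $C$ of components $I$ of $[0,1]\setminus\gamma^{-1}(E)$ with $\bar I$ contained in neither $\gamma^{-1}(U)$ nor $\gamma^{-1}(V)$ is \emph{finite}. Only those finitely many $\gamma|_{\bar I}$ get collapsed to constants; the resulting path $\gamma'$ agrees with $\gamma$ elsewhere, has image entirely in $U$ (every remaining $\bar{I}'$ meets $\gamma^{-1}(E)$, so lies in $\gamma^{-1}(U)$ rather than $\gamma^{-1}(V)$), hence lies in a single $U_e$ by connectedness, and is therefore nullhomotopic. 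There is no continuity issue at all, because only finitely many intervals are touched.

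Your construction of $F$, by contrast, needs one fixed family $\{H_j\}_{j\in J}$ working simultaneously at every accumulation point of $\bigsqcup_j\bar I_j$. The modification you describe --- given $s_0$ and $U\ni\gamma(s_0)$, replace those $H_j$ with $\gamma(\bar I_j)\subset U$ by nullhomotopies valued in $U$ --- is stated per pair $(s_0,U)$, but for $F$ to be a single well-defined continuous map you must produce $\{H_j\}$ once and for all with $\mathrm{diam}(\mathrm{image}\,H_j)\to 0$ as $\mathrm{diam}(\bar I_j)\to 0$. That uniform quantitative choice (fix a metric, use the local $\St(n)$-structure to pick, for each sufficiently small $\bar I_j$, a simply connected open $V_j\supset\gamma(\bar I_j)$ with $\mathrm{diam}(V_j)$ controlled by $\mathrm{diam}(\gamma(\bar I_j))$, then take $H_j$ with image in $V_j$) is achievable, so your route can be completed. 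But as written this is the load-bearing step and it is not actually carried out; it is exactly the difficulty that the Lebesgue-number finiteness in the paper's proof is there to eliminate.
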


\begin{proof}
Given $e\in E$, let $U_e$ be a connected and simply connected open neighborhood of $e$.
Choose $U_e$ small enough so that $U_e\cap U_{e'}=\emptyset$ for $e\neq e'$.
Let $U=\bigcup_{e\in E}U_e$. Let $V$ be an open subset of $X\setminus E$ containing
$X\setminus U$.

Let $C$ be the set of connected components $I$ of $[0,1]\setminus\gamma^{-1}(E)$ such that
		$\bar{I}$ is not contained in $\gamma^{-1}(U)$ nor in $\gamma^{-1}(V)$.
	By Lebesgue's
number Lemma, that set is finite. Since the restriction of
	$\gamma$ to $\bar{I}$ is nullhomotopic for
$I\in C$, it follows that $\gamma$ is homotopic to a path $\gamma'$ that is
constant on $\bar{I}$ for $I\in C$ and that coincides with $\gamma$ on
$[0,1]-\bigcup_{I\in C}I$. Let $I'$ be a connected component of 
	$[0,1]\setminus\gamma^{-1}(E)$ with $I'{\not\in}C$. We have 
	$\bar{I}\cap\gamma^{-1}(E)\neq\emptyset$, hence $\bar{I}\subset\gamma^{-1}(U)$.
	We deduce that $\gamma'([0,1])\subset U$, hence $\gamma'$ is nullhomotopic.
\end{proof}

\begin{lemma}
	\label{le:componentspath}
Let $E$ be a finite subset of $X$ and $\gamma$ a path in $X$. 
Let $B$ be the set of connected components $I$ of
$[0,1]\setminus\gamma^{-1}(E)$ such that $\gamma_{|\bar{I}}$ is not nullhomotopic.
Then $B$ is finite and
there are paths $\gamma'$ and $\gamma''$ homotopic to $\gamma$ such that
	\begin{itemize}
		\item $\gamma$ and $\gamma'$ coincide on $\bigcup_{I\in B}\bar{I}$ and
			$\gamma'([0,1]\setminus \bigcup_{I\in B}\bar{I})\subset E$
		\item $\gamma^{\prime\prime -1}(E)$ is finite.
	\end{itemize}
\end{lemma}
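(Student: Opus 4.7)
The approach follows the Lebesgue-number argument used in the proof of Lemma \ref{le:nullhomotopic}. I would choose, for each $e\in E$, a simply connected open neighborhood $U_e$ with $U_e\cap E=\{e\}$ and the $U_e$'s pairwise disjoint, together with a cover of $X\setminus E$ by simply connected opens disjoint from $E$. After refinement I can assume each member of the resulting open cover of $X$ either meets $E$ in a single point $e$ and is contained in $U_e$, or is disjoint from $E$. Applying the Lebesgue number lemma to the pullback of this cover by $\gamma$ yields a subdivision $0=t_0<t_1<\cdots<t_N=1$ with each $\gamma([t_i,t_{i+1}])$ contained in a single simply connected open $W_i$. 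Any component $I=(a,b)$ of $[0,1]\setminus\gamma^{-1}(E)$ whose interior avoids $\{t_0,\ldots,t_N\}$ is contained in some $[t_i,t_{i+1}]$, so $\gamma_{|\bar I}$ lies in the simply connected $W_i$ and is nullhomotopic rel endpoints, forcing $I\notin B$. Hence $B$ is contained in the finite collection of components meeting the subdivision, so $B$ is finite.

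To construct $\gamma'$, I would process each $B^c$ component $I=(a,b)\subset[t_i,t_{i+1}]$. Nullhomotopy rel endpoints gives $\gamma(a)=\gamma(b)$, and since $W_i$ meets $E$ in at most one point, whenever $\{a,b\}\cap\gamma^{-1}(E)\neq\emptyset$ this common value equals the unique $e\in W_i\cap E$. I would define $\gamma'$ to coincide with $\gamma$ on $\bigcup_{I\in B}\bar I$ and to be the constant path at this $e$ on each $\bar I$ with $I\in B^c$; the few remaining edge cases (for instance $\gamma^{-1}(E)=\emptyset$, or a component touching $0$ or $1$ whose endpoint in $[0,1]$ does not lie in $E$) are handled directly by checking that they either force that component into $B$ or lead to a trivial conclusion. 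The homotopy from $\gamma$ to $\gamma'$ is built piecewise on each $[t_i,t_{i+1}]$ using simple connectedness of $W_i$ and is stationary on pieces inside $B$-components, yielding a continuous map $[0,1]\times[0,1]\to X$.

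To construct $\gamma''$, I would return to $\gamma$ itself and reuse the Lebesgue subdivision. On each $[t_i,t_{i+1}]$ with $W_i\subset U_{e_i}\cong\St(n_{e_i})$, I would homotope $\gamma_{|[t_i,t_{i+1}]}$ rel endpoints within $U_{e_i}$ to the canonical radial path, which goes along the ray through $\gamma(t_i)$ to $e_i$ and then out along the ray through $\gamma(t_{i+1})$; such a path hits $e_i$ at most at one interior time (and at endpoints if $\gamma(t_i)$ or $\gamma(t_{i+1})$ equals $e_i$). On $[t_i,t_{i+1}]$ with $W_i\cap E=\emptyset$ there is nothing to do. Gluing across the $t_i$'s produces $\gamma''$ homotopic to $\gamma$ with $\gamma^{\prime\prime-1}(E)$ a finite union of finite sets, hence finite.

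The main obstacle is ensuring continuity of the homotopy in the second step: $[0,1]\setminus\gamma^{-1}(E)$ may have countably many components accumulating at limit points of $\gamma^{-1}(E)$, so the piecewise nullhomotopies on different $B^c$-components must be glued compatibly. Uniform control is obtained by keeping each deformation within $[t_i,t_{i+1}]$ inside the single simply connected $W_i$, and invoking uniform continuity of $\gamma$ on the compact $[0,1]$, so the diameter of the deformation shrinks with the diameter of the corresponding $B^c$-component, paralleling the gluing argument in the proof of Lemma \ref{le:nullhomotopic}.
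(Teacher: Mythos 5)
The finiteness of $B$ is handled exactly as in the paper. The construction of $\gamma'$ is morally the same, though you process each $B^c$-component individually and therefore have to worry about gluing possibly infinitely many local nullhomotopies, whereas the paper groups the $B^c$-components into the finitely many closed intervals making up $[0,1]\setminus\bigcup_{I\in B}\bar I$ and applies Lemma \ref{le:nullhomotopic} once per block, so no accumulation issue arises. Your gluing argument can be made rigorous (one must actually specify the local nullhomotopies, e.g. the cone-to-apex contraction in the $\St(n_{e_i})$-model, to get the claimed diameter control), but as written you assert rather than establish the convergence.

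The genuine gap is in $\gamma''$. The radial path you describe, from $\gamma(t_i)$ in to $e_i$ and out to $\gamma(t_{i+1})$, degenerates to the constant path at $e_i$ whenever $\gamma(t_i)=\gamma(t_{i+1})=e_i$, in which case $\gamma^{\prime\prime -1}(e_i)\supset[t_i,t_{i+1}]$ is infinite and the construction fails. This is not a remote edge case: it happens, for instance, whenever $\gamma$ is constant at a point of $E$ on a subinterval containing two consecutive subdivision points. The fix is not hard (replace a degenerate radial path by a small excursion that leaves $e_i$), but a cleaner route is the paper's: having built $\gamma'$, constant on the finitely many closed intervals of $[0,1]\setminus\bigcup_{I\in B}\bar I$ and agreeing with $\gamma$ elsewhere, reparametrize to collapse each such constant interval to a single point. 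Since $\gamma^{-1}(E)\cap\bar I$ has at most two elements for each $I\in B$, the resulting $\gamma''$ hits $E$ only finitely often; no local model of $X$ is needed at this step.
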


\begin{proof}
	Let $\CU$ be an open covering of $X$ by connected and simply connected subsets,
	each of which contain at most one element of $E$.
	By Lebesgue's number Lemma, there are only finitely many $I\in
	\pi_0([0,1]\setminus\gamma^{-1}(E))$ such that $\bar{I}$ is not contained in
	an element of $\gamma^{-1}(\CU)$. So, $B$ is finite.

We can write $\gamma$ as a finite composition of its restrictions to $\bar{I}$ for 
$I \in B$ interlaced with finitely many paths that satisfy the assumptions of
Lemma \ref{le:nullhomotopic}. Thanks to that lemma, we obtain a path $\gamma'$
satisfying the requirements of the lemma. By shrinking the intervals on which
$\gamma'$ is constant to points, we obtain a path $\gamma''$ as desired.
\end{proof}

\begin{defi}
	We say that a path $\gamma$ in a $1$-dimensional space $X$ is {\em minimal}\index[ter]{minimal path} if
there is a finite covering of $[0,1]$ by open subsets such that the restriction
of $\gamma$ to any of those open subsets is injective.
\end{defi}

\smallskip
Given a continuous map $f:X\to X'$ and a path $\gamma:[0,1]\to X$, we will usually
denote by $f(\gamma)$ the path $f\circ\gamma$.

We denote by $[\gamma]$ the homotopy class of a path $\gamma$. 
Note that we always consider homotopies relative to the endpoints. 
We denote by $\Pi(X)$\indexnot{PiX}{\Pi(X)} the fundamental groupoid of $X$.

Given $x_0,x_1\in X$ such that there is a unique homotopy class of paths from $x_0$ to
$x_1$ in $X$, we denote by $[x_0\to x_1]$\indexnot{[}{[x_0\to x_1]} that homotopy class.

\smallskip
The following lemma is classical for $1$-dimensional finite CW-complexes.

\begin{lemma}
	\label{le:minimalpaths}
Let $X$ be a $1$-dimensional space. A homotopy class
	of paths in $X$ contains a minimal path if and only if it is not an identity.

Given $\gamma,\gamma'$ two homotopic minimal paths in $X$, there
	is a homeomorphism $\phi:[0,1]\iso [0,1]$ with $\phi(0)=0$ and $\phi(1)=1$
	such that $\gamma'=\gamma\circ\phi$.
\end{lemma}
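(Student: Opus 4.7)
The plan is to reduce both parts to classical results on paths in $1$-dimensional manifolds, handling the singular set $X_{exc}$ via the local model from Lemma \ref{le:localstructuremaps} and the quotient presentation from Lemma \ref{le:1manifoldsquotients}.

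For the first equivalence, the easy direction will be to observe that a constant path cannot be minimal: any finite open cover of the connected space $[0,1]$ must contain an open subset with more than one point, on which a constant map fails to be injective. For the converse, I will start from a non-nullhomotopic representative $\gamma$ of the given class and apply Lemma \ref{le:componentspath} with $E=X_{exc}$ to replace $\gamma$ by a homotopic path whose preimage of $X_{exc}$ is finite. On the closure of each connected component of the complement of that finite set, $\gamma$ takes values in the $1$-manifold $X\setminus X_{exc}$ (with endpoints possibly in $X_{exc}$). The classical case of $1$-manifolds, proved via their universal covers ($\BR$, $[0,1]$, \ldots), tells us that each such restriction is either nullhomotopic or homotopic rel.\ endpoints to an injective representative. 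I will replace each restriction by such a representative, collapse the nullhomotopic ones to points, and iteratively eliminate ``U-turns'' at singular points --- situations where the incoming and outgoing branches at some $x\in X_{exc}$ agree --- by pushing the path off $x$ using the explicit local model $\St(n_x)$ guaranteed by Lemma \ref{le:localstructuremaps}. Each such elimination strictly decreases $|\gamma^{-1}(X_{exc})|$, so the procedure terminates; the resulting path will be minimal because the pieces between branch changes are injective and their images meet only at common endpoints lying over $X_{exc}$.

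For uniqueness, I will show that a minimal path $\gamma$ is determined up to reparametrization by a combinatorial datum consisting of the ordered sequence of singular points it visits together with the incoming and outgoing branches at each, plus, on each subinterval between consecutive visits to $X_{exc}$, the rel.\ endpoints homotopy class of its restriction in $X\setminus X_{exc}$. Given two homotopic minimal paths $\gamma,\gamma'$, I will argue that they carry the same combinatorial datum: using the presentation $X=\hat X/\!\!\sim$ from Lemma \ref{le:1manifoldsquotients} and lifting the homotopy piecewise to $\hat X$, the sequence of branches and the $1$-manifold homotopy classes on each subinterval are homotopy invariants that can be read off from any minimal representative. On each matching subinterval, the restrictions of $\gamma$ and $\gamma'$ are then homotopic minimal paths in a $1$-manifold with prescribed endpoints, so by the classical uniqueness they are reparametrizations of each other; gluing these reparametrizations at the singular-point crossings produces the required homeomorphism $\phi:[0,1]\iso[0,1]$.

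The hard part will be the U-turn elimination step and its uniqueness counterpart: verifying that a minimal path never contains a redundant backtrack at a singular point, and that when lifting pieces of a homotopy to $\hat X$ the branch labels attached to $\gamma$ and $\gamma'$ really do agree. Both reduce to a careful local analysis near points of $X_{exc}$ using the explicit model $\St(n_x)$ from the definition of a $1$-dimensional space and the normal form for morphisms in Lemma \ref{le:localstructuremaps}; I expect this case analysis (and ensuring the inductive elimination does not inadvertently create new U-turns elsewhere) to be the only genuinely non-formal content of the argument.
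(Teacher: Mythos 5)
There is a genuine gap in the "easy direction" of the first equivalence. You propose to establish it by observing that a \emph{constant} path cannot be minimal (any open subset of a finite cover with more than one point witnesses non-injectivity). That observation is correct, but it does not prove the needed implication. The direction to prove is: if the homotopy class is an identity, then it contains no minimal path. The identity class at a point $x$ consists of \emph{all} nullhomotopic loops at $x$, most of which are non-constant (e.g.\ a path that traverses an arc and then backtracks, or a path representing a word like $aa^{-1}$ in a wedge of circles). Showing that the constant representative is not minimal says nothing about these. The paper handles this by quotienting $X$ by $X_{exc}\cup\{\gamma(0),\gamma(1)\}$ to obtain a wedge of circles with free fundamental group, observing that the image of a minimal path would have to read off a reduced word (no immediate backtracking at the wedge point by local injectivity), and invoking freeness to conclude that a nullhomotopic class has no minimal representative; minimality is then pulled back along the quotient via Lemma \ref{le:mappaths}. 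Your plan contains no analogue of this argument, and the U-turn-elimination machinery you describe runs in the opposite direction (producing minimal paths from arbitrary ones) and cannot be adapted to rule out minimal representatives of the trivial class without an idea of the above kind.

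A secondary caution: in the existence direction you take $E = X_{exc}$ in Lemma \ref{le:componentspath}, but $X_{exc}$ can be empty (e.g.\ $X = S^1$) while the complement is not a disjoint union of arcs, so you are implicitly relying on the classical $S^1$ case as a black box; the paper instead enlarges $E$ so that the closures of components of $X\setminus E$ are genuine intervals, which makes the reduction cleaner and lets the ``composition of minimal paths is minimal-or-constant'' lemma do the U-turn bookkeeping uniformly. Your uniqueness sketch is plausible and its spirit (match the ordered sequence of $X_{exc}$-visits and the inter-visit segments) agrees with the paper's, but be aware that ``lifting the homotopy piecewise to $\hat X$'' is delicate since $\hat X\to X$ is not a covering map; the paper sidesteps this by an induction on $|\gamma^{-1}(X_{exc})|$ that matches initial segments directly, using the easy direction (the very one that has the gap above) to force $\gamma\circ\gamma^{\prime -1}$ to cancel at its midpoint.
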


\begin{proof}
		Let $\gamma_1$, $\gamma_2$ be two minimal paths in $X$ with
$\gamma_1(1)=\gamma_2(0)$. 
The path $\gamma_2\circ\gamma_1$ is minimal if and only if there are
$t_1,t_2\in (0,1)$ such that $\gamma_1((t_1,1))\cap\gamma_2((0,t_2))=\emptyset$.
If $\gamma_2\circ\gamma_1$ is not minimal, then there are unique elements
$t_1\in [0,1)$ and $t_2\in (0,1]$ such that
$(\gamma_2)_{|[0,t_2]}\circ(\gamma_1)_{|[t_1,1]}$ is 
homotopic to a constant path and $(\gamma_2)_{|[t_2,1]}\circ(\gamma_1)_{|[0,t_1]}$
is minimal (if $t_2\neq 1$ or $t_1\neq 0$).

We deduce by induction that a composition of minimal paths is homotopic to a minimal
path or to a constant path.

	\smallskip
	Let $\gamma$ be a path in $X$.
If $X$ is homeomorphic to an interval of $\BR$, then $\gamma$ is homotopic to
a minimal path or a constant path.
In general there is a finite subset $E$ of $X$ such that given $U$ a connected component
	of $X\setminus E$, the space $\bar{U}$ is homeomorphic to
	an interval of $\BR$.
By Lemma \ref{le:componentspath}
	there is a path $\gamma'$ homotopic to $\gamma$ and such that
	$\gamma^{\prime -1}(E)$ is finite. So, $\gamma'$ is a composition of
	paths contained in subspaces of $X$ that are homeomorphic to
	intervals of $\BR$. Consequently, $\gamma'$ is a composition of
	minimal paths. It follows that $\gamma'$, hence $\gamma$, is homotopic to a minimal
	or constant path.

\smallskip

	Let $\gamma$ be a path homotopic to a constant path. The image $\bar{\gamma}$
	of $\gamma$
	in $\bar{X}=X/(X_{exc}\cup\{\gamma(0),\gamma(1)\})$ is homotopic to a constant
	path. Since $\bar{X}$ is homotopy equivalent to a wedge of circles, its fundamental
	group is free and $\bar{\gamma}$ cannot be a minimal path. It follows that
	$\gamma$ is not minimal.

\smallskip

	Let $\gamma$ be a minimal path.
		Let $\{0=t_0<t_1<\ldots<t_n=1\}=\{0,1\}\cup\gamma^{-1}(X_{exc})$.
		Note that $\gamma((t_i,t_{i+1}))$ is contained in a
		connected component $U_i$ of $X\setminus X_{exc}$ and it is a connected
		component if $\gamma(t_i),\ \gamma(t_{i+1})\in X_{exc}$.
		If $\bar{U}_i$ is homeomorphic to an interval of $\BR$, then
		$U_i\neq U_{i+1}$ and 
		$U_i\neq U_{i-1}$. Otherwise, $\bar{U}_i$ is 
		homeomorphic to $S^1$ and if $U_i=U_{i+1}$, then the paths
		$\gamma_{|U_i}$ and $\gamma_{|U_{i+1}}$ have the same
		orientation.

	Let $\gamma'$ be a minimal path homotopic to $\gamma$. We will
	show the existence of $\phi$ as in the lemma by induction on $n$.
	Since $\gamma\circ \gamma^{\prime -1}$ is not minimal, there is
	$\eps>0$ such that $\gamma'([0,\eps])\subset\bar{U}_1$. Consider $\eps$
	maximal with this property. 

	Assume $\gamma'(\eps){\not\in} X_{exc}$. We have $\eps=1$. Let
	$\eps'\in (t_0,t_1]$ such that $\gamma(\eps')=\gamma'(\eps)$. The path
	$\gamma_{|[\eps',1]}$ is homotopic to the identity, hence $n=1$,
	$\eps'=1$ and $\gamma(t_1)=\gamma'(\eps)$.
	
	If $\gamma'(\eps)\in X_{exc}$, then $\gamma'(\eps)=\gamma(t_1)$ as well.
	In both cases, the paths
	$\gamma_{|[0,t_1]}$ and $\gamma'_{|[0,\eps]}$ are injective and have
	the same image. So, there is a homeomorphism
	$\psi:[0,\eps]\iso [0,t_1]$ such that
	$\gamma'(t)=\gamma(\psi(t))$ for $t\in [0,\eps]$ and the
	existence of $\psi$ follows by induction.

\end{proof}

\begin{defi}
Let $\zeta$ be a non-identity homotopy class of paths in a $1$-dimensional space
$X$. We define
	the {\em support}\index[ter]{support of a path}
	$\supp(\zeta)$\indexnot{supp}{\supp(\zeta)}
of $\zeta$ to be the subspace $\gamma([0,1])$ of $X$, where
$\gamma$ is a minimal path in $\zeta$.
\end{defi}

Lemma \ref{le:minimalpaths} ensures that the support
is well defined. Note that $\supp(\zeta)=\bigcap_\gamma \gamma([0,1])$, where
$\gamma$ runs over paths with $[\gamma]=\zeta$.

Since a minimal path $[0,1]\to X$ is a morphism of $1$-dimensional
manifolds, it follows that the support of $\zeta$ is a compact connected
$1$-dimensional subspace of $X$.

We define the support of the identity homotopy class $\id_x$ at a point $x$
to be $\{x\}$.

\begin{lemma}
	\label{le:mappaths}
	Let $f:X\to X'$ be a morphism of $1$-dimensional spaces and let
	$\gamma$, $\gamma'$ be two paths in $X$.
	
	\begin{itemize}
		\item $\gamma$ is minimal if and only if $f(\gamma)$ is minimal.
			In particular, 
			$\supp([f(\gamma)])=f(\supp([\gamma)])$.
		\item
	If $f(\gamma)=f(\gamma')$, then
	$\gamma=\gamma'$ or $\gamma$ and $\gamma'$ are constant paths at two distinct points
	of $X$ having the same image under $f$.
		\item
	If $[f(\gamma)]=[f(\gamma')]$, then $[\gamma]=[\gamma']$ or
	$[\gamma]=\id_{x_1}$ and
	$[\gamma']=\id_{x_2}$ for some $x_1\neq x_2\in X$ with $f(x_1)=f(x_2)$.
	\end{itemize}
\end{lemma}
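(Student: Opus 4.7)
The plan rests on one local observation extracted from Lemma~\ref{le:localstructuremaps}: every $x\in X$ admits a small open neighbourhood $V_x$ on which $f$ is injective (namely the branch $\St(I_l)$ containing $x$ in the local model at $f(x)$), and any two distinct points $x_1\ne x_2$ of $X$ with $f(x_1)=f(x_2)$ admit \emph{disjoint} such neighbourhoods $V_{x_1}$, $V_{x_2}$. I will exploit this throughout.

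For the first bullet, assume $\gamma$ is minimal, so $[0,1]$ is covered by finitely many open sets on each of which $\gamma$ is injective. For each $t\in[0,1]$ I would shrink such a set around $t$ so that its $\gamma$-image lies in $V_{\gamma(t)}$; then $f\circ\gamma$ is injective on it, and compactness of $[0,1]$ yields a finite subcover, proving $f\circ\gamma$ minimal. The converse is immediate, since injectivity of $f\circ\gamma$ on an open set forces injectivity of $\gamma$ there. For the support claim, if $[\gamma]$ is an identity then both sides are singletons at the common image point; otherwise pick a minimal representative $\gamma_m$ and compute $\supp([f(\gamma)])=f(\gamma_m)([0,1])=f(\gamma_m([0,1]))=f(\supp([\gamma]))$.

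For the second bullet, assume $f\circ\gamma=f\circ\gamma'$ and set $T=\{t\in[0,1]:\gamma(t)\ne\gamma'(t)\}$. The local observation makes $T$ open (disjoint neighbourhoods at a point of $T$ persist under continuity) and also makes $[0,1]\setminus T$ open (a common injectivity neighbourhood at a point outside $T$ forces $\gamma=\gamma'$ nearby), so $T$ is clopen. If $T=\emptyset$ we are done; if $T=[0,1]$, I will show that $t\mapsto(\gamma(t),\gamma'(t))$ takes values in the finite set $N=\{(a,b)\in X^2:f(a)=f(b),\ a\ne b\}$, so by connectedness it is constant and $\gamma$, $\gamma'$ are constants at distinct points with common image. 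The one step needing care is finiteness of $N$: using $E_3$ finite with $f|_{X\setminus E_3}$ injective (Lemma~\ref{le:equivmorphism1space}), any point of $X'$ with two preimages must have at least one preimage in $E_3$, so $\{x'\in X':|f^{-1}(x')|\ge 2\}\subset f(E_3)$ is finite; combining with finiteness of each fibre from Lemma~\ref{le:localstructuremaps} gives $|N|<\infty$.

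For the third bullet, split on whether $[f(\gamma)]$ is an identity. If it is not, then neither $[\gamma]$ nor $[\gamma']$ is an identity (else the $f$-image would be an identity too), so minimal representatives $\gamma_m\sim\gamma$ and $\gamma'_m\sim\gamma'$ exist; by the first bullet $f(\gamma_m)$ and $f(\gamma'_m)$ are minimal members of the same class, so Lemma~\ref{le:minimalpaths} yields an endpoint-fixing reparametrisation $\phi$ of $[0,1]$ with $f(\gamma_m\circ\phi)=f(\gamma'_m)$. Applying the second bullet to $\gamma_m\circ\phi$ and $\gamma'_m$, the constant-path alternative is excluded by minimality, so $\gamma_m\circ\phi=\gamma'_m$ and therefore $[\gamma]=[\gamma']$. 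If on the other hand $[f(\gamma)]$ is an identity, then neither $[\gamma]$ nor $[\gamma']$ can admit a minimal representative (its $f$-image would be a minimal path in an identity class, contradicting Lemma~\ref{le:minimalpaths}), so both classes are identities $\id_{\gamma(0)}$ and $\id_{\gamma'(0)}$; the shared endpoint relation $f(\gamma(0))=f(\gamma'(0))$ then yields either $[\gamma]=[\gamma']$ or the two-distinct-points alternative. The main obstacle throughout is keeping the local model of Lemma~\ref{le:localstructuremaps} aligned with the compactness and connectedness arguments, and in particular establishing the finiteness of $N$ needed in the second bullet.
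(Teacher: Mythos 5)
Your proof is correct and follows essentially the same route as the paper: local injectivity from Lemma~\ref{le:localstructuremaps} plus compactness for the first bullet, the open set $\{t:\gamma(t)\ne\gamma'(t)\}$ being empty or all of $[0,1]$ for the second, and minimal representatives combined with Lemma~\ref{le:minimalpaths} and the second bullet for the third. The one place you add something is the finiteness-of-$N$ argument to justify that $\Omega=[0,1]$ forces constancy, a step the paper states but leaves implicit (it can also be seen directly: all values of $f\circ\gamma$ then lie in the finite set $X'_f$, so $f\circ\gamma$ is constant and $\gamma([0,1])$ is a connected subset of a finite fibre).
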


\begin{proof}
	A minimal path is a locally injective path. Since every point of $X$ has
	an open neighbourhood on which $f$ is injective (cf Lemma 
	\ref{le:localstructuremaps}), the image by $f$ of
	a minimal path is a minimal path.

	\smallskip
	Consider the set $\Omega=\{t\in [0,1]\ |\ \gamma(t)\neq\gamma'(t)\}$, an open
	subset of $[0,1]$. Let $I$ be a connected component of $\Omega$.
	If $I=[0,1]$,
	then $\gamma$ and $\gamma'$ are constant paths at distinct points of
	$X$ with the same image under $f$. Otherwise,
	let $s\in \overline{I}-I$. There is an open neighbourhood $U$ of 
	$\gamma(s)=\gamma'(s)$ such that $f_{|U}$ is injective.
	There is $t\in I$ such that $\gamma(t)$ and $\gamma'(t)$ are in $U$,
	hence $\gamma(t)=\gamma'(t)$, a contradiction.
	This shows the second assertion of the lemma.
	
	\smallskip
	Assume $\gamma$ and $\gamma'$ are minimal. Since $f(\gamma)$
	and $f(\gamma')$ are minimal and homotopic, it follows from
	Lemma \ref{le:minimalpaths} that there is
	$\phi:[0,1]\iso [0,1]$ with $\phi(0)=0$ and
	$\phi(1)=1$ such that $f(\gamma')=f(\gamma)\circ \phi=f(\gamma\circ\phi)$.
	It follows from the previous assertion of the lemma that
	$\gamma'=\gamma\circ\phi$. 

	Assume now $\gamma$ is minimal. Since $f(\gamma)$ is minimal, it
	follows that $[f(\gamma')]$ is not the identity, hence $[\gamma']$ is
	not the identity. We deduce that the third assertion of the lemma holds
	when $[\gamma]$ and $[\gamma']$ are not both identities. The case where
	they are both identities is clear.
\end{proof}

\subsubsection{Tangential multiplicity}
\label{se:tangential}
Let $X$ be a $1$-dimensional space.
Let $x\in X$ and $U$ be a small open neighborhood of $x$.

Let $c\in C(x)$ and let $U_c$ be the connected component of $U-\{x\}$
corresponding to $c$.
Given $\gamma$ a path in $X$, let 
$I_c^+(\gamma)$ (resp. $I_c^-(\gamma)$)\indexnot{Ic}{I_c^{\pm}(\gamma)}
be the set of elements $t\in [0,1]$ such that
$\gamma(t)=x$ and there is 
$\eps>0$ with $t+\eps<1$ and $\gamma((t,t+\eps))\subset U_c$ (resp. $t-\eps>0$
and $\gamma((t-\eps,t))\subset U_c$).

\smallskip
When $\gamma$ is minimal, the set $I_c^\pm(\gamma)$ is finite and it follows from Lemma
\ref{le:minimalpaths} that its cardinality depends only
on the homotopy class $[\gamma]$. We put $m_c^\pm([\gamma])=|I_c^\pm(\gamma)|
\in\BZ_{\ge 0}$\indexnot{mc}{m_c^\pm}
for $\gamma$ minimal and $m_c([\gamma])=m_c^+([\gamma])-m_c^-([\gamma])$\indexnot{mc}{m_c}.
Similarly, whether or not $0\in I_c^+$ depends only on the homotopy
class $[\gamma]$ (for $\gamma$ minimal).


\begin{lemma}
	\label{le:multiplicityset}
Let $\gamma$ be a path in $X$ such that
$\gamma^{-1}(x)$ has finitely many connected components, none of which contain
$0$ or $1$ in the closure of their interior.

We have
$\partial(\gamma^{-1}(x))=
\bigcup_{c\in C(x)}(I_c^+(\gamma)\cup I_c^-(\gamma))$ and
$|I_c^+(\gamma)|-|I_c^-(\gamma)|=m_c([\gamma])$ for all
$c\in C(x)$.
\end{lemma}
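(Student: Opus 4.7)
For the first identity, I would argue locally at $x$. Choose a small open neighbourhood $U$ of $x$ with $U \setminus \{x\} = \bigsqcup_{c \in C(x)} U_c$. If $t \in \partial(\gamma^{-1}(x))$, the hypothesis on the connected components of $\gamma^{-1}(x)$ ensures that $t$ lies in the closure of $[0,1] \setminus \gamma^{-1}(x)$ on at least one side of $[0,1]$ (a component containing $0$ or $1$ in the closure of its interior is precisely what would obstruct this). Continuity of $\gamma$ gives $\epsilon > 0$ such that, on the available side, $\gamma((t, t+\epsilon)) \subset U \setminus \{x\}$; by connectedness this image lies in a single $U_c$, so $t \in I_c^+$, and symmetrically on the left. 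Conversely, any element of $I_c^+ \cup I_c^-$ admits a one-sided punctured neighbourhood disjoint from $\gamma^{-1}(x)$, hence is a boundary point.

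For the second identity, I would first reduce to the case that $\gamma^{-1}(x)$ is finite. Each positive-length component of $\gamma^{-1}(x)$ is a closed subinterval of $(0,1)$ on which $\gamma$ is constant equal to $x$, and precomposing $\gamma$ with a monotone surjection $[0,1] \to [0,1]$ collapsing each such interval to a point produces a homotopic path $\tilde\gamma$ with $\tilde\gamma^{-1}(x)$ finite and with the same $I_c^{\pm}$ (the collapse identifies each interval boundary pair to the single point that remains after collapsing, and the local behaviour on each side survives).

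Next, by Lemma \ref{le:minimalpaths}, the class $[\gamma]$ contains a minimal representative $\gamma_0$ (the case $[\gamma] = \id_x$ being trivial, as all quantities vanish). By definition of $m_c^\pm$ we have $|I_c^\pm(\gamma_0)| = m_c^\pm([\gamma])$. It therefore suffices to show that, among paths $\alpha$ with $\alpha^{-1}(x)$ finite and with fixed endpoints, the signed count $|I_c^+(\alpha)| - |I_c^-(\alpha)|$ depends only on $[\alpha]$. To see this, I would pick a test point $y_c \in U_c$ close to $x$, orient the arc $U_c \cong \BR_{>0}$ in the direction toward $x$, and interpret $|I_c^+(\alpha)| - |I_c^-(\alpha)|$ as the oriented intersection number of $\alpha$ with $\{y_c\}$, after accounting for boundary contributions determined solely by whether $\alpha(0)$ and $\alpha(1)$ lie on the $x$-side or the far side of $y_c$ (this contribution is fixed during a homotopy rel endpoints). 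Homotopy invariance of oriented intersection numbers with a generic $0$-chain in a $1$-manifold then yields the claim.

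The main obstacle is the homotopy invariance step. A rigorous justification either requires a careful transverse-intersection analysis near $y_c$, with control of boundary corrections coming from the endpoints, or an elementary combinatorial argument decomposing an arbitrary homotopy among paths with finite $\alpha^{-1}(x)$ into elementary moves (cancellation of a backtrack through $x$, deformation within a single branch $U_c$, deformation in $X \setminus \{x\}$) and verifying invariance of $|I_c^+| - |I_c^-|$ under each such move. The other steps are routine in comparison.
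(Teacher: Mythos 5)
Your argument for the first identity is a spelled-out version of the paper's ``clear,'' and it is correct. For the second identity, however, you take a genuinely different route from the paper, and you leave an acknowledged gap that is more substantive than you suggest.

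The paper's proof does not attempt to show directly that $|I_c^+(\gamma)| - |I_c^-(\gamma)|$ is a homotopy invariant. Instead it runs as follows: (i) both sides of the identity are additive under composition of paths --- the left side by an inspection of $I_c^\pm$ at the splicing point, the right side because $m_c$ induces a group homomorphism $R(X)\to\BZ$ (Lemma~\ref{le:additivitymc}); (ii) the identity holds in the base case where $\gamma((0,1))$ avoids $X_{exc}\cup\{x\}$ (both sides are then $\pm 1$ or $0$ according to the endpoint and branch data); (iii) when $\gamma^{-1}(X_{exc}\cup\{x\})$ is finite, decompose $\gamma$ into finitely many pieces of that base form and use additivity; (iv) for a general $\gamma$, use the construction in Lemma~\ref{le:componentspath} with $E = X_{exc}\cup\{x\}$ to produce a homotopic path $\gamma'$ with $\gamma^{\prime -1}(E)$ finite \emph{and} the same $|I_c^\pm|$, and conclude from (iii). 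Notice that (iv) is precisely engineered to sidestep the need for homotopy invariance of $|I_c^+|-|I_c^-|$ in full generality: one only needs it for a single explicit homotopy, the one from the proof of Lemma~\ref{le:componentspath}, which was built so that the $I_c^\pm$ counts are manifestly preserved.

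Your proposal reverses the logic: you reduce to the minimal representative $\gamma_0$ (where the identity holds by definition), and then ask that $|I_c^+|-|I_c^-|$ be a homotopy invariant. But this is essentially the statement of the lemma itself. The appeal to ``homotopy invariance of oriented intersection numbers with a generic $0$-chain in a $1$-manifold'' does not directly apply, because $X$ is a branched $1$-complex, not a manifold; a homotopy between two paths with finite preimage of $x$ generally passes through paths that dwell at branch points, and the preimage of a test point $y_c$ under such a homotopy need not be a $1$-manifold with the standard boundary-matching argument available. Your fallback --- decomposing an arbitrary homotopy into elementary moves --- is plausible in spirit, but the existence of such a normal form for homotopies in a branched $1$-complex is itself a nontrivial claim that would have to be proved. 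Moreover, you only reduce to $\gamma^{-1}(x)$ finite, which is not enough to decompose $\gamma$ into the simple pieces the paper uses; the crucial reduction is to $\gamma^{-1}(X_{exc}\cup\{x\})$ finite, which is what Lemma~\ref{le:componentspath} delivers. In short: your outline has a clean high-level shape, but the step you flag as ``the main obstacle'' is a genuine gap, and filling it rigorously would most naturally push you back toward the paper's decomposition-plus-additivity strategy.
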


\begin{proof}
	The first statement is clear. Let us now prove the second statement.
	That statement is clear if $\gamma((0,1))\cap (X_{exc}\cup\{x\})=\emptyset$.

	The left side of the equality is additive under compositions of paths, and so
	is the right side by Lemma \ref{le:additivitymc} below. 
	
	Assume now $\gamma^{-1}(X_{exc}\cup\{x\})$ is finite.
	The path $\gamma$ is a (finite) composition of paths mapping $(0,1)$ into the
	complement of $X_{exc}\cup\{x\}$, hence the statement holds for $\gamma$.

	Consider now the general case.
The proof of Lemma \ref{le:componentspath} for $E=X_{exc}\cup\{x\}$ produces a path $\gamma'$
	homotopic to $\gamma$ such that $\gamma^{\prime -1}(E)$ is finite and such that 
$|I_c^+(\gamma)|-|I_c^-(\gamma)|=|I_c^+(\gamma')|-|I_c^-(\gamma')|$. Since
the statement holds for $\gamma'$, it follows that it holds for $\gamma$.
\end{proof}

\medskip
Let $\zeta$ be the homotopy class of a minimal path $\gamma$.
Let $x=\zeta(0)$. There is a unique $c\in C(x)$ such that 
$0\in I_c(\gamma)^+$ and we define $\zeta(0+)=\{c\}$\indexnot{z0}{\zeta(0+)}.
Similarly, we define $\zeta(1-)=\{c'\}$\indexnot{z1}{\zeta(1-)}, where
$c'\in C(\zeta(1))$ is unique such that $1\in I_c(\gamma)^-$.

\smallskip
When $\zeta$ is the homotopy class of a constant path we put $\zeta(0+)=C(\zeta(0))$,
$\zeta(1-)=C(\zeta(1))$ and $m_c^\pm(\zeta)=m_c(\zeta)=0$.

\bigskip
%


Given a category $\CC$,
we denote by $H_0(\CC)$ the abelian group generated
by maps in $\CC$ modulo the relation $f+g=f\circ g$ for any two composable
maps $f$ and $g$. We denote by $\llbracket f\rrbracket$\indexnot{[}{\protect\llbracket f\protect\rrbracket} the class in $H_0(\CC)$ of
a map $f$ of $\CC$.
Note that if $f$ is an identity map, then $\llbracket f\rrbracket=0$.

Note that $H_0$ is left adjoint to the functor sending an abelian group to the
category with one object with endomorphism monoid that abelian group.

\bigskip
Let $R(X)=H_0(\Pi(X))$\indexnot{R}{R(X)}.
Note that $R(X)$ is generated by the set $I$ of homotopy classes
of paths $\gamma$ such that $\gamma$ is injective.
It follows from the description of the composition of two minimal
paths in \S\ref{se:paths} that $R(X)$ has a presentation with generating set the
non-identity homotopy classes of paths and relations 
$[\gamma\circ\gamma']=[\gamma]+[\gamma']$ if $\gamma$, $\gamma'$ and $\gamma\circ\gamma'$
are minimal and $[\gamma]+[\gamma^{-1}]=0$ for $\gamma$ minimal.
Note finally that every element of $R(X)$ is a linear combination of non-identity homotopy
classes of paths
such that the intersection between the supports of two distinct homotopy classes is finite.

\begin{lemma}
	\label{le:additivitymc}
	Given $c\in T(X)$, the map $m_c$ 
	induces a morphism of groups $R(X)\to\BZ$.
\end{lemma}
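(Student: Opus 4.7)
The plan is to verify that $m_c$ respects the presentation of $R(X)=H_0(\Pi(X))$: it must vanish on identity homotopy classes and be additive under composition. Vanishing on identities is built into the definition, so the whole content is the additivity
$$m_c(\zeta_2\circ\zeta_1)=m_c(\zeta_1)+m_c(\zeta_2)$$
for composable homotopy classes $\zeta_1,\zeta_2$. Once this is established, the universal property of $H_0$ (equivalently, the explicit presentation of $R(X)$ recalled just before the lemma) produces the claimed homomorphism $R(X)\to\BZ$.

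To prove additivity, I would pass to representatives for which the counting formula of Lemma \ref{le:multiplicityset} applies to all three of $\zeta_1$, $\zeta_2$, and $\zeta_2\circ\zeta_1$. When both $\zeta_i$ are non-identity, choose minimal representatives $\gamma_i$; by Lemma \ref{le:minimalpaths}, $\gamma_i^{-1}(x)$ is then a finite set of points, so each connected component is a singleton whose interior is empty, and the hypothesis of Lemma \ref{le:multiplicityset} is satisfied for $\gamma_1$, $\gamma_2$, and the concatenation $\gamma=\gamma_2\circ\gamma_1$ (reparametrized to $[0,1]$). When one of the $\zeta_i$ is an identity, additivity is trivial because $m_c$ of an identity is $0$ and the composition equals the non-trivial factor.

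The key computation is then comparing $I_c^\pm(\gamma)$ with $I_c^\pm(\gamma_1)\sqcup I_c^\pm(\gamma_2)$. Splitting $[0,1]$ at the gluing time $1/2$, points $t\in[0,1/2)$ contribute to $I_c^\pm(\gamma)$ exactly when the corresponding point $2t\in[0,1)$ lies in $I_c^\pm(\gamma_1)$, and symmetrically for $(1/2,1]$ and $\gamma_2$. The only delicate case is the gluing time $t=1/2$, and the definition of $I_c^+$ requires $t+\varepsilon<1$ (resp.\ $t-\varepsilon>0$ for $I_c^-$), so the point $t=1$ can never lie in $I_c^+(\gamma_1)$ and $t=0$ can never lie in $I_c^-(\gamma_2)$. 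A short bookkeeping argument then gives
$$|I_c^\pm(\gamma)|=|I_c^\pm(\gamma_1)|+|I_c^\pm(\gamma_2)|,$$
and Lemma \ref{le:multiplicityset} applied to $\gamma$, together with the definition of $m_c$ for minimal representatives, yields $m_c([\gamma])=m_c([\gamma_1])+m_c([\gamma_2])$.

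The main conceptual obstacle, mild in this case, is checking that the gluing time $1/2$ does not produce spurious contributions: this is resolved by the observation above that the endpoint values $t=1$ and $t=0$ are excluded from $I_c^+$ and $I_c^-$ respectively. Once additivity is in hand, together with the identity normalization, $m_c$ factors through $R(X)=H_0(\Pi(X))$ and defines the required group homomorphism to $\BZ$.
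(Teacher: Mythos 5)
Your plan is mostly sound, but one of its load-bearing steps is circular. To pass from the bookkeeping count $|I_c^\pm(\gamma)|=|I_c^\pm(\gamma_1)|+|I_c^\pm(\gamma_2)|$ for the concatenation $\gamma=\gamma_2\circ\gamma_1$ to the homotopy-class quantity $m_c([\gamma])$ you invoke Lemma \ref{le:multiplicityset}, since $\gamma$ need not be minimal. But the paper's proof of Lemma \ref{le:multiplicityset} already depends on the present lemma: it says explicitly that the right side of its identity is additive under composition ``by Lemma \ref{le:additivitymc} below.'' So the step you rely on is exactly the half of \ref{le:multiplicityset} that is deferred to \ref{le:additivitymc}. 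If $\gamma_2\circ\gamma_1$ happens to be minimal there is no issue --- $m_c^\pm([\gamma])$ is by definition $|I_c^\pm(\gamma)|$ for the minimal representative $\gamma$, and your count finishes the proof --- but when $\gamma_2$ begins by backtracking along the end of $\gamma_1$ the concatenation must first be shortened to a minimal path before $m_c$ can be read off, which is the very content you would be smuggling in.

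The paper sidesteps this by working with the presentation of $R(X)$ stated just before the lemma: the generators are non-identity homotopy classes and the only relations imposed are $[\gamma\circ\gamma']=[\gamma]+[\gamma']$ when $\gamma$, $\gamma'$ \emph{and} $\gamma\circ\gamma'$ are all minimal, together with $[\gamma]+[\gamma^{-1}]=0$ for $\gamma$ minimal. For the first family of relations your direct endpoint bookkeeping already suffices, with no appeal to \ref{le:multiplicityset}, because the composition is itself a minimal representative. But the second family of relations must be checked separately, precisely because $\gamma^{-1}\circ\gamma$ is a constant path and falls outside the first family; your write-up addresses identities appearing as a factor but never the inverse relation. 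The paper dispatches it with the one-line observation $m_c^\pm([\gamma])=m_c^\mp([\gamma^{-1}])$, hence $m_c([\gamma])+m_c([\gamma^{-1}])=0$. Restricting your additivity check to the minimal triples of the presentation and adding this inverse-relation verification removes the circular reference and completes the proof.
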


\begin{proof}
	Consider $\gamma$ and $\gamma'$ two injective composable paths such that
	$\gamma\circ\gamma'$ is injective.
	We have $m_c^\pm([\gamma\gamma'])=m_c^\pm([\gamma])+m_c^\pm([\gamma'])$.

	Consider now $\gamma$ a minimal path. We have
	$m_c^\pm([\gamma])=m_c^\mp([\gamma^{-1}])$, hence
	$m_c([\gamma])+m_c([\gamma^{-1}])=0=m_c([\gamma^{-1}\circ\gamma])$. The lemma
	follows.
\end{proof}

The next lemma shows how to realize $R(X)$ as a subgroup of the group of maps
$U\to\BZ$, where $U$ is a dense subset of $X$.

\begin{lemma}
	\label{le:embeddingR}
	Let $U$ be a dense subset of $X-(\partial X\cup X_{exc})$.
Given $x\in U$, fix a group morphism $l_x:\BZ^{C(x)}\to\BZ$
that does not factor through the sum map.

	The morphism $(l_x\circ (m_c)_{c\in C(x)})_{x\in U}:
	R(X)\to \BZ^U$ is injective.
\end{lemma}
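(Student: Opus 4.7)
The plan is to reduce, via a graph model of $X$, to a free-basis computation for $R(X)$, and then to test vanishing edge-by-edge using the hypothesis on $l_x$.

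First, write $\alpha=\sum_i n_i[\gamma_i]$, where the $\gamma_i$ are pairwise distinct minimal paths (using Lemma \ref{le:minimalpaths}). Choose a finite subset $F\subset X$ containing $\partial X\cup X_{exc}$, all endpoints $\gamma_i(0),\gamma_i(1)$, and at least one point from each connected component of $X-X_{exc}$ that is homeomorphic to a circle. Then each connected component $I_j$ of $X\setminus F$ is homeomorphic to $(0,1)$, and $\overline{I_j}$ is either a closed arc with both endpoints in $F$ or a topological circle with a single marked point in $F$.

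Since $\gamma_i$ is minimal and hence locally injective, $\gamma_i^{-1}(f)$ has no accumulation point for each $f\in F$, so $\gamma_i^{-1}(F)$ is finite. Decompose $\gamma_i$ at $\gamma_i^{-1}(F)$ into finitely many sub-paths, each mapping its interior into some $I_{j(i,k)}$. Local injectivity rules out backtracking, so each sub-path monotonically traverses $\overline{I_{j(i,k)}}$ exactly once; after fixing an orientation on each $I_j$ and writing $[I_j]\in R(X)$ for the class of a minimal path that traverses $I_j$ in that orientation, one obtains $[\gamma_i]=\sum_j N_{i,j}[I_j]$ with $N_{i,j}\in\BZ$ the resulting signed traversal count. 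Hence $\alpha=\sum_j N_j [I_j]$ with $N_j=\sum_i n_i N_{i,j}$.

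Now, for each $j$, use the density of $U$ in $X-(\partial X\cup X_{exc})$ to choose $x_j\in U\cap I_j$, and let $c_+,c_-$ be the two elements of $C(x_j)$ indexed so that $c_+$ matches the chosen orientation of $I_j$. The support of $[I_{j'}]$ is disjoint from $x_j$ for $j'\neq j$, so $m_c([I_{j'}])=0$ at $x_j$ for both $c\in C(x_j)$, whereas $m_{c_+}([I_j])=1$ and $m_{c_-}([I_j])=-1$ at $x_j$. Lemma \ref{le:additivitymc} therefore gives $m_{c_+}(\alpha)=N_j$ and $m_{c_-}(\alpha)=-N_j$, whence
\[
 0 \;=\; l_{x_j}\bigl((m_c(\alpha))_{c\in C(x_j)}\bigr) \;=\; N_j\bigl(l_{x_j}(e_{c_+})-l_{x_j}(e_{c_-})\bigr),
\]
where $e_{c_\pm}$ denote the standard generators of $\BZ^{C(x_j)}$. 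The hypothesis that $l_{x_j}$ does not factor through the sum map $\BZ^{C(x_j)}\to\BZ$ is precisely the statement $l_{x_j}(e_{c_+})\neq l_{x_j}(e_{c_-})$, so $N_j=0$ for every $j$ and $\alpha=0$.

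The main obstacle is to justify cleanly, using minimality, that each sub-path of $\gamma_i$ between consecutive preimages of $F$ is a single monotone traversal of some $\overline{I_j}$ contributing $\pm[I_j]\in R(X)$; the delicate case is when $\overline{I_j}$ is a topological circle with only one marked point of $F$, where one must argue that a minimal sub-path based at that point winds around $\overline{I_j}$ exactly once in a single direction (and so crosses any interior point exactly once with a consistent sign).
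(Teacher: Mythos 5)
Your proof is correct, but it takes a genuinely different route from the one in the paper, and it is worth spelling out the difference.

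The paper's proof is short because it leans directly on the structural remark made just above the lemma: every nonzero element of $R(X)$ can be written as $r=\sum_{\zeta\in L}a_\zeta\llbracket\zeta\rrbracket$ with $L$ a finite set of \emph{injective} homotopy classes whose supports pairwise meet in finite sets and with all $a_\zeta\neq 0$. Fixing $\zeta_0\in L$, the paper picks a point $x\in U\cap\supp(\zeta_0)$ avoiding $\zeta_0(0),\zeta_0(1)$ and the (finite) intersections $\supp(\zeta_0)\cap\supp(\zeta)$ for $\zeta\neq\zeta_0$; since $\zeta_0$ is injective and $x$ is an interior point of its support, $m_c(\zeta_0)=-m_{\iota(c)}(\zeta_0)=\pm 1$, while all other $\zeta\in L$ contribute $0$ at $x$, so $(m_c(r),m_{\iota(c)}(r))=\pm(a_{\zeta_0},-a_{\zeta_0})$, and the hypothesis on $l_x$ (which, as you also observe, for a manifold point just says $l_x(e_c)\neq l_x(e_{\iota(c)})$) finishes the argument.

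You instead rebuild this decomposition by hand: after choosing a finite set $F$ of ``vertices'' (containing $\partial X\cup X_{exc}$, the endpoints of your minimal representatives, and one marked point per circle component), you chop each minimal path at $\gamma_i^{-1}(F)$, use local injectivity to show that each sub-path is a single monotone traversal of a closed arc or embedded circle $\overline{I_j}$, and so rewrite $\alpha=\sum_j N_j[I_j]$ with the $I_j$ pairwise \emph{disjoint}. Testing at $x_j\in U\cap I_j$ then gives $(m_{c_+}(\alpha),m_{c_-}(\alpha))=(N_j,-N_j)$ and hence $N_j=0$. The underlying mechanism is the same as in the paper — a path that crosses a manifold point $x$ once transversally and not at its endpoints produces a vector of the shape $(a,-a)$, and the hypothesis on $l_x$ precisely says such vectors are not killed — but your decomposition into disjoint arcs is a stronger, more canonical normal form than the paper's ``pairwise finite support intersection'' one, obtained by doing the combinatorial bookkeeping explicitly rather than deferring it to the surrounding remarks. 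The price, as you flag yourself, is that you must justify the monotone-traversal claim (including the circle case where $\overline{I_j}$ has a single marked point, and the degenerate cases where $\overline{I_j}$ is non-compact and therefore receives no sub-paths at all); these checks do go through via local injectivity, but they are exactly the details the paper's route avoids. Both arguments are complete and correct.
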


\begin{proof}
	Let $L$ be a non-empty
	finite subset of $I$ such that $\supp(\zeta)\cap\supp(\zeta')$ is 
	finite for any two distinct elements $\zeta$ and $\zeta'$ in $L$.
	Let $r=\sum_{\zeta\in L}a_\zeta \llbracket\zeta\rrbracket$ where 
	$a_\zeta\in\BZ-\{0\}$ for $\zeta\in L$.
	 Let $\zeta_0\in L$. There is
	$x\in \supp(\zeta_0)\cap U$ with $x{\not\in}\{\zeta_0(0),\zeta_0(1)\}$ and
	$x{\not\in}\bigcup_{\zeta\in
	L-\{\zeta_0\}}\supp(\zeta)$.
	Let $c\in C(x)$ and $\iota(c)$ be the other element of $C(x)$.
	We have $m_c(\zeta_0)=-m_{\iota(c)}(\zeta_0)=\pm 1$, while
	$m_c(\zeta')=m_{\iota(c)}(\zeta')=0$ for $\zeta'\in L-\{\zeta_0\}$. It follows
	that $m_c(r)=-m_{\iota(c)}(r)=\pm a_\gamma$. Consequently,
	$\bigl(l_x\circ (m_c,m_{\iota(c)})\bigr)(r)=\pm l_x(a_\gamma,-a_\gamma)\neq 0$.
	Since every non-zero element of $R(X)$ is of the form $r$ as above, the
	lemma follows.
\end{proof}

Let $f:X\to X'$ be a morphism of $1$-dimensional spaces.
	The next lemma follows from
	the injectivity statement of Lemma \ref{le:localstructuremaps}.

\begin{lemma}
	\label{le:functorialitym}
	Given $x\in X$, $c\in C(X)$ and $\zeta$ a homotopy
	class of paths in $X$, we have
$m_{f(c)}^\pm(f(\zeta))=m_c^\pm(\zeta)$ and
	$m_{f(c)}(f(\zeta))=m_c(\zeta)$.
\end{lemma}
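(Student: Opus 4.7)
My plan is to reduce to minimal representatives of $\zeta$ and then exploit the injectivity of $f\colon T(X)\to T(X')$ asserted in Lemma \ref{le:localstructuremaps}.

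First, if $\zeta$ is a constant homotopy class, both sides are $0$ by definition, so the statement is trivial. Assume therefore that $\zeta$ is not an identity and pick a minimal representative $\gamma$ of $\zeta$. By Lemma \ref{le:mappaths}, $f(\gamma)$ is again minimal and represents $f(\zeta)$. Hence $m_c^{\pm}(\zeta)=|I_c^{\pm}(\gamma)|$ and $m_{f(c)}^{\pm}(f(\zeta))=|I_{f(c)}^{\pm}(f(\gamma))|$, and it suffices to exhibit a bijection $I_c^{\pm}(\gamma)\iso I_{f(c)}^{\pm}(f(\gamma))$. The statement for $m_c$ then follows by subtracting the $+$ and $-$ cases.

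The forward inclusion is direct from the definitions: if $t\in I_c^{+}(\gamma)$, pick a small open neighbourhood $U$ of $x$ and $\eps>0$ with $\gamma((t,t+\eps))\subset U_c$. Choose a small open neighbourhood $U'$ of $f(x)$ contained in the neighbourhood produced by Lemma \ref{le:localstructuremaps}, and shrink $\eps$ so that $f(\gamma((t,t+\eps)))\subset U'$. The local model in Lemma \ref{le:localstructuremaps} identifies $f_{|U_c}$ with an open embedding into $U'_{f(c)}$, so $f(\gamma((t,t+\eps)))\subset U'_{f(c)}$ and $t\in I_{f(c)}^{+}(f(\gamma))$. The argument for $I_c^{-}$ is symmetric.

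For the reverse inclusion, suppose $t\in I_{f(c)}^{+}(f(\gamma))$, so $f(\gamma(t))=f(x)$. Set $x'=\gamma(t)$, an element of $f^{-1}(f(x))$. Since $\gamma$ is minimal, it is locally injective at $t$, so there exists $c'\in C(x')$ and $\eps>0$ with $\gamma((t,t+\eps))\subset U'_{c'}$ for some small neighbourhood of $x'$. Applying Lemma \ref{le:localstructuremaps} in a small enough common neighbourhood, the image of this arc lies in $U'_{f(c')}$ (where $f(c')\in C(f(x))$ denotes the image of $c'\in C(x')$ under the map $T(X)\to T(X')$). By hypothesis it also lies in $U'_{f(c)}$, so $f(c')=f(c)$ in $C(f(x))$. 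The injectivity of $f\colon T(X)\to T(X')$ from Lemma \ref{le:localstructuremaps} then forces $(x',c')=(x,c)$, which gives $t\in I_c^{+}(\gamma)$. Again the $-$ case is symmetric.

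The main (and only) non-trivial ingredient is the injectivity of $f$ on tangential directions; everything else is bookkeeping with the local model. So there is no real obstacle, just careful unraveling of the definitions of $I_c^{\pm}$ through the local chart supplied by Lemma \ref{le:localstructuremaps}.
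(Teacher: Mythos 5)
Your proof is correct, and it takes exactly the route the paper indicates: the paper does not spell out a proof but simply asserts that the lemma ``follows from the injectivity statement of Lemma~\ref{le:localstructuremaps},'' and your argument is a careful unwinding of precisely that remark — reducing to minimal representatives via Lemma~\ref{le:mappaths}, using the local model to get the containment $I_c^\pm(\gamma)\subseteq I_{f(c)}^\pm(f(\gamma))$, and invoking the injectivity of $f:T(X)\to T(X')$ for the reverse containment. Nothing to add.
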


Note that $f$ induces a morphism of groups $f:R(X)\to R(X')$.

\begin{lemma}
	\label{le:finjR}
	Let $H$ be the subgroup of $R(X')$ generated by classes $[\gamma]$ with
	$\supp(\gamma)\subset \overline{X'-f(X)}$.

	The composition
	$R(X)\xrightarrow{f}R(X')\xrightarrow{\can}R(X')/H$ is injective.
\end{lemma}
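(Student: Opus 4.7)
The plan is to prove injectivity of $R(X)\to R(X')/H$ by detecting elements of $R(X)$ through their tangential multiplicities, using Lemma \ref{le:embeddingR}, and then transferring this detection to $X'$ via the functoriality statement of Lemma \ref{le:functorialitym}. The key geometric input is that the generators of $H$ have supports that avoid a dense subset of $f(X)$, so their tangential multiplicities there vanish.

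First I would identify a good dense subset $U\subset X$. Using Lemma \ref{le:equivmorphism1space}, the set $X'_f\subset X'$ is such that $X_f:=f^{-1}(X'_f)$ is finite. Set
$$U=X-\bigl(\partial X\cup X_{exc}\cup X_f\bigr).$$
Since $\partial X$, $X_{exc}$, and $X_f$ are all finite and $X$ has no isolated connected components, $U$ is dense in $X-(\partial X\cup X_{exc})$, and every $x\in U$ satisfies $n_x=2$, so $|C(x)|=2$. For each $x\in U$, pick $l_x:\BZ^{C(x)}\to\BZ$ to be projection onto one of the two factors; this does not factor through the sum map.

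The main geometric step — and the place where one has to be careful — is to verify that for every $x\in U$, the image $f(x)$ lies outside $\overline{X'-f(X)}$. This follows from the very definition of $X'_f$: since $f(x)\notin X'_f$, there is an open neighborhood $V$ of $f(x)$ in $X'$ such that $f_{|f^{-1}(V)}:f^{-1}(V)\to V$ is a homeomorphism; in particular $V=f(f^{-1}(V))\subset f(X)$, so $V$ is an open neighborhood of $f(x)$ disjoint from $X'-f(X)$.

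With these pieces in hand the conclusion is immediate. Suppose $r\in R(X)$ with $f(r)\in H$, and write $f(r)=\sum_i n_i\llbracket\gamma_i\rrbracket$ with $\supp(\gamma_i)\subset\overline{X'-f(X)}$. For any $x\in U$ and any $c\in C(x)$, the previous paragraph shows $f(x)\notin\supp(\gamma_i)$ for each $i$, hence $m_{f(c)}(\llbracket\gamma_i\rrbracket)=0$. By the additivity of $m_{f(c)}$ (Lemma \ref{le:additivitymc}) we get $m_{f(c)}(f(r))=0$, and by Lemma \ref{le:functorialitym}, $m_c(r)=m_{f(c)}(f(r))=0$. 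Thus $r$ lies in the kernel of the morphism $R(X)\to\BZ^U$ of Lemma \ref{le:embeddingR}, which is injective, so $r=0$. The principal obstacle is the geometric verification that $f(U)$ avoids $\overline{X'-f(X)}$; once that is settled, the algebraic formalism of tangential multiplicities and the detection lemma do the rest.
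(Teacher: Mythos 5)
Your proof is correct and follows essentially the same approach as the paper's: detect elements of $R(X)$ via tangential multiplicities at a dense set $U$ of manifold points avoiding $X_f$ (Lemma \ref{le:embeddingR}), transfer to $X'$ by functoriality of $m_c$ (Lemma \ref{le:functorialitym}), and use that $f(U)$ avoids $\overline{X'-f(X)}$ so generators of $H$ are killed by the detecting multiplicities. You spell out the geometric step — that $f(x)\notin X'_f$ forces $f(x)$ to have a neighborhood in $f(X)$ — which the paper leaves implicit in its assertion that $H$ lies in the kernel of $R(X')\to\BZ^{U'}\to\BZ^U$; your element-wise phrasing and the paper's factorization phrasing are two presentations of the same argument.
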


\begin{proof}
	Let $U'=X'-(X'_f\cup X'_{exc}\cup \partial X')$, a dense subset of
	$X'$. Note that $U=f^{-1}(U')$ is a dense subset of
	$X-(X_{exc}\cup \partial X)$.
	Given $x'\in U'$, fix a morphism $l_{x'}:\BZ^{C(x')}\to\BZ$ that does
	not factor through the sum map. Given $x\in U$, let
	$l_x=l_{x'}\circ f:\BZ^{C(x)}\to\BZ$.
	Lemma \ref{le:embeddingR} shows that
	$(l_x\circ (m_c)_{c\in C(x)})_{x\in U}:R(X)\to \BZ^U$
	is injective. This map
	is equal to the composition
	$$R(X)\xrightarrow{f}R(X')
	\xrightarrow{(l_{x'}\circ (m_{c'})_{c'\in C(x')})_{x'\in U'}}
	\BZ^{U'}\xrightarrow{f^*}\BZ^U$$	
	since $m_{f(c)}^\pm(f(\zeta))=m_c^\pm(\zeta)$ and
	$m_{f(c)}(f(\zeta))=m_c(\zeta)$
	for all $x\in X$, $c\in C(X)$ and all homotopy classes of paths $\zeta$
	in $X$ (Lemma \ref{le:functorialitym}).
	Since $H$ is contained in the kernel of the composition
$$R(X') \xrightarrow{(l_{x'}\circ (m_{c'})_{c'\in C(x')})_{x'\in U'}}
	\BZ^{U'}\xrightarrow{f^*}\BZ^U,$$
	it follows that the composite map of the lemma is injective.
\end{proof}


Given $M$ a subset of $X$, we denote by $R_M(X)$\indexnot{RM}{R_M(X)}
the subgroup of $R(X)$ generated
by classes of paths $\gamma$ with endpoints in $M$.

\subsection{Curves}
\label{se:curves}
\subsubsection{Definitions}
We consider now partially oriented $1$-dimensional spaces. We build the theory
so that the unoriented part is a manifold, and morphisms are injective on the
unoriented part.

\begin{defi}
	We define a {\em curve}\index[ter]{curve} to be a $1$-dimensional space $Z$
endowed with
\begin{itemize}
	\item an open subset $Z_o$\indexnot{Z}{Z_o} containing $Z_{exc}$
	\item an orientation of $Z_o-Z_{exc}$ and 
	\item a fixed-point free involution $\iota$ of 
		$C_Z(z)$ for every $z\in Z_{exc}$ 
\end{itemize}
satisfying the following conditions:
\begin{itemize}
	\item $\partial Z=\emptyset$
	\item $Z-Z_o$ has finitely many connected components,
		none of which are points
	\item given $z\in Z_{exc}$,
		given $U$ a small open neighbourhood of $z$ in $Z_o$,
		and given $L\in\pi_0(U-\{z\})$,
		then
		$L\cup \iota(L)\cup\{z\}$ has an orientation extending
		the given orientations on $L$ and $\iota(L)$.
\end{itemize}
\end{defi}

We put $Z_u=Z-Z_o$\indexnot{Z}{Z_u}. Note that $\partial Z_u=Z_u\cap\overline{Z_o}$.
Given $z\in Z-Z_{exc}$, we have
$|C(z)|=2$ and we define $\iota$ as the unique non-trivial automorphism of
$C(z)$.

\smallskip
We denote by $Z^\opp$\indexnot{Z}{Z^\opp} the {\em opposite} curve
\index[ter]{opposite curve} to $Z$ all of whose data
coincides with that of $Z$, except for $Z_o-Z_{exc}$, whose orientation is reversed.

\smallskip
Fix $n\ge 1$. The $1$-dimensional space $Z=\St(2n)$\indexnot{St}{\St} (cf \S\ref{se:1dimspaces})
can be endowed with a structure of curve by giving $\BR e^{i\pi r/n}$ the
orientation of $\BR$ for $0\le r<n$ and setting $Z_o=Z$. The involution $\iota$ is
defined by $\iota(\BR_{>0}e^{i\pi r/n})=\BR_{<0}e^{i\pi r/n}$.

\subsubsection{Morphisms and subcurves}

\begin{defi}
	A {\em morphism of curves}\index[ter]{morphism of curves} $f:Z\to Z'$ is a morphism of $1$-dimensional spaces such that
\begin{itemize}
	\item $f(Z_u)\subset Z'_u$
	\item $f_{|f^{-1}(Z'_o-Z'_{exc})}$ is orientation-preserving
	\item given $z\in f^{-1}(Z'_{exc})$, the canonical map
		$C(f):C_Z(z)\to C_{Z'}(f(z))$ is $\iota$-equivariant.
\end{itemize}
\end{defi}

Note that a composition of morphisms of curves is a morphism of curves.
Let $f:Z\to Z'$ be a morphism of curves. We have the following statements.
\begin{properties}
		\label{it:morphismscurves}
	\mbox{}
\begin{itemize}
	\item $f$ 
is invertible if and only if it is a homeomorphism and $f(Z_o)\subset Z'_o$.
\item $f(Z_{exc})\subset Z'_{exc}$ and
		$C(f):C_Z(z)\to C_{Z'}(f(z))$ is $\iota$-equivariant for
		all $z\in Z$.
\item $f$ restricts to a homeomorphism from $f^{-1}(Z'-Z'_{exc})$ to the open
	subset $f(Z)\cap
	(Z'-Z'_{exc})=f(Z-Z_{exc})\cap (Z'-Z'_{exc})$ of $Z'$, since 
		$Z_f\subset f^{-1}(Z'_{exc})$.
	In particular,	
	the restriction of $f$ to $Z_u$ is a homeomorphism $Z_u\iso f(Z_u)$.
\item If $Z'$ is non-singular, then $f$ is an open embedding.
\end{itemize}
\end{properties}

We say that $f$ is {\em strict}\index[ter]{strict morphism of curves} if 
$f(Z_u)$ is closed in $Z'_u$ and $f(Z_o)\subset Z'_o$. Note that this implies
that $f(Z_u)$ is also open in $Z'_u$.

\medskip
Let $Z$ be a  curve.

\begin{defi}
	A {\em subcurve}\index[ter]{subcurve}
	of $Z$ is a $1$-dimensional subspace $X$ of $Z$ such that
given $z\in X$, the image of $C_X(z)$ in $C_Z(z)$ is $\iota$-stable.
\end{defi}

If $X$ is a subcurve of $Z$, then $X$ is a  curve with
$X_o=X\cap Z_o$, $X_{exc}\subset Z_{exc}$ and $\iota$ is defined on 
$C_X(z)$ as the restriction of $\iota$ on $C_Z(z)$, for $z\in X_{exc}$. Note that
$X_u$ is open in $Z_u$.

\smallskip
Equivalently, a subspace $X$ of $Z$ is a subcurve if it is a curve,
$X_o=X\cap Z_o$ and
the inclusion map $X\to Z$ is a morphism of curves.


\medskip
We define an equivalence relation on connected components of $Z-Z_{exc}$: it is the 
relation generated by $T\sim T'$ if there is $z\in Z_{exc}\cap\overline{T}\cap
\overline{T'}$, $U$ a small open neighbourhood of $z$ and
	$L\in\pi_0(U-\{z\})$
such that $L\subset T$ and $\iota(L)\subset T'$.

Let $\CE$ be the set of equivalence classes of connected components of $Z-Z_{exc}$.
Given $E\in\CE$, let $Z_E=\bigcup_{T\in E}\overline{T}$. The subspaces
$Z_E$ of $Z$ are called the {\em components}\index[ter]{components of a curve} of $Z$.

\smallskip
A curve has only finitely many components, each of which is a closed subcurve.

\smallskip
If $Z$ is non-singular, then its components are its connected components.

\medskip
The local structure of a curve is described as follows.
Let $z\in Z$. 
There is an open neighbourhood $U$ of $z$ that is a subcurve of $Z$
and an isomorphism of curves $U\iso X,\ z\mapsto 0$, where $X\subset\BC$ is
one of the following:
\begin{itemize}
	\item $\BR$ viewed as an unoriented manifold, if
		$z\in Z_u-\partial Z_u$
	\item $\BR$ where $\BR_{\ge 0}$ is unoriented and $\BR_{<0}$ has either
		of its two orientations, if $z\in \partial Z_u$
	\item $\BR$ viewed as an oriented manifold, if $z\in Z_o-Z_{exc}$
	\item $\mathrm{St}(n_z)$ if $z\in Z_{exc}$.
\end{itemize}

\begin{rem}
	\label{re:smooth}
	Let $Z$ be a closed subspace of $\BR^N$ for some $N>0$.
	Assume
	there is a finite subset $E$ of $Z$ such that $Z-E$ is a
	$1$-dimensional submanifold of $\BR^N$ with no boundary and such that
	given $e\in E$, there is $n'_e>1$ and
	a finite family $\{j_{e,i}\}_{1\le i\le n'_e}$
	of smooth embeddings 
	$j_{e,i}:(-1,1)\to\BR^N$ such that
	\begin{itemize}
		\item $j_{e,i}(0)=e$, 
		\item $j_{e,i}((-1,0)\cup (0,1))\subset Z-\{e\}$,
		\item $j_{e,i}((-1,1))\cap j_{e,i'}((-1,1))=\{e\}$ for $i\neq i'$
		\item $\BR\frac{dj_{e,i}}{dt}(0)\neq \BR\frac{dj_{e,i'}}{dt}(0)$
			for $i\neq i'$ and
	\item $\bigcup_i j_{e,i}(-1,1)$ is an open neighborhood of $e$ in $Z$.
	\end{itemize}

	Let us choose in addition an open subset $Z_o$ of $Z$ containing
	$E$ and an orientation of the $1$-dimensional manifold $Z_o-E$.
	We assume that $Z-Z_o$ has finitely many connected components, none of
	which are points. We assume furthermore that
	given $e\in E$ and $i\in\{1,\ldots,n'_e\}$, the orientation of
	$j_{e,i}^{-1}(Z_o-\{e\})$ extends to an orientation of
	$j_{e,i}^{-1}(Z_o)$.

	\smallskip
	Given $e\in E$, we denote by $\iota$ the involution
	of $C(e)$ that swaps $j_{e,i}((-1,0))$ and $j_{e,i}((0,1))$ for 
	$1\le i\le n'_e$. Note that $Z_{exc}=E$ and $n_e=2n'_e$ for $e\in E$.
	This defines a structure of curve on $Z$ that does not depend on the choice
	of the maps $j_{e,i}$.

	We leave it to the reader
	to check that any curve is isomorphic to a curve obtained by such a construction.
\end{rem}

\subsubsection{Quotients}
Let $(\tilde{Z},\tilde{Z}_o,\tilde{\iota})$ be a curve. 

\begin{defi}
	A {\em finite relation}\index[ter]{finite relation}
	on $\tilde{Z}$ is an equivalence relation $\sim$ such that
the set of points that are not alone in their equivalence class is finite and
contained in $\tilde{Z}_o$.
\end{defi}

Consider a finite relation $\sim$ on $\tilde{Z}$.
We define a curve structure on the $1$-dimensional space $Z=\tilde{Z}/\!\!\sim$.

\smallskip

Let $q:\tilde{Z}\to Z$ be the quotient map. We have
$Z_{exc}=q(\tilde{Z}_{exc})\cup \{z\in Z|\ |q^{-1}(z)|>1\}$
(cf \S\ref{se:quotientsspaces}).
Let $Z_o=q(\tilde{Z}_o)$.
The map $q_{|\tilde{Z}_o-q^{-1}(Z_{exc})}:\tilde{Z}_o-q^{-1}(Z_{exc})\to
Z_o-Z_{exc}$ is a homeomorphism and we provide $Z_o-Z_{exc}$ with the orientation
coming from $\tilde{Z}_o-q^{-1}(Z_{exc})$. Let $z\in Z_{exc}$. We define
$\iota$ on $C(z)$ to make the canonical bijection 
$\coprod_{\tilde{z}\in q^{-1}(z)}C(\tilde{z})\iso C(z)$ $\iota$-equivariant.
This makes $q$ into a strict morphism of curves.

\begin{lemma}
	\label{le:quotientsfactorcurves}
Let $f:Z\to Z'$ be a morphism of curves.

Define an equivalence
	relation on $Z$ by $z_1\sim z_2$ if $f(z_1)=f(z_2)$. This is a finite
	relation on $Z$ and $f$ factors as a composition of morphisms
	of curves $Z\xrightarrow{f_1} Z/\!\!\sim\xrightarrow{f_2}Z'$ where
	$f_1$ is the quotient map and $f_2$ is injective.
\end{lemma}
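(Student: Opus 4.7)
The plan is to deduce the statement from its $1$-dimensional space analog, Lemma \ref{le:quotientsfactor}, and then verify the additional conditions needed for curves.

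First, I would apply Lemma \ref{le:quotientsfactor} to $f$ viewed as a morphism of $1$-dimensional spaces. This immediately yields that $\sim$ is a finite relation on the $1$-dimensional space $Z$, produces a factorization $f = f_2 \circ f_1$ as morphisms of $1$-dimensional spaces with $f_1$ the quotient map, and makes $f_2$ injective (because $z_1 \sim z_2$ is defined to mean $f(z_1) = f(z_2)$).

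Second, I would upgrade $\sim$ to a finite relation on the curve $Z$ by showing that every non-singleton equivalence class lies in $Z_o$. If $z_1 \neq z_2$ satisfy $f(z_1) = f(z_2)$, then both belong to $Z_f$, and Properties \ref{it:morphismscurves} gives $Z_f \subset f^{-1}(Z'_{exc}) \subset f^{-1}(Z'_o)$. Combined with $f(Z_u) \subset Z'_u$ and $Z'_o \cap Z'_u = \emptyset$, this forces $z_1, z_2 \in Z_o$. The curve quotient construction of \S\ref{se:quotientsspaces} then applies and endows $Z/\!\!\sim$ with a curve structure for which $f_1$ is automatically a strict morphism of curves.

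Finally, I would verify the three curve-morphism axioms for $f_2$. Since the nontrivial classes sit in $Z_o$, the restriction of $f_1$ is a homeomorphism $Z_u \iso (Z/\!\!\sim)_u$, so $f_2((Z/\!\!\sim)_u) = f(Z_u) \subset Z'_u$. Orientation-preservation on $f_2^{-1}(Z'_o - Z'_{exc})$ follows from that of $f$ by pulling back through the orientation-preserving strict morphism $f_1$. For $\iota$-equivariance at an exceptional point $z \in (Z/\!\!\sim)_{exc}$, the quotient construction identifies $C(z) = \coprod_{\tilde z \in f_1^{-1}(z)} C_Z(\tilde z)$ with $\iota$ acting within each $C_Z(\tilde z)$ as $\iota_Z$, and $C(f_2)$ restricted to each $C_Z(\tilde z)$ coincides with $C(f)$, which is $\iota$-equivariant by hypothesis. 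The main obstacle is the second step, specifically the appeal to $Z_f \subset f^{-1}(Z'_{exc})$ to locate the non-singleton equivalence classes inside $Z_o$; once that inclusion is in hand, the rest is a routine transport of structure through the already established quotient construction.
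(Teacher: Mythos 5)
Your proof is correct and follows the same essential approach as the paper's (which is much terser): identify $Z_f\subset f^{-1}(Z'_o)\subset Z_o$ so that the relation is a finite relation on the curve $Z$, then invoke Lemma \ref{le:quotientsfactor} and transport the curve structure. Your explicit verification of the three curve-morphism axioms for $f_2$ fills in detail that the paper leaves implicit, and your identification of $Z_f\subset Z_o$ via $Z_f\subset f^{-1}(Z'_{exc})\subset f^{-1}(Z'_o)$ together with $f(Z_u)\subset Z'_u$ is the same chain of inclusions the paper writes as $Z_f\subset f^{-1}(Z'_o)\subset Z_o$.
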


\begin{proof}
	We have $Z_f\subset f^{-1}(Z'_o)\subset Z_o$.
	It follows that $\sim$ is a finite relation on $Z$ and the lemma follows
	from Lemma \ref{le:quotientsfactor}.
\end{proof}

We define the category of non-singular curves with a finite relation as the category
with objects pairs $(Z,\sim)$ where $Z$ is a non-singular curve and $\sim$ is a
finite relation on $Z$, and where
$\Hom((Z,\sim),(Z',\sim'))$ is the set of morphisms of curves $f:Z\to Z'$ such that
if $z_1\sim z_2$, then $f(z_1)\sim' f(z_2)$.

\smallskip
The next proposition shows that curves can be viewed as non-singular curves with
a finite relation.

\begin{prop}
	\label{pr:quotients}
The quotient construction defines an equivalence from the category of
non-singular curves with a finite relation to the category of curves.
\end{prop}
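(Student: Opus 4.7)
The plan is to exhibit an inverse functor via normalization and deduce the equivalence from the universal property of quotients together with the local structure of morphisms.

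First, I would verify that the quotient construction is a functor $Q$. Given $f : (Z,\sim) \to (Z',\sim')$, the relation-compatibility condition means $f$ descends to a continuous map $\bar f : Z/\!\!\sim\, \to Z'/\!\!\sim'$; this is a morphism of curves by Lemma \ref{le:quotientsfactorcurves} applied to $q' \circ f$, using that $f$ preserves $Z_u$, orientations on $Z_o - Z_{exc}$, and the involutions $\iota$ (which pass through the quotient by the construction preceding Lemma \ref{le:quotientsfactorcurves}).

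Next, for essential surjectivity, I would build a normalization functor $N$ on objects: given a curve $Y$, construct a non-singular curve $\hat Y$ with a finite relation $\sim_Y$ such that $\hat Y/\!\!\sim_Y\, \cong Y$. Concretely, following the construction in Lemma \ref{le:1manifoldsquotients}, for each $y \in Y_{exc}$ choose a small open neighbourhood $U_y$; the third clause of the curve definition says that for each $\iota$-orbit $O = \{L,\iota(L)\}$ in $\pi_0(U_y - \{y\})$ the union $L \cup \iota(L) \cup \{y\}$ carries a canonical orientation extending those on $L$ and $\iota(L)$. Gluing
\[
\hat Y = (Y - Y_{exc}) \sqcup_{\coprod_y (U_y - \{y\})} \coprod_{y \in Y_{exc}} \Bigl(\coprod_{O \in C(y)/\iota} (L_O \cup \iota L_O \cup \{y_O\}) \Bigr),
\]
endowed with the pulled-back orientation, produces a non-singular curve: at each new point $y_O$ we have $n_{y_O} = 2$ and the two branches are canonically co-oriented. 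Let $\sim_Y$ identify the points $y_O$ lying over the same $y$; this is a finite relation, all identifications occur in $\hat Y_o$, and the canonical map $\hat Y/\!\!\sim_Y\, \to Y$ is an isomorphism of curves (it is a bijective morphism of curves that is a homeomorphism by the local description of $U_y$, and matches $\iota$ by construction).

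For fully faithful, let $(Z_i,\sim_i)$ be non-singular curves with finite relations and quotient maps $q_i : Z_i \to Z_i/\!\!\sim_i$. Faithfulness is immediate: if $\bar f_1 = \bar f_2$ then $f_1(z) \sim_2 f_2(z)$ for every $z$; since $\sim_2$ is a finite relation and $Z_2$ is Hausdorff, the cofinite open set on which $\sim_2$ is trivial forces $f_1 = f_2$ there, hence everywhere by continuity. For fullness, given $g : Z_1/\!\!\sim_1\, \to Z_2/\!\!\sim_2$, let $S_i \subset Z_i$ be the finite set of non-singleton $\sim_i$-classes, so $q_i$ restricts to a homeomorphism $Z_i - S_i \iso (Z_i/\!\!\sim_i) - q_i(S_i)$, and $q_i(S_i) \supset (Z_i/\!\!\sim_i)_{exc}$. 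On the cofinite open set $V = (Z_1 - S_1) \cap g^{-1}((Z_2/\!\!\sim_2) - q_2(S_2))$, define $f = q_2^{-1} \circ g \circ q_1$. To extend $f$ across a point $z_0 \in Z_1 - V$, note that $q_1(z_0)$ has a small neighbourhood on which $q_1$ is an isomorphism onto a subcurve of $Z_1/\!\!\sim_1$; apply Lemma \ref{le:localstructuremaps} to $g$ at $q_1(z_0)$ and use that $g$ is a morphism of curves (hence its induced map $C(g) : C(q_1(z_0)) \to C(g(q_1(z_0)))$ is $\iota$-equivariant) to select the unique point in $q_2^{-1}(g(q_1(z_0)))$ corresponding to the $\iota$-orbit $q_1(C_{Z_1}(z_0))$. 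This gives a continuous extension $f : Z_1 \to Z_2$, and by its local description $f$ is a morphism of (non-singular) curves that respects $\sim_1$ and $\sim_2$ by construction; moreover $\bar f = g$. Finally, I would check that the natural isomorphisms $Q \circ N \cong \id$ and $N \circ Q \cong \id$ assemble from the object-level isomorphisms above, which is then automatic from fully faithfulness.

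The main obstacle will be the fullness step: one has to produce the lift $f$ continuously across the finitely many singular points where $V$ fails, and the only data available is the $\iota$-equivariance of $g$ on tangent sets together with the local normal form of Lemma \ref{le:localstructuremaps}. The key point is that the pieces labelled $I_l$ in that lemma, viewed inside $C_{Z_1/\!\!\sim_1}(q_1(z_0))$, must be respected by $g$ in an $\iota$-compatible way; matching them with preimages in $Z_2$ via the analogous local structure around the lift is what forces existence and uniqueness of $f$ at $z_0$.
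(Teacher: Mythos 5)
Your proposal is correct and follows essentially the same route as the paper's proof: you verify functoriality via Lemma \ref{le:quotientsfactorcurves}, construct the normalization exactly as in Lemma \ref{le:1manifoldsquotients} for essential surjectivity, establish faithfulness by comparing morphisms on the cofinite open set where the target relation is trivial, and prove fullness by defining the lift $f$ on a cofinite open set and extending it across the finitely many exceptional points using the $\iota$-equivariance of $C(g)$ to pin down the target point in each fiber. The only real difference is that you make the appeal to Lemma \ref{le:localstructuremaps} explicit when justifying the continuous extension in the fullness step, whereas the paper leaves this local analysis implicit; this is a harmless elaboration, not a different argument.
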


\begin{proof}
Let $(\tilde{Z},\sim)$ and $(\tilde{Z}',\sim')$ be two non-singular
curves with finite relations
and let $q:\tilde{Z}\to Z=\tilde{Z}/\!\!\sim$ and $q':\tilde{Z}'\to
Z'=\tilde{Z}'/\!\!\sim'$ be the quotient maps.

A morphism of curves
$f:\tilde{Z}\to \tilde{Z}'$ such that $z_1\sim z_2$ implies $f(z_1)\sim' f(z_2)$ induces
a morphism of curves $Z\to Z'$. So, the quotient
	functor induces indeed a functor as claimed. Consider
$f':\tilde{Z}\to \tilde{Z}'$ such that $z_1\sim z_2$ implies $f'(z_1)\sim' f'(z_2)$.
If $q'\circ f=q'\circ f'$, then $f$ and $f'$ coincide outside a finite set of points,
hence $f=f'$. So, the quotient functor is faithful.

\smallskip
	Consider now a morphism of curves $g:Z\to Z'$. Let $E'$
be the finite subset of $\tilde{Z}'$ of points that are not alone
	in their equivalence class and $E=q^{-1}(g^{-1}(q'(E')))$.
	Consider the composition of continuous maps
	$$f:\tilde{Z}-E\xrightarrow{q}Z-q(E)\xrightarrow{g}Z'-q'(E')
	\xrightarrow{(q'_{|\tilde{Z}'-E'})^{-1}} \tilde{Z}'-E'.$$

		Given $z\in E$, the
	$\iota$-equivariance of $C(g):C_{Z}(q(z))\to C_{Z'}(g(q(z)))$
	ensures that $f$ extends to a continuous map at $z$.
	So, $f$ extends (uniquely) to a continuous map $\tilde{Z}\to
	\tilde{Z}'$, and that map is a morphism of $1$-dimensional spaces.

	We have $\tilde{Z}_u\subset \tilde{Z}-E$ and
	$f(\tilde{Z}_u)\subset \tilde{Z}'_u$. Since
	$g_{|g^{-1}(\tilde{Z}'_o)-E}$ is
	orientation-preserving, it follows that
	$f_{|f^{-1}(\tilde{Z}'_o-E')}$ is orientation-preserving.
 So, $f:\tilde{Z}\to \tilde{Z}'$ is a morphism of curves and it is
	compatible with the relations.
	This shows that the quotient functor is fully faithful.

	\smallskip
	Let now $Z$ be a curve.
	Let $z\in Z_{exc}$ and $U_z\subset Z_o$ be a small open neighbourhood of $z$.
	Fix an isomorphism of
	curves $f_z:U_z\iso \mathrm{St}(n_z),\ z\mapsto 0$.
	The equivalence relation on $\pi_0(U_z-\{z\})$ whose equivalence classes are the
	orbits of $\iota$ defines via $f_z$ the equivalence relation on
	$\{e^{i\pi r/2n_z}\}_{0\le r<2n_z}$ given by $\zeta\sim \zeta'$ if and only
	if $\zeta'=\zeta^{\pm 1}$.

	The proof of Lemma \ref{le:1manifoldsquotients} provides us
	a non-singular curve $\hat{Z}$ with a finite relation.
	Indeed, with the notations of the proof of Lemma \ref{le:1manifoldsquotients},
	we have $\hat{U}_z=\coprod_{0\le r<n_z}\BR e^{i\pi r/n_z}$.
	Note that $\hat{Z}_o$ is the subspace of $\hat{Z}$ obtained by adding to
	$Z_o-Z_{exc}$ the point
	$0$ of $\BR e^{i\pi r/n_z}$ for each $r\in\{0,\ldots,n_z-1\}$ and
	each $z\in Z_{exc}$.

	This gives $\hat{Z}$ a structure of non-singular curve.
	As in the proof of Lemma \ref{le:1manifoldsquotients}, we obtain
	a finite relation on $\hat{Z}$ and an isomorphism of curves
	$Z\iso \hat{Z}/\!\sim$. This shows that the quotient functor is essentially
	surjective.
\end{proof}

\begin{defi}
	Given $Z$ a curve, the {\em non-singular cover}\index[ter]{non-singular cover} 
	of $Z$ is a non-singular curve
	$\hat{Z}$\indexnot{Z}{\hat{Z}}, together with a finite relation $\sim$ and an isomorphism
$\hat{Z}/\!\!\sim\ \iso Z$.
\end{defi}

Note that $Z_{exc}=Z_q$ where $q:\hat{Z}\to Z$ is the canonical map.
	Proposition \ref{pr:quotients} shows that non-singular covers exist and
are unique up to a unique isomorphism. The following proposition makes this more precise.

\begin{prop}
	\label{pr:coveradjoint}
	The functor sending a curve $Z$ to its non-singular cover is right adjoint to
	the embedding of the category of non-singular curves in the category of curves.
\end{prop}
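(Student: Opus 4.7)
The plan is to derive the adjunction directly from Proposition~\ref{pr:quotients}, which establishes an equivalence between the category of non-singular curves with finite relation and the category of curves. Under this equivalence, a curve $Z$ corresponds to the pair $(\hat{Z}, \sim_Z)$, where $\sim_Z$ is the equivalence relation given by $\tilde{z}_1\sim_Z\tilde{z}_2$ iff $q_Z(\tilde{z}_1)=q_Z(\tilde{z}_2)$. The embedding $i$ of non-singular curves into curves corresponds under this equivalence to the functor $T$ sending a non-singular curve $N$ to the pair $(N,=)$ equipped with the trivial (identity) relation: indeed, $N_{exc}=\emptyset$ forces the quotient map $\hat{N}\to N$ to be a homeomorphism, so $N$ is its own non-singular cover. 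The non-singular cover functor $Z\mapsto\hat{Z}$ then corresponds to the forgetful functor $U:(Z,\sim)\mapsto Z$ which discards the relation. The claim therefore reduces to showing that $U$ is right adjoint to $T$.

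This reduced adjunction is essentially formal. For $N$ a non-singular curve and $(Z,\sim)$ a non-singular curve with finite relation, a morphism $(N,=)\to(Z,\sim)$ in the category of non-singular curves with finite relation is, by definition, a morphism of curves $f:N\to Z$ satisfying the compatibility condition $n_1=n_2\Rightarrow f(n_1)\sim f(n_2)$, which is vacuous. Hence
$$\Hom((N,=),(Z,\sim))=\Hom_{\mathcal{N}}(N,Z)=\Hom_{\mathcal{N}}(N,U(Z,\sim)),$$
and this identification is plainly natural in both variables. The counit at $Z$ is the quotient map $q_Z:\hat{Z}\to Z$, and the unit at $N$ is the identity map $N\to\hat{N}=N$.

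There is no real obstacle: the only points to check are the identification of $i$ with $T$ under the equivalence of Proposition~\ref{pr:quotients} (handled by the observation on $N_{exc}=\emptyset$ above) and the fact that the compatibility condition in the definition of morphisms of non-singular curves with finite relation is vacuous when the source relation is trivial. The functoriality of $Z\mapsto\hat{Z}$, only implicitly asserted in the statement of the proposition, is also obtained from the equivalence of Proposition~\ref{pr:quotients} together with the adjunction itself: given a morphism of curves $f:Z\to Z'$, one defines $\hat{f}:\hat{Z}\to\hat{Z}'$ as the unique lift of $f\circ q_Z:\hat{Z}\to Z'$ provided by the universal property of the counit $q_{Z'}$.
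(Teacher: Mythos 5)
Your proof is correct, and it takes a route that is organized differently from the paper's, even though both hinge on Proposition~\ref{pr:quotients}. The paper works directly with the comparison map $h:\Hom(Z',\hat{Z})\to\Hom(Z',Z)$, $g\mapsto q\circ g$, proving injectivity by an explicit argument (two morphisms from $Z'$ agreeing outside the finite set $g^{-1}(Z_q)$ must coincide) and surjectivity by factoring any $f:Z'\to Z$ through $Z'/\!\!\sim$ via Lemma~\ref{le:quotientsfactorcurves} and then invoking fullness of the quotient functor from Proposition~\ref{pr:quotients}. You instead transport the entire adjunction across the equivalence of Proposition~\ref{pr:quotients}, identifying the embedding with the ``trivial relation'' functor $T:N\mapsto(N,=)$ and the non-singular cover with the forgetful functor $U:(Z,\sim)\mapsto Z$, after which $T\dashv U$ is literally a tautology because the compatibility condition on morphisms out of $(N,=)$ is vacuous. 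This reorganization has real advantages: naturality of the Hom-isomorphism, which the paper leaves implicit, comes for free since the two Hom-sets are identical; and the argument makes it transparent that the proposition is a formal consequence of the equivalence. Two small points worth tightening: the identification $Q\circ i\cong T$ rests on the observation that a quotient of a curve identifying two or more points necessarily produces a point of $Z_{exc}$ (since $\partial\tilde Z=\emptyset$), so a non-singular $Z$ forces the relation to be trivial; you gesture at this with ``$N_{exc}=\emptyset$ forces the quotient map to be a homeomorphism,'' which is correct but could be spelled out. And your closing remark on functoriality of $Z\mapsto\hat{Z}$ is slightly circular as phrased (``the adjunction itself'' presupposes the functor); the cleaner statement is that a pointwise right adjoint --- i.e., an object $\hat{Z}$ with a counit $q_Z$ satisfying the universal property for each $Z$ --- automatically assembles into a functor, which is exactly what you exhibit.
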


\begin{proof}
	Let $Z'$ be a non-singular curve.
	We have a map $h:\Hom(Z',\hat{Z})\to\Hom(Z',Z),\ g\mapsto q\circ g$. Since $Z_q$
	is finite, it follows that $h$ is injective.

	Consider now a morphism of curves $f:Z'\to Z$. We factor $f$ as
	$Z'\xrightarrow{f_1}Z'/\!\sim\ \xrightarrow{f_2}Z$ as in
	Lemma \ref{le:quotientsfactorcurves}. By Proposition
	\ref{pr:quotients}, there is a morphism $\hat{f}:Z'\to \hat{Z}$ such that
	$q\circ \hat{f}=f$, hence $h(\hat{f})=f$. So $h$ is surjective.
\end{proof}

\begin{example}
	\label{ex:cover}
	Let us provide some examples of curves and non-singular covers.
	The dotted lines link the points in the same equivalence class.
	The grey part corresponds to $Z_u$.

$$\includegraphics[scale=0.85]{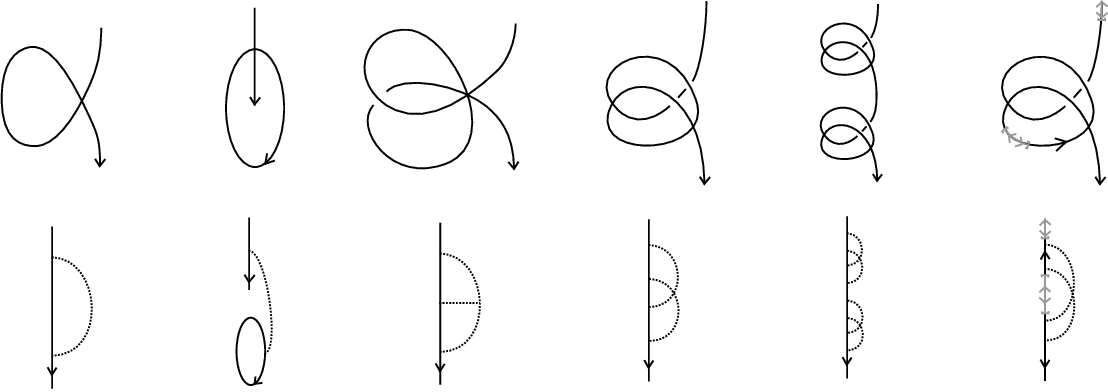}$$
\end{example}
	
	\subsubsection{Chord diagrams as singular curves}
	\label{se:arcdiagrams}

	We describe here the relation between singular curves and
	chord (or arc) diagrams.

	\smallskip
	We define a {\em chord diagram}\index[ter]{chord diagram} to be 
	to be a triple $(\CZ,\Ba)$ where
	\begin{itemize}
		\item $\CZ$ is a closed oriented $1$-dimensional manifold
			(i.e., a finite disjoint union of copies of
			$S^1$ and $[0,1]$)
		\item $\Ba$ is a finite set of pairs of points of $\mathring{\CZ}$, all
			of which are distinct.
	\end{itemize}

	A chord diagram gives rise to a smooth oriented curve $\tilde{Z}=
	\mathring{\CZ}$ with the following relation: given $z\neq z'$, we have
	$z\sim z'$ if $\{z,z'\}\in\Ba$.
	We obtain an oriented curve $Z=\tilde{Z}/\!\sim$ and a map
	$\mu:\bigcup_{\{z,z'\}\in\Ba}\{z,z'\}\to Z_{exc}$ inducing a bijection
	$\Ba \iso Z_{exc}$.

	\smallskip
	Up to suitable isomorphism, this defines a bijection from chord diagrams
	to oriented singular curves with $n_z\in\{2,4\}$ for all $z$.

	\medskip

\begin{convention}
	\label{con:reversal}
We will use the above bijection composed with the reversal of all orientations
when identifying chord diagrams with certain singular curves. This orientation
reversal is related to the usual direction reversal between arrows in a quiver
and morphisms in the corresponding path category, and to the time-reversal of
graphs mentioned in Example \ref{ex:examplespaths} below.
\end{convention}

	\medskip

	When $\CZ$ is a union of intervals, we
	recover the notion of (possibly degenerate) arc diagram due
	to Zarev \cite[Definition 2.1]{Za} (compare Example
	\ref{ex:cover} and \cite[Figures 3 and 4]{Za}).

	The chord diagrams such that the singular curve $Z$ is connected
	and $k>0$ correspond to the chord diagrams of \cite{AnChePeReiSu}.

	Zarev's definition generalizes that of pointed matched circles due to
	Lipshitz, Ozsv\'ath and Thurston \cite[\S 3.2]{LiOzTh1}: they correspond
	to the case where $\CZ$ is a single interval
	($\mathring{\CZ}$ is obtained from
	the circle considered in \cite{LiOzTh1} by removing its
	basepoint).

	\subsubsection{Sutured surfaces and topological field theories}
\label{se:TQFT}

	\medskip
	We define a {\em sutured surface}\index[ter]{sutured surface}
	to be a quadruple
	$(F,\Lambda,S^+,S^-)$ where $F$ is a compact oriented surface,
	$\Lambda$ is a finite subset of $\partial{F}$, and
	$S^+$ and $S^-$ are unions of components of $\partial{F} - \Lambda$
such that $\Lambda=\overline{S^+} \cap \overline{S^-}$ and $\partial{F}-\Lambda=S^+\cup S^-$
(this is \cite[Definition 1.2]{Za} without the topological restrictions). Note that
$(F,\Lambda,S^+,S^-)$ is determined by the data of $(F,S^+)$: we have $\Lambda=\overline{S}^+
-S^+$ and $S^-=\partial F-\overline{S}^+$.
A sutured surface is representable by a chord diagram (as we define it) if and
only if each component of $F$ (not $\partial{F}$) intersects $S^+$ and $S^-$
nontrivially.








\smallskip
Let $(\CZ,\Ba)$ be a chord diagram. We define a sutured surface
$(F,\Lambda,S^+,S^-)$:
\begin{itemize}
	\item the oriented surface $F$ is obtained
from $\CZ\times [0,1]$ by adding $1$-handles at
		$\{(z,0),(z',0)\}$ for all pairs $\{z,z'\}$ in $\Ba$
\item $S^+=\bigl(\CZ \times \{1\}\bigr) \cup \bigl(\partial\CZ \times
	(\frac{1}{2},1]\bigr)$
		\end{itemize}

		When $\CZ$ is a union of intervals, this is
		Zarev's construction \cite[\S 2.1]{Za}.

		\smallskip
		Let $Z$ be a singular curve giving rise to $(\CZ,\Ba)$. The
		oriented
		surface $F$ can be identified with $Z\times [0,1]$ and paths in $Z$ give rise
		to paths in $F$.

		\medskip
		The sutured surface $F$ also comes with an arc decomposition: for each
		$\{z,z'\}$ in $\Ba$, 
		we have an arc $\omega_{\{z,z'\}}$ with set of end points 
		$\{(z,1),(z',1)\}$ in $S^+$ corresponding to the $1$-handle 
		added at $\{(z,0),(z',0)\}$. 

	\begin{example}
		\label{ex:sutured}
In the table below, the first row depicts some chord diagrams. The second and
third rows show the corresponding sutured surfaces with the $S^+$ part of the boundary in
		green and with the arcs $\omega_z$ in red; the second row applies the
above construction directly, and the third row gives an alternate perspective.
The fourth row shows the sutured surfaces as open-closed cobordisms (with
empty source and with target colored in green);
			this interpretation is
discussed in \S\ref{se:TQFT}.

Under the strands algebra construction of \S \ref{se:leftaction},
		the first and second
columns give rise to simple 2-representations of $\mathcal{U}$, categorifying
the vector representation and its dual.

		Tensor powers of the algebra of the first column give algebras very
		similar to the one considered by Tian \cite{Ti};
in fact, Tian's algebras were an important early clue in the development of
the present work. Tensor powers of the algebra of the second column are
	studied from the Heegaard Floer perspective by the first-named author in \cite{Man}.

The algebra of the third column is the $n=3$ case of a family of algebras
considered in \cite{ManMarWi,LePo}.
For general $n$, these are isomorphic to the algebras 
$\CB(n) = \oplus_{k=0}^n \CB(n,k)$ used by Ozsv\'ath and Szab\'o in their theory
		of bordered knot Floer homology \cite{OsSz4,OsSz5,OsSz6}
(their notation is slightly different). The middle summand of the algebra of the
fourth column is the undeformed version of a curved $A_{\infty}$-algebra used
		by Lipshitz-Ozsv\'ath-Thurston \cite{LiOzTh2,LiOzTh3} to define bordered
		$HF^-$ for $3$-manifolds with torus boundary.
The middle summand of the algebra of the fifth column is the
well-known ``torus algebra" from bordered Floer homology. The fifth and sixth
columns together illustrate our perspective on cornered Floer homology;
following Zarev's ideas, we view the cornered Floer gluing theorem as
recovering the algebra of two matched intervals glued end-to-end, rather than
as the invariants of two matched intervals with distinguished endpoints being
glued to form a pointed matched circle.

The first, fifth, and sixth columns give algebras that are among Zarev's
strands algebras $\mathcal{A}(\mathcal{Z})$, although the first diagram is
degenerate (equivalently, its sutured surface has closed circles in $S^-$).
The second, third, and fourth columns do not satisfy the restrictions that
Zarev imposes. As far as we are aware, our strands categories below give the
first detailed description of strands algebras associated to general chord
diagrams with circles as well as intervals; less formal descriptions have
appeared previously, cf. \cite[Proposition 11]{Au2}. As indicated by
Lipshitz-Ozsv\'ath-Thurston's work \cite{LiOzTh2,LiOzTh3}, curved
$A_{\infty}$-deformations of the algebras appear necessary in the general
setting when defining modules and bimodules for 3-manifolds with boundary,
although in special cases like Ozsv\'ath-Szab\'o's bordered knot Floer homology
(third column) this complication should be avoidable.
$$\includegraphics[scale=0.85]{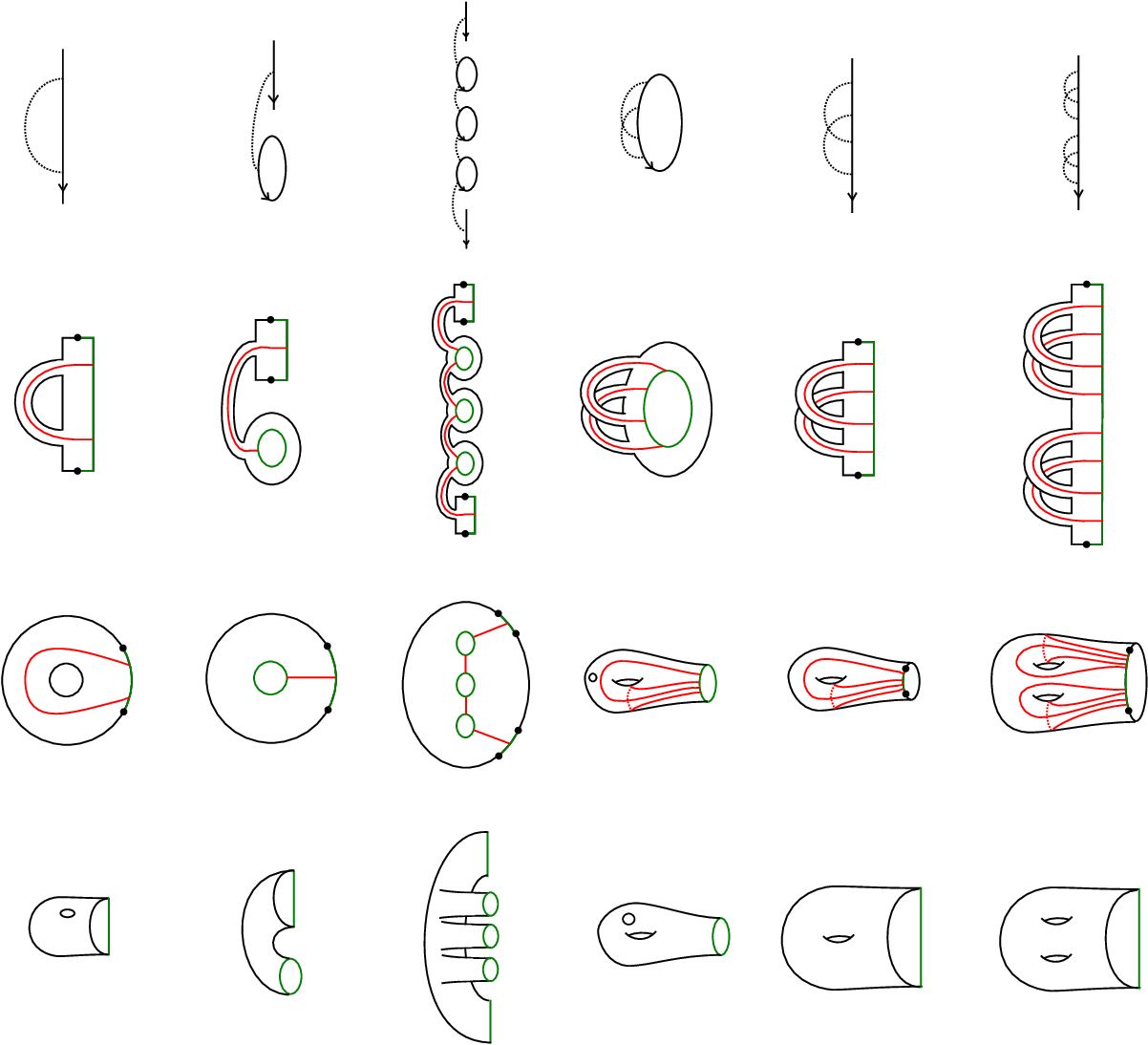}$$
	\end{example}

\newcommand{\Otimes}{ 
  \mathbin{
    \mathchoice
      {\buildcircleotimes{\displaystyle}}
      {\buildcircleotimes{\textstyle}}
      {\buildcircleotimes{\scriptstyle}}
      {\buildcircleotimes{\scriptscriptstyle}}
  } 
}

\newcommand\buildcircleotimes[1]{%
  \begin{tikzpicture}[baseline=(X.base), inner sep=0, outer sep=0]
    \node[draw,circle] (X)  {$#1\otimes$};
  \end{tikzpicture}%
}

A sutured surface can be viewed as a morphism in the 2d open-closed cobordism category with empty source; if $(F,\Lambda,S^+,S^-)$ is a sutured surface, the corresponding open-closed cobordism has target given by $S^+$ and non-gluing boundary given by $S^-$. See the bottom row of the figure in Example~\ref{ex:sutured}; the targets of these open-closed cobordisms are shown in green and the non-gluing boundary is shown in black.

Let us consider how the end-to-end gluings of chord diagrams covered by our results in \S\ref{se:2repstrand}
can be viewed in terms of open-closed cobordisms. When gluing two distinct intervals of a chord diagram end-to-end, the corresponding sutured surface gets glued as in the top-left picture below: the two intervals marked in blue are glued together to form the top-middle picture. However, we can also consider the top-middle picture as arising from the top-right picture; in this latter case the gluing is an instance of composition (with an open pair of pants) in the open-closed cobordism category. Similarly, when self-gluing the two endpoints of an interval of a chord diagram, the sutured surface gets glued as in the bottom-left picture below, producing the bottom-middle picture; we can also think of the bottom-middle picture as arising from the bottom-right picture, which is another instance of composition in the open-closed category. 

$$\includegraphics[scale=0.6]{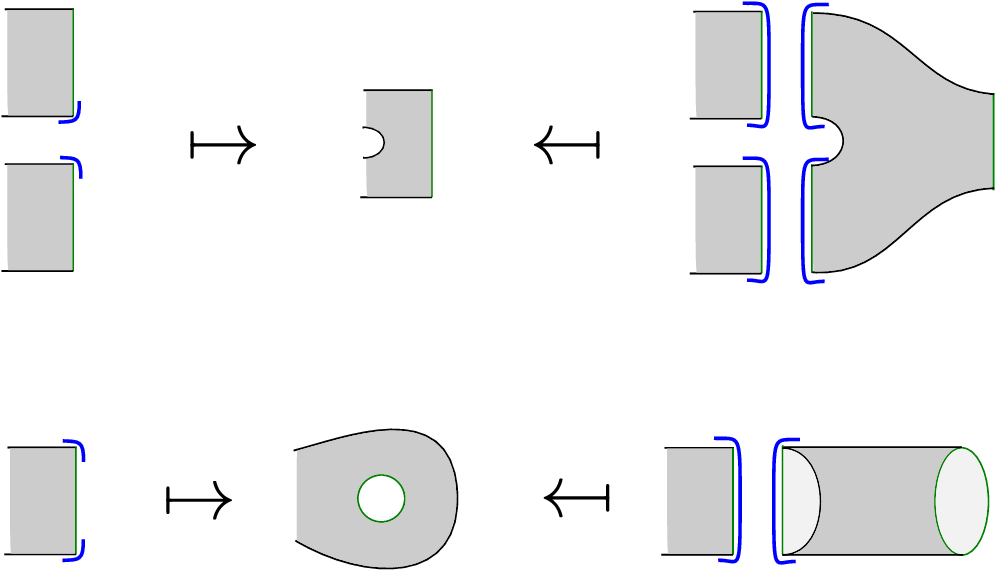}$$

One could try to view our constructions as giving part of the structure of an open-closed 2d TQFT valued in a category whose objects are dg 2-categories and whose morphisms are certain dg 2-functors. In particular, this hypothetical open-closed TQFT would assign a dg 2-category of 2-representations of $\CU$ to an interval. To an open-closed cobordism with empty source, the open-closed TQFT would assign an object of the dg 2-category of the target, encoding the data of a lax multi-2-action of $\CU$ for the interval components of the target. 
Our approach doesn't quite realize that. We associate 2-representations of $\CU$ to chord diagrams or singular curves rather than directly to surfaces. 

One can also consider the extent to which such a theory would extend to a point.
Things are considerably simpler for the decategorified version
of the theory, where one sees many relationships with other work on 3d TQFTs; this will be addressed in more detail in a follow-up paper \cite{ArMa}.


\subsection{Paths}
\label{se:curvepaths}

\subsubsection{Admissible paths}
\label{se:admissiblepaths}
Let $Z$ be a curve. 

\begin{defi}
	An {\em oriented path}\index[ter]{oriented path} $\gamma$ in $Z$ is defined to be a
	path whose
restriction to $\gamma^{-1}(Z_o-Z_{exc})$ is compatible (non strictly) with the
orientation.
\end{defi}

Let us note some basics facts about oriented paths.

\begin{properties}
		\label{it:orientedpaths}
	\mbox{}
	Let $\gamma$ be a non-constant oriented path in $Z$.
\begin{itemize}
	\item[(1)] We have
		$\gamma([0,1])\cap Z_o=\supp([\gamma])\cap Z_o$ and 
		$\gamma([0,1])\cap Z_u$ is contained in the union of the connected
	components of $Z_u$ that have a non-empty intersection with 
	$\supp([\gamma])$.
\item[(2)]
If $\gamma$ is homotopic to a constant path,
		then it is contained in $Z_u$ (as $\gamma([0,1])$ is contractible).
	\item[(3)] There are unique real numbers $0=t_0<t_1<\cdots<t_r=1$
	such that
		\begin{itemize}
			\item given $0\le i<r$,
		there are $\{j,k\}=\{i,i+1\}$ with the property that
				$\gamma([t_j,t_{j+1}])\subseteq Z_u$ (if
				$j<r$) and 
				$\gamma([t_k,t_{k+1}])\subseteq \bar{Z}_o$ (if
				$k<r$)
			(cf Lemma \ref{le:componentspath} for $E=Z_u\cap\bar{Z}_o$).
		\item given $0<i<r$ and $\eps>0$ such that $\gamma([t_i,t_i+\eps])\subset
			Z_u\cap\bar{Z}_o$, we have $\gamma([t_i,t_{i+1}]){\not\subseteq}
				\bar{Z}_o$
		\item given $0<i<r$ and $\eps>0$ such that $\gamma([t_i-\eps,t_i])\subset
			Z_u\cap\bar{Z}_o$, we have $\gamma([t_{i-1},t_i]){\not\subseteq}
				\bar{Z}_o$.
		\end{itemize}
		The sequence $[\gamma_{|[t_0,t_1]}],\ldots,[\gamma_{|[t_{r-1},t_r]}]$
		depends only on $[\gamma]$.
	
\item[(4)] Consider homotopy classes of oriented paths
	$\zeta_1$, $\zeta_2$ and $\zeta_3$
	with $[\gamma]=\zeta_3\circ\zeta_2\circ\zeta_1$. If $\supp(\zeta_2)$ is
		contained in $\overline{Z_o}$ but not in $Z_u$, then there are
		$0\le t_1\le t_2\le  1$ such that $[\gamma_{|[0,t_1]}]=\zeta_1$,
		$[\gamma_{|[t_1,t_2]}]=\zeta_2$ and
		$[\gamma_{|[t_2,1]}]=\zeta_3$.
\end{itemize}
\end{properties}

\begin{lemma}
\label{le:equivalencesmooth}
Let $\gamma$ be a path in $Z$. The following conditions are equivalent:
	\begin{itemize}
		\item[(i)] $\gamma$ lifts to a path in the non-singular cover of $Z$
		\item[(ii)] given $z\in Z_{exc}$, given a small open neighbourhood 
			$U$ of $z$ in $Z_o$ and given $K$ a connected component
			of $\gamma^{-1}(z)$, the set of
			$L\in\pi_0(U-\{z\})$ with 
			$K\cap \overline{\gamma^{-1}(L)}\neq \emptyset$ is contained
			in an orbit of $\iota$.
	\end{itemize}
\end{lemma}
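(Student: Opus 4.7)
The plan is to reduce the equivalence to a local assertion near each $z\in Z_{exc}$ and each connected component $K$ of $\gamma^{-1}(z)$, then verify it using the explicit description of the non-singular cover recalled in the proof of Proposition \ref{pr:quotients}.

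First I would observe that the quotient map $q:\hat{Z}\to Z$ restricts to a homeomorphism $\hat{Z}-q^{-1}(Z_{exc})\iso Z-Z_{exc}$. Hence $\gamma_{|\gamma^{-1}(Z-Z_{exc})}$ admits a unique continuous lift $\hat{\gamma}_0$ to $\hat{Z}$, and (i) amounts to asking whether $\hat{\gamma}_0$ extends continuously across $\gamma^{-1}(Z_{exc})$. Since $\gamma^{-1}(Z_{exc})$ is a closed subset of $[0,1]$, the obstruction to extension is local at each connected component $K$ of $\gamma^{-1}(z)$ for $z\in Z_{exc}$.

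Next, fix such a $z$ and $K$. By compactness of $K$ and continuity of $\gamma$ we may choose a small open neighbourhood $U$ of $z$ in $Z_o$ and a connected open neighbourhood $V$ of $K$ in $[0,1]$ with $\gamma(V)\subset U$ and $V\cap\gamma^{-1}(z)=K$. The construction of $\hat{Z}$ in the proof of Proposition \ref{pr:quotients} exhibits $q^{-1}(U)$ as a disjoint union, indexed by the $\iota$-orbits $O$ in $C(z)=\pi_0(U-\{z\})$, of subspaces $\hat{U}_O$, each containing a unique point $\hat{z}_O$ above $z$ and satisfying $\hat{U}_O-\{\hat{z}_O\}=\coprod_{L\in O} L$ under the identification $q^{-1}(U-\{z\})=U-\{z\}$.

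Then I would analyse the lift on $V$: each connected component $J$ of $V\setminus K$ is an interval on which $\gamma$ takes values in a single $L(J)\in\pi_0(U-\{z\})$, and $\hat{\gamma}_0(J)\subset L(J)\subset \hat{U}_{O(J)}$, where $O(J)$ is the $\iota$-orbit of $L(J)$. For a continuous extension of $\hat{\gamma}_0$ to $K$ to exist, sending $K$ to some $\hat{z}_{O_0}$, it is necessary and sufficient that every $J$ whose closure meets $K$ satisfies $O(J)=O_0$; that is, every $L\in\pi_0(U-\{z\})$ with $K\cap\overline{\gamma^{-1}(L)}\neq\emptyset$ lies in the orbit $O_0$ (continuity of the extension at $K$ follows from the local star-shaped structure of $\hat{U}_{O_0}$ around $\hat{z}_{O_0}$). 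This is precisely condition (ii).

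The main technical point will be the third step: verifying that each component $J$ of $V\setminus K$ does map into a single $L\in\pi_0(U-\{z\})$ (which uses the decomposition $U-\{z\}=\coprod_L L$ together with connectedness of $\gamma(J)$) and that continuity of the extension at points of $K$ is exactly captured by the star-topology on $\hat{U}_{O_0}$. Once these local checks are in place, glueing across the finitely many relevant components $K$ gives the global lift.
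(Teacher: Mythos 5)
Your overall strategy—reduce to a local lifting problem near each point of $Z_{exc}$ and analyze the lift using the explicit description of the non-singular cover—is the same as the paper's, and the (i) $\Rightarrow$ (ii) direction is sound. However, the (ii) $\Rightarrow$ (i) direction has a genuine gap: you claim one can choose a connected open neighbourhood $V$ of $K$ in $[0,1]$ with $V\cap\gamma^{-1}(z)=K$. This requires $K$ to be open in $\gamma^{-1}(z)$, which can fail. The set $\gamma^{-1}(z)$ is a compact subset of $[0,1]$, and its connected components need not be isolated from one another; for instance one can have $\gamma^{-1}(z)=\{0\}\cup\{1/n : n\ge 1\}$, where $K=\{0\}$ is a component but every connected neighbourhood of $0$ meets infinitely many other components. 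In such a case your partially-defined lift $\hat\gamma_0$ is undefined at points arbitrarily close to $K$ on both sides, so the analysis of ``components $J$ of $V\setminus K$'' breaks down and one cannot treat the extension at $K$ in isolation from the extensions at the nearby components.

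The paper avoids this by taking $W$ to be the connected component of $\gamma^{-1}(U)$ containing $K$, rather than trying to isolate $K$. This $W$ is an open interval that may contain many (even infinitely many) components of $\gamma^{-1}(z)$, and the key (and not entirely trivial) claim is that, assuming (ii), there is a single $L\in\pi_0(U-\{z\})$ with $\gamma(W)\subset L\cup\{z\}\cup\iota(L)$; since $q$ splits over $L\cup\{z\}\cup\iota(L)$, this gives the lift on all of $W$ at once. The argument that $\gamma(W)$ lands in a single $\iota$-orbit branch uses (ii) at every component of $\gamma^{-1}(z)\cap W$ together with connectedness of $W$, essentially chaining orbits across successive intervals of $W\setminus\gamma^{-1}(z)$. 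You would need to replace your neighbourhood $V$ of $K$ with this component $W$ of $\gamma^{-1}(U)$ and supply the single-branch argument in order to close the gap. A secondary, smaller issue: your final sentence asserts there are ``finitely many relevant components $K$,'' which is also unjustified (though the gluing argument does not actually need finiteness, only agreement of lifts on overlaps).
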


\begin{proof}
	Let $\hat{Z}$ be the non-singular cover of $Z$ and
	$q:\hat{Z}\to Z$ be the quotient map.

	Assume (i).
	Consider $z$, $U$, $K$ as in the lemma and let $\hat{\gamma}$ be a lift of
	$\gamma$. Consider $L_i\in\pi_0(U-\{z\})$ with
	$K\cap\overline{\gamma^{-1}(L_i)}\neq\emptyset$ for $i\in\{1,2\}$. We
	have $\hat{\gamma}(K)\subset \overline{q^{-1}(L_i)}$.
	Consequently, we have 
	$\overline{q^{-1}(L_1)}\cap\overline{q^{-1}(L_2)}\neq\emptyset$.
	If $L_1$ and $L_2$ are not in the same $\iota$-orbit, then
	$\overline{q^{-1}(L_1)}$ and $\overline{q^{-1}(L_2)}$
	are in distinct connected components of $\overline{q^{-1}(U)}$, a contradiction.
	So, (ii) holds.

	Assume (ii).
	Since lifts of non-identity paths are unique if they exist 
	(Lemma \ref{le:mappaths}), it is
	enough to show the existence of lifts locally on $Z$. This is clear for a small
	open neighbourhood of a point of $Z-Z_{exc}$. Consider now
$z\in Z_{exc}$ and a small open neighbourhood 
$U$ of $z$ in $Z_o$.
	Let $K$ be a connected component of $\gamma^{-1}(z)$ and let $W$ be
	the connected component of $\gamma^{-1}(U)$ containing $K$. There is
	$L\in\pi_0(U-\{z\})$ such that $\gamma(W)\subset L\cup\{z\}\cup\iota(L)$.
	Since $q$ splits over $L\cup\{z\}\cup\iota(L)$, it
	follows that the restriction of $\gamma$ to $W$ lifts to $\hat{Z}$.
\end{proof}

\begin{defi}
	We say that a path $\gamma$ in $Z$ is {\em smooth}\index[ter]{smooth path}
	if it satisfies the
equivalent conditions of Lemma \ref{le:equivalencesmooth}.

	We say that a path $\gamma$ in $Z$ is {\em admissible}\index[ter]{admissible
	path} if it is oriented and smooth.

We say that 
a homotopy class of paths is smooth (resp. admissible, resp. oriented) if it contains a
smooth (resp. an admissible, resp. an oriented) path.
\end{defi}

\smallskip
Let us note some basic properties of smooth and admissible paths and classes.

\begin{properties}
	\label{it:smoothadmissible}
	\mbox{}
\begin{itemize}
	\item[(1)] A path is smooth if and only if its inverse is smooth.
	\item[(2)]
A smooth path is contained in a component of $Z$.
\item[(3)]  Every admissible path $\gamma$ is homotopic to a minimal admissible path
via a homotopy involving only admissible paths contained in the support of
$\gamma$ (cf Lemma \ref{le:minimalpaths}).
		\item[(4)] A minimal path in a smooth (resp. admissible)
			homotopy class is smooth (resp. admissible).
		\item[(5)] An oriented path is admissible if and only if
			its homotopy class is admissible (Lemma
			\ref{le:componentspath} provides a minimal oriented
			path $\gamma_{min}$ homotopic to a given oriented path $\gamma$
			with the property that $\gamma$ is admissible if $\gamma_{min}$ is 
			admissible, hence we obtain the desired equivalence by (4) above).
		\item[(6)] Given two oriented homotopy classes of paths
$\alpha$ and $\beta$ with $\alpha\circ\beta$ admissible, then
		$\alpha$ and $\beta$ are admissible (cf (5) above).
\end{itemize}
\end{properties}

\begin{defi}
	\label{de:opposite}
Given two smooth non-identity homotopy classes of paths $\zeta_1$ and $\zeta_2$
	contained in the same component of $Z$, there is a unique
	$\eps\in\{\pm 1\}$ such that there is a minimal smooth path
		$\gamma$ in $Z$ with the property that $\zeta_1$
		and $\zeta_2^\eps$ are equal to the classes of restrictions of
		$\gamma$. We say that $\zeta_1$ and $\zeta_2$ have the
		{\em same orientation}\index[ter]{same orientation} (resp. {\em opposite
		orientation})\index[ter]{opposite orientation} if $\eps=1$
		(resp. $\eps=-1$).
\end{defi}

Note that $Z^\opp$ and $Z$ have the same smooth paths.
Note also that the notion of ``opposite orientation" does not depend on the orientation
of $Z$.

\begin{rem}
	Assume $X\subset\BR^N$ is obtained by the construction of 
	Remark \ref{re:smooth}.
A homotopy class of paths in $X$ is smooth if and only if it contains a path
	$\gamma$ such that the composition $[0,1]\xrightarrow{\gamma}X\hookrightarrow
	\BR^N$ is a smooth immersion.
\end{rem}

\begin{example}
	\label{ex:examplespaths}
	We give below some examples of paths. The top and bottom
	paths are  admissible, while the middle one is not. The left and
	middle columns describe the path in the singular curve, while
	the right column describes the lifted path (if it exists) in
	the non-singular cover.

	In the middle and right columns, and throughout the paper, we depict paths $\gamma$ using their time-reversed graphs, so that $\gamma(0)$ is on the right and $\gamma(1)$ is on the left.

$$\includegraphics[scale=0.85]{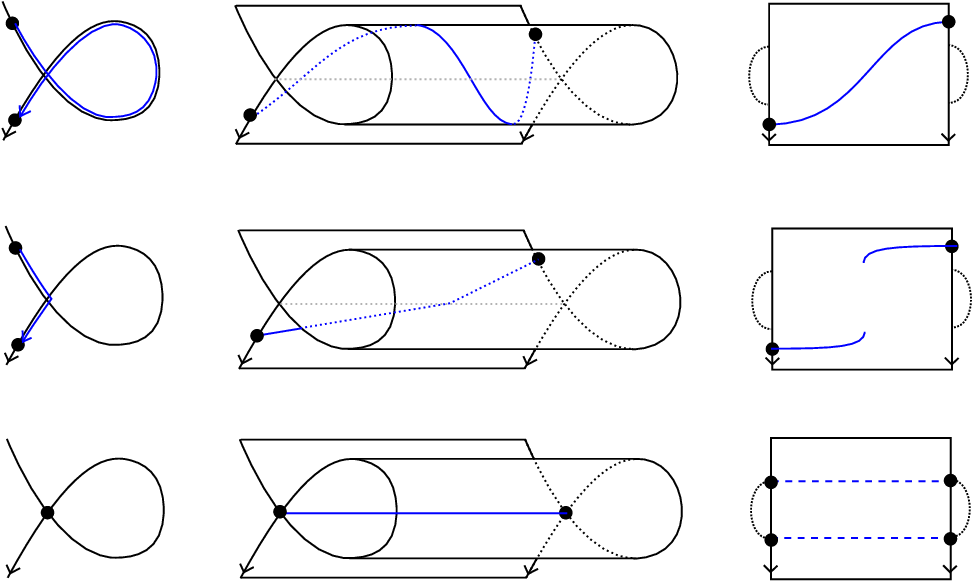}$$
\end{example}

\subsubsection{Pointed category of admissible paths}

We now define a category associated with admissible paths.

\begin{defi}
	We define $\CS^\bullet(Z,1)$\indexnot{S(}{\CS^\bullet(Z,n)}
	to be the pointed category with object set $Z$, with
$$\Hom_{\CS^\bullet(Z,1)}(x,y)=\{0\}\sqcup \{\text{admissible homotopy classes of
paths }x\to y\}$$
 and
$$\alpha \beta=\begin{cases}
\alpha\circ\beta & \text{ if }\alpha\circ\beta \text{ is admissible} \\
0 &\text{ otherwise.}
\end{cases}$$
\end{defi}

\begin{rem}
	Consider $\Pi_o(Z)$ the category with objects the points of $Z$ and arrows the
	oriented homotopy classes of
	paths, a subcategory of $\Pi(Z)$. We define a $\BZ_{\ge 0}$-filtration
	on $\Pi_o(Z)$ by defining a class $\zeta$
	to have degree $\le d$ if it is the product
	of $d+1$ admissible homotopy classes of paths.
	The category $\CS^\bullet(Z,1)$ is isomorphic to the degree $0$ part of $\gr\Pi_o(Z)$.

	Note finally that if $Z$ is non-singular, then $\CS^\bullet(Z,1)$ is the pointed category
	associated to $\Pi_o(Z)$.
\end{rem}


We put $\CS(Z,1)=\BF_2[\CS^\bullet(Z,1)]$.

\begin{example}
	We describe below some examples of products in $\CS^\bullet(Z,1)$.
	Here $Z$ is the third singular curve of example \ref{ex:cover} and
	the paths are drawn in the smooth cover.
$$\includegraphics[scale=0.70]{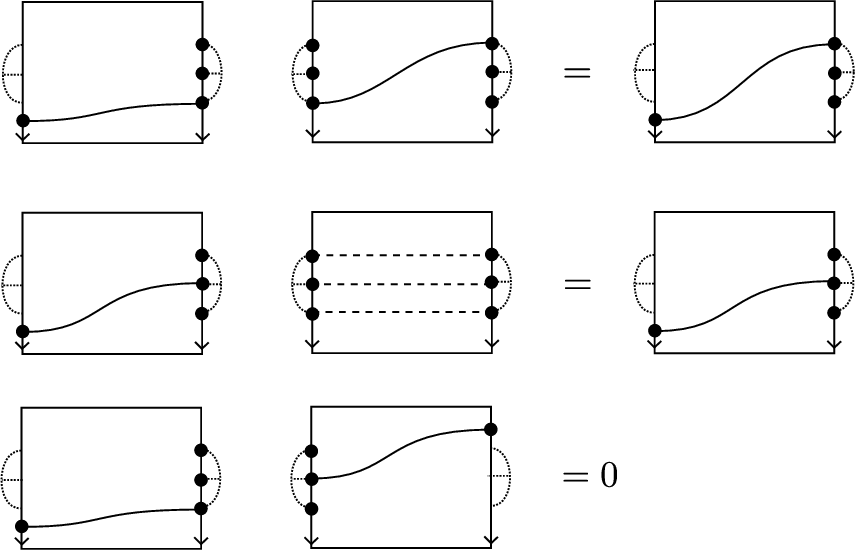}$$
\end{example}

\subsubsection{Central extension}
\label{se:central}

\smallskip
Let $L(Z)=(\bigoplus_{c\in T(Z)}\BZ e_c)/(\bigoplus_{c\in T(Z)}\BZ
(e_c+e_{\iota(c)}))$\indexnot{LZ}{L(Z)}\indexnot{ec}{e_c}.
We define a bilinear map
$\langle-,-\rangle:R(Z)\times R(Z)\to L(Z)$\indexnot{<}{\protect\langle
\alpha,\beta\protect\rangle} by
$$\langle \alpha,\beta\rangle=\frac{1}{2}\sum_{c\in T(Z)}
(m_{\iota(c)}-m_c)(\alpha)\cdot(m_c+m_{\iota(c)})(\beta)e_c.$$

Note that $(m_c+m_{\iota(c)})(\beta)=0$ for all but
finitely many $c$'s, hence the sum above is finite.
More precisely,
let $\zeta$ be a non-identity homotopy class of paths in $Z$.
We have $\zeta(0+)\neq\zeta(1-)$ and
\begin{equation}
	\label{eq:scalar}
	(m_c+m_{\iota(c)})(\zeta)=\begin{cases}
		1 & \text{if }c\in\{\zeta(0+)\cup\iota(\zeta(0+))\} \text{ and }
		c{\not\in}\{\zeta(1-)\cup\iota(\zeta(1-))\}\\
		-1 & \text{if } c\in\{\zeta(1-)\cup\iota(\zeta(1-))\}
		\text{ and } c{\not\in}\{\zeta(0+)\cup\iota(\zeta(0+))\}\\
		0 & \text{otherwise.}
\end{cases}
\end{equation}

If $\zeta$ is admissible and non-identity,
then $\zeta(1-)\neq\zeta(0+)$, hence
$$\langle\alpha,\llbracket\zeta\rrbracket\rangle=
(m_{\iota(\zeta(0+)}-m_{\zeta(0+)})(\alpha)e_{\zeta(0+)}-
(m_{\iota(\zeta(1-)}-m_{\zeta(1-)})(\alpha)e_{\zeta(1-)}.$$

\medskip
We define a group $\Gamma'(Z)$, a central extension of $R(Z)$ by $L(Z)$.
The set of elements of 
$\Gamma'(Z)$ is $L(Z)\times R(Z)$ and the multiplication is given by
$$(m,\alpha)(n,\beta)=(m+n+\langle\alpha,\beta\rangle,\alpha+\beta).$$

We put $\Gamma(Z)=(\bigoplus_{\Omega\in\pi_0(Z)}\frac{1}{2}\BZ e_\Omega)\times\Gamma'(Z)$\indexnot{Ga}{\Gamma(Z)}\indexnot{eo}{e_\Omega}.

\smallskip
Note that $L(Z)$, $\Gamma'(Z)$ and $\Gamma(Z)$ depend only on the $1$-dimensional
space underlying $Z$ and on $\iota$.

\smallskip
Let $D$ be a subset of $T(Z)$ such that $D\cap\iota(D)=\emptyset$.
We denote by $\Gamma(Z,D)$\indexnot{Ga}{\Gamma(Z,D)} the quotient of $\Gamma(Z)$ by
the central subgroup generated by $\{e_c+\frac{1}{2}e_\Omega\}$, where
$c\in D$ and $\Omega$ is the connected component of $Z$ containing $c$.
The canonical map $\bigoplus_{\Omega\in\pi_0(Z)}\frac{1}{2}\BZ e_\Omega\to
\Gamma(Z,D)$ is
injective and we identify $(\frac{1}{2}\BZ)^{\pi_0(Z)}$ with its image.

We put a partial order on $\Gamma(Z,D)$ by setting $g_1\ge g_2$ if
$g_1g_2^{-1}\in (\frac{1}{2}\BZ_{\ge 0})^{\pi_0(Z)}$.

\smallskip
We define $\bar{\Gamma}(Z,D)$\indexnot{Ga}{\bar{\Gamma}(Z,D)}
to be the quotient of $\Gamma(Z,D)$ by the central
subgroup generated by $\frac{1}{2}e_\Omega-\frac{1}{2}e_{\Omega'}$ for
$\Omega,\Omega'\in\pi_0(Z)$. 

The image of $(\frac{1}{2}\BZ_{\ge 0})^{\pi_0(Z)}$ in $\bar{\Gamma}(Z,D)$
is $\frac{1}{2}\BZ$ (where $\frac{1}{2}e_\Omega\mapsto\frac{1}{2}$).
Let $r\in\frac{1}{2}\BZ$. We still denote by $r$ the image of $r$ in 
$\bar{\Gamma}(Z,D)$. Given $x\in \bar{\Gamma}(Z,D)$, we put
$x+r=x\cdot r=r\cdot x$.

We define a partial order on $\bar{\Gamma}(Z,D)$ by setting $g_1\ge g_2$\indexnot{<}{g_1<g_2}
if $g_1g_2^{-1}\in \frac{1}{2}\BZ_{\ge 0}$.

\medskip
Given $z\in Z_o$, we denote by $C(z)^+$\indexnot{Cz}{C(z)^+} the set of $c\in C(z)$  
such that there is an oriented path $\gamma$ in $Z$ with $m_c^+(\gamma)=1$.
Note that $C(z)=C(z)^+\coprod \iota(C(z)^+)$. Note also that given
$\zeta$ an oriented homotopy class of paths in $Z$, we have
\begin{equation}
	\label{eq:mzeta+}
m_c(\zeta)e_c+m_{\iota(c)}(\zeta)e_{\iota(c)}=
(m_c^+(\zeta)+m_{\iota(c)}^-(\zeta))e_c \text{ for }
z\in Z_o \text{ and }c\in C(z)^+.
\end{equation}

Given $E$ a subset of $Z_o$, we put $E^+=\coprod_{z\in E}C(z)^+$\indexnot{E}{E^+}.

\begin{rem}
	Fix an orientation of each component of $Z$ (forgetting about the
	already given orientation of $Z_o)$ and define
$Z^+\subset T(Z)$ to be the set of pairs $(z,c)$ such that 
there is an oriented path $\gamma$ in $Z$ (for the given new orientation)
	with $m_c^+(\gamma)=1$.

There is a quotient map $L(Z)\to \BZ^{\pi_0(Z)}$ given by 
$e_c\mapsto e_\Omega$ for all $s\in\Omega$ and $(z,c)\in Z^+$.
Let us show that the bilinear form $R(Z)\times R(Z)\to \BZ^{\pi_0(Z)}$
obtained by composing $\langle-,-\rangle$ with this quotient map is
antisymmetric. Let $\gamma$ and $\gamma'$ be two injective oriented
	paths in $Z$ (for the given new orientation).
	If the supports of $\gamma$ and $\gamma'$ are disjoint, then
	$\langle\llbracket\gamma\rrbracket,\llbracket\gamma'\rrbracket\rangle=0$. We have
	$\langle\llbracket\gamma\rrbracket,\llbracket\gamma\rrbracket\rangle=
	-e_{\gamma(0+)}-e_{\gamma(1-)}$. If
	$\gamma([0,1])\cap\gamma'([0,1])=\{\gamma(1)\}$, then
$$\langle\llbracket\gamma\rrbracket,\llbracket\gamma'\rrbracket\rangle=
	-e_{\gamma'(0+)}
	\text{ and }
\langle\llbracket\gamma\rrbracket,\llbracket\gamma'\rrbracket\rangle=
	-e_{\gamma(1-)}.$$
We deduce the antisymmetry statement.
\end{rem}

Let $M$ be a subset of $Z$. We denote by $L_M(Z)$\indexnot{LM}{L_M(Z)}
the subgroup of $L(Z)$ generated
by elements $e_c$ with $\mathrm{pt}(c)\in M$.
The restriction of the pairing $\langle-,-\rangle$ to $R_M(Z)\times R_M(Z)$
takes values in $L_M(Z)$ and we denote by $\Gamma'_M(Z)$
\indexnot{Gamma'M}{\Gamma'_M(Z)} the subgroup
of $\Gamma'_M(Z)$ with elements $(m,\alpha)$ where $m\in L_M(Z)$ and
$\alpha\in R_M(Z)$. Finally, we define $\Gamma_M(Z)$
\indexnot{GammaM}{\Gamma_M(Z)} as the subgroup
$(\bigoplus_{\Omega\in\pi_0(Z),\ M\cap\Omega\neq\emptyset}\frac{1}{2}\BZ e_\Omega)\times\Gamma'_M(Z)$ of
$\Gamma(Z)$.

We denote by $\Gamma_M(Z,D)$\indexnot{GM}{\Gamma_M(Z,D)} (resp.
$\bar{\Gamma}_M(Z,D)$\indexnot{GM}{\bar{\Gamma}_M(Z,D)}) the image 
of $\Gamma_M(Z)$ in $\Gamma(Z,D)$ (resp. $\bar{\Gamma}(Z,D)$).

\smallskip
If $M\subset Z_o$, we put 
$\Gamma_{M^+}(Z)=\Gamma_M(Z,M^+)$\indexnot{GM}{\Gamma_{M^+}(Z)}.

\subsubsection{Functoriality}
\label{se:functorialitypaths}

Let $f:Z\to Z'$ be a morphism of curves.

\begin{lemma}
	\label{le:smoothf}
Let $\zeta$ be a
homotopy class of paths in $Z$. The class $f(\zeta)$ is smooth if and only
	if $\zeta$ is smooth. If $\zeta$ is admissible, then $f(\zeta)$ is admissible.
\end{lemma}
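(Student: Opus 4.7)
The plan is to reduce both statements to the characterization of smoothness in Lemma~\ref{le:equivalencesmooth} and to exploit the fact that a morphism of curves preserves orientations on the oriented part and induces an injective, $\iota$-equivariant map on branches.

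For the admissibility clause I will argue that oriented classes are preserved by $f$: if $\gamma$ is an oriented representative of $\zeta$, then $(f\circ\gamma)^{-1}(Z'_o-Z'_{exc})=\gamma^{-1}(f^{-1}(Z'_o-Z'_{exc}))$ is contained in $\gamma^{-1}(Z_o-Z_{exc})$ because $f(Z_u)\subseteq Z'_u$ and $f(Z_{exc})\subseteq Z'_{exc}$ (Properties~\ref{it:morphismscurves}). On this set $\gamma$ is orientation-compatible by hypothesis, and $f$ is orientation-preserving by the definition of a morphism of curves, so $f\circ\gamma$ is oriented. Combined with the smoothness equivalence below, this yields admissibility of $f(\zeta)$.

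For the forward direction of the smoothness equivalence, I will invoke Proposition~\ref{pr:coveradjoint}: the non-singular cover is right adjoint to the inclusion of non-singular curves, so the composition $\hat Z\xrightarrow{q}Z\xrightarrow{f}Z'$ factors uniquely through a morphism $\hat f\colon\hat Z\to\hat Z'$ with $q'\circ\hat f=f\circ q$. If $\hat\gamma$ is a lift of $\gamma$ to $\hat Z$, then $\hat f\circ\hat\gamma$ is a lift of $f\circ\gamma$ to $\hat Z'$, proving that $f(\zeta)$ is smooth.

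The main obstacle is the converse, where one has to descend local smoothness from $Z'$ to $Z$. I will verify criterion (ii) of Lemma~\ref{le:equivalencesmooth} for $\gamma$ directly. Fix $z\in Z_{exc}$ and a connected component $K$ of $\gamma^{-1}(z)$. Using Lemma~\ref{le:localstructuremaps}, choose a small open neighbourhood $U'$ of $f(z)$ in $Z'$ together with a decomposition of $f^{-1}(U')$ as an $\St^{\circ}(I_0)$-piece away from $f^{-1}(f(z))$ and $\St(I_\ell)$-pieces around the finitely many preimages of $f(z)$; take $U$ to be the component containing $z$, which is a small neighbourhood of $z$ in $Z$. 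The fibres $f^{-1}(f(z))\cap\gamma^{-1}(f(z))$ being a disjoint union of the sets $\gamma^{-1}(z_i)$, any connected component $K'$ of $(f\circ\gamma)^{-1}(f(z))$ is contained in a single $\gamma^{-1}(z_i)$; hence the component $K'$ containing $K$ equals $K$. If $L_1,L_2\in\pi_0(U-\{z\})$ both meet $\overline{\gamma^{-1}(L_i)}\cap K$ with corresponding classes $c_1,c_2\in C_Z(z)$, then $f(L_i)$ lies in a component $L'_i\in\pi_0(U'-\{f(z)\})$ corresponding to $C(f)(c_i)$, and by construction $K'=K$ meets $\overline{(f\circ\gamma)^{-1}(L'_i)}$. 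Smoothness of $f(\zeta)$ forces $C(f)(c_1)$ and $C(f)(c_2)$ to lie in the same $\iota$-orbit; since $C(f)$ is $\iota$-equivariant (by the morphism of curves axiom) and injective on $C_Z(z)$ (again by Lemma~\ref{le:localstructuremaps}), we conclude that $c_1$ and $c_2$ lie in the same $\iota$-orbit, as required.
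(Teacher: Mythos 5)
Your overall strategy is sound and in fact fleshes out what the paper leaves terse: the paper's proof simply asserts that, for a single path $\gamma$, smoothness of $\gamma$ is equivalent to smoothness of $f(\gamma)$, and then deduces the lemma by reducing to a minimal representative. You supply a genuine argument for the hard direction of this path-level equivalence by checking criterion~(ii) of Lemma~\ref{le:equivalencesmooth} locally, using Lemma~\ref{le:localstructuremaps} together with the injectivity and $\iota$-equivariance of $C(f)$ at each $z\in Z_{exc}$. The admissibility argument (via $f^{-1}(Z'_o-Z'_{exc})\subseteq Z_o-Z_{exc}$) and the forward smoothness argument (via lifting to non-singular covers and the adjunction) are both correct and match the paper.

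There is, however, a gap in your converse. You verify criterion~(ii) for some representative $\gamma$ of $\zeta$, and at the decisive step invoke ``smoothness of $f(\zeta)$'' to force $C(f)(c_1)$ and $C(f)(c_2)$ into a common $\iota$-orbit. But criterion~(ii) in Lemma~\ref{le:equivalencesmooth} is a statement about a \emph{path}, not a homotopy class: what the argument actually consumes is that $f\circ\gamma$ itself satisfies criterion~(ii), i.e.\ that $f\circ\gamma$ is smooth. This is false for an arbitrary representative $\gamma$ — a path can run into a branch point and come back out in an unmatched direction while still being nullhomotopic, so $\zeta$ (and $f(\zeta)$) can be smooth classes while $\gamma$ (and $f\circ\gamma$) are not smooth paths — which is exactly why the property ``smooth'' for homotopy classes is defined existentially. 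You should first dispose of the case where $\zeta$ is an identity (trivial) and otherwise choose $\gamma\in\zeta$ minimal via Lemma~\ref{le:minimalpaths}; then $f(\gamma)$ is minimal by Lemma~\ref{le:mappaths}, hence smooth by Properties~\ref{it:smoothadmissible}(4) since $f(\zeta)=[f(\gamma)]$ is smooth. This is precisely the reduction the paper performs, and once inserted, your local verification applies as written and turns the paper's bare assertion into an actual proof.
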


\begin{proof}
	Given $\gamma$ an oriented path in $Z$, the path $f(\gamma)$ is oriented. It
is smooth if and only $f(\gamma)$ is smooth. This
shows that if $\zeta$ is a smooth (resp. admissible) homotopy class of paths in $Z$,
	then $f(\zeta)$ is smooth (resp. admissible).

Consider now $\zeta$ a homotopy class of paths in $Z$ such that
	$f(\zeta)$ is smooth. Given
	$\gamma$ a minimal path in $\zeta$, then $f(\gamma)$ is minimal
	(Lemma \ref{le:mappaths}). Since $f(\zeta)$ is smooth, it
	follows that $f(\gamma)$ is smooth (Properties \ref{it:smoothadmissible}(4)),
	hence $\gamma$ is
	smooth and finally $\zeta$ is smooth.
\end{proof}

It follows from the previous lemma that
the morphism $f$ induces a functor
$f:\CS^\bullet(Z,1)\to\CS^\bullet(Z',1)$. We have constructed a functor $\CS^\bullet(-,1)$ from the category
of curves to the category of pointed categories.

\smallskip
Let us state a version of Lemma \ref{le:mappaths} for morphisms of curves.

\begin{lemma}
	\label{le:uniqueliftpath}
Let $\gamma,\gamma'$ be two admissible paths in $Z$.
	If $[f(\gamma)]=[f(\gamma')]\neq\id$, then $[\gamma]=[\gamma']$. 
	The functor $f:\CS^\bullet(Z,1)\to\CS^\bullet(Z',1)$ is faithful.
\end{lemma}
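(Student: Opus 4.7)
The plan is to derive the first assertion directly from the third bullet of Lemma \ref{le:mappaths}, applied to $f$ viewed merely as a morphism of $1$-dimensional spaces (a morphism of curves is in particular one by definition). That bullet yields $[\gamma]=[\gamma']$ except in the degenerate case where $[\gamma]=\id_{x_1}$ and $[\gamma']=\id_{x_2}$ at two distinct points of $Z$ glued by $f$. Under the hypothesis $[f(\gamma)]\neq\id$ this degenerate case cannot arise, since constant paths map to constant paths and hence represent identity classes in $\Pi(Z')$. This immediately gives the first statement.

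For the faithfulness of $f\colon\CS^\bullet(Z,1)\to\CS^\bullet(Z',1)$, I will check injectivity on each Hom-set $\Hom_{\CS^\bullet(Z,1)}(x,y)$ separately. The functor sends the distinguished $0$ to $0$ and, by Lemma \ref{le:smoothf}, sends an admissible homotopy class to an admissible homotopy class. So the real content is to show that if $\alpha,\beta\in\Hom_{\CS^\bullet(Z,1)}(x,y)$ are non-zero admissible classes with $f(\alpha)=f(\beta)$, then $\alpha=\beta$. Using Properties \ref{it:smoothadmissible}(3) I can pick minimal admissible representatives $\gamma,\gamma'$, and Lemma \ref{le:mappaths} (first bullet) then ensures that $f(\gamma),f(\gamma')$ are still minimal.

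The argument now splits according to whether $f(\alpha)=f(\beta)$ is or is not an identity class in $\Pi(Z')$. If it is not an identity, the first part of the present lemma gives $[\gamma]=[\gamma']$, i.e.\ $\alpha=\beta$. If it is an identity class, then the minimality of $f(\gamma)$ and $f(\gamma')$ combined with Lemma \ref{le:minimalpaths} (which says minimal paths never represent identity classes) forces $\gamma$ and $\gamma'$ to be constant, so $\alpha=\beta=\id_x$ with $x=y$. A symmetric observation rules out the possibility that one of $\alpha,\beta$ is a non-identity admissible class while the other is a $0$ or an identity class, since then their images under $f$ would belong to different strata of the target Hom-set. Assembling these cases yields the faithfulness.

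The whole proof is essentially bookkeeping, and the only point requiring any care is the three-fold distinction among (i) the distinguished $0$ of the pointed Hom-set, (ii) the unit of the category $\CS^\bullet(Z,1)$ (i.e.\ the class of a constant path), and (iii) a general admissible class; the geometric content—that an admissible class is determined by its image under $f$ away from trivialities—has already been packaged into Lemma \ref{le:mappaths}, so no new geometric input is needed here.
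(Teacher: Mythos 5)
Your proof is correct and follows exactly the route the paper intends: it leaves this lemma unproved, introducing it only with ``Let us state a version of Lemma \ref{le:mappaths} for morphisms of curves,'' and you supply precisely that deduction (bullet three of Lemma \ref{le:mappaths} for the first assertion, Lemma \ref{le:smoothf} to send non-zero classes to non-zero classes, and the minimal-path characterization of non-identity classes from Lemma \ref{le:minimalpaths} to separate the three strata of each pointed Hom-set). The only minor blemish is that you announce ``pick minimal admissible representatives $\gamma,\gamma'$'' before the case split and then, in the identity case, conclude those same $\gamma,\gamma'$ are constant---which a minimal path cannot be; the fix is just to observe first that $f(\alpha)=\id$ already forces $\alpha=\id$ (else a minimal representative of $\alpha$ would push forward to a minimal, hence non-identity, representative of $f(\alpha)$), and only invoke minimal representatives when $\alpha,\beta$ are non-identities.
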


Note that $f$ induces an injective morphism of groups $f:L(Z)\to L(Z')$ and a map
$f:\pi_0(Z)\to\pi_0(Z')$.

The next lemma is an immediate consequence of Lemma \ref{le:functorialitym}.
\begin{lemma}
	\label{le:bilinearfunctor}
Given $\alpha,\beta\in R(Z)$, we have
	$\langle f(\alpha),f(\beta)\rangle=f(\langle\alpha,\beta\rangle)$.
\end{lemma}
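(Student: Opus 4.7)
The plan is to unwind both sides in terms of the tangential multiplicities $m_c$ and then apply Lemma \ref{le:functorialitym} termwise. Since both sides are bilinear in $(\alpha,\beta)$, I may assume $\alpha=\llbracket\zeta\rrbracket$ and $\beta=\llbracket\eta\rrbracket$ for non-identity homotopy classes of paths $\zeta,\eta$ in $Z$. By definition of the induced map $f:L(Z)\to L(Z')$ (via $e_c\mapsto e_{f(c)}$) and the formula for $\langle-,-\rangle$, I would compute
\[
f(\langle\alpha,\beta\rangle)=\tfrac{1}{2}\sum_{c\in T(Z)}(m_{\iota(c)}-m_c)(\zeta)\cdot(m_c+m_{\iota(c)})(\eta)\,e_{f(c)}.
\]

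For the other side, the crucial observation is that $f$ is injective on $T(Z)$ (Lemma \ref{le:localstructuremaps}) and $\iota$-equivariant on $T(Z)$ (third condition in the definition of a morphism of curves); so $\iota(f(c))=f(\iota(c))$ for all $c\in T(Z)$. Lemma \ref{le:functorialitym} gives $m_{f(c)}^{\pm}(f(\zeta))=m_c^{\pm}(\zeta)$, and combining with the equivariance yields
\[
(m_{\iota(f(c))}-m_{f(c)})(f(\zeta))\cdot(m_{f(c)}+m_{\iota(f(c))})(f(\eta))=(m_{\iota(c)}-m_c)(\zeta)\cdot(m_c+m_{\iota(c)})(\eta).
\]
Thus the contributions of $c'=f(c)$ to $\langle f(\alpha),f(\beta)\rangle$ match those of $c$ to $f(\langle\alpha,\beta\rangle)$.

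It remains to check that any $c'\in T(Z')$ not in the image $f(T(Z))$ contributes $0$, i.e., $m_{c'}(f(\zeta))=0$ and $m_{c'}(f(\eta))=0$. This is where I would use the local description of $f$ from Lemma \ref{le:localstructuremaps}: around $x'=\pt(c')$, the map $f$ is modeled on the inclusion $\St^\circ(I_0)\sqcup\St(I_1)\sqcup\cdots\sqcup\St(I_r)\to\St(n_{x'})$, with $f(T(Z))\cap C(x')$ corresponding to $I_1\cup\cdots\cup I_r$ (once one identifies directions at $x_l$ with their images). A class $c'\notin f(T(Z))$ lies either in $I_0$ or in $C(x')$ for $x'\notin f(Z)$; in the latter case $f(\zeta)$ avoids $x'$ entirely, and in the former, any approach of $f(\zeta)$ to $x'$ must come from $\gamma$ passing through some $x_l$, hence uses directions in $I_l$ and not in $I_0$. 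Either way the tangential multiplicity vanishes.

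The main (and only mildly delicate) step is this last verification; everything else is just bookkeeping with bilinearity and the functoriality of $m_c$. Given the local model, the vanishing is immediate, which is why the paper calls the lemma an "immediate consequence" of Lemma \ref{le:functorialitym}.
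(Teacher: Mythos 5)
Your proof is correct, and it fills in exactly what the paper leaves implicit when it calls the lemma "an immediate consequence of Lemma \ref{le:functorialitym}." The bilinearity reduction, the use of injectivity and $\iota$-equivariance of $f$ on $T(Z)$, and the matching of terms via $m_{f(c)}^{\pm}(f(\zeta))=m_c^{\pm}(\zeta)$ are all as expected.

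Two small remarks on the vanishing step, which you correctly identify as the only substantive point. First, your case split is not quite exhaustive as stated: a class $c'\notin f(T(Z))$ with $x'=\pt(c')\in f(Z)$ need not lie in $I_0$ — it could simply fail to lie in $\bigcup_{l\ge 0}I_l$ at all (i.e., the ray at $c'$ is disjoint from $f(Z)$ near $x'$). However, your actual argument — that any approach of $f(\gamma)$ to $x'$ comes from $\gamma$ passing through some $x_l$ and hence uses directions in $I_l$ — applies verbatim to this third case, so the gap is only in the phrasing, not the mathematics. Second, a slightly slicker route to the vanishing avoids the local model altogether: by equation (\ref{eq:scalar}), $(m_{c'}+m_{\iota(c')})(f(\eta))\ne 0$ forces $c'$ to be one of the (at most four) tangential directions $f(\eta)(0+)$, $\iota(f(\eta)(0+))$, $f(\eta)(1-)$, $\iota(f(\eta)(1-))$, each of which equals $f(c)$ for the corresponding $c\in T(Z)$ since $f(\eta)(0+)=f(\eta(0+))$ etc. So the second factor already kills all terms indexed outside $f(T(Z))$, without needing to examine $m_{c'}(f(\zeta))$ for the first factor. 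Either route is fine; yours is a correct and complete argument.
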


It follows from Lemmas \ref{le:bilinearfunctor} and \ref{le:finjR}
that we have a morphism
of groups $f:\Gamma(Z)\to\Gamma(Z'),\ (r,(m,\alpha))\mapsto \bigl(f(r),(f(m),f(\alpha))\bigr)$
which restricts
to an injective morphism of groups $\Gamma'(Z)\to\Gamma'(Z')$.

Let $D$ be a subset of $T(Z)$  such that given $z\in \pt(D)$, the composition
$D\cap\pt^{-1}(z)\to C(z)\to C(z)/\iota$ is bijective. The morphism
$f:\Gamma(Z)\to\Gamma(Z')$ induces a morphism $f:\Gamma(Z,D)\to\Gamma(Z',f(D))$.
Let $g,h\in \Gamma(Z,D)$. If $g<h$, then $f(g)<f(h)$. If $f:\pi_0(Z)\to\pi_0(Z')$
is injective and $f(g)<f(h)$, then $g<h$.

Finally, the morphism $f:\Gamma(Z,D)\to\Gamma(Z',f(D))$ induces a morphism
$f:\bar{\Gamma}(Z,D)\to\bar{\Gamma}(Z',f(D))$. Given 
$g,h\in \bar{\Gamma}(Z,D)$, we have $g<h$ if and only if $f(g)<f(h)$.

\medskip
Let $Z_1,\ldots,Z_r$ be the connected components of $Z$. There are
isomorphisms of groups
$R(Z_1)\times\cdots\times R(Z_r)\iso R(Z)$ and
$L(Z_1)\times\cdots\times L(Z_r)\iso L(Z)$ 
given by the inclusions $Z_i\hookrightarrow Z$. They
induce an isomorphism of groups 
\begin{equation}
	\label{eq:Gammacomponents}
	\Gamma(Z_1)\times\cdots\times \Gamma(Z_r)\iso\Gamma(Z).
\end{equation}

The inclusions $Z_i\hookrightarrow Z$ induce pointed functors $\CS^\bullet(Z_i,1)\to\CS^\bullet(Z,1)$ and
give rise to an isomorphism of pointed categories
\begin{equation}
	\label{eq:Acomponents}
\CS^\bullet(Z_1,1)\vee\cdots\vee\CS^\bullet(Z_r,1)\iso\CS^\bullet(Z,1).
\end{equation}

\subsubsection{Pullback}
Let $f:Z\to Z'$ be a morphism of curves.
We define a non-multiplicative ``functor''
$f^\#:\add(\CS(Z',1))\to \add(\CS(Z,1))$\indexnot{f}{f^\#}. It commutes with coproducts
but is not a functor, i.e., it is not compatible with composition for a general $f$.
We put $f^\#(z')=\coprod_{z\in f^{-1}(z')}z$.
Given $\zeta'\in\Hom_{\CS^\bullet(Z',1)}(z'_1,z'_2)$ non-zero, we define 
$f^\#(\zeta')$ to be
\begin{itemize}
	\item $\id$ if $\zeta'=\id$ \\
	\item $0$ if $\zeta'$ does not lift to an admissible class of 
		paths in $Z$ \\
	\item the composition
	$$\coprod_{z\in f^{-1}(z'_1)}z\xrightarrow{\text{projection}}z_1
	\xrightarrow{\zeta}z_2 \xrightarrow{\text{inclusion}} 
	\coprod_{z\in f^{-1}(z'_2)}z$$
		where $\zeta:z_1\to z_2$ is the unique lift of $\zeta'$, otherwise
		(cf Lemma \ref{le:uniqueliftpath}).
\end{itemize}

\smallskip
We denote by $f^{-1}(\zeta')$ the set of admissible lifts of $\zeta'$. We have
$f^\#(\zeta')=\sum_{\zeta\in f^{-1}(\zeta')}\zeta$.

\smallskip
Given $\zeta'_1\in\Hom_{\CS^\bullet(Z',1)}(z'_1,z'_2)$ and
$\zeta'_2\in\Hom_{\CS^\bullet(Z',1)}(z'_2,z'_3)$ such that $f^\#(\zeta'_1)\neq 0$ and
$f^\#(\zeta'_2)\neq 0$, we have
$f^\#(\zeta'_2)f^\#(\zeta'_1)=f^\#(\zeta'_2\zeta'_1)$ (cf Lemma \ref{le:smoothf}).

\medskip
Given $f':Z'\to Z''$ a morphism of curves, we have $(f'f)^\#=f^\#f^{\prime\#}$.

%
%

\begin{lemma}
	\label{le:compositionliftpath}
Let $\gamma'$ be a smooth path in $Z'$. 
Consider the following assertions:
	\begin{enumerate}
		\item $\gamma'$ lifts to a smooth path in $Z$
		\item $\gamma'([0,1])\subset f(Z)$.
		\item $[\gamma']$ lifts to a smooth homotopy class in $Z$
		\item $\supp([\gamma'])\subset f(Z)$.
	\end{enumerate}
	We have $(1)\Leftrightarrow (2)\Rightarrow (3)\Leftrightarrow (4)$.

	Assume $f$ is strict. Then $(3)\Rightarrow (2)$. Furthermore, if
	$\gamma'$ is admissible and it lifts to a smooth path in $Z$, then that path
	is admissible.
\end{lemma}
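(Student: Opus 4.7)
\smallskip
\noindent\textbf{Proof plan.} The equivalences split into essentially independent arguments that all boil down to the local model of Lemma~\ref{le:localstructuremaps}. The implication $(1)\Rightarrow (2)$ is immediate, since if $\gamma=f\circ\tilde{\gamma}$ then $\gamma'([0,1])\subset f(Z)$. Similarly $(1)\Rightarrow (3)$ by taking homotopy classes. For $(3)\Rightarrow (4)$, if $[\gamma']=f(\zeta)$ with $\zeta$ smooth, choose a minimal smooth representative $\tilde{\gamma}$ of $\zeta$; then Lemma~\ref{le:mappaths} gives $\supp([\gamma'])=\supp([f(\tilde{\gamma})])=f(\supp([\tilde{\gamma}]))\subset f(Z)$. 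For $(4)\Rightarrow (3)$, pick a minimal smooth representative $\gamma'_{\min}$ of $[\gamma']$, noting $\gamma'_{\min}([0,1])=\supp([\gamma'])\subset f(Z)$, so the already-established $(2)\Rightarrow (1)$ lifts $\gamma'_{\min}$ to a smooth path whose class lifts $[\gamma']$.

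The main content is $(2)\Rightarrow (1)$. The plan is to lift $\gamma'$ uniquely off the exceptional preimages and then extend across them. Since $f$ restricts to a homeomorphism $f^{-1}(Z'\setminus Z'_{exc})\to f(Z)\cap (Z'\setminus Z'_{exc})$ (by Properties~\ref{it:morphismscurves}), there is a unique continuous $\gamma$ on $[0,1]\setminus\gamma'^{-1}(Z'_{exc})$ with $f\circ\gamma=\gamma'$. Now fix a connected component $K$ of $\gamma'^{-1}(Z'_{exc})$ with $\gamma'(K)=\{z'\}$, and apply Lemma~\ref{le:localstructuremaps} to produce a small neighbourhood $U'=a(\St(n_{z'}))$ of $z'$, together with a decomposition $f^{-1}(U')\cong\St^{\circ}(I_0)\sqcup\St(I_1)\sqcup\cdots\sqcup\St(I_r)$. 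Each $I_l$ with $l\geq 1$ is $\iota_{Z'}$-stable by $\iota$-equivariance of $C(f)$, and $I_0$ is also $\iota_{Z'}$-stable as the complement. The smoothness of $\gamma'$, via condition~(ii) of Lemma~\ref{le:equivalencesmooth}, says the set of directions $L\in\pi_0(U'\setminus\{z'\})$ approached from $K$ lies in a single $\iota_{Z'}$-orbit. Since $\gamma'([0,1])\subset f(Z)$ forces $z'\in f(Z)$ (so $r\geq 1$) and restricts the permitted approach directions to $I_0\cup I_1\cup\cdots\cup I_r$, this orbit must sit in a unique $I_l$ with $l\geq 1$; define $\gamma$ to be constantly $z_l\in f^{-1}(z')$ on $K$. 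The local model shows this extension is continuous, and verifying condition~(ii) for $\gamma$ at $z_l$ (using the injectivity of $C(f)$ on $C_Z(z_l)$ and the $\iota$-equivariance) gives the smoothness of $\gamma$.

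For the strictness part, assume $f$ is strict, so $f(Z_u)$ is closed and open in $Z'_u$ and $f(Z_o)\subset Z'_o$; in particular $f(Z)\cap Z'_u=f(Z_u)$ is a union of connected components of $Z'_u$. Given $(3)$, decompose $\gamma'$ using Properties~\ref{it:orientedpaths}(3) (or its analogue for smooth paths obtained by cutting at $\gamma'^{-1}(Z'_u\cap\overline{Z'_o})$) into maximal subpaths that lie alternately in $Z'_u$ and in $\overline{Z'_o}$. The support endpoints of each subpath lie in $\supp([\gamma'])\subset f(Z)$ by hypothesis; a subpath in $Z'_u$ is connected and touches $f(Z_u)$ at such an endpoint, hence lies entirely in $f(Z_u)\subset f(Z)$ by the clopen property; a subpath in $\overline{Z'_o}$ lifts via the already-constructed smooth lift of a minimal representative, and by strictness the lift lies in $Z_o$, so the subpath's image is in $f(Z_o)\subset f(Z)$. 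This gives $\gamma'([0,1])\subset f(Z)$, hence $(2)$.

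Finally, preservation of admissibility: if $\gamma'$ is admissible (oriented and smooth) and $\gamma$ is the smooth lift produced above, then at each point $t\in\gamma^{-1}(Z_o-Z_{exc})$ the map $f$ is a local orientation-preserving homeomorphism, so $\gamma$ inherits orientation from $\gamma'$; thus $\gamma$ is oriented, hence admissible. The main obstacle in the plan is the exceptional-point argument in $(2)\Rightarrow(1)$: correctly combining the smoothness condition of Lemma~\ref{le:equivalencesmooth}, the $\iota$-stability of the sets $I_l$, and the hypothesis $\gamma'([0,1])\subset f(Z)$ to rule out approach directions landing in $I_0$ and so to single out a unique preimage $z_l$ on each component $K$; this is where the various structural constraints on $f$ as a morphism of curves come to bear simultaneously.
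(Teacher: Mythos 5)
Your argument for $(2)\Rightarrow(1)$ takes a genuinely different route from the paper's. The paper passes to non-singular covers: since $\gamma'$ is smooth it lifts uniquely to $\hat\gamma'$ in $\hat Z'$, and since $\hat f\colon\hat Z\to\hat Z'$ is an open embedding it suffices to know $\hat\gamma'([0,1])\subset\hat f(\hat Z)$, after which the lift in $\hat Z$ is pushed down to $Z$. You instead stay in $Z$, $Z'$ and use the local model of Lemma~\ref{le:localstructuremaps} directly, extending the tautological lift off $\gamma'^{-1}(Z'_{exc})$ across each component $K$ by means of the smoothness criterion (ii) of Lemma~\ref{le:equivalencesmooth}. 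Both strategies are reasonable, and the other implications in your write-up --- $(1)\Rightarrow(2)\Rightarrow(3)$, $(3)\Leftrightarrow(4)$ via a minimal representative and Lemma~\ref{le:mappaths}, the strict-case $(3)\Rightarrow(2)$ via the clopen property of $f(Z_u)$, and the admissibility preservation --- track the paper's proof closely and are fine.

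The one place your argument has a real gap is precisely the decisive point of $(2)\Rightarrow(1)$: the assertion that ``this orbit must sit in a unique $I_l$ with $l\geq1$.'' From $\gamma'([0,1])\subset f(Z)$ you do get $z'\in f(Z)$ (so $r\geq1$) and that the approach directions have $f$-preimages (so they lie in $I_0\cup I_1\cup\cdots\cup I_r$), but nothing in your write-up rules out the orbit being contained in $I_0$. By construction $I_0$ consists of directions $L$ whose preimage $f^{-1}(L)$ is non-empty but does not accumulate at any point of $f^{-1}(z')$; if the orbit lay there, the lift off $K$ would escape along an open end of $Z$ and fail to converge. You need a further argument to exclude this. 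I would also flag that the paper's own proof relies on the unexplained claim $\hat\gamma'([0,1])\subset\hat f(\hat Z)$, which meets exactly the same obstruction: for instance, taking $Z'=\mathrm{St}(4)$, $Z$ a disjoint union of $\BR$ mapping through $z'$ and covering one $\iota$-orbit of branches, and an oriented open interval whose image is a punctured arc through $z'$ covering the other $\iota$-orbit, a smooth path along that punctured arc ending at $z'$ satisfies $(2)$ but admits no continuous lift. So either the lemma needs a further hypothesis for $(2)\Rightarrow(1)$ to hold in this generality (note that where it is actually used, via Lemma~\ref{le:liftquotientpaths}, $f$ is strict), or there is a subtlety that both you and the paper leave implicit; at minimum you should surface the difficulty rather than assert the claim as obvious.
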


\begin{proof}
	The implications  $(1)\Rightarrow(2)$, $(1)\Rightarrow(3)\Rightarrow(4)$
	are clear. We can assume that $\gamma'$ is not constant, for otherwise
	the other implications are trivial.

	\smallskip
	Assume $(2)$.
		Let $\hat{f}:\hat{Z}\to\hat{Z}'$ be the map between non-singular
	covers corresponding to $f$.
	Since $\gamma'$ is smooth, it lifts uniquely to a path
	$\hat{\gamma}'$ on $\hat{Z}'$ and $\hat{\gamma}'([0,1])\subset
	\hat{f}(\hat{Z})$. Since $\hat{f}$ is an open embedding, it follows that
	$\hat{\gamma}'$ is the image of a path of $\hat{Z}$. Its image in
	$Z$ is a smooth path that lifts $\gamma'$, hence $(1)$ holds.

	\smallskip
	Assume $(4)$.
	Let $\gamma'_0$ be a minimal smooth path homotopic to $\gamma'$
	(cf Properties \ref{it:smoothadmissible}(4)). We
	have $\gamma'_0([0,1])=\supp([\gamma'])\subset f(Z)$, hence
	$\gamma'_0$ lifts to a smooth path in $Z$. So $(3)$ holds.

	\smallskip
	Assume $(3)$ and $f$ is strict. 
	Note that $\gamma'([0,1])\cap Z'_o=\supp([\gamma'])\cap Z'_o$ and 
	$\gamma'([0,1])\cap Z'_u$ is contained in the union of the connected
	components of $Z'_u$ that have a non-empty intersection with 
	$\supp([\gamma'])$ (Properties \ref{it:orientedpaths}(1)).
	Since $f(Z_u)$ is open and closed in $Z'_u$, it follows that
	$\gamma'([0,1])\subset f(Z)$, so $(2)$ holds.

	Assume $\gamma'$ is admissible and lifts to $Z$. Since $f$ is strict, it follows
	that the lift is oriented.
\end{proof}

Since quotient maps are strict, we have the following consequence of 
Lemma \ref{le:compositionliftpath}.

\begin{lemma}
	\label{le:liftquotientpaths}
	Assume $f$ is the quotient map of $Z$ by a finite relation.
	Every non-constant admissible path in $Z'$ lifts uniquely to a path in $Z$
	and that lift is admissible.
\end{lemma}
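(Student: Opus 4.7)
The plan is to deduce this from Lemma \ref{le:compositionliftpath} combined with the uniqueness statement of Lemma \ref{le:mappaths}, after first checking that the quotient map $f$ is strict.

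First I would verify strictness. By definition of a finite relation (\S\ref{se:quotientsspaces}), the points not alone in their equivalence class form a finite subset of $\tilde{Z}_o$. Consequently the restriction of the quotient map to $Z_u$ is a homeomorphism onto $Z'_u$, so $f(Z_u)=Z'_u$ is trivially closed in $Z'_u$, and the construction of the curve structure on the quotient gives $f(Z_o)=Z'_o$. Hence $f$ is strict.

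Next, for existence of the lift, let $\gamma'$ be a non-constant admissible path in $Z'$. Since $\gamma'$ is admissible it is in particular smooth and oriented. Because $f$ is surjective, $\gamma'([0,1])\subset f(Z)$, so condition (2) of Lemma \ref{le:compositionliftpath} holds, and the equivalence $(1)\Leftrightarrow(2)$ furnishes a smooth path $\gamma$ in $Z$ with $f(\gamma)=\gamma'$. The final assertion of Lemma \ref{le:compositionliftpath}, applicable since $f$ is strict and $\gamma'$ admissible, gives that $\gamma$ is admissible.

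Finally, for uniqueness, suppose $\gamma_1,\gamma_2$ are two paths in $Z$ with $f(\gamma_1)=f(\gamma_2)=\gamma'$. By the second bullet of Lemma \ref{le:mappaths}, either $\gamma_1=\gamma_2$ or both paths are constant at distinct points of $Z$ with the same image in $Z'$. The latter would force $\gamma'$ to be constant, contradicting our hypothesis, so $\gamma_1=\gamma_2$. There is no serious obstacle here, the only care needed is to confirm the strictness of the quotient map; once that is in place, the two previously established lemmas give existence, admissibility, and uniqueness essentially for free.
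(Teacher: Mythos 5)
Your proof is correct and follows essentially the same route as the paper: the paper's proof consists of the single remark that quotient maps are strict, so the conclusion follows from Lemma~\ref{le:compositionliftpath}. Your version fills in the two small pieces the paper leaves implicit — the check of strictness (which the paper already asserts when constructing the quotient curve structure) and the uniqueness of the lift via the second bullet of Lemma~\ref{le:mappaths} — so it is a fuller write-up of the same argument rather than a different one.
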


\begin{prop}
	\label{pr:functorfsharp}
	If $f$ is strict, then
$f^\#:\add(\CS(Z',1))\to \add(\CS(Z,1))$ is a functor.
\end{prop}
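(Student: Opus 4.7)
The content to verify is compatibility with composition; coproduct compatibility is built into the construction. Thus, given composable morphisms $\zeta_1', \zeta_2'$ in $\CS^\bullet(Z',1)$, we need $f^\#(\zeta_2' \circ \zeta_1') = f^\#(\zeta_2') \circ f^\#(\zeta_1')$. The cases where some $\zeta_i'$ is zero or an identity are immediate from the definition, so we reduce to the case where both $\zeta_i': z_i' \to z_{i+1}'$ are non-identity admissible classes. Call $\zeta_i'$ \emph{liftable} if $f^\#(\zeta_i') \neq 0$, equivalently if $\zeta_i'$ admits an admissible lift $\zeta_i$ to $Z$; such a lift is unique by Lemma \ref{le:uniqueliftpath}.

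Unpacking the projection-apply-include definition of $f^\#$, it suffices to show: $\zeta_2' \circ \zeta_1'$ is admissible and liftable in $Z$ if and only if both $\zeta_i'$ are liftable with $\zeta_1(1) = \zeta_2(0)$ and $\zeta_2 \circ \zeta_1$ admissible, and in that case $\zeta_2 \circ \zeta_1$ is the unique admissible lift of $\zeta_2' \circ \zeta_1'$. The backward direction is direct: when $\zeta_1$ and $\zeta_2$ exist and compose, the composition $\zeta_2 \circ \zeta_1$ is oriented and maps to the smooth class $\zeta_2' \circ \zeta_1'$, hence smooth by Lemma \ref{le:smoothf}, hence admissible; uniqueness of lifts then gives the last statement. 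For the forward direction with $\zeta_2' \circ \zeta_1'$ non-identity, take a minimal admissible representative $\tilde{\gamma}$ of the lift and use that the homotopy class $\zeta_2' \circ \zeta_1'$ factors through $z_2'$, together with Properties \ref{it:orientedpaths}, to locate a time $t$ with $f(\tilde{\gamma}(t)) = z_2'$ such that the two restrictions $\tilde{\gamma}|_{[0,t]}$, $\tilde{\gamma}|_{[t,1]}$ are admissible lifts of representatives of $\zeta_1'$ and $\zeta_2'$ with matching intermediate endpoint.

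Two degenerate situations remain. First, if $\zeta_2' \circ \zeta_1' = \id_{z_1'}$ then $\zeta_2' = \zeta_1^{\prime -1}$ in $\Pi(Z')$; admissibility of both forces their supports into $Z'_u$, since the inverse of an oriented path is never admissible unless its support is in $Z'_u$. Strictness of $f$ implies $f(Z_u)$ is a union of connected components of $Z'_u$, so the component of $z_1' \in f(Z_u)$ lies entirely in $f(Z_u)$, hence both $\zeta_i'$ lift; furthermore $z_1' \notin Z'_{exc}$, so by Lemma \ref{le:localstructuremaps} the fiber $f^{-1}(z_1')$ is a singleton, and the identity on the coproduct matches the composition of the mutually inverse lifts. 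Second, one must verify that if $\zeta_2' \circ \zeta_1'$ is admissible but not liftable, then some $\zeta_i'$ is not liftable, so both sides vanish; by Lemma \ref{le:compositionliftpath} this amounts to tracking the escape of $\supp(\zeta_2' \circ \zeta_1')$ from $f(Z)$ back to $\supp(\zeta_i')$, using once more that $f(Z_u)$ is a union of connected components of $Z'_u$ and that no oriented segment of $\zeta_i'$ can be cancelled in the composition (since the candidate cancelling segment would be the inverse of an oriented path, contradicting admissibility of $\zeta_{3-i}'$). This component-by-component analysis is the main obstacle in the proof; it is the only place where strictness is invoked beyond Lemma \ref{le:compositionliftpath}.
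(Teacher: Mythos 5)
Your approach is genuinely different from the paper's: you attempt a direct case analysis, whereas the paper reduces the general case to the non-singular one by conjugating with non-singular covers (using $q^\#f^\# = \hat{f}^\#q'^\#$, faithfulness of $q^\#$, and the compatibility remark preceding the proposition). The direct route is legitimate in principle, but as written there is a gap in the forward direction of your biconditional. You take a minimal admissible representative $\tilde\gamma$ of the lift of $\zeta_2'\circ\zeta_1'$ and want a time $t$ with $f(\tilde\gamma(t))=z_2'$ at which to split. No such $t$ need exist: the homotopy class $\zeta_2'\circ\zeta_1'$ can have \emph{cancellation}, so that its minimal representative does not visit $z_2'$ at all. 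Concretely, take $Z'=\BR$ with $Z'_o=(0,\tfrac12)$ oriented standardly, $\zeta_1'=[0.3\to 2]$, $\zeta_2'=[2\to 1]$, and $Z=(0,\infty)\hookrightarrow Z'$ (a strict open embedding): all three classes lift, but $\zeta_2'\circ\zeta_1'=[0.3\to 1]$ and its minimal representative stays in $[0.3,1]$, never reaching $2$. The parenthetical remark in your last paragraph — that ``no oriented segment of $\zeta_i'$ can be cancelled in the composition'' — is the source of the trouble: a cancelled segment whose support lies entirely in $Z'_u$ has both itself and its inverse oriented (trivially), so there is no contradiction with the admissibility of $\zeta_{3-i}'$. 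Cancellation is exactly permitted (and constrained to) the unoriented locus.

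The fix is to extend the component argument you already used for the identity case. When cancellation occurs, write minimal representatives $\gamma_1,\gamma_2$ with $(\gamma_2)_{|[0,t_2]}\circ(\gamma_1)_{|[t_1,1]}$ nullhomotopic. Since $\gamma_1|_{[t_1,1]}$ and its inverse $\gamma_2|_{[0,t_2]}$ are both oriented, the cancelled arc has support in $Z'_u$; its endpoint $\gamma_1(t_1)$ lies in $\supp(\zeta_2'\circ\zeta_1')\subset f(Z)$, hence in $f(Z)\cap Z'_u = f(Z_u)$ (using $f(Z_o)\subset Z'_o$), and since $f(Z_u)$ is open and closed in $Z'_u$ by strictness, the whole cancelled arc lies in $f(Z_u)$. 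Lifting it via the homeomorphism $Z_u\iso f(Z_u)$ and appending to the lift of $\zeta_2'\circ\zeta_1'$ produces the admissible lifts of $\zeta_1'$ and $\zeta_2'$ and shows they share the intermediate endpoint (that matching also follows for free from the $\iota$-equivariance of $C(f)$ and the injectivity of $f:T(Z)\to T(Z')$ in Lemma \ref{le:localstructuremaps}). Finally, your ``not liftable'' case can be dispatched directly by $\supp(\zeta_2'\circ\zeta_1')\subset\supp(\zeta_1')\cup\supp(\zeta_2')$ together with Lemma \ref{le:compositionliftpath}, without the extra appeal to non-cancellation.
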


\begin{proof}
	We need to check that $f^\#$ is compatible with composition. 
	This is clear if $Z$ and $Z'$ are non-singular. In general, consider
	two maps $\zeta'_1$ and $\zeta'_2$ in $\CS(Z',1)$ such that
	$f^\#(\zeta'_2\circ\zeta'_1)\neq 0$.
	Let $\hat{f}:\hat{Z}\to\hat{Z}'$ be the map corresponding to $f$
	between non-singular covers $q:\hat{Z}\to Z$ and $q':\hat{Z}'\to Z'$.

	We have 
	\begin{align*}
		q^\#f^\#(\zeta'_2\circ\zeta'_1)&=\hat{f}^\#q^{\prime
	\#}(\zeta'_2\circ\zeta'_1)=\hat{f}^\#\bigl(q^{\prime
	\#}(\zeta'_2)\circ q^{\prime\#}(\zeta'_1)\bigr)=
	\bigl(\hat{f}^\#q^{\prime
	\#}(\zeta'_2)\bigr)\circ \bigl(\hat{f}^\#q^{\prime\#}(\zeta'_1)\bigr)\\
		&= q^\#f^\#(\zeta'_2)\circ q^\#f^\#(\zeta'_1),
	\end{align*}
	hence $f^\#(\zeta'_1)\neq 0$ and $f^\#(\zeta'_2)\neq 0$ since
	$q^\#f^\#(\zeta'_2\circ\zeta'_1)\neq 0$ by Lemma \ref{le:liftquotientpaths}.
	It follows that $f^\#(\zeta'_2\circ\zeta'_1)=f^\#(\zeta'_2)\circ f^\#(\zeta'_1)$.
\end{proof}

The construction $Z\mapsto \add(\CS(Z,1))$ and
$f\mapsto f^\#$ defines a contravariant
functor from the category
of curves with strict morphisms to the category of $\BF_2$-linear categories.

\medskip

Lemma \ref{le:liftquotientpaths} and
Proposition \ref{pr:functorfsharp} have the following consequence.

\begin{prop}
	\label{pr:qsharpfaithful}
	Let $Z$ be a curve with an admissible relation $\sim$ and
let $q:Z\to Z/\!\!\sim$ be the quotient map.
	The functor $q^\#:\add(\CS(Z/\!\!\sim,1))\to\add(\CS(Z,1))$
is faithful.
\end{prop}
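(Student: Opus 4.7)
The plan is to reduce faithfulness to the statement that distinct basis elements of $\Hom$-spaces in $\CS(Z/\!\!\sim,1)$ go to linearly independent elements of $\Hom$-spaces in $\CS(Z,1)$, then verify this using the unique-lifting property of Lemma \ref{le:liftquotientpaths}. First, since $q$ is strict (quotient maps are strict by construction in \S\ref{se:quotientsspaces}), Proposition \ref{pr:functorfsharp} guarantees that $q^\#$ is indeed a functor on $\add(\CS(Z/\!\!\sim,1))$, so the question is well-posed.

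Next, I would unpack the definition of $q^\#$ on a general morphism. A morphism $f:\coprod_i x'_i\to\coprod_j y'_j$ in $\add(\CS(Z/\!\!\sim,1))$ is a matrix $(f_{ji})$ with each $f_{ji}\in\Hom_{\CS(Z/\!\!\sim,1)}(x'_i,y'_j)$ being an $\BF_2$-linear combination of the basis consisting of $\id_{x'_i}$ (only when $x'_i=y'_j$) together with the non-identity admissible homotopy classes $\zeta'\colon x'_i\to y'_j$. By Lemma \ref{le:liftquotientpaths}, each non-identity admissible $\zeta'$ admits a unique admissible lift $\tilde\zeta\colon z\to w$, where $z\in q^{-1}(x'_i)$ and $w\in q^{-1}(y'_j)$ are uniquely determined. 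Thus $q^\#(\zeta')$ is the matrix morphism $q^\#(x'_i)\to q^\#(y'_j)$ with a single nonzero entry, namely $\tilde\zeta$ in position $(w,z)$. Similarly, $q^\#(\id_{x'_i})$ is the diagonal matrix $\sum_{z\in q^{-1}(x'_i)}\id_z$. Assembling this, the $(w,z)$-entry of the matrix $q^\#(f)$ is
$$\bigl(q^\#(f)\bigr)_{w,z}=\delta_{z=w}\,a^{(ji)}_{\id}\,\id_z+\sum_{\tilde\zeta\colon z\to w}a^{(ji)}_{\zeta'}\,\tilde\zeta,$$
where the sum runs over non-identity admissible $\zeta'\colon x'_i\to y'_j$ whose lift starts at $z$ and ends at $w$, and $a^{(ji)}_\bullet$ denote the coefficients of $f_{ji}$.

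The key observation is now that these contributions are linearly independent in $\Hom_{\CS(Z,1)}(z,w)$. Indeed, by Lemma \ref{le:liftquotientpaths} the lift is unique, so two distinct non-identity admissible classes $\zeta'_1\neq\zeta'_2$ in $Z/\!\!\sim$ cannot share a lift (else projecting back through $q$ would give $\zeta'_1=\zeta'_2$). Moreover, the lift of a non-identity admissible class is a non-constant admissible path, hence a non-identity class in $\CS^\bullet(Z,1)$, so it is distinct from any $\id_z$. Since $\Hom_{\CS(Z,1)}(z,w)$ has for basis the non-zero elements of $\Hom_{\CS^\bullet(Z,1)}(z,w)$ (i.e.\ the identity when $z=w$, together with the admissible homotopy classes $z\to w$), the displayed expression above vanishes only when every coefficient $a^{(ji)}_\bullet$ that appears in it is zero.

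Finally, to conclude faithfulness, suppose $q^\#(f)=0$. Each coefficient of each $f_{ji}$ appears in exactly one $(w,z)$-entry of $q^\#(f)$ (the coefficient of a non-identity $\zeta'$ appears at the pair $(w,z)$ determined by its lift; the coefficient of $\id_{x'_i}$ appears at $(z,z)$ for every $z\in q^{-1}(x'_i)$). By the linear independence in the previous paragraph, all coefficients vanish, so $f=0$. There is no real obstacle here — the entire content of the proof is packaged in Lemma \ref{le:liftquotientpaths}, and the rest is bookkeeping with the basis of $\CS(Z,1)$.
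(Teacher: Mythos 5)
Your proof is correct and follows exactly the route the paper indicates (the paper states the proposition as an immediate consequence of Lemma \ref{le:liftquotientpaths} and Proposition \ref{pr:functorfsharp} without writing out details). You correctly identify that the entire content is in the unique-lifting statement of Lemma \ref{le:liftquotientpaths}, which guarantees that $q^\#$ sends distinct basis elements of $\Hom$-spaces to distinct basis elements sitting in disjoint or distinguishable positions of the matrix, and the rest is the bookkeeping you carry out.
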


Note that Proposition \ref{pr:qsharpfaithful} provides an identification of
$\CS(Z/\!\!\sim,1)$ with a (non-full) subcategory of $\add(\CS(Z,1))$.

\begin{example}
	We describe the image by the map $f^\#$ of two paths, the first
	of which is the constant path at
	the singular point of $Z_{exc}$
	(we draw the lifts in the non-singular cover).
$$\includegraphics[scale=0.80]{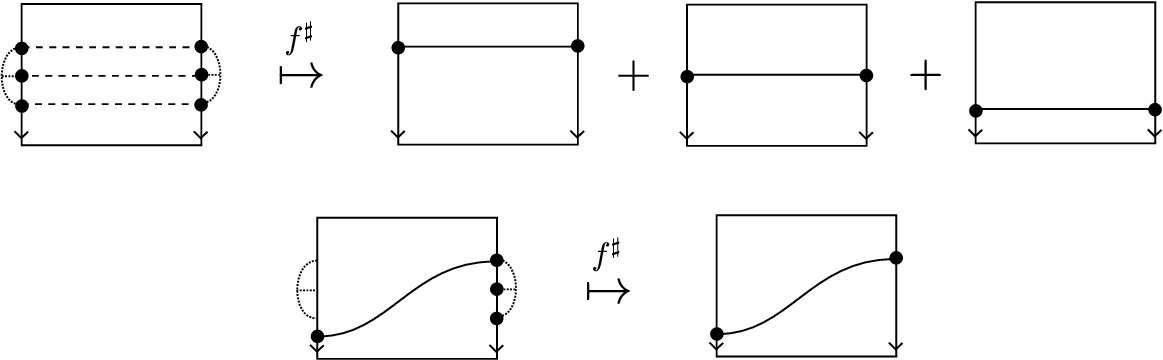}$$
\end{example}

\subsubsection{One strand bordered algebras}

Consider a chord diagram $(\CZ,\Ba)$ as in
\S\ref{se:arcdiagrams} with associated singular curve $Z$.
Define
$$\CA(Z,1)=\End_{\add(\CS(Z,1))}(\bigoplus_{
	\substack{z\in Z_{exc}}}z).$$
Proposition \ref{pr:qsharpfaithful} shows that the algebra $\CA(Z,1)$ is
the opposite of
Zarev's algebra $\CA_{Za}(\CZ,1)$ \cite[Definition 2.6]{Za} (this will be
explained for the more general algebras $\CA(Z)$ in \S\ref{se:HF}).

\medskip
$\bullet\ $Consider the chord diagram $(\BR,\{\{1,3\},\{2,4\}\})$.

The associated singular curve $Z$ is the quotient of oriented $\BR$ by
the relation whose non-trivial equivalence classes are
$1=\{1,3\}$ and $2=\{2,4\}$.

The full pointed subcategory of $\CS(Z,1)$ with object set $\{1,2\}$  is 
generated by $\alpha,\alpha':1\to 2$ and $\beta:2\to 1$ with relations
$\beta\alpha=\alpha'\beta=0$. This corresponds to the well-known
``torus algebra" in bordered Floer homology.

$$\includegraphics[scale=0.8]{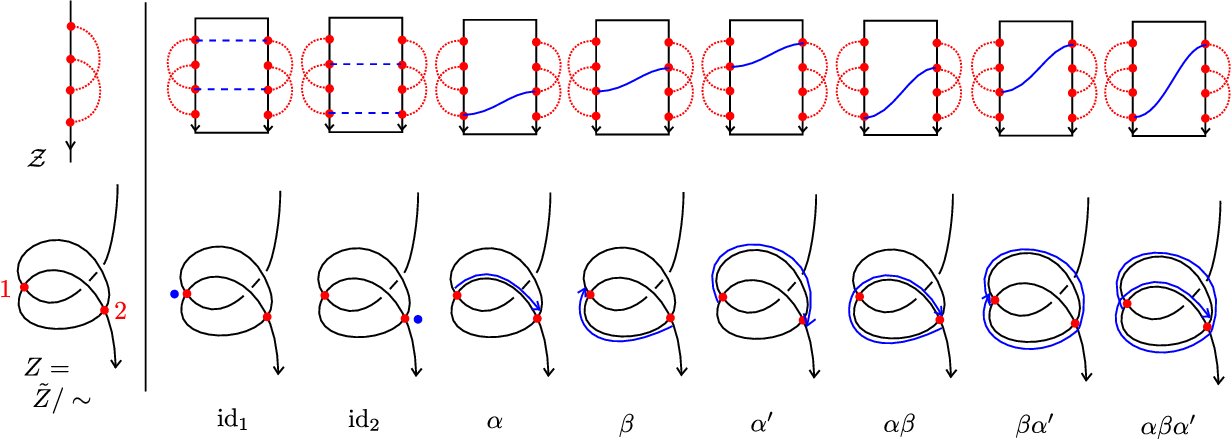}$$

\medskip
$\bullet\ $Consider the chord diagram $(S^1,\{\{\pm 1\},\{\pm i\}\})$.

The associated singular curve $Z$ is the quotient of oriented $S^1$ by
the relation whose non trivial equivalence classes are
$1=\{\pm 1\}$ and $2=\{\pm i\}$.

The full pointed subcategory of $\CS(Z,1)$ with object set $\{1,2\}$  is 
generated by $\alpha,\alpha':1\to 2$ and $\beta,\beta':2\to 1$ with relations
$\beta\alpha=\alpha'\beta=\alpha\beta'=\beta'\alpha'=0$.
A curved $A_{\infty}$-deformation of this subcategory appears in \cite{LiOzTh3}.

\smallskip
We have 
$$\End_{\CS(Z,1)}(1)=\{\id\}\sqcup\{(\beta'\alpha\beta\alpha')^n\}_{n\ge 1}
\sqcup\{(\beta'\alpha\beta\alpha')^n\beta'\alpha\}_{n\ge 0}
\sqcup\{\beta\alpha'(\beta'\alpha\beta\alpha')^n)\}_{n\ge 0}
\sqcup\{(\beta\alpha'\beta'\alpha)^n)\}_{n\ge 1}$$
$$\End_{\CS(Z,1)}(2)=\{\id\}\sqcup\{(\alpha'\beta'\alpha\beta)^n\}_{n\ge 1}
\sqcup\{(\alpha'\beta'\alpha\beta)^n\alpha'\beta'\}_{n\ge 0}
\sqcup\{\alpha\beta(\alpha'\beta'\alpha\beta)^n)\}_{n\ge 0}
\sqcup\{(\alpha\beta\alpha'\beta')^n)\}_{n\ge 1}$$
$$\Hom_{\CS(Z,1)}(1,2)=\{\alpha'(\beta'\alpha\beta\alpha')^n\}_{n\ge 0}
\sqcup\{\alpha\beta\alpha'(\beta'\alpha\beta\alpha')^n\}_{n\ge 0}
\sqcup\{\alpha'\beta'\alpha(\beta\alpha'\beta'\alpha)^n)\}_{n\ge 0}
\sqcup\{\alpha(\beta\alpha'\beta'\alpha)^n)\}_{n\ge 0}$$
$$\Hom_{\CS(Z,1)}(2,1)=\{(\beta'\alpha\beta\alpha')^n\beta'\}_{n\ge 0}
\sqcup\{(\beta'\alpha\beta\alpha')^n\beta'\alpha\beta'\}_{n\ge 0}
\sqcup\{(\beta\alpha'\beta'\alpha)^n)\beta\}_{n\ge 0}
\sqcup\{(\beta\alpha'\beta'\alpha)^n)\beta\alpha'\beta'\}_{n\ge 0}$$

$$\includegraphics[scale=0.9]{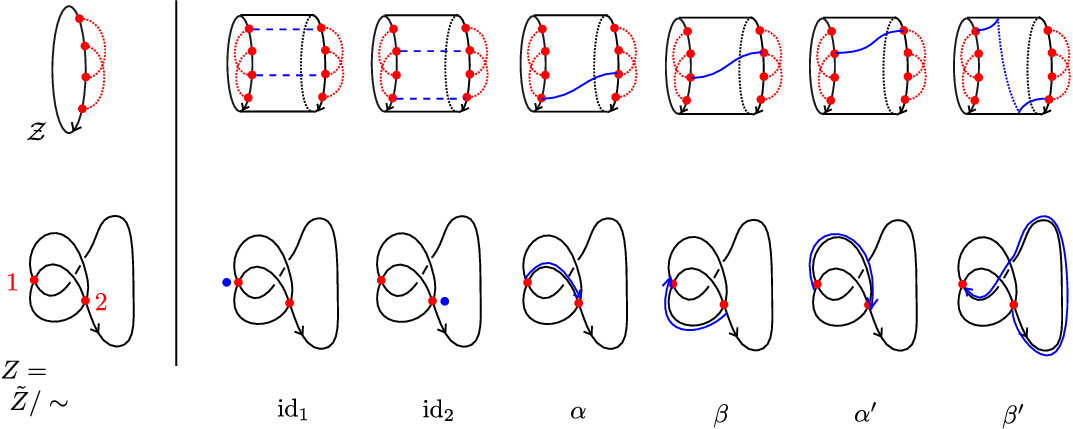}$$

\subsubsection{Intersection multiplicity}
Let $\gamma$ and $\gamma'$ be two paths in $Z$. We consider the number
of intersection points between the graphs of $\gamma$ and $\gamma'$
$$i(\gamma,\gamma')=
|\{t\in [0,1]\ |\ \gamma(t)=\gamma'(t)\}|\in\BZ_{\ge 0}\cup\{\infty\}\indexnot{iz}{i(\zeta,\zeta')}.$$
Note that $i(\gamma_1\circ\gamma_2,\gamma'_1\circ\gamma'_2)=i(\gamma_1,\gamma'_1)
+i(\gamma_2,\gamma'_2)-\delta_{\gamma_1(0)=\gamma'_1(0)}$.

\smallskip
Given $\zeta$ and $\zeta'$ two admissible homotopy classes of paths in $Z$, we put
$$i(\zeta,\zeta')=\min_{\gamma,\gamma'}i(\gamma,\gamma'),$$
where $\gamma$ (resp. $\gamma'$) runs over admissible paths in $[\zeta]$
(resp. in $[\zeta']$).
Note that $i(\zeta_1\zeta_2,\zeta'_1\zeta'_2)\le i(\zeta_1,\zeta'_1)+
i(\zeta_2,\zeta'_2)-\delta_{\zeta_1(0)=\zeta'_1(0)}$.

\medskip
The next lemma relates the intersection multiplicity with a constant path and
tangential multiplicities.

\begin{lemma}
	\label{le:m=i}
Let $\gamma_0$ be a minimal admissible path in $Z$ and let $z\in Z$. We have
	$$i([\gamma_0],\id_z)= \min_{\substack{\gamma \text{ admiss.}\\
	[\gamma]=[\gamma_0]}}i(\gamma,\id_z)=
		i(\gamma_0,\id_z) 
		=\frac{1}{2}\bigl(\sum_{c\in C(z)}(m_c^+([\gamma_0])+m_c^-([\gamma_0]))+
	\delta_{\gamma_0(0)=z}+\delta_{\gamma_0(1)=z}\bigr).$$
	If $z\in Z_o$, then we have
	$$i([\gamma_0],\id_z)=\frac{1}{2}\bigl(\sum_{c\in C(z)^+}(m_c([\gamma_0])-m_{\iota(c)}([\gamma_0]))+
	\delta_{\gamma_0(0)=z}+\delta_{\gamma_0(1)=z}\bigr).$$

\end{lemma}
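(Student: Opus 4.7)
The plan is to verify the third equality $i(\gamma_0,\id_z) = \frac{1}{2}(\sum_c(m_c^+ + m_c^-) + \delta_{\gamma_0(0)=z} + \delta_{\gamma_0(1)=z})$ by direct counting, then bootstrap to show $\gamma_0$ achieves the minimum of $i(\gamma,\id_z)$ over admissible $\gamma \in [\gamma_0]$, then see that allowing $\gamma'$ to vary cannot decrease the intersection, and finally derive the second formula from the orientation constraint.

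First, I would treat the constant path $\mathrm{const}_z$ as the canonical representative of $\id_z$, so $i(\gamma,\id_z)=|\gamma^{-1}(z)|$. Since $\gamma_0$ is minimal and admissible, $\gamma_0^{-1}(z)$ is a finite set; more precisely each connected component is a singleton (no interior), and no such component contains $0$ or $1$ in the closure of its interior. Lemma~\ref{le:multiplicityset} then applies and gives $\partial\gamma_0^{-1}(z)=\bigsqcup_{c\in C(z)}(I_c^+(\gamma_0)\cup I_c^-(\gamma_0))$, with $|I_c^\pm(\gamma_0)|=m_c^\pm([\gamma_0])$. Each interior point of $\gamma_0^{-1}(z)$ lies in a unique $I_c^+$ and a unique $I_{c'}^-$, hence contributes $2$ to $\sum_c(m_c^++m_c^-)$; each endpoint $t\in\{0,1\}$ at which $\gamma_0(t)=z$ lies in a unique $I_c^\pm$ and contributes $1$. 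Summing yields
\[
\sum_{c\in C(z)}(m_c^++m_c^-) \;=\; 2\,|\gamma_0^{-1}(z)\cap(0,1)| + \delta_{\gamma_0(0)=z} + \delta_{\gamma_0(1)=z},
\]
which rearranges into the desired formula for $|\gamma_0^{-1}(z)|$.

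Next I would establish $\min_\gamma i(\gamma,\id_z)=i(\gamma_0,\id_z)$. For an admissible $\gamma\in[\gamma_0]$, either some component of $\gamma^{-1}(z)$ is a non-degenerate interval, in which case $i(\gamma,\id_z)=\infty$ and there is nothing to check, or all components are singletons; in the latter case the same accounting as above applies to $\gamma$, and homotopy-invariance of $m_c^\pm$ forces $|\gamma^{-1}(z)|=|\gamma_0^{-1}(z)|$. For $i([\gamma_0],\id_z)=\min_\gamma i(\gamma,\id_z)$, the inequality $\leq$ is obtained by taking $\gamma'=\mathrm{const}_z$. For $\geq$, I would note that an admissible loop $\gamma'\in[\id_z]$ is either constant or, by Properties~\ref{it:orientedpaths}(2), contained in $Z_u$; in either case the graphs of $\gamma$ and $\gamma'$ in $[0,1]\times Z$ agree at $t\in\{0,1\}$ whenever $\gamma(t)=z$, and a local argument near each interior point $t$ with $\gamma(t)=z$ shows that $\gamma'$ (being trapped in $Z_u$) cannot eliminate the contribution coming from crossings of $\gamma$ with points of $Z_o$; combined with the counting in Step~1 this gives $i(\gamma,\gamma')\geq |\gamma_0^{-1}(z)|$.

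Finally, for $z\in Z_o$, I would use the fact that admissibility of $\gamma_0$ forces a directional constraint at $z$: if $c\in C(z)^+$ then by the definition of $C(z)^+$ as the set of ``outgoing'' directions for the orientation, we have $m_c^-([\gamma_0])=0$ and $m_{\iota(c)}^+([\gamma_0])=0$. Using $C(z)=C(z)^+\sqcup\iota(C(z)^+)$ and $m_c=m_c^+-m_c^-$,
\[
\sum_{c\in C(z)}(m_c^++m_c^-)=\sum_{c\in C(z)^+}m_c^+ + \sum_{c\in C(z)^+}m_{\iota(c)}^- = \sum_{c\in C(z)^+}(m_c-m_{\iota(c)}),
\]
yielding the second formula.

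The main obstacle will be making the ``$\geq$'' direction in Step~3 rigorous, i.e., showing that allowing $\gamma'$ to be a non-constant nullhomotopic admissible loop at $z$ (possible only when $z\in Z_u$) cannot reduce the intersection count below $|\gamma_0^{-1}(z)|$. This requires combining the constraint that such a $\gamma'$ lies in $Z_u$ with a local analysis of how the graphs of $\gamma$ and $\gamma'$ in $[0,1]\times Z$ can meet near each preimage $\gamma^{-1}(z)$; the counting in Step~1 should provide the correct topological lower bound.
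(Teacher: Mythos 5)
Your proof of the third equality (via Lemma~\ref{le:multiplicityset}) and your derivation of the second displayed formula from the orientation constraint at $z\in Z_o$ are both correct and match what the paper does (the paper invokes equation~\eqref{eq:mzeta+}, which encodes exactly your reduction $m_c^+ + m_{\iota(c)}^- = m_c - m_{\iota(c)}$). The issues are in the first two equalities.

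For $\min_\gamma i(\gamma,\id_z)=i(\gamma_0,\id_z)$, you claim that every admissible $\gamma\in[\gamma_0]$ with finite intersection satisfies $|\gamma^{-1}(z)|=|\gamma_0^{-1}(z)|$ ``by homotopy-invariance of $m_c^\pm$.'' This is not justified: the quantities $m_c^\pm$ are defined by $m_c^\pm([\gamma_0])=|I_c^\pm(\gamma_0)|$ \emph{only for minimal $\gamma_0$}. For a non-minimal admissible $\gamma$, Lemma~\ref{le:multiplicityset} controls only the \emph{difference} $|I_c^+(\gamma)|-|I_c^-(\gamma)|=m_c([\gamma])$, not the sum $|I_c^+(\gamma)|+|I_c^-(\gamma)|$; a $\gamma$ that wiggles back and forth through $z$ has strictly more intersection points. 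So your equality claim is false, and the inequality $i(\gamma,\id_z)\ge i(\gamma_0,\id_z)$ (which is what you actually need here) is not established by this accounting. In any case, this inequality is the trivial direction $\min\le i(\gamma_0,\id_z)$ combined with the hard lower bound below, so the paper doesn't need a separate argument for it.

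The serious gap is the first equality, i.e., the hard inequality $i([\gamma_0],\id_z)\ge i(\gamma_0,\id_z)$, which you flag yourself as ``the main obstacle'' and handle only with a vague ``local argument near each interior preimage, combined with the constraint that $\gamma'$ is trapped in $Z_u$.'' A local argument in $[0,1]\times Z$ will not suffice by itself, because the difficulty is global: when both strands are free to move, a naive local count does not control the number of graph intersections. The paper does something structurally different: it proves the bound first when $Z=S^1$ (unoriented) using Lemma~\ref{le:intersectionlength}, which is a genuinely combinatorial statement tied to the length function on the extended affine symmetric group; it then transfers the bound to any connected non-singular curve via an injective morphism into $S^1$ (which does not decrease $i([\cdot],[\cdot])$ by Lemma~\ref{le:functorialitym}), then to arbitrary non-singular curves componentwise, and finally to general $Z$ via the non-singular cover, using Lemma~\ref{le:intersections}(3). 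You would need either to reproduce this chain of reductions or to actually prove a topological lower bound for graph intersections of two curves on a cylinder — which is precisely what Lemma~\ref{le:intersectionlength} supplies. As written, your proposal leaves this step open.
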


\begin{proof}
Note that
$$i([\gamma_0],\id_z)\le \min_{\substack{\gamma \text{ admiss.}\\
	[\gamma]=[\gamma_0]}}i(\gamma,\id_z)\le i(\gamma_0,\id_z).$$
The third equality of the lemma follows from Lemma \ref{le:multiplicityset}.

\smallskip
	 When $Z=S^1$ unoriented, the lemma follows from Lemma \ref{le:intersectionlength}.

	\smallskip
	When $Z$ is a connected non-singular curve, there is
	an injective morphism of curves $f:Z\to S^1$. We have
	$i([\gamma_0],\id_z)\ge i(f([\gamma_0]),\id_{f(z)})=
	i(f(\gamma_0),\id_{f(z)})=i(\gamma_0,\id_z)$,
	hence the first two equalities of the lemma hold for $Z$.
	It follows that they hold for any non-singular curve.

	\smallskip
	Consider now a general $Z$ and let $q:\hat{Z}\to Z$ be the non-singular
	cover. Let $\hat{\gamma}_0$ be the lift of $\gamma_0$ to $\hat{Z}$. We have
	$$i(\gamma_0,\id_z)=\sum_{\hat{z}\in f^{-1}(z)}i(\hat{\gamma}_0,\id_{\hat{z}})=
	\sum_{\hat{z}\in f^{-1}(z)}([\hat{\gamma}_0],\id_{\hat{z}})\le
	i([\gamma_0],\id_z).$$
	We deduce that the first two equalities of the lemma hold.

	\smallskip
	The last equality of the lemma follows from (\ref{eq:mzeta+}).
\end{proof}

\smallskip
Let us now state some basic properties of intersection counts.

\begin{lemma}
	\label{le:intersections}
	Let $\zeta_1$ and $\zeta_2$ be two admissible homotopy classes of paths in $Z$.
	Assume $\zeta_1(t)\neq \zeta_2(t)$ for $t\in\{0,1\}$.

	\begin{enumerate}
				\item We have $i(\zeta_1,\zeta_2)<\infty$.
				\item There are minimal or identity admissible
	paths $\gamma_1$ in $\zeta_1$ and $\gamma_2$ in $\zeta_2$ such that
	$i(\zeta_1,\zeta_2)=i(\gamma_1,\gamma_2)$.
\item Given $f:Z'\to Z$ a morphism of curves such that
	$\zeta_1$ and $\zeta_2$ are images of admissible homotopy
			classes of paths in $Z'$,
	we have	$i(\zeta_1,\zeta_2)=\sum_{\zeta'_i\in f^{-1}(\zeta_i)}
	i(\zeta'_1,\zeta'_2)$.
	\end{enumerate}

\end{lemma}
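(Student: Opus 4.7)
The plan is to prove~(1) and~(2) together by reducing to admissible paths in a $1$-manifold via the non-singular cover, and then deduce~(3) from the uniqueness of admissible lifts established in Lemma~\ref{le:uniqueliftpath}.

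First I would fix minimal admissible representatives $\gamma_i$ of $\zeta_i$, which exist by Properties~\ref{it:smoothadmissible}(3). Lifting to the non-singular cover $\hat{Z}\to Z$ (Lemma~\ref{le:liftquotientpaths}) preserves admissibility and is injective outside a finite set, so the intersection count is controlled by intersections of the lifted paths in a $1$-manifold. In the $1$-manifold setting, two minimal oriented paths can only have infinite intersection set if they traverse a common sub-arc with matching orientations over overlapping time parameters. The endpoint hypothesis $\zeta_1(t)\neq \zeta_2(t)$ for $t\in\{0,1\}$ ensures that a monotone reparametrization of one of the paths, supported in the interior of $[0,1]$, breaks every shared-arc segment into finitely many transverse crossings. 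Pushing the reparametrized paths back down to $Z$ yields admissible representatives of $\zeta_1,\zeta_2$ with finite intersection count, establishing~(1). Then~(2) follows: since the intersection count is a non-negative integer the infimum is attained, and by the uniqueness statement of Lemma~\ref{le:minimalpaths} any minimizer may be replaced by a pair of minimal (or identity) admissible representatives without increasing the count.

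For~(3) I would argue as follows. Given any admissible representatives $\gamma_i$ of $\zeta_i$ in $Z$, each lift class $\zeta'_i\in f^{-1}(\zeta_i)$ determines a unique admissible path lift $\gamma'_{i,\zeta'_i}$ of $\gamma_i$ by Lemma~\ref{le:uniqueliftpath}. Every intersection $t$ with $\gamma_1(t)=\gamma_2(t)$ corresponds to exactly one pair of lifts meeting at $t$, namely the pair determined by the values of the lifts at time $t$, so there is a decomposition
$$
i(\gamma_1,\gamma_2)=\sum_{(\zeta'_1,\zeta'_2)\in f^{-1}(\zeta_1)\times f^{-1}(\zeta_2)} i(\gamma'_{1,\zeta'_1},\gamma'_{2,\zeta'_2}).
$$
Taking the infimum on the left over admissible $\gamma_i$ in $Z$ gives $i(\zeta_1,\zeta_2)\geq \sum i(\zeta'_1,\zeta'_2)$. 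For the reverse inequality I would use~(2) in $Z'$ to choose, for each pair $(\zeta'_1,\zeta'_2)$, minimal admissible representatives realizing $i(\zeta'_1,\zeta'_2)$, and then invoke the uniqueness of minimal admissible paths up to monotone reparametrization (Lemma~\ref{le:minimalpaths}) to coordinate these into a single pair of minimal admissible paths in $Z$ whose lifts simultaneously realize each sub-infimum.

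The main obstacle is this final coordination step. Different lifts of a fixed $\gamma_i$ are synchronized reparametrized copies of the same combinatorial data, so realizing all the optimal lift-intersection counts by a single choice downstairs requires a careful simultaneous choice of time-reparametrizations. I expect this to work because minimal admissible representatives are essentially unique up to a monotone self-homeomorphism of $[0,1]$ fixing the endpoints, which gives enough freedom to match up the optimal reparametrizations across all lift pairs without altering the underlying minimal structure.
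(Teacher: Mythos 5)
Your overall strategy — reduce to the non-singular cover, where one is in a $1$-manifold — is the same as the paper's, which bootstraps from $S^1$ (via Lemma~\ref{le:intersectionlength}) through connected non-singular curves, general non-singular curves, and finally arbitrary curves via the cover. But there are two genuine gaps.

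First, the claimed decomposition
$i(\gamma_1,\gamma_2)=\sum_{(\zeta'_1,\zeta'_2)} i(\gamma'_{1,\zeta'_1},\gamma'_{2,\zeta'_2})$
is false as an equality. An intersection $\gamma_1(t)=\gamma_2(t)=z$ with $z\in Z_{exc}$ need not correspond to an intersection of \emph{any} pair of lifts: the lifts may land at distinct points of $f^{-1}(z)$. (Take the non-singular cover and the two admissible paths crossing at an exceptional point via different branches.) So the correct statement is only the inequality $i(\gamma_1,\gamma_2)\geq\sum i(\gamma'_{1,\zeta'_1},\gamma'_{2,\zeta'_2})$, which still yields $i(\zeta_1,\zeta_2)\geq\sum i(\zeta'_1,\zeta'_2)$ — but you should not present the decomposition as an exact count. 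This same phenomenon undermines your reduction of (1) and (2): the intersection count downstairs is not simply ``controlled by'' the count upstairs in a way that transfers finiteness, because pushing down \emph{creates} intersections at exceptional points, so $i(\zeta_1,\zeta_2)$ downstairs is a priori larger than the count for the lifts.

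Second, and more importantly, the coordination step for $i(\zeta_1,\zeta_2)\leq\sum i(\zeta'_1,\zeta'_2)$ is the crux of the whole proof and you have only said why you ``expect it to work.'' The paper resolves it with a concrete argument: it fixes a minimizing pair $(\gamma_1,\gamma_2)$ for $i(\zeta_1,\zeta_2)$ in the cover and then further minimizes $i(f(\gamma_1),f(\gamma_2))$ over reparametrizations $\rho_1,\rho_2$ fixing the endpoints; at any hypothetical ``extra'' downstairs intersection $t_0$ (i.e., $\gamma_1(t_0)\neq\gamma_2(t_0)$ but $f(\gamma_1(t_0))=f(\gamma_2(t_0))$), the local structure of the cover near the exceptional point forces the two pushed-down arcs to lie in $U$ over some $[t_1,t_2]$, meeting only at $f(\gamma_1(t_0))$, so a reparametrization inside $[t_1,t_2]$ removes that intersection — contradicting minimality. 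Your observation that all lifts of a fixed $\gamma_i$ are synchronized is correct and relevant, but by itself it does not show that the minima for all lift pairs and for the downstairs pair are realized by a common reparametrization. Some version of the paper's local analysis near $Z_{exc}$ is required, and it is not supplied.
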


\begin{proof}
		$\bullet\ $  Assume $\zeta_1$ or $\zeta_2$ is an identity.
		In that case, (1) and (2) follow from Lemma \ref{le:m=i} and
		(3) follows from Lemmas \ref{le:functorialitym}
		and \ref{le:m=i}.

		\medskip
		From now on, we assume that neither $\zeta_1$ nor $\zeta_2$ is
		an identity.

		\medskip
		$\bullet\ $  Let $f:Z\to Z'$ be an injective morphism of curves and 
		assume $f(\zeta_1)$ and $f(\zeta_2)$ satisfy (1) and (2).
	We have $i(f(\zeta_1),f(\zeta_2))\le i(\zeta_1,\zeta_2)$.
	There are minimal admissible paths $\gamma'_i$ in $f(\zeta_i)$ for $i\in\{1,2\}$
	such that
	$i(f(\zeta_1),f(\zeta_2))=i(\gamma'_1,\gamma'_2)$. There are admissible paths
	$\gamma_i$ of $Z$ such that $\gamma'_i=f(\gamma_i)$ for $i\in\{1,2\}$.
	It follows that
	$i(\zeta_1,\zeta_2)\ge i(\gamma'_1,\gamma'_2)=i(\gamma_1,\gamma_2)$,
	hence $i(f(\zeta_1),f(\zeta_2))=i(\zeta_1,\zeta_2)$.
	We deduce also that (1) and (2) hold for $\zeta_1$ and $\zeta_2$.

	\smallskip
	$\bullet\ $ Assume $Z=S^1$ unoriented. The assertions (1) and (2) follow from
	Lemma \ref{le:intersectionlength}.

	\smallskip
	$\bullet\ $ Assume $Z$ is non-singular and connected. There is an
	injective map $f:Z\to S^1$. It follows that $Z$ satisfies (1) and (2).
	This shows that (1) and (2) hold for a general non-singular curve.

	\smallskip
	$\bullet\ $ Let $Z'$ be an arbitrary curve and let $f:Z\to Z'$
	be the non-singular cover of $Z'$.
	Assume $f(\zeta_1(t))\neq f(\zeta_2(t))$ for $t\in\{0,1\}$.
	Since all admissible paths in $Z'$ lift to $Z$, it follows
	that $i(\zeta_1,\zeta_2)\le i(f(\zeta_1),f(\zeta_2))$.

	Consider two minimal admissible
	paths $\gamma_1$ and $\gamma_2$ in $\zeta_1$ and $\zeta_2$
	such that $i(\gamma_1,\gamma_2)=i(\zeta_1,\zeta_2)$. We assume that given
	$\rho_1,\rho_2:[0,1]\iso [0,1]$ any two homeomorphisms fixing $0$ and $1$
	and such that $i(\gamma_1\circ\rho_1,\gamma_2\circ\rho_2)=i(\zeta_1,\zeta_2)$,
	we have $i(f(\gamma_1),f(\gamma_2))\le
	i(f(\gamma_1\circ \rho_1),f(\gamma_2\circ\rho_2))$.
	Let $t_0\in (0,1)$ such that $\gamma_1(t_0)\neq\gamma_2(t_0)$ but
	$f(\gamma_1(t_0))=f(\gamma_2(t_0))$.
	There is a small open neighbourhood $U$ of $z'=f(\gamma_1(t_0))$
	homeomorphic to $\St(n_{z'})$ and
	with $U\cap f(Z_f)=\{z'\}$ and there are $0\le t_1<t_0<t_2\le 1$
	such that 
	$f(\gamma_1)([t_1,t_2])\subset U$ and $f(\gamma_2)([t_1,t_2])\subset U$.
	The paths $(\gamma_1)_{|[t_1,t_2]}$ and $(\gamma_2)_{|[t_1,t_2]}$
	are contained in disjoint connected components of $f^{-1}(U)$, 
	hence $f(\gamma_1)([t_1,t_2])\cap f(\gamma_2)([t_1,t_2])=\{z'\}$. So, by
	reparametrizing
	$f(\gamma_1)$ and $f(\gamma_2)$ in the interval $[t_1,t_2]$, we can assume they
	do not have a common value in that interval. This contradicts the minimality
	of $i(f(\gamma_1),f(\gamma_2))$. It follows that 
	$$i(\zeta_1,\zeta_2)=i(\gamma_1,\gamma_2)=i(f(\gamma_1),f(\gamma_2))\ge
	i(f(\zeta_1),f(\zeta_2)),$$
	hence $i(\zeta_1,\zeta_2)=i(f(\zeta_1),f(\zeta_2)).$
	This shows that (1) and
	(2) hold for $f(\gamma_1)$ and $f(\gamma_2)$. We deduce that
	(1) and (2) hold in full generality. It follows also that (3) holds
	when $f$ is injective.

	\smallskip
	$\bullet\ $ Consider now a morphism of curves $f:Z\to Z'$.
	Consider the map $\hat{f}:\hat{Z}\to\hat{Z}'$ between non-singular
	covers corresponding to $f$. 
	Let $\hat{\zeta}_i$ be the lift of $\zeta_i$ to $\hat{Z}$.
	Since $\hat{f}$ is injective, it follows that
	$i(\hat{f}(\hat{\zeta}_1),\hat{f}(\hat{\zeta}_2))=
	i(\hat{\zeta}_1,\hat{\zeta}_2)$. The study above shows that
	$i(\hat{f}(\hat{\zeta}_1),\hat{f}(\hat{\zeta}_2))=
	i(f(\zeta_1),f(\zeta_2))$ and 
	$i(\hat{\zeta}_1,\hat{\zeta}_2)=i(\zeta_1,\zeta_2)$. It follows
	that $i(f(\zeta_1),f(\zeta_2))=i(\zeta_1,\zeta_2)$. This completes the proof of the
	lemma.
\end{proof}

We provide now an upper bound for intersections involving a composition of
paths.

\begin{lemma}
\label{le:boundintercompo}
	Consider $\zeta$, $\zeta_1$ and $\zeta_2$ three homotopy classes
	of admissible paths in $Z$. Assume $\zeta$ is not an identity,
	$\zeta_2(1)=\zeta_1(0)$, $\zeta(0)\neq\zeta_2(0)$ and
	$\zeta(1)\neq\zeta_1(1)$.
	We have
	$$i(\zeta,\zeta_1\circ\zeta_2)\le
	\mathrm{min}(m_{\zeta(0+)}^+(\zeta_2)+i(\zeta,\zeta_1),
	m_{\zeta(1-)}^-(\zeta_1)+i(\zeta,\zeta_2)).$$
\end{lemma}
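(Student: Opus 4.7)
The strategy is to establish the first bound $i(\zeta,\zeta_1\circ\zeta_2)\le m_{\zeta(0+)}^+(\zeta_2)+i(\zeta,\zeta_1)$; the second bound follows by reversing all paths, using $(\zeta_1\circ\zeta_2)^{-1}=\zeta_2^{-1}\circ\zeta_1^{-1}$, $\zeta^{-1}(0+)=\zeta(1-)$, $m_c^+(\eta^{-1})=m_c^-(\eta)$, and the symmetry $i(\eta,\eta')=i(\eta^{-1},\eta^{\prime-1})$. Identity cases are handled directly (the $\zeta_2=\id$ case is trivial; the $\zeta_1=\id$ case reduces via Lemma~\ref{le:m=i}), so assume both $\zeta_1$ and $\zeta_2$ are non-identity.

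By Lemma~\ref{le:intersections}(2), pick minimal admissible representatives $\gamma$ of $\zeta$ and $\gamma_1$ of $\zeta_1$ with $i(\gamma,\gamma_1)=i(\zeta,\zeta_1)$, and any minimal admissible $\gamma_2$ of $\zeta_2$. Write $z_0=\gamma(0)$ and $c_0=\zeta(0+)$, and pick $\varepsilon\in(0,1)$ small enough that $\gamma|_{[0,\varepsilon]}$ is an injective arc leaving $z_0$ along $c_0$. Fix $\tau\in(0,1)$ and parametrize $\gamma_1\circ\gamma_2$ so that $\gamma_2$ fills $[0,\tau]$ and $\gamma_1$ fills $[\tau,1]$; simultaneously reparametrize $\gamma$ as $\gamma\circ\phi^{-1}$ where $\phi$ is an increasing self-homeomorphism of $[0,1]$ with $\phi(\varepsilon)=\tau$, to be chosen. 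Intersections then split into a $\gamma_1$-phase contribution on $[\tau,1]$ and a $\gamma_2$-phase contribution on $[0,\tau]$. Applying Lemma~\ref{le:intersections}(2) to the minimizing pair $(\gamma,\gamma_1)$ on the interval $[\varepsilon,1]$ allows $\phi|_{[\varepsilon,1]}$ to be chosen so that the $\gamma_1$-phase contributes exactly $i(\gamma,\gamma_1)=i(\zeta,\zeta_1)$ intersections.

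The crux is the $\gamma_2$-phase estimate. Setting $s=\phi^{-1}(t)\in[0,\varepsilon]$ and $r=t/\tau\in[0,1]$, the $\gamma_2$-phase intersection count equals the number of points of $V=\{(s,r)\in[0,\varepsilon]\times[0,1]:\gamma(s)=\gamma_2(r)\}$ on the strictly increasing graph $r=\phi(s)/\tau$ from $(0,0)$ to $(\varepsilon,1)$. By minimality of $\gamma_2$ and injectivity of $\gamma|_{(0,\varepsilon]}$, $V$ is a finite union of strictly monotone arcs and isolated points. For $\varepsilon$ small enough, the components of $V$ that must meet every such increasing graph are in bijection with $I_{c_0}^+(\gamma_2)$: they are the strictly increasing arcs of $V$ based at points $(0,r^*)$ with $r^*\in I_{c_0}^+(\gamma_2)$, corresponding to pieces of $\gamma_2$ that start from a visit to $z_0$ exiting in direction $c_0$ and immediately retrace a portion of $\gamma|_{[0,\varepsilon]}$ in the same orientation. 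The remaining components of $V$ can be bypassed by local adjustments of $\phi$, yielding the upper bound $m_{c_0}^+(\zeta_2)$.

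The main obstacle is rigorously showing that the non-forced components of $V$ can be simultaneously avoided by a single increasing homeomorphism $\phi$ satisfying $\phi(\varepsilon)=\tau$. After lifting everything to the non-singular cover via Lemma~\ref{le:liftquotientpaths}, this becomes a planar combinatorial statement about monotone arcs in a square avoiding a strictly increasing graph joining the lower-left and upper-right corners, proved by induction on the number of non-forced components, iteratively pushing $\phi$ past each one while preserving the endpoint conditions, the mid-value $\phi(\varepsilon)=\tau$, and the $\gamma_1$-phase intersection count.
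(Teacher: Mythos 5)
Your strategy is essentially the same as the paper's — split off a tiny initial arc of $\zeta$ and control the contribution of each "phase" — but the paper carries this out much more cleanly by routing through the subadditivity inequality $i(\zeta'\circ\zeta'',\zeta_1\circ\zeta_2)\le i(\zeta',\zeta_1)+i(\zeta'',\zeta_2)$ (stated as a remark just above the lemma and following immediately from the definitions). The paper then takes $\zeta''=[\gamma_{|[0,t]}]$ with $t$ small, uses $i(\zeta'',\zeta_2)=m_{\zeta(0+)}^+(\zeta_2)$, and uses $i(\zeta',\zeta_1)\le i(\zeta,\zeta_1)$; the second inequality of the statement follows by passing to $Z^{\opp}$. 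Your argument re-derives this subadditivity by hand at the level of parametrized paths and graphs, which is where the trouble creeps in.

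Two substantive points. First, your claim that ``$\phi|_{[\varepsilon,1]}$ [can] be chosen so that the $\gamma_1$-phase contributes exactly $i(\gamma,\gamma_1)=i(\zeta,\zeta_1)$'' is imprecise: the $\gamma_1$-phase is an intersection count between $\gamma|_{[\varepsilon,1]}$ and $\gamma_1$, and $[\gamma|_{[\varepsilon,1]}]\neq\zeta$, so the minimum over reparametrizations is $i([\gamma_{|[\varepsilon,1]}],\zeta_1)$, which is $\le i(\zeta,\zeta_1)$ but need not equal it; Lemma~\ref{le:intersections}(2) as stated does not govern this restricted configuration. The inequality is what is actually needed and what the paper uses (also without detailed proof). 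Second, the heart of your argument — that among the components of $V$, those forced to meet every increasing graph are exactly the arcs based at $(0,r^*)$ with $r^*\in I_{c_0}^+(\gamma_2)$, and that all non-forced components can be simultaneously avoided by a single increasing $\phi$ — is asserted and then deferred to an induction you only sketch. This is precisely the content of the paper's one-line claim $i([\gamma_{|[0,t]}],\zeta_2)=m_{\zeta(0+)}^+(\zeta_2)$ for $t$ small, so you haven't avoided any difficulty, only redistributed it into a more delicate combinatorial lemma. Recognizing that the remark on subadditivity of $i$ already packages the decomposition argument would have let you skip the entire analysis of $V$ and the reparametrization $\phi$.
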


\begin{proof}
	Let $\zeta'$ and $\zeta''$ be homotopy classes of admissible 
	paths such that $\zeta=\zeta'\circ\zeta''$. We have
	$$i(\zeta,\zeta_1\circ\zeta_2)\le i(\zeta',\zeta_1)+i(
	\zeta'',\zeta_2).$$
	Let $\gamma$ be a minimal path in $\zeta$ and let $t\in (0,1)$.
	We have $m_{\zeta(0)^+}(\zeta_2)=i(\gamma_{|[0,t]},
	\zeta_2)$ for $t$ small enough. Since
	$i([\gamma_{[t,1]},\zeta_1)\le i(\zeta,\zeta_1)$, it follows that
	$$i(\zeta,\zeta_1\circ\zeta_2)\le i(\zeta,\zeta_1)+
	m_{\zeta(0)^+}(\zeta_2).$$

	The second inequality follows from the first one by replacing
	$Z$ by $Z^{\mathrm{opp}}$.
\end{proof}

\medskip
Recall that we denote by $\Pi(Z)$ the fundamental groupoid of $Z$.
Consider $\zeta_1,\zeta_2$ two admissible homotopy classes of paths in $Z$
with $\zeta_1(t)\neq\zeta_2(t)$ for $t\in\{0,1\}$.

Let $I(\zeta_1,\zeta_2)\indexnot{Iz}{I(\zeta_1,\zeta_2)}$ be the set of  non-identity
classes $\zeta\in\Hom_{\Pi(Z)}(\zeta_1(0),\zeta_2(0))$ such that
\begin{itemize}
	\item[(i)]
$\zeta$, $\zeta_2\circ\zeta$ and $\zeta\circ\zeta_1^{-1}$ are smooth
\item[(ii)] $\zeta$ and $\bar{\zeta}:=\zeta_2\circ\zeta\circ\zeta_1^{-1}$\indexnot{zeta}{\bar{\zeta}} have opposite orientations (cf Definition \ref{de:opposite}).
\end{itemize}

Note that there are bijections
$$\mathrm{inv}:I(\zeta_1,\zeta_2)\iso 
I(\zeta_2,\zeta_1),\ \zeta\mapsto\zeta^{-1} \text{ and }
I(\zeta_1,\zeta_2)\iso I(\zeta_1^{-1},\zeta_2^{-1}),\
\zeta\mapsto \bar{\zeta}\indexnot{inv}{\mathrm{inv}}.$$

If $Z$ is non-singular, then the condition (i) in the definition of $I(\zeta_1,\zeta_2)$
is automatically satisfied.

\medskip
Let  $f:Z\to Z'$ be a morphism of curves. 
If $f(\zeta_1(t))\neq f(\zeta_2(t))$ for $t\in\{0,1\}$, then
the map $f$ induces an injection $I(\zeta_1,\zeta_2)\hookrightarrow
	I(f(\zeta_1),f(\zeta_2))$ with image
	$f\bigl(\Hom_{\Pi(Z)}(\zeta_1(0),\zeta_2(0))\bigr)\cap I(f(\zeta_1),f(\zeta_2))$.

	\smallskip
The next lemma is immediate.

\begin{lemma}
	\label{le:coverI}
	Let $q:\hat{Z}\to Z$ be the non-singular cover of $Z$.
	The map $q$ induces a bijection
	$$\coprod_{\hat{\zeta}_i\in q^{-1}(\zeta_i)}I(\hat{\zeta}_1,\hat{\zeta_2})
	\iso I(\zeta_1,\zeta_2).$$
	\end{lemma}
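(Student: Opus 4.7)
The plan is to construct an explicit inverse to the map induced by $q$, relying on the uniqueness of admissible lifts provided by Lemma \ref{le:liftquotientpaths}. Given $\zeta \in I(\zeta_1, \zeta_2)$, the class $\zeta$ is non-identity and smooth, so Lemma \ref{le:liftquotientpaths} provides a unique admissible lift $\hat{\zeta}$ in $\hat{Z}$. Set $\hat{x}_0 = \hat{\zeta}(0)$ and $\hat{x}_1 = \hat{\zeta}(1)$; these lie over $\zeta_1(0)$ and $\zeta_2(0)$ respectively. I would then take $\hat{\zeta}_1$ and $\hat{\zeta}_2$ to be the unique elements of $q^{-1}(\zeta_1)$ and $q^{-1}(\zeta_2)$ starting at $\hat{x}_0$ and $\hat{x}_1$ respectively---when $\zeta_i$ is non-identity this is the admissible lift furnished by Lemma \ref{le:liftquotientpaths}, and when $\zeta_i$ is an identity at some $z \in Z$ we take the constant class at the relevant preimage of $z$.

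The main verification is that $\hat{\zeta}$ lies in $I(\hat{\zeta}_1, \hat{\zeta}_2)$ for this choice. Condition (i) lifts as follows: smoothness of $\zeta_2 \circ \zeta$ together with the uniqueness of smooth lifts (Lemma \ref{le:smoothf}, Properties \ref{it:smoothadmissible}) forces the lift of $\zeta_2 \circ \zeta$ to be the concatenation of some admissible lift of $\zeta_2$ with $\hat{\zeta}$, and hence forces that lift of $\zeta_2$ to begin exactly at $\hat{x}_1$, making it equal to $\hat{\zeta}_2$. The argument is symmetric for $\hat{\zeta}_1$ using smoothness of $\zeta \circ \zeta_1^{-1}$. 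Condition (ii), the opposite-orientation requirement of Definition \ref{de:opposite}, transports freely between $Z$ and $\hat{Z}$ since $q$ sends minimal smooth paths to minimal smooth paths by Lemma \ref{le:mappaths}, and the orientation relation is defined in terms of such paths. Conversely, for any $\hat{\zeta} \in I(\hat{\zeta}_1, \hat{\zeta}_2)$, the class $q(\hat{\zeta})$ satisfies (i) and (ii) by direct application of Lemma \ref{le:smoothf}, giving the forward map already noted before the lemma statement.

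The two constructions are mutually inverse because non-identity smooth paths lift uniquely (Lemma \ref{le:liftquotientpaths}), so the endpoints of $\hat{\zeta}$ determine in which piece of the coproduct the element sits, ruling out any duplication between different pairs $(\hat{\zeta}_1, \hat{\zeta}_2)$. The main (and essentially only) delicate point is the case where one of $\zeta_1$ or $\zeta_2$ is a constant class at a point $z \in Z_{exc}$: there $q^{-1}(\zeta_i)$ has several elements, and one must verify that the smoothness conditions in (i) actually single out the preimage $\hat{\zeta}(0)$ or $\hat{\zeta}(1)$ rather than any other preimage of $z$. This reduces to the local description of smooth paths through a singular point given by condition (ii) of Lemma \ref{le:equivalencesmooth}: a smooth path meeting $z$ must approach it along branches lying in a single $\iota$-orbit, and the corresponding preimage in $\hat{Z}$ is thereby uniquely determined by the adjacent non-identity lift $\hat{\zeta}$.
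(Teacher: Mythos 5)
The paper provides no proof of Lemma \ref{le:coverI}, stating only that ``the next lemma is immediate,'' so there is no official argument to compare against; your strategy of constructing an explicit inverse via unique lifting is the natural one, and the outline is correct. However, two points deserve attention. First, a citation issue: Lemma \ref{le:liftquotientpaths} concerns \emph{admissible} (smooth and oriented) paths, whereas an element $\zeta \in I(\zeta_1,\zeta_2)$ need only be \emph{smooth} and may fail to be oriented (see the paper's own figure for $I(\zeta_1,\zeta_2)$). The correct references for existence and uniqueness of the lift $\hat{\zeta}$ are Lemma \ref{le:equivalencesmooth}(i), which is the definition of smoothness, and Lemma \ref{le:mappaths} (third bullet), which gives uniqueness of lifts of non-identity classes.

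Second, and more substantively, the assertion that ``smoothness of $\zeta_2\circ\zeta$ \dots forces the lift of $\zeta_2\circ\zeta$ to be the concatenation of some \dots lift of $\zeta_2$ with $\hat{\zeta}$'' is where the real content lies, and invoking ``uniqueness of smooth lifts'' alone does not establish it. One must show that, for minimal paths $\gamma$, $\gamma_2$ representing $\zeta$, $\zeta_2$, the concatenated \emph{path} $\gamma_2\circ\gamma$ is itself smooth; only then does its unique lift restrict to $\hat{\gamma}$ and $\hat{\gamma}_2$, forcing $\hat{\zeta}(1)=\hat{\zeta}_2(0)$. The verification is local at the joint $w=\zeta(1)$: writing $d,d'$ for the branches at $w$ along which $\gamma$ arrives and $\gamma_2$ departs, if $d'\notin\{d,\iota(d)\}$ there is no cancellation, so $\gamma_2\circ\gamma$ is already minimal but violates condition (ii) of Lemma \ref{le:equivalencesmooth} at $w$, contradicting smoothness of $\zeta_2\circ\zeta$ via Properties \ref{it:smoothadmissible}(4); hence $d'\in\{d,\iota(d)\}$ and $\gamma_2\circ\gamma$ is a smooth path. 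The same argument applied to $\zeta\circ\zeta_1^{-1}$ at $\zeta_1(0)$ gives $\hat{\zeta}_1(0)=\hat{\zeta}(0)$ when $\zeta_1$ is non-identity. Finally, your last paragraph mislabels the constant case as the delicate one: when $\zeta_1=\id_z$, the choice of $\hat{\zeta}_1$ is forced by the domain requirement $\hat{\zeta}_1(0)=\hat{\zeta}(0)$ with nothing to verify; the delicate case is $\zeta_1$ non-identity with $\zeta_1(0)\in Z_{exc}$, which is exactly where the concatenation argument above is needed.
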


\begin{lemma}
	\label{le:supportI}
If $\zeta\in I(\zeta_1,\zeta_2)$, then
$\supp(\zeta)\subset \supp(\zeta_1)\cup\supp(\zeta_2)$.
\end{lemma}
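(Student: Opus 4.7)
My plan is to reduce the statement, via two successive liftings, to an elementary inclusion of intervals in $\BR$.

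First, by Lemma \ref{le:coverI}, after choosing suitable lifts $\hat\zeta_1,\hat\zeta_2$ of $\zeta_1,\zeta_2$ to the non-singular cover $q\colon\hat Z\to Z$, the element $\zeta$ lifts to some $\hat\zeta\in I(\hat\zeta_1,\hat\zeta_2)$. Since $q$ is a morphism of $1$-dimensional spaces, Lemma \ref{le:mappaths} gives $\supp(\zeta)=q(\supp(\hat\zeta))$ and similarly for $\zeta_1,\zeta_2$, so it suffices to treat the case where $Z$ is non-singular. In that case Properties \ref{it:smoothadmissible}(2) forces $\supp(\hat\zeta)$, $\supp(\hat\zeta_1)$, $\supp(\hat\zeta_2)$, $\supp(\hat{\bar\zeta})$ to all lie in a single component $\Omega$ of $\hat Z$, so we may restrict to $\Omega$, which is $\BR$, $S^1$, or an interval.

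Next I would pass to the universal cover $p\colon\tilde\Omega\to\Omega$, a subinterval of $\BR$. After fixing a lift $\tilde v_0$ of $\zeta_1(0)$, the three non-identity smooth paths $\hat\zeta_1$, $\hat\zeta$, $\hat\zeta_2$ lift uniquely to $\tilde\zeta_1,\tilde\zeta,\tilde\zeta_2$ with $\tilde\zeta_1(0)=\tilde\zeta(0)=\tilde v_0$ and $\tilde\zeta_2(0)=\tilde\zeta(1)$; the composition $\tilde\zeta_2\circ\tilde\zeta\circ\tilde\zeta_1^{-1}$ is then a lift $\tilde{\bar\zeta}$ of $\hat{\bar\zeta}$. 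Since $p$ is a local homeomorphism, $p(\supp(\tilde\eta))=\supp(\eta)$ for each such class. The opposite-orientation condition lifts as well: a minimal smooth path $\delta$ in $\Omega$ with $\hat\zeta$ and $\hat{\bar\zeta}^{-1}$ as classes of restrictions lifts to a monotone path $\tilde\delta$ in $\tilde\Omega$, and $\tilde\zeta$, $\tilde{\bar\zeta}^{-1}$ are restrictions of $\tilde\delta$ with forward orientation.

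Orient $\tilde\Omega\subseteq\BR$ so that $\tilde\delta$ is increasing, and set $A_1=\tilde\zeta(0)=\tilde\zeta_1(0)$, $A_2=\tilde\zeta(1)=\tilde\zeta_2(0)$, $B_1=\tilde{\bar\zeta}(1)=\tilde\zeta_2(1)$, $B_2=\tilde{\bar\zeta}(0)=\tilde\zeta_1(1)$. Then $A_1\le A_2$ and $B_1\le B_2$; moreover $\supp(\tilde\zeta)=[A_1,A_2]$, while $\supp(\tilde\zeta_1)$ is the interval between $A_1$ and $B_2$ and $\supp(\tilde\zeta_2)$ is the interval between $A_2$ and $B_1$. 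The inclusion reduces to the elementary claim
\begin{equation*}
[A_1,A_2]\subseteq [\min(A_1,B_2),\max(A_1,B_2)]\cup[\min(A_2,B_1),\max(A_2,B_1)],
\end{equation*}
which holds because for $x\in[A_1,A_2]$ either $x\le B_2$ (so $A_1\le x\le B_2$, placing $x$ in the first interval) or $x>B_2\ge B_1$ (so $B_1\le x\le A_2$, placing $x$ in the second interval). Projecting back under $q\circ p$ then yields $\supp(\zeta)\subseteq\supp(\zeta_1)\cup\supp(\zeta_2)$. The main technical point is the bookkeeping of the compatibility of the successive lifts: checking that the compositional relation $\bar\zeta=\zeta_2\circ\zeta\circ\zeta_1^{-1}$ and the opposite-orientation witness $\delta$ both descend consistently through $q$ and $p$, so that the endpoint identifications $A_i,B_i$ assemble correctly in $\tilde\Omega$.
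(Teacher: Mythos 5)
Your proof is correct and uses essentially the same chain of reductions as the paper (non-singular cover via Lemma \ref{le:coverI}, then universal cover, then the elementary interval computation in $\BR$), just traversed in the opposite order from that in the paper. The lift-compatibility worry you raise at the end is benign, since the orientation (increasing vs.\ decreasing) of a lift of a path to the universal cover of $S^1$ does not depend on the choice of lift, and that is the only property of $\tilde{\bar\zeta}^{-1}$ your interval inclusion actually uses.
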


\begin{proof}
	Consider three non-identity homotopy classes of paths $\zeta$, $\zeta_1$ and
	$\zeta_2$ in $\BR$ with $\zeta(0)=\zeta_1(0)\neq\zeta(1)=\zeta_2(0)$.
	If $\zeta$ and $\zeta_2\circ\zeta\circ\zeta_1^{-1}$ have opposite
	orientations, then $\supp(\zeta)\subset \supp(\zeta_1)\cup\supp(\zeta_2)$.
	We deduce that the lemma holds for $Z=S^1$ by using the universal cover of $Z$.
	As a consequence, the lemma holds when $Z$ is connected and smooth by embedding
	it in $S^1$, hence it holds for $Z$ smooth. Lemma \ref{le:coverI}
	shows that the lemma holds for any $Z$, since it holds for
	the non-singular cover of $Z$.
\end{proof}

\begin{example}
	In the two examples below, we describe 
	the set $I(\zeta_1,\zeta_2)$. In the second example, $\zeta_2$ is the
	identity at the singular point.
$$\includegraphics[scale=1]{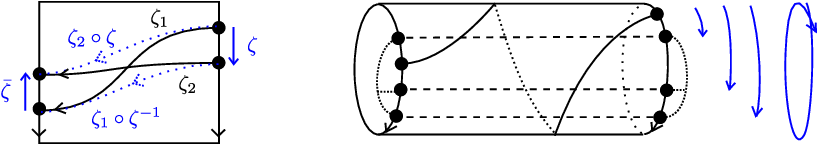}$$
\end{example}

\subsection{Strands}
\label{se:strands}

\subsubsection{Braids}
\label{se:braids}
Let $Z$ be a curve.
Let $I$ and $J$ be two finite subsets of $Z$. 

\begin{defi}
	A {\em parametrized braid}\index[ter]{parametrized braid} $I\to J$ is a
family $\vartheta=(\vartheta_s)_{s\in I}$ where $\vartheta_s$ is an
admissible path in $Z$ with $\vartheta_s(0)=s$ and such that
	$s\mapsto \vartheta_s(1)$ defines a bijection $\bij{\vartheta}:I\iso J$
	\indexnot{ch}{\bij{\theta}}.
A {\em braid}\index[ter]{braid} $I\to J$ is a homotopy class of parametrized braids, \ie, a family
of admissible homotopy classes of paths.
\end{defi}

\begin{defi}
	We define the pre-strand category $\CP^\bullet(Z)=S(\CS^\bullet(Z,1))$\indexnot{P}{\CP^\bullet(Z)} (cf \S\ref{se:symmetricpowers}).
\end{defi}

The objects of this pointed
category are the finite subsets of $Z$ and
$\Hom_{\CP^\bullet(Z)}(I,J)$ is the set of braids $I\to J$, together with a 
$0$-element.  Given $\theta:I\to J$ and $\theta':J\to K$ two braids,
we have $\theta'\circ\theta=(\theta'_{\theta_s(1)}\circ\theta_s)_{s\in I}$ if
$\theta'_{\theta_s(1)}\circ\theta_s$ is admissible for all $s\in I$, and
we have $\theta'\circ\theta=0$ otherwise. If $\theta'\circ\theta\neq 0$, we have
$\bij{\theta'\circ\theta}=\bij{\theta'}\circ\bij{\theta}$.

\smallskip
We put $\CP(Z)=\BF_2[\CP^\bullet(Z)]$.\indexnot{P}{\CP(Z)}

\smallskip
Note that there is a decomposition
$\CP^\bullet(Z)=\bigvee_{n\ge 0}\CP^\bullet(Z,n)$, where
$\CP^\bullet(Z,n)$ is the full subcategory of $\CP^\bullet(Z)$ with objects subsets with $n$ elements. We have $\CP^\bullet(Z,1)=\CS^\bullet(Z,1)$.

\smallskip
Given $M$ a subset of $Z$, we denote by $\CP^\bullet_M(Z)$\indexnot{P}{\CP^\bullet_M(Z)} the full subcategory of
$\CP^\bullet(Z)$ with objects the finite subsets of $M$.

\smallskip
Given $\theta:I\to J$ a braid and $I'$ a subset of $I$, we denote by 
$\theta_{|I'}$ the braid $(\theta_s)_{s\in I'}$.


\medskip
Let $f:Z\to Z'$ be a morphism of curves. We denote by
$\CP^\bullet_f(Z)$\indexnot{P}{\CP^\bullet_f(Z)} the full subcategory of $\CP^\bullet(Z)$ with objects
those finite subsets $I$ of $Z$ such that $|f(I)|=|I|$.


\smallskip
The next proposition follows immediately from Lemma \ref{le:uniqueliftpath}
and \S\ref{se:symmetricpowers}.

\begin{prop}
	\label{pr:finjective}
	The functor $f:\CS^\bullet(Z,1)\to\CS^\bullet(Z',1)$ defines a faithful pointed functor
	$$f:\CP^\bullet_f(Z)\to\CP^\bullet(Z'),\ I\mapsto f(I),\ \theta\mapsto (f(\theta_{s}))_{f(s)}.$$
In particular if $f:Z\to Z'$ is injective then we have
a faithful pointed functor $f:\CP^\bullet(Z)\to\CP^\bullet(Z')$.
\end{prop}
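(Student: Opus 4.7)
The plan is to verify the three content points of the statement---well-definedness on objects, functoriality on morphisms, and faithfulness---by leveraging the already-established one-strand analog (Lemma \ref{le:uniqueliftpath}) together with the general behavior of the symmetric-power construction $S(-)$ recalled in \S\ref{se:symmetricpowers}.

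First, I would check the formula defines a map on objects and morphisms. For $I\in\CP^\bullet_f(Z)$ we have $|f(I)|=|I|$ by definition, so $f(I)$ is a valid object of $\CP^\bullet(Z')=S(\CS^\bullet(Z',1))$, and the same for the target $J\in\CP^\bullet_f(Z)$. For a non-zero braid $\theta\colon I\to J$, each $\theta_s$ is an admissible homotopy class, so $f(\theta_s)$ is admissible by Lemma \ref{le:smoothf}, hence a non-zero morphism of $\CS^\bullet(Z',1)$. The injectivity of $f$ on $I$ makes the indexing $(f(\theta_s))_{f(s)}$ unambiguous, and the composition $f(s)\mapsto f(\theta_s(1))$ defines a bijection $f(I)\iso f(J)$ since $s\mapsto \theta_s(1)$ is a bijection and $f$ is injective on $J$.

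Next, I would check functoriality and the pointed property. Composition in $\CP^\bullet(Z')$ is componentwise composition of admissible classes (with the convention that a non-admissible concatenation gives $0$). If $\theta\colon I\to J$ and $\theta'\colon J\to K$ are morphisms in $\CP^\bullet_f(Z)$ with $\theta'\circ\theta\neq 0$, then for each $s\in I$ the composite $\theta'_{\theta_s(1)}\circ\theta_s$ is admissible in $Z$, and applying the pointed functor $f\colon\CS^\bullet(Z,1)\to\CS^\bullet(Z',1)$ (established in \S\ref{se:functorialitypaths}) gives $f(\theta'_{\theta_s(1)})\circ f(\theta_s)=f(\theta'_{\theta_s(1)}\circ\theta_s)$, which is admissible in $Z'$. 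Using the injectivity of $f$ on $J$ to identify $f(\theta')_{f(\theta_s(1))}$ with $f(\theta'_{\theta_s(1)})$, we recover $f(\theta'\circ\theta)=f(\theta')\circ f(\theta)$. If on the other hand $\theta'\circ\theta=0$, then some component composition is non-admissible in $Z$, and functoriality of the one-strand $f$ forces the corresponding composition in $Z'$ to be non-admissible, so $f(\theta')\circ f(\theta)=0$.

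Finally, faithfulness reduces to the one-strand case exactly as in \S\ref{se:symmetricpowers}. Suppose $\theta,\theta'\colon I\to J$ are non-zero braids with $f(\theta)=f(\theta')$. Then $\bij{f(\theta)}=\bij{f(\theta')}$ as bijections $f(I)\iso f(J)$; since $f$ is injective on $J$ we can cancel and obtain $\bij{\theta}=\bij{\theta'}$. For each $s\in I$ we then have $f(\theta_s)=f(\theta'_s)$ in $\CS^\bullet(Z',1)$, and Lemma \ref{le:uniqueliftpath} (the faithfulness of the one-strand functor, which also covers the degenerate case where the common image is an identity by the argument in Lemma \ref{le:mappaths}) gives $\theta_s=\theta'_s$. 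Hence $\theta=\theta'$. The final clause of the proposition is immediate: if $f$ is injective on $Z$ then $|f(I)|=|I|$ for every finite $I\subset Z$, so $\CP^\bullet_f(Z)=\CP^\bullet(Z)$. There is no real obstacle here; the only subtle point is ensuring the faithfulness argument handles identity morphisms, which is exactly what is built into Lemma \ref{le:uniqueliftpath} via Lemma \ref{le:mappaths}.
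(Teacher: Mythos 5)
Your proof is correct and takes the same route the paper intends: the paper states that the proposition "follows immediately from Lemma \ref{le:uniqueliftpath} and \S\ref{se:symmetricpowers}", and you have simply unpacked what that means — namely, use the general fact from \S\ref{se:symmetricpowers} that $S(F)$ is faithful whenever $F$ is a faithful pointed functor injective on objects, together with the one-strand faithfulness of Lemma \ref{le:uniqueliftpath}, with the restriction to $\CP^\bullet_f(Z)$ supplying the object-injectivity needed where $f$ itself may fail to be injective on $Z$. Your step-by-step verification of well-definedness, compatibility with composition, the pointed property, and faithfulness is exactly the content the paper delegates to those two references.
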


\medskip
We define a non-multiplicative 
$f^\#:\add(\CP(Z'))\to\add(\CP(Z))$\indexnot{f}{f^\#}
that commutes with coproduct.
Given $I'$ a finite subset of $Z'$, we put
$$f^\#(I')=\coprod_{p:I'\to Z,\ fp=\id_{I'}}p(I').$$
Consider now $\theta'\in\Hom_{\CP^\bullet(Z')}(I',J')$ non-zero.
Given $s'\in I'$, we have a decomposition
$f^\#(\theta'_{s'})=\sum_{s\in f^{-1}(s')}
f^\#(\theta'_{s'})_s$ along the decomposition $f^\#(s')=\bigoplus_{s\in f^{-1}(s')}s$
(cf \S \ref{se:functorialitypaths}).
Given $p:I'\to Z$ with $fp=\id_{I'}$, we put $f^\#_p(\theta')=
\bigl(f^\#(\theta'_{f(s)})_s\bigr)_{s\in p(I')}$, a map in $\CP(Z)$ with source $p(I')$.

We define
$$f^\#(\theta')=\sum_{p:I'\to Z,\ fp=\id_{I'}}f^\#_p(\theta').$$

Note that $f^\#(\theta')=\sum_{\theta\in f^{-1}(\theta')}\theta$, where
$f^{-1}(\theta')$ is the set of braids in $Z$ lifting $\theta$.

\smallskip
Given $f':Z'\to Z''$ a morphism of curves, we have $(f'f)^\#=f^\#f^{\prime\#}$.

\medskip
The next two propositions are immediate consequences of
Propositions \ref{pr:functorfsharp} and \ref{pr:qsharpfaithful}
(cf \S\ref{se:symmetricpowers}).

\begin{prop}
	\label{pr:fsharpsprestrands}
	If $f$ is strict, then
	$f^\#$ defines a functor $\add(\CP(Z'))\to\add(\CP_f(Z))$ commuting with
	coproducts.
\end{prop}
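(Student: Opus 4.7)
The plan is to deduce the proposition from the one-strand case (Proposition~\ref{pr:functorfsharp}) by invoking the functoriality of the symmetric power construction from \S\ref{se:symmetricpowers}. Recall that $\CP^\bullet(Z) = S(\CS^\bullet(Z,1))$, so a braid $\theta':I' \to J'$ is, up to the $0$-identifications in $S$, a pair $(\bij{\theta'},(\theta'_{s'})_{s' \in I'})$ consisting of a bijection $\bij{\theta'}:I' \iso J'$ and a family of admissible homotopy classes. The operation $f^\#$ on $\CP$ is built componentwise out of $f^\#$ on $\CS$, so the task is to upgrade Proposition~\ref{pr:functorfsharp} through the $S$-construction.

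First, I would verify that the image of $f^\#$ on objects lies in $\add(\CP_f(Z))$. Given a finite $I' \subset Z'$, any section $p:I' \to Z$ of $f$ is injective, hence $|p(I')|=|I'|$; and since $f(p(I'))=I'$, we have $|f(p(I'))|=|p(I')|$, which is exactly the condition defining an object of $\CP_f(Z)$. Commuting with coproducts is immediate from the definition $f^\#(I') = \coprod_p p(I')$, since a section over a disjoint union decomposes canonically into a pair of sections.

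The main step is compatibility with composition. Fix composable braids $\theta':I' \to J'$ and $\vartheta':J' \to K'$ with $\vartheta' \circ \theta' \neq 0$. Expanding $f^\#(\vartheta' \circ \theta')$ as a sum over sections $p$ of $f$ over $I'$, each summand $f^\#_p(\vartheta' \circ \theta')$ is obtained by lifting, for each $s' \in I'$, the admissible class $\vartheta'_{\bij{\theta'}(s')} \circ \theta'_{s'}$ to the unique admissible class in $Z$ starting at $p(s')$; strictness of $f$ guarantees admissibility of the lift (Lemma~\ref{le:compositionliftpath}). On the other side, $f^\#(\vartheta')\circ f^\#(\theta')$ expands as a double sum indexed by sections $p$ of $f$ over $I'$ and sections $q$ of $f$ over $J'$; by Proposition~\ref{pr:functorfsharp} applied strand by strand, the composition $f^\#(\vartheta'_{\bij{\theta'}(s')}) \circ f^\#(\theta'_{s'})$ in $\CS$ equals $f^\#(\vartheta'_{\bij{\theta'}(s')} \circ \theta'_{s'})$, and the projection onto the summand indexed by $p(s')$ is nonzero precisely when $q$ agrees on $\bij{\theta'}(s')$ with the endpoint of the $\theta'_{s'}$-lift starting at $p(s')$. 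Hence the surviving summands on the right are indexed bijectively by sections $p$ over $I'$, matching those on the left strand by strand. The $0$-identifications in $S$---which kill a family once two endpoints collide---behave the same way on both sides, since the collision is detected purely on endpoints of the lifted strands.

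The main obstacle will be the bookkeeping of the double sum: one must verify that the pairing of $p$ over $I'$ with a forced section $q$ over $J'$ (determined by the endpoints of the lifted strands of $\theta'$) sets up a bijection between the indexing sets of the two sums, and that the componentwise identities of Proposition~\ref{pr:functorfsharp} assemble correctly into an identity in $S(\add(\CS(Z,1))) \subset \add(\CP(Z))$. Once this is done, functoriality of $f^\#$ on $\add(\CP(Z')) \to \add(\CP_f(Z))$ follows formally, and compatibility with coproducts has already been observed.
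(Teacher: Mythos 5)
Your proof is correct and takes essentially the same route as the paper: the multi-strand functoriality is deduced from the single-strand case (Proposition~\ref{pr:functorfsharp}) by bookkeeping through the symmetric power construction $\CP^\bullet = S(\CS^\bullet(\,\cdot\,,1))$, with the sections $q$ over $J'$ forced by $p$ and the strand endpoints. One small caveat: the coproduct-compatibility is simply built into how $f^\#$ is extended from $\CP(Z')$ to $\add(\CP(Z'))$ by additivity; your remark that sections over a disjoint union decompose into pairs is true but gives a product indexing (so $f^\#(I'_1\sqcup I'_2)\neq f^\#(I'_1)\oplus f^\#(I'_2)$ in general), which is not what ``commutes with coproducts'' is asserting.
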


\begin{prop}
	\label{pr:qprestrands}
Let $Z$ be a curve with a finite admissible relation $\sim$ and
let $q:Z\to Z/\!\!\sim$ be the quotient map. 
	The functor $q^\#:\add(\CP(Z/\!\!\sim))\to\add(\CP_q(Z))$ is 
	faithful
	and every map in $\CP^\bullet(Z/\!\!\sim)$ is in the image of
	the functor $q:\CP_q^\bullet(Z)\to\CP^\bullet(Z/\!\!\sim)$.
\end{prop}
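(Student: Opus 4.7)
The plan is to reduce both assertions to the corresponding one-strand statement, Proposition \ref{pr:qsharpfaithful}, via the symmetric power formalism of \S\ref{se:symmetricpowers}. Since the relation $\sim$ is finite and admissible (i.e.\ supported in $Z_o$), the quotient map $q$ is strict: it is the identity on $Z_u$ and sends $Z_o$ into $(Z/\!\!\sim)_o$. Proposition \ref{pr:fsharpsprestrands} then already gives that $q^\#$ is a functor $\add(\CP(Z/\!\!\sim))\to\add(\CP_q(Z))$, so only faithfulness and the image assertion remain to be proved.

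For faithfulness, I would use the explicit formula $q^\#(\theta')=\sum_{p}q^\#_p(\theta')$, summed over the finitely many set-theoretic sections $p\colon I'\to Z$ of $q$. The summands $q^\#_p(\theta')$ have pairwise distinct sources $p(I')$ as objects of $\add(\CP(Z))$, so the decomposition $q^\#(I')=\bigsqcup_{p}p(I')$ exhibits $q^\#(\theta')$ as a block-diagonal map; it therefore suffices to check that each block $q^\#_p$ is injective on Hom-spaces. By construction, the coordinates of $q^\#_p(\theta')$ are the one-strand lifts $q^\#(\theta'_{q(s)})$, and Proposition \ref{pr:qsharpfaithful} recovers each $\theta'_{q(s)}$ from its image under the one-strand $q^\#$. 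Equivalently, this is the general $S_k$-preservation of faithfulness for $k$-linear functors injective on objects between categories with flat Hom-spaces, recorded in \S\ref{se:symmetricpowers}, applied to $k=\BF_2$ and to $q^\#$ on $\CS(Z/\!\!\sim,1)$, together with the identification $\CP(Z)=S_k(\CS(Z,1))$.

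For the image assertion, let $\theta'\colon I'\to J'$ be a non-zero braid in $Z/\!\!\sim$. I would pick any set-theoretic section $p\colon I'\to Z$ of $q$ (which exists since $q$ is surjective with finite fibers) and, for each $s'\in I'$, lift $\theta'_{s'}$ to the unique admissible path in $Z$ starting at $p(s')$ using Lemma \ref{le:liftquotientpaths}. This produces a candidate family $\theta=(\theta_{p(s')})_{s'\in I'}$ from $I:=p(I')$ to its endpoint set $J:=\{\theta_{p(s')}(1)\mid s'\in I'\}$. The key point to verify is that the endpoints are pairwise distinct: if $\theta_{p(s')}(1)=\theta_{p(s'')}(1)$ for distinct $s',s''\in I'$, then two admissible paths in $Z/\!\!\sim$ starting at distinct points $s',s''$ would both end at the common image $q(\theta_{p(s')}(1))$, and by uniqueness of lifts in Lemma \ref{le:liftquotientpaths} they would have to be equal, contradicting the bijectivity of $s'\mapsto\theta'_{s'}(1)$. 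Hence $|J|=|I'|$, $q_{|J}$ is injective, so $I,J\in\CP_q^\bullet(Z)$ and $\theta$ is a braid with $q(\theta)=\theta'$.

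Neither step is deep: the only points that require care are the direct-summand structure of $q^\#(\theta')$ indexed by sections $p$ (which is simply a matter of distinct sources) and the endpoint-distinctness lemma above, both of which reduce at once to the uniqueness of admissible lifts. Everything else is formal given Propositions \ref{pr:functorfsharp}--\ref{pr:qsharpfaithful} and the symmetric power package.
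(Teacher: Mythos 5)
Your overall strategy — reduce both assertions to the one-strand statements and exploit uniqueness of admissible lifts — is the one the paper intends. However, two of your steps are incorrect as written. In the surjectivity argument you ``pick any section $p$ of $q$ and lift $\theta'_{s'}$ to the unique admissible path in $Z$ starting at $p(s')$ using Lemma~\ref{le:liftquotientpaths}''. That lemma gives a lift that is \emph{globally} unique: its starting point is forced, and for a non-constant $\theta'_{s'}$ there is in general no lift starting at a prescribed preimage (a quotient map is not a covering). The argument should go in the other order: lift each non-constant coordinate uniquely, choose an arbitrary preimage for each constant coordinate, and \emph{then} define $I$ as the resulting set of initial points; since $q(\theta_{s'}(0))=s'$, that set is automatically a section over $I'$. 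Your verification that $J$ is also a section is essentially right, but the appeal to uniqueness of lifts there is a red herring — applying $q$ to the hypothesized coincidence of endpoints already contradicts the bijectivity of $\bij{\theta'}$.

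In the faithfulness argument, the step ``it therefore suffices to check that each block $q^\#_p$ is injective on Hom-spaces'' fails: for every section $p$ that does not agree with the set of lift starting points, the summand $q^\#_p$ kills every non-constant braid, so those blocks are not injective, and Proposition~\ref{pr:qsharpfaithful} says nothing useful about them. (Also note that the $S_k$-faithfulness statement of \S\ref{se:symmetricpowers} is for functors $\CC\to\CC'$ of pointed or $\BF_2$-linear categories; $q^\#$ goes between additive closures, so it does not directly fit that template.) The correct reduction is support-by-support rather than block-by-block: by definition $q^\#(\theta')=\sum_{\theta\in q^{-1}(\theta')}\theta$, the lift sets $q^{-1}(\theta')$ for distinct braids $\theta'\colon I'\to J'$ are pairwise disjoint (apply $q$ to any common lift), and each is nonempty by the surjectivity half of the proposition. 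Thus the images under $q^\#$ of distinct basis braids are supported on disjoint sets of braids in $Z$, which gives $\BF_2$-linear independence and hence injectivity of $q^\#$ on every Hom-space.
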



\medskip
Note that the construction $Z\mapsto \add(\CP(Z))$ and
$f\mapsto f^\#$ defines a contravariant
functor from the category
of curves with strict morphisms to the category of $\BF_2$-linear categories.

\medskip
Let $Z_1,\ldots,Z_r$ be the connected components of $Z$. 
The isomorphism (\ref{eq:Acomponents}) induces an isomorphism of pointed categories
\begin{equation}
	\label{eq:Pcomponents}
\CP^\bullet(Z_1)\wedge\cdots\wedge\CP^\bullet(Z_r)\iso \CP^\bullet(Z).
\end{equation}

Note that the inverse functor sends a braid $\theta:I\to J$ in $Z$
to $(\theta_1,\ldots,\theta_r)$, where $\theta_i$ is the restriction of
$\theta$ to $I\cap Z_i$.

\begin{example}
We describe below an example of product in $\CP^\bullet(Z)$.
$$\includegraphics[scale=0.85]{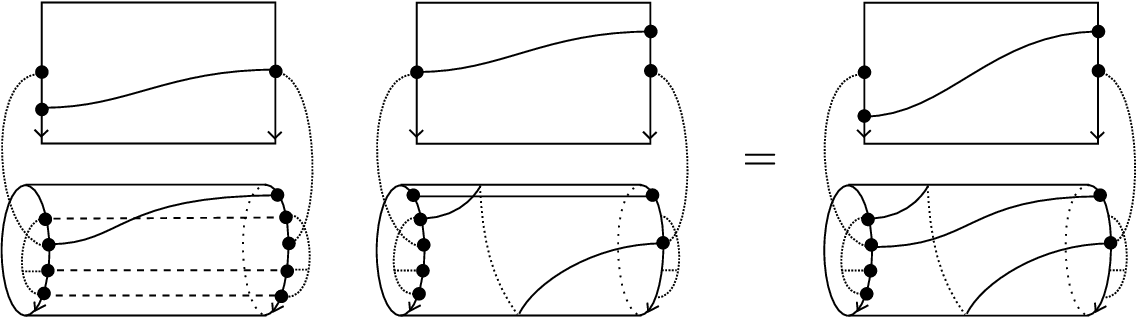}$$
\end{example}
	
\subsubsection{Degree}

Consider $\theta:I\to J$ a braid.
We put
$$i(\theta)=\frac{1}{2}\sum_{\Omega\in\pi_0(Z)}
\sum_{\substack{s\neq s'\in I\cap\Omega}}
i(\theta_s,\theta_{s'})e_\Omega\in(\BZ_{\ge 0})^{\pi_0(Z)}\indexnot{i}{i(\theta)}$$

\medskip
We define
$\llbracket\theta\rrbracket=\sum_{s\in I}\llbracket\theta_s\rrbracket\in R(Z)\indexnot{[}{\protect\llbracket\theta\protect\rrbracket}$ and
$$m(\theta)=\sum_{s\in I}
\sum_{c\in \theta_s(0+)\cup\iota(\theta_s(0+))} m_c(\llbracket\theta\rrbracket)e_c\in L(Z)\indexnot{m}{m(\theta)}.$$

Finally, we define $\deg'(\theta)\in\Gamma(Z)$ by
$$\deg'(\theta)=(-i(\theta),(-m(\theta),-\llbracket\theta\rrbracket))\indexnot{d}{\deg'(\theta)}.$$

Given $D\subset T(Z)$ with $D\cap\iota(D)=\emptyset$,
we denote by $\deg_D(\theta)$\indexnot{d}{\deg_D(\theta)} the image of $\deg'(\theta)$ in
$\Gamma(Z,D)$. Note that if $D'\subset D$, then 
$\deg_D(\theta)$ is the image of $\deg_{D'}(\theta)\in \Gamma(Z,D')$
in $\Gamma(Z,D)$.

\smallskip
We put $\deg(\theta)=\deg_{Z_{exc}^+}(\theta)$\indexnot{d}{\deg(\theta)} and we denote by
$\overline{\deg}(\theta)$\indexnot{d}{\overline{\deg}(\theta)}
(resp.  $\overline{\deg}_D(\theta)$\indexnot{d}{\overline{\deg}_D(\theta)})
the image of $\deg(\theta)$ (resp. $\deg_D(\theta)$) in 
$\bar{\Gamma}(Z,Z_{exc}^+)$
(resp. $\bar{\Gamma}(Z,D)$).


\begin{lemma}
	\label{le:removeid}
	Let $\theta:I\to J$ be a braid in $Z$. 
	Let $E$ be a subset of $\{s\in I\cap Z_o\ |\ \theta_s=\id_s\}$ and let
$\bar{\theta}= (\theta_s)_{s\in I-E}$. 
	We have $\deg_{E^+}(\theta)=\deg_{E^+}(\bar{\theta})$.
\end{lemma}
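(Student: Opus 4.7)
The plan is to reduce by induction on $|E|$ to the case where $E = \{s\}$ is a singleton; removing identity paths one at a time preserves the braid structure on the remaining index set, and the image of the degree in $\Gamma_{E^+}(Z)$ is the further projection of the degree in $\Gamma_{\{s\}^+}(Z)$, so it suffices to check equality with $s\in I\cap Z_o$ and $\theta_s=\id_s$. Writing $\bar\theta=(\theta_{s'})_{s'\neq s}$ and $\alpha=\llbracket\theta\rrbracket\in R(Z)$, one has $\llbracket\id_s\rrbracket=0$, hence $\alpha=\llbracket\bar\theta\rrbracket$; the degrees of $\theta$ and $\bar\theta$ therefore differ only in their $i$- and $m$-components.

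I would next compute these two contributions separately. Since $\bij{\theta}(s)=s$ and $\bij{\theta}$ is a bijection, for any $s'\neq s$ we have $\theta_{s'}(0)=s'\neq s$ and $\theta_{s'}(1)=\bij{\theta}(s')\neq s$, so the two delta terms in Lemma \ref{le:m=i} vanish, giving
\[
 i(\id_s,\theta_{s'})=\tfrac12\sum_{c\in C(s)^+}\bigl(m_c(\theta_{s'})-m_{\iota(c)}(\theta_{s'})\bigr).
\]
Summing over $s'\in (I\cap\Omega_s)\setminus\{s\}$ (where $\Omega_s$ is the component of $s$; contributions from other components vanish) and using linearity of $m_c$ on $R(Z)$ (Lemma \ref{le:additivitymc}) yields
\[
 i(\theta)-i(\bar\theta)=\tfrac12\sum_{c\in C(s)^+}\bigl(m_c(\alpha)-m_{\iota(c)}(\alpha)\bigr)e_{\Omega_s}.
\]
For $m$, the only changed term is the $s$-contribution; using $\theta_s(0+)\cup\iota(\theta_s(0+))=C(s)$ and $e_{\iota(c)}=-e_c$ in $L(Z)$, this is $\sum_{c\in C(s)^+}(m_c(\alpha)-m_{\iota(c)}(\alpha))\,e_c$. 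The key observation is that in $\Gamma_{\{s\}^+}(Z)$ the relation $e_c=\tfrac12 e_{\Omega_s}$ for $c\in C(s)^+$ converts this $L(Z)$-valued expression into exactly the $(\tfrac12\BZ)^{\pi_0(Z)}$-element $i(\theta)-i(\bar\theta)$ computed above.

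Finally I would multiply out $\deg'(\theta)\cdot\deg'(\bar\theta)^{-1}$ in $\Gamma(Z)$. Because the factor $(\tfrac12\BZ)^{\pi_0(Z)}$ is a direct summand (hence central) and $\llbracket\theta\rrbracket=\llbracket\bar\theta\rrbracket$, the cross terms involving $\langle\alpha,\alpha\rangle$ coming from inversion in $\Gamma'(Z)$ cancel, and one obtains
\[
 \deg'(\theta)\deg'(\bar\theta)^{-1}=\bigl(i(\theta)-i(\bar\theta),\,(m(\bar\theta)-m(\theta),0)\bigr).
\]
By the identification in the previous paragraph, the two components become negatives of each other in $\Gamma_{\{s\}^+}(Z)$, so their product is trivial. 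This gives $\deg_{\{s\}^+}(\theta)=\deg_{\{s\}^+}(\bar\theta)$, completing the inductive step. I expect the only real obstacle to be bookkeeping: carrying out the non-abelian multiplication in $\Gamma(Z)$ cleanly, and keeping straight the two ways an element of $\tfrac12\BZ\cdot e_{\Omega_s}$ can arise (once as an intersection count, once as the image of a sum in $L(Z)$ under the defining relation of the quotient $\Gamma_{\{s\}^+}(Z)$).
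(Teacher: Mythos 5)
Your proof is correct and follows the same route as the paper's (terse) argument: note $\llbracket\theta\rrbracket=\llbracket\bar\theta\rrbracket$, then use Lemma \ref{le:m=i} to show that the $s$-contribution to $m(\theta)$ and the $s$-contribution to $i(\theta)$ become identified in $\Gamma_{\{s\}^+}(Z)$ via the relation $e_c=\tfrac12 e_{\Omega_s}$. The paper simply records the key identity $\sum_{c\in C(s)}m_c(\llbracket\theta\rrbracket)e_c \xrightarrow{e_c\to 1} 2\sum_{s'\neq s}i(\id_s,\theta_{s'})$ and declares "the lemma follows"; you have filled in the induction on $|E|$, the verification that the delta terms in Lemma \ref{le:m=i} vanish, and the $\Gamma(Z)$ group computation, but the mathematical content is the same.
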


\begin{proof}
	Note that $\llbracket\theta\rrbracket= \llbracket\bar{\theta}\rrbracket$.
	Let $s\in E$. We have
	$$\sum_{c\in C(s)}m_c(\llbracket\theta\rrbracket)e_c=
	\sum_{c\in C(s)^+}\sum_{s'\in I,\ s'\neq s}
	(m_c-m_{\iota(c)})(\llbracket\theta_{s'}\rrbracket)e_c\xrightarrow{e_c\to 1}
	2\sum_{s'\in I,\ s'\neq s}i(\id_s,\theta_{s'})$$
	by Lemma \ref{le:m=i}.
	The lemma follows.
\end{proof}


\begin{rem}
	\label{re:decomposition I}
	Note that
	$i(\theta)=\sum_{I'\subset I,\ |I'|=2}i(\theta_{|I'})$.
\end{rem}

The next lemma shows that the failure of multiplicativity of $\deg$ and $i$
coincide up to terms involving points in $Z_{exc}$.

\begin{lemma}
	\label{le:degproduct}
Let $\theta:I\to J$ and $\theta':I'\to I$ be two braids such that $\theta\circ\theta'$ is a
braid. The element
		$\deg(\theta)\cdot\deg(\theta')\cdot\deg(\theta\circ\theta')^{-1}$ of
		$\Gamma(Z,Z_{exc}^+)$ is in $\bigoplus_\Omega \frac{1}{2}\BZ e_\Omega$
		and it is equal to
	\begin{multline*}
	i(\theta\circ\theta')-i(\theta)-i(\theta')\\
		+\frac{1}{2}\sum_{\substack{\Omega,\ s'\in I'\cap Z_{exc}\cap\Omega\\
		\theta'_{s'}=\id,\ \theta_{s'}\neq\id
		\\ c'\in C(s')^+\setminus\theta_{s'}(0+)}}
		(m_{c'}-m_{\iota(c')})(
		\llbracket\theta'\rrbracket)e_\Omega+
		\frac{1}{2}\sum_{\substack{\Omega,\ s'\in I' \cap\Omega\\
		\theta'_{s'}\neq\id,\ \theta_{\theta'_{s'}(1)}=\id
		\\ c\in C(\theta'_{s'}(1))^+\setminus\iota(\theta'_{s'}(1-))\\ \theta'_{s'}(1)\in
		Z_{exc}}}
		(m_{c}-m_{\iota(c)})(\llbracket\theta\rrbracket)e_\Omega.
	\end{multline*}
	and is also equal to
	\begin{multline*}\frac{1}{2}
		\sum_{\substack{\Omega, (s'_1,s'_2)\in (I'\cap \Omega)^2\\
		(s'_1,s'_2){\not\in}E\cup E'}}\bigl(i(\theta_{s_1}\circ\theta'_{s'_1},
	\theta_{s_2}\circ\theta'_{s'_2})-i(\theta_{s_1},\theta_{s_2})-
	i(\theta'_{s'_1},\theta'_{s'_2})\bigr)e_\Omega+\\
		+\sum_{\Omega,\ (s'_1,s'_2)\in E\cap\Omega}
		\bigl(i(\theta_{s_1},\theta_{s_2}\circ\theta'_{s'_2})-
		i(\theta_{s_1},\theta_{s_2})-
		m_{\theta_{s_1}(0+)}^+(\theta'_{s'_2})\bigr)e_\Omega+\\
		+\sum_{\Omega,\ (s'_1,s'_2)\in E'\cap\Omega}
		\bigl(i(\theta'_{s'_1},\theta_{s_2}\circ\theta'_{s'_2})
		-i(\theta'_{s'_1},\theta'_{s'_2})-
		m_{\theta'_{s'_1}(1-)}^-(\theta_{s_2})\bigr)e_\Omega
	\end{multline*}
	where
	\begin{itemize}
		\item given $(s'_1,s'_2)\in I^{\prime 2}$, we put
			$s_i=\theta'_{s'_i}(1)$
		\item 
	$E$ is the set of pairs $(s'_1,s'_2)\in I'\times I'$ with
	$s'_1\in\ Z_{exc}$, $\theta'_{s'_1}=\id$, $\theta_{s'_1}\neq\id$,
	$\theta'_{s'_2}\neq\id$
\item
	$E'$ is the set of pairs $(s'_1,s'_2)\in I'\times I'$ with
	$s_1\in\ Z_{exc}$, $\theta'_{s'_1}\neq\id$, 
	$\theta_{s_1}=\id$ and $\theta_{s_2}\neq\id$.
	\end{itemize}
\end{lemma}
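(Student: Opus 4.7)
The plan is to decompose the defect $\deg(\theta)\cdot\deg(\theta')\cdot\deg(\theta\circ\theta')^{-1}$ into its three natural components coming from the semidirect-product structure of $\Gamma(Z)=(\frac{1}{2}\BZ)^{\pi_0(Z)}\times\Gamma'(Z)$ and $\Gamma'(Z)=L(Z)\rtimes R(Z)$, handle each piece separately, and then reconcile the resulting expression with the two claimed formulas. I first unravel the group law: if $\deg'(\theta)=(i(\theta),(-m(\theta),\llbracket\theta\rrbracket))$, then
$\deg'(\theta)\deg'(\theta')=(i(\theta)+i(\theta'),(-m(\theta)-m(\theta')+\langle\llbracket\theta\rrbracket,\llbracket\theta'\rrbracket\rangle,\llbracket\theta\rrbracket+\llbracket\theta'\rrbracket))$.
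The $R(Z)$-component of the defect vanishes trivially because $m_c$ is additive under composition (Lemma~\ref{le:additivitymc}) and $\llbracket\theta\circ\theta'\rrbracket=\sum_{s'\in I'}\llbracket\theta_{\theta'_{s'}(1)}\circ\theta'_{s'}\rrbracket=\llbracket\theta\rrbracket+\llbracket\theta'\rrbracket$. The $(\frac{1}{2}\BZ)^{\pi_0(Z)}$-component is by inspection $i(\theta)+i(\theta')-i(\theta\circ\theta')$.

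The content of the lemma is therefore in the $L(Z)$-component: the difference
$\Delta:=m(\theta\circ\theta')-m(\theta)-m(\theta')+\langle\llbracket\theta\rrbracket,\llbracket\theta'\rrbracket\rangle$
must, after reduction modulo the relations $e_c=\frac{1}{2}e_\Omega$ for $c\in Z_{exc}^+$, account for both the intersection correction $i(\theta)+i(\theta')-i(\theta\circ\theta')$ (with a sign) and the two sums supported on trivially-concatenated strands at singular points. Using the explicit cocycle $\langle\alpha,\llbracket\zeta\rrbracket\rangle=(m_{\iota(\zeta(0+))}-m_{\zeta(0+)})(\alpha)e_{\zeta(0+)}-(m_{\iota(\zeta(1-))}-m_{\zeta(1-)})(\alpha)e_{\zeta(1-)}$ from \S\ref{se:central}, I would sum over pairs $(s'_1,s'_2)\in I'\times I'$ and split according to whether $\theta'_{s'_1}$, $\theta'_{s'_2}$, and $\theta_{s_1},\theta_{s_2}$ (with $s_i=\theta'_{s'_i}(1)$) are trivial. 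For each generic case (neither pair trivial), the contribution of $\langle-,-\rangle$ to $\Delta$ at a point $c$ matches twice the "oriented intersection" count, and one invokes Lemma~\ref{le:m=i} together with the quotient relation $e_c=\frac{1}{2}e_\Omega$ at $c\in Z_{exc}^+$ to turn this into $i(\theta_{s_1},\theta_{s_2})+i(\theta'_{s'_1},\theta'_{s'_2})-i(\theta_{s_1}\circ\theta'_{s'_1},\theta_{s_2}\circ\theta'_{s'_2})$; this is the main, and only, substantive computation.

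For the degenerate cases where a strand is an identity at a point of $Z_{exc}$, Lemma~\ref{le:removeid} is not available at that point because we are quotienting only by $Z_{exc}^+$, not by $E^+\cup E^{\prime+}$; this is exactly what forces the appearance of the two correction sums in the first formula (indexed by $E$ and $E'$ in the second formula). I would treat these cases by inserting the full set $C(s)^+$ in the sum defining $m(\theta)$ when $\theta_s=\id$, applying Lemma~\ref{le:m=i} to rewrite $m_c^+(\theta'_{s'_2})$ (resp.\ $m_c^-(\theta_{s_2})$) in terms of intersections with the constant path $\id_{s_1}$, and comparing with Lemma~\ref{le:boundintercompo}. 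The equivalence of the two stated formulas is then obtained by this same pair-by-pair bookkeeping, using Remark~\ref{re:decomposition I} to write $i(\theta)=\sum_{|I'|=2}i(\theta_{|I'})$, so that both formulas reduce to a sum of two-strand contributions indexed by $(s'_1,s'_2)\in I'\times I'$.

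The main obstacle is entirely bookkeeping: tracking how the sets $\theta_s(0+)$, $\theta'_{s'}(0+)$, and $(\theta\circ\theta')_{s'}(0+)$ differ when the strands are/are not identities at points of $Z_{exc}$, and verifying that the alternating cancellation between $\langle\llbracket\theta\rrbracket,\llbracket\theta'\rrbracket\rangle$ and the three $m$-terms produces precisely the asymmetric "left/right" correction sums involving $E$ and $E'$. I expect no conceptual difficulty once the case $|I|=|I'|=2$ is settled: the general case then follows by summing over pairs.
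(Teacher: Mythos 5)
Your decomposition of the defect via the central-extension structure of $\Gamma(Z)$ is exactly the skeleton of the paper's argument: the $R(Z)$-component vanishes because $\llbracket-\rrbracket$ is additive, the $(\tfrac{1}{2}\BZ)^{\pi_0(Z)}$-component gives $i(\theta)+i(\theta')-i(\theta\circ\theta')$, and the content lies in $\Delta:=m(\theta\circ\theta')-m(\theta)-m(\theta')+\langle\llbracket\theta\rrbracket,\llbracket\theta'\rrbracket\rangle$. The paper also computes $\Delta$ directly from the formula (\ref{eq:scalar}) and the definition of $m$, establishes the first displayed expression, and then passes to the second using Lemma \ref{le:m=i} applied to the degenerate correction terms.

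There is, however, a real confusion in the middle of your plan that would derail the computation if taken at face value. You write that the generic contribution of $\langle-,-\rangle$ to $\Delta$ ``matches twice the oriented intersection count'' and that Lemma \ref{le:m=i} converts this into $i(\theta_{s_1},\theta_{s_2})+i(\theta'_{s'_1},\theta'_{s'_2})-i(\theta_{s_1}\circ\theta'_{s'_1},\theta_{s_2}\circ\theta'_{s'_2})$. This is not what happens. In the generic case (neither strand trivial at a point of $Z_{exc}$) the $L(Z)$-component $\Delta$ vanishes entirely: the contributions of $\langle\llbracket\theta\rrbracket,\llbracket\theta'\rrbracket\rangle$ cancel against those of $m(\theta\circ\theta')-m(\theta)-m(\theta')$, and what survives is supported only on the degenerate pairs indexed by $E$ and $E'$. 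The generic term $i(\theta_{s_1},\theta_{s_2})+i(\theta'_{s'_1},\theta'_{s'_2})-i(\ldots)$ in the second formula comes purely from the $(\tfrac{1}{2}\BZ)^{\pi_0(Z)}$-component $i(\theta)+i(\theta')-i(\theta\circ\theta')$ via Remark \ref{re:decomposition I}, not from $\Delta$ and not from Lemma \ref{le:m=i}. Lemma \ref{le:m=i} only relates $m_c^\pm$-counts to intersections with a \emph{constant} path $\id_z$; the paper applies it only to the $E$- and $E'$-indexed correction sums, using that these arise from identity strands at singular points. So the item you flag as ``the main, and only, substantive computation'' is aimed at the wrong target: the substantive work is showing $\Delta$ cancels generically and isolating what it leaves behind at $Z_{exc}$, not reconstructing $i(\cdot,\cdot)$ from $\langle-,-\rangle$.

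Your treatment of the degenerate case is on track (you correctly identify that the corrections arise because one cannot invoke Lemma \ref{le:removeid} at singular points), and the reduction to two-strand subbraids via Remark \ref{re:decomposition I} is the right way to reconcile the two formulas. With the misconception above corrected, your plan aligns with the paper's proof.
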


\begin{proof}
		Given $s'\in I'$ and $s=\theta'_{s'}(1)$, the class $\theta_s\circ\theta'_{s'}$
	is admissible, hence
	$\theta_s(0+)\cup\iota(\theta_s(0+))=\theta'_{s'}(1-)\cup
	\iota(\theta'_{s'}(1-))$ unless 
	$s\in Z_{exc}$ and one of $\theta_s$ and $\theta_{s'}$ is the identity, but not
	the other.

	Given $c\in T(Z)$, we put 
	$$v_c=(m_c-m_{\iota(c)})(\llbracket\theta
	\rrbracket)e_c=m_c(\llbracket\theta\rrbracket)e_c+
	m_{\iota(c)}(\llbracket\theta\rrbracket)e_{\iota(c)}=v_{\iota(c)}.$$
	Let
	$$a=\sum_{\substack{s'\in I'\cap Z_{exc}\\ \theta'_{s'}=\id,\ \theta_{s'}
	\neq\id\\ c'\in C(s')\setminus\bigl((\theta_{s'}(0+)\cup
	\iota(\theta_{s'}(0+))\bigr)}}m_{c'}(\llbracket\theta'\rrbracket)e_{c'}=
\sum_{\substack{s'\in I'\cap Z_{exc}\\ \theta'_{s'}=\id,\ \theta_{s'}
	\neq\id\\ c'\in C(s')^+\setminus\theta_{s'}(0+)
	}}(m_{c'}-m_{\iota(c')})(\llbracket\theta'\rrbracket)e_{c'}
	.$$
	We have
	$$m(\theta\circ\theta')-m(\theta)-m(\theta')=$$
	$$=
	\sum_{\substack{s'\in I'\\ c'\in (\theta\circ\theta')_{s'}(0+)\cup
	\iota((\theta\circ\theta')_{s'}(0+))}}
	m_{c'}(\llbracket\theta\rrbracket)e_{c'}-
	\sum_{\substack{s\in I\\ c\in \theta_{s}(0+)\cup\iota(\theta_{s}(0+))}}
	m_{c}(\llbracket\theta\rrbracket)e_c-a$$
	$$=\sum_{\substack{s'\in I'\\ \theta'_{s'}\neq\id}}
	v_{\theta'_{s'}(0+)}
	- \sum_{\substack{s'\in I'\\ \theta'_{s'}\neq\id\\ \theta_{\theta'_{s'}(1)}\neq\id}}
	v_{\theta'_{s'}(1-)}
	- \frac{1}{2}\sum_{\substack{s'\in I'\\ \theta'_{s'}\neq\id\\ \theta_{\theta'_{s'}(1)}=\id\\ c\in C(\theta'_{s'}(1))}}
	v_c -a$$

	Using (\ref{eq:scalar}), we find
	\begin{align*}
		\langle\llbracket\theta\rrbracket,\llbracket\theta'\rrbracket\rangle&=
	-\frac{1}{2}\sum_{\substack{s'\in I'\\ c'\in \theta'_{s'}(0+)\cup\iota(\theta'_{s'}(0+))}}
	v_{c'}+\frac{1}{2}\sum_{\substack{s'\in I'\\ c\in \theta'_{s'}(1-)\cup\iota(\theta'_{s'}(1-))}} v_{c}\\
		&=
	-\sum_{\substack{s'\in I'\\\theta'_{s'}\neq\id}}
	v_{\theta'_{s'}(0+)}+
	\sum_{\substack{s'\in I'\\ \theta'_{s'}\neq\id}}
	v_{\theta'_{s'}(1-)}.
	\end{align*}
We deduce that
$$\langle\llbracket\theta\rrbracket,\llbracket\theta'\rrbracket\rangle+
m(\theta\circ\theta')-m(\theta)-m(\theta')=
		-\sum_{\substack{s'\in I'\\ \theta'_{s'}\neq\id\\ \theta_{\theta'_{s'}(1)}=\id
		\\ c\in C(\theta'_{s'}(1))^+\setminus\iota(\theta'_{s'}(1-))\\ \theta'_{s'}(1)\in
		Z_{exc}}}
		(m_{c}-m_{\iota(c)})(\llbracket\theta\rrbracket)e_{c}-a$$
and the first equality of the lemma follows.

\smallskip
	Consider $s'_1\neq s'_2$ in $I'$.

If $s'_1\in Z_{exc}$,
$\theta'_{s'_1}=\id_{s'_1}$ and $\theta_{s_1}\neq\id_{s_1}$, it follows from
Lemma \ref{le:m=i} that
$$\sum_{\substack{s'_2\in I'\\ \theta'_{s'_2}\neq\id}} i(\id_{s'_1},
\theta'_{s'_2})=
\frac{1}{2}\sum_{\substack{s'_2\in I'\\ \theta'_{s'_2}\neq\id_{s'_2}\\ 
c'\in C(s'_1)^+}}(m_{c'}-m_{\iota(c')})(\theta'_{s'_2})=
\frac{1}{2}\sum_{c'\in C(s'_1)^+}(m_{c'}-m_{\iota(c')})(\llbracket\theta'\rrbracket).$$
Similarly, if $s_1\in Z_{exc}$, $\theta'_{s'_1}\neq\id$ and $\theta_{s_1}=\id$,
we have
$$\sum_{\substack{s'_2\in I'\\ \theta_{s_2}\neq\id}} i(\id_{s_1},
\theta_{s_2})=
\frac{1}{2}\sum_{c\in C(s_1)^+}(m_{c}-m_{\iota(c)})(\llbracket\theta\rrbracket).$$

The second equality of the lemma follows.
\end{proof}

\begin{example}
	The left (respectively second) side of the diagram below shows a
typical instance where the left (respectively right) sum of
Lemma \ref{le:degproduct} is nonzero.
	$$\includegraphics[scale=1.0]{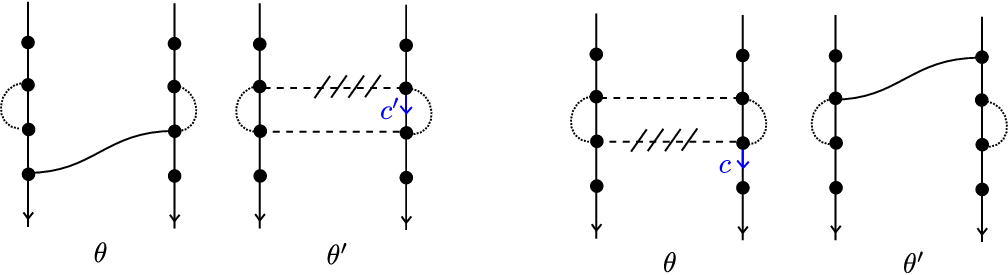}$$
\end{example}

\begin{rem}
	\label{re:productnonexc}
	Let $\theta:I\to J$ and $\theta':I'\to I$ be two braids such that
	$\theta\circ\theta'$ is a braid. 
	By Lemma \ref{le:boundintercompo}, the terms
	$i(\theta_{s_1},\theta_{s_2})+i(\theta'_{s'_1},\theta'_{s'_2})-i(\theta_{s_1}\circ\theta'_{s'_1},
	\theta_{s_2}\circ\theta'_{s'_2})$,
		$i(\theta_{s_1},\theta_{s_2})+
		m_{\theta_{s_1}(0+)}^+(\theta'_{s'_2})-
		i(\theta_{s_1},\theta_{s_2}\circ\theta'_{s'_2})$ and
		$i(\theta'_{s'_1},\theta'_{s'_2})+
		m_{\theta'_{s'_1}(1-)}^-(\theta_{s_2})-
		i(\theta'_{s'_1},\theta_{s_2}\circ\theta'_{s'_2})$ in 
		Lemma \ref{le:degproduct} are all non-negative.

	We deduce that the following
	assertions are equivalent:
	\begin{itemize}
		\item 
	$\deg(\theta)\cdot\deg(\theta')=\deg(\theta\circ\theta')$
\item $\deg(\theta_{|E})\cdot\deg(\theta'_{|E'})=\deg(\theta_{|E}\circ\theta'_{|E'})$ for
	any two-element subset $E'\subset I'$, where $E=\bij{\theta'}(E')$.
	\end{itemize}

	If  given $s\in I'$ with
	$\theta'_s=\id$ or $\theta_{\bij{\theta'}(s)}=\id$, we have $s{\not\in}Z_{exc}$, then
	$\deg(\theta)\cdot\deg(\theta')=\deg(\theta\circ\theta')$ if and only
	if 
$i(\theta_{\theta'_s(1)},\theta_{\theta'_{s'}(1)})+ i(\theta'_s,\theta'_{s'})=
	i((\theta\circ\theta')_s,(\theta\circ\theta')_{s'})$
	for all $s\neq s'$ in $I'$.
\end{rem}

\begin{lemma}
	\label{le:fdeg}
Let $f:Z\to Z'$ be a morphism of curves. 
	Let $I$ and $J$ be two finite subsets of $Z$ such that $|f(I)|=|f(J)|=|I|=|J|$.
	Let $\theta:I\to J$ be a braid in $Z$.
	Let $E=\{s\in I\cap Z_f\ |\ \theta_s=\id_s\}$.
	
	We have
	$f(\deg_{f^{-1}(f(E))^+}(\theta))=\deg_{f(E)^+}(f(\theta))$.
\end{lemma}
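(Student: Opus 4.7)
My plan is to verify the equality in $\Gamma_{f(E)^+}(Z')$ by decomposing $\deg'(\theta) = (i(\theta), -m(\theta), \llbracket\theta\rrbracket)$ into its three components in $\Gamma(Z) = (\frac{1}{2}\BZ)^{\pi_0(Z)} \times L(Z) \times R(Z)$ and tracking each under $f$. The $R$-component equality $f(\llbracket\theta\rrbracket) = \llbracket f(\theta)\rrbracket$ is immediate from the definition of the induced map on fundamental groupoids. The real content is to show that the discrepancies in the $i$- and $L$-components cancel, using the quotient relation $e_{c'} = \frac{1}{2}e_{\Omega'}$ imposed in $\Gamma_{f(E)^+}(Z')$ for $c' \in f(E)^+$.

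For the $L$-component, I decompose $m(\theta) = \sum_{s \in I} \mu_s(\theta)$ with $\mu_s(\theta) = \sum_{c \in \theta_s(0+) \cup \iota(\theta_s(0+))} m_c(\llbracket\theta\rrbracket) e_c$. Using Lemma \ref{le:functorialitym} and the injective $\iota$-equivariant map $C(f):C(s) \hookrightarrow C(f(s))$ from Lemma \ref{le:localstructuremaps}, I get $f(\mu_s(\theta)) = \mu_{f(s)}(f(\theta))$ whenever either $\theta_s \neq \id_s$ (both sums range over a single $\iota$-pair) or $s \notin Z_f$ (so $C(f)$ is a bijection). The only discrepancies come from $s \in E$; after using $e_{\iota(c')} = -e_{c'}$ in $L(Z')$, they contribute exactly $Y_s := \sum_{c' \in C(f(s))^+ \setminus f(C(s)^+)} (m_{c'} - m_{\iota(c')})(\llbracket f(\theta)\rrbracket) e_{c'}$.

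For the intersection component, pairs $\{s_1,s_2\}$ with both $\theta_{s_i} \neq \id_{s_i}$ contribute equally on both sides, by uniqueness of admissible lifts (Lemma \ref{le:uniqueliftpath}) together with Lemma \ref{le:intersections}(3). The discrepancy comes from pairs involving some $s \in E$ with $\theta_s = \id_s$: summing over all $s' \neq s$ in $I$ (which automatically combines the same-component-in-$Z$ contributions with the cross-component ones where $s,s'$ lie in different components of $Z$ but $f(s),f(s')$ lie in the same component $\Omega'$ of $Z'$, since $i(\theta_{s'}, \id_s) = 0$ in the cross-component case), the net $i$-extra at $s$ equals $X_s := \sum_{s' \neq s} [i(f(\theta_{s'}), \id_{f(s)}) - i(\theta_{s'}, \id_s)] e_{\Omega'(f(s))}$.

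The main obstacle is the matching of $Y_s$ and $X_s$, which I would carry out using Lemma \ref{le:m=i} twice: applied in $Z$ at $s$, it gives $\sum_{c \in C(s)^+}(m_c - m_{\iota(c)})(\llbracket\theta\rrbracket) = 2\sum_{s'} i(\theta_{s'}, \id_s) - 2$, and applied in $Z'$ at $f(s)$ it gives the analogous identity. Subtracting them (and using Lemma \ref{le:functorialitym} to equate the $f(C(s)^+)$-portion with $f$ applied to the $Z$-sum) yields $\sum_{c' \in C(f(s))^+ \setminus f(C(s)^+)}(m_{c'} - m_{\iota(c')})(\llbracket f(\theta)\rrbracket) = 2 \cdot (\text{coefficient of } e_{\Omega'(f(s))} \text{ in } X_s)$ (the boundary $\delta$-terms cancel because $\theta_s = \id_s$ gives equal boundary contributions on both sides). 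Under the relation $e_{c'} = \frac{1}{2}e_{\Omega'(f(s))}$ in $\Gamma_{f(E)^+}(Z')$, this identity converts $Y_s$ into $X_s$ (viewed in the $\frac{1}{2}\BZ^{\pi_0(Z')}$-slot), so the contribution $-Y_s$ from the $-m$ in $\deg'$ cancels the $i$-extra $X_s$ exactly, and the total difference vanishes in $\Gamma_{f(E)^+}(Z')$.
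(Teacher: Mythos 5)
Your proof is correct, but it takes a genuinely different route from the paper's. The paper first proves the lemma in the special case $E=\emptyset$, where the $m$- and $i$-components are \emph{both} preserved exactly by $f$ (because every identity strand starts at a point where $C(s)\to C(f(s))$ is a bijection, and Lemma~\ref{le:intersections} then gives equality of intersection numbers), and then reduces the general case to this one by excising the identity strands at $E$ using Lemma~\ref{le:removeid}, which states that $\deg_{E^+}(\theta)=\deg_{E^+}(\bar\theta)$. What you do instead is bypass the reduction entirely: you track the discrepancies $\mu_{f(s)}(f(\theta))-f(\mu_s(\theta))$ and $i(f(\theta))-f(i(\theta))$ term-by-term over $s\in E$ and show they cancel in $\Gamma_{f(E)^+}(Z')$ via the two applications of Lemma~\ref{le:m=i} at $s$ and at $f(s)$. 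That cancellation is exactly the mechanism hidden inside the paper's Lemma~\ref{le:removeid} (whose proof also hinges on Lemma~\ref{le:m=i}), so you are in effect re-deriving \ref{le:removeid} inline rather than invoking it. Both approaches are sound and rest on the same ingredients---Lemmas~\ref{le:m=i}, \ref{le:functorialitym}, \ref{le:intersections}, \ref{le:uniqueliftpath}, and the $\iota$-equivariant injection $C(s)\hookrightarrow C(f(s))$ from Lemma~\ref{le:localstructuremaps}; the paper's is more modular, yours is more self-contained.

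Two small points worth tightening. First, the identity you quote, $\sum_{c\in C(s)^+}(m_c-m_{\iota(c)})(\llbracket\theta\rrbracket)=2\sum_{s'}i(\theta_{s'},\id_s)-2$, has a spurious $-2$: if the sum is over $s'\neq s$ (as it must be, since $i(\id_s,\id_s)$ is not defined), the $\delta$-terms of Lemma~\ref{le:m=i} all vanish because $\theta_s=\id_s$ and $\bij{\theta}$ is a bijection, so no other strand begins or ends at $s$; the correct form is $\sum_{c\in C(s)^+}(m_c-m_{\iota(c)})(\llbracket\theta\rrbracket)=2\sum_{s'\neq s}i(\theta_{s'},\id_s)$, and this is what your $X_s$ accounting actually uses, so the slip is self-correcting. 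Second, to write $Y_s$ as a sum over $C(f(s))^+\setminus f(C(s)^+)$ (rather than over the $\iota$-orbits of $C(f(s))\setminus f(C(s))$), you implicitly use $f(C(s)^+)\subset C(f(s))^+$; this is true---it follows from Lemma~\ref{le:functorialitym} and orientation-preservation of $f$---but it deserves an explicit mention since $f(s)\in Z'_{exc}$ while $s$ may not be in $Z_{exc}$.
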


\begin{proof}
	Assume first $E=\emptyset$. Given $s\in I$ with $\theta_s=\id_s$, we have
	a bijection $C(s)\iso C(f(s))$. It follows that
	\begin{align*}
		f(m(\theta))&=\sum_{\substack{s\in I\\ \theta_s\neq\id_s}}
	\sum_{c\in \theta_s(0+)\cup\iota(\theta_s(0+))} m_c(\llbracket\theta\rrbracket)
	f(e_c)+
	\sum_{\substack{s\in I\\ \theta_s=\id_s}}
	\sum_{c\in C(s)} m_c(\llbracket\theta\rrbracket) f(e_c) \\
		&=\sum_{\substack{s'\in f(I)\\ f(\theta)_{s'}\neq\id_{s'}}}
		\sum_{c'\in f(\theta)_{s'}(0+)\cup\iota(f(\theta)_{s'}(0+))}
		m_{c'}(\llbracket f(\theta)\rrbracket) e_{c'}+
		\sum_{\substack{s'\in f(I)\\ f(\theta)_{s'}=\id_{s'}}}
		\sum_{c'\in C(s')} m_{c'}(\llbracket f(\theta)\rrbracket) e_{c'} \\
		&=m(f(\theta))
	\end{align*}
	by Lemma \ref{le:functorialitym}.

	Given $s'\in f(I)$ such that $f(\theta)_{s'}=\id_{s'}$, we have
	$s'{\not\in}Z'_f$. We deduce that $i(\theta_s,\theta_t)=i(f(\theta)_{f(s)},
	f(\theta)_{f(t)})$ for all $s\neq t\in I$ by Lemma \ref{le:intersections}.
	So $f(i(\theta))=i(f(\theta))$. We deduce that the lemma holds for $\theta$.

	\smallskip
	Consider now the case where $E\neq\emptyset$.
	Let $\bar{\theta}= (\theta_s)_{s\in I-E}$. We have 
	$\deg_{E^+}(\theta)=\deg_{E^+}(\bar{\theta})$ by Lemma \ref{le:removeid}; taking
	quotients, we obtain
	$\deg_{f^{-1}(f(E))^+}(\theta)=\deg_{f^{-1}(f(E))^+}(\bar{\theta})$.
	Since $f(\bar{\theta})=(f(\theta)_t)_{t\in f(I)-f(E)}$, it follows again from
	Lemma \ref{le:removeid} that $\deg_{f(E)^+}(f(\theta))=\deg_{f(E)^+}
	(f(\bar{\theta}))$. Since the lemma holds for $\bar{\theta}$, we deduce that
	the lemma holds for $\theta$.
\end{proof}

As a consequence of Lemma \ref{le:fdeg}, we have the following result.

\begin{prop}
	\label{pr:degfsharp}
	Let $f:Z\to Z'$ be a morphism of curves and let $\theta'$ be a non-zero
	map in $\CP^\bullet(Z')$. Then $f^\#(\theta')$ is a sum of maps $\theta$ such that
	$f(\deg_{Z_f^+}(\theta))=\deg_{f(Z_f)^+}(\theta')$.
\end{prop}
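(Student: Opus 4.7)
The plan is to reduce to Lemma \ref{le:fdeg} and then push the resulting equality forward through the quotients $\Gamma_D(Z)\twoheadrightarrow\Gamma_{D'}(Z)$ for $D\subset D'$.

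First I would unpack the definition of $f^\#(\theta')$. Recall
$$f^\#(\theta')=\sum_{p:I'\to Z,\ fp=\id_{I'}}f^\#_p(\theta')
=\sum_{\theta\in f^{-1}(\theta')}\theta,$$
so it suffices to fix a summand $\theta:p(I')\to J$ of $f^\#(\theta')$ and prove that $f(\deg_{Z_f^+}(\theta))=\deg_{f(Z_f)^+}(\theta')$. Since $p$ is a section of $f$, we have $|f(p(I'))|=|p(I')|=|I'|$, and similarly $|f(J)|=|J|=|I'|$, so the numerical hypothesis of Lemma \ref{le:fdeg} is satisfied. Moreover, by the construction of $f^\#_p$, we have $f(\theta)=\theta'$.

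Next, set $E=E(\theta):=\{s\in p(I')\cap Z_f\ |\ \theta_s=\id_s\}$. Lemma \ref{le:fdeg} gives immediately
$$f\bigl(\deg_{f^{-1}(f(E))^+}(\theta)\bigr)=\deg_{f(E)^+}(f(\theta))=\deg_{f(E)^+}(\theta').$$
I would then apply the canonical surjections coming from the inclusions $f^{-1}(f(E))^+\subset Z_f^+$ and $f(E)^+\subset f(Z_f)^+$. The key functoriality is that the morphism of groups $f:\Gamma(Z)\to\Gamma(Z')$ sends the central generator $e_c-\tfrac12 e_\Omega$ (for $c\in Z_f^+$ and $\Omega$ its connected component) to $e_{f(c)}-\tfrac12 e_{f(\Omega)}$, where $f(c)\in f(Z_f)^+$ and $f(\Omega)$ is a component of $Z'$ containing $f(c)$. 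Thus $f$ descends to a commutative square
$$\xymatrix{
\Gamma_{f^{-1}(f(E))^+}(Z)\ar[r]^-f\ar[d] & \Gamma_{f(E)^+}(Z')\ar[d] \\
\Gamma_{Z_f^+}(Z)\ar[r]_-f & \Gamma_{f(Z_f)^+}(Z')
}$$
in which the vertical maps are the canonical surjections. Chasing the equality displayed above around this square yields $f(\deg_{Z_f^+}(\theta))=\deg_{f(Z_f)^+}(\theta')$, which is exactly what is required.

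No serious obstacle is expected: the entire content is Lemma \ref{le:fdeg} together with the tautological functoriality of the grading groups under enlargement of $D$. The only point to be a little careful about is verifying that $f:\Gamma(Z)\to\Gamma(Z')$ respects the defining relations of the quotient, which is immediate from the facts recorded in \S\ref{se:functorialitypaths} (namely, $f$ preserves components and is $\iota$-equivariant on $T(-)$, hence sends $e_c-\tfrac12e_\Omega$ to a relation of the target group).
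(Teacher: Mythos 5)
Your proof is correct and follows the same route as the paper: the paper states Proposition~\ref{pr:degfsharp} as a direct consequence of Lemma~\ref{le:fdeg} with no further detail, and your argument supplies exactly the missing routine step (passing from $\Gamma_{f^{-1}(f(E))^+}(Z)$ and $\Gamma_{f(E)^+}(Z')$ to the larger quotients $\Gamma_{Z_f^+}(Z)$ and $\Gamma_{f(Z_f)^+}(Z')$, using that $E\subset Z_f$ and that $f$ carries the defining relations of one quotient into those of the other). The only place you are slightly terse is the claim $|f(J)|=|J|$: this holds because $f(J)$ is the target of the braid $\theta'=f(\theta)$, so $|f(J)|=|I'|=|p(I')|=|J|$.
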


Let $Z_1,\ldots,Z_r$ be the connected components of $Z$. The isomorphism
(\ref{eq:Pcomponents}) is compatible with the degree function in the following sense.
Given $\theta:I\to J$ a braid in $Z$, let $\theta_i$ be the restriction of
$\theta$ to $I\cap Z_i$. The image of
$(\deg(\theta_1),\ldots,\deg(\theta_r))$ in $\Gamma(Z)$ by the map
of (\ref{eq:Gammacomponents}) is $\deg(\theta)$.

%
%
%
%

\medskip

Let $I$ and $J$ be two finite subsets of $Z$ and let 
$\theta:I\to J$ be a braid in $Z$. We define
$$L(\theta)=\coprod_{i_1\neq i_2\in I}I(\theta_{i_1},\theta_{i_2})\indexnot{L}{L(\theta)}.$$

Note that $\zeta\mapsto\zeta^{-1}$ 
induces a fixed-point free involution $\mathrm{inv}$ on $L(\theta)$.

\smallskip
Let $\zeta\in L(\theta)$.  Put $i_1=\zeta(0)$ and $i_2=\zeta(1)$.
We define $\theta^\zeta$\indexnot{t}{\theta^\zeta} by $(\theta^\zeta)_i=\theta_i$ if $i\in I-\{i_1,i_2\}$,
$(\theta^\zeta)_{i_1}=\theta_{i_2}\circ\zeta=\bar{\zeta}\circ\theta_{i_1}$ and
$(\theta^\zeta)_{i_2}= \theta_{i_1}\circ\zeta^{-1}=\bar{\zeta}^{-1}\circ\theta_{i_2}$.
Note that $\theta^{\zeta^{-1}}=\theta^\zeta$.

\medskip
Let $D(\theta)$\indexnot{D}{D(\theta)} be the set of classes $\zeta$ in $L(\theta)$
such that
\begin{itemize}
	\item[(a)] given a class of smooth paths $\zeta':\zeta(0)\to\zeta(1)$
		such that
		$\zeta\circ\zeta^{\prime -1}$ and $\zeta^{\prime -1}\circ
		\zeta$ are smooth and have the same
		orientation as $\zeta$ and $\zeta'$, and given
		a class of smooth paths $\zeta'':\bar{\zeta}(0)\to
		\bar{\zeta}(1)$ such that
		$\bar{\zeta}\circ\zeta^{\prime\prime -1}$ and
		$\zeta^{\prime\prime -1}\circ\bar{\zeta}$ are smooth and have
		the same
		orientation as $\bar{\zeta}$ and $\zeta^{\prime\prime}$,
		then $\zeta'=\zeta$ or $\zeta''=\bar{\zeta}$.
	\item[(b)] given $\zeta'$ and $\zeta''$ in $L(\theta)$
with $\zeta=\zeta'\circ\zeta''$, then $\zeta'$ and $\zeta''$ have opposite orientations.
\end{itemize}

\begin{rem}
Condition (a) above is automatically satisfied if the component of
the support of $\zeta$ is not isomorphic to $S^1$.

The subset $D(\theta)$ of $L(\theta)$ is stable under the involution $\mathrm{inv}$.
\end{rem}

The next lemma restricts the cases where condition (b) above needs to be
checked.

\begin{lemma}
	\label{le:conditionbDtheta}
	Let $\zeta,\zeta',\zeta''\in L(\theta)$ such that
	$\zeta=\zeta'\circ\zeta''$.
	If $\zeta'(0)\in Z_o$ and $\theta_{\zeta'(0)}=\id$, then
	$\zeta'$ and $\zeta''$ have opposite orientations.
\end{lemma}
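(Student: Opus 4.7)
The plan is to analyze the local behavior at the junction point $s := \zeta'(0) = \zeta''(1) \in Z_o$ using the non-singular cover $\hat{Z}$. Write $i_1 := \zeta''(0)$ and $i_2 := \zeta'(1)$, so $\zeta \in I(\theta_{i_1}, \theta_{i_2})$, $\zeta' \in I(\theta_s, \theta_{i_2})$, and $\zeta'' \in I(\theta_{i_1}, \theta_s)$. Because $\theta_s = \id_s$, the conjugate paths simplify to $\bar{\zeta}' = \theta_{i_2} \circ \zeta'$ and $\bar{\zeta}'' = \zeta'' \circ \theta_{i_1}^{-1}$, and moreover $\bar{\zeta} = \bar{\zeta}' \circ \bar{\zeta}''$, with the junction of $\bar{\zeta}$ at $s$ inherited from that of $\zeta$.

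First, I would apply Lemma \ref{le:supportI} to $\zeta'$ and $\zeta''$. Since $\zeta'$ is a non-identity class from $s$ with support in $\{s\} \cup \supp(\theta_{i_2})$, connectedness forces $s \in \supp(\theta_{i_2})$; similarly $s \in \supp(\theta_{i_1})$. Because $\theta_{i_1}, \theta_{i_2}$ are admissible and $s \in Z_o$, the orientation of $Z_o$ near $s$ is well-defined, and both $\theta_{i_1}, \theta_{i_2}$ traverse $s$ in the oriented direction; this determines a common ``forward side'' at $s$, and in particular $\theta_{i_1}$ and $\theta_{i_2}$ occupy the same $\iota$-orbit of branches at $s$.

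The core step is to pass to $\hat{Z}$ (using Lemma \ref{le:coverI}), lifting $\zeta, \zeta', \zeta'', \bar\zeta, \bar\zeta', \bar\zeta''$ to smooth paths in a common 1-manifold component containing the single preimage $\hat s$ of $s$ determined by $\zeta''(1-)$ and $\zeta'(0+)$. In $\hat Z$, orientations of smooth non-identity classes are concrete directions on the 1-manifold. Conditions (ii) for $\zeta'$ and $\zeta''$ then force: extending $\zeta'$ by $\theta_{i_2}$ reverses its direction, and extending $\zeta''$ by $\theta_{i_1}^{-1}$ reverses its direction. Since $\theta_{i_1}, \theta_{i_2}$ pass through $\hat s$ in the same oriented direction, this means $\zeta'$ leaves $\hat s$ going against this forward direction while $\zeta''$ arrives at $\hat s$ going with it (or symmetrically, depending on the specific geometric configuration), so their net directions at $\hat s$ are opposite. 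Applying Definition \ref{de:opposite} with the common minimal smooth path provided by the minimal representative through $\hat s$ (whose existence is guaranteed by $\supp(\zeta') \cup \supp(\zeta'') \subset \supp(\theta_{i_1}) \cup \supp(\theta_{i_2}) \cup \{s\}$ and Lemma \ref{le:supportI} applied to $\zeta$), we conclude $\zeta'$ and $\zeta''$ have opposite orientations.

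The main obstacle will be carefully handling the orientation bookkeeping at $s$ when $s \in Z_{exc}$, where $C(s)$ has several $\iota$-orbits: one must ensure the smoothness constraint $\zeta'(0+) = \iota(\zeta''(1-))$ combined with the constraint that $\theta_{i_1}, \theta_{i_2}$ share a single $\iota$-orbit at $s$ (because they are admissible and $s$ sits on both supports with the orientation of $Z_o$ linking opposite branches) forces $\zeta'$ and $\zeta''$ to lie on that same strand through $s$, so that the comparison of orientations makes sense in a common minimal smooth path. The role of the hypothesis $\theta_s = \id$ is precisely to prevent any ``twist'' contribution from $\theta_s$ at the junction, which would otherwise decouple the orientation constraints from (ii) for $\zeta'$ and $\zeta''$; its vanishing is what allows the orientation inversions in (ii) to be read off directly at $s$.
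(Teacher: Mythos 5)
Your proposal follows the paper's route: the crux is to use condition (ii) in the definition of $I(\theta_s,\theta_{i_2})$ for $\zeta'$ (simplified since $\theta_s=\id$) to force $\theta_{i_2}$ to cancel $\zeta'$ and hence pin down $\zeta'(0+)\in\iota(C(s)^+)$, and symmetrically $\zeta''(1-)\in\iota(C(s)^+)$, then compare. Passing through the non-singular cover is a legitimate way to make the orientation comparison explicit; the paper records only the two membership statements above and then concludes, which amounts to the same thing.

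There is, however, a gap in your second paragraph. You deduce that $\theta_{i_1}$ and $\theta_{i_2}$ pass through $s$ on a common $\iota$-orbit of $C(s)$ from the facts that both are admissible, $s$ lies on both supports, and $s\in Z_o$. When $s\in Z_{exc}$ this does not follow: the orientation of $Z_o$ near $s$ orients each strand $\{c,\iota(c)\}$ separately and provides no way to compare strands, so two admissible paths may perfectly well traverse $s$ on different strands. What your lifting step actually needs is that $\zeta'(0+)$ and $\zeta''(1-)$ lie in a common $\iota$-orbit, and that follows from the smoothness of $\zeta=\zeta'\circ\zeta''$ at the junction (whether or not the composition cancels at $s$, the two tangent data at $s$ lie on one strand). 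That $\theta_{i_1}$ and $\theta_{i_2}$ then pass through the same preimage $\hat s$ is a \emph{consequence} of the cancellations forced by condition (ii), not a separately available input; you have the order of implications reversed. Relatedly, the ``smoothness constraint $\zeta'(0+)=\iota(\zeta''(1-))$'' you invoke at the end is the wrong branch: that is the no-cancellation case, which is exactly what $\zeta'(0+),\zeta''(1-)\in\iota(C(s)^+)$ rules out, since a single $\iota$-orbit meets $\iota(C(s)^+)$ in exactly one element and the two directions must therefore be equal. The lemma's conclusion lives in the cancellation case $\zeta'(0+)=\zeta''(1-)$, which is precisely why $\zeta'$ and $(\zeta'')^{-1}$ leave $\hat s$ in the same direction and hence why $\zeta'$ and $\zeta''$ have opposite orientations.
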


\begin{proof}
	Let $z=\zeta'(0)=\zeta''(1)$.  We have $\zeta'\in
	I(\id_z,\theta_{\zeta(1)})$. Since $\bar{\zeta}'=\theta_{\zeta(1)}\circ\zeta'$
	is smooth and has opposite orientation to $\zeta'$, it follows that
	$\zeta'(0+)\in \iota(C(z)^+)$.
	Similarly, $\zeta''(1-)\in \iota(C(z)^+)$. We deduce that
	$\zeta'$ and $\zeta''$ have opposite orientations.
\end{proof}

\begin{lemma}
	\label{le:restrictionDtheta}
	Let $I'$ be a subset of $I$ such that $I-I'\subset Z_o$ and
	$\theta_i=\id$ for $i\in I-I'$. 

	We have $D(\theta_{|I'})\subset D(\theta)$.
\end{lemma}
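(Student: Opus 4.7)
The plan is to verify directly that any $\zeta\in D(\theta_{|I'})$ satisfies the two defining conditions (a) and (b) required for membership in $D(\theta)$. Since $I'\subset I$, the inclusion of index sets gives $L(\theta_{|I'})\subset L(\theta)$, so $\zeta\in L(\theta)$ is automatic and only the two conditions need to be checked. Condition (a) is intrinsic to $\zeta$ (and to the map $\zeta\mapsto\bar\zeta$, which depends on $\theta_{\zeta(0)}$ and $\theta_{\zeta(1)}$, both of which come from $\theta_{|I'}$ since $\zeta(0),\zeta(1)\in I'$): it makes no reference to the other entries of $\theta$. So (a) transfers for free from $D(\theta_{|I'})$ to $D(\theta)$.

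The content is therefore entirely in condition (b). Suppose $\zeta=\zeta'\circ\zeta''$ with $\zeta',\zeta''\in L(\theta)$. Writing $\zeta'\in I(\theta_i,\theta_j)$ and $\zeta''\in I(\theta_k,\theta_l)$, the endpoints of the paths force $k=\zeta(0)$, $j=\zeta(1)$, and $i=l=\zeta'(0)=\zeta''(1)$. Since $\zeta\in L(\theta_{|I'})$ we have $k,j\in I'$, so the only way $\zeta'$ or $\zeta''$ can fail to lie in $L(\theta_{|I'})$ is if the common midpoint $i=l$ lies in $I\setminus I'$.

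I would then split into two cases. If $i\in I'$, then both $\zeta'$ and $\zeta''$ lie in $L(\theta_{|I'})$, and the hypothesis $\zeta\in D(\theta_{|I'})$ applied to this very decomposition gives that $\zeta'$ and $\zeta''$ have opposite orientations. If instead $i\in I\setminus I'$, then by the assumption of the lemma $\zeta'(0)=i\in Z_o$ and $\theta_{\zeta'(0)}=\theta_i=\id$, so Lemma \ref{le:conditionbDtheta} applies directly to the decomposition $\zeta=\zeta'\circ\zeta''$ and again yields opposite orientations. In both cases condition (b) holds for $\zeta\in L(\theta)$, and therefore $\zeta\in D(\theta)$.

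The only delicate point — and the reason this is not completely tautological — is that a priori the decomposition $\zeta=\zeta'\circ\zeta''$ inside $L(\theta)$ might introduce new intermediate strand labels beyond those in $I'$. Keeping careful track of the endpoint identifications $i=l=\zeta'(0)=\zeta''(1)$ shows that any such new label must sit in $I\setminus I'\subset Z_o$ at a point where $\theta$ is an identity, which is exactly the hypothesis of Lemma \ref{le:conditionbDtheta}. No other step requires work, so the proof is essentially a bookkeeping argument that routes all new cases through that lemma.
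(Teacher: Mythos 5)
Your proof is correct and takes essentially the same route as the paper: the paper's one-line argument records the inclusion $L(\theta_{|I'})\subset L(\theta)$ and invokes Lemma \ref{le:conditionbDtheta}, which is exactly how you handle condition (b) when the intermediate label falls in $I\setminus I'$. Your version just makes explicit the endpoint bookkeeping and the case split (intermediate label in $I'$ versus in $I\setminus I'$) that the paper leaves implicit.
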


\begin{proof}
	We have $L(\theta_{|I'})\subset L(\theta)$ and Lemma
	\ref{le:conditionbDtheta} shows that 
	$D(\theta_{|I'})\subset D(\theta)$.
\end{proof}

\begin{example}
	In the picture below, the left side shows a valid braid $\theta$,
for which the conclusion of Lemma \ref{le:conditionbDtheta} holds. For contrast,
the right side shows a braid $\theta$ that is disallowed since $\theta_{i_2}$
is not oriented, and the conclusion of Lemma \ref{le:conditionbDtheta}
fails.
$$\includegraphics[scale=1.2]{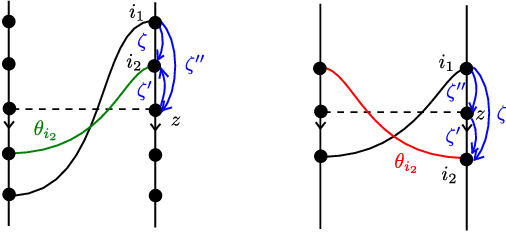}$$
\end{example}

\subsubsection{Strands on $S^1$}
\label{se:StrandsS1}

Let $Z=S^1$, viewed as an unoriented manifold.
Fix a family $M=\{a_1,\ldots,a_n\}$ of cyclically ordered points on $S^1$, \ie,
$a_j=e^{i e_j}$ for some real numbers $e_1<\cdots<e_n$ with $e_n-e_1<2\pi$.

\smallskip
Fix $r',r\in\{1,\ldots,n\}$.
There is a bijection
$$F_{r',r}:r'-r+n\BZ\iso\Hom_{\Pi(S^1)}(a_r,a_{r'}):$$
it sends $l$ to the homotopy class of paths going in the positive direction
and winding $\lfloor \frac{l}{n}\rfloor$ times around $S^1$, if $l\ge 0$,
and to the homotopy class of paths going in the negative direction
and winding $\lfloor \frac{-l}{n}\rfloor$ times around $S^1$, otherwise.

We put
$$F_r=\sum_{r'}F_{r',r}:\BZ\iso\coprod_{r'}
\Hom_{\Pi(S^1)}(a_r,a_{r'}).$$
Given $r,r'\in\{1,\ldots,n\}$ and $l,l'\in\BZ$ with $r'-r=l\pmod n$, we have
$F_r(l+l')=F_{r'}(l')\circ F_r(l)$.
Note also that given $j\in\{1,\ldots n\}$ and $j'\in\BZ$, we have
$$\supp(F_j(j'-j))=\begin{cases}
	S^1 & \text{ if }|j'-j|\ge n \\
	\{e^{iu}\ |\ e_j\le u\le e_{j''}+2\pi\delta_{j'>n} & \text{ if }
	j'-j\in\{0,\ldots,n-1\} \\
	\{e^{iu}\ |\ e_{j''}-2\pi\delta_{j'\le 0}\le u\le e_j & \text{ if }
	j-j'\in\{0,\ldots,n-1\}
\end{cases}$$
where $j''\in\{1,\ldots,n\}$ and $j''-j'\in n\BZ$.

\smallskip
We denote by $\vec{S}^1$ the oriented curve $S^1$.
	Fix  $z=e^{ix}\in S^1$ with $x<e_1$ and $e_n-x<2\pi$ and $\Omega$ a connected
	open neighbourhood of $z$ in $S^1$ containing no $a_i$. Let
	$I=S^1-\{z\}$ unoriented and $\vec{I}=S^1-\{z\}$ oriented. We define
	$\dot{S}^1$ to be the curve $S^1$ with $(\dot{S}^1)_o=\Omega$ with its
	standard orientation.

\begin{prop}
	\label{pr:prestrandsnonsingular}
There is an isomorphism of pointed categories
	$F:(\CS_n)_+\iso \CP^\bullet_M(S^1)$ given by
	$F(J)=\{a_j\}_{j\in \tilde{J}\cap[1,n]}$ and
	$F(\sigma)_{a_j}=F_j(\sigma(j)-j)$ for $\sigma$ a map of $\CS_n$.

	It restricts to isomorphisms of
	pointed categories
	$$(\CS_n^+)_+\iso \CP^\bullet_M(\dot{S}^1),\ (\CS_n^{++})_+\iso\CP^\bullet_M(\vec{S}^1),\
	(\CS_n^f)_+\iso\CP^\bullet_M(I) \text{ and }
	(\CS_n^{f++})_+\iso\CP^\bullet_M(\vec{I}).$$
\end{prop}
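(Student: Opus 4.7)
The plan is to verify that the explicit formula defining $F$ gives a well-defined pointed functor, to check bijectivity on objects and on Hom-sets using that $F_j : \BZ \iso \coprod_{j'} \Hom_{\Pi(S^1)}(a_j, a_{j'})$ is a bijection, and finally to analyze admissibility in each of the four subcurves to match the defining conditions of $\CS_n^?$.

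First I would check that $F$ is well-defined and functorial. The map $J \mapsto \{a_j : j \in \tilde J \cap [1,n]\}$ is clearly a bijection between subsets of $\BZ/n$ and finite subsets of $\Ba$. For $\sigma : I \to J$ in $\CS_n$ and $j \in \tilde I \cap [1,n]$, let $\bar\sigma(j) \in \{1,\ldots,n\}$ be the representative of $\sigma(j) \bmod n$. Then $F(\sigma)_{a_j} = F_j(\sigma(j)-j)$ lies in $\Hom_{\Pi(S^1)}(a_j, a_{\bar\sigma(j)})$, and since $\sigma$ is an $n$-periodic bijection, $j \mapsto \bar\sigma(j)$ is a bijection $\tilde I \cap [1,n] \iso \tilde J \cap [1,n]$, so $F(\sigma)$ is a braid. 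All paths in unoriented $S^1$ are automatically admissible (it is non-singular with $Z_u = Z$), so $F(\sigma)$ is a non-zero map of $\CP^\bullet_\Ba(S^1)$. Functoriality follows from the cocycle identity $F_{r'}(l') \circ F_r(l) = F_r(l+l')$ for $r' \equiv r + l \pmod n$: taking $r = j$, $l = \sigma(j) - j$, $r' = \bar\sigma(j)$, and using the $n$-periodicity $\tau(\bar\sigma(j)) - \bar\sigma(j) = \tau(\sigma(j)) - \sigma(j)$, one obtains $F(\tau)_{a_{\bar\sigma(j)}} \circ F(\sigma)_{a_j} = F_j(\tau\sigma(j) - j) = F(\tau\sigma)_{a_j}$.

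Next I would establish bijectivity on Hom-sets. Given a braid $\theta : F(I) \to F(J)$, each $\theta_{a_j}$ is a non-zero homotopy class of paths from $a_j$ to some $a_k \in F(J)$, and by the bijectivity of $F_j$ there is a unique $l_j \in \BZ$ with $\theta_{a_j} = F_j(l_j)$. Setting $\sigma(j) := j + l_j$ for $j \in \tilde I \cap [1,n]$ and extending by $n$-periodicity produces the unique $\sigma \in \Hom_{\CS_n}(I,J)$ with $F(\sigma) = \theta$. This gives the first isomorphism.

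For the four restrictions, I would compute in each case when $F_j(l)$ admits an admissible representative in the given curve, tracking whether the path crosses $z$ and in which direction. The intended correspondences are: in $\dot{S}^1$ (only a neighborhood of $z$ oriented, positively), the class $F_j(l)$ is admissible iff it avoids negative crossings of $z$, iff $j + l \ge 1$ iff $\sigma(j) > 0$ --- the defining condition of $\hat\GS_n^+$; in $\vec{S}^1$ (fully oriented), admissibility means $l \ge 0$ iff $\sigma(j) \ge j$, matching $\hat\GS_n^{++}$; in $I = S^1 \setminus \{z\}$ (unoriented), admissibility means no crossing of $z$, iff $j + l \in \{1,\ldots,n\}$ iff $\sigma(j) \in \{1,\ldots,n\}$, matching $\CS_n^f$; and for $\vec I$ one combines the two. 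Each $\CS_n^?$ is a submonoid of $\CS_n$, so composition in $(\CS_n^?)_+$ is preserved, and the admissibility analysis above ensures $F$ restricts bijectively onto the corresponding $\CP^\bullet_\Ba(?)$.

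The only substantive technical work is the case analysis in the fourth step, specifically in $\dot{S}^1$, where one must carefully use the definition of $F_j(l)$ (positive direction with $\lfloor l/n \rfloor$ winds when $l \ge 0$, negative with $\lfloor -l/n \rfloor$ winds when $l < 0$) together with the position of $z$ in the arc between $a_n$ and $a_1$ to count the positive and negative crossings of $z$ by a minimal representative. This is elementary but requires keeping track of the conventions; all the other steps are essentially formal consequences of the bijectivity of $F_j$ and the cocycle relation.
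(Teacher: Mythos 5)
Your proposal is correct and follows essentially the same route as the paper's proof: both establish the bijection on $\Hom$-sets via the bijectivity of $F_j$, verify functoriality through the cocycle identity $F_{\sigma(j)}(\sigma')\circ F_j(\sigma) = F_j(\sigma'\sigma)$, and then handle the four restrictions by translating admissibility into the arithmetic conditions $\sigma(j)>0$, $\sigma(j)\ge j$, $\sigma(j)\in[1,n]$, and their conjunction. Your explicit crossing-count analysis for $\dot{S}^1$ is somewhat more detailed than the paper, which simply states the resulting conditions, but the underlying argument is the same; one minor note is that the paper writes $\sigma(j)\ge 0$ where the defining condition of $\CS_n^+$ (and your version) requires $\sigma(j)>0$, which appears to be a typo there.
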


\begin{proof}
Consider $J,J'\subset \BZ/n$. We have an injective map
	$f:\Hom_{\CS_n}(J,J')\to \BZ^J,\ \sigma\mapsto (\sigma(j)-j))_b$,
	where $j\in\{1,\ldots,n\}$ and $b=j+n\BZ$.
	The image of that map is the set of 
	those $c\in\BZ^J$ such that $\{c_b+b\}_b=J'$ and we obtain a bijection
\begin{align*}
	\Hom_{\CS_n}(J,J')&\iso \Hom_{\CP^\bullet_M(\vec{S}^1)}(\{a_j\}_{j\in \tilde{J}\cap[1,n]},
	\{a_{j'}\}_{j'\in \tilde{J}'\cap[1,n]})\\
	\sigma&\mapsto \bigl(F_{j',j}(f(\sigma)_{j+n\BZ})\bigr)_{
		j\in \tilde{J}\cap[1,n],\ j'\in \tilde{J}'\cap[1,n],\ \sigma(j)-j'\in n\BZ}.
\end{align*}
	We deduce
	that $F$ induces a bijection on pointed $\Hom$-sets.
	Consider now $\sigma:J\to J'$ and $\sigma':J'\to J''$ two maps
	in $\CS_n$. Given $j\in\tilde{J}\cap[1,n]$, we have
$$F(\sigma'\sigma)_{a_j}=F_j(\sigma'\sigma(j)-j)=
	F_j(\sigma'(\sigma(j))-\sigma(j)+\sigma(j)-j)=
	F_{\sigma(j)}(\sigma')_{a_{\sigma(j)}}\circ F_j(\sigma)_{a_j}.$$
We deduce that $F$ is a functor and the first statement of the proposition follows.

	\smallskip
	Consider now $\sigma\in\Hom_{\CS_n}(J,J')$.
	
	The map
	$F(\sigma)$ is in $\CP^\bullet_M(\dot{S}^1)$ if and only if
	$\sigma(j)\ge 0$ for all $j\in [1,n]\cap \tilde{J}$, hence if and only if
	$\sigma$ is in $\CS_n^+$.

	The map
	$F(\sigma)$ is in $\CP^\bullet_M(\vec{S}^1)$ if and only if
	$\sigma(j)-j\ge 0$ for all $j\in \tilde{J}$, hence if and only if
	$\sigma$ is in $\CS_n^{++}$.

	The map
	$F(\sigma)$ is in $\CP^\bullet_M(I)$ if and only if
	$\sigma(j)\in [1,n]$ for all $j\in \tilde{J}\cap [1,n]$, hence if and only if
	$\sigma$ is in $\CS_n^f$.

	The proposition follows.
\end{proof}

There are morphisms of groups $F_R:R_n\to R(S^1),\ \alpha_{j+n\BZ}\mapsto 
\llbracket F_j(1)\rrbracket$ and $F_L:L_n\to L(S^1),\
\eps_{j+n\BZ}\mapsto e_{c_j}$,
where $j\in\{1,\ldots,n\}$,
$c_j=(a_j,a_je^{iu})\in C(a_j)$ and $u\in\BR_{>0}$ is small enough.

\begin{lemma}
	\label{le:S1comparison}
	Given $\alpha,\beta\in R_n$, we have
	$F_L(\langle\alpha,\beta\rangle)=\langle F_R(\alpha),F_R(\beta)\rangle$
	and there is an injective morphism of groups
	$F_\Gamma:\Gamma_n\to\Gamma(S^1),\ (r,(l,\alpha))\mapsto (r,(F_L(l),F_R(\alpha)))$.

	Let $D$ be a subset of $\{1,\ldots,n\}\times\{\pm 1\}$ that embeds in its
	projection on $\{1,\ldots,n\}$. Define $\partial:D\to T(S^1)$ by
	$\partial((i,\nu_i))=c_i$ if $\nu_i=1$ and
	$\partial((i,\nu_i))=\iota(c_i)$ otherwise. The morphism $F_\Gamma$
	induces an isomorphism of groups
	$F_D:\Gamma_D\to\Gamma_M(S^1,\partial(D))$.
	We have $u<u'$ if and only if $F_D(u)<F_D(u')$.

	Let $\sigma$ be a map in $\CS_n$. We have 
$F_R(\llbracket \sigma\rrbracket)=\llbracket F(\sigma)\rrbracket$,
	$m(F(\sigma))=F_L(m(\sigma))$,
	$i(F(\sigma))=\ell(\sigma)$ and
	$\deg(F(\sigma))=F_\Gamma(\deg(\sigma))$.
\end{lemma}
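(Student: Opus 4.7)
The plan is to prove the lemma by a sequence of reductions: first verify the bilinearity identity on generators, then use it to produce the group morphisms $F_\Gamma$ and $F_D$, and finally verify the assertions about $F(\sigma)$ by combining the length formula of Lemma~\ref{le:formulalength} with the intersection formula of Lemma~\ref{le:intersectionlength}.

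First, to prove $F_L(\langle\alpha,\beta\rangle)=\langle F_R(\alpha),F_R(\beta)\rangle$, by bilinearity of both sides it suffices to check the identity on generators $\alpha=\alpha_{j+n\BZ}$, $\beta=\alpha_{j'+n\BZ}$ for $j,j'\in\{1,\ldots,n\}$. The left side, expanded via $\langle\alpha,\beta\rangle=\alpha\cdot(\eps_{j'+1+n\BZ}-\eps_{j'+n\BZ})$ and the explicit formula $\alpha_a\cdot\eps_b=(\delta_{a,b}+\delta_{a+1,b})\eps_b$, produces a signed sum of two terms in the $\eps$-generators, which $F_L$ sends to the corresponding $e_{c_i}$'s. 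For the right side, the key observation is that the path $F_j(1)$ is an injective positively-oriented arc from $a_j$ to $a_{j+1\bmod n}$, so its tangential multiplicities $m_c,m_{\iota(c)}$ are nonzero only at $a_j$ (with $(m_{c_j},m_{\iota(c_j)})=(-1,0)$ via the convention on $c_j$) and at $a_{j+1\bmod n}$ (symmetrically). A case-by-case comparison of the two expressions then yields the identity. From this the well-definedness and injectivity of $F_\Gamma$ are immediate: injectivity of $F_R$ follows from Lemma~\ref{le:embeddingR} applied to the dense set $\{a_je^{i\eps}\}\subset S^1$, and injectivity of $F_L$ is clear from its action on generators.

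For the descent to $F_D:\Gamma_D\to\Gamma_{\partial(D)}(S^1)$, verify that $F_\Gamma$ sends each defining relation to a relation in the target. The element $(0,\eps_{i+n\BZ})-(\tfrac{1}{2}\nu_i,0)$ maps to $(e_{c_i},0)-(\tfrac{1}{2}\nu_i e_{S^1},0)$; when $\nu_i=1$ we use $\partial((i,1))=c_i\in\partial(D)$ to identify $e_{c_i}$ with $\tfrac{1}{2}e_{S^1}$, and when $\nu_i=-1$ we use $\partial((i,-1))=\iota(c_i)\in\partial(D)$ together with $e_{c_i}=-e_{\iota(c_i)}$ in $L(S^1)$. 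Injectivity of $F_D$ follows by comparing the quotient structures: the subgroup of $L_n$ killed in $\Gamma_D$ is freely generated by $\eps_{i+n\BZ}-\tfrac{1}{2}\nu_i\cdot(\text{unit})$ for $(i,\nu_i)\in D$, and its $F_L$-image generates exactly the corresponding subgroup in $\Gamma_{\partial(D)}(S^1)$. The preservation of the order is immediate since the orders on both sides are defined through the $\frac{1}{2}\BZ_{\ge 0}$ component, and this component is left unchanged by $F_D$.

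Finally, for the identities involving $\sigma\in\Hom_{\CS_n}(J,J')$: the identity $F_R(\llbracket\sigma\rrbracket)=\llbracket F(\sigma)\rrbracket$ reduces, via the expansion $\alpha_{i,\sigma(i)}=\sum_{i\le r<\sigma(i)}\alpha_{r+n\BZ}-\sum_{\sigma(i)\le r<i}\alpha_{r+n\BZ}$, to the telescoping relation $\llbracket F_i(l+l')\rrbracket=\llbracket F_{i'}(l')\rrbracket+\llbracket F_i(l)\rrbracket$ noted after the definition of $F_r$. The identity $m(F(\sigma))=F_L(m(\sigma))$ follows by computing both sides as sums over $i\in\tilde{J}\cap[1,n]$, using Lemma~\ref{le:functorialitym} and the preceding identity to match tangential multiplicities at each $a_i$. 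The identity $i(F(\sigma))=\ell(\sigma)$ is the main technical point: by Remark~\ref{re:decomposition I} one writes $i(F(\sigma))=\sum_{i_1\neq i_2}i(F(\sigma)_{a_{i_1}},F(\sigma)_{a_{i_2}})/2$, and Lemma~\ref{le:intersectionlength} evaluates each term as the absolute value $|\lfloor(\sigma(i_2)-\sigma(i_1))/n\rfloor|$, matching exactly the formula for $\ell(\sigma)$ in Lemma~\ref{le:formulalength}. Combining the three identities gives $\deg(F(\sigma))=F_\Gamma(\deg(\sigma))$.

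The main obstacle will be the careful bookkeeping of signs and winding numbers in the bilinearity verification and in reducing $i(F(\sigma))$ to $\ell(\sigma)$; both ultimately come down to finite case checks once the conventions for $c_i$ and $C(z)^+$ are unwound, with Lemma~\ref{le:intersectionlength} doing the geometric work.
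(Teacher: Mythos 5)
Your outline follows the paper's proof closely: check the bilinearity identity $F_L(\langle\alpha,\beta\rangle)=\langle F_R(\alpha),F_R(\beta)\rangle$ on generators, deduce $F_\Gamma$ and descend to $F_D$, and establish the four $F(\sigma)$ identities using Lemmas~\ref{le:formulalength} and \ref{le:intersectionlength} together with Remark~\ref{re:decomposition I}. The one substantive slip is in the stated tangential multiplicities of $\llbracket F_j(1)\rrbracket$: you claim $(m_{c_j},m_{\iota(c_j)})=(-1,0)$ at $a_j$, with the endpoint at $a_{j+1}$ treated ``symmetrically.'' The correct values are $m_{c_r}(\llbracket F_j(1)\rrbracket)=\delta_{r,j}$ and $m_{\iota(c_r)}(\llbracket F_j(1)\rrbracket)=-\delta_{r,j+1}$, so at $a_j$ one gets $(1,0)$ (the path \emph{exits} $a_j$ in the $c_j$ direction, contributing $+1$ to $m_{c_j}^+$) and at $a_{j+1}$ one gets $(0,-1)$; the two endpoints are not symmetric. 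With your stated values the case-by-case check against $\alpha_a\cdot\eps_b=(\delta_{a,b}+\delta_{a+1,b})\eps_b$ will not close; with the corrected ones it gives exactly $\langle\llbracket F_i(1)\rrbracket,\llbracket F_j(1)\rrbracket\rangle=(\delta_{i,j+1}+\delta_{i,j})F_L(\eps_{j+1+n\BZ})-(\delta_{i,j}+\delta_{i+1,j})F_L(\eps_{j+n\BZ})$, and the rest of the outline goes through. (A minor point: injectivity of $F_R$ is actually immediate from the same generator computation $m_{c_r}(\llbracket F_j(1)\rrbracket)=\delta_{r,j}$, which exhibits the coordinates; you do not need the full strength of Lemma~\ref{le:embeddingR} for that.)
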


\begin{proof}
	Let $r,j\in\{1,\ldots,n\}$ and let $j'\in\BZ$.
	We have
$$m_{c_r}(\llbracket F_j(j'-j)\rrbracket)=
		|\{i\in r+n\BZ\ |\ j\le i<j'\}|
		-|\{i\in r+n\BZ\ |\ j>i\ge j'\}|$$
and
	$$m_{\iota(c_r)}(\llbracket F_j(j'-j)\rrbracket)=
		-|\{i\in r+n\BZ\ |\ j< i\le j'\}|
		+|\{i\in r+n\BZ\ |\ j\ge i> j'\}|$$

	In particular,
	$m_{c_r}(\llbracket F_j(1)\rrbracket)=
	\delta_{r,j}$ and 
	$m_{\iota(c_r)}(\llbracket F_j(1)\rrbracket)=-\delta_{r,j+1}$.
	This shows that $F_R$ is injective. This shows also that given 
	$i\in\{1,\ldots,n\}$, we have
	$$\langle \llbracket F_i(1)\rrbracket,\llbracket F_j(1)\rrbracket\rangle=
	(\delta_{i,j+1}+\delta_{i,j})F_L(\eps_{j+1+n\BZ})-(\delta_{i,j}+\delta_{i+1,j})F_L(\eps_{j+n\BZ})=F_L(\langle\alpha_{i+n\BZ},\alpha_{j+n\BZ}\rangle).$$
	This shows the first equality and this shows that $F_R$ induces an injective
	morphism of groups $F_\Gamma$.
	
	\smallskip
	Taking quotients, we obtain an injective morphism of groups
	$F_D:\Gamma_D\to\Gamma(S^1,\partial(D))$ compatible with
	the order and with image $\Gamma_M(S^1,\partial(D))$.

	\smallskip
	Consider $\sigma\in\Hom_{\CS_n}(I,J)$.
	Given $d\in\BZ$, we have
	$\llbracket F_r(d)\rrbracket=F_R(\alpha_{r,r+d})$,
	hence $F_R(\llbracket \sigma\rrbracket)=\llbracket F(\sigma)\rrbracket$.

	\smallskip
We have
$$m(F(\sigma)) = \sum_{r,j \in \tilde{I} \cap [1.n]} (m_{c_r} - m_{\iota(c_r)})(\llbracket F_j(\sigma(j) - j)\rrbracket) e_{c_r}$$ and
	$$m(\sigma) = \sum_{r,j \in \tilde{I} \cap [1,n]} \alpha_{j,\sigma(j)} \cdot \varepsilon_{r+n\BZ}.$$
	Since
$$\alpha_{j,\sigma(j)} \cdot \varepsilon_{r+nZ} =
	(m_{c_r}-m_{\iota(c_r)})(F(\sigma)) \varepsilon_{r+nZ},$$
	it follows that $m(F(\sigma))=F_L(m(\sigma))$. 

	\smallskip

	Consider $i_1,i_2\in \tilde{I}$ with $0\le i_1<i_2<n$. We have
	$i(F(\sigma)_{a_{i_1}},F(\sigma)_{a_{i_2}})=
	i(\gamma_1,\gamma_2)$ for some minimal paths $\gamma_l$ in
	$F(\sigma)_{a_{i_l}}$ by Lemma \ref{le:intersections}.
	Lemma \ref{le:intersectionlength} shows that
	$i(F(\sigma)_{a_{i_1}},F(\sigma)_{a_{i_2}})=
	\bigl|{\lfloor\frac{\sigma(i_2)-\sigma(i_1)}{n}\rfloor}\bigr|$.
	Lemma \ref{le:formulalength} shows now that $i(F(\sigma))=\ell(\sigma)$.
\end{proof}

	Given $i_1,i_2\in\BZ$ with
	$i_2{\not\in} i_1+n\BZ$, we put $\lambda(i_1,i_2)=F_{i'_1}(i_2-i_1)$,
	where $i'_1\in[1,n]\cap (i_1+n\BZ)$.

\begin{lemma}
	\label{le:inversionpaths}
	Let $\sigma$ be a map in $\CS_n$. Given $(i_1,i_2)$ in $L(\sigma)$ (resp.
	$D(\sigma)$), the class
	$\lambda(i_1,i_2)$ is in $L(F(\sigma))$ (resp. $D(F(\sigma))$)
	and
	$F(\sigma)^{\lambda(i_1,i_2)}=F(\sigma^{i_1,i_2})$. 
	Furthermore, $\lambda$ induces 
	bijections
	$$L(\sigma)/n\BZ\iso L(F(\sigma))/\mathrm{inv}\text{ and }
	D(\sigma)/n\BZ\iso D(F(\sigma))/\mathrm{inv}.$$

	
\end{lemma}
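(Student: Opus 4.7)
The strategy is direct computation in $\Pi(S^1)$ using the composition formula $F_r(a+b) = F_{r'}(b) \circ F_r(a)$ with $r' \equiv r+a \pmod n$, together with the $n$-periodicity $\sigma(i+n) = \sigma(i) + n$. All relevant paths are of the form $F_r(d)$, classified by their integer degrees $d$, and the two directions on $S^1$ correspond to the two signs of $d$.

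I would first verify $\lambda(i_1, i_2) \in L(F(\sigma))$ by a degree count: $\lambda = F_{i'_1}(i_2 - i_1)$ has positive degree, while $\bar\lambda = F(\sigma)_{a_{i'_2}} \circ \lambda \circ F(\sigma)_{a_{i'_1}}^{-1}$ has degree $(\sigma(i_2) - i_2) + (i_2 - i_1) + (i_1 - \sigma(i_1)) = \sigma(i_2) - \sigma(i_1) < 0$ since $(i_1, i_2) \in L(\sigma)$. So $\lambda$ and $\bar\lambda$ have opposite orientations on $S^1$, placing $\lambda$ in $I(F(\sigma)_{a_{i'_1}}, F(\sigma)_{a_{i'_2}})$. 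The identity $F(\sigma)^{\lambda(i_1, i_2)} = F(\sigma^{i_1, i_2})$ is then a componentwise check: at index $i'_1$, the left side is $F(\sigma)_{a_{i'_2}} \circ \lambda$ of degree $\sigma(i_2) - i_1$, matching $F_{i'_1}(\sigma^{i_1, i_2}(i_1) - i_1)$ since $\sigma^{i_1, i_2}(i_1) = \sigma(i_2)$; symmetrically at $i'_2$, and the other components coincide with $F(\sigma)$.

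The substantive step is translating the two conditions defining $D(F(\sigma))$ into the two bullets characterizing $D(\sigma)$ via Lemma~\ref{le:lengthoneaffine}. Condition (a) is active only when $\supp(\lambda) = S^1$, i.e.\ $i_2 - i_1 \geq n$; in that regime, nontrivial same-oriented alternatives $\zeta'$ of $\lambda$ (resp.\ $\zeta''$ of $\bar\lambda$) are obtained by shifting the degree by a nonzero multiple of $n$ keeping it positive (resp.\ negative), and exist iff $i_2 - i_1 > n$ (resp.\ $\sigma(i_1) - \sigma(i_2) > n$). Thus (a) holds iff $i_2 - i_1 < n$ or $\sigma(i_1) - \sigma(i_2) < n$, matching the first bullet. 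For (b), Lemma~\ref{le:conditionbDtheta} together with the orientation computation above reduces the check to decompositions $\lambda = \lambda' \circ \lambda''$ with both factors of positive degree; these correspond to intermediate lifts $l \in \tilde I$ with $i_1 < l < i_2$, and both factors lie in $L(F(\sigma))$ iff $\sigma(i_2) < \sigma(l) < \sigma(i_1)$. The absence of such $l$ is precisely the second bullet, so $\lambda(i_1, i_2) \in D(F(\sigma))$ iff $(i_1, i_2) \in D(\sigma)$.

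Finally, the bijectivity statements are immediate from the formula for $\lambda$. The map is $n\BZ$-invariant by inspection, so it descends to orbits; surjectivity sends $\zeta = F_j(d) \in L(F(\sigma))$ with $d > 0$ to $(j, j+d)$ and handles $d < 0$ via $\mathrm{inv}$; injectivity holds since $\lambda(i_1, i_2) = \lambda(j_1, j_2)$ forces matching starting lift and degree, hence the same $n\BZ$-orbit, while $\lambda(i_1, i_2)^{-1}$ has negative degree and so cannot equal any $\lambda(j_1, j_2)$. The main obstacle is the orientation bookkeeping in the analysis of condition (b), but once the identification of the two signs of degree with the two orientations on $S^1$ is set up carefully, the entire argument reduces to integer arithmetic together with the combinatorial structure of $L(\sigma)$.
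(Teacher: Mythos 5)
Your argument is correct and follows the same route as the paper's proof: translating conditions (a) and (b) defining $D(F(\sigma))$ into the two bullets of Lemma~\ref{le:lengthoneaffine} by tracking integer degrees, and verifying $F(\sigma)^{\lambda(i_1,i_2)}=F(\sigma^{i_1,i_2})$ componentwise. One small misattribution: Lemma~\ref{le:conditionbDtheta} does no work here, since for $Z=S^1$ unoriented we have $Z_o=\emptyset$, so its hypothesis $\zeta'(0)\in Z_o$ never holds; the reduction to decompositions $\lambda=\lambda'\circ\lambda''$ with both factors of positive degree follows directly from $\deg\lambda'+\deg\lambda''=i_2-i_1>0$, which forces both degrees positive if they share a sign, with no need for that lemma.
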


\begin{proof}
	Note first that, given $i'_1$ and $i'_2$ two distinct elements of
	$\{1,\ldots,n\}$, then $\lambda$ induces a bijection
	$$\bigl((i'_1+n\BZ)\times (i'_2+n\BZ)\bigr)/n\BZ\iso \Hom_{\Pi(S^1)}(
	a_{i'_1},a_{i'_2}).$$

	Consider $\sigma\in\Hom_{\CS_n}(I,J)$ and $i_1,i_2\in\tilde{I}$
	with $i_2{\not\in} i_1+n\BZ$.
	Note that $\lambda(i_1,i_2)=\lambda(i_2,i_1)^{-1}$ for any
	$i_1,i_2$.

	Let $\zeta_r=F(\sigma)_{a_{i_r}}$ and
	$\zeta=\lambda(i_1,i_2)$.
	We have $\bar{\zeta}=\lambda(\sigma(i_1),\sigma(i_2))$. So, $\zeta\in L(F(\sigma))$
	if and only if $i_1-i_2$ and $\sigma(i_1)-\sigma(i_2)$ have opposite signs.
	On the other hand, $(i_1,i_2)\in L(\sigma)$ if and only if
	$i_1<i_2$ and $\sigma(i_2)<\sigma(i_1)$. This shows that $\lambda(L(\sigma))\subset
	L(F(\sigma))$ and $\lambda$  induces 
	a bijection $L(\sigma)/n\BZ\iso L(F(\sigma))/\mathrm{inv}$.

	\smallskip
	Consider $(i_1,i_2)\in L(\sigma)$.
	Let $r=\lfloor \frac{i_2-i_1}{n}\rfloor$ and
	$s=\lfloor \frac{\sigma(i_1)-\sigma(i_2)}{n}\rfloor$.
	We have $r>0$ if and only if $\supp(\lambda(i_1,i_2-rn))\subsetneq
	\supp(\lambda(i_1,i_2))$ and
	$s>0$ if and only if $\supp(\lambda(i_1,i_2+sn))\subsetneq
	\supp(\lambda(i_1,i_2))$.
	There is $i$ such that $(i_1,i)$ and
	$(i,i_2)$ are in $L(\sigma)$ if and only if
	there are $\zeta'$ and $\zeta''$ with the same orientations
	in $L(F(\sigma))$ such that
	$\lambda(i_1,i_2)=\zeta''\circ\zeta'$.
	We have $i_2-i_1>n$ if and only if there is $\zeta'$ such that
	$\zeta$, $\zeta'$ and $\zeta\circ\zeta^{\prime -1}$ have the
	same orientation. We have $\sigma(i_1)-\sigma(i_2)>n$ 
	if and only if there is $\zeta''$ such that
	$\bar{\zeta}$, $\zeta^{\prime\prime}$ and
	$\bar{\zeta}\circ\zeta^{\prime\prime -1}$
have the same orientation.
	
	We deduce that $(i_1,i_2)\in D(\sigma)$ 
	if and only if $\lambda(i_1,i_2)\in D(F(\sigma))$.

	\smallskip
	Assume now $(i_1,i_2)\in L(\sigma)$. Let $i'_r\in [1,n]\cap(i_r+n\BZ)$ for
	$r\in\{1,2\}$.
	We have 
	$$(F(\sigma)^{\lambda(i_1,i_2)})_{a_{i'_1}}=F_{i'_2}(\sigma(i_2)-i_2)\circ
	F_{i'_1}(i_2-i_1)=F_{i'_1}(\sigma(i_2)-i_1)=(F(\sigma^{i_1,i_2}))_{a_{i_1}}.$$
	Similarly, 
	$(F(\sigma)^{\lambda(i_1,i_2)})_{a_{i'_2}}=(F(\sigma^{i_1,i_2}))_{a_{i_2}}$.
	It follows that $F(\sigma)^{\lambda(i_1,i_2)}=F(\sigma^{i_1,i_2})$.
This completes the proof of the lemma.
%
\end{proof}

\subsubsection{Strand category}
\label{se:definitionstrandcategory}
Let $Z$ be a curve.

\smallskip
We have a positivity result in the setting of Lemma \ref{le:degproduct}.
\begin{lemma}
	\label{le:degsemi}
Let $\theta$ and $\theta'$ be two braids such that $\theta\circ\theta'$ is a braid.
	We have $\deg(\theta)\cdot\deg(\theta')\le \deg(\theta\circ\theta')$.

	Given $D$ a subset of $T(Z)$ containing $Z_{exc}^+$ and
	such that $D\cap\iota(D)=\emptyset$, the following
	assertions are equivalent:
	\begin{itemize}
		\item $\deg(\theta)\cdot\deg(\theta')=\deg(\theta\circ\theta')$
		\item $\deg(\theta)\cdot\deg(\theta')$ and $\deg(\theta\circ\theta')$
			have the same image in $\Gamma(Z,D)$
		\item $\deg(\theta)\cdot\deg(\theta')$ and $\deg(\theta\circ\theta')$
			have the same image in $\bar{\Gamma}(Z,D)$.
	\end{itemize}
\end{lemma}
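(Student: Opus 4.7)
The plan is to reduce everything to Lemma \ref{le:degproduct}, which already does the main work by expressing the defect $\deg(\theta)\cdot\deg(\theta')\cdot\deg(\theta\circ\theta')^{-1}$ as an explicit element of $\bigoplus_\Omega \frac{1}{2}\BZ e_\Omega$ inside $\Gamma_{Z_{exc}^+}(Z)$. What remains is to observe that every summand in that expression is non-negative, and to trace how $\bigoplus_\Omega \frac{1}{2}\BZ e_\Omega$ sits inside $\Gamma_D(Z)$ and $\bar\Gamma_D(Z)$.

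First I would apply Lemma \ref{le:degproduct} in the second form given (the one expanding the defect as a sum of three families of local intersection contributions, indexed respectively by pairs outside $E\cup E'$, by pairs in $E$, and by pairs in $E'$). Each of those three types of local contributions is of the form $i(\alpha_1,\alpha_2)+(\text{something})-i(\alpha_1\circ\beta_1,\alpha_2\circ\beta_2)$, and by Lemma \ref{le:boundintercompo} applied in each case (the "something" being precisely an $m^\pm$ term, or another intersection count), each such local contribution is $\ge 0$. This is exactly the content of Remark \ref{re:productnonexc}, which I would cite. Summing over $\Omega\in\pi_0(Z)$ yields the required inequality $\deg(\theta)\cdot\deg(\theta')\ge \deg(\theta\circ\theta')$ in $\Gamma_{Z_{exc}^+}(Z)$ with respect to the partial order defined by $(\frac{1}{2}\BZ_{\ge 0})^{\pi_0(Z)}$.

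For the equivalence of the three assertions, write $\Delta=\deg(\theta)\cdot\deg(\theta')\cdot\deg(\theta\circ\theta')^{-1}\in\bigoplus_\Omega \frac{1}{2}\BZ_{\ge 0} e_\Omega\subset \Gamma_{Z_{exc}^+}(Z)$. The implication $(1)\Rightarrow(2)\Rightarrow(3)$ is formal, since the quotient maps $\Gamma_{Z_{exc}^+}(Z)\to \Gamma_D(Z)\to\bar\Gamma_D(Z)$ send $\Delta$ to its images. For $(3)\Rightarrow(1)$, I would use that the composition $\bigoplus_\Omega\frac{1}{2}\BZ e_\Omega\hookrightarrow \Gamma_{Z_{exc}^+}(Z)\to\bar\Gamma_D(Z)$ is exactly the sum map $\sum_\Omega a_\Omega e_\Omega\mapsto \sum_\Omega a_\Omega\in\frac{1}{2}\BZ$, by the very definition of $\bar\Gamma_D(Z)$ (which identifies all $\frac{1}{2}e_\Omega$ to $\frac{1}{2}$). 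Since every coefficient of $\Delta$ lies in $\frac{1}{2}\BZ_{\ge 0}$, vanishing of the sum forces vanishing of each coefficient, hence $\Delta=0$ already in $\Gamma_{Z_{exc}^+}(Z)$.

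There is essentially no obstacle here: the main work has already been done in Lemmas \ref{le:boundintercompo} and \ref{le:degproduct}. The only small point to check carefully is that the partial ordering on $\Gamma_{Z_{exc}^+}(Z)$ (given by $(\frac{1}{2}\BZ_{\ge 0})^{\pi_0(Z)}$) interacts correctly with the quotient maps to $\Gamma_D(Z)$ and $\bar\Gamma_D(Z)$, which is immediate from the explicit descriptions in \S\ref{se:central}.
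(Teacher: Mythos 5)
Your proof is correct, and it takes a genuinely different route from the paper's. The paper proves the inequality by a reduction chain: first for $Z=S^1$ (unoriented), where it translates via Proposition \ref{pr:prestrandsnonsingular} and Lemma \ref{le:S1comparison} into the already-established subadditivity of length and of $\dm$ for the extended affine symmetric group (Lemmas \ref{le:lengthincrease} and \ref{le:degreelength}); then for non-singular connected $Z$ by embedding in $S^1$ and invoking the compatibility of $\deg$ with pushforward (Lemma \ref{le:fdeg}); then for arbitrary $Z$ via the non-singular cover and the compatibility of $\deg$ with $q^\#$ (Proposition \ref{pr:degfsharp}). You instead read the non-negativity directly off the second expansion in Lemma \ref{le:degproduct}, with each summand non-negative by subadditivity of $i$ under composition (the note preceding Lemma \ref{le:m=i}) and by Lemma \ref{le:boundintercompo} — exactly the observation recorded in Remark \ref{re:productnonexc}. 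Your argument is shorter and more local, as it sidesteps the whole reduction machinery (which the paper has anyway, for other reasons); the paper's route has the minor advantage of tying the assertion visibly to the affine Hecke combinatorics of Lemma \ref{le:lengthincrease}. For the equivalence of the three statements, your argument is identical in substance to the paper's: the defect $\Delta$ lives in $(\frac{1}{2}\BZ_{\ge 0})^{\pi_0(Z)}$, and the composite $\bigoplus_\Omega \frac{1}{2}\BZ e_\Omega \to \bar{\Gamma}_D(Z)$ is the sum map, so vanishing of the image forces vanishing of every coordinate.

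One small caveat worth flagging if you were to write this up formally: Remark \ref{re:productnonexc} attributes the non-negativity of all three families of terms to Lemma \ref{le:boundintercompo}, but the first family (pairs outside $E\cup E'$) actually relies on the bare subadditivity $i(\zeta_1\zeta_2,\zeta'_1\zeta'_2)\le i(\zeta_1,\zeta'_1)+i(\zeta_2,\zeta'_2)-\delta_{\zeta_1(0)=\zeta'_1(0)}$ stated just before Lemma \ref{le:m=i}, together with the observation that the Kronecker $\delta$ vanishes because $\theta'$ is a bijection on the index set; Lemma \ref{le:boundintercompo} only handles the $m^\pm$-terms. This does not affect correctness, but a careful proof should cite the subadditivity explicitly rather than leaning entirely on the remark.
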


\begin{proof}
	Assume $Z=S^1$ unoriented. Let $M$ be a family 
as in \S\ref{se:StrandsS1}. Assume $M$ 	
	contains $\theta_s(r)$ and $\theta'_s(r)$ for $r\in\{0,1\}$ and all $s$.
	Proposition \ref{pr:prestrandsnonsingular} and Lemma \ref{le:S1comparison}
	show that the inequality follows from the corresponding inequality for
	maps in $\CS_n$, which is given by Lemmas \ref{le:lengthincrease} and
	\ref{le:degreelength}.

	Given $Z$ a non-singular connected curve, there is an injective morphism
	of curves $Z\to S^1$, and the lemma follows from
	Proposition \ref{pr:finjective} and Lemma \ref{le:fdeg}. We deduce that the 
	inequality holds for any non-singular curve $Z$.

	Consider now a general curve $Z$ and let $q:\hat{Z}\to Z$ be the
	non-singular cover.  Since the functor
	$q^\#:\add(\CP(Z))\to\add(\CP(\hat{Z}))$ is compatible with degrees
	(Proposition \ref{pr:degfsharp}),
	it follows that the inequality holds for $Z$.

	\smallskip
	The equivalence of the three assertions follows from the fact that an element
	of $(\frac{1}{2}\BZ_{\ge 0})^{\pi_0(Z)}\subset\Gamma(Z,Z_{exc}^+)$ is zero
	if and only if its image in $\frac{1}{2}\BZ_{\ge 0}\subset
	\bar{\Gamma}(Z,D)$
	is zero.
\end{proof}

\smallskip
By Lemma \ref{le:degsemi}, the degree function gives a $\Gamma(Z,Z_{exc}^+)$-filtration
on the category $\CP^\bullet(Z)$.

\begin{defi}
We define the {\em strand category}
	$\CS^\bullet(Z)$\indexnot{S(}{\CS^\bullet(Z)} as the
	$\Gamma(Z,Z_{exc}^+)$-graded pointed category
associated with the filtered pointed category $\CP^\bullet(Z)$ (cf \S\ref{se:pointedcat}).
\end{defi}

The category $\CS^\bullet(Z)$ has the same objects and the same maps as the category $\CP^\bullet(Z)$.
It is a pointed
category with objects the finite subsets of $Z$ and with
$\Hom_{\CS^\bullet(Z)}(I,J)$ the set of braids $I\to J$, together with a 
$0$-element.

The product of two braids
$\theta:I\to J$ with $\theta':J\to K$ is defined as follows:
$$\theta'\cdot\theta=
\begin{cases}
	\theta'\circ\theta &\text{ if }\deg(\theta'\circ\theta)=\deg(\theta')\cdot\deg(\theta)\\
	0 & \text{ otherwise.}
\end{cases}$$

\smallskip
Note that the strand category decomposes as a disjoint union
$\CS^\bullet(Z)=\coprod_{n\ge 0}\CS^\bullet(Z,n)$, where $\CS^\bullet(Z,n)$
\indexnot{S(}{\CS^\bullet(Z,n)}is the full subcategory with
objects subsets with $n$ elements.

\smallskip
It follows from Lemma \ref{le:degsemi} that 
	given $D$ a subset of $T(Z)$ containing $Z_{exc}^+$ and
	such that $D\cap\iota(D)=\emptyset$, the structure of
	$\Gamma(Z,D)$-graded (resp. 
	$\bar{\Gamma}(Z,D)$-graded) category on $\CS(Z)$ obtained from the quotient
	morphism $f:\Gamma(Z,Z_{exc}^+)\to\Gamma(Z,D)$
	(resp. $f:\Gamma(Z,Z_{exc}^+)\to\bar{\Gamma}(Z,D)$) is the same as the
	graded category obtained from the structure of $\Gamma(Z,D)$-filtered
	(resp. $\bar{\Gamma}(Z,D)$-filtered) category on $\CP^\bullet(Z)$ that is deduced
	from the structure of $\Gamma(Z,Z_{exc}^+)$-filtered category via $f$.

\begin{rem}
	We leave to the reader to check the following alternate definition of
	the product in the strand category.
	
	We have $\theta'\cdot\theta\neq 0$ if and only 
if there are parametrized braids $\vartheta,\vartheta'$ with $\theta=[\vartheta]$ and $\theta'=[\vartheta']$
and there are $\alpha:I'\to I$ and $\alpha':K\to K'$ two parametrized
braids with $I',K'\subset Z\setminus Z_{exc}$
	such that $i(\alpha)=i(\alpha')=0$, $\alpha'_{\vartheta'\circ\vartheta\circ\alpha_s(1)}\circ
	\vartheta'_{\vartheta\circ\alpha_s(1)}
	\circ\vartheta_{\alpha_s(1)}\circ\alpha_s$ 
is admissible for all $s\in I$ and
$i(\alpha'\circ\theta'\circ\theta\circ\alpha)=
i(\alpha'\circ\theta')+i(\theta\circ\alpha)$.
\end{rem}

Let $\CS(Z)=\BF_2[\CS^\bullet(Z)]$\indexnot{S(}{\CS(Z)},
a $\Gamma(Z,Z_{exc}^+)$-graded $\BF_2$-linear category.

\smallskip
Let $f:Z\to Z'$ be a morphism of curves.

Let $\CS^\bullet_f(Z)$\indexnot{S(}{\CS^\bullet_f(Z)} be the full subcategory of $\CS^\bullet(Z)$ with objects
those finite subsets $I$ of $Z$ such that $|f(I)|=|I|$.
We deduce from Proposition \ref{pr:finjective} and Lemma \ref{le:fdeg} a faithful
$\Gamma(Z',Z_{exc}^{\prime +})$-graded pointed functor $f:\CS^\bullet_f(Z)\to
\CS^\bullet(Z')$. Here, the $\Gamma(Z',Z_{exc}^{\prime +})$-grading on 
$\CS^\bullet_f(Z)$ comes from the $\Gamma(Z,f^{-1}(Z_{exc}')^+)$-grading via the morphism
$\Gamma(f)$.

\smallskip
Assume $f$ is strict. Propositions
\ref{pr:fsharpsprestrands} and \ref{pr:degfsharp} provide an additive $\BF_2$-linear 
$\Gamma(Z',Z_{exc}^{\prime +})$-graded functor
$f^\#:\add(\CS(Z'))\to\add(\CS_f(Z))$, where the $\Gamma(Z',Z_{exc}^{\prime +})$-grading on
$\CS_f(Z)$ is deduced from the $\Gamma(Z,f^{-1}(Z_{exc}')^+)$-grading via the 
morphism $\Gamma(f)$.

If $f$ is a quotient morphism,
it follows from Proposition \ref{pr:qprestrands} that $f^\#$ is faithful.

\smallskip
Given $M$ a subset of $Z$, we denote by $\CS^\bullet_M(Z)$\indexnot{S(}{\CS^\bullet_M(Z)} the full
subcategory of $\CS^\bullet(Z)$ whose objects are the finite subsets of $M$.
The $\Gamma(Z,Z_{exc}^+)$-grading on $\CS^\bullet_M(Z)$ comes from a 
$\Gamma_M(Z,Z_{exc}^+)$-grading.

We
denote by $\CS_{M,f}^\bullet(Z)$\indexnot{S(}{\CS_{M,f}^\bullet(Z)} the full subcategory of
$\CS_f^\bullet(Z)$ with objects subsets contained in $M$.

\smallskip
We put $\CA^\bullet(Z)=\CS_{Z_{exc}}^\bullet(Z)$\indexnot{A(}{\CA^\bullet(Z)} and
$\CA^\bullet(Z,n)=\CS_{Z_{exc}}^\bullet(Z,n)$\indexnot{A(}{\CA^\bullet(Z,n)}. Let
$\CA(Z)=\BF_2[\CA^\bullet(Z)]$\indexnot{A(}{\CA(Z)} and
$\CA(Z,n)=\BF_2[\CA^\bullet(Z,n)]$\indexnot{A(}{\CA(Z,n)}.

\medskip
Let $Z_1,\ldots,Z_r$ be the connected components of $Z$.
The isomorphism (\ref{eq:Pcomponents}) induces an isomorphism of 
$\Gamma(Z,Z_{exc}^+)$-graded
pointed categories

\begin{equation}
	\label{eq:Scomponents}
\CS^\bullet(Z_1)\wedge\cdots\wedge\CS^\bullet(Z_r)\iso \CS^\bullet(Z)
\end{equation}
where the grading on the left hand term is deduced from the
the $\bigl(\prod_{i=1}^r\Gamma(Z_i,(Z_i)_{exc}^+)\bigr)$-grading via (\ref{eq:Gammacomponents})
and an isomorphism of $\BF_2$-linear categories
\begin{equation}
	\label{eq:Strandscomponents}
\CS(Z_1)\otimes\cdots\otimes\CS(Z_r)\iso \CS(Z).
\end{equation}

\begin{example}
In the example below the first
	row is the product in $\CP^\bullet(Z)$, while the second
	row is the product in $\CS^\bullet(Z)$.
$$\includegraphics[scale=0.75]{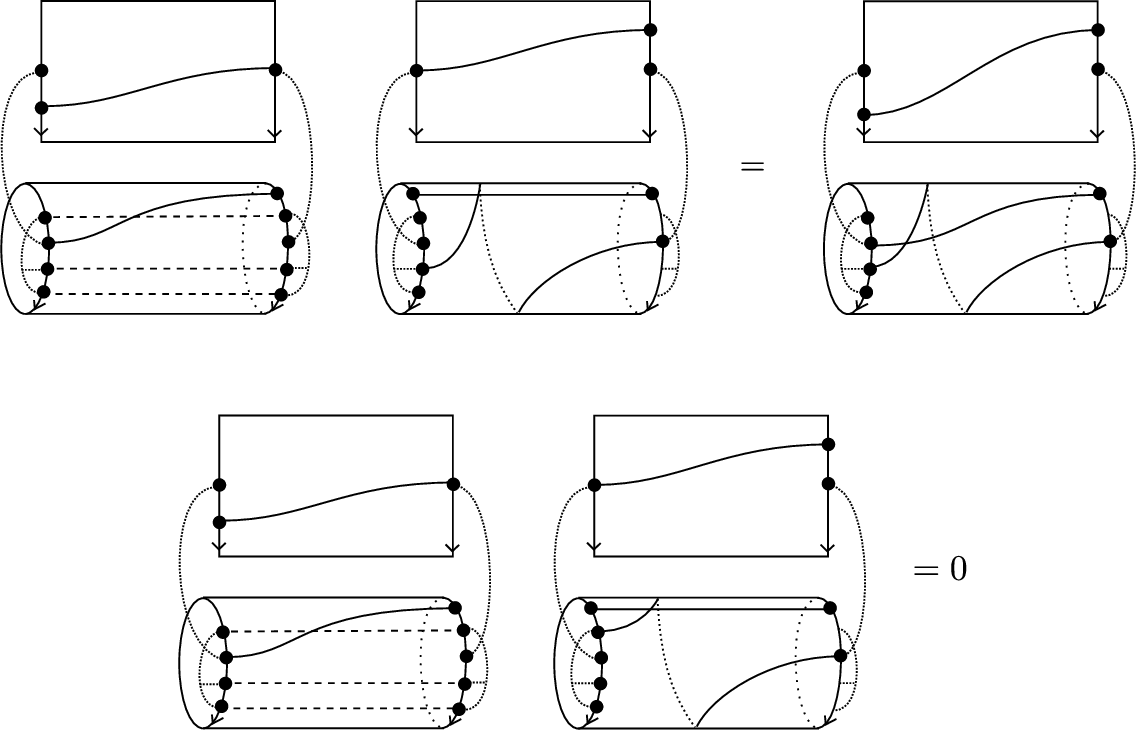}$$
\end{example}

\subsubsection{Generation}
\label{se:generation}
We equip $Z$ with a metric. Given $\xi$ a path in $Z$, we denote
by $|\xi|$ its length. Given $\zeta$ a homotopy class of paths in $Z$,
we put $|\zeta|=|\xi|$, where $\xi$ is a minimal path in $\zeta$.
Given $\theta:I\to J$ a braid in $Z$, we put
$|\theta|=\sum_{s\in I}|\theta_s|$.

\smallskip
Let $M$ be a finite subset of $Z$.

\begin{lemma}
	\label{le:producttensorid}
	Let $\theta\in\Hom_{\CS^\bullet_M(Z)}(I,J)$ and 
	$\theta'\in\Hom_{\CS^\bullet_M(Z)}(I',I)$ such that $\theta\circ\theta'$
	is a braid. Let $I_0$ be
	a finite subset of $M\setminus (I\cup I'\cup J)$.

	If $|\theta|+|\theta'|=|\theta\circ\theta'|$, then
	$(\theta\boxtimes\id_{I_0})\cdot (\theta'\boxtimes\id_{I_0})=
	(\theta\cdot\theta')\boxtimes\id_{I_0}$.
\end{lemma}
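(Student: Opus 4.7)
The plan is to reduce the lemma to the equivalence of two degree-product identities and then analyze the contributions one pair of strands at a time. By the definition of the strand category, each side of the desired equation is either $0$ or equal to the composite braid $(\theta\circ\theta')\boxtimes\id_{I_0}$; writing $\tilde\theta=\theta\boxtimes\id_{I_0}$ and $\tilde\theta'=\theta'\boxtimes\id_{I_0}$, the statement is therefore equivalent to
\[
\deg(\tilde\theta)\cdot\deg(\tilde\theta')=\deg(\tilde\theta\circ\tilde\theta')
\ \iff\
\deg(\theta)\cdot\deg(\theta')=\deg(\theta\circ\theta').
\]

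Using the second equality of Lemma \ref{le:degproduct}, each side of this equivalence is a sum of non-negative contributions (Remark \ref{re:productnonexc}) indexed by ordered pairs of strands. For $(\tilde\theta,\tilde\theta')$ these pairs live in $(I'\cup I_0)^2$, splitting into three types: both entries in $I'$, both in $I_0$, or mixed. Pairs of the first type reproduce exactly the contributions appearing for $(\theta,\theta')$, and pairs of the second type contribute $0$ because two distinct identity strands at points of $I_0$ do not intersect. The heart of the argument is showing that the mixed pairs also contribute $0$.

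Consider a mixed pair with, say, $s'_1\in I_0$ and $s'_2\in I'$; the opposite case is analogous. Since $I_0\cap(I\cup I'\cup J)=\emptyset$, both $\tilde\theta_{s'_1}$ and $\tilde\theta'_{s'_1}$ equal $\id_{s'_1}$ and the point $s'_1$ is distinct from every endpoint of a non-identity strand of $\theta$ or $\theta'$. In particular, $(s'_1,s'_2)$ belongs neither to the set $E$ nor to the set $E'$ in Lemma \ref{le:degproduct}, so its contribution is the ``general'' term
\[
i(\id_{s'_1},\theta_{s_2})+i(\id_{s'_1},\theta'_{s'_2})
-i(\id_{s'_1},\theta_{s_2}\circ\theta'_{s'_2}).
\]
By Lemma \ref{le:m=i} each of these intersection numbers equals $\frac{1}{2}\sum_{c\in C(s'_1)}(m_c^++m_c^-)$ of the relevant homotopy class, with the boundary $\delta$-terms all vanishing since $s'_1$ differs from $s'_2$, $s_2$ and $\theta_{s_2}(1)$. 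Hence the contribution vanishes once $m_c^\pm$ is additive along the composition $\theta_{s_2}\circ\theta'_{s'_2}$ at every $c\in C(s'_1)$.

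The length hypothesis $|\theta|+|\theta'|=|\theta\circ\theta'|$ forces $|\theta_{s_2}|+|\theta'_{s'_2}|=|\theta_{s_2}\circ\theta'_{s'_2}|$ for each $s'_2\in I'$, by the termwise triangle inequality. Thus the concatenation of minimal admissible representatives of $\theta_{s_2}$ and $\theta'_{s'_2}$ is itself a minimal admissible representative of $\theta_{s_2}\circ\theta'_{s'_2}$; since the gluing point $s_2\in I$ differs from $s'_1\in I_0$, the tangential multiplicities at $c\in C(s'_1)$ decompose as a disjoint union between the two halves, yielding the required additivity. Combining the three types of pairs, the failure of the degree product for $(\tilde\theta,\tilde\theta')$ coincides with that for $(\theta,\theta')$, and the lemma follows. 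The main obstacle is disentangling the case when $s'_1\in I_0\cap Z_{exc}$ in Lemma \ref{le:degproduct}, which is resolved by using that $s'_1$ is disjoint from the endpoints of $\theta$ and $\theta'$: this eliminates the $E,E'$ subtleties and reduces the problem to the tangential additivity afforded by the length hypothesis.
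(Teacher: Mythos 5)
Your proof is correct and follows essentially the same route as the paper's: both reduce the claim to showing that the defect $\deg(\theta)\cdot\deg(\theta')\cdot\deg(\theta\circ\theta')^{-1}$ of Lemma~\ref{le:degproduct} is unchanged when the identity strands at $I_0$ are adjoined, with the key input being that the strand-wise length equality $|\theta_{\theta'(s')}|+|\theta'_{s'}|=|\theta_{\theta'(s')}\circ\theta'_{s'}|$ forces $i(\theta_{\theta'(s')},\id_i)+i(\theta'_{s'},\id_i)=i(\theta_{\theta'(s')}\circ\theta'_{s'},\id_i)$ for $i\in I_0$. You use the second (pair-by-pair) formula of Lemma~\ref{le:degproduct} and supply the tangential-multiplicity justification for the intersection-count additivity via Lemma~\ref{le:m=i}, whereas the paper uses the first formula and asserts that additivity directly, but the content is the same.
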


\begin{proof}
	Let $s'\in I'$.
	Since $|\theta_{\theta'(s')}|+|\theta'_{s'}|=|\theta_{\theta'(s')}
	\circ\theta'_{s'}|$, it follows that
	$i(\theta_{\theta'(s')},\id_i)+i(\theta'_{s'},\id_i)=
	i(\theta_{\theta'(s')} \circ\theta'_{s'},\id_i)$ for all $i\in I_0$.
	As a consequence,
	$$i((\theta\circ\theta')\boxtimes\id_{I_0})-i(\theta\boxtimes\id_{I_0})
	-i(\theta'\boxtimes\id_{I_0})=
	i(\theta\circ\theta')-i(\theta)-i(\theta').$$
	The lemma follows now from Lemma \ref{le:degproduct}.
\end{proof}

Let $\theta\in\Hom_{\CS^\bullet_M(Z)}(I,J)$ be a non-zero braid.

Let $I_0=\{i\in I\ |\ \theta_i=\id_i\}$.
Let $i\in I\setminus I_0$. There is a (unique) decomposition
$\theta_i=\beta^i\cdot\alpha^i$  in $\CS^\bullet(Z,1)$ with
\begin{itemize}
	\item $\alpha^i(1)\in M\setminus I_0$
	\item $|\theta_i|=|\alpha^i|+|\beta^i|$ 
	\item given a minimal path
$\xi$ in $\alpha^i$, we have $\xi((0,1))\cap M\subset I_0$.
\end{itemize}

We define a quiver $\Gamma(\theta)$ with vertex set $I\setminus I_0$.
There is an arrow $i\to i'$ if $\alpha^i(1)=i'$.

Note that there is at most one arrow with a given source (that arrow can
be a loop).

\begin{lemma}
	\label{le:rightdivisor}
	Let $I'$ be a non-empty finite subset of $I\setminus I_0$ such that
	\begin{itemize}
		\item if there is an arrow $i\to i'$ in $\Gamma(\theta)$ with
			$i\in I'$, then $i'\in I'$
		\item given $i\neq i'\in I'$, we have $\alpha^i(1)\neq
			\alpha^{i'}(1)$.
	\end{itemize}
	There is a (unique) decomposition $\theta=\theta^u\cdot u$ in
	$\CS_M^\bullet(Z)$, where $|\theta|=|\theta^u|+|u|$ and
	$$u_i=\begin{cases}
		\alpha^i & \text{ if }i\in I'\\
		\id_i & \text{ otherwise.}
	\end{cases}$$
\end{lemma}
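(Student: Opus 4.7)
The plan is to exhibit $u$ and $\theta^u$ explicitly and then verify that their product in the strand category equals $\theta$. Define
$$I'' = (I\setminus I')\cup\{\alpha^i(1) : i\in I'\},$$
set $u_i = \alpha^i$ for $i\in I'$ and $u_i = \id_i$ for $i\in I\setminus I'$, and set $\theta^u_{\alpha^i(1)} = \beta^i$ for $i\in I'$ while $\theta^u_{i''} = \theta_{i''}$ for $i''\in I\setminus I'$. First I would check that $I''$ is the disjoint union displayed: the second hypothesis on $I'$ gives injectivity of $i\mapsto \alpha^i(1)$ on $I'$, and the first hypothesis combined with $\alpha^i(1)\in M\setminus I_0$ forces $\{\alpha^i(1) : i\in I'\}$ to be disjoint from $I\setminus I'$ (either $\alpha^i(1)\in I\setminus I_0$, and then $\alpha^i(1)\in I'$ by successor-closure, or else $\alpha^i(1)\not\in I$). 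So $u$ is a braid and $\theta^u$ is well-defined; admissibility of each $\alpha^i$ and $\beta^i$ follows from Properties~\ref{it:smoothadmissible}(6) applied to $\theta_i = \beta^i\cdot\alpha^i$. By construction $\theta^u\circ u = \theta$ as parametrized braids, and summing $|\theta_i|=|\alpha^i|+|\beta^i|$ over $i\in I'$ gives $|\theta^u|+|u| = |\theta|$.

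The content of the lemma is then the promotion of this equality from $\CP^\bullet_M(Z)$ to $\CS^\bullet_M(Z)$, i.e.\ $\deg(\theta) = \deg(\theta^u)\cdot\deg(u)$. By Lemma~\ref{le:degsemi} one inequality is free, and Lemma~\ref{le:degproduct} expresses the difference as a sum of non-negative terms indexed by pairs $(s_1',s_2')\in I^2$ together with corrections at exceptional points (the sets $E,E'$ of that lemma). Pairs with both indices in $I\setminus I'$ contribute zero since $u$ is the identity there; pairs with exactly one index in $I'$ reduce to intersection-with-identity calculations that follow from length-additivity and Lemma~\ref{le:m=i}, exactly as in the proof of Lemma~\ref{le:producttensorid}; the $E,E'$-corrections, which arise when a strand terminates at a point of $Z_{exc}$, are controlled by Lemma~\ref{le:boundintercompo} together with the structural condition that a minimal representative of $\alpha^i$ avoids $M\setminus I_0$ in its interior.

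The case I expect to be the main obstacle is that of pairs $s_1',s_2'\in I'$, which demands
$$i(\alpha^{s_1'},\alpha^{s_2'}) + i(\beta^{s_1'},\beta^{s_2'}) = i(\theta_{s_1'},\theta_{s_2'}).$$
I would attack this by passing to the non-singular cover via Lemma~\ref{le:intersections}(3): each side splits as a sum over pairs of lifts of $(s_1',s_2')$, with the chosen lifts of $\beta^{s_k'}$ forced by the endpoints of the chosen lifts of $\alpha^{s_k'}$. On the non-singular cover, Lemma~\ref{le:intersections}(2) lets me choose length-minimal parametrized representatives of $\hat\alpha^{s_k'}$ and $\hat\beta^{s_k'}$ realizing the respective intersection counts; length-additivity forces their concatenation at a common midpoint to be a minimal representative of $\hat\theta_{s_k'}$, and the structural condition on $\alpha^i$ prevents any $\alpha$-part from crossing a $\beta$-part of a different strand, so the intersections split additively; the inequality $i(\theta_{s_1'},\theta_{s_2'})\le i(\alpha^{s_1'},\alpha^{s_2'})+i(\beta^{s_1'},\beta^{s_2'})$ is automatic, and the reverse is the content. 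Uniqueness of the decomposition is then immediate: $u$ is prescribed by the given formula and $\theta^u$ is forced by $\theta^u\circ u = \theta$.
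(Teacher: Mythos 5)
Your overall strategy — exhibit $u$ and $\theta^u$, verify $u$ is a braid, show length-additivity, and reduce the degree equation to the additivity $i(\alpha^{s_1'},\alpha^{s_2'})+i(\beta^{s_1'},\beta^{s_2'})=i(\theta_{s_1'},\theta_{s_2'})$ for pairs $s_1',s_2'\in I'$ via Lemma~\ref{le:degproduct} and Remark~\ref{re:productnonexc} — matches the paper's structure, and the first two paragraphs are essentially correct. However, your argument for the crucial intersection additivity has a genuine gap. You pass to the non-singular cover, choose minimal representatives of each piece, concatenate, and assert that "the structural condition on $\alpha^i$ prevents any $\alpha$-part from crossing a $\beta$-part of a different strand." This is not true: the support of a minimal path in $\alpha^{s_1'}$ can easily overlap the support of a minimal path in $\beta^{s_2'}$ (the structural condition only constrains the interior of $\alpha^i$ against $M\setminus I_0$, and $\beta^{s_2'}$ is irrelevant to it). More fundamentally, you implicitly assume that pasting minimality-realizing pieces at a common midpoint produces a pair of parametrizations realizing the minimum $i(\hat\theta_{s_1'},\hat\theta_{s_2'})$; but Lemma~\ref{le:intersections}(2) only asserts that \emph{some} minimal paths realize the infimum, not that every choice does, and the pointwise count $i(\gamma_1,\gamma_2)$ depends on the parametrizations, not merely on the homotopy classes. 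Reparametrizing one concatenation relative to the other (shifting the $\alpha/\beta$ transition time) could a priori reduce the intersection count below your sum, which is exactly the reverse inequality you need to rule out.

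The paper avoids this by pushing the reduction one step further: after the cover, reduce to a connected component and embed it in $S^1$ unoriented, where the intersection count has the explicit formula of Lemma~\ref{le:intersectionlength} and paths are characterized by direction and winding. There, the structural condition forces each $\alpha^i$ to have zero winding, so two $u$-strands that intersect at all must run in opposite directions and meet exactly once, and length-additivity forces $\theta_{i_r}$, $u_{i_r}$, $(\theta^u)_{i_r}$ to share a common direction; the additivity $i(\theta_{i_1},\theta_{i_2})=i((\theta^u)_{i_1},(\theta^u)_{i_2})+1$ then drops out of the winding-number formula, and Remark~\ref{re:productnonexc} concludes (the singular corrections have already vanished on the cover). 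If you want to argue directly on an interval component without the $S^1$ embedding, you need to use that the midpoint $m_k=\alpha^{s_k'}(1)$ is pinned between $s_k'$ and $\theta_{s_k'}(1)$, which forces exactly one of the two pairs of pieces to cross whenever the full strands do — a short case check, but one that is absent from your sketch.
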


\begin{proof}
	Note that the second assumption on $I'$ shows that the
	full subquiver of $\Gamma(\theta)$
	with vertex set $I'$ is a disjoint union of oriented lines and
	oriented circles.

	\smallskip
	Let $i_1\neq i_2\in I$. If $i_1,i_2\in I\setminus I'$, then
	$u(i_1)\neq u(i_2)$. Assume now $i_1\in I'$ and $i_2\in I\setminus I'$.
	Since $i_1\to i_2$ is not an arrow of the quiver, we have
	$u(i_1)\neq i_2$, hence $u(i_1)\neq u(i_2)$. Finally if
	$i_1,i_2\in I'$, then $u(i_1)\neq u(i_2)$.
	We have shown that $u$ is a braid.

	\smallskip
	Note that there is a (unique) decomposition $\theta=\theta^u\circ u$ with
	$|\theta|=|\theta^u|+|u|$. In order to show that $\theta^u\cdot u\neq 0$, we can replace $\theta$ by $\theta_{|I\setminus I_0}$ and $M$ by 
	$M\setminus I_0$, thanks to Lemma \ref{le:producttensorid}.
	So, we assume now that $I_0=\emptyset$.

	Let $q:\tilde{Z}\to Z$ be a non-singular cover of $Z$. Let
	$\tilde{M}=q^{-1}(M)$. Let
	$\tilde{\theta}:\tilde{I}\to\tilde{J}$ be the unique lift of
	$\theta$ to $\tilde{Z}$. We have a decomposition
	$\tilde{\theta}_i=\tilde{\beta}^i\cdot\tilde{\alpha}^i$ for
	$i\in\tilde{I}$ and $q(\tilde{\alpha}^i)=\alpha^{q(i)}$.
	
	Let $\tilde{I}'=q^{-1}(I')\cap\tilde{I}$. 
	Note that
	$q$ induces a morphism of quivers $\Gamma(\tilde{\theta})\to
	\Gamma(\theta)$, hence
	$\tilde{I}'$ satisfies the assumptions of the lemma and we have
	a decomposition $\tilde{\theta}=\tilde{\theta}^{\tilde{u}}\circ
	\tilde{u}$. Since $q(\tilde{u})=u$, it follows that if the
	lemma holds for $\tilde{\theta}$, then it holds for $\theta$.

	\smallskip
	We assume now that $Z$ is non-singular. If the lemma
	holds for connected components of $Z$, it will hold for $Z$, hence
	it is enough to prove the lemma for $Z$ connected.
	Assume now $Z$ is connected. There is an injective morphism of
	curves $f:Z\to S^1$, where $S^1$ is unoriented. It the lemma
	holds for $S^1$, it holds for $Z$.

	\smallskip
	We assume finally that $Z=S^1$ unoriented.
	Let $i_1\neq i_2\in I'$ such that $i(u_{i_1},u_{i_2})\neq 0$.
	Note that $u_{i_1}$ and $u_{i_2}$ have opposite directions
	and $i(u_{i_1},u_{i_2})=1$. Furthermore,
	$\theta_{i_r}$ has the same direction as $u_{i_r}$, hence
	$i(\theta_{i_1},\theta_{i_2})=i((\theta^u)_{i_1},(\theta^u)_{i_2})+
	1$. Given $i_1\neq i_2\in I$ with $i_1{\not\in} I'$,
	we have $i(u_{i_1},u_{i_2})=0$.
	It follows from Remark \ref{re:productnonexc} that $\theta^u\cdot u\neq 0$.
	This completes the proof of the lemma.
\end{proof}


Note that the length of a map in $\CS^\bullet_M(Z)$ takes value in
a finitely generated submonoid of $\BR_{\ge 0}$. So, a repeated application of the previous
lemma provides a decomposition of any map $\theta$ of
$\CS_M^\bullet(Z)$ as a product $\theta=u_n\cdots u_1$, where $u_i$
is a map $u$ as in the lemma.

\subsubsection{Decomposition at a point}
\label{se:decompositionz0}
Let $z_0\in Z_o$ with $z_0{\not\in}M$.

Given $\zeta$ a homotopy class of admissible paths in $Z$ with
$\zeta\neq\id_{z_0}$, we put
$\mu(\zeta)=i(\zeta,\id_{z_0})$\indexnot{muzeta}{\mu(\zeta)}.

Assume $\mu(\zeta)\ge 1$. There is a unique decomposition
$\zeta=\zeta^{r-}\cdot \zeta^r$ in $\CS^\bullet(Z,1)$ such that
$\zeta^r(1)=z_0$ and $\mu(\zeta^r)=1$.

\smallskip

Given $\theta\in\Hom_{\CS_M^\bullet(Z)}(I,J)$,
we put $\mu(\theta)=\sum_{s\in I}\mu(\theta_s)$.  Given
$\theta'\in\Hom_{\CS_M^\bullet(Z)}(I',I)$ with
$\theta\cdot\theta'\neq 0$, we have $\mu(\theta\cdot\theta')=
\mu(\theta)+\mu(\theta')$.

\begin{lemma}
	\label{le:divisorlength1}
	Let $\theta\in\Hom_{\CS_M^\bullet(Z)}(I,J)$ with
	$\mu(\theta)\ge 2$. 

	There exists a decomposition $\theta=r'(\theta)\cdot r(\theta)$ in
	$\CS_M(Z)$ with $\mu(r(\theta))=1$ and with the following property.

	Let $s\in I$ such that $\mu(r(\theta)_s)=1$. Given $s'\in I$
	such that $\mu(\theta_{s'})\ge 1$ and $\mathrm{supp}(\theta_{s'}^r)\subset
	\mathrm{supp}(\theta_s^r)$, then $s'=s$.
%
%
\end{lemma}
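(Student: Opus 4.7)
The plan is to choose a strand $s \in I$ with $\mu(\theta_s) \geq 1$ whose first-passage support $X_s = \supp(\theta_s^r)$ is minimal for inclusion, to define $r(\theta)$ as the braid equal to $\theta_s^r$ on $s$ and to $\id_i$ for $i \in I \setminus \{s\}$, and to set $r'(\theta)_{z_0} = \theta_s^{r-}$, $r'(\theta)_i = \theta_i$ for $i \in I \setminus \{s\}$. The set $T = \{s' \in I : \mu(\theta_{s'}) \geq 1\}$ is nonempty (since $\mu(\theta) \geq 2$) and finite, so such an $s$ exists. Since $z_0 \notin M$, both $r(\theta)$ and $r'(\theta)$ are valid braids, $\mu(r(\theta)) = 1$, and the equality $r'(\theta) \circ r(\theta) = \theta$ in $\CP_M^\bullet(Z)$ follows from the defining factorization $\theta_s = \theta_s^{r-} \cdot \theta_s^r$.

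Next I would verify that this composition is a product in $\CS_M(Z)$, i.e.~that degrees multiply. Applying Remark \ref{re:productnonexc} reduces the check to two-element subsets $E' \subseteq I$. When $s \notin E'$, both strands of $r(\theta)|_{E'}$ are identities at distinct points of $M$ and contribute zero intersection, so the required additivity is immediate. When $s \in E' = \{s, i\}$, it becomes $i(\theta_s, \theta_i) = i(\theta_s^{r-}, \theta_i) + i(\theta_s^r, \id_i)$, which follows from the parametrized composition formula for intersection counts (with adjustment $\delta = 0$ since $z_0 \neq i$) applied to the factorization $\theta_s = \theta_s^{r-} \cdot \theta_s^r$, using that the length additivity $|\theta_s| = |\theta_s^{r-}| + |\theta_s^r|$ guarantees that minimal representatives of the factors concatenate to a minimal representative of $\theta_s$.

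For uniqueness, I suppose $s' \in T$ with $s' \neq s$ and $X_{s'} \subseteq X_s$. Minimality forces $X_{s'} = X_s$. Since $\mu(\theta_s^r) = \mu(\theta_{s'}^r) = 1$, each minimal representative approaches $z_0$ from a single branch of $C(z_0)$, and equality of supports forces these terminal branches to coincide. Passing to the non-singular cover $q : \hat{Z} \to Z$ via Lemma \ref{le:liftquotientpaths}, the minimal representatives lift to admissible paths $\hat\xi_s, \hat\xi_{s'}$ ending at a common preimage $\hat z_0$ of $z_0$ (determined by the common terminal branch), and their images are arcs in $\hat Z$ with $\hat z_0$ as an endpoint. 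Any two arcs in a $1$-manifold sharing an endpoint are nested; since neither image can properly contain the other (else the projections to $Z$ would give $X_s \subsetneq X_{s'}$ or vice versa), the images coincide, forcing the other endpoints to coincide and hence $s = s'$.

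The main obstacle will be the careful handling of the $Z_{exc}$-correction terms in the first equality of Lemma \ref{le:degproduct} during the degree computation. The $A$-term vanishes in our setup since $\id_i(0+) = C(i)$ makes $C(i)^+ \setminus \id_i(0+) = \emptyset$. A $B$-term correction can appear when $z_0 \in Z_{exc}$ and the chosen $s$ has $\theta_s = \theta_s^r$ (so $r'(\theta)_{z_0} = \id_{z_0}$); this case requires invoking the second, equivalent formula in Lemma \ref{le:degproduct} and carefully comparing intersection terms on both sides, noting that when $\mu(\theta) \geq 2$ at most one strand of $\theta$ can have $\theta_s(1) = z_0$, which constrains the correction.
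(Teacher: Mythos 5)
Your decomposition does not live in $\CS_M(Z)$. The intermediate object of your factorization is $\{z_0\}\cup(I\setminus\{s\})$, and $z_0\notin M$ by hypothesis, so this is not an object of $\CS_M^\bullet(Z)$ (nor of $\CP_M^\bullet(Z)$, where you assert the factorization takes place). The lemma explicitly asks for a decomposition ``in $\CS_M(Z)$'', and this is not cosmetic: in the application (Lemma \ref{le:qsurj}, with $Z$ replaced by $Z_\xi$ and $z_0=\xi(0)$), the factors $r(\theta)$ and $r'(\theta)$ must themselves be braids between finite subsets of $M$, since the inductive hypothesis of that proof is applied to each factor separately to produce preimages under $q\colon G\to\Id_{\CS_M^\bullet(Z_\xi)}$. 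Your phrase ``since $z_0\notin M$, both $r(\theta)$ and $r'(\theta)$ are valid braids'' points at the wrong thing: they are valid braids in $\CS^\bullet(Z)$, but they are not morphisms of $\CS_M^\bullet(Z)$, which is what is required.

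The paper's construction, via the quiver $\Gamma(\theta)$ and Lemma \ref{le:rightdivisor}, is built precisely to avoid leaving $M$: the first factor $r(\theta)=u$ cuts each relevant strand $\theta_i$ at the \emph{first point of $M$} encountered along its minimal representative (the arcs $\alpha^i$ of \S\ref{se:generation}), not at $z_0$ itself. This forces a multi-strand first factor, because other strands of $\theta$ may be sitting on the trajectory of $\theta_s$: one cannot move $s$ past a point of $M$ occupied by another strand without displacing that strand as well, so the chain or cycle $I'$ through $s$ in $\Gamma(\theta)$ must move together. Lemma \ref{le:rightdivisor} is then exactly the verification that this simultaneous move yields a nonzero product in $\CS_M^\bullet(Z)$, and the proof of the present lemma runs an induction on $|\theta|$, either reducing $|\theta|$ via a chain with $\mu(u)=0$, or exhibiting a chain through the strand of minimal $\supp(\theta_s^r)$ whose short factor has $\mu(u)=1$. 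Because your factorization is ill-typed, the degree bookkeeping and the $Z_{exc}$ corrections you anticipate in the last paragraph never come into play; the gap is upstream of all of that.
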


\begin{proof}
We prove the lemma by induction on $|\theta|$. Assume there is
	a set $I'$ satisfying the assumptions of Lemma \ref{le:rightdivisor}
	and such that $\mu(u)=0$. By induction, there is
	a decomposition $\theta^u=r'(\theta^u)\cdot r(\theta^u)$ as in the lemma.
	Now $r(\theta)=r(\theta^u)\cdot u$ and $r'(\theta)=r'(\theta^u)$
	satisfy the requirements of the lemma.

	\smallskip
	Assume now that given any set $I'$ satisfying the assumptions of
	Lemma \ref{le:rightdivisor}, we have $\mu(u)\ge 1$.

	Let $s\in I$ with $\mu(\theta_s)\ge 1$ such that given $s'\in I$
	with $\mu(\theta_{s'})\ge 1$, we have 
	$\mathrm{supp}(\theta_s^r)\subset \mathrm{supp}(\theta_{s'}^r)$.
	Given $s'\in I\setminus\{s\}$, we have $\mu(\alpha^{s'})=0$
	(notations of \S\ref{se:generation}).

	Let $I'$ be the set of $s'\in I$ such that there is a sequence
	$s_0=s,s_1,\ldots,s_r=s'$ of elements of $I$ such that
	$s_i\to s_{i+1}$ is an arrow of $\Gamma(\theta)$ for
	$0\le i<r$. Assume there exist $s_1,\ldots,s_d$ in $I'\setminus\{s_0\}$
	such that $s_d=s_1$ and $s_i\to s_{i+1}$ is an arrow of $\Gamma(\theta)$ for
	$1\le i<d$. Then $I''=\{s_1,\ldots,s_d\}$ satisfies the assumptions of
	Lemma \ref{le:rightdivisor}. On the other hand, we have
	$\mu(\alpha^{s'})=0$ for $s'\in I''$, hence we get a contradiction.
	It follows that $I'$ is a cycle or a line and it satisfies the
	assumptions of Lemma \ref{le:rightdivisor}.
	The braids $r'(\theta)=\theta^u$ and $r(\theta)=u$ 
	of Lemma \ref{le:rightdivisor} satisfy the requirements of the
	lemma.
%
\end{proof}

\subsubsection{Differential}
Let us start with a description of $i(\theta)$ in terms of $L(\theta)$, using
our previous analysis of $S^1$.

\medskip
	Let $f:Z\to Z'$ be a morphism of curves. Given
	$\theta\in\Hom_{\CP^\bullet_f(Z)}(I,J)$,
	the map $f$ induces an injection $f:L(\theta)\hookrightarrow L(f(\theta))$ by the
	discussion above Lemma \ref{le:coverI}.
	
\begin{lemma}
	\label{le:functorialityL}
	Given $\theta'\in f(\Hom_{\CP^\bullet_f(Z)}(I,J))$, the
	map $f$ induces a bijection $\bigcup_{\theta\in f^{-1}(\theta')}
	L(\theta)\iso L(\theta')$. It restricts to a bijection
	$\bigcup_{\theta\in f^{-1}(\theta')}
	D(\theta)\iso D(\theta')$.
\end{lemma}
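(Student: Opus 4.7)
The plan is to prove the bijection by reducing to the non-singular case and then exploiting the uniqueness of smooth lifts along open embeddings. Throughout, I will use the injection $f: L(\theta) \hookrightarrow L(f(\theta))$ constructed just above Lemma \ref{le:coverI} (which exists because $\theta \in \CP^\bullet_f(Z)$ ensures $f(\theta_{i_1}(t)) \neq f(\theta_{i_2}(t))$ for $t \in \{0,1\}$ and $i_1 \neq i_2$).

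First I would reduce to the case where $Z$ and $Z'$ are non-singular. Let $q: \hat{Z} \to Z$ and $q': \hat{Z}' \to Z'$ be the non-singular covers, and let $\hat{f}: \hat{Z} \to \hat{Z}'$ be the induced morphism; $\hat{f}$ is an open embedding by Properties \ref{it:morphismscurves} since $\hat{Z}'$ is non-singular. Applying Lemma \ref{le:coverI} to each pair $(\theta_{i_1},\theta_{i_2})$ indexing $L(\theta)$ yields a bijection $\coprod_{\hat{\theta} \in q^{-1}(\theta)} L(\hat{\theta}) \iso L(\theta)$, and similarly for $\theta'$. These bijections are compatible with $\hat{f}$ and $f$, so it suffices to prove the lemma for $\hat{f}$. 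The same reduction works for the $D$-statement, since conditions (a) and (b) defining $D$ involve smooth classes and orientations, both of which lift and descend uniquely through the non-singular covers by Lemma \ref{le:liftquotientpaths}.

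Second, assume $Z$ and $Z'$ are non-singular, so $f$ is an open embedding. Given $\zeta' \in L(\theta')$ with $\zeta' \in I(\theta'_{i'_1}, \theta'_{i'_2})$, Lemma \ref{le:supportI} gives $\supp(\zeta') \subseteq \supp(\theta'_{i'_1}) \cup \supp(\theta'_{i'_2}) \subseteq f(Z)$. By Lemma \ref{le:compositionliftpath} (applied with $f$ strict), once a preimage $i_1 \in f^{-1}(i'_1)$ is fixed, $\zeta'$ lifts uniquely to a smooth admissible class $\zeta$ in $Z$ with $\zeta(0) = i_1$; its endpoint $i_2 := \zeta(1) \in f^{-1}(i'_2)$ is determined. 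The admissible lifts $\theta_{i_1}$, $\theta_{i_2}$ of $\theta'_{i'_1}$, $\theta'_{i'_2}$ starting at $i_1$, $i_2$ are likewise unique (Lemma \ref{le:compositionliftpath}, last clause). The hypothesis $\theta' \in f(\Hom_{\CP^\bullet_f(Z)}(I,J))$ furnishes a lift $\theta_0$ of $\theta'$, and adjusting the source of $\theta_0$ at the points above $i'_1, i'_2$ to $i_1, i_2$ produces the required $\theta \in f^{-1}(\theta')$ containing these source points. The smoothness conditions on $\zeta$, $\theta_{i_2} \circ \zeta$, and $\zeta \circ \theta_{i_1}^{-1}$ transfer from $Z'$ to $Z$ via Lemmas \ref{le:smoothf} and \ref{le:compositionliftpath}, and the opposite-orientation condition is preserved because $f$, being an open embedding, takes minimal smooth paths to minimal smooth paths (Lemma \ref{le:mappaths}). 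Uniqueness of $(\theta,\zeta)$ over $\zeta'$ follows from the uniqueness of the smooth lift and of the admissible lifts of the individual $\theta'_{i'_r}$.

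For the restriction to a bijection on $D$, I would verify that conditions (a) and (b) are invariant under $f$ in the non-singular case: any auxiliary class $\zeta''$ in $Z'$ as in condition (a), or any decomposition $\zeta' = \zeta'_1 \cdot \zeta'_2$ as in condition (b), has support inside $\supp(\zeta') \subseteq f(Z)$ (again by Lemma \ref{le:supportI}), hence lifts uniquely to $Z$ by Lemma \ref{le:compositionliftpath}; conversely, its image under $f$ is a corresponding class in $Z'$. The orientation comparison of condition (b) is preserved by the same argument. The main obstacle is the bookkeeping in Step 2: ensuring that the partial lift determined by $\zeta$ actually extends to a global braid $\theta \in f^{-1}(\theta')$ with the correct source. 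This is where the assumption $\theta' \in f(\Hom_{\CP^\bullet_f(Z)}(I,J))$ is essential, since it provides a starting reference lift whose source can be surgically modified at the two distinguished points; verifying that this surgery yields a valid braid (in particular, that the new source still has the correct cardinality and that the other admissible lifts remain consistent) is the subtle technical step.
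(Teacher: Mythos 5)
Your handling of the $L$-statement is essentially the paper's: reduce through the non-singular covers using Lemma \ref{le:coverI}, then settle the open-embedding case with Lemma \ref{le:supportI} for surjectivity. The paper orders the pieces slightly differently (covering case, then embedding case, then composes) but the content is the same.

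The gap is in your treatment of the $D$-statement. You assert that the reduction to the non-singular case ``works the same'' because smooth classes and orientations lift and descend uniquely. That is not enough, and the reason is exactly the reason Lemma \ref{le:conditionbDtheta} exists. To prove that $\zeta\in D(\theta)$ implies $f(\zeta)\in D(\theta')$, you must verify condition (b) for $f(\zeta)$: given $\zeta',\zeta''\in L(\theta')$ with $f(\zeta)=\zeta'\circ\zeta''$, you must show $\zeta'$ and $\zeta''$ have opposite orientations. If $z=\zeta'(0)=\zeta''(1)$ satisfies $|f^{-1}(\theta'_z)|>1$ (which happens exactly when $\theta'_z=\id_z$ and $z$ is a quotient point, hence $z\in Z'_o$), then the unique lifts $\hat\zeta'$ and $\hat\zeta''$ of $\zeta'$ and $\zeta''$ need not have matching endpoints in $Z$ --- so $\hat\zeta'\circ\hat\zeta''$ may not even be defined, and in any case one may have $\hat\zeta',\hat\zeta''\notin L(\theta)$. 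Condition (b) for $\zeta\in D(\theta)$ therefore cannot be invoked, and the reduction argument breaks down. This is not a bookkeeping inconvenience; it is the substantive case, and it occurs precisely in the non-singular-cover step of your reduction (which is the step you would need to have already proved in order to run the reduction).

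The paper closes this case by a direct argument rather than a reduction: when $|f^{-1}(\theta'_z)|>1$, then $z\in Z'_o$ and $\theta'_z=\id$, so Lemma \ref{le:conditionbDtheta} forces $\zeta'$ and $\zeta''$ to have opposite orientations with no appeal to $D(\theta)$ at all; when $|f^{-1}(\theta'_z)|=1$, the lifts compose and lie in $L(\theta)$, so condition (b) transfers directly. A similar dichotomy-free argument handles condition (a). Your proof needs this case analysis, or some equivalent use of Lemma \ref{le:conditionbDtheta}, to be correct.
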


\begin{proof}
	Assume first $f$ is a  non-singular cover of $Z'$.

	Let $\zeta'\in L(\theta')$. There are $i'_1\neq i'_2\in f(I)$ such that
	$\zeta'\in I(\theta'_{i'_i}, \theta'_{i'_2})$. By Lemma \ref{le:coverI},
	there are elements $\zeta_r\in f^{-1}(\theta'_{i_r})$ and
	$\zeta\in I(\zeta_1,\zeta_2)$
	such $\zeta'=f(\zeta)$.  We define $\theta\in f^{-1}(\theta')$ by setting
	$\theta_{\zeta_r(0)}=\zeta_r$ and by setting $\theta_i$ to be
	any lift of $\theta'_{f(i)}$ for all $f(i) \notin \{i'_1,i'_2\}$. This shows the
	surjectivity part of the first statement of the lemma.
	
	Consider now $\theta$ and $\hat{\theta}$ maps in $\CP_f^\bullet(Z)$ such that
	$f(\theta) = f(\hat{\theta}) = \theta'$. Let
	$\zeta\in L(\theta)$ and $\hat{\zeta}\in L(\hat{\theta})$ such that
	$f(\zeta)=f(\hat{\zeta})=\zeta'$.
	There are $i'_1\neq i'_2\in f(I)$ such that $\zeta' \in 
	I(\theta'_{i'_1}, \theta'_{i'_2})$. We have $\hat{\theta}_{\hat{\zeta}(t)},
	\theta_{\zeta(t)} \in 
	f^{-1}(\theta'_{i'_r})$ for $t \in \{0,1\}$. It follows from
	Lemma \ref{le:coverI} that $\zeta=\hat{\zeta}$. So, the first statement of the lemma
	holds.

	\smallskip
	Assume now $f$ is an open embedding. The injectivity of the first map of the lemma
	is clear, while the surjectivity follows from Lemma \ref{le:supportI}.

	We deduce the first part of the lemma for $Z$ and $Z'$ non-singular and the
	general case follows now by taking non-singular covers
	of $Z$ and $Z'$ and the lift of $f$.

	\smallskip
	Let us prove now the second statement of the lemma about $D(\theta)$.
	
	Consider
	$\theta\in f^{-1}(\theta')$ and
	$\zeta\in L(\theta)$. 
	It is clear that if 
$f(\zeta)\in D(\theta')$, then $\zeta\in D(\theta)$.
	
	Assume now $\zeta\in D(\theta)$. 
	Fix $i_1\neq i_2\in I$ so that $\zeta\in I(\theta_{i_1},\theta_{i_2})$.

	Let $\zeta',\zeta''\in L(\theta')$ such that $f(\zeta)=\zeta'\circ\zeta''$. Let $z=\zeta'(0)=\zeta''(1)$. If $|f^{-1}(\theta'_z)|>1$, then
	$z\in Z_o$ and $\theta'_z=\id$, hence $\zeta'$ and $\zeta''$ have
	opposite orientations by Lemma \ref{le:conditionbDtheta}.
	Assume now $\theta'_z$ has a unique lift.
	Let $\hat{\zeta}'$ and $\hat{\zeta}''$ be the unique lifts of
	$\zeta'$ and $\zeta''$ (first part of the lemma). By unicity of lifts, we have
	$\zeta=\hat{\zeta}'\circ\hat{\zeta}''$. 
	We have $\hat{\zeta}',\hat{\zeta}''\in L(\theta)$, hence
	$\hat{\zeta}'$ and $\hat{\zeta}''$ have opposite orientations.
	It follows that $\zeta'$ and $\zeta''$ have opposite orientations as well.

	Consider now $\zeta':f(i_1)\to f(i_2)$ a smooth homotopy class of paths such that $f(\zeta)\circ\zeta^{\prime -1}$ and
	$\zeta^{\prime -1}\circ f(\zeta)$ are smooth and have the same
	orientation as $f(\zeta)$ and $\zeta'$.
Let $\hat{\zeta}'$ be the unique lift
	of $\zeta'$. Since
	$f(\zeta)\circ\zeta^{\prime -1}$ is smooth, it follows that
	$\hat{\zeta}'(0)=i_1$ and
	$\zeta\circ\hat{\zeta}^{\prime -1}$ is smooth and has the same
	orientation as $\zeta$. Similarly,
	$\hat{\zeta}'(1)=i_2$ and
	$\hat{\zeta}^{\prime -1}\circ \zeta$ is smooth and has the same
orientation as $\zeta$.
	 A similar statement holds for $\zeta$ replaced by $\bar{\zeta}$.
	We deduce that $f(\zeta)\in D(\theta')$.
\end{proof}

\begin{rem}
The picture below shows what would go wrong in Lemma \ref{le:functorialityL} 
if we allowed unoriented points in $Z_{exc}$. In the proof, we need
$\hat{\zeta}'$ and
$\hat{\zeta}''$ to be in $L(\theta)$, which would not be true if this example were valid.  
	$$\includegraphics[scale=1.2]{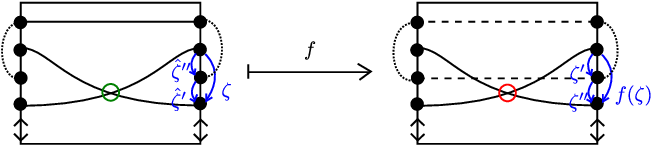}$$
\end{rem}

\begin{prop}
	\label{pr:i=L}
	Let $\theta\in\Hom_{\CP^\bullet(Z)}(I,J)$. We have $i(\theta)=\sum_{\Omega\in\pi_0(Z)}
	|(L(\theta)\cap\Omega)/\mathrm{inv}|e_\Omega$.
	In particular, $L(\theta)$ is finite.
\end{prop}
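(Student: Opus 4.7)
The plan is to reduce the statement to a computation on $S^1$ where both sides admit explicit formulas. First, both sides are additive over connected components of $Z$, so we may assume $Z$ is connected. By Remark \ref{re:decomposition I} we have $i(\theta) = \sum_{\{s,s'\}} i(\theta_s,\theta_{s'}) e_\Omega$ summed over unordered pairs (using symmetry of $i$), while $L(\theta) = \coprod_{i_1 \neq i_2} I(\theta_{i_1},\theta_{i_2})$ together with $\mathrm{inv}(I(\theta_{i_1},\theta_{i_2})) = I(\theta_{i_2},\theta_{i_1})$ gives $|L(\theta)/\mathrm{inv}| = \sum_{\{i_1,i_2\}} |I(\theta_{i_1},\theta_{i_2})|$. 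It therefore suffices to prove $i(\zeta_1,\zeta_2) = |I(\zeta_1,\zeta_2)|$ for any pair of admissible classes $\zeta_1,\zeta_2$ starting at distinct points. Once this is established, finiteness of $L(\theta)$ follows since $\mathrm{inv}$ is fixed-point free.

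Next I reduce to the non-singular case. If $q:\hat{Z}\to Z$ is the non-singular cover, Lemma \ref{le:coverI} yields $|I(\zeta_1,\zeta_2)| = \sum_{\hat{\zeta}_r \in q^{-1}(\zeta_r)} |I(\hat{\zeta}_1,\hat{\zeta}_2)|$, and Lemma \ref{le:intersections}(3) gives the matching decomposition for $i(\zeta_1,\zeta_2)$. Then for $Z$ non-singular and connected, I choose an injective morphism of curves $f:Z\hookrightarrow S^1$ (treating $S^1$ as unoriented if $Z$ has any unoriented part). By Lemma \ref{le:intersections}(3), $i(f(\zeta_1),f(\zeta_2)) = i(\zeta_1,\zeta_2)$. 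The discussion preceding Lemma \ref{le:coverI} shows that $f$ induces an injection $I(\zeta_1,\zeta_2) \hookrightarrow I(f(\zeta_1),f(\zeta_2))$, and by Lemma \ref{le:supportI} every class in the target has support contained in $\supp(f(\zeta_1)) \cup \supp(f(\zeta_2)) \subset f(Z)$, hence lifts uniquely to $Z$ since $f$ is an open embedding of non-singular curves. So the injection is a bijection, reducing the problem to $Z = S^1$.

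For $Z = S^1$ unoriented, pick a family of cyclically ordered basepoints $\Ba = \{a_1,\ldots,a_n\}$ containing $\zeta_1(0),\zeta_2(0),\zeta_1(1),\zeta_2(1)$. Proposition \ref{pr:prestrandsnonsingular} identifies $(\zeta_1,\zeta_2)$ with $F(\sigma)$ for some $\sigma$ in $\CS_n$ with $|J|=2$. Lemma \ref{le:S1comparison} gives $i(F(\sigma)) = \ell(\sigma)$, Lemma \ref{le:inversionpaths} gives a bijection $L(\sigma)/n\BZ \iso L(F(\sigma))/\mathrm{inv}$, and Lemma \ref{le:lengthaffine} gives $\ell(\sigma) = |\tilde{L}(\sigma)| = |L(\sigma)/n\BZ|$. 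Chaining these and using $L(F(\sigma)) = I(\zeta_1,\zeta_2) \sqcup I(\zeta_2,\zeta_1)$ with $\mathrm{inv}$ swapping the two summands yields
$$i(\zeta_1,\zeta_2) = \ell(\sigma) = |L(F(\sigma))/\mathrm{inv}| = |I(\zeta_1,\zeta_2)|,$$
completing the reduction.

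The main obstacle is not in any of the individual reductions but in verifying that the technical conditions defining $I(\zeta_1,\zeta_2)$—smoothness of the relevant compositions and the opposite-orientation requirement—are correctly preserved when passing to a non-singular cover and when restricting along an open embedding $Z \hookrightarrow S^1$. These are precisely the statements packaged in Lemmas \ref{le:coverI} and \ref{le:supportI}, so the bulk of the work has already been done; the proof amounts to stringing these lemmas together with the explicit computations on $S^1$ provided by Lemmas \ref{le:S1comparison}, \ref{le:inversionpaths}, and \ref{le:lengthaffine}.
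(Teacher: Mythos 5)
Your proof is correct and follows essentially the same reduction chain as the paper's: from general $Z$ to its non-singular cover, then to a connected non-singular curve embedded in $S^1$, then an explicit computation on $S^1$ via Proposition~\ref{pr:prestrandsnonsingular} and Lemmas~\ref{le:S1comparison}, \ref{le:inversionpaths}. The only cosmetic difference is that you first reduce to pairs and invoke Lemmas~\ref{le:coverI} and~\ref{le:supportI} directly, whereas the paper stays at the braid level and cites Lemma~\ref{le:functorialityL}, which is precisely those two pair-level lemmas packaged together.
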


\begin{proof}
	The statement is true for $Z=S^1$ unoriented by Lemmas \ref{le:lengthaffine},
	\ref{le:S1comparison} and \ref{le:inversionpaths}.
	It follows from Lemmas \ref{le:functorialityL} and \ref{le:intersections}
	that it holds for any connected
	non-singular $Z$, by embedding it in $S^1$. So, the lemma holds for any
	non-singular $Z$.
	By realizing an arbitrary $Z$ as a quotient of its non-singular cover, we
	deduce from Lemmas \ref{le:functorialityL} and \ref{le:intersections} that
	the lemma holds for any $Z$.
\end{proof}

	Given $f:Z\to Z'$ a morphism of curves, given
	$\theta\in\Hom_{\CP^\bullet_f(Z)}(I,J)$ and
	given $\zeta\in L(\theta)$, we have $f(\theta^\zeta)=f(\theta)^{f(\zeta)}$.

\begin{lemma}
	\label{le:thetazeta}
Given $\theta\in\Hom_{\CP^\bullet(Z)}(I,J)$ and $\zeta\in L(\theta)$, we have
	$\theta^\zeta\in \Hom_{\CP^\bullet(Z)}(I,J)$.
	We have $\zeta\in D(\theta)$ if and only if
	$\overline{\deg}_D(\theta^\zeta)=\overline{\deg}_D(\theta)+1$ for some (or
	equivalently, any) finite subset $D$ of $T(Z)$ such that $D\cap\iota(D)=\emptyset$.
\end{lemma}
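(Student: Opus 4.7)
I would prove both statements simultaneously via a reduction to the affine symmetric group. A few preliminary observations: (a) $\bij{\theta^\zeta}$ is simply $\bij{\theta}$ composed with the transposition of $\bij{\theta}(i_1)$ and $\bij{\theta}(i_2)$, so $\bij{\theta^\zeta}\colon I\iso J$ is automatic once we know $\theta^\zeta$ is a braid; (b) since $\llbracket\zeta\rrbracket+\llbracket\zeta^{-1}\rrbracket=0$ in $R(Z)$, we have $\llbracket\theta^\zeta\rrbracket=\llbracket\theta\rrbracket$, so $\deg_D(\theta)\cdot\deg_D(\theta^\zeta)^{-1}$ projects into the image of $\bigoplus_\Omega\frac{1}{2}\BZ e_\Omega$ in $\bar\Gamma_D(Z)$, which is identified with $\frac{1}{2}\BZ$; (c) Lemma~\ref{le:degsemi} therefore yields the equivalence between the equality $\overline{\deg}_D(\theta^\zeta)=\overline{\deg}_D(\theta)-1$ holding for some $D$ and its holding for all such $D$, independently of any admissibility check.

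By Lemma~\ref{le:supportI}, $\supp(\zeta)$ lies in the connected component $\Omega$ of $Z$ containing $\theta_{i_1}(0)$ and $\theta_{i_2}(0)$, and the operation $\theta\mapsto\theta^\zeta$ alters only the strands in $\Omega$. Together with the decomposition \eqref{eq:Scomponents} this reduces the problem to $Z$ connected. Next, I pass to the non-singular cover $q\colon\hat Z\to Z$: by Lemma~\ref{le:functorialityL}, $\zeta$ lifts uniquely to some $\hat\zeta\in L(\hat\theta)$ for a suitable lift $\hat\theta$, with $\hat\zeta\in D(\hat\theta)$ iff $\zeta\in D(\theta)$; the formation of $\theta^\zeta$ manifestly commutes with $q$, and Lemma~\ref{le:fdeg} ensures compatibility of degrees under $q$. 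This reduces everything to the case where $Z$ is non-singular and connected.

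In that setting, I would embed $Z$ as an open subcurve of $S^1$ with the induced orientation and invoke Proposition~\ref{pr:prestrandsnonsingular} to identify $\theta$ with an element $\sigma\in\Hom_{\CS_n}(-,-)$ in the appropriate positive submonoid. Lemma~\ref{le:inversionpaths} then matches $\zeta$ with some $\lambda(i_1,i_2)$ for $(i_1,i_2)\in L(\sigma)$, gives $\theta^\zeta=F(\sigma^{i_1,i_2})$ as a morphism in $\CS_n$, and translates $\zeta\in D(\theta)$ into $(i_1,i_2)\in D(\sigma)$. The admissibility of $\theta^\zeta$ then reduces to the stability of the chosen positive submonoid under $\sigma\mapsto\sigma^{i_1,i_2}$, a direct check (e.g.\ for $\CS_n^{++}$ one uses $\sigma(i_2)\ge i_2>i_1$ together with $\sigma(i_1)>\sigma(i_2)\ge i_2$, and similarly for $\CS_n^+,\CS_n^f,\CS_n^{f++}$). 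For the degree statement, Lemma~\ref{le:S1comparison} identifies the two degree functions under $F_\Gamma$, reducing the assertion to the last part of Lemma~\ref{le:setDforSn}: $(i_1,i_2)\in D(\sigma)$ iff $\ell(\sigma^{i_1,i_2})=\ell(\sigma)-1$.

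The main obstacle is the bookkeeping of conventions in passing between $\Gamma_D(Z)$, $\Gamma_D(\hat Z)$, $\bar\Gamma_D$ and $\Gamma_n$—in particular, ensuring that $\overline{\deg}_D$ is unambiguously tracked through the components decomposition, the cover map $q$, and finally through the embedding into $S^1$ via $F_\Gamma$; once these identifications are in place, both assertions of the lemma fall out of the affine symmetric group computations of \S\ref{se:nilHeckecat}.
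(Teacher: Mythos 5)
Your plan for the second assertion (the degree criterion) essentially matches the paper's: decompose over connected components, pass to the non-singular cover, embed a connected non-singular curve into the unoriented $S^1$, and invoke Lemmas~\ref{le:inversionpaths}, \ref{le:S1comparison} and \ref{le:setDforSn}. This is sound because $i(\theta)$, $m(\theta)$, $\llbracket\theta\rrbracket$, $L(\theta)$ and $D(\theta)$ are all insensitive to the orientation data of $Z$, so an injective morphism of curves into the \emph{unoriented} $S^1$ loses nothing.

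The gap is in your treatment of the first assertion (that $\theta^\zeta$ is again a braid, i.e.\ that $(\theta^\zeta)_{i_1}$ and $(\theta^\zeta)_{i_2}$ are \emph{admissible}). You propose to ``embed $Z$ as an open subcurve of $S^1$ with the induced orientation and invoke Proposition~\ref{pr:prestrandsnonsingular} to identify $\theta$ with an element of the appropriate positive submonoid.'' But Proposition~\ref{pr:prestrandsnonsingular} only identifies $\CS^\bullet_\Ba$ of five specific curve structures on $S^1$ or an interval (unoriented, fully oriented, or oriented on a single arc $\Omega$) with $(\CS_n)_+$, $(\CS_n^{++})_+$, $(\CS_n^{+})_+$, $(\CS_n^{f})_+$, $(\CS_n^{f++})_+$. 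A general non-singular connected curve has $Z_o$ a union of arcs, each with its own orientation, and need not admit an orientation-preserving embedding into any of these models; consequently there is no ``appropriate positive submonoid'' whose stability under $\sigma\mapsto\sigma^{i_1,i_2}$ you could check. Unlike the degree statement, admissibility really does depend on the orientation of $Z$, so the otherwise-available forgetful embedding into the unoriented $S^1$ does not help here. The paper sidesteps this with a direct argument: depending on whether $(\theta^\zeta)_{i_1}$ has the same orientation as $\zeta^{-1}$ or as $\bar\zeta^{-1}$, one writes $\theta_{i_2}=(\theta^\zeta)_{i_1}\circ\zeta^{-1}$ or $\theta_{i_1}=\bar\zeta^{-1}\circ(\theta^\zeta)_{i_1}$ as a composition of minimal paths, and then uses that a minimal path in an admissible class is admissible and that a restriction of an admissible path is admissible (Properties~\ref{it:smoothadmissible}); the same applies to $(\theta^\zeta)_{i_2}$. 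You would need to substitute something along those lines for the positive-submonoid check.

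Two smaller remarks. Your observation~(b) overstates what follows from $\llbracket\theta^\zeta\rrbracket=\llbracket\theta\rrbracket$ alone: one must also compare $m(\theta^\zeta)$ with $m(\theta)$, and when $\zeta(0)$ or $\zeta(1)$ lies in $Z_{exc}$ the starting directions $\theta_s(0+)$ genuinely change under $\theta\mapsto\theta^\zeta$, so the $L(Z)$-component does not literally cancel --- one needs the reduction to $S^1$ (via Lemma~\ref{le:S1comparison}, where $m(\sigma)$ is manifestly unchanged) to see it. And the ``some $D$ iff any $D$'' equivalence comes from Lemma~\ref{le:setDforSn} together with that reduction, not from Lemma~\ref{le:degsemi}, which concerns compositions of braids rather than the passage from $\theta$ to $\theta^\zeta$.
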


\begin{proof}
	Let us show the first statement. We can assume
	$\theta_{\zeta(0)}^\zeta\neq\id$.

	Assume $\theta_{\zeta(0)}^\zeta$
	has the same orientation as $\zeta^{-1}$. We have
	$\theta_{\zeta(1)}=\theta_{\zeta(0)}^\zeta\circ\zeta^{-1}$.
If $\gamma$ and $\gamma'$ are minimal paths in $\theta_{\zeta(0)}^\zeta$ and
	$\zeta^{-1}$, then $\gamma\circ\gamma'$ is a minimal path in
	$\theta_{\zeta(1)}$. Since $\gamma\circ\gamma'$ is admissible,
	it follows that $\gamma$ is admissible, hence
	$\theta_{\zeta(0)}^\zeta$ is admissible.

	Otherwise,
	$\theta_{\zeta(0)}^\zeta$ has the same orientation as
	$\bar{\zeta}^{-1}$ and
	$\theta_{\zeta(0)}=\bar{\zeta}^{-1}\circ\theta_{\zeta(0)}^\zeta$,
	hence we deduce as above that $\theta_{\zeta(0)}^\zeta$ is admissible.

	Similarly, $\theta_{\zeta(1)}^\zeta$ is admissible and we deduce that
        $\theta^\zeta$ is a braid.

	\smallskip
	Let us prove the second part of the lemma.
	When $Z=S^1$ unoriented, this holds by Lemmas \ref{le:inversionpaths},
	\ref{le:setDforSn} and
	\ref{le:S1comparison} and Proposition \ref{pr:prestrandsnonsingular}.
	We deduce that the lemma holds when $Z$ is a connected non-singular curve,
	by embedding $Z$ in $S^1$. So, it holds
	when $Z$ is a non-singular curve (since $\supp(\zeta)$ is contained
	in a connected component of $Z$).

	\smallskip
	Consider now a general $Z$ and the non-singular cover $q:\hat{Z}\to Z$.
	There is a braid $\hat{\theta}$ in $\hat{Z}$ with $q(\hat{\theta})=\theta$
	(Lemma \ref{pr:qprestrands}) and there is
	$\hat{\zeta}\in L(\hat{\theta})$ such that
	$\zeta=q(\hat{\zeta})$ (Lemma \ref{le:functorialityL}).
	The considerations above show that $\hat{\theta}^{\hat{\zeta}}$ is a
	braid in $\hat{Z}$, hence $\theta^\zeta=q(\hat{\theta}^{\hat{\zeta}})$
	is a braid in $Z$. The statement on degrees follows from Lemmas
	\ref{le:functorialityL} and \ref{le:fdeg}.
\end{proof}

\medskip
Given $\theta\in\Hom_{\CS^\bullet(Z)}(I,J)$, we put
$$d(\theta)=\sum_{\zeta\in D(\theta)/\mathrm{inv}}\theta^\zeta\in
\Hom_{\CS(Z)}(I,J)\indexnot{d}{d(\theta)}.$$
Note that the set $D(\theta)$ is finite by Proposition \ref{pr:i=L}.

\begin{thm}
	\label{th:strandsdifferential}
	The map $d$ equips $\CS(Z)$ with a structure of
	differential $\bar{\Gamma}(Z,Z_{exc}^+)$-graded
	$\BF_2$-linear category and $\CS^\bullet(Z)$ with a structure of
	differential $\bar{\Gamma}(Z,Z_{exc}^+)$-graded pointed category.

	\smallskip
	Let $f:Z\to Z'$ be a morphism of curves.

	$\bullet\ $The functor $f:\CS_f^\bullet(Z)\to\CS^\bullet(Z')$ is a faithful
	pointed functor and its restriction to $\CS_{\{z\in Z\ |\
	|f^{-1}f(z)|=1\}}^\bullet(Z)$
	is a differential $\bar{\Gamma}(Z',Z_{exc}^{\prime +})$-graded pointed functor.

	$\bullet\ $If $f$ is strict, then
	$f^\#:\add(\CS(Z'))\to\add(\CS_f(Z))$ is a
	differential $\bar{\Gamma}(Z',Z_{exc}^{\prime +})$-graded 
	functor commuting with coproducts.

	$\bullet\ $If $f$ is a quotient morphism, then $f^\#$ is faithful and every map in
	$\CS^\bullet(Z')$ is in the image by $f$ of a map of $\CS^\bullet_f(Z)$.
\end{thm}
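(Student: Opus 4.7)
The plan is to reduce the key identities $d^2=0$ and the Leibniz rule in part (1) to the case $Z=S^1$ unoriented, where they become the differential graded structure on the nil Hecke category $\CH_n$ of Proposition \ref{pr:Heckedg}, via the isomorphism of Proposition \ref{pr:prestrandsnonsingular} and the compatibility of $D(\sigma)$ with $D(F(\sigma))$ given by Lemma \ref{le:inversionpaths}. Parts (2)--(4) then follow from the functorial properties of the construction $\theta\mapsto\theta^\zeta$ combined with Lemma \ref{le:functorialityL} and the already established (non-graded) results of Propositions \ref{pr:finjective}, \ref{pr:fsharpsprestrands}, and \ref{pr:qprestrands}.

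First I would check that $d$ is well-defined on $\BF_2[\CS^\bullet(Z)]$: the sum defining $d(\theta)$ is finite because $D(\theta)\subseteq L(\theta)$ and $L(\theta)$ is finite by Proposition \ref{pr:i=L}. Moreover, Lemma \ref{le:thetazeta} shows that each summand $\theta^\zeta$ satisfies $\overline{\deg}(\theta^\zeta)=\overline{\deg}(\theta)-1$, so $d$ is homogeneous of degree $+1$ with respect to the $\Gamma_{Z_{exc}^+}(Z)$-grading. The boundedness condition needed for a differential pointed structure holds since $d(\theta)$ is a finite sum of maps for each $\theta$.

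The reduction strategy then proceeds in two stages. First, via the non-singular cover $q:\hat{Z}\to Z$: by Proposition \ref{pr:qprestrands}, $q^\#$ is a faithful functor commuting with coproducts, and Lemma \ref{le:functorialityL} gives a bijection $\coprod_{\hat{\theta}\in q^{-1}(\theta)}D(\hat{\theta})\iso D(\theta)$ (at the level of orbits under $\mathrm{inv}$), so $d\circ q^\# = q^\#\circ d$. Thus the identities for $Z$ follow from those for $\hat{Z}$. Next, by (\ref{eq:Scomponents}) one reduces to $Z$ connected and non-singular, which admits an injective morphism of curves $f:Z\hookrightarrow S^1$ with $S^1$ unoriented. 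Proposition \ref{pr:finjective} makes the induced pointed functor faithful, and Lemma \ref{le:functorialityL} again (now as the injective branch of the same lemma) combined with the identity $f(\theta^\zeta)=f(\theta)^{f(\zeta)}$ shows $f$ intertwines $d$. Then Proposition \ref{pr:prestrandsnonsingular} and Lemma \ref{le:inversionpaths} translate the identities into the corresponding identities on $\CH_n$, which are the content of Proposition \ref{pr:Heckedg}.

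The main obstacle will be verifying the Leibniz rule carefully, because $d$ is naturally defined on $\CP^\bullet(Z)$ (on homotopy classes) while the product in $\CS^\bullet(Z)$ only retains the top-degree contribution. Concretely, when $\theta\cdot\theta'\neq 0$ in $\CS^\bullet$, Lemma \ref{le:degproduct} and Remark \ref{re:productnonexc} force the intersection and degree counts to be additive, so one can partition $D(\theta\circ\theta')$ (up to $\mathrm{inv}$) according to whether each crossing $\zeta$ lies in $L(\theta)$, in $L(\theta')$, or is mixed; the mixed contributions are ruled out precisely by the additivity. The resulting terms $\theta^{\zeta_1}\cdot\theta'$ and $\theta\cdot\theta'^{\zeta_2}$ must then themselves be non-zero in $\CS^\bullet$, which again requires checking degree additivity and is handled by Lemma \ref{le:restrictionDtheta} and a direct computation with Lemma \ref{le:degproduct}. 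Once this is in place, $d^2=0$ follows either directly from the $S^1$ reduction or by constructing a fixed-point free involution on pairs $(\zeta,\zeta')$ with $\zeta\in D(\theta)/\mathrm{inv}$ and $\zeta'\in D(\theta^\zeta)/\mathrm{inv}$, in the spirit of Proposition \ref{pr:diffnilHecke}. Finally, for parts (2)--(4): the faithfulness in (2) is Proposition \ref{pr:finjective}, and compatibility with $d$ on the indicated subcategory is exactly the injective case of Lemma \ref{le:functorialityL}; part (3) combines Proposition \ref{pr:fsharpsprestrands} with the identity $d(f^\#(\theta'))=\sum_{\theta\in f^{-1}(\theta')}d(\theta)=f^\#(d(\theta'))$ obtained from the same lemma; and part (4) is an immediate consequence of Proposition \ref{pr:qprestrands}.
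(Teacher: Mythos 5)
Your proposal is correct and follows essentially the paper's own route: reduce to $Z=S^1$ unoriented via faithful $d$-intertwining functors (non-singular cover, component decomposition, embedding into $S^1$), then transport the identities from the nil Hecke category via Propositions \ref{pr:prestrandsnonsingular} and \ref{pr:Heckedg} and Lemma \ref{le:inversionpaths}, with parts (2)--(4) coming from Lemma \ref{le:functorialityL} and Propositions \ref{pr:finjective}, \ref{pr:fsharpsprestrands}, \ref{pr:qprestrands}. One remark: the Leibniz rule needs no separate ``obstacle'' discussion --- because the reducing functors are genuine functors (so respect composition) and commute with $d$, faithfulness transports $d(\theta\cdot\theta')=d(\theta)\cdot\theta'+\theta\cdot d(\theta')$ from $\CH_n$ exactly as it transports $d^2=0$, so the paragraph you devote to a direct combinatorial Leibniz argument via Lemmas \ref{le:degproduct} and \ref{le:restrictionDtheta} is dispensable.
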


\begin{proof}
	Lemma \ref{le:functorialityL} shows that
	$d(f^\#(\theta'))=f^\#(d(\theta'))$ for any $\theta'$ and that
	$d(f(\theta))=f(d(\theta))$ if $|f^{-1}f(\theta)|=1$.

	Assume $Z=S^1$ (unoriented) and consider a finite
	subset $M$ of $Z$ as in \S\ref{se:StrandsS1}. We use the notations
	of that section.
	It follows from Lemma \ref{le:S1comparison} that the isomorphism $F$
	of Proposition \ref{pr:prestrandsnonsingular} induces an isomorphism of
	$\BF_2$-linear categories $F:\BF_2[\CH_n]\iso\CS_M(Z)$.
	It follows now from Lemma \ref{le:inversionpaths} that this isomorphism
	commutes with $d$.
	In particular, $d$ is a differential on $\CS_M(Z)$.
	Since this holds for any finite subset $M$ of $Z$, we deduce that
	$d$ is a differential on $\CS(Z)$.

	Consider now a non-singular connected $Z$ and an injective morphism
	of curves $f:Z\hookrightarrow S^1$. Since $f$ induces a
	faithful $\BF_2$-linear functor $\CS(Z)\to\CS(S^1)$ commuting with
	$d$, we deduce that $d$ is a differential on $\CS(Z)$.

	The decomposition (\ref{eq:Strandscomponents}) is compatible with $d$, hence $d$ is a 
	differential on $\CS(Z)$ for any non-singular $Z$.

	Consider now a general $Z$ and $q:\hat{Z}\to Z$ its non-singular cover.
	Since the additive $\BF_2$-linear functor $q^\#$ commutes with $d$,
	it follows that $d$ is a differential on $\CS(Z)$.

	The last statement of the theorem follows from Lemma \ref{le:liftquotientpaths}.
\end{proof}

There is an isomorphism of differential pointed categories
\begin{equation}
	\label{eq:oppositeStrands}
\CS^\bullet(Z^\opp)\iso\CS^\bullet(Z)^\opp,\ I\mapsto I,\ \theta\mapsto (\theta_s^{-1})_s.
\end{equation}


\medskip
Note that the construction $Z\mapsto \add(\CS(Z))$ and
$f\mapsto f^\#$ defines a contravariant
functor from the category
of curves with strict morphisms to the category of
differential categories.

\subsubsection{Strands on non-singular curves}
\label{se:strandsnonsingular}

We consider as in \S\ref{se:StrandsS1} a family 
$M=\{a_1,\ldots,a_n\}$ of points on $S^1$ and $z\in S^1-M$
such that $a_1,\ldots,a_n,z$ is cyclically ordered.

\smallskip
The next proposition follows immediately from Proposition \ref{pr:prestrandsnonsingular} and Lemmas \ref{le:S1comparison} and \ref{le:inversionpaths}.

\begin{prop}
	\label{pr:strandsnonsingular}
The functor $F$ induces an isomorphism of differential pointed categories
$\CH_n\iso \CS^\bullet_M(S^1)$. It restricts to isomorphisms of
	differential pointed categories
	$$\CH_n^+\iso\CS^\bullet_M(\dot{S}^1),\
	\CH_n^{++}\iso\CS^\bullet_M(\vec{S}^1),\
	\CH_n^f\iso\CS^\bullet_M(I) \text{ and }
	\CH_n^{f++}\iso\CS^\bullet_M(\vec{I}).$$
\end{prop}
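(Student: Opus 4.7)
The plan is to reduce everything to the three cited results and verify that they assemble into the claimed isomorphism of differential pointed categories.

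First I would observe that the source category $\CH_n$ is, by definition, the $\Gamma_n$-graded pointed category associated to the $\Gamma_n$-filtration on $\CS_n$ (cf.\ \S\ref{se:Heckedifferential}), while the target $\CS^\bullet_\Ba(S^1)$ is, by Theorem \ref{th:strandsdifferential} and the definition in \S\ref{se:definitionstrandcategory}, the graded pointed category associated to the $\Gamma(S^1)$-filtration on $\CP^\bullet_\Ba(S^1)$ (here $S^1_{exc} = \emptyset$). By Proposition \ref{pr:prestrandsnonsingular}, the pointed functor $F:(\CS_n)_+ \iso \CP^\bullet_\Ba(S^1)$ is already an isomorphism at the pre-graded (pointed category) level. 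So it suffices to check (a) that $F$ preserves filtrations/degrees, (b) that the induced functor on the graded pieces is still a bijection on Hom-sets, and (c) that it intertwines the two differentials.

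For (a) and (b), Lemma \ref{le:S1comparison} provides an injective order-preserving morphism of groups $F_\Gamma:\Gamma_n \hookrightarrow \Gamma(S^1)$ with $\deg(F(\sigma)) = F_\Gamma(\deg(\sigma))$. Consequently $F$ sends elements of $\Gamma_n$-degree exactly $g$ to elements of $\Gamma(S^1)$-degree exactly $F_\Gamma(g)$; combined with the bijectivity on pointed Hom-sets from Proposition \ref{pr:prestrandsnonsingular}, this shows $F$ restricts to a bijection on each graded piece. Since the $\BZ$-grading on $\CH_n$ is obtained from the $\Gamma_n$-grading via the quotient to $\Gamma_n/\Gamma'_n = \frac{1}{2}\BZ$ (the length), and similarly for $\CS^\bullet_\Ba(S^1)$ via $\Gamma(S^1)\to\BZ^{\pi_0(S^1)}=\BZ$, and $F_\Gamma$ is compatible with these quotients, $F$ is a $\BZ$-graded isomorphism of pointed categories.

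For (c), I would invoke Lemma \ref{le:inversionpaths}: the correspondence $\lambda$ gives a bijection $D(\sigma)/n\BZ \iso D(F(\sigma))/\mathrm{inv}$ with $F(\sigma^{i_1,i_2}) = F(\sigma)^{\lambda(i_1,i_2)}$. Matching this against the formulas $d(\sigma) = \sum_{(i_1,i_2)\in\tilde D(\sigma)} \sigma^{i_1,i_2}$ (Proposition \ref{pr:Heckedg}, using the canonical identification $\tilde{D}(\sigma) \iso D(\sigma)/n\BZ$) and $d(F(\sigma)) = \sum_{\zeta\in D(F(\sigma))/\mathrm{inv}} F(\sigma)^\zeta$ (\S\ref{se:definitionstrandcategory}) gives $F(d(\sigma)) = d(F(\sigma))$ immediately, so $F$ is a differential pointed functor, hence an isomorphism of such.

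Finally, for the four restrictions, I would combine the corresponding restriction statement of Proposition \ref{pr:prestrandsnonsingular} with the observation that each of $\CH_n^+, \CH_n^{++}, \CH_n^f, \CH_n^{f++}$ is defined as the graded pointed subcategory of $\CH_n$ supported on exactly the maps whose underlying $\CS_n$-representative lies in $\CS_n^+, \CS_n^{++}, \CS_n^f, \CS_n^{f++}$ respectively; under $F$ these map to the full pointed subcategories of $\CS^\bullet_\Ba(S^1)$ consisting of braids contained in $\dot S^1, \vec S^1, I, \vec I$ respectively, which are precisely $\CS^\bullet_\Ba(\dot S^1), \CS^\bullet_\Ba(\vec S^1), \CS^\bullet_\Ba(I), \CS^\bullet_\Ba(\vec I)$. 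Since the differential of $\CH_n$ preserves each of these subcategories (this was the content of the lemma following their definition) and $F$ intertwines differentials, the restrictions are isomorphisms of differential pointed categories as well. No real obstacle arises here; the whole argument is a clean assembly, and the only thing to be careful about is keeping straight that $\Gamma_{Z_{exc}^+}(S^1) = \Gamma(S^1)$ because $S^1$ is non-singular.
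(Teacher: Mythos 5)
Your proof is correct and takes the same approach as the paper, whose proof is simply the one-line observation that the statement follows from Proposition \ref{pr:prestrandsnonsingular} and Lemmas \ref{le:S1comparison} and \ref{le:inversionpaths}. You have merely spelled out exactly how those three results assemble: the bijectivity at the pointed-category level, the degree compatibility via $F_\Gamma$ giving compatibility with the associated-graded products, and the bijection $D(\sigma)/n\BZ \iso D(F(\sigma))/\mathrm{inv}$ together with $F(\sigma^{i_1,i_2}) = F(\sigma)^{\lambda(i_1,i_2)}$ giving compatibility of differentials; the restrictions then follow from the corresponding restriction clauses of Proposition \ref{pr:prestrandsnonsingular} and the lemma showing the subcategories $\CH_n^?$ are differential subcategories.
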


The isomorphism 
$\Gamma_{[1,n]^+}\iso \Gamma_{M}(\vec{S^1})$
 of Lemma \ref{le:S1comparison}
restricts to an isomorphism of groups 
$\Gamma^f_{[1,n]^+}\iso \Gamma_{M}(\vec{I})$
and
the isomorphism
	$\CH_n^{f++}\to\CS^\bullet_M(\vec{I})$ of Proposition 
\ref{pr:strandsnonsingular} is compatible with the grading by those groups.

\medskip

Consider $Z=\BR_{>0}$ as an unoriented curve.
We denote by $\CS_{\otimes}^\bullet(\BR_{>0})$ the full subcategory of $\CS^\bullet(\BR_{>0})$
with objects the subsets of the form $\{1,\ldots,n\}$ for some $n\in\BZ_{\ge 0}$.
We define a monoidal structure on 
the differential pointed category $\CS_{\otimes}^\bullet(\BR_{>0})$ by
$\{1,\ldots,n\}\otimes \{1,\ldots,m\}=\{1,\ldots,n+m\}$
and $\theta''=\theta\otimes\theta'$ is defined by
$\theta''_i=\theta_i$ if $i\le n$ and
$\theta''_i=\theta'_{i-n}$ otherwise.

\smallskip
The next theorem follows immediately from Proposition \ref{pr:strandsnonsingular}.

\begin{thm}
	\label{th:Ufromstrands}
There is an isomorphism of
differential pointed monoidal categories
	$\CU^\bullet\iso \CS_{\otimes}^\bullet(\BR_{>0})$ defined by $e\mapsto \{1\}$ and
	$\tau$ maps to the non-zero and non-identity element of $\End_{\CS^\bullet(\BR_{>0})}(\{1,2\})$.
\end{thm}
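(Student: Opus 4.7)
The plan is to use the universal property of $\CU^\bullet$ to define the functor, and then check it is an isomorphism by reducing to Proposition \ref{pr:strandsnonsingular} on each $\Hom$-space. First, I would identify $\tilde{\tau} \in \End_{\CS^\bullet(\BR_{>0})}(\{1,2\})$ as the unique non-zero, non-identity element: the ``swap'' braid consisting of two paths, $1 \to 2$ and $2 \to 1$, crossing once. I would then verify the three defining relations (\ref{eq:definingtau}): $\tilde{\tau}^2 = 0$ holds because composing two swaps yields a braid with no intersections, so the degree of the composition is strictly less than the sum of degrees and the product vanishes in the strand category; $d(\tilde{\tau}) = 1$ follows since $L(\tilde{\tau})/\mathrm{inv}$ has a single element (the arc from $1$ to $2$) whose smoothing gives the identity braid on $\{1,2\}$; the braid relation is a direct inspection inside $\End(\{1,2,3\})$. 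The universal property then produces a strict monoidal differential pointed functor $\Phi\colon \CU^\bullet \to \CS_{\otimes}^\bullet(\BR_{>0})$ with $\Phi(e) = \{1\}$ and $\Phi(\tau) = \tilde{\tau}$.

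Next, I would show $\Phi$ is an isomorphism. It is bijective on objects by construction. On morphisms, for $n \ne m$ both $\Hom_{\CU^\bullet}(e^n, e^m)$ and $\Hom_{\CS_{\otimes}^\bullet(\BR_{>0})}(\{1,\ldots,n\}, \{1,\ldots,m\})$ vanish (a braid requires a bijection between source and target). For $n = m$, identify $\BR_{>0}$ with the unoriented open interval $I \subset S^1$ and choose $\Ba_n = \{1, \ldots, n\} \subset \BR_{>0}$. Proposition \ref{pr:strandsnonsingular} (the case $\CH_n^f \iso \CP^\bullet_\Ba(I)$), combined with Proposition \ref{pr:Heckedg}, gives an isomorphism of differential pointed monoids
\[
H_n^\bullet = \End_{\CH_n^f}(\BZ/n) \iso \End_{\CS^\bullet(\BR_{>0})}(\{1,\ldots,n\})
\]
sending the generator $T_i$ to the braid swapping the $i$-th and $(i{+}1)$-st strands. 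Since $\End_{\CU^\bullet}(e^n) = H_n^\bullet$ with generators $T_i = e^{i-1}\tau e^{n-i-1}$, the functor $\Phi$ sends $T_i$ to exactly this swap braid; hence $\Phi$ is a bijection on endomorphisms.

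The only remaining point is compatibility of $\Phi$ with the monoidal structure on morphisms, i.e., that $\Phi(a \otimes b) = \Phi(a) \otimes \Phi(b)$ for $a \in H_n^\bullet$, $b \in H_m^\bullet$. Since $\Phi$ is strict monoidal by its construction via the universal property, this reduces to verifying that the two inclusions $H_n \hookrightarrow H_{n+m}$ ($T_i \mapsto T_i$) and $f_n\colon H_m \hookrightarrow H_{n+m}$ ($T_j \mapsto T_{n+j}$) of \S\ref{se:defmoncat} correspond under the isomorphism of the previous paragraph to the inclusions obtained by enlarging $\Ba_n$ to $\Ba_{n+m}$ by adding $m$ points on the right, respectively adding $n$ points on the left. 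This is immediate from the explicit form of $F$ in Proposition \ref{pr:prestrandsnonsingular}: the swap between positions $i, i+1$ in $\Ba_n$ remains the swap between positions $i, i+1$ in $\Ba_{n+m}$ (right inclusion), while the swap between positions $j, j+1$ in the translated copy of $\Ba_m$ becomes the swap between positions $n+j, n+j+1$ (left inclusion). The main obstacle, such as it is, lies in unpacking all these identifications coherently; no substantively new computation beyond Proposition \ref{pr:strandsnonsingular} is required.
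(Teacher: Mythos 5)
Your proof is correct and relies on the same key result as the paper: Proposition~\ref{pr:strandsnonsingular} (the case $\CH_n^f\iso\CS^\bullet_\Ba(I)$), which is the entirety of what the paper's proof cites. Your preliminary detour through the universal property of $\CU^\bullet$ — directly checking $\tilde{\tau}^2=0$, $d(\tilde{\tau})=\id$, and the braid relation inside $\CS^\bullet(\BR_{>0})$ — is logically redundant once the $\End$-space isomorphism $H_n^\bullet\iso\End_{\CS^\bullet(\BR_{>0})}(\{1,\ldots,n\})$ is invoked, since those relations then hold automatically on the image side, but it is a clean way to organize the construction; your final paragraph on compatibility of the monoidal structures makes explicit a check that the paper leaves implicit. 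One small slip: you write the relevant case of Proposition~\ref{pr:strandsnonsingular} as ``$\CH_n^f\iso\CP^\bullet_\Ba(I)$'', but that proposition's target is the strand category $\CS^\bullet_\Ba(I)$, not the pre-strand category; the $\CP^\bullet_\Ba(I)$ isomorphism (with source $(\CS_n^f)_+$) is the pre-differential statement of Proposition~\ref{pr:prestrandsnonsingular}. The rest of your paragraph makes clear you meant the strand category, so the argument stands.
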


\subsubsection{Products and divisibility}

\begin{lemma}
	\label{le:middlebraids}
	Consider braids $\theta'':I\to J$ and $\theta':J\to K$
	and assume $\theta=\theta'\cdot\theta''$ is non-zero.
	Let $\zeta\in D(\theta)\setminus (D(\theta)\cap D(\theta''))$.
	Assume $\zeta$ and $\zeta^{-1}$ are oriented.

	Define $\alpha'':I\to J$ by
	$$\alpha''_s=\begin{cases}
		\theta''_{\zeta(1)}\circ\zeta & \text{ if }s=\zeta(0)\\
		\theta''_{\zeta(0)}\circ\zeta^{-1} & \text{ if }s=\zeta(1)\\
		\theta''_s & \text{ otherwise.}
	\end{cases}$$
	Let $\zeta'=\theta''_{\zeta(1)}\circ\zeta\circ(\theta''_{\zeta(0)})^{-1}$
	and $\alpha'=(\theta')^{\zeta'}$.
	Then $\alpha''$ and $\alpha'$ are braids and
	$\theta=\alpha'\cdot\alpha''$.
\end{lemma}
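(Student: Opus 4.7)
The plan is to verify directly that $\theta = \alpha' \circ \alpha''$ as an ordinary composition of admissible paths, and then establish the degree additivity $\deg(\theta) = \deg(\alpha') \cdot \deg(\alpha'')$ (equivalent, by Lemma \ref{le:degsemi}, to the product being nonzero in $\CS^\bullet(Z)$) by reducing through non-singular covers and connected components to the case $Z = S^1$ unoriented, where the statement becomes the already established Hecke analogue Lemma \ref{le:middleHeckecategory}.

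For the path identity, set $i_r = \zeta(r)$ for $r \in \{0,1\}$ and $i'_r = \theta''_{i_r}(1) = \zeta'(r)$. A direct unwinding using $\theta_{i_r} = \theta'_{i'_r} \circ \theta''_{i_r}$ yields
$$\bar{\zeta'} = \theta'_{i'_2} \circ \zeta' \circ (\theta'_{i'_1})^{-1} = \theta_{i_2} \circ \zeta \circ \theta_{i_1}^{-1} = \bar\zeta,$$
from which $\alpha'_{\alpha''_{i_r}(1)} \circ \alpha''_{i_r} = \theta_{i_r}$ is a short computation; for $s \in I - \{i_1,i_2\}$ the identity is immediate since $\alpha''_s = \theta''_s$ and $\alpha'$ agrees with $\theta'$ at $\theta''_s(1)$. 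Admissibility of $\alpha''_{i_1} = \theta''_{i_2} \circ \zeta$ and $\alpha''_{i_2} = \theta''_{i_1} \circ \zeta^{-1}$ follows from smoothness of $\theta_{i_r} \circ \zeta^{\pm 1}$ (which holds since $\zeta \in L(\theta)$) combined with the observation that smoothness of a composition implies smoothness of an initial segment, while orientedness follows from the orientedness of $\zeta, \zeta^{-1}$ and admissibility of $\theta''_{i_r}$. Using the identity $\bar{\zeta'} = \bar\zeta$, one similarly verifies $\zeta' \in L(\theta')$, so $\alpha' = (\theta')^{\zeta'}$ is a well-defined braid by Lemma \ref{le:thetazeta}.

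For the degree identity, note $\deg(\alpha') \cdot \deg(\alpha'') \geq \deg(\theta)$ by Lemma \ref{le:degsemi}. To prove equality, first reduce to non-singular $Z$ via the non-singular cover $q : \hat{Z} \to Z$: by the last assertion of Theorem \ref{th:strandsdifferential}, there exist lifts $\hat{\theta'}, \hat{\theta''}$ in $\CS_q^\bullet(\hat{Z})$ with $\hat\theta = \hat{\theta'} \cdot \hat{\theta''}$ nonzero, and by Lemma \ref{le:functorialityL} the class $\zeta$ lifts to $\hat\zeta \in D(\hat\theta) \setminus D(\hat{\theta''})$. The constructions of $\alpha'', \alpha'$ in the lemma commute with $q$, so it suffices to prove degree additivity upstairs. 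By the decomposition (\ref{eq:Strandscomponents}) applied to $\hat{Z}$, one further restricts to the connected component of $\hat{Z}$ containing $\supp(\hat\zeta)$, and since a connected non-singular curve embeds in $S^1$ (unoriented), Proposition \ref{pr:finjective} and Lemma \ref{le:fdeg} let us further reduce to $Z = S^1$ unoriented.

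For $Z = S^1$, choose a finite set $\Ba \subset S^1$ containing all endpoints of paths appearing in $\hat\theta, \hat{\theta'}, \hat{\theta''}$, and identify $\CS_\Ba^\bullet(S^1) \cong \CH_n^\bullet$ compatibly with length and degree via Proposition \ref{pr:strandsnonsingular} and Lemma \ref{le:S1comparison}. By Lemma \ref{le:inversionpaths}, $\hat\zeta$ corresponds under $F$ to a pair $(j_1, j_2) \in D(F^{-1}(\hat\theta)) \setminus D(F^{-1}(\hat{\theta''}))$, and the constructions of $\hat{\alpha''}$ and $\hat{\alpha'}$ correspond respectively to $\sigma'' s_{j_1,j_2}$ and $(\sigma')^{\sigma''(j_1), \sigma''(j_2)}$ of Lemma \ref{le:middleHeckecategory}; applying that lemma yields length additivity, hence degree additivity by Lemma \ref{le:S1comparison}. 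The main obstacle is the precise matching of path-level constructions with their Hecke-level counterparts at this last stage --- specifically, checking that post-composing $\hat{\theta''}$ by the crossing path $\zeta = \lambda(j_1, j_2)$ corresponds exactly to right multiplication by the generator $s_{j_1,j_2}$ in the extended affine symmetric group, which is a careful but straightforward bookkeeping of positions and directions through the isomorphism $F$.
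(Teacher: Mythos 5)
Your proposal is correct and follows essentially the same route as the paper's proof: verify orientedness and admissibility of $\alpha''$, invoke Lemma \ref{le:thetazeta} for $\alpha'$, and then reduce via non-singular covers and connected components (using Lemma \ref{le:functorialityL} and the decomposition of \S\ref{se:tensorstrands}) to the unoriented $S^1$ case, where Proposition \ref{pr:strandsnonsingular}, Lemma \ref{le:inversionpaths}, and Lemma \ref{le:middleHeckecategory} finish the job. The only difference is one of exposition: you spell out the path-level identity $\theta_{i_r}=\alpha'_{\alpha''_{i_r}(1)}\circ\alpha''_{i_r}$ and $\bar{\zeta'}=\bar\zeta$ explicitly (which is a worthwhile sanity check but subsumed by the $S^1$ case), whereas the paper treats both the composition identity and the degree additivity uniformly through the $S^1$ reduction.
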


\begin{proof}
	Since $\zeta$ and $\zeta^{-1}$ are oriented, it follows that
	$\alpha''_s$ is oriented for all $s$.
	Also, it follows from Lemma \ref{le:thetazeta} that $\alpha'$ is a braid.

	\smallskip
	Consider first the case where $Z=S^1$ unoriented. In that case,
	the lemma follows from Proposition \ref{pr:strandsnonsingular} and Lemmas \ref{le:inversionpaths} and
	\ref{le:middleHeckecategory}.

	\smallskip
	Assume now $Z$ is smooth and connected.
	There is an injective
	morphism of curves $f:Z\to S^1$, where $S^1$ is unoriented.
	Since the lemma holds for $S^1$, we deduce that 
	it holds for $Z$.

	When $Z$ is only assumed to be smooth, the lemma follows
	from the case of the connected component containing $\zeta$.

	\smallskip
	Consider now the general case.
	Let $f:\tilde{Z}\to Z$ be a smooth cover. Let $\tilde{\theta}$
	be a braid lifting $\theta$. There are unique braids 
	$\tilde{\theta}'$ and $\tilde{\theta}''$ in $\tilde{Z}$
	with $\tilde{\theta}=\tilde{\theta}'\cdot\tilde{\theta}''$ and
	$f(\tilde{\theta}')=\theta'$, 
	$f(\tilde{\theta}'')=\theta''$.
	There is a unique $\tilde{\zeta}\in D(\tilde{\theta})$ with $f(\tilde{\zeta})=
	\zeta$ (Lemma \ref{le:functorialityL}). We have $\tilde{\zeta}{\not\in}
	D(\tilde{\theta}'')$ (Lemma \ref{le:functorialityL}). Since the lemma holds
	for $\tilde{Z}$, we deduce it holds for $Z$.
\end{proof}

\subsubsection{Subcurves}
\label{se:tensorstrands}
Let $Z$ be a curve.

Let $S$ and $T$ be two finite subsets of $Z$.
 Let $S_1$ be a subset of $S$ and $S_2=S\setminus S_1$.
 Let $T_1$ be a subset of $T$ and $T_2=T\setminus T_1$.
Let $\Phi_i\in\Hom_{\CS^\bullet(Z)}(S_i,T_i)$.
We define $\Phi=\Phi_1\boxtimes\Phi_2\in \Hom_{\CS^\bullet(Z)}(S,T)$ by
	$\Phi_s=(\Phi_i)_s$ when $s\in S_i$.
This gives an injective map of pointed sets
$$\Hom_{\CS^\bullet(Z)}(S_1,T_1)\wedge\Hom_{\CS^\bullet(Z)}(S_2,T_2)
\hookrightarrow \Hom_{\CS^\bullet(Z)}(S,T).$$
Note that this is not compatible with composition in general.
We obtain an isomorphism of pointed sets
$$\bigvee_{\substack{T'_1\subset T\\ |T'_1|=|S_1|}}
\bigl(\Hom_{\CS^\bullet(Z)}(S_1,T'_1)\wedge\Hom_{\CS^\bullet(Z)}(S_2,T\setminus
T'_1)\bigr)
\iso \Hom_{\CS^\bullet(Z)}(S,T).$$
We have corresponding morphisms of $\BF_2$-modules between $\Hom$-spaces in
	$\CS(Z)$. Note these are not compatible with the differential.

\smallskip

Assume $S_2=T_2$. The map $\Phi_1\mapsto \Phi_1\boxtimes \id_{S_2}$ defines
a canonical embedding of pointed sets (not compatible
with the differential nor the multiplication in general)
$$\Hom_{\CS^\bullet(Z)}(S_1,T_1)\hookrightarrow
\Hom_{\CS^\bullet(Z)}(S_1\sqcup S_2,T_1\sqcup S_2).$$

\medskip
Given $Z_1$ and $Z_2$ two disjoint closed subcurves of $Z$, we obtain
a faithful differential pointed functor
$$\CS^\bullet(Z_1)\wedge\CS^\bullet(Z_2)\to
\CS^\bullet(Z),\
(S_1,S_2)\mapsto S_1\sqcup S_2.$$

\smallskip
Let $Z_1,\ldots,Z_r$ be the connected components of $Z$. The construction
above induces
an isomorphism of differential pointed categories (cf (\ref{eq:Scomponents}))
\begin{equation}
	\label{eq:decompositioncomponents}
\CS^\bullet(Z_1)\wedge\cdots\wedge\CS^\bullet(Z_r)\iso
\CS^\bullet(Z),\
(S_1,\ldots,S_r)\mapsto S_1\sqcup\cdots\sqcup S_r.
\end{equation}

\smallskip

	Let us record a case where the tensor product construction $\boxtimes$
	is compatible
	with composition and the differential in the following immediate lemma.

	\begin{lemma}
		\label{le:commutetensor}
	Let $M$ be a subset of $Z$ and let $Z'$ be a subcurve of $Z$.
Assume that given an admissible homotopy class of paths $\zeta$ in $Z$
with endpoints in $M$, there is an admissible path $\gamma$ in $\zeta$ contained
in $Z-Z'$. 
There is a faithful functor of differential pointed categories
		\begin{align*}
			\CS^\bullet_M(Z)\wedge\CS^\bullet(Z')&\to \CS^\bullet_{M\cup Z'}(Z)\\
			(S,T)&\mapsto S\sqcup T\\
			(\alpha,\beta)&\mapsto
\alpha\boxtimes\beta=(\alpha\boxtimes \id)\cdot (\id\boxtimes\beta)=
		(\id\boxtimes\beta)\cdot(\alpha\boxtimes \id).
		\end{align*}
\end{lemma}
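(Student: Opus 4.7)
The plan is to construct the functor by setting $(S,T)\mapsto S\sqcup T$ on objects, $(\alpha,\beta)\mapsto \alpha\boxtimes\beta$ on morphisms (where $\alpha\boxtimes\beta$ is the braid with $(\alpha\boxtimes\beta)_s=\alpha_s$ for $s\in S$ and $(\alpha\boxtimes\beta)_s=\beta_s$ for $s\in T$), and then to verify well-definedness, functoriality, compatibility with the differential, and faithfulness. The workhorse will be a geometric separation lemma: the hypothesis gives, for each $\alpha_s\in\CS^\bullet_M(Z)$, an admissible representative in $Z\setminus Z'$, and since $\supp(\alpha_s)$ is contained in the image of any representative, we get $\supp(\alpha_s)\subset Z\setminus Z'$. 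Meanwhile every $\beta_{s'}$ satisfies $\supp(\beta_{s'})\subset Z'$ since $Z'$ is a subcurve.

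The first step is to show $\alpha\boxtimes\beta$ is nonzero in $\CS^\bullet(Z)$ and equal to both iterated products. For $s\in S$ and $s'\in T$, the separation together with Lemma \ref{le:m=i} gives $i(\alpha_s,\beta_{s'})=i(\alpha_s,\id_{s'})=i(\id_s,\beta_{s'})=0$ (the braid condition $S\cap T=\emptyset=\bij{\alpha}(S)\cap\bij{\beta}(T)$ rules out endpoint contributions). By Remark \ref{re:productnonexc}, this forces $\deg$ to be additive across $\alpha\boxtimes\id_T$ and $\id_S\boxtimes\beta$ in either order, so both products are nonzero and equal to the underlying combined braid. Functoriality, i.e.\ $(\alpha\cdot\alpha')\boxtimes(\beta\cdot\beta')=(\alpha\boxtimes\beta)\cdot(\alpha'\boxtimes\beta')$, is then a matter of rearranging $\boxtimes$ and $\cdot$ using the associativity of multiplication in $\CS^\bullet(Z)$ and a second application of the same degree additivity.

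The hard part will be showing that $\boxtimes$ commutes with $d$. Using $d(\theta)=\sum_{\zeta\in D(\theta)/\mathrm{inv}}\theta^\zeta$, I need to establish a disjoint union decomposition $D(\alpha\boxtimes\beta)=D(\alpha)\sqcup D(\beta)$ compatible with the operation $\theta\mapsto\theta^\zeta$. The decomposition has three parts to rule out: (i) no mixed $\zeta\in I(\alpha_s,\beta_{s'})$ occurs — by Lemma \ref{le:supportI}, such a $\zeta$ would have support in $\supp(\alpha_s)\cup\supp(\beta_{s'})\subset(Z\setminus Z')\cup Z'=Z$, but the support of a path is connected, and the endpoints lie in two disjoint pieces, so either the support meets $\partial Z'$ and then examination of orientations (via Lemma \ref{le:conditionbDtheta} applied suitably) forces $\zeta$ to fail the orientation condition of $I$, or such a $\zeta$ simply does not exist; (ii) for pure $\zeta\in L(\alpha)\subset L(\alpha\boxtimes\beta)$, condition (b) for $D(\alpha\boxtimes\beta)$ must be checked against all decompositions $\zeta=\zeta'\circ\zeta''$ in $L(\alpha\boxtimes\beta)$, but by the same support argument any such decomposition must lie inside $L(\alpha)$, reducing to membership in $D(\alpha)$; similarly for $\zeta\in L(\beta)$; and (iii) the braid $(\alpha\boxtimes\beta)^\zeta$ is computed by modifying only the two paths involved in $\zeta$, so it equals $\alpha^\zeta\boxtimes\beta$ or $\alpha\boxtimes\beta^\zeta$ accordingly, giving the Leibniz formula $d(\alpha\boxtimes\beta)=d(\alpha)\boxtimes\beta+\alpha\boxtimes d(\beta)$.

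Finally, faithfulness is the easiest step: the underlying map of pointed sets $\Hom_{\CS^\bullet_M(Z)}(S,S')\wedge\Hom_{\CS^\bullet(Z')}(T,T')\to\Hom_{\CS^\bullet_{M\cup Z'}(Z)}(S\sqcup T,S'\sqcup T')$ sends a pair $(\alpha,\beta)$ to the braid $\alpha\boxtimes\beta$, which is determined by and determines $(\alpha,\beta)$ by restriction to $S$ and $T$; the only possible collapse to $0$ on the right comes from the $0$ element of the wedge on the left, so injectivity (hence faithfulness) holds. The principal obstacle throughout is ensuring that the geometric separation between $Z\setminus Z'$ and $Z'$ is strong enough to kill all unwanted intersection, tangential-multiplicity, and smoothing contributions simultaneously; once this is encoded cleanly at the level of supports and endpoints, each verification is a bookkeeping exercise using Lemma \ref{le:degproduct}, Remark \ref{re:productnonexc}, and Lemma \ref{le:supportI}.
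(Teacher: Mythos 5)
The paper treats this lemma as ``immediate'' and offers no proof, so your task here is to supply what the authors left unsaid; your write-up is fundamentally correct and the geometric-separation idea is indeed the right engine. The hypothesis forces $M\cap Z'=\emptyset$, and the core fact is that $\supp(\alpha_s)\subset Z\setminus Z'$ and $\supp(\beta_{s'})\subset Z'$ are disjoint compact sets; from there the cross intersection numbers, the cross tangential multiplicities, and the mixed sets $I(\alpha_s,\beta_{s'})$ all vanish, which drives degree additivity (hence non-vanishing of the products and functoriality), the Leibniz rule, and faithfulness.

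Two points of hygiene. First, in ruling out a mixed $\zeta\in I(\alpha_s,\beta_{s'})$, the detour through $\partial Z'$ and Lemma~\ref{le:conditionbDtheta} is unnecessary: $\supp(\zeta)$ is the image of a minimal path, hence connected, and it sits inside the disjoint union of two nonempty compact sets $\supp(\alpha_s)$ and $\supp(\beta_{s'})$ while meeting both (its endpoints are $s$ and $s'$); this is already impossible, so $I(\alpha_s,\beta_{s'})=\emptyset$ with no further case analysis. Second, you invoke Remark~\ref{re:productnonexc} for degree additivity, but the cleanest sufficient condition stated there requires the identity-strand endpoints to avoid $Z_{exc}$, and nothing in the hypotheses guarantees $S\cap Z_{exc}=\emptyset$ or $T\cap Z_{exc}=\emptyset$. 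You should instead appeal directly to Lemma~\ref{le:degproduct}: the extra correction sums there involve quantities like $(m_{c'}-m_{\iota(c')})(\llbracket\beta\rrbracket)$ evaluated at cone directions $c'$ at points of $S\subset Z\setminus Z'$ (and symmetrically), and these vanish because every strand of $\beta$ has support in $Z'$ (and vice versa); combined with the vanishing of the cross intersection numbers, the defect is identically zero. With that substitution the argument is complete and matches the intent of the paper's ``immediate.''
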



\subsubsection{Bordered Heegaard Floer algebras} \label{se:HF}

We consider a chord diagram $(\CZ,\Ba)$ as in \S \ref{se:arcdiagrams}.
Let $Z_1,\ldots, Z_l$ be the connected components of $\CZ$. Let $\tilde{\Ba}=
\bigcup_{\{z,z'\}\in\Ba}\{z,z'\}$, 
$n_i=|\tilde{\Ba}\cap Z_i|$ and
let $q:\tilde{Z}\to Z$ be the quotient map.

The isomorphism (\ref{eq:decompositioncomponents}) associated with
the decomposition $\tilde{Z}=\mathring{Z}_1\coprod\cdots\coprod \mathring{Z}_l$ together
with the strands algebra description of \S \ref{se:strandsalgebras}
and the isomorphism of Proposition \ref{pr:strandsnonsingular}
induce an isomorphism of differential algebras

$$\CA(n_1)\otimes\cdots\otimes\CA(n_l)\iso
\End_{\add(\CS(\tilde{Z}))}(\bigoplus_{I\subset\tilde{\Ba}}I).$$
It is compatible with the gradings, via the embedding
$G'(n_1)\times\cdots\times G'(n_l)\hookrightarrow \Gamma_{\Ba}(\tilde{Z})$
given by \S \ref{se:strandsalgebras} and \S \ref{se:strandsnonsingular}.

\smallskip
The differential algebra $\CA(\CZ)$ associated to $\CZ$ is a differential
$(G'(n_1)\times\cdots\times G'(n_l))$-graded non-unital 
subalgebra of $\CA(n_1)\otimes\cdots\otimes\CA(n_l)$ (cf 
\cite[Definition 2.6]{Za} and \cite[Definition 3.23]{LiOzTh1} for the original
setting where $l=1$).
There is a unique isomorphism of differential algebras
	$$\CA(\CZ)\iso \End_{\add(\CS(Z))}\bigl(\bigoplus_{S\subset \Ba}S\bigr)$$
making the following diagram commutative
$$\xymatrix{\CA(\CZ)\ar[r]^-\sim \ar@{^{(}->}[d] &
	\End_{\add(\CS(Z))}\bigl(\bigoplus_{S\subset \Ba}S\bigr)\ar@{^{(}->}[d]^{q^\#} \\
	\CA(n_1)\otimes\cdots\otimes\CA(n_l) \ar[r]_-\sim & \End_{\add(\CS(\tilde{Z}))}(\bigoplus_{I\subset\tilde{\Ba}}I)}$$

\subsubsection{Fukaya categories from strand algebras}
\label{se:Fukayastrands}
Consider an oriented singular curve $Z$ with $n_z\in\{2,4\}$ for all $z$ and its
corresponding chord diagram $(\CZ,\Ba)$ (cf \S \ref{se:arcdiagrams}).
Let $(F,\Lambda,S^+,S^-)$ be the associated sutured surface. We assume
that every component of $\partial F$ intersects
$S^+$ non-trivially (cf \S\ref{se:TQFT}).
Choose for each component $E$ of $S^-$ a point $e_E\in E$ and let
$S=\{e_E\}_E$. We have obtained a pair $(F,S)$ where $S$ is a finite subset of $\partial F$.

Consider the arcs $\omega_z$ for $z\in Z_{exc}$ (cf \S \ref{se:arcdiagrams}).
Note that $F\setminus\bigl(\bigcup_{z\in Z_{exc}}\omega_z\bigr)$ is a union of discs, each of which
contains one point of $S$.

\smallskip
Auroux \cite[Definition 8]{Au2} considers a partially wrapped Fukaya
category $\CF(\mathrm{Sym}^nF,S)$
of the symmetric power
$\mathrm{Sym}^n(F)$ of $F$ with set of stops $S\times\mathrm{Sym}^{n-1}(F)$. This
is an (ungraded) $A_\infty$-category over $k$.

Let $s,t\in Z_{exc}$. A path in $Z$ gives rise to a path in $F$ and this defines 
a bijection $f$ from the set of admissible homotopy classes of paths $s\to t$ in $Z$ to 
the set $\bar{\chi}_t^s$ of \cite[Proposition 11]{Au2} (recall the orientation reversal,
cf Convention \ref{con:reversal}). When $s=t$, the trivial path is
sent to the element $\bf{1}_i$ of Auroux.

Auroux \cite[Proposition 11]{Au2} relates the $A_\infty$-category
$\CF(\mathrm{Sym}^nF,S)$ to
the strand algebra associated with
 $Z$.

\begin{thm}[Auroux]
	\label{th:Auroux}
	There is a fully faithful $A_\infty$-functor 
	$$\Phi:\CA(Z,n)\to \CF(\mathrm{Sym}^nF,S),\
	I\mapsto \prod_{i\in I}\omega_i,\ \theta\mapsto (\chi(\theta),(f(\theta_s))_s)$$
	inducing an equivalence of derived categories.
\end{thm}

\section{$2$-representations on strand algebras}
\label{se:2repstrand}
\subsection{Action on ends of curves}
\label{se:leftaction}

\subsubsection{Definition}
\label{se:defleftaction}

Let $\xi:\BR_{>0}\to Z$ be an injective morphism of curves, where $\BR_{>0}$ is
viewed as an unoriented curve. Let $M$ be a subset of $Z\setminus\xi(\BR_{\ge 1})$.

\smallskip
We say that $\xi$ is {\em terminal}\index[ter]{terminal morphism} for $(Z,M)$ if the
following two conditions hold:
\begin{itemize}
	\item given an admissible homotopy class of paths $\zeta$ in $Z$
with endpoints in $M$, there is an admissible path $\gamma$ in $\zeta$ contained
in $Z\setminus\xi(\BR_{\ge 1})$
		\item there is no admissible path in $Z\setminus\{\xi(1)\}$
from a point of $M$ to $\xi(2)$.
\end{itemize}

Note that $\xi$ is terminal for $(Z,M)$ if and only if
$\xi$ is terminal for $(Z(\xi),Z(\xi)\cap M)$, where $Z(\xi)$\indexnot{Zx}{Z(\xi)} is the component of $Z$
containing $\xi(\BR_{>0})$.

\smallskip
We say that $\xi$ is {\em outgoing}\index[ter]{outgoing morphism}
for $Z$ if $\xi(\BR_{\ge 1})$ is closed in $Z$. Note
that if $\xi$ is outgoing for $Z$ then it is terminal for $(Z,M)$ for any
$M\subset Z\setminus\xi(\BR_{\ge 1})$.

\begin{rem}
	\label{re:reductionoutgoing}
Assume $\xi$ is not outgoing for $Z$ and let $z_0\in Z$ such that 
$\overline{\xi(\BR_{\ge 1})}\setminus\xi(\BR_{\ge 1})=\{z_0\}$. Note that
$\xi$ is outgoing for $Z\setminus\{z_0\}$. The map
$\xi$ is terminal for $(Z,M)$ if and only if $z_0{\not\in M}$ and
the inclusion induces an isomorphism
	$\Hom_{\CA^\bullet(Z\setminus\{z_0\},1)}(m,z)\iso 
	\Hom_{\CA^\bullet(Z,1)}(m,z)$ for all $m\in M$ and $z\in M\cup\{\xi(1)\}$.
\end{rem}

\medskip
We assume now that $\xi$ is terminal for $(Z,M)$.
Thanks to Lemma \ref{le:commutetensor}, we have
a differential pointed functor

$$L^\bullet=L_\xi^\bullet:\CS^\bullet_M(Z)\times\CS^\bullet_M(Z)^\opp\times\CU^\bullet\to \diff$$
$$L^\bullet(T,S,e^n)=\Hom_{\CS^\bullet(Z)}(S,T\sqcup\{\xi(1),\ldots,\xi(n)\})$$
$$L^\bullet(\beta,\alpha,\sigma)(f)=(\beta\boxtimes\xi(\sigma))\cdot f\cdot\alpha \in L^\bullet(T',S',n)$$
for $\alpha\in\Hom_{\CS^\bullet(Z)}(S',S)$, $\beta\in\Hom_{\CS^\bullet(Z)}(T,T')$,
$\sigma\in\End_{\CU^\bullet}(e^n)$, and $f\in L^\bullet(T,S,n)$.
We have used the strands realization of $\CU^\bullet$ given by Theorem \ref{th:Ufromstrands}.


\smallskip
We put $L^\bullet(T,S)=L^\bullet(T,S,e)$.
As usual, we put $L_\xi=\BF_2[L_\xi^\bullet]$.

\smallskip
The naturality in the next lemma is immediate as in Lemma \ref{le:commutetensor}.

\begin{lemma}
	\label{le:decompositionnodiff}
Given $S\subset M$ and $n\ge 0$, there is an isomorphism of functors
	$\CS^\bullet_M(Z)\to \Sets^\bullet$ (forgetting the differential)
\begin{align*}
\bigvee_{\substack{S'\subset S\\ |S'|=n}}
	\Hom_{\CS^\bullet(Z)}(S',\{\xi(1),\ldots,\xi(n)\})\wedge \Hom_{\CS^\bullet(Z)}(S\setminus S',-) &\iso L^\bullet(-,S,e^n)\\
	(\alpha,\beta)&\mapsto \alpha\boxtimes \beta.
\end{align*}
\end{lemma}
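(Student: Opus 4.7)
The proof splits into two pieces: the set-theoretic bijection for each fixed target, and naturality in the target variable. For the first piece, fix $T \subset M$ and $n \ge 0$. A nonzero element of $L^\bullet(T,S,e^n)$ is a braid $f:S\to T\sqcup\{\xi(1),\ldots,\xi(n)\}$, whose underlying bijection $\bij{f}$ partitions $S = S' \sqcup (S\setminus S')$ with $S' = \bij{f}^{-1}(\{\xi(1),\ldots,\xi(n)\})$. The restrictions $\alpha = (f_s)_{s\in S'}$ and $\beta = (f_s)_{s\in S\setminus S'}$ then satisfy $f = \alpha\boxtimes\beta$ as braids, and the decomposition is uniquely determined by $f$. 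Since a single braid is always a nonzero element of $\CS^\bullet(Z)$, this gives an inverse to $(\alpha,\beta)\mapsto\alpha\boxtimes\beta$ on underlying pointed sets.

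For naturality in the first argument, I would check, for every $\beta' \in \Hom_{\CS^\bullet(Z)}(T,T')$, the equality
$$(\beta'\boxtimes\id_{\{\xi(1),\ldots,\xi(n)\}})\cdot(\alpha\boxtimes\beta) \;=\; \alpha\boxtimes(\beta'\cdot\beta)$$
in $\CS^\bullet(Z)$. In $\CP^\bullet(Z)$ both sides coincide with the braid $\alpha\boxtimes(\beta'\circ\beta)$, so the only content is that the $\cdot$-product in $\CS^\bullet(Z)$ degenerates (to zero) on one side exactly when it does on the other. By Lemma \ref{le:degproduct}, the discrepancy between the two relevant degree identities splits as the sum of (i)~$\sum_{t,i} i(\beta'_t,\id_{\xi(i)})$ and (ii)~$\sum_{s\in S',\, t\in S\setminus S'}\bigl(i(\alpha_s,\beta_t) - i(\alpha_s,(\beta'\circ\beta)_t)\bigr)$, plus exceptional-point contributions. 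The first clause of the terminal hypothesis allows minimal representatives of $\beta'_t$ (and of $\beta_t$) to be chosen inside $Z\setminus\xi(\BR_{\ge 1})$, killing term (i) via Lemma \ref{le:m=i}. The second clause forces each $\alpha_s$ to enter $\xi(\BR_{\ge 1})$ only via $\xi(1)$, uniquely determining the homotopy type of its tail. This is precisely the mechanism underlying Lemma \ref{le:commutetensor}, invoked here in the weaker setting where the identity components of $\alpha\boxtimes\beta$ are carried by $\{\xi(1),\ldots,\xi(n)\}$.

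The principal obstacle is establishing term (ii): an intersection equality, rather than the inequality supplied directly by Lemma \ref{le:boundintercompo}. The idea is to pick admissible minimal representatives of $\beta_t$ and $\beta'_{\beta_t(1)}$ in $Z\setminus\xi(\BR_{\ge 1})$, so that their concatenation (after possibly smoothing at $\beta_t(1)$) represents $(\beta'\circ\beta)_t$ with no intersections in $\xi(\BR_{\ge 1})$; intersections of $\alpha_s$ with this path then all take place in the portion of $\alpha_s$ before it enters $\xi(\BR_{\ge 1})$, and match the corresponding intersections with $\beta_t$ alone once one accounts for the fact that $\alpha_s$ and $\beta'_{\beta_t(1)}$ cannot intersect in the tail. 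Once (ii) is shown, the two degree conditions become equivalent, naturality follows, and combining with the set-theoretic bijection completes the proof.
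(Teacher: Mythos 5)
Your overall structure is correct: the set-theoretic bijection is clear, and you rightly reduce naturality of the map to the degree-multiplicativity of $(\beta'\boxtimes\id)\cdot(\alpha\boxtimes\beta)$ versus $\beta'\cdot\beta$, which via Lemma~\ref{le:degproduct} comes down to showing $i(\alpha_{s},\beta_{s_2})=i(\alpha_{s},\beta'_t\circ\beta_{s_2})$ for each cross pair $s\in S'$, $s_2\in S\setminus S'$. You also correctly observe that Lemma~\ref{le:boundintercompo} (with $m^-_{\alpha_s(1-)}(\beta'_t)=0$, since $\supp(\beta'_t)\subset Z\setminus\xi(\BR_{\ge1})$ misses $\xi(j)$) yields only $i(\alpha_{s},\beta'_t\circ\beta_{s_2})\le i(\alpha_{s},\beta_{s_2})$, so an independent argument for the reverse inequality is required. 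This is the crux, and it is here that your proposal has a genuine gap.

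The ``matching'' argument does not close. Choosing a particular representative of $(\beta'\circ\beta)_t$ as a concatenation of minimal representatives of $\beta_{s_2}$ and $\beta'_t$ in $Z\setminus\xi(\BR_{\ge1})$, and a particular parameterization of $\alpha_s$, can only give an upper bound on $i(\alpha_s,(\beta'\circ\beta)_t)$; since $i$ is a minimum over all admissible representatives and the concatenated path need not be minimal (minimal representatives of $(\beta'\circ\beta)_t$ generally do not pass through $t$ at all), exhibiting one parameterization whose crossings ``match'' those with $\beta_{s_2}$ does not establish the lower bound $i(\alpha_s,(\beta'\circ\beta)_t)\ge i(\alpha_s,\beta_{s_2})$, which is the direction actually needed. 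Moreover, the timing of crossings is a statement about pairs of \emph{parameterized} paths, and your description conflates the support-level fact (the tail of $\alpha_s$ in $\xi(\BR_{\ge1})$ is disjoint from $\supp\beta'_t$) with a bijection between crossing sets. A second, unaddressed issue is the ``exceptional-point contributions'' you defer: when $\beta$ has identity strands at points of $S\cap Z_{exc}$, the sets $E,E'$ of Lemma~\ref{le:degproduct} acquire extra pairs $(s'_1,s'_2)$ with $s'_2\in S'$ that have no counterpart in the degree comparison for $\beta'\cdot\beta$, and these must be shown to vanish as well. A route that actually closes all of these points, and is the one the paper uses for the closely related Proposition~\ref{pr:descriptiononeside}, is to reduce via Remark~\ref{re:reductionoutgoing} to $\xi$ outgoing, pass to the non-singular cover using Lemmas~\ref{le:functorialitym} and \ref{le:intersections}(3), and then compute in the $\BR$-component containing $\hat\xi$: there, paths are affine, $\xi(\BR_{\ge1})$ is a half-line beyond all of $M$, and the crossing count of $\hat\alpha_s\colon\hat s\to\hat\xi(j)$ with any affine path out of $\hat s_2$ depends only on the sign of $\hat s-\hat s_2$ (not on the other endpoint), giving the intersection equality at once.
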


Lemma \ref{le:decompositionnodiff} shows that there is an isomorphism
of functors, functorial in $S$ and $T$
\begin{align*}
	L^\bullet(T,-,e^n)\wedge L^\bullet(-,S,e^m)&\iso L^\bullet(T,S,e^{n+m})\\
	(\alpha,\beta) &\mapsto
	(\alpha\boxtimes \xi([r\to n+r]_{1\le r\le m}))\cdot\beta.
\end{align*}

\smallskip
The functor $E=E_\xi=L^\bullet(-,-)$ gives a 
bimodule $2$-representation on $\CS^\bullet_M(Z)$.
The endomorphism $\tau$ of $L^\bullet(-,-,e^2)$ is given by
the non-identity non-zero braid $\{1,2\}\to\{1,2\}$.

\medskip
We have obtained the following proposition.

\begin{prop}
	\label{pr:left2rep}
The bimodule $E$ and the endomorphism $\tau$ define a
	bimodule $2$-representation on $\CS^\bullet_M(Z)$ and on $\CS_M(Z)$.
\end{prop}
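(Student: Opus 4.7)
The plan is to verify the three ingredients of a bimodule $2$-representation as formulated in \S\ref{se:2reppointed}: (i) $E = L^\bullet(-,-,e)$ is a well-defined differential bimodule on $\CS^\bullet_M(Z)$, (ii) the multiplication isomorphisms $L^\bullet(T,-,e^n)\wedge L^\bullet(-,S,e^m) \iso L^\bullet(T,S,e^{n+m})$ combined with the $H_n^\bullet$-action assemble into a strict monoidal differential pointed functor $\CU^\bullet \to \End_{\mathrm{Bimod}^\bullet}(\CS^\bullet_M(Z))$, and (iii) the particular $\tau$ satisfies $\tau^2 = 0$, $d(\tau) = \id$ and the braid relation.

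The main observation is that via Theorem \ref{th:Ufromstrands}, $\CU^\bullet$ is identified with the monoidal strand category $\CS^\bullet_\otimes(\BR_{>0})$ of the unoriented half-line, so that the injective morphism of curves $\xi$ induces a faithful strict monoidal differential pointed functor $\xi_*:\CU^\bullet \to \CS^\bullet(Z)$ sending $e^n$ to $\{\xi(1),\ldots,\xi(n)\}$ and the generating $2$-cell of $\CU^\bullet$ to the unique non-identity non-zero element of $\End_{\CS^\bullet(Z)}(\{\xi(1),\xi(2)\})$. The terminal hypothesis applied to Lemma \ref{le:commutetensor} (with the appropriate subcurve inside $\xi(\BR_{>0})$) provides a faithful differential pointed bifunctor $\boxtimes:\CS^\bullet_M(Z)\wedge\CS^\bullet(\xi(\BR_{>0})) \to \CS^\bullet(Z)$; combining with $\xi_*$, the operation $(\beta,\sigma)\mapsto \beta\boxtimes \xi_*(\sigma)$ is functorial in $\beta$ and $\sigma$ and commutes with $d$. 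The displayed formula $L^\bullet(\beta,\alpha,\sigma)(f) = (\beta\boxtimes\xi_*(\sigma))\cdot f\cdot \alpha$ then defines a differential functor $\CS^\bullet_M(Z)\times\CS^\bullet_M(Z)^\opp\times\CU^\bullet \to \diff$, because both $\boxtimes$ and composition in $\CS^\bullet(Z)$ are associative and differential-preserving.

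Next I would read off the strict bi-$2$-representation structure. The multiplication map is the explicit formula already recorded after Lemma \ref{le:decompositionnodiff}, and Lemma \ref{le:decompositionnodiff} itself — whose canonical splitting rests precisely on the second bullet of the terminal hypothesis — yields that $\mu_{n,m}$ is an isomorphism. Its associativity and compatibility with the $H_n\otimes H_m \to H_{n+m}$ action are immediate from the strict monoidality of $\xi_*:\CU^\bullet \to \CS^\bullet(Z)$ together with associativity of composition in $\CS^\bullet(Z)$. Finally, the three defining relations for $\tau$ hold automatically: they are, by construction, the image under $\xi_*$ of the three defining relations in \S\ref{se:defmoncat} for the generating $2$-cell of $\CU^\bullet$, and these transport across the faithful differential pointed functor $\xi_*$. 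The linear case on $\CS_M(Z)$ follows by applying $\BF_2[-]$.

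The only real obstacle is bookkeeping: one must be precise in choosing the subcurve of $\xi(\BR_{>0})$ to which Lemma \ref{le:commutetensor} is applied (so that both the terminal hypothesis holds and the points $\xi(1),\ldots,\xi(n)$ are absorbed into the $Z'$-factor), and one must confirm that the tensor operation $\boxtimes$ with $\xi$-strands is strictly — not merely laxly — multiplicative. This is exactly what is guaranteed by the two bullets of the terminal hypothesis together with Lemma \ref{le:decompositionnodiff}, so that no admissible paths between $M$-points and $\xi$-points can generate "cross-terms" spoiling the factorization. Once this point is nailed down, everything else is an immediate transport of structure from $\CU^\bullet$.
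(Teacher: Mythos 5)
Your argument is essentially the paper's: the paper presents Proposition \ref{pr:left2rep} as an immediate consequence of applying Lemma \ref{le:commutetensor} (which gives the differential pointed functor $L^\bullet$), Theorem \ref{th:Ufromstrands} (which realizes $\CU^\bullet$ as strands on $\BR_{>0}$), and Lemma \ref{le:decompositionnodiff} (which gives the strict multiplication isomorphisms $\mu_{m,n}$), and you invoke exactly these three ingredients in the same roles, with the $\tau$-relations transported along the faithful pushforward by $\xi$. One minor caveat: you describe $\xi_*:\CU^\bullet\to\CS^\bullet(Z)$ as a "strict monoidal" functor, but $\CS^\bullet(Z)$ carries no monoidal structure—what is actually used (and what you correctly identify as the substantive bookkeeping) is the compatibility of $\xi$, the monoidal structure on $\CS^\bullet_\otimes(\BR_{>0})$, and the $\boxtimes$ bifunctor of Lemma \ref{le:commutetensor}.
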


Lemma \ref{le:decompositionnodiff} shows that $L_\xi(-,-)$ is left finite. 

\begin{rem}
	\label{re:leftasbottomalgebramodule}
	Proposition \ref{pr:left2rep} generalizes and make more precise a result
	of Douglas and Manolescu \cite[\S 5.2]{DouMa}.

	Let $(\CZ,\Ba)$ be a chord diagram where $\CZ=[0,1]$. Let
	$\tilde{Z}'=(0,\infty)$, viewed as a curve with
	$\tilde{Z}'_o=\tilde{Z}=(0,1)$ (with its usual orientation).
	We extend the equivalence relation from $\tilde{Z}$ to $\tilde{Z}'$
	by having all points of $[1,\infty)$ alone in their class.
	Let $Z'=\tilde{Z}'/\!\sim$. We have $Z'_o=Z_o$.
	Let $M=Z'_{exc}$ be the image of $\Ba$ in $Z'$. Let $\xi:\BR_{>0}\to
	Z',\ x\mapsto x+1$. Note that $\xi$ is outgoing for $Z'$. 

	The lax $2$-representation underlying the $2$-representation on
	$\CS_M(Z)=\CS_M(Z')$
	provided by Proposition \ref{pr:left2rep} is the ``bottom algebra
	module" constructed by Douglas and Manolescu, via the identification
	of \S \ref{se:algebramodules}.
\end{rem}

\begin{example}
	\label{ex:leftaction}
	The left picture below gives an example where $\xi$ is terminal
	for $(Z,M)$ but not outgoing for $Z$. The right picture is an
	example where $\xi$ is outgoing for $Z$.
$$\includegraphics[scale=1.60]{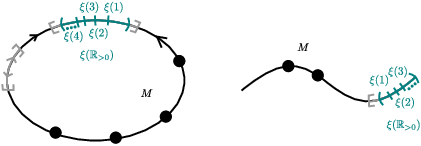}$$

The picture below considers the case of a curve quotient of
the disjoint union of an interval and a circle, with an outgoing $\xi$
	at an end of the interval. The middle picture describes an
	element of $L^\bullet_\xi(-,-,e^2)$. The rightmost picture
	provides a different graphical representation of that element: the
	interval $\xi(\BR_{\ge 1})$ has been moved to the bottom horizontal
	line.
$$\includegraphics[scale=0.90]{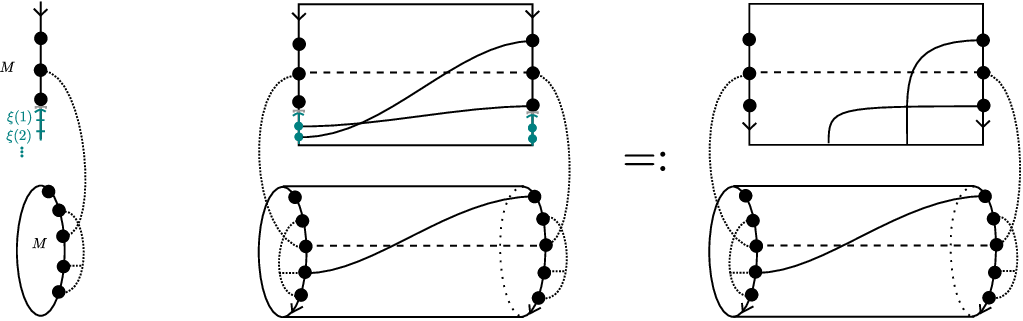}$$
\end{example}

The next remark discusses the dependence of $L_\xi^\bullet$ on $\xi$.

\begin{rem}
\label{se:indep}
	Assume $\xi$ is terminal for $(Z,M)$.
Consider $f:Z\iso Z$ an isomorphism of curves fixing $M$. Note that
$f\circ\xi$ is terminal for $(Z,M)$ and the map $f$ induces an
isomorphism $L_{\xi}^\bullet\iso L_{f\circ\xi}^\bullet$.

	Consider now another injective morphism of curves $\xi':\BR_{>0}\to Z$
	such that $\xi'$ is terminal for $(Z,M)$. Assume 
	there is a connected open subset $U$ of $Z_u$ containing
	$\overline{\xi(\BR_{>0})}$ and $\overline{\xi'(\BR_{>0})}$ and assume
	the canonical
	orientations on $\xi(\BR_{>0})$ and $\xi'(\BR_{>0})$ extend
	to an orientation of $U$. There is an isomorphism of curves
	$f:Z\iso Z$ fixing $Z\setminus U$ such that $\xi'=f\circ\xi$.
	It induces an isomorphism $L_{\xi}^\bullet\iso L_{\xi'}^\bullet$,
	and that isomorphism does not depend on the choice of $f$.
\end{rem}

\subsubsection{Approximation}
Assume $\xi^{-1}(M)$ has no maximum. Fix
an increasing sequence $m_0,m_1,\ldots$ of points of
$(0,1)$ with $\xi(m_i)\in M$ for all $i$ and with $\lim_i m_i>t$ for all $t\in \xi^{-1}(M)$.

Fix $n\ge 0$ and 
define the braid $\beta_r:\{m_r,\ldots,m_{r+n-1}\}\to \{1,\ldots,n\}$ of $\BR_{>0}$ by
$(\beta_r)_{m_{r+i}}=[m_{r+i}\to i+1]$.

\smallskip
Let $S$ and $T$ be two finite subsets of $M$.
Consider $r$ such that $m_r>\xi^{-1}(t)$ for all $t\in T\cap\xi(\BR_{>0})$. There is an isomorphism
$$\Hom_{\CS^\bullet(Z)}
(S,T\sqcup \xi(\{m_r,\ldots,m_{r+n-1}\}))\iso L^\bullet(T,S,e^n),\
\alpha\mapsto (\id_T\boxtimes\xi(\beta_r))\cdot\alpha.$$

It follows that there are isomorphisms functorial in $S$ and $T$
\begin{equation}
	\label{eq:Lascolim}
	\colim_{r\to\infty}\Hom_{\CS^\bullet_M(Z)}
(S,T\sqcup \xi(\{m_r,\ldots,m_{r+n-1}\}))\iso L^\bullet(T,S,e^n).
\end{equation}
Here, the colimit is taken over the invertible maps
$\xi(\theta_r)$, where $\theta_r:\{m_r,\ldots,m_{r+n-1}\}\to\{m_{r+1},\ldots,m_{r+n}\}$ is
the braid in $\BR_{>0}$ given by
$(\theta_r)_{m_s}=[m_s\to m_{s+1}]$.

We deduce that $T\mapsto
(S\mapsto L^\bullet(T,S,e^n))$ is isomorphic to the functor
$$\CS^\bullet_M(Z)\to\CS^\bullet_M(Z)\mdiff,\ 
T\mapsto \colim_{r\to\infty}T\sqcup \xi(\{m_r,\ldots,m_{r+n-1}\}).$$

\subsubsection{$2$-representations and morphisms of curves}
\label{se:2repmorphismscurves}
Let $f:Z\to Z'$ be a morphism of curves.
Assume $\xi$ is terminal for
$(Z,M)$ and $f\circ\xi$ is terminal for $(Z',f(M))$.

\smallskip
Assume that $|f^{-1}(f(z))|=1$ for all $z\in M$.
Let $M_f$  be the $(\CS^\bullet_M(Z),\CS^\bullet_{f(M)}(Z'))$-bimodule corresponding to $f$,
\ie\ given by $M_f(S,S')=\Hom_{\CS^\bullet(Z')}(S',f(S))$. There is a morphism
of functors $E_\xi\wedge_{\CS^\bullet_{M}(Z)}M_f\to M_f\wedge_{\CS^\bullet_{f(M)}(Z')}E_{f\circ\xi}
$ defined as making the following diagram commutative
$$\xymatrix{
	\Hom_{\CS^\bullet(Z)}(-,T\sqcup\{\xi(1)\})\wedge \Hom_{\CS^\bullet(Z')}(S',f(-))\ar[d]
	\ar[rr]^-{\beta\wedge\alpha'\mapsto f(\beta)\cdot\alpha'} &&
 \Hom_{\CS^\bullet(Z')}(S',f(T)\sqcup\{f\circ\xi(1)\}) \\
	\Hom_{\CS^\bullet(Z')}(-,f(T))\wedge\Hom_{\CS^\bullet(Z')}(S',-\sqcup\{f\circ\xi(1)\})
	\ar[urr]^\sim_-{\ \ \ \ \ \ \ \ \  \beta'\wedge\alpha' \mapsto (\beta'\boxtimes\id_{f\circ\xi(1)})\cdot\alpha'}
}$$

\smallskip
The following lemma is a consequence of  (\ref{eq:Lascolim}).

\begin{lemma}
	\label{le:2repcurvesf}
	If $\xi^{-1}(M)$ has no maximum,
	then the construction above gives an isomorphism
	$$E_\xi\wedge_{\CS^\bullet_{M}(Z)}M_f\iso M_f\wedge_{\CS^\bullet_{f(M)}(Z')}E_{f\circ\xi},$$
and $f$ provides a morphism of bimodule
	$2$-representations $L^\bullet_{f\circ\xi}\to L^\bullet_{\xi}$.
\end{lemma}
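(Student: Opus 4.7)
The plan is to unwind both sides of the asserted isomorphism using the colimit description (\ref{eq:Lascolim}). Fix a sequence $m_0 < m_1 < \cdots$ in $(0,1)$ with $\xi(m_i) \in M$ and $\lim m_i > t$ for all $t \in \xi^{-1}(M)$; such a sequence exists because $\xi^{-1}(M)$ has no maximum. Then for fixed $T \subset M$,
$$E_\xi(T,-) \;\cong\; \colim_{r \to \infty}\,\Hom_{\CS^\bullet_M(Z)}\bigl(-,\, T \sqcup \xi(\{m_r\})\bigr),$$
where the colimit is taken along the transition maps induced by the braids $\xi(\theta_r)$. Since tensoring with the bimodule $M_f(-,S') = \Hom_{\CS^\bullet(Z')}(S', f(-))$ evaluates a representable functor by replacing the free variable, I get
$$\bigl(E_\xi \wedge_{\CS^\bullet_M(Z)} M_f\bigr)(T, S') \;\cong\; \colim_r \Hom_{\CS^\bullet(Z')}\bigl(S',\, f(T \sqcup \xi(\{m_r\}))\bigr).$$
The hypothesis $|f^{-1}(f(z))| = 1$ for $z \in M$ (which includes $z = \xi(m_r)$) gives $f(T \sqcup \xi(\{m_r\})) = f(T) \sqcup (f\circ\xi)(\{m_r\})$ as subsets of $Z'$.

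On the other side, applying (\ref{eq:Lascolim}) to $f \circ \xi$ and the sequence $(m_r)$ — which still satisfies the required properties for $(Z', f(M))$ because $(f\circ\xi)(m_r) = f(\xi(m_r)) \in f(M)$ and $(f \circ \xi)^{-1}(f(M)) \supset \xi^{-1}(M)$ has no maximum — yields
$$E_{f\circ\xi}(f(T),-) \;\cong\; \colim_r \Hom_{\CS^\bullet_{f(M)}(Z')}\bigl(-,\, f(T) \sqcup (f\circ\xi)(\{m_r\})\bigr),$$
so a similar calculation gives
$$\bigl(M_f \wedge_{\CS^\bullet_{f(M)}(Z')} E_{f\circ\xi}\bigr)(T, S') \;\cong\; \colim_r \Hom_{\CS^\bullet(Z')}\bigl(S',\, f(T) \sqcup (f\circ\xi)(\{m_r\})\bigr).$$
These two colimits have isomorphic terms, and one checks that the transition maps correspond under $f$: the map $\xi(\theta_r)$ in $\CS^\bullet(Z)$ is sent by $f$ to $(f\circ\xi)(\theta_r)$ in $\CS^\bullet(Z')$. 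This produces an isomorphism. I would then verify by chasing the definitions that this colimit-level isomorphism agrees with the explicit natural transformation $E_\xi \wedge M_f \to M_f \wedge E_{f\circ\xi}$ defined above the lemma, by testing on a generator of the form $\beta \wedge \alpha'$ where $\beta$ comes from the representable approximation $\Hom(S, T \sqcup \xi(\{m_r\}))$.

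For the second assertion, to get a morphism of bimodule $2$-representations I need to upgrade the isomorphism to the full $\CU^\bullet$-action, i.e.\ check compatibility with the endomorphism $\tau$ and the multiplications $\mu_{n,m}$. Both are determined by the description of $\CU^\bullet$ via Theorem \ref{th:Ufromstrands} as the strand category of $\BR_{>0}$, with $\tau$ the non-identity non-zero braid $\{1,2\} \to \{1,2\}$. Under the colimit identification above, these endomorphisms on the two sides correspond to the action of the same braid pushed forward through $\xi$ (resp.\ $f\circ\xi$), and $f$ is compatible with pushforwards in the sense that $f\circ\xi(\tau) = (f\circ\xi)(\tau)$ in $\CS^\bullet(Z')$. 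Faithfulness of the functor $f : \CS^\bullet_f(Z) \to \CS^\bullet(Z')$ (Theorem \ref{th:strandsdifferential}) then forces the required commutation.

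I expect the main obstacle to be the bookkeeping verification that the colimit-based isomorphism constructed here coincides with the explicit diagram-defined map, which is needed to transport the $\tau$-compatibility from the transparent colimit picture back to the working definition. Aside from this identification, the whole argument is essentially Yoneda plus the observation that $f$ commutes on the nose with the decompositions into strands used to define $E$ and $\tau$.
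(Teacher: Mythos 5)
Your proof is correct and takes the same route as the paper: the paper's own proof of this lemma is the single sentence ``The following lemma is a consequence of (\ref{eq:Lascolim}),'' and your argument is precisely the intended elaboration, unwinding both sides of the claimed isomorphism via the colimit description and using $|f^{-1}(f(z))|=1$ on $M$ to identify $f(T\sqcup\xi(\{m_r\}))$ with $f(T)\sqcup(f\circ\xi)(\{m_r\})$. One small imprecision: for the $\tau$-compatibility (\ref{eq:taumorphism}), the relation you need to verify already lives in $\CS^\bullet(Z')$, so the relevant fact is that $f$ is a (pointed, differential) functor commuting with pushforward of braids, i.e.\ $f(\xi(\sigma))=(f\circ\xi)(\sigma)$ for $\sigma\in\End_{\CU^\bullet}(e^n)$; faithfulness of $f:\CS^\bullet_f(Z)\to\CS^\bullet(Z')$ would only help you transport a relation from $\CS^\bullet(Z')$ back to $\CS^\bullet(Z)$, which is not what is needed here.
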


\medskip
We consider now an arbitrary $M$ but we assume that $f$ is strict.
Let $M_{f^\#}$  be the $(\CS_{f(M)}(Z'),\CS_M(Z))$-bimodule corresponding to $f^\#$,
\ie\ given by 
$$M_{f^\#}(S',S)=\bigoplus_{p:S'\to Z,\ f\circ p=\id_{S'}} \Hom_{\CS_{M}(Z)}(S,p(S')).$$

 There is a morphism
of functors $E_{f\circ\xi}\otimes_{\CS_{f(M)}(Z')}M_{f^\#}\to  M_{f^\#}
\otimes_{\CS_M(Z)}E_{\xi}$
defined as making the following diagram commutative
$$\xymatrix{
	{\displaystyle\bigoplus_{\substack{p:-\to Z\\ f\circ p=\id}}}
	\Hom_{\CS(Z')}(-,T'\sqcup\{f\circ\xi(1)\})\otimes\Hom_{\CS(Z)}(S,p(-))\ar[d]
	\ar[rr]^-{\beta'\wedge\alpha\mapsto f^\#(\beta')\cdot\alpha} &
& {\displaystyle\bigoplus_{\substack{p:T'\to Z\\ f\circ p=\id_{T'}}}}
\Hom_{\CS(Z)}(S,p(T')\sqcup\{\xi(1)\}) \\
	{\displaystyle\bigoplus_{\substack{p:T'\to Z\\ f\circ p=\id_{T'}}}}
	\Hom_{\CS(Z)}(-,p(T'))\otimes\Hom_{\CS(Z)}(S,-\sqcup\{\xi(1)\})\ar[urr]^\sim_-{\ \ \ \ \ \ \ \beta\wedge\alpha
\mapsto (\beta\boxtimes\id_{\{\xi(1)\}})\cdot\alpha}
}$$

\smallskip
The following lemma is a consequence of  (\ref{eq:Lascolim}).

\begin{lemma}
	If $\xi^{-1}(M)$ has no maximum, then the construction above gives an isomorphism
$$E_{f\circ\xi}\otimes_{\CS_{f(M)}(Z')}M_{f^\#}\iso  M_{f^\#}
\otimes_{\CS_M(Z)}E_{\xi},$$
and $f^\#$ provides a morphism of bimodule
	$2$-representations $L_{\xi}\to L_{f\circ\xi}$.
\end{lemma}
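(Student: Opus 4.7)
The plan is to mirror the proof of Lemma \ref{le:2repcurvesf}, with $f$ replaced by the pullback $f^\#$, and with the approximation formula (\ref{eq:Lascolim}) playing the central role. First I would verify that the proposed natural transformation is well-defined. By the coYoneda lemma applied to the coend defining the tensor product, the codomain $M_{f^\#}\otimes_{\CS_M(Z)}E_\xi$ evaluated at $(T',S)$ simplifies to
$$\bigoplus_{\substack{p:T'\to Z\\ f\circ p=\id_{T'}}} \Hom_{\CS(Z)}(S, p(T')\sqcup\{\xi(1)\}),$$
which is precisely the top-right of the given commutative diagram; the composition of the two sides of the triangle should therefore agree by direct unwinding of the coYoneda isomorphism.

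Next, to show the top arrow $\beta'\otimes\alpha\mapsto f^\#(\beta')\cdot\alpha$ is an isomorphism, I would use (\ref{eq:Lascolim}) applied in $Z'$ to rewrite $E_{f\circ\xi}(T',S') = L^\bullet_{f\circ\xi}(T',S',e)$ as the colimit over $r$ of $\Hom_{\CS_{f(M)}(Z')}(S', T'\sqcup\{f\circ\xi(m_r)\})$, where $(m_r)$ is an increasing sequence in $\xi^{-1}(M)$ (which exists by assumption). Since $f$ is strict and $\xi$ is injective, $\xi(m_r)\in M$ is the unique lift of $f\circ\xi(m_r)\in f(M)$ to $Z$. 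Consequently, under $f^\#$, every map $\beta':S'\to T'\sqcup\{f\circ\xi(m_r)\}$ in $\CS(Z')$ pulls back to a sum of maps in $\CS(Z)$ landing in lifts of the form $p(T')\sqcup\{\xi(m_r)\}$ for lifts $p$ of $T'$. Using the same colimit description (\ref{eq:Lascolim}) now applied in $Z$ for $L_\xi$, the colimit of $\bigoplus_p \Hom_{\CS(Z)}(S,p(T')\sqcup\{\xi(m_r)\})$ computes precisely $\bigoplus_p L_\xi(p(T'),S)$, matching the right-hand side.

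For the bimodule-$2$-representation morphism statement, it remains to check compatibility of $f^\#$ with $\tau$. The endomorphism $\tau$ on $E_{f\circ\xi}^2$ (resp. $E_\xi^2$) is realized as the non-identity non-zero braid $\{f\circ\xi(1),f\circ\xi(2)\}\to\{f\circ\xi(1),f\circ\xi(2)\}$ (resp. $\{\xi(1),\xi(2)\}\to\{\xi(1),\xi(2)\}$). Since $f\circ\xi$ is injective (being a composition of $\xi$, which is injective, with the restriction of $f$ to $\xi(\BR_{>0})\subseteq Z_u$, which is a homeomorphism onto its image because $f$ is strict), the pullback $f^\#$ of this braid on $Z'$ is exactly the corresponding braid on $Z$, giving $f^{\#}\tau_{f\circ\xi}=\tau_\xi f^{\#}$. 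Combined with the isomorphism above, this makes $f^\#$ into a morphism $L_\xi\to L_{f\circ\xi}$ of bimodule $2$-representations.

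The main obstacle will be bookkeeping the lifts $p$ carefully in the colimit description and verifying that the "no maximum" hypothesis on $\xi^{-1}(M)$ is what makes the approximation formulas on both sides match up via $f^\#$: without it, the colimit on the $Z'$-side could stabilize at a finite $r$ for which the set of lifts through $f^\#$ of endpoint configurations does not stabilize, and the comparison would fail. Once that stability is secured, the compatibility with composition and with $\tau$ is a direct structural check.
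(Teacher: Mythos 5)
Your proposal is correct and takes essentially the same approach as the paper, which states the lemma as ``a consequence of (\ref{eq:Lascolim})'' without spelling out details; your write-up fills in exactly that reduction (applying the colimit formula on both sides, identifying lifts via coYoneda, checking $\tau$-compatibility). One small point you gesture at but don't pin down: the colimit formula must also apply for $(Z',f\circ\xi,f(M))$, and for both sides to use the same sequence $(m_r)$ one should verify $(f\circ\xi)^{-1}(f(M))=\xi^{-1}(M)$, which follows since $\xi(\BR_{>0})\subset Z_u$, $f$ strict forces $f^{-1}(Z'_u)=Z_u$, and $f$ is injective on $Z_u$ --- your ``unique lift'' remark is the local version of this fact.
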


\subsubsection{Twisted object description}
\label{se:twistedL}
We explain how to obtain a version of Lemma \ref{le:decompositionnodiff} with a
differential.

We say that a homotopy class of path in $Z(\xi)$ is positive if it has the same
orientation as $\xi([1\to 2])$.

\smallskip
Fix a finite subset $S$ of $M$ and $n\ge 0$.

\smallskip
Let $S''$ be a subset of $S$ with $n$ elements, let $s'\in S\setminus S''$ and $s''\in S''$.
Let $\zeta:s''\to s'$ be a positive smooth homotopy class of paths in $Z$.
We put $S'=(S''\setminus\{s''\})\sqcup\{s'\}$.

We define a map
$$g_{S'',\zeta}:\Hom_{\CS^\bullet(Z)}(S'',\{\xi(1),\ldots,\xi(n)\})\to 
\Hom_{\CS^\bullet(Z)}(S',\{\xi(1),\ldots\xi(n)\})\wedge
\Hom_{\CS^\bullet(Z)}(S\setminus S',S\setminus S'').$$
We put
$$g_{S'',\zeta}(\alpha)=
(\alpha_{|S'\setminus\{s'\}}\boxtimes (\alpha_{s''}\circ\zeta^{-1}))\wedge
(\id_{S\setminus (S''\sqcup\{s'\})}\boxtimes\zeta)
$$
if
\begin{itemize}
	\label{le:2repcurvesfsharp}
	\item $\alpha_{s''}\circ\zeta^{-1}$ is smooth
	\item and
given $s\in S''\setminus\{s''\}$ and $\zeta':s\to s'$ and $\zeta'':s''\to s$ smooth positive
with $\zeta=\zeta'\circ\zeta''$ and with
		$\alpha_{s''}\circ\zeta^{\prime\prime -1}$ smooth,
		then $\alpha_{s''}\circ\zeta^{\prime\prime -1}\circ\alpha_{s}^{-1}$ is negative.
\end{itemize}

We put $g_{S'',\zeta}(\alpha)=0$ otherwise.

\begin{rem}
Note that if $\alpha_{s''}\circ\zeta^{-1}$ is smooth, then the support of
$\zeta$ is contained in $Z(\xi)$.

Given $\alpha$ non-zero, if $\zeta$ is positive,
then both $\alpha_{s''}\circ\zeta^{-1}$ and $\zeta$ are oriented, since $\alpha_{s''}$
is oriented.
\end{rem}

\medskip
We obtain a map
$f_{S'',\zeta}:\alpha\wedge\beta\mapsto (\id\wedge\beta)\circ g_{S'',\zeta}(\alpha)$
$$\Hom_{\CS^\bullet(Z)}(S'',\{\xi(1),\ldots,\xi(n)\})\wedge
\Hom(S\setminus S'',-)\to 
\Hom_{\CS^\bullet(Z)}(S',\{\xi(1),\ldots\xi(n)\})\wedge
\Hom_{\CS^\bullet(Z)}(S\setminus S',-).$$

Let $r(S'')$ be the number of pairs $(s'',s)\in S''\times (S\setminus S'')$ such that
there exists a positive path $s''\to s$.

\smallskip
We define now
$$V_r=\bigoplus_{\substack{S'\subset S,\ |S'|=n\\ r(S')=r}}
\Hom(S',\{\xi(1),\ldots,\xi(n)\})\otimes
\Hom_{\CS(Z)}(S\setminus S',-)\in\CS_M(Z)\mdiff.$$

Given $r'<r''$, define 
$f_{r',r''}=\sum_{S'',\zeta}f_{S'',\zeta}$, where 
\begin{itemize}
	\item $S''$ is a subset of $S$ with $|S''|=n$ and $r(S'')=r''$
	\item $\zeta$ is a positive admissible homotopy class of paths in
$Z$ with $\zeta(0)\in S''$ and $\zeta(1)\in S\setminus S''$
\end{itemize}
such that
$\supp(\zeta)\cap S''=\{s''\}$ and $r((S''\setminus\{\zeta(0)\})\sqcup\{\zeta(1)\})=r'$.

\smallskip
Let $V=V_n(S)=\bigoplus_r V_r$ and let $d_V=\sum_r d_{V_r}+\sum_{r',r''}f_{r',r''}$.
We will show below (Proposition \ref{pr:descriptiononeside})
that $d_V^2=0$,\ \ie\ $V$ is the object
of $\CS_M(Z)\mdiff$ corresponding to the twisted object
$[\bigoplus V_r,(f_{r',r''})]$.

\begin{prop}
	\label{pr:descriptiononeside}
	Given $S\subset M$ and $n\ge 0$, then $d_{V_n(S)}^2=0$ and the
	map of Lemma \ref{le:decompositionnodiff}
	defines an isomorphism of functors $\CS_M(Z)\to k\mdiff$
	$$V_n(S)\iso L(-,S,e^n).$$
\end{prop}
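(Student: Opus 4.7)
The proposition has two assertions: that $d_{V_n(S)}^2 = 0$, and that the underlying bijection of Lemma~\ref{le:decompositionnodiff} upgrades to an isomorphism of functors $\CS_M(Z) \to k\mdiff$. The plan is to establish both at once by showing that under $\Phi\colon V_n(S)(T) \to L(T, S, e^n)$, $\alpha \otimes \beta \mapsto \alpha \boxtimes \beta$, the twisted differential $d_{V_n(S)}$ is transported to the intrinsic differential $d$ on $L(T, S, e^n)$ coming from Theorem~\ref{th:strandsdifferential}. Since the latter squares to zero, so does $d_{V_n(S)}$.

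Fix $(\alpha, \beta)$ in the $S'$-summand and write
$$d(\alpha \boxtimes \beta) = \sum_{\zeta \in D(\alpha \boxtimes \beta)/\mathrm{inv}} (\alpha \boxtimes \beta)^\zeta.$$
The first step is to partition $D(\alpha \boxtimes \beta)$ according to whether $\{\zeta(0), \zeta(1)\}$ lies entirely in $S'$, entirely in $S \setminus S'$, or has one element in each. For $\zeta$ with both endpoints in $S'$, Lemma~\ref{le:restrictionDtheta} gives $\zeta \in D(\alpha)$ with $(\alpha \boxtimes \beta)^\zeta = \alpha^\zeta \boxtimes \beta$; the analogous statement for $\beta$ handles the second case. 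These "internal" contributions sum to $\Phi((d\alpha) \otimes \beta + \alpha \otimes (d\beta))$, matching the restriction of $d_{V_n(S)}$ to the diagonal summands.

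The core of the proof is the mixed case, where $s'' := \zeta(0) \in S'$ and $s' := \zeta(1) \in S \setminus S'$. Here $(\alpha \boxtimes \beta)^\zeta$ has its strand at $s'$ equal to $\alpha_{s''} \circ \zeta^{-1}$ (running to $\xi(i)$) and its strand at $s''$ equal to $\beta_{s'} \circ \zeta$; so the new configuration lies in the $S'_{\mathrm{new}}$-summand with $S'_{\mathrm{new}} = (S' \setminus \{s''\}) \cup \{s'\}$. Admissibility of $\alpha_{s''} \circ \zeta^{-1}$ forces $\zeta$ to continue $\alpha_{s''}$ smoothly at $s''$; combined with the fact that $\alpha_{s''}$ is an oriented path terminating on $\xi([0,1])$ and the orientation of $Z$ near $\xi(\BR_{>0})$, this constrains $\zeta$ to be a positive admissible class in $Z(\xi)$ and forces $\supp(\zeta) \cap S'' = \{s''\}$. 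Conditions (a) and (b) in the definition of $D(\alpha \boxtimes \beta)$ then translate, via the terminal hypothesis on $\xi$ (no strand $\beta_s$ for $s \in S \setminus S'$ reaches $\xi(\BR_{\ge 1})$), into precisely the two bullets in the definition of $g_{S', \zeta}$. Conversely, given any positive $\zeta$ and $s''$ satisfying the two bullets, one verifies directly that $\zeta \in D(\alpha \boxtimes \beta)$. Under this correspondence, unwinding $(\id \wedge \beta) \circ g_{S', \zeta}(\alpha)$ reproduces $(\alpha \boxtimes \beta)^\zeta$, so the mixed contributions to $d(\alpha \boxtimes \beta)$ match $\Phi$ applied to $\sum f_{r', r''}$ on the $(\alpha, \beta)$-summand.

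Finally, one checks that the filtration index $r$ strictly decreases under each such move, $r(S'_{\mathrm{new}}) < r(S')$: the positive pair $(s'', s')$ contributing to $r(S')$ disappears, and any putative new pair $(s', s_0)$ would require a positive path $s' \to s_0$ which, concatenated with $\zeta$, would contradict either the "$\supp(\zeta) \cap S'' = \{s''\}$" output of the second bullet of $g_{S', \zeta}$ or condition (a) of $D$; a bijection between the remaining pairs for $S'$ and those for $S'_{\mathrm{new}}$ gives the equality up to the removed pair. This guarantees that the $f_{r', r''}$ with $r' < r''$ assemble into a genuine twisted object, and the identifications of Steps~2--4 give $\Phi \circ d_{V_n(S)} = d \circ \Phi$, hence both claims of the proposition. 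The main obstacle is the back-and-forth translation in the mixed case, where the abstract conditions (a), (b) defining $D$ must be matched term-for-term with the concrete conditions on $g_{S', \zeta}$; this hinges on the local structure near $\xi$ provided by the terminal assumption.
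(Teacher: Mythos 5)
Your overall strategy coincides with the paper's: show the isomorphism of Lemma~\ref{le:decompositionnodiff} intertwines $d_{V_n(S)}$ with the strand differential on $L(-,S,e^n)$ and deduce $d_{V_n(S)}^2=0$ from Theorem~\ref{th:strandsdifferential}, with the same decomposition of $D(\alpha\boxtimes\beta)$ into internal contributions (giving $d(\alpha)\boxtimes\beta+\alpha\boxtimes d(\beta)$) and mixed contributions (giving the $f_{r',r''}$ terms).

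The gap is in the mixed case. You assert that conditions (a) and (b) defining $D(\alpha\boxtimes\beta)$ ``translate \ldots into precisely the two bullets in the definition of $g_{S',\zeta}$,'' and that the converse can be ``verified directly,'' but this matching \emph{is} the substance of the proposition rather than an unwinding of definitions: both sides involve smoothness and orientation conditions that are formulated via minimal paths in non-singular covers and are not directly comparable on a singular curve. The paper isolates the claim as a single equivalence ``$\beta\cdot u\ne 0 \iff \zeta\in D(\theta)$'' (Equation~(\ref{eq:reductionsmooth})) and proves it by reducing through the non-singular cover $\hat{Z}\to Z$ using Lemma~\ref{le:functorialityL}, then specializing to $Z$ smooth so that $Z(\xi)$ is an interval and the verification becomes the concrete intersection-number criterion of Remark~\ref{re:productnonexc}; you would need either to reproduce that reduction or to supply a substitute argument handling general singular curves, and neither appears. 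Two secondary points: the claim that admissibility of $\alpha_{s''}\circ\zeta^{-1}$ forces $\supp(\zeta)\cap S'=\{s''\}$ is false --- $\zeta$ may well pass through other points of the $\xi$-bound subset, which is exactly what the second bullet of $g_{S',\zeta}$ is designed to handle, not exclude --- and Lemma~\ref{le:restrictionDtheta} cannot be invoked for the internal cases, since it gives the opposite inclusion and requires the braid to be the identity off the subset, which fails here. You also omit the preliminary reduction to $\xi$ outgoing via Remark~\ref{re:reductionoutgoing}, which the paper uses to simplify the geometry of $Z(\xi)$ before the analysis begins.
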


\begin{proof}
	By Remark \ref{re:reductionoutgoing}, we can assume $\xi$ is outgoing for $Z$.
We will show that
	\begin{equation}
		\label{eq:eq1}
		\text{the isomorphism of Lemma \ref{le:decompositionnodiff} is compatible
		with the differentials.}
	\end{equation}
	The proposition will follow immediately from (\ref{eq:eq1}).

	Let $S''$ be a subset of $S$ with $n$ elements, and let $T$ be
	a finite subset of $M$.
	Let $a:\Hom_{\CS^\bullet(Z)}(S'',\{\xi(1),\ldots\xi(n)\})\wedge
	\Hom_{\CS^\bullet(Z)}(S\setminus S'',T)\to L^\bullet(T,S,e^n)$ be the map of
	Lemma \ref{le:decompositionnodiff}.
	Let $\alpha\in\Hom_{\CS^\bullet(Z)}(S'',\{\xi(1),\ldots\xi(n)\})$ and
	$\beta\in\Hom_{\CS^\bullet(Z)}(S\setminus S'',T)$. Let $\theta=\alpha\boxtimes\beta=a(
	\alpha\wedge\beta)$.
	The statement (\ref{eq:eq1}) will follow from the following property:
	\begin{equation}
		\label{eq:eq2}
	a(d(\alpha\otimes\beta))=d(\theta).
	\end{equation}

	\smallskip
	We have 
	$$a(d(\alpha\otimes\beta))=d(\alpha)\boxtimes\beta+\alpha\boxtimes d(\beta)+
	\sum_\zeta a((\id\otimes\beta)\cdot g_{S'',\zeta}(\alpha))$$
	where $\zeta$ runs over positive admissible homotopy classes of paths
	starting in $S''$ and ending in $S\setminus S''$.

	We have 
	$$D(\theta)/\mathrm{inv}=\bigl(D(\alpha)/\mathrm{inv}\bigr)\sqcup
	\bigl(D(\beta)/\mathrm{inv}\bigr)\sqcup 
\coprod_{(s_1,s_2)\in S''\times(S\setminus S'')} I(\alpha_{s_1},\beta_{s_2})\cap D(\theta).$$

	Fix $(s_1,s_2)\in S''\times(S\setminus S'')$. 
	Let $S'=(S''\setminus\{s_1\})\sqcup\{s_2\}$.
	Let $\zeta$ be a smooth path $s_1\to s_2$.
	Let $u'=\id_{S\setminus(S'\sqcup\{s_1\})}\boxtimes\zeta$.
	Write $g_{S'',\zeta}(\alpha)=v\wedge u$ with $u:S\setminus S'\to S\setminus S''$
	and $v:S'\to\{\xi(1),\ldots,\xi(n)\}$. We take $u=0$ and $v=0$ if
	$g_{S'',\zeta}(\alpha)=0$. If $g_{S'',\zeta}(\alpha)\neq 0$, then
	$u=u'$. 
	
	\smallskip
	Assume $(\beta\cdot u)\neq 0$. Then $\alpha_{s_1}\circ\zeta^{-1}$ and
	$\beta_{s_2}\circ\zeta$ are smooth, and $\zeta$ and
	$\bar{\zeta}$ have opposite orientations, since $\bar{\zeta}$ is negative (it
	starts in $\xi(\BZ_{\ge 1})$ and ends in $M$).
	It follows that $\zeta\in L(\theta)$.

	Assume $\zeta\in L(\theta)$. Since $\bar{\zeta}$ is negative, it follows that
	$\zeta$ is positive, then 
$(\theta^\zeta)_s=\theta_s$ for $s{\not\in}\{s_1,s_2\}$, while
	$(\theta^\zeta)_{s_1}=\beta_{s_2}\circ\zeta$ and
	$(\theta^\zeta)_{s_2}=\alpha_{s_1}\circ\zeta^{-1}$.
	We deduce that $(\beta\cdot u)\boxtimes v=\theta^\zeta$ if
	$\beta\cdot u\neq 0$. So, the assertion (\ref{eq:eq2}) is a consequence of the
	following:
	\begin{equation}
		\label{eq:reductionsmooth}
		\text{given }\zeta\in L(\theta)\text{ positive, we have }\beta\cdot u\neq 0
		\text{ if and only if } \zeta\in D(\theta).
	\end{equation}

	\medskip
We will prove that statement by reduction to the non-singular case.
	Let $f:\hat{Z}\to Z$ be a non-singular cover.
	The morphism $\xi:\BR_{>0}\to Z$ lifts uniquely to a morphism of
	curves $\hat{\xi}:\BR_{>0}\to\hat{Z}$. Let $\hat{M}=f^{-1}(M)$ and
	let $\hat{\alpha}:\hat{S}''\to\{\hat{\xi}(1),\ldots\hat{\xi}(n)\}$
	and $\hat{\zeta}$ be the unique lifts of $\alpha$ and
	$\zeta$ to $\hat{Z}$. There exist subsets $\hat{S},\hat{T}$ of
	$\hat{M}$ and a lift
	$\hat{\beta}:\hat{S}\setminus\hat{S}''\to\hat{T}$ of $\beta$ such that
	$\hat{\zeta}\in L(\hat{\theta})$, where
	$\hat{\theta}=\hat{\alpha}\boxtimes\hat{\beta}$ (Lemma \ref{le:functorialityL}).
	We have $\zeta\in D(\theta)$ if and only if 
	$\hat{\zeta}\in D(\hat{\theta})$ (Lemma \ref{le:functorialityL}).

	Write $g_{\hat{S}'',\hat{\zeta}}(\hat{\alpha})=\hat{v}\wedge\hat{u}$
	as above. We have $f(\hat{u})=u$ and $f(\hat{v})=v$.
	We have $g_{S'',\zeta}(\alpha)\neq 0$ if and only if
	$g_{\hat{S}'',\hat{\zeta}}(\hat{\alpha})\neq 0$.  Finally,
	$\beta\cdot u\neq 0$ if and only if $\hat{\beta}\cdot\hat{u}\neq 0$.
	This completes the reduction of (\ref{eq:reductionsmooth}) to the case of $\hat{Z}$.

	\medskip
	So, we now prove (\ref{eq:reductionsmooth}) assuming $Z$ is smooth.
	Note that $Z(\xi)$ is isomorphic (as a $1$-dimensional space) to an interval of $\BR$.
	We consider $\zeta:s_1\to s_2$ in $L(\theta)$ positive with 
	$s_1\in S''$ and $s_2\in S\setminus S''$.

	\smallskip
	Remark \ref{re:productnonexc} shows that $\beta\cdot u'\neq 0$ if and only if
	$i(\beta_s,\beta_{s_2}\circ\zeta)=i(\beta_s,\beta_{s_2})+i(\id_s,\zeta)$ for
	all $s\in S\setminus (S'\sqcup\{s_1\})$. That equality is always satisfied
	unless there are $\zeta'':s_1\to s$ and $\zeta':s\to s_2$ positive.
	In that case, $\bar{\zeta}''$ is negative and
	the equality is satisfied if and only if $\bar{\zeta}'$ is positive.

	\smallskip
	We have $u\neq 0$ if and only if given $\zeta'':s_1\to s$ and $\zeta':s\to s_2$
	positive with $s\in S'\setminus\{s_2\}$, then $\bar{\zeta}''=
	\alpha_{s_2}\circ\zeta''\circ \alpha_{s_1}^{-1}$ is positive.

	We deduce that $\zeta\in D(\theta)$ if and only if $\beta\cdot u\neq 0$.
The proposition follows.
\end{proof}


\begin{example}
	The picture below gives two examples of description of
	the map $g_{S'',\zeta}$.
	$$\includegraphics[scale=0.85]{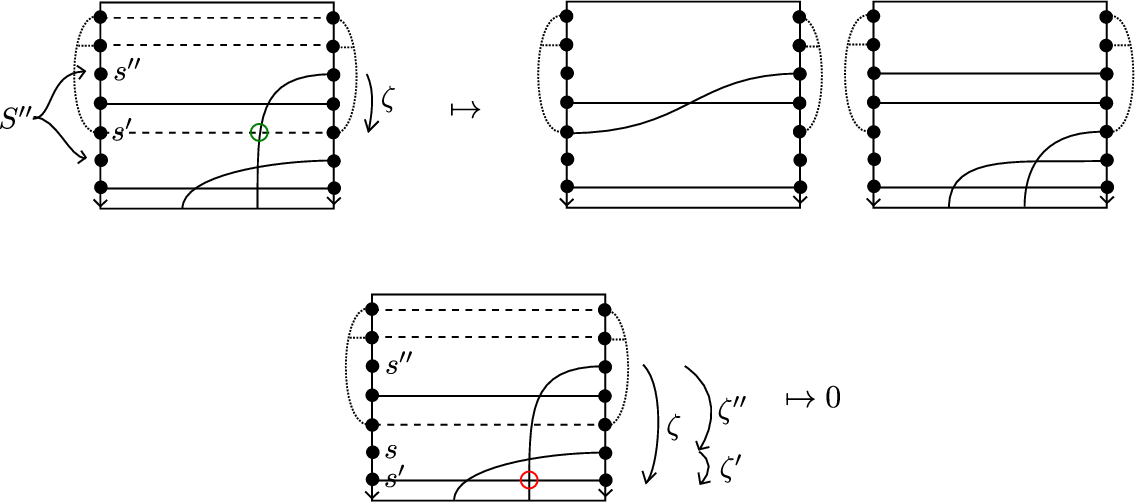}$$
\end{example}

\subsubsection{Right action}
\label{se:rightaction}
Consider now $\xi':\BR_{<0}\to Z$ an injective morphism of curves, where $\BR_{<0}$
is unoriented. Identifying $(\BR_{<0})^\opp$ with $\BR_{>0}$ by $x\mapsto -x$,
we obtain a morphism of curves $\xi:\BR_{>0}\to Z^\opp$. Let
$M$ be a subset of $Z\setminus\xi'(\BR_{\le -1})$.

We say that $\xi'$ is {\em initial}\index[ter]{initial morphism} for $(Z,M)$ if
$\xi$ is terminal for $(Z^\opp,M)$ and that $\xi'$ is {\em incoming}
\index[ter]{incoming morphism} for $Z$ if
$\xi'(\BR_{\le -1})$ is closed in $Z$.

\smallskip
Assume $\xi'$ is initial for $(Z,M)$.
As in the left action case, we define a differential functor

\begin{align*}
	R^\bullet=R_{\xi'}^\bullet:\CC\times\CC^\opp\times\CU&\to k\mdiff\\
	R^\bullet(S,T,e^n)&=\Hom(T\sqcup\{\xi'(-1),\ldots,\xi'(-n)\},S)\\
	R^\bullet(\beta,\alpha,\sigma)(f)&=\beta\cdot f\cdot
	(\alpha\boxtimes\xi'(\sigma^{\mathrm{rev}\opp}))\in R^\bullet(S',T',n)
\end{align*}
for $\alpha\in\Hom_{\CS^\bullet(Z)}(T',T)$, $\beta\in\Hom_{\CS^\bullet(Z)}(S,S')$ and
$\sigma\in\End_{\CU^\bullet}(e^n)$, and $f\in R^\bullet(S,T,n)$.

\smallskip
We put $R_\xi^\bullet(S,T)=R_\xi^\bullet(S,T,e)$ and $R_\xi=\BF_2[R_\xi^\bullet]$.

\smallskip
Recall that the isomorphism (\ref{eq:oppositeStrands}) of differential categories 
$\CS^\bullet_M(Z)\iso \CS^\bullet_M(Z^\opp)^\opp$. This isomorphism provides
an isomorphism $R^\bullet_{\xi'}(S,T,e^n)\iso L^\bullet_{\xi}(T,S,e^n)$ functorial in $S$, $T$ and $e^n$.

In particular, $R^\bullet$ provides a ``right" $2$-representation on $\CS^\bullet_M(Z)$ and
all results of \S\ref{se:defleftaction}--\ref{se:twistedL} have counterparts for $R^\bullet$.

\smallskip
Given $S\subset M$ and $n\ge 0$, there is an isomorphism of functors
$$\bigvee_{\substack{S'\subset S\\ |S'|=n}}
\Hom_{\CS^\bullet(Z)}(\{\xi'(-1),\ldots,\xi'(-n)\},S')\wedge \Hom_{\CS^\bullet(Z)}(-,S\setminus S')
\iso R^\bullet(S,-,e^n).$$

\smallskip
There is an isomorphism of functors, functorial in $S$ and $T$
\begin{align*}
	R^\bullet(T,-,e^n)\wedge R^\bullet(-,S,e^m)&\iso R^\bullet(T,S,e^{n+m})\\
	(\alpha,\beta) &\mapsto
	\alpha\cdot (\beta\boxtimes \xi'([-m-r\to -r]_{1\le r\le n})).
\end{align*}

\medskip
Assume there is a decreasing sequence $m_0,m_{-1},\ldots$ of points of
$\xi^{\prime -1}(M)$ with $\lim_i m_i<t$ for all $t\in\xi^{\prime -1}(M)$.

We obtain as in (\ref{eq:Lascolim}) isomorphisms functorial in $S$ and $T$
\begin{equation}
	\label{eq:Rascolim}
	\colim_{r\to\infty}\Hom_{\CS^\bullet_M(Z)}
	(T\sqcup \xi'(\{m_{-r},\ldots,m_{-r-n+1}\}),S)\iso R^\bullet(S,T,e^n).
\end{equation}

\smallskip
Let us finally consider functoriality as in 
\S\ref{se:2repmorphismscurves}. Let $f:Z\to Z'$ be a morphism of curves and
assume $f\circ \xi'$ is initial for $(Z',f(M))$.

The functor $f:\CS_{f,M}^\bullet(Z)\to\CS^\bullet_{f(M)}(Z')$ induces a morphism of bimodule
$2$-representations $R_{f\circ\xi'}^\bullet\to R_{\xi'}^\bullet$, when
$|f^{-1}(f(z))|=1$ for all $z\in M$.

If $f$ is strict, then 
the functor $f^\#:\add(\CS_{f(M)}(Z'))\to\add(\CS_M(Z))$ induces a morphism of bimodule
$2$-representations $R_{\xi'}\to R_{f\circ\xi'}$.

\begin{rem}
	As in Remark \ref{re:leftasbottomalgebramodule}, we recover
	the construction of ``top algebra module" of Douglas and
	Manolescu by taking the underlying lax $2$-representation of
	$R_{\xi'}$.
\end{rem}

\begin{example}
	As in Example \ref{ex:leftaction}, we use an
	alternative graphical description for $R_{\xi'}^\bullet$. This is
	illustrated in the example of $R_{\xi'}^\bullet(-,-,e^2)$ below.
$$\includegraphics[scale=0.95]{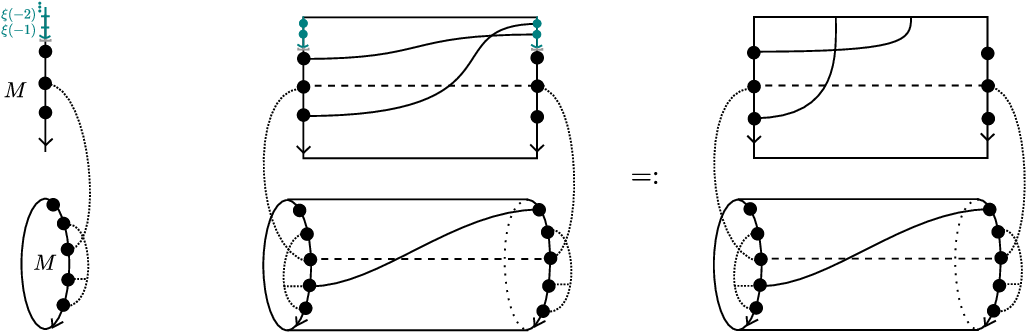}$$
\end{example}

\subsubsection{Duality}
	\label{se:duality}
	Let $Z'=\BR$ be the smooth curve with $Z'_o=(-\frac{1}{2},\frac{1}{2})$, with its
	standard orientation.
Consider a morphism of curves $\tilde{\xi}:Z'\to Z$ such that
$\tilde{\xi}(Z')$ is a component of $Z$.

	Fix an increasing homeomorphism $\alpha:\BR_{>0}\iso\BR_{>\frac{1}{2}}$ fixing
	the positive integers and define $\alpha':\BR_{<0}\iso\BR_{<-\frac{1}{2}}$ by
	$\alpha'(t)=-\alpha(-t)$.
	Let $\xi^+=\tilde{\xi}\circ\alpha:\BR_{>0}\to Z$ and
	$\xi^-=\tilde{\xi}\circ\alpha':\BR_{<0}\to Z$. These are injective
morphisms of curves, $\xi^+$ is outgoing for $Z$ and $\xi^-$ is incoming for $Z$.

\smallskip
Given $n\ge 0$,
we denote by $\theta(n)\in\Hom_{\CS^\bullet(Z')}(\{-n,\ldots,-1\},\{1,\ldots,n\})$
the braid given by $\theta(n)_{-i}=[-i\to i]$.

\smallskip
Let $T$ and $T'$ two finite subsets of $Z$ and $I\subset\BZ_{\ge 1}$ finite.
Assume that $\tilde{\xi}(-I)\subset T$ and that
given $x\in\BR$ with $x<i$ for all $i\in -I$, we have
$\tilde{\xi}(x){\not\in}T$.
Assume also that $\tilde{\xi}(I)\subset T'$ and that
given $x\in\BR$ with $x>i$ for all $i\in I$, we have
$\tilde{\xi}(x){\not\in}T'$.

We consider the pointed map

\begin{align*}
	\kappa_I:\Hom_{\CS^\bullet(Z)}(T,T')&\to 
	\Hom_{\CS^\bullet(Z)}(T\setminus (T\cap\tilde{\xi}(-I)),
	T'\setminus (T'\cap \tilde{\xi}(I))) \\
	\theta&\mapsto\begin{cases}
		(\theta_t)_{t\in T\setminus \tilde{\xi}(-I)} &
		\text{ if }
\bij{\theta}(\tilde{\xi}(-i))=\tilde{\xi}(i) \text{ for }i\in I\\
	0 & \text{ otherwise.}
\end{cases}
\end{align*}
We put $\kappa_n=\kappa_{\{1,\ldots,n\}}$.
Note that $\kappa_n=\kappa_{\{n\}}\circ\cdots\circ\kappa_{\{2\}}\circ\kappa_{\{1\}}$.

	\smallskip
Let  $f:Z\to \bar{Z}$ be a morphism of curves such that
	$f\circ\tilde{\xi}$ is a homeomorphism from $Z'$ to
	a component of $\bar{Z}$.
		Put $\tilde{\bar{\xi}}=f\circ\tilde{\xi}$.
	 	Denote by $\bar{\kappa}_n$ the map defined
	as above with $Z$ replaced by $\bar{Z}$.

		Let $T$ and $T'$ be two finite subsets of
	$Z$ such that $|f(T)|=|T|$ and $|f(T')|=|T'|$. Put
	$\mathring{T}=T\setminus (T\cap\tilde{\xi}(\{-n,\ldots,-1\})$ and
	$\mathring{T}'=T'\setminus (T'\cap\tilde{\xi}(\{-n,\ldots,-1\})$.
	There is a commutative diagram
	\begin{equation}
		\label{eq:fkappa}
		\xymatrix{\Hom_{\CS^\bullet(Z)}(T,T')\ar[r]^-{\kappa_n} \ar[d]_f &
		\Hom_{\CS^\bullet(Z)}(\mathring{T},\mathring{T}') \ar[d]^f \\
	\Hom_{\CS^\bullet(\bar{Z})}(f(T),f(T'))\ar[r]_-{\kappa_n} &
		\Hom_{\CS^\bullet(\bar{Z})}(f(\mathring{T}),f(\mathring{T}'))
	}
	\end{equation}

	Similarly, if $f$ is strict and $U$ and $U'$ are two finite subsets of $\bar{Z}$,
	there is a commutative diagram

	\begin{equation}
		\label{eq:fsharpkappa}
		\xymatrix{\Hom_{\CS(\bar{Z})}(U,U')\ar[r]^-{\kappa_n} \ar[d]_{f^\#} &
	\Hom_{\CS(\bar{Z})}(U\setminus (U\cap\tilde{\bar{\xi}}(\{-n,\ldots,-1\})),U'\setminus
	(U'\cap\tilde{\bar{\xi}}(\{1,\ldots,n\})) \ar[d]^{f^\#} \\
	\bigoplus_{T,T'}\Hom_{\CS(Z)}(T,T')\ar[r]_-{\kappa_n} &\bigoplus_{T,T'}
		\Hom_{\CS(Z)}(\mathring{T},\mathring{T}')
	}
	\end{equation}

	where $T$ (resp. $T'$) runs over finite subsets of $Z$ such that $f(T)=U$
	(resp. $f(U')=T'$).

\begin{lemma}
	\label{le:kappadiff}
	The map $\kappa_n$ commutes with differentials.
\end{lemma}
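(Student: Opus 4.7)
The plan is to reduce to the single-index case and then perform a careful case analysis based on the structure of $\zeta \in D(\theta)$, using the extremality of $s = \tilde{\xi}(-i)$ and $s' = \tilde{\xi}(i)$ inside the component $\tilde{\xi}(Z')$.

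First I will factor $\kappa_n = \kappa_{\{1\}}\circ\kappa_{\{2\}}\circ\cdots\circ\kappa_{\{n\}}$ (applying $\kappa_{\{n\}}$ first). At each stage, the hypotheses of the next single-index $\kappa_{\{i\}}$ are preserved: after removing $\tilde{\xi}(-n),\ldots,\tilde{\xi}(-(i+1))$, no element $\tilde{\xi}(x)$ with $x<-i$ remains in the reduced $T$. So it suffices to prove the lemma for $\kappa_{\{i\}}$ when $s=\tilde{\xi}(-i)$ is leftmost in $T\cap\tilde{\xi}(Z')$ and $s'=\tilde{\xi}(i)$ is rightmost in $T'\cap\tilde{\xi}(Z')$.

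For the single-index case I examine $\kappa_{\{i\}}(\theta^\zeta)$ for each $\zeta\in D(\theta)/\mathrm{inv}$. The term is nonzero iff $\bij{\theta^\zeta}(s)=s'$. I split classes by whether $s\in\{\zeta(0),\zeta(1)\}$: in \emph{Type A} ($s$ not an endpoint), the term contributes iff $\bij{\theta}(s)=s'$, and then $\kappa_{\{i\}}(\theta^\zeta)=(\kappa_{\{i\}}(\theta))^\zeta$; in \emph{Type B} ($\zeta:s\leftrightarrow t$), the term contributes iff $\bij{\theta}(t)=s'$, which forces $t=t^*:=\bij{\theta}^{-1}(s')$ uniquely by bijectivity. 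The key observation is that Type B contributions always vanish because no such $\zeta:s\to t^*$ lies in $L(\theta)$: extremality forces $t^*=\tilde{\xi}(y)$ with $y>-i$, so $\zeta$ is the rightward straight path $\tilde{\xi}([-i\to y])$ and $\theta_{t^*}$ is the rightward path $\tilde{\xi}([y\to i])$; then a case analysis on $\theta_s:s\to\bij{\theta}(s)$ shows that either $\zeta\circ\theta_s^{-1}$ backtracks at $s$ (when $\bij{\theta}(s)$ lies to the right of $s$, violating smoothness condition (i) of $I(\theta_s,\theta_{t^*})$) or both $\zeta$ and $\bar{\zeta}=\theta_{t^*}\circ\zeta\circ\theta_s^{-1}$ are rightward (when $\bij{\theta}(s)$ lies to the left of or equals $s$, violating the opposite-orientation condition (ii)).

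With Type B eliminated, only Type A remains. When $\bij{\theta}(s)\neq s'$, Type A contributes $0$, giving $\kappa_{\{i\}}(d(\theta))=0=d(\kappa_{\{i\}}(\theta))$. When $\bij{\theta}(s)=s'$, writing $\theta_0=\kappa_{\{i\}}(\theta)$, the computation reduces to $\sum_{\zeta\in D(\theta)\cap\{\text{Type A}\}/\mathrm{inv}}(\theta_0)^\zeta=d(\theta_0)$, which is equivalent to the identification $D(\theta)\cap\{\text{Type A}\}=D(\theta_0)$. The hard part will be establishing this identification: condition (a) of $D$ is $\theta$-independent and $L(\theta_0)$ is exactly the Type A part of $L(\theta)$, so the content is showing that condition (b) agrees. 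The only possible discrepancy comes from decompositions $\zeta=\zeta'\circ\zeta''$ with intermediate point $s$, involving $\zeta''\in I(\theta_{i_1},\theta_s)$; but again $\theta_s=\tilde{\xi}([-i\to i])$ leaves $s$ rightward while any admissible $\zeta'':i_1\to s$ approaches $s$ leftward (since extremality forces $i_1=\tilde{\xi}(y')$ with $y'>-i$), so $\theta_s\circ\zeta''$ backtracks at $s$ and is not smooth, ruling out such decompositions. A subtle point is that the component $\tilde{\xi}(Z')$ need not literally be $\cong\BR$ since $\tilde{\xi}$ may not be injective on $Z'_o$, but the extremality hypothesis together with smoothness of the relevant paths forces the local structure near $s$ and $s'$ to be as in $\BR$, which is all the backtracking argument requires.
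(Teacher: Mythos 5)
Your proof takes a genuinely different route from the paper's. The paper reduces $\kappa_n$ to the Hecke-algebra trace map $t^-_{m,m-n}$ via the identification $\phi$ of Proposition \ref{pr:strandsnonsingular} (after first passing to the case $\tilde{\xi}$ a homeomorphism with $Z_o=\emptyset$, then to the non-singular cover), and invokes Proposition \ref{pr:tracediff}. You instead factor $\kappa_n$ into single-index pieces and run a direct path-theoretic analysis of $D(\theta)$: extremality of $s=\tilde{\xi}(-i)$ in the source and $s'=\tilde{\xi}(i)$ in the target kills the Type B terms (by forcing a violation of condition (i) or (ii) of $I(\theta_s,\theta_{t^*})$), and kills the decompositions of Type A classes through the intermediate point $s$ (by forcing backtracking against $\theta_s(0+)$), leaving $D(\theta)\cap\{\text{Type A}\}=D(\kappa_{\{i\}}(\theta))$. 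Your reordering $\kappa_n=\kappa_{\{1\}}\circ\cdots\circ\kappa_{\{n\}}$ (innermost $\kappa_{\{n\}}$ first) is the right choice for extremality to persist at each stage, in contrast to the order written in \S\ref{se:duality}. This is a more elementary and self-contained argument in the strand language, essentially re-deriving the combinatorics of Proposition \ref{pr:tracediff} without invoking it; both proofs ultimately exploit the same extremality phenomenon.

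Two points should be tightened before this is complete. First, the extremality preservation across the factorization requires that $T$ meet $\tilde{\xi}((-\infty,-\tfrac12))$ exactly in $\tilde{\xi}(\{-n,\ldots,-1\})$ (and dually for $T'$), which is strictly stronger than the displayed hypothesis in \S\ref{se:duality} as literally worded ("no $\tilde{\xi}(x)$ with $x<-n$"); this is, however, precisely the hypothesis the paper's own base-case computation uses ($T\setminus\{-n,\ldots,-1\}\subset(-1,\infty)$), so you are consistent with the intended reading. Second, and more substantively, the step "both $\zeta$ and $\bar{\zeta}$ are rightward, violating (ii)" is asserted locally, but Definition \ref{de:opposite} is a global condition (existence of a common minimal smooth $\gamma$ having both as restrictions). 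The argument does go through, but the correct mechanism is to lift to $Z'\cong\BR$: the unique lift of $\zeta$ is increasing because $\zeta(0+)$ must be the rightward class at $\tilde{\xi}(-i)$ (no escape into $\tilde{\xi}((-\infty,-i))$ without backtracking), and the unique lift of $\bar{\zeta}$ is increasing because $\bar{\zeta}(1-)=\theta_{t^*}(1-)$ must be the leftward class at $\tilde{\xi}(i)$; two increasing lifts are both restrictions of a single increasing minimal path in $\BR$, whose image under $\tilde{\xi}$ witnesses "same orientation." Your closing remark gestures at this but does not carry it out, and since "same orientation" cannot be read off from local tangential data alone, the lift to the cover is genuinely needed — at which point you are effectively re-importing the paper's reduction-to-non-singular step.
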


\begin{proof}
	Assume first $\tilde{\xi}$ is a homeomorphism and $Z_o=\emptyset$.
	Let $T$ and $T'$ be two finite subsets of $\BR$ with same cardinality $m$. Let
	$a:\{1,\ldots,m\}\iso T$ and $a':\{1,\ldots,m\}\iso T'$ be the increasing
	bijections. There
	is an isomorphism of differential modules
	(Proposition \ref{pr:strandsnonsingular})
	$\phi:\Hom_{\CS(Z)}(T,T')\iso H_m$:
	given $\theta\in\Hom_{\CS^\bullet(Z)}(T,T')$ non-zero
	and given $i\in\{1,\ldots,m\}$,
	we put $\phi(\theta)(i)=a^{\prime -1}(\theta_{a(i)}(1))$.

	Assume in addition that $\{-n,\ldots,-1\}\subset T$ and
	$T\setminus\{-n,\ldots,-1\}\subset (-1,\infty)$
	and $\{1,\ldots,n\}\subset T'$ and $T'\setminus\{1,\ldots,n\}
	\subset (-\infty,1)$.
There is a commutative diagram
	$$\xymatrix{\Hom_{\CS(Z)}(T,T')\ar[r]^-{\kappa_n} \ar[d]_{\phi}^\sim &
	\Hom_{\CS(Z)}(T\setminus\{-n,\ldots,-1\},T'\setminus\{1,\ldots,n\}) \ar[d]^\phi_\sim \\
	H_m\ar[r]_-{t^-_{m,m-n}} & H_{m-n}
	}$$
	The lemma follows now from \S\ref{se:Lrn}.

	\smallskip
	Assume now $Z$ is smooth.
	If $Z(\xi^+)$ is unoriented, then the lemma holds by the discussion above, using
	\S\ref{se:tensorstrands}. In general, we consider the morphism
	of curves $f:Z\to \bar{Z}$ that is an isomorphism outside $Z(\xi^+)$ and
	the identity on $Z(\xi^+)$, with $f(Z(\xi^+))_o=\emptyset$. The vertical maps
	of the commutative diagram (\ref{eq:fkappa}) are injective, hence the lemma
	holds for $Z$ since it holds for $\bar{Z}$.

	\smallskip
	Consider now a general $Z$. Let $f:\hat{Z}\to Z$ be a non-singular cover. 
	The vertical maps
	of the commutative diagram (\ref{eq:fsharpkappa}) are injective, hence the lemma
	holds for $Z$ since it holds for $\hat{Z}$.
\end{proof}

Let $M$ be a subset of $Z\setminus
\tilde{\xi}\bigl((-\infty,-1]\cup[1,\infty)\bigr)$.

\smallskip
Given $S$ a finite subset of $M$, the pointed map
$$L_{\xi^+}^\bullet(T,S,e^n)\wedge R_{\xi^-}^\bullet(S,T',e^n)\to \Hom_{\CS^\bullet(Z)}(T',T),\
(\theta',\theta)\mapsto \kappa_n(\theta'\cdot\theta)$$
induces an $\BF_2$-linear map
\begin{align*}
	\hat{\kappa}(T,S):L_{\xi^+}(T,S,e^n)&\to \Hom_{\CS(Z)^\opp\mdiff}(R_{\xi^-}(S,-,e^n),
	\Hom(-,T))\\
	\theta'&\mapsto \bigl( (\theta\in R_{\xi^-}^\bullet(S,T',e^n))\mapsto \kappa_n(\theta'\cdot\theta)\bigr).
\end{align*}

\begin{prop}
	\label{pr:Ldual}
	The map $\hat{\kappa}$  induces an isomorphism of differential pointed
	bimodules $L_{\xi^+}(-_2,-_1,e^n)\iso R_{\xi^-}(-_1,-_2,e^n)^\vee$.
\end{prop}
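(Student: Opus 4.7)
The proof breaks into three parts: (i) verifying that $\hat{\kappa}$ is a well-defined closed morphism of bimodules, (ii) reducing the isomorphism claim to a pairing statement on the component $\tilde{\xi}(Z')$, and (iii) identifying that pairing with the Hecke-algebra duality already established in Corollary \ref{co:dual}.

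For (i), bimodule compatibility is formal. Given $\alpha \in \Hom_{\CS^\bullet(Z)}(T,T'')$ and $\beta \in \Hom_{\CS^\bullet(Z)}(T',T)$, the maps $\kappa_n$ and $\alpha \cdot (-)\cdot \beta$ commute because $\kappa_n$ only affects the endpoints $\tilde{\xi}(\pm\{1,\dots,n\})$, which are disjoint from the strands contributed by $\alpha$ and $\beta$; this gives $(\alpha \cdot \hat{\kappa}(\theta')(\theta) \cdot \beta) = \hat{\kappa}(\alpha \cdot \theta')(\theta \cdot \beta)$ after routine checking against the $R$- and $L$-action formulas. Closedness $d\hat{\kappa} = 0$ follows from Lemma \ref{le:kappadiff} combined with the Leibniz rule: $\kappa_n(d(\theta'\cdot\theta)) = \kappa_n(d\theta' \cdot \theta + \theta'\cdot d\theta) = d\kappa_n(\theta'\cdot\theta)$.

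For (ii), we use Lemma \ref{le:decompositionnodiff} (and its $R$-analog from \S\ref{se:rightaction}) to decompose, at the level of pointed sets, both sides according to $n$-element subsets $S' \subset S$:
\begin{align*}
L^\bullet_{\xi^+}(T,S,e^n) &\iso \bigvee_{|S'|=n} \Hom(S',\{\xi^+(1),\dots,\xi^+(n)\}) \wedge \Hom(S\setminus S',T),\\
R^\bullet_{\xi^-}(S,T,e^n) &\iso \bigvee_{|S''|=n} \Hom(\{\xi^-(-1),\dots,\xi^-(-n)\},S'') \wedge \Hom(T,S\setminus S'').
\end{align*}
Inspecting $\kappa_n(\theta'\cdot\theta)$, nonvanishing forces the $S'$ in the first decomposition and the $S''$ in the second to agree, and also the induced bijections to be compatible. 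By Yoneda, $\Hom(-,S\setminus S')^\vee \iso \Hom(S\setminus S',-)$, so modulo the $\BF_2$-linear dualities on the "$\xi$-local" factors, the map $\hat{\kappa}$ is the direct sum over $S'$ of the pairings
$$\Hom_{\CS(Z)}(S',\{\xi^+(1),\dots,\xi^+(n)\}) \wedge \Hom_{\CS(Z)}(\{\xi^-(-1),\dots,\xi^-(-n)\},S') \xrightarrow{\kappa_n(-\cdot-)} \BF_2.$$

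For (iii), the support of every nonzero braid appearing in these factors is contained in the component $\tilde{\xi}(Z') \approx \BR$. Lifting to the non-singular cover $q:\hat{Z}\to Z$ via the commutative diagrams (\ref{eq:fkappa}) and (\ref{eq:fsharpkappa}), the pairing on $Z$ is recovered from that on $\hat{Z}$, so it suffices to treat the case where $\tilde{\xi}$ embeds a non-singular interval (or circle) containing the relevant marked points. Applying Proposition \ref{pr:strandsnonsingular} we identify the local strand categories with nil Hecke categories $\CH_m$, under which $\kappa_n$ becomes the nil trace map $t^\pm_{m,m-n}$ of \S\ref{se:Lrn}, and the factors become the nil specializations of $L^+$ and $R^+$. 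The asserted perfect pairing is then exactly the nil version of Corollary \ref{co:dual}, established via Proposition \ref{pr:tracediff}. Finally, one checks that $\hat{\kappa}$ is compatible with the twisted-object differentials of Proposition \ref{pr:descriptiononeside} (and its $R$-analog), which again reduces via Lemma \ref{le:kappadiff} to a direct computation. The main obstacle is (ii)--(iii): carefully aligning the combinatorial decompositions on the two sides, especially the matching between subsets $S'$ and $S''$ and their compatibility with the twisting maps $f_{S'',\zeta}$ from \S\ref{se:twistedL}.
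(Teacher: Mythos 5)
Your proposal is correct and follows essentially the same route as the paper: establish closedness of $\hat{\kappa}$ via Lemma \ref{le:kappadiff}, reduce via Lemma \ref{le:decompositionnodiff} (and its $R$-analog) to the local pairing on the component $\tilde{\xi}(Z')$, and identify that pairing — after passing to a non-singular model — with the nil trace pairing of \S\ref{se:Lrn}, invoking Corollary \ref{co:dual} and Proposition \ref{pr:tracediff}. The one place where you are making life harder than necessary is the final remark: you flag as the ``main obstacle'' the alignment of the combinatorial decompositions with the twisting maps $f_{S'',\zeta}$ of Proposition \ref{pr:descriptiononeside}. This check is not needed. Since you have already shown $\hat{\kappa}$ is a closed map of $\BF_2$-modules, it suffices to verify bijectivity, and for that the \emph{non-differential} decomposition of Lemma \ref{le:decompositionnodiff} (at the level of $\BF_2$-modules) is all you use — the paper's final commutative diagram is precisely of $\BF_2$-modules, not of complexes. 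A minor presentational difference: the paper first establishes bijectivity of $\hat{\kappa}(\emptyset,S)$ for $|S|=n$, working through the chain $\tilde{\xi}$ a homeomorphism with $Z_o=\emptyset$, then $Z(\xi^+)$ smooth unoriented, then smooth, then general via the non-singular cover; you instead decompose general $(T,S)$ first and then reduce the factors. Both orders are fine, but in your version you should be slightly more explicit that before applying Proposition \ref{pr:strandsnonsingular} one must also discard the orientation on $\tilde{\xi}(Z')_o$ — the paper does this via a morphism $f:Z\to\bar{Z}$ with $f(Z(\xi^+))_o=\emptyset$ — so that the local strand category is literally $\CH_m$ and $\kappa_n$ is literally $t^-_{m,m-n}$.
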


\begin{proof}
	Lemma \ref{le:kappadiff} shows that $\hat{\kappa}$ commutes with differentials.

	      Let $S$ be a finite subset of $M$ of cardinality $n$. 

	      Assume
	      $\tilde{\xi}$ is a homeomorphism and $Z_o=\emptyset$. There is
	      a commutative diagram (see the proof of Lemma \ref{le:kappadiff} with
	      $(T,T')=(S,\xi^+(\{1,\ldots,n\})$ and $(T,T')=(\xi^-(\{-n,\ldots,-1\}),S)$)
	      $$\xymatrix{
		      \Hom_{\CS(Z)}(S,\xi^+(\{1,\ldots,n\})\ar[r]^-{\hat{\kappa}}\ar[d]_\phi^\sim &
		      \Hom_{\CS(Z)}(\xi^-(\{-n,\ldots,-1\},S)^*\ar[d]_\sim^{(\phi^*)^{-1}} \\
		      H_n\ar[r]_{\hat{t}^-_{S,\emptyset}} & H_n^*
		      }$$
The bottom horizontal map is bijective by Corollary \ref{co:dual}, hence
	$\hat{\kappa}(\emptyset,S)$ is bijective.

	Assume now $Z(\xi^+)$ is smooth and unoriented. The map
	$\hat{\kappa}(\emptyset,S)$ is the same for $Z$ and for $Z(\xi^+)$, so $\hat{\kappa}(\emptyset,S)$
	is still bijective.

	Assume $Z(\xi^+)$ is smooth. There is a morphism of curves $f:Z\to \bar{Z}$ that
	is an isomorphism outside $Z(\xi^+)$ and the identity on $Z(\xi^+)$ with 
	$f(Z(\xi^+))_o=\emptyset$. The map
	$\hat{\kappa}(\emptyset,S)$ is the same for $Z$ and for $\bar{Z}$, so $\hat{\kappa}(\emptyset,S)$
	is still bijective.

	Consider now a general $Z$ and let $f:\hat{Z}\to Z$ be a non-singular cover. Let
	$\tilde{\hat{\xi}}:Z'\to \hat{Z}$ be the morphism of curves such that 
	$\tilde{\xi}=f\circ\tilde{\hat{\xi}}$.
	The functors $f$ and $f^\#$ are inverse bijections between
	$\Hom_{\CS(Z)}(S,\xi^+(\{1,\ldots,n\})$ and
	$\bigoplus_{S'}\Hom_{\CS(\hat{Z})}(S',\tilde{\hat{\xi}}(\{1,\ldots,n\})$
	(resp. $\Hom_{\CS(Z)}(\xi^-(\{-n,\ldots,-1\},S)$ and 
	$\bigoplus_{S'}\Hom_{\CS(\hat{Z})}(\tilde{\hat{\xi}}(\{-n,\ldots,-1\},S')$),
	where $S'$ runs over $n$-elements subsets of $\hat{Z}$ such that $f(S')=S$.
	Furthermore,
	$\hat{\kappa}(\emptyset,S)$ is compatible with these bijections (see the proof of
	Lemma \ref{le:kappadiff}). It follows that $\hat{\kappa}(\emptyset,S)$ is bijective.

	\smallskip
	We consider now two arbitrary subsets $S$ and $T$ of $M$.	      
	      The canonical isomorphisms of Lemma \ref{le:decompositionnodiff} and of \S
        \ref{se:rightaction} fit in a commutative diagram of $\BF_2$-modules 
        $$\xymatrix{
		\bigoplus_{S'} L_{\xi^+}(\emptyset,S',e^n)\otimes
		\Hom_{\CS(Z)}(S\setminus S',T) \ar[r]^-\sim 
		\ar[d]_{\sum_{S'}\hat{\kappa}(\emptyset,S')\otimes
		\id}&
		 L_{\xi^+}(T,S,e^n) \ar[d]^{\hat{\kappa}(T,S)}\\
		\bigoplus_{S'}R_{\xi^-}(S',\emptyset,e^n)^*\otimes\Hom_{\CS(Z)}(S\setminus S',T) \ar[r]_-\sim  &
                \Hom(R_{\xi^-}(S,-,e^n),\Hom(-,T))
                }$$
		where $S'$ runs over $n$ elements subsets of $S$.
The discussion above shows that the left vertical arrow is an isomorphism, hence
	$\hat{\kappa}(T,S)$ is an isomorphism.
\end{proof}

Given $x_1,x_2\in [-1,1]$, the homotopy class
$\tilde{\xi}([x_1\to x_2])$ is admissible if
$x_1\le x_2$ or $x_1\le -\frac{1}{2}$ or $x_2\ge \frac{1}{2}$.
Given $x\in [-1,1]$ and $\zeta$ an admissible class of paths in $Z$ with $\zeta(1)=
\tilde{\xi}(x)$ and $\tilde{\xi}([x\to 1])\cdot\zeta\neq 0$, there is a unique $y\in [-1,1]$ such
that $\zeta=\tilde{\xi}([y\to x])$.

\smallskip
Let us describe now the unit of the adjunction when $n=1$.
\begin{lemma}
	\label{le:unitLR}
	The unit of the adjunction $(L_{\xi^+}(-,-)\otimes-,
	R_{\xi^-}(-,-)\otimes-)$ is given by the morphism of bimodules
	whose evaluation at $(T,S)$ is
	\begin{align*}
		\Hom_{\CS(Z)}(S,T)&\to R_{\xi^-}(T,-)\otimes L_{\xi^+}(-,S)\\
		\gamma &\mapsto \sum_{x\in \tilde{\xi}^{-1}(S)}
		(\gamma_{|S\setminus\{\tilde{\xi}(x)\}}\boxtimes \tilde{\xi}([-1\to x])) \otimes
		(\id_{S\setminus\{\tilde{\xi}(x)\}}\boxtimes \tilde{\xi}([x\to 1])).
	\end{align*}
\end{lemma}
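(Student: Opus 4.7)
By Proposition \ref{pr:Ldual}, the bimodule $L_{\xi^+}$ is canonically left dual to $R_{\xi^-}$, so the pair $(L_{\xi^+}\otimes_\CC-,R_{\xi^-}\otimes_\CC-)$ is adjoint with counit $\eps:L_{\xi^+}\otimes R_{\xi^-}\to\Hom$ defined by $\theta'\otimes\theta\mapsto\kappa_1(\theta'\cdot\theta)$. Since the unit of such an adjunction is uniquely determined, my plan is to verify that the proposed formula for $\eta$ satisfies the triangle identity $(\eps\otimes L_{\xi^+})\circ(L_{\xi^+}\otimes\eta)=\id_{L_{\xi^+}}$.

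Since $\eta$ is a morphism of bimodules, it suffices to verify the formula for $\gamma=\id_S$ and extend by naturality under the left $\Hom$-action on $R_{\xi^-}$. At $\gamma=\id_S$ the formula reads $\eta(\id_S)=\sum_{x\in\tilde\xi^{-1}(S)}\alpha_x\otimes\beta_x$ with $\alpha_x=\id_{S\setminus\{\tilde\xi(x)\}}\boxtimes\tilde\xi([-1\to x])$ and $\beta_x=\id_{S\setminus\{\tilde\xi(x)\}}\boxtimes\tilde\xi([x\to 1])$. Since $S\subset Z\setminus\tilde\xi((-\infty,-1]\cup[1,\infty))$, each $x\in\tilde\xi^{-1}(S)$ lies in $(-1,1)$, and the classes $\tilde\xi([-1\to x])$ and $\tilde\xi([x\to 1])$ are admissible because they lift to monotonic oriented paths in $\BR$. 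Thus $\alpha_x\in R_{\xi^-}(S,S\setminus\{\tilde\xi(x)\})$ and $\beta_x\in L_{\xi^+}(S\setminus\{\tilde\xi(x)\},S)$, both are closed, and the sum is a well-defined element of $(R_{\xi^-}\otimes_\CC L_{\xi^+})(S,S)$.

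Given $\theta_0\in L^\bullet_{\xi^+}(T,S)=\Hom_{\CS(Z)}(S,T\sqcup\{\xi^+(1)\})$, the zigzag evaluates to
\[
\sum_{x\in\tilde\xi^{-1}(S)}\kappa_1(\theta_0\cdot\alpha_x)\cdot\beta_x,
\]
and I claim this equals $\theta_0$. The braid $\theta_0$ has a unique strand ending at $\xi^+(1)=\tilde\xi(1)$; since admissible classes remain within a single component of $Z$ and $\tilde\xi(\BR)$ is such a component, this strand starts at some $s_0\in S\cap\tilde\xi(\BR)$. By the unique-lift property of admissible paths under morphisms of curves (Lemma \ref{le:mappaths}) combined with the fact that admissible classes of paths in $\BR$ are determined by their endpoints, there is a unique $x_0\in\tilde\xi^{-1}(s_0)\cap(-1,1)$ such that $(\theta_0)_{s_0}=\tilde\xi([x_0\to 1])$.

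For $x=x_0$, the $\xi^-(-1)$-strand of $\theta_0\cdot\alpha_{x_0}$ is the concatenation $\tilde\xi([-1\to x_0])$ followed by $\tilde\xi([x_0\to 1])=\tilde\xi([-1\to 1])$, which lands on $\xi^+(1)$; the product is nonzero in $\CS(Z)$ since lengths add without creating new intersections, so degree additivity holds. Applying $\kappa_1$ removes this strand, giving $\theta_0|_{S\setminus\{\tilde\xi(x_0)\}}$; post-multiplication by $\beta_{x_0}$ reinstalls the strand $\tilde\xi([x_0\to 1])$ and recovers $\theta_0$. For other $x\in\tilde\xi^{-1}(S)$, the $\xi^-(-1)$-strand of $\theta_0\cdot\alpha_x$ (when the product is nonzero) ends at $(\theta_0)_{\tilde\xi(x)}(1)\in T$ rather than at $\xi^+(1)$, so $\kappa_1$ returns zero. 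The sum therefore reduces to $\theta_0$. The main technical obstacle is correctly identifying $x_0$ when $\tilde\xi$ fails to be injective; this uses the uniqueness of admissible lifts together with the simple connectedness of $\BR$ to ensure that exactly one preimage of $s_0$ contributes a nonzero term.
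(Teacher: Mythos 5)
Your proof is correct, but it verifies the triangle identity on the $L_{\xi^+}$ side, whereas the paper verifies the other one. Concretely, you show
$$(\eps\otimes\id_{L_{\xi^+}})\circ(\id_{L_{\xi^+}}\otimes\eta)=\id_{L_{\xi^+}}$$
by picking $\theta_0\in L^\bullet_{\xi^+}(T,S)$, identifying the unique $x_0$ with $(\theta_0)_{s_0}=\tilde\xi([x_0\to 1])$, and showing that every other $x$ contributes zero after $\kappa_1$. The paper instead takes $\gamma\in R_{\xi^-}^\bullet(T,S)$, computes $\eta(\id_T)$, and checks
$$(\id_{R_{\xi^-}}\otimes\eps)\circ(\eta\otimes\id_{R_{\xi^-}})=\id_{R_{\xi^-}},$$
identifying the unique $x$ with $\gamma_{\xi^-(-1)}=\tilde\xi([-1\to x])$. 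Since $\eps$ is already established as a counit (Proposition \ref{pr:Ldual}), the unit is uniquely determined by either triangle identity, so both routes are legitimate. The paper's choice is arguably more economical: the distinguished strand of $\gamma$ is by definition the one emanating from $\xi^-(-1)$, so no preliminary argument is needed to locate it and its unique lift; in your version you first have to observe that exactly one strand of $\theta_0$ lands on $\xi^+(1)$, that it must come from $S\cap Z(\xi)$, and that its lift determines $x_0$, before the cancellations become visible. Your argument is also slightly more exposed to the degree-additivity subtleties (you invoke them informally with "lengths add without creating new intersections"); the paper keeps the same facts implicit but faces fewer instances of them. On the other hand, you correctly flag and dispose of the genuine technical point — that when $\tilde\xi$ is non-injective, other preimages of $s_0$ might a priori produce a strand reaching $\xi^+(1)$ — by using that $Z'$ is the non-singular cover and lifts are unique, which matches the paper's handling of the same issue.
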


\begin{proof}
	The counit of the adjunction is $\eps=\kappa_1\circ\mathrm{ mult}$.
	Let $\gamma\in R_{\xi^-}^\bullet(T,S)$. Let $\eta$ be the map defined in the lemma.
	We have
$$\eta(\id_T)=\sum_{x\in\tilde{\xi}^{-1}(T)}
	(\tilde{\xi}([-1\to x])\boxtimes\id_{T\setminus\{\tilde{\xi}(x)\}})
	\otimes (\tilde{\xi}([x\to 1])\boxtimes\id_{T\setminus\{\tilde{\xi}(x)\}}),$$
	hence
		$$(\id\otimes\eps)\circ(\eta\otimes\id)(\gamma)=
\sum_{x\in\tilde{\xi}^{-1}(T)}
	(\tilde{\xi}([-1\to x])\boxtimes\id_{T\setminus\{\tilde{\xi}(x)\}})\cdot
	\kappa_1\bigl((\tilde{\xi}([x\to 1])\boxtimes
	\id_{T\setminus\{\tilde{\xi}(x)\}})\cdot\gamma\bigr)$$
Let $x$ be the unique element of 
	$\tilde{\xi}^{-1}(\bij{\gamma}(\xi^-(-1)))$. We have
	$\gamma_{\xi^-(-1)}= \tilde{\xi}([-1\to x])$ and
	$\kappa_1\bigl((\tilde{\xi}([x\to 1])\boxtimes
	\id_{T\setminus\{\tilde{\xi}(x)\}})\cdot\gamma\bigr)=\gamma_{|S}$, hence
		$$(\id\otimes\eps)\circ(\eta\otimes\id)(\gamma)=
(\gamma_{\xi^-(-1)}\boxtimes\id_{T\setminus\{\bij{\gamma}(\xi^-(-1))\}})\otimes
		\gamma_{|S}$$
We deduce that
		$$\mathrm{mult}\circ(\id\otimes\eps)\circ(\eta\otimes\id)(\gamma)=\gamma$$
	and the lemma follows.
\end{proof}

\begin{rem}
There is a bifunctorial injective map
	$$R_{\xi^-}^\bullet(T,-)\wedge L_{\xi^+}^\bullet(-,S)\to \Hom(S\sqcup\{\xi^-(-1)\},
	T\sqcup\{\xi^+(1)\}),\ \beta\wedge\alpha\mapsto (\beta\boxtimes\id_{\xi^+(1)})\cdot
	(\alpha\boxtimes\id_{\xi^-(-1)}).$$
	The composition of the unit given by Lemma \ref{le:unitLR} with this map is the
	following map
	$$\Hom(S,T)\to \Hom(S\sqcup\{\xi^-(-1)\}, T\sqcup\{\xi^+(1)\}),\ 
	\gamma\mapsto (\gamma\otimes\id_{\xi^+(1)})\cdot d(\tilde{\xi}([-1\to 1])\boxtimes\id_S).$$
\end{rem}

\begin{example}
	The first picture below provides an example of description of
	the unit of the adjunction as in Lemma \ref{le:unitLR}.
$$\includegraphics[scale=0.85]{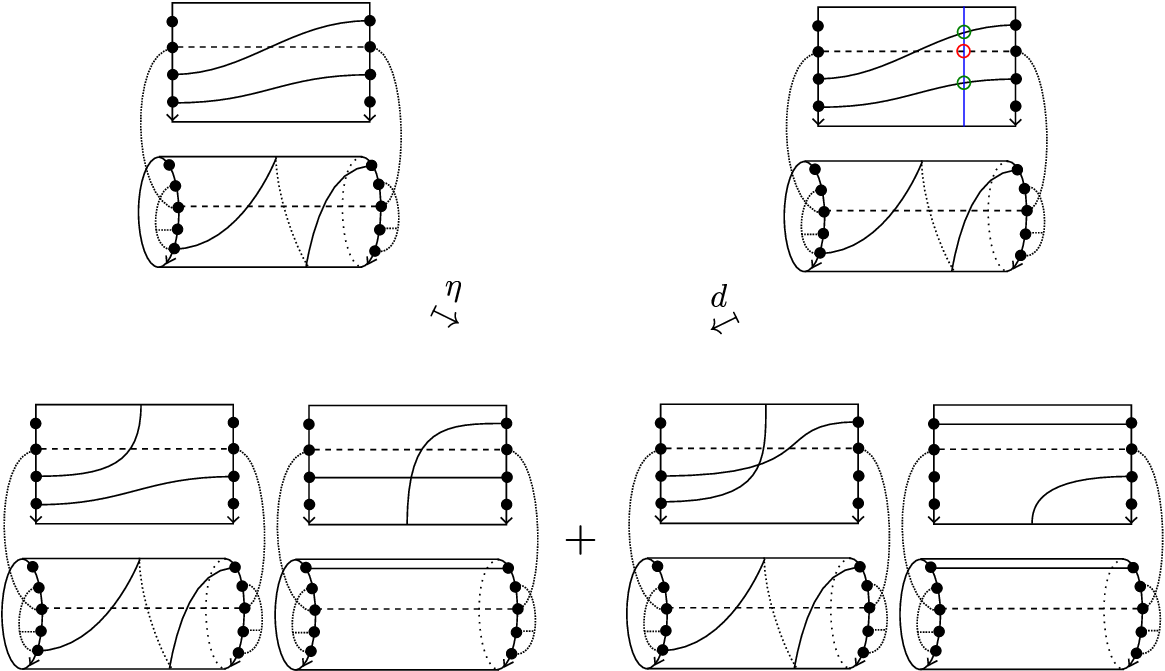}$$
	The second picture
	describes a calculation of an image by the counit.
$$\includegraphics[scale=0.85]{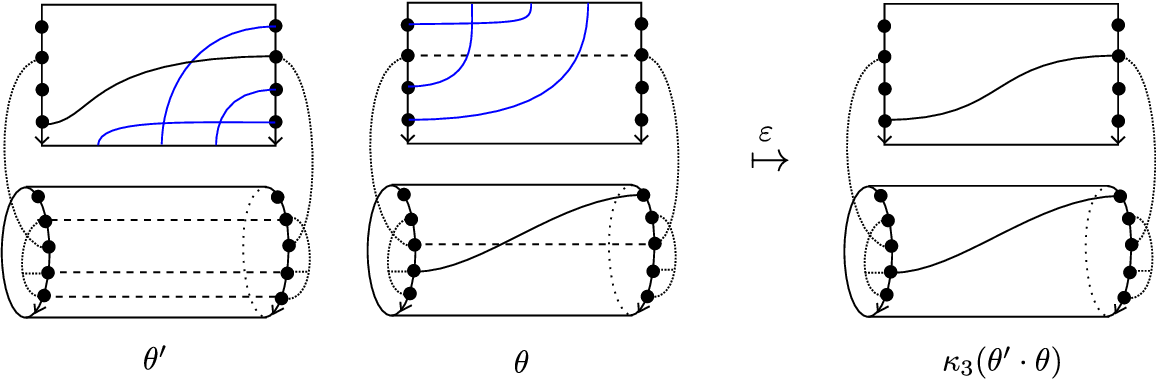}$$
\end{example}

\subsubsection{Actions for the line}
\label{se:actionsline}
We consider the unoriented curve $\BR$. Let $M=\{\pm(1-\frac{1}{n})\}_{n\in\BZ_{>0}}$.

Consider $S,T$ two finite subsets of $\BR$ with $|S|=|T|=n$. Let $f_S:S\iso\{1,\ldots,n\}$ and
$f_T:T\iso\{1,\ldots,n\}$ be the unique increasing bijections.
We define 
$$\phi(S,T):\Hom_{\CS^\bullet(\BR)}(S,T)\iso H_n^\bullet=\End_{\CU^\bullet}(e^n),\
\theta\mapsto T_{f_T\circ\bij{\theta}\circ f_S^{-1}}.$$

\smallskip
We define a functor $\Phi:\CS^\bullet_M(\BR)\to\CU^\bullet$. We put $\Phi(S)=e^{|S|}$ and
$\Phi(f)=\phi(S,T)(f)$ for $f\in\Hom_{\CS^\bullet(\BR)}(S,T)$.

\smallskip
The next proposition follows from Proposition \ref{pr:strandsnonsingular}.

\begin{prop}
The functor $\Phi:\CS^\bullet_M(\BR)\to \CU^\bullet$ is an equivalence of differential pointed categories.
\end{prop}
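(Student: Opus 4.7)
The plan is to reduce to Proposition \ref{pr:strandsnonsingular} by a finite-subset argument, verifying essential surjectivity directly and then identifying Hom-sets and composition using the Hecke-algebra description of the strand categories of an open arc.

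Essential surjectivity is immediate: since $M$ is an infinite subset of $\BR$, for each $n\ge 0$ we can pick an $n$-element subset $S_n\subset M$, and by definition $\Phi(S_n)=e^n$.

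For full faithfulness and compatibility with composition, fix finite subsets $S,T,U\subset M$. The case $|S|\neq |T|$ is trivial since there are no bijections $S\to T$ and $\Hom_{\CU^\bullet}(e^{|S|},e^{|T|})=\{0\}$. Assume $|S|=|T|=|U|=n$. Choose a homeomorphism of unoriented curves $\BR\iso I$, where $I = S^1\setminus\{z\}$ for some $z$, and pick a finite subset $\Ba\subset I$ of cardinality $m$ containing the images of $S\cup T\cup U$. Proposition \ref{pr:strandsnonsingular} provides an isomorphism of differential pointed categories $\CH_m^f\iso \CS^\bullet_\Ba(I)$, under which $S$, $T$, $U$ correspond to subsets $J_S, J_T, J_U\subset \BZ/m$ each of cardinality $n$. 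The full subcategory of $\CS^\bullet_M(\BR)$ on $\{S,T,U\}$ is a full subcategory of $\CS^\bullet_\Ba(I)$.

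For any size-$n$ subset $J\subset\BZ/m$, the isomorphism $F_J:\hat{\GS}_n^{\nil}\iso\End_{\CH_m}(J)$ of Proposition \ref{pr:Heckedg} restricts to an isomorphism of differential pointed monoids $H_n^\bullet\iso\End_{\CH_m^f}(J)$ (the restriction lands in $\CH_m^f$ precisely for the elements of $\GS_n\subset\hat{\GS}_n$ that stabilize $\{1,\ldots,n\}$, since $\beta_J(\{1,\ldots,n\})=\tilde J\cap\{1,\ldots,m\}$). Between two such subsets $J$ and $J'$ there is a unique length-zero (hence invertible in $\CH_m$) increasing bijection, and composition with it yields differential pointed isomorphisms $\Hom_{\CH_m^f}(J,J')\iso H_n^\bullet$ compatible with composition. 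Tracing through the definitions shows that the increasing bijections $f_S,f_T$ used to define $\phi(S,T)$ match exactly the increasing bijections $\{1,\ldots,n\}\iso J_S$, $\{1,\ldots,n\}\iso J_T$ coming from $\beta_{J_S},\beta_{J_T}$, so that $\phi(S,T)$ is identified with the isomorphism $\Hom_{\CH_m^f}(J_S,J_T)\iso H_n^\bullet$. Hence $\phi(S,T)$ is a differential pointed isomorphism, and these isomorphisms are natural in $S,T,U$.

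The main obstacle is the bookkeeping in the last step: checking that $\phi(S,T)$, defined explicitly via $f_S,f_T$, agrees with the abstract isomorphism produced by Proposition \ref{pr:strandsnonsingular} and the $F_J$'s. Once this is verified, functoriality of $\Phi$ follows because composition on both sides is the same (intersection counts on the strand side equal lengths on the Hecke side, as used in the proof of Proposition \ref{pr:strandsnonsingular}), and compatibility with the differential follows likewise from Lemma \ref{le:setDforSn} identifying $D(\theta)$ with the length-one Bruhat descents of the corresponding permutation.
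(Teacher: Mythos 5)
Your proposal is correct and follows the same route the paper intends (the paper's proof is the single sentence that the proposition follows from Proposition \ref{pr:strandsnonsingular}, so you are simply supplying the omitted details). The heart of the matter — that $\phi(S,T)$, defined via the increasing bijections $f_S,f_T$, coincides with the isomorphism obtained by combining $F:\CH_m^f\iso\CS^\bullet_\Ba(I)$, the restriction of $F_J:\hat\GS_{|J|}^{\nil}\iso\End_{\CH_m}(J)$ to $H_n^\bullet\iso\End_{\CH_m^f}(J)$, and precomposition with the length-zero increasing bijection $\tau:J_S\to J_T$ — does check out: one finds $f_T\circ\bij{F(\sigma)}\circ f_S^{-1}=\beta_{J_T}^{-1}\circ\sigma\circ\beta_{J_S}$ restricted to $\{1,\ldots,n\}$, which is exactly $F_{J_S}^{-1}(\tau^{-1}\circ\sigma)\in\GS_n$. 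One small imprecision: for compatibility of $\Phi$ with the differential you invoke Lemma \ref{le:setDforSn}, which sits on the Hecke side of the comparison, whereas the identification between the strands differential $D(\theta)$ and the Hecke differential $D(\sigma)$ is Lemma \ref{le:inversionpaths}. But since Proposition \ref{pr:strandsnonsingular} already asserts a differential isomorphism and the remaining identifications ($F_J$ and composition with a closed length-zero map) respect $d$ by Proposition \ref{pr:Heckedg}, the conclusion stands without needing to re-derive the differential match.
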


 Consider  $\xi_+:\BR_{>0}\to\BR$ and $\xi_-:\BR_{<0}\to\BR$ the inclusion maps.

We define $\varphi_\pm:L_{\xi_\pm}(-,-,e^n)\iso L^\pm(-,-,e^n)\circ(\Phi\wedge \Phi)$ by 
$$\varphi_\pm(T,S)=\phi(S,T\sqcup\xi_\pm(\{\pm1,\ldots,\pm n\})):
\Hom_{\CS^\bullet(\BR)}(S,T\sqcup\xi_\pm(\{\pm 1,\ldots,\pm n\}))\iso
L^\pm(e^{|T|},e^{|S|},n).$$

Similarly, 
we define $\varphi'_\pm:R_{\xi_\pm}(-,-,e^n)\iso R^\pm(-,-,e^n)\circ(\Phi\wedge \Phi)$ by 
$$\varphi'_\pm(T,S)=\phi(T\sqcup\xi_\pm(\{\pm1,\ldots,\pm n\}),S):
\Hom_{\CS^\bullet(\BR)}(T\sqcup\xi_\pm(\{\pm 1,\ldots,\pm n\}),S)\iso
R^\pm(e^{|S|},e^{|T|},n).$$

\begin{prop}
	\label{pr:2repline}
	Together with $\varphi_\pm$ (resp. $\varphi_\pm'$),
the functor $\Phi$ induces equivalences of bimodule
	$2$-representations between $L_{\xi_\pm}$ and $L^\pm$ (resp.
	$R_{\xi_\pm}$ and $R^\pm$).
\end{prop}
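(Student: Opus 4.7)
The plan is to show that $\Phi$, together with $\varphi_\pm$ (respectively $\varphi'_\pm$), assembles into the data of an equivalence in the $2$-category of bimodule $2$-representations (as defined in \S\ref{se:bimod2rep}). Since $\Phi$ is already shown to be an equivalence of differential pointed categories in the statement immediately preceding Proposition \ref{pr:2repline}, the core task is to verify that $\varphi_\pm$ is a closed isomorphism of bimodules compatible with the full $\CU^\bullet$-action and all structure maps $\mu$ and $\tau$.

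First, I would verify that $\varphi_\pm(T,S)$ is an isomorphism of differential $\BF_2$-modules, natural in both $T$ and $S$. Bijectivity at the pointed-set level follows directly from Proposition \ref{pr:strandsnonsingular}, which identifies braids in $\BR$ (via the increasing bijection) with elements of the nil Hecke algebra. Closedness under the differential follows from Theorem \ref{th:strandsdifferential}, since both sides compute the differential by summing over single-crossing smoothings, which match under $\phi$. The naturality in $T$ (left action of $\CS^\bullet_M(\BR)$, corresponding to left $H_r$-action on $L^\pm$) and in $S$ (right action, corresponding to right $H_{r+n}$-action) follows from the compatibility of $\phi$ with strand concatenation, matching the direct left/right multiplications in $L^\pm(r,n)$ described in \S\ref{se:Lrn}.

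Next, I would check compatibility with the $\CU^\bullet$-action and with the multiplication maps $\mu_{(i,j),(i',j')}$ of the bimodule $2$-representation. The action of $\sigma \in H_n = \End_{\CU^\bullet}(e^n)$ on $L_{\xi_\pm}(T,S,e^n)$ is by post-composition with $\xi_\pm(\sigma)$, where $\sigma$ is interpreted as a braid via the monoidal realization of Theorem \ref{th:Ufromstrands}. On the $L^\pm$ side, the $H_n$-action comes from the $2$-representation $\Upsilon^\pm$; for $L^+$ this involves the monoidal reversal $\mathrm{rev}$, producing the $\iota_n$ twist of \S\ref{se:Lrn}, while for $L^-$ no such twist appears. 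One must check that the position-geometric action on the $L_{\xi_\pm}$ side, translated through $\phi$, agrees with these algebraic formulas; the multiplicativity of $\mu$ then follows from the isomorphism $L^\bullet(T,-,e^n) \wedge L^\bullet(-,S,e^m) \iso L^\bullet(T,S,e^{n+m})$ already present in \S\ref{se:defleftaction}, matching the inclusion $H_{r+n} \otimes H_{r+n+m} \to H_{r+n+m}$ on the algebraic side. Compatibility with $\tau$ is immediate since, on both sides, $\tau$ corresponds to the unique nonidentity nonzero element of $H_2$.

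The $R$-cases follow by duality: Proposition \ref{pr:Ldual} identifies $L_{\xi^+}$ with the left dual of $R_{\xi^-}$ (and similarly $L_{\xi^-}$ with that of $R_{\xi^+}$ via the analogous construction for the opposite curve), while Corollary \ref{co:dual} provides the analogous algebraic duality between $L^\pm$ and $R^\mp$ (up to a grading shift encoded in $\langle N-N_I\rangle$). Applying the "left dual" $2$-functor (cf.\ \S\ref{se:diffalg}) to the morphism $(\Phi, \varphi_\mp)$ of bimodule $2$-representations yields a morphism $(\Phi, \varphi'_\pm)$, which is then an equivalence because $\Phi$ is an equivalence of categories (and duals of equivalences are equivalences). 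Alternatively, one can use the isomorphism \eqref{eq:oppositeStrands} to reduce the $R$-case directly to the $L$-case for the opposite curve.

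The main obstacle is the careful bookkeeping in matching conventions: the $\iota_n$ twist appearing in the $H_n$-action on $L^+(r,n)$ (arising from the $\mathrm{rev}$ in the definition of $\Upsilon^+$) must be reconciled with the geometric placement of the new strands $\xi_+(\{1,\ldots,n\})$ to the right of $T$ under the increasing bijection in $\phi$, and symmetrically for $\xi_-$ and $L^-$. This is precisely the sort of convention-matching where the signs and twists of the algebraic and geometric setups are designed to align, and where subtle discrepancies could otherwise arise if any of the monoidal, reversal, or dualization conventions were altered.
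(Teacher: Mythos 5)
Your outline is structurally sound and cites the right ingredients (Proposition \ref{pr:strandsnonsingular} for bijectivity, Theorem \ref{th:strandsdifferential} for compatibility with $d$, the concatenation isomorphism of \S\ref{se:defleftaction} for $\mu$-compatibility). Note however that the paper does not record a proof of Proposition \ref{pr:2repline}, so there is no "paper's route" to compare against; the comparison is about whether your plan actually closes.

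The one place where your plan stops short is precisely the place you call "the main obstacle." You flag the $\iota_n$ twist in the $H_n$-action on $L^+(r,n)$ as a piece of bookkeeping to be "reconciled" and then assert, without argument, that the conventions "are designed to align." That is not a proof step. But there is a clean way around it that you do not use: by the $(E,\tau)$-description of bimodule $2$-representations in \S\ref{se:bimod2rep}, a morphism from $(\CC,E,\tau)$ to $(\CC',E',\tau')$ consists only of a bimodule $P$ and a degree-$1$ closed isomorphism $\varphi:P\otimes_\CC E\iso E'\otimes_{\CC'}P$ satisfying (\ref{eq:taumorphism}); the whole $H_n$-equivariant structure for $n\ge 3$ is then determined formally by $\tau$. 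So the only data to check is $\varphi_\pm$ in degree $n=1$ (a plain bimodule isomorphism) and the $\tau$-compatibility in degree $n=2$. Since $\iota_1=\iota_2=\id$, the $\iota_n$ twist is invisible in the degrees that actually carry information, and there is nothing to reconcile: you have already observed that $\tau$ corresponds on both sides to the unique nontrivial element of $H_2^\bullet$. Recognizing this turns your flagged obstacle into a non-issue and makes the proof complete rather than an outline.

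On the $R$-case, your "apply the left dual $2$-functor to $(\Phi,\varphi_\mp)$" argument is serviceable but more indirect than needed; the paper does not set up a ``left dual'' operation at the level of the $2$-category of bimodule $2$-representations in a form you can cite verbatim, and Proposition \ref{pr:Ldual} is stated in the setting of \S\ref{se:duality} (oriented $Z'_o=(-\frac{1}{2},\frac{1}{2})$) rather than the unoriented line of \S\ref{se:actionsline}, so some translation is still required. Your alternative via the isomorphism (\ref{eq:oppositeStrands}) is closer in spirit to how $R_{\xi'}$ is actually defined in \S\ref{se:rightaction}, but for the unoriented line one must also track the reparametrization $x\mapsto -x$; the most economical route is simply to repeat the $L$-case verification, which after the reduction above is again just the $n\le 2$ check and is symmetric in form.
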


\subsubsection{Action as functors}
\label{se:actionfunctors}
We explain here how the $2$-representation constructed in \S\ref{se:defleftaction} can
be described using functors between strand categories of different curves.

\smallskip
Let $Z$ be a singular curve and\
$\xi:\BR_{>0}\to Z$ an injective morphism of curves with
$\xi(\BR_{\ge 1})$ closed and contained in $Z\setminus Z_{exc}$.

Let $A=\bigoplus_{I,J\subset Z_{exc}}\Hom_{\CA(Z)^{\opp}}(J,I)$. We denote by
$e_I\in A$ the idempotent corresponding to the projection on $I$, so that
$e_IAe_J=\Hom_{\CA(Z)}(I,J)$.

The 
equivalence $A\mdiff\iso \CA(Z)^{\opp}\mdiff$ restricts to an equivalence
$(\bar{A})^i\iso \bar{\CA}^i(Z)$ (cf \S \ref{se:Algebras}).

\smallskip
We consider a new
singular curve $\hat{Z}=Z\sqcup_{\xi(1)}(-1,1)$ obtained as the quotient of the disjoint union of $Z$ 
and the oriented interval $(-1,1)$ identifying $\xi(1)$ with $0$.
Note that $\hat{Z}_{exc}=Z_{exc}\cup\{\xi(1)\}$.

We put  $\hat{A}=\bigoplus_{I,J\subset \hat{Z}_{exc}}\Hom_{\CA(\hat{Z})^{\opp}}(J,I)$.
As before, we have idempotents $e_I\in\hat{A}$ for $I\in\hat{Z}_{exc}$. We put
$e=\sum_{I\subset Z_{exc}}e_{I\sqcup\{\xi(1)\}}$.

\smallskip
The inclusion $i:Z\hookrightarrow \hat{Z}$ provides a fully faithful functor 
$\Xi:\bar{\CA}^i(Z)\to \bar{\CA}^i(\hat{Z})$. This gives rise to an isomorphism
of algebras $h:A\iso (1-e)\hat{A}(1-e)$ and we have a commutative diagram
$$\xymatrix{
	\bar{\CA}^i(Z)\ar[rr]^{\Xi} && \bar{\CA}^i(\hat{Z}) \\
(\bar{A})^i \ar[u]_{\sim}^{\can}\ar[rr]_{\hat{A}(1-e)\otimes_A-} && (\bar{\hat{A}})^i 
\ar[u]^{\sim}_{\can}
}$$
where $A$ acts on the right on $\hat{A}(1-e)$ by right multiplication preceded by $h$.

\smallskip
The inclusion $(1-e)\hat{A}(1-e)\hookrightarrow\hat{A}$ induces a surjective
morphism of algebras $g:(1-e)\hat{A}(1-e)\twoheadrightarrow \hat{A}/\hat{A}e\hat{A}$.
We have $e\hat{A}(1-e)=0$, hence $\hat{A}e\hat{A}\cap (1-e)\hat{A}(1-e)=0$.
It follows that $g$ is an isomorphism.

\smallskip
The right adjoint to $\hat{A}(1-e)\otimes_A-$ is 
$\Hom_{\hat{A}}(\hat{A}(1-e),-)$, which is canonically isomorphic to
$(1-e)\hat{A}\otimes_{\hat{A}}-$ and  we have a commutative diagram
$$\xymatrix{
	\bar{\CA}^i(\hat{Z})\ar[rr]^{\Gamma} && \bar{\CA}^i(Z) \\
(\bar{\hat{A}})^i \ar[u]_{\sim}^{\can}\ar[rr]_{(1-e)\hat{A}\otimes_{\hat{A}}-} && (\bar{A})^i 
\ar[u]^{\sim}_{\can}
}$$
where $\Gamma:\bar{\CA}^i(\hat{Z})\to\bar{\CA}^i(Z)$ is
the right adjoint of $\Xi$.

\begin{rem}
	There is a sequence of four adjoint functors between $A$-modules and
	$\hat{A}$-modules:
	$$\bigl(A\otimes_{\hat{A}}-,\hat{A}(1-e)\otimes_A -,(1-e)\hat{A}\otimes_{\hat{A}}-,
	\Hom_A((1-e)\hat{A},-)\bigr).$$
The first and fourth functors are not exact in general.
	Here, 
\begin{itemize}
	\item $\hat{A}$ acts on the right on $A$ by right multiplication
	preceded by the composition
	$$\hat{A}\xrightarrow{\can}\hat{A}/\hat{A}e\hat{A}\xrightarrow[\sim]{g^{-1}}
		(1-e)\hat{A}(1-e)\xrightarrow[\sim]{h^{-1}}A$$
	\item $\bigl(\hat{A}(1-e)\otimes_A -\bigr)=\bigl((1-e)\hat{A}(1-e)\otimes_A-
		\bigr)		\xrightarrow[\sim]{h^{-1}}\bigl(A\otimes_A-\bigr)=\Hom_A(A,-)$
	\item $\bigl((1-e)\hat{A}\otimes_{\hat{A}}-\bigr)\xrightarrow[\sim]{\can}
		\bigl(\Hom_{\hat{A}}(\hat{A}(1-e),\hat{A})\otimes_{\hat{A}}-\bigr)
		\xrightarrow[\sim]{\can}\Hom_{\hat{A}}(\hat{A}(1-e),-)$.
	\end{itemize}
\end{rem}

There is also a fully faithful functor 
$$\Upsilon:\bar{\CA}^i(Z)\to \bar{\CA}^i(\hat{Z}),\ T\mapsto T\sqcup\{\xi(1)\}$$
sending a braid $(\theta_t)_{t\in T}$ to $(\theta_t)_{t\in T}\sqcup (\mathrm{id}_{\xi(1)\}})$. It gives rise to an isomorphism of
algebras $u:A\iso e\hat{A}e$ and there is  a commutative diagram
$$\xymatrix{
	\bar{\CA}^i(Z)\ar[rr]^{\Upsilon} && \bar{\CA}^i(\hat{Z}) \\
(\bar{A})^i \ar[u]_{\sim}^{\can}\ar[rr]_{\hat{A}e\otimes_A-} && (\bar{\hat{A}})^i 
\ar[u]^{\sim}_{\can}
}$$
where the right action of $A$ on $\hat{A}e$ is by right multiplication preceded by $u$.

\smallskip
We have 
$$L(T,S)=\Hom_{\CA(\hat{Z})}(\Xi(S),\Upsilon(T))\iso
\mathrm{Hom}_{\CA(Z)}(S,\Gamma\Upsilon(T)).$$ 

 Denote by $E=L\otimes_{\bar{\CA}^i(Z)} -$ the
endofunctor of $\bar{\CA}^i(Z)$ induced by the bimodule $L$. The 
isomorphism above gives rise to an isomorphism
of functors $E\xrightarrow{\sim} \Gamma\Upsilon$.

\medskip
We put $\hat{Z}_0=Z$ and we
define inductively $\hat{Z}_r=\hat{Z}_{r-1}\sqcup_{\xi(r)}(-1,1)$ for $r\ge 1$,
where $\xi(r)$ is identified with $0$.

We denote by $\Xi_r:\bar{\CA}^i(\hat{Z}_{r-1})\to \bar{\CA}^i(\hat{Z}_r),\ I\mapsto I$ the functor associated
with the inclusion $\hat{Z}_{r-1}\hookrightarrow\hat{Z}_r$, defined as $\Xi$ above.

We denote by $\Upsilon_r:\bar{\CA}^i(\hat{Z}_r)\to \bar{\CA}^i(\hat{Z}_r),\ T\mapsto T\sqcup\{\xi(r)\}$
the functor $\Upsilon$ for $Z$ replaced by $\hat{Z}_{r-1}$.

Composition with $\id_T\sqcup\{[\xi(1)\to\xi(r)]\}$ gives an isomorphism
$$L(T,S)\iso \Hom_{\CA(\hat{Z}_r)}(\Xi_r\cdots\Xi_1(S),\Upsilon_r\Xi_{r-1}\cdots
\Xi_1(T))=\Hom_{\CA(\hat{Z}_r)}(S,T\sqcup\{\xi(r)\})$$
for $r\ge 1$. Similarly, we have an isomorphism
$$f=(\id_T\sqcup\{[\xi(1)\to\xi(2)],[\xi(2)\to\xi(3)]\})\circ -$$
$$L(T,S,2) \iso \Hom_{\CA(\hat{Z}_3)}(\Xi_3\Xi_2\Xi_1(S),\Upsilon_3\Upsilon_2\Xi_1(T)
)=\Hom_{\CA(\hat{Z}_3)}(S,T\sqcup\{\xi(2),\xi(3)\}).$$
We consider the morphism
$$g=(\id_{T\sqcup\{\xi(2)\}}\sqcup\{[\xi(1)\to\xi(3)]\})\circ -$$
$$L(T,S,2) \to \Hom_{\CA(\hat{Z}_3)}(\Xi_3\Xi_2\Xi_1(S),\Upsilon_3\Upsilon_2\Xi_1(T)
)=\Hom_{\CA(\hat{Z}_3)}(S,T\sqcup\{\xi(2),\xi(3)\}).$$

The composition $f^{-1}\circ g$ is the endomorphism $\tau$ of $L(T,S,2)$.

\begin{center}
	\includegraphics[scale=1.0]{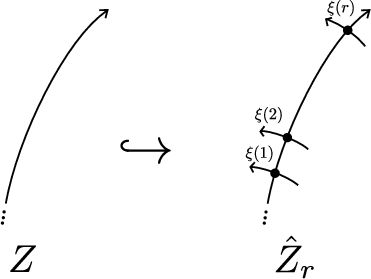}
\end{center}
\subsubsection{Action on Fukaya categories}
\label{se:actionFukaya}

Assume now $Z$ is as in \S \ref{se:Fukayastrands}, so that we have an associated pair
$(F,S)$. We sketch a construction of the $2$-representation on Fukaya
categories of symmetric powers of $F$ via Auroux's equivalences (\S \ref{se:Fukayastrands}).
A rigorous construction would require
a general theory of partially wrapped Fukaya categories and Lagrangian correspondences.

\smallskip
The surface associated with the singular curve $\hat{Z}_r$ of \S\ref{se:actionfunctors}
can be 
 identified with $F$, with set of stops $\hat{S}_r$ obtained from $S$ by adding points
 $z_1,\ldots,z_r$. We have $(\hat{Z}_r)_{exc}=Z_{exc}\sqcup\{z_1,\ldots,z_r\}$ and
 we put $\omega_i=\omega_{z_i}$. We denote by $z_0$ the point of $Z_{exc}\cap\partial F$
 such that the interval $(z,z_1)$ of $\partial F$ contains no point of $Z_{exc}$.

\begin{center}
	\includegraphics[scale=1.0]{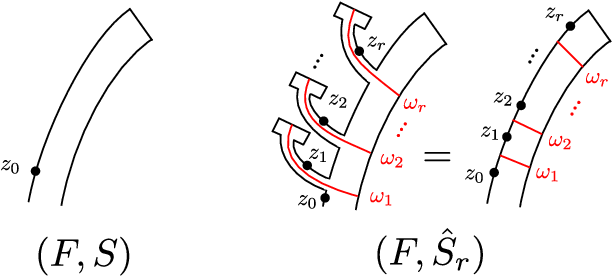}
\end{center}


Consider a positive integer $r$.
We have a fully faithful functor
$\Xi'_r:\CF(\mathrm{Sym}^nF,\hat{S}_{r-1})\to
\CF(\mathrm{Sym}^nF,\hat{S}_r)$ obtained by moving
endpoints of Lagrangians so that they are not on the interval $[z_{r-1},z_r]$
of $\partial F$.
There is a commutative diagram where
 the vertical functors are Auroux's functors:
 $$\xymatrix{
	 \CA(\hat{Z}_{r-1},n)\ar[r]^-{\Xi_r}\ar[d]_\Phi &
	 \CA(\hat{Z}_r,n)\ar[d]^\Phi \\
\CF(\mathrm{Sym}^nF,\hat{S}_{r-1})\ar[r]_-{\Xi'_r} &\CF(\mathrm{Sym}^{n}F,\hat{S}_r)
}$$

\smallskip
The Lagrangian correspondence
$$\bigl\{(\{x_1,\ldots,x_n\},\{x_1,\ldots,x_n,y\})\ |\ x_1,\ldots,x_n\in F,\ 
y\in\omega_r\bigr\}
\subset -\mathrm{Sym}^nF\times \mathrm{Sym}^{n+1}F$$
induces a functor 
$$\Upsilon'_r:\bar{\CF}^i(\mathrm{Sym}^nF,\hat{S}_{r-1})\to\bar{\CF}^i(\mathrm{Sym}^{n+1}F,\hat{S}_r),\ L\mapsto L\sqcup \omega_r$$
and there is a commutative diagram
 $$\xymatrix{
	 \CA(\hat{Z}_{r-1},n)\ar[r]^-{\Upsilon_r}\ar[d]_\Phi &
	 \CA(\hat{Z}_r,n+1)\ar[d]^\Phi \\
\bar{\CF}^i(\mathrm{Sym}^nF,\hat{S}_{r-1})\ar[r]_-{\Upsilon'_r} &\bar{\CF}^i(\mathrm{Sym}^{n+1}F,\hat{S}_r)
}$$

\medskip
We define a bimodule 
\begin{align*}
	L'_r=L'_{r,n}:\CF(\mathrm{Sym}^nF,S)\otimes \CF(\mathrm{Sym}^{n+r}F,S)^\opp&\to
k\mdiff\\
\lambda_1\otimes \lambda_2&\mapsto \Hom(\Xi'_r\cdots\Xi'_1(\lambda_2),
	\Upsilon'_r\cdots\Upsilon'_1(\lambda_1)).
\end{align*}
We put $L'_r=\bigoplus_{n\ge 0}L'_{r,n}$, a $(\CF(\mathrm{Sym}^* F,S),
\CF(\mathrm{Sym}^* F,S))$-bimodule.
We have an isomorphism of bimodules
$L(-,-,r)\iso L'_r\circ (\Phi\otimes\Phi)$.

\medskip
Consider $t\in\Hom_{\CF(F,\hat{S}_2)}(\omega_1,\omega_2)$ corresponding, via Auroux's
equivalence, to $[\xi(1)\to\xi(2)]$. Similarly,
we consider the two maps $u,v\in\Hom_{\CF(\mathrm{Sym}^2(F),\hat{S}_3)}(\omega_1\sqcup
\omega_2,\omega_2\sqcup\omega_3)$ corresponding, via Auroux's equivalences,
to $\{[\xi(1)\to\xi(2)],[\xi(2)\to\xi(3)]\}$ and
to $\{\id_{\xi(2)},[\xi(1)\to\xi(3)]\}$ respectively.

\begin{center}
	\includegraphics[scale=1.0]{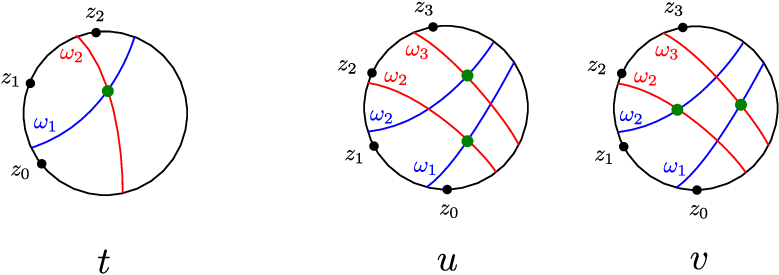}
\end{center}

Composition with $t$ induces an isomorphism
$$f_t:L'_1(\lambda_1,\lambda_2)\iso
\Hom(\Xi'_2\Xi'_1(\lambda_2),\Upsilon'_2\Xi'_1(\lambda_1)).$$

Composition with $u$ and $v$ induce morphisms 
$$f_u,f_v:L'_2(\lambda_1,\lambda_2)\to 
\Hom(\Xi'_3\Xi'_2\Xi'_1(\lambda_2),\Upsilon'_3\Upsilon'_2\Xi'_1(\lambda_1)).$$
The map $f_u$ is invertible and we put $\tau=f_u^{-1}\circ f_v$.

The composition map
$$L'_1(\lambda_1,-)\otimes_{\CF(\mathrm{Sym}^nF,S)}L'_1(-,\lambda_2)\to
L'_2(\lambda_1,\lambda_2),\ x\otimes y\mapsto \Upsilon'_2(x)\circ f_t(y)$$
is an isomorphism. Via this isomorphism, $\tau$ defines an endomorphism of
$(L'_1)^2$.

%
%
%
%
%
%

%

The relations (\ref{eq:definingtau}) are satisfied because $\tau$ arises from a map coming from strand algebras.

\begin{rem}
	Our construction is similar to the sketch provided by Douglas and
	Manolescu in \cite[\S 2.3]{DouMa}.
\end{rem}

\subsection{Gluing}
\label{se:gluing}
\subsubsection{Construction}
Consider two injective morphisms of curves $\xi_1^+:\BR_{>0}\to Z$ and
$\xi_2^-:\BR_{<0}\to Z$ where $\BR_{<0}$ and $\BR_{>0}$ are unoriented.
We assume that $\xi_1^+$ is outgoing for $Z$, that $\xi_2^-$ is incoming for $Z$ and that
$\xi_1^+(\BR_{>0})\cap\xi_2^-(\BR_{<0})=\emptyset$.
We write $r$ instead of $\xi_1^+(r)$ and $-r$ instead of $\xi_2^-(-r)$,
for $r\in\BZ_{>0}$.

\smallskip

Let $M$ be a subset of $Z\setminus(\xi_1^+(\BR_{\ge 1})\sqcup\xi_2^-(\BR_{\le -1}))$.

\medskip
Fix an oriented diffeomorphism $\BR_{>0}\iso\BR_{<-1}$ and let 
$i_+:\BR_{>0}\to \BR$ be its composition
with the inclusion map. Similarly, fix an oriented diffeomorphism $\BR_{<0}
\iso\BR_{>1}$ and let $i_-:\BR_{<0}
\to \BR$ be its composition with the inclusion map.

\smallskip
Consider $m,n\ge 0$.
Let $E_{m,n}$ be the $(\CS_M^\bullet(Z),\CS_M^\bullet(Z))$-bimodule given by
$$E_{m,n}(T,S)=\Hom_{\CS^\bullet(Z)}(S\sqcup (-n,-1),T\sqcup (1,m)).$$

Note that $E_{0,1} = R^{\bullet}_{\xi_2^-}$ and $E_{1,0} = L^{\bullet}_{\xi_1^+}$, but
$E_{m,n}$ is not isomorphic to $(R^{\bullet}_{\xi_2^-})^n (L^{\bullet}_{\xi_1^+})^m$
in general.

\smallskip
There is an action of $H_m^\bullet\wedge H_n^\bullet$ on
$E_{m,n}$ given by
$$
(T_a\wedge T_b)\cdot\sigma=(\id_T\boxtimes ([i\mapsto a(i)]_{1\le i\le m})\cdot
\sigma \cdot (\id_S\boxtimes (-i\mapsto b^{-1}(n+1-i)-n-1)_{1\le i\le n})
$$
for $\sigma\in\Hom_{\CS^\bullet(Z)}(S\sqcup (-n,-1),T\sqcup (1,m))$, $a\in\GS_m$
and $b\in \GS_n$.

\smallskip
There is a map $\ast:E_{m,n}E_{m',n'}\to E_{m+m',n+n'}$ given by
$$\alpha\wedge\beta\mapsto \alpha\ast\beta=
(\alpha\boxtimes ([i\to i+m])_{1\le i\le m'})\cdot
(\beta\boxtimes ([-n'-i\to -i])_{1\le i\le n}).$$

This map is compatible with the action of
$(H_m^\bullet\wedge H_n^\bullet)\wedge (H_{m'}^\bullet\wedge H_{n'}^\bullet)$ via
the canonical embeddings $H_m^\bullet H_{m'}^\bullet\to H_{m+m'}^\bullet$ and
$H_n^\bullet H_{n'}^\bullet\to H_{n+n'}^\bullet$.
We have $(\alpha\ast\beta)\ast\gamma=\alpha\ast(\beta\ast\gamma)$.

\smallskip
So, we have defined a bimodule lax bi-$2$-representation on $\CS_M^\bullet(Z)$.

\medskip
Let $Z_\xi=Z\sqcup_{\BR_{>0}\sqcup\BR_{<0}}\BR$,\indexnot{Zx}{Z_\xi} where
the gluing is done along the maps $\xi_1^+\sqcup\xi_2^-:\BR_{>0}\sqcup
\BR_{<0}\to Z$
and $i_+\sqcup i_-:\BR_{>0}\sqcup\BR_{<0}\to\BR$.  
Note that $Z_\xi$ is a $1$-dimensional space and it comes with an injective open
morphism of $1$-dimensional spaces $\xi:\BR\to Z_\xi$. We endow $\BR$ with a curve
structure by setting $\BR_u=\BR_{\le -1}\sqcup\BR_{\ge 1}$ and by endowing $(-1,1)$
with its usual orientation. We extend the curve structure on $Z$
by endowing $\xi(\BR)$ with the curve structure of $\BR$.
Note that $(Z_\xi)_u=\overline{Z_u}$.

\smallskip
Given $\eps,\eps'\in\{+,-\}$ and $a\in\BR_{\eps}$, $b\in\BR_{\eps'}$,
we put $[a\to b]= \xi([i_\eps(a),i_{\eps'}(b)])$.

\medskip
We consider the differential pointed category
$T_{\CS^\bullet_M(Z)}(R_{\xi_2^-}^\bullet L_{\xi_1^+}^\bullet)$ with objects those of
$\CS^\bullet_M(Z)$ and with
$$\Hom_{\tilde{\CS}^\bullet_M(Z)}(S,T)=\bigvee_{i\ge 0}
R_{\xi_2^-}^\bullet(T,-_i)\wedge L_{\xi_1^+}^\bullet(-_i,-_{i-1})\wedge\cdots\wedge
R_{\xi_2^-}^\bullet(-_2,-_1)\wedge L_{\xi_1^+}^\bullet(-_1,S).$$

We define a differential pointed functor 
$\tilde{\Xi}:T_{\CS^\bullet_M(Z)}(R_{\xi_2^-}^\bullet L_{\xi_1^+}^\bullet)\to \CS^\bullet_M(Z_\xi)$. It is
the identity on objects and defined on maps by
$$\beta_i\wedge\alpha_i\wedge\cdots\wedge\beta_1\wedge\alpha_1\mapsto 
(\beta_i\cdot(\id\boxtimes [1\to -1])\cdot\alpha_i)\cdot\cdots\cdot
(\beta_1\cdot(\id\boxtimes [1\to -1])\cdot\alpha_1):$$
$$S\xrightarrow{\alpha_1}U_1\sqcup\{\xi_1^+(1)\}\xrightarrow{\id_{U_1}\boxtimes [1\to -1]}
U_1\sqcup\{\xi_2^-(-1)\}\xrightarrow{\beta_1}V_1\xrightarrow{\alpha_2}\cdots\to T.$$

\begin{thm}
	\label{th:gluing}
The functor $\tilde{\Xi}$ factors through $\Delta_E\CS^\bullet_{M}(Z)$
and induces an isomorphism of differential pointed categories
	$\Xi:\Delta_E\CS^\bullet_{M}(Z)\iso \CS^\bullet_M(Z_\xi)$.\indexnot{Xi}{\Xi}
\end{thm}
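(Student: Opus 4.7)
The plan is to exhibit $\Xi$ as a two-sided inverse of the map induced by $\tilde{\Xi}$ on the quotient $\Delta_E\CS^\bullet_M(Z)$, building the inverse by cutting braids in $Z_\xi$ along the gluing point $\xi(0)$.

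First I would verify that $\tilde{\Xi}$ factors through $\Delta_E\CS^\bullet_M(Z)$. The composition $\mu_i:(E_{0,1}E_{1,0})^i\to E_{i,i}$ sends a tensor $\beta_i\wedge\alpha_i\wedge\cdots\wedge\beta_1\wedge\alpha_1$ to the stacked braid in $Z$ from $S\sqcup\{-i,\ldots,-1\}$ to $T\sqcup\{1,\ldots,i\}$, obtained via the $\ast$ construction. Applying $\tilde{\Xi}$ and gluing via $\xi$ pairs the positive ends with the negative ends through the identity strands $\xi([j\to -j])$ in the oriented region $\xi((-1,1))$. The generator $T_r\wedge 1$ of $H_i\wedge H_i$ acts by inserting a nil Hecke twist on the positive labels, while $1\wedge T_r$ inserts the same twist on the negative labels; after gluing, a twist living in $\xi_1^+(\BR_{>0})$ is isotopic across $\xi(0)$ to a twist in $\xi_2^-(\BR_{<0})$, so the two actions produce the same braid in $\CS^\bullet_M(Z_\xi)$. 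Hence $\tilde{\Xi}$ factors, yielding a pointed functor $\Xi$.

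Second, the inverse is constructed by cutting along $\xi(0)$. Given a non-zero braid $\sigma:S\to T$ in $\CS^\bullet_M(Z_\xi)$, choose minimal admissible representatives; since $\xi((-1,1))$ is oriented, all transverse crossings with $\xi(0)$ occur in the positive $\xi$-direction, and minimality forces their number $i$ to be finite. After a small perturbation the crossings occur at distinct times $0<t_1<\cdots<t_i<1$ each involving a single strand. Cutting at these times and pushing the cut endpoints into $\xi_1^+(\BR_{\ge 1})$ and $\xi_2^-(\BR_{\le -1})$ yields a decomposition $\sigma = \psi_i\cdots\psi_1$ with $\psi_j = \beta_j \cdot (\id\boxtimes\xi([1\to -1]))\cdot\alpha_j$ for $\alpha_j\in L^\bullet_{\xi_1^+}$ and $\beta_j\in R^\bullet_{\xi_2^-}$, producing a lift of $\sigma$ to $(E_{0,1}E_{1,0})^i$. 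Degrees are consistent because $Z_{\xi,exc}=Z_{exc}$ and the identity strands through $\xi((-1,1))$ introduce no crossings with strands confined to the $Z$-part, so the resulting composition in $\CS^\bullet(Z_\xi)$ is non-zero whenever each piece is non-zero.

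Third, well-definedness as an element of $\Delta_E\CS^\bullet_M(Z)$: different choices of perturbation or of how to time-order simultaneous crossings produce lifts that differ by transpositions $T_r\wedge 1$ on the positive labels or $1\wedge T_r$ on the negative labels of $\mu_i(y)\in E_{i,i}$, which is exactly the equivalence relation defining $\Delta_E$. Compatibility with the differential follows from Lemma \ref{le:functorialityL} applied to the open pieces of $Z_\xi$ as overlaid with $\xi(\BR)$: the set $D(\sigma)$ decomposes as the disjoint union of the $D$-sets of the pieces $\psi_j$ plus contributions across $\xi(0)$ matching the Leibniz terms from the tensor differential on $R^\bullet_{\xi_2^-}\wedge L^\bullet_{\xi_1^+}\wedge\cdots$.

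The main obstacle will be the faithfulness step: showing that any two cuttings of the same braid differ only by the $H_i\wedge H_i$-equivariance, with no hidden relations coming from global features of $Z$. My plan is to reduce to the case where $Z$ is non-singular by pulling back through a non-singular cover $\hat{Z}\to Z$ (using Propositions \ref{pr:qsharpfaithful} and \ref{pr:qprestrands} together with Lemma \ref{le:uniqueliftpath} to track braids), then further reduce to a disjoint union of lines by embedding each connected component into $S^1$; here the description becomes explicit through Proposition \ref{pr:strandsnonsingular} and matches the combinatorics of the bimodules $L^\pm, R^\pm$ and their duality from Proposition \ref{pr:Ldual}.
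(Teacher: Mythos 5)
Your overall strategy (factorize through $\Delta_E$, build an inverse by cutting along $\xi(0)$, argue well-definedness via the $H_i\wedge H_i$-equivariance) matches the shape of the paper's proof, which works with the bimodules $G_n=E_{n,n}$, the cutting map $q:G\to\Id_{\CS^\bullet_M(Z_\xi)}$, and the equivalence relation $\sim$ that $q$ induces on $G$. However, the two central difficulties are asserted rather than established, and they are exactly where the paper does the real work.

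First, you claim the pieces obtained by cutting compose to something non-zero in $\CS^\bullet(Z_\xi)$ "because $Z_{\xi,exc}=Z_{exc}$ and the identity strands through $\xi((-1,1))$ introduce no crossings with strands confined to the $Z$-part." That only addresses crossings with old strands; it does not control the intersection numbers between the newly created strands that pass through $\xi_1^+(\BR_{\ge 1})$ and $\xi_2^-(\BR_{\le -1})$, nor between those and strands near $\xi(0)$. In $\CS^\bullet$ (as opposed to $\CP^\bullet$), $\deg(\psi_i)\cdots\deg(\psi_1)=\deg(\sigma)$ must hold or the product is $0$, and an arbitrary time-ordered cut will generally violate this. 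The paper handles exactly this point with Lemma \ref{le:divisorlength1}, which peels off a single $\mu=1$ factor while preserving non-vanishing; this drives the surjectivity argument of Lemma \ref{le:qsurj}.

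Second, "different choices of perturbation or time-ordering produce lifts differing by transpositions" is the heart of faithfulness and is not proven by the sketch. Two cuttings can differ by more than reorderings of simultaneous crossings; one must show no hidden relations arise. The paper does this by introducing normal-form subbimodules $E_n$ (no crossings among the negative gluing labels) and $F_n$ (no crossings among strands hitting the positive gluing labels), proving $q$ is injective on each (Lemma \ref{le:qinjEF}, a substantial combinatorial argument reconstructing $\alpha$ from $q(\alpha)$), showing every element of $G$ is $\sim$-equivalent to a normal form (Lemma \ref{le:simtoEF}), and showing $q(\alpha)=q(\beta)\Leftrightarrow\alpha\sim\beta$ (Lemma \ref{le:sim=q}). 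Your planned reduction to non-singular covers and then to $S^1$/intervals is reasonable in spirit — for the interval case it would land on the presentation of $\hat{H}_n^+$ (Proposition \ref{le:generatorsrelationsHnhat+}) and Theorem \ref{th:tensorU} — but you would still need to verify that $\Delta_E$ and $q$ commute with the $q^\#$-pullback, and the self-gluing case where $Z_\xi$ acquires a new circle component does not fit a naive embedding into intervals. The paper's proof of the theorem itself avoids these reductions and works directly with $G_n$.
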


The sections \S \ref{se:Gbimodules}-\ref{se:equivrelation} below are devoted to the proof of Theorem \ref{th:gluing}.

\begin{example}
	We give below an illustration of the gluing data.
$$\includegraphics[scale=0.85]{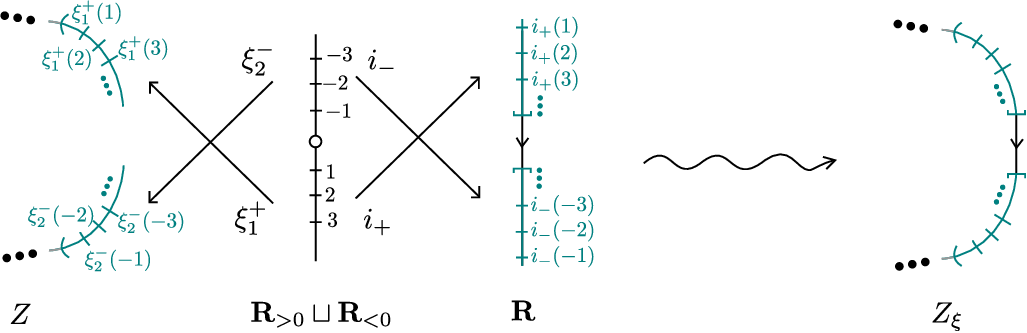}$$
\end{example}

\begin{example}
	The pictures below give two examples of description of
	$\tilde{\Xi}$. The first picture corresponds to the gluing of two
	intervals to form an interval. The second picture corresponds to
	the self-gluing of an interval to form a circle.
$$\includegraphics[scale=0.85]{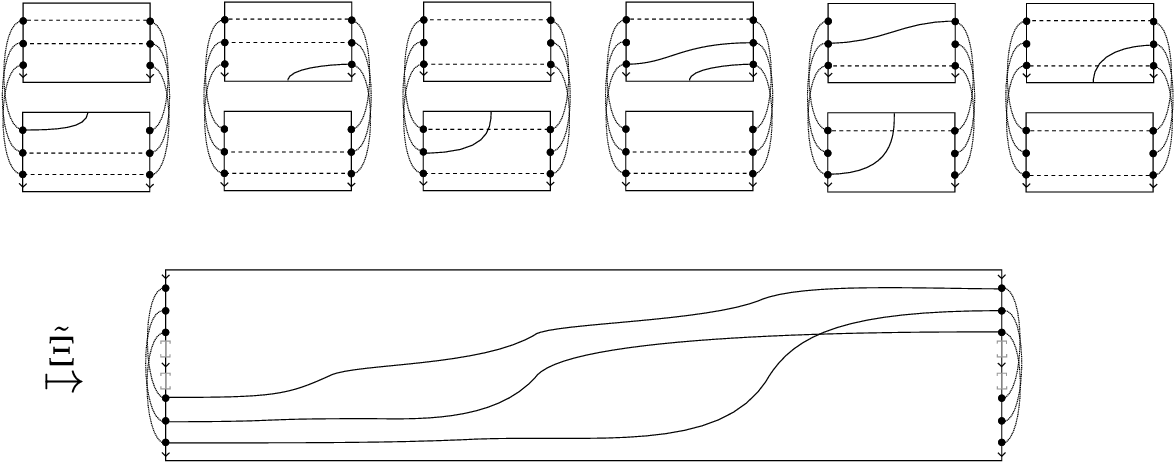}$$
$$\includegraphics[scale=0.85]{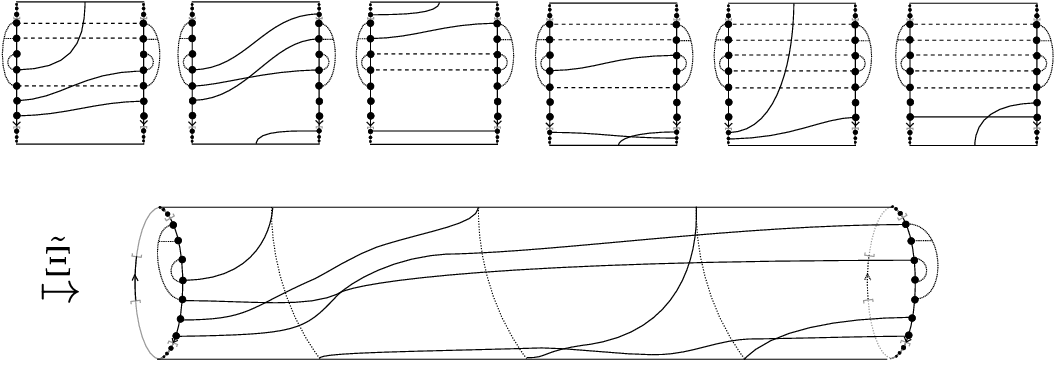}$$
\end{example}

\subsubsection{Bimodules}
\label{se:Gbimodules}

If $\Hom_{\CS(Z_\xi)^\bullet}(\{-1\},\{1\})\neq 0$, then there is
$\kappa'\in\Hom_{\CS(Z_\xi)^\bullet}(\{-1\},\{1\})$ such that
$\Hom_{\CS(Z_\xi)^\bullet}(\{-1\},\{1\})=\{\kappa^n\cdot\kappa'\}_{n\ge 0}$,
where $\kappa=\kappa'\cdot [1\to -1]$. 

When $\Hom_{\CS(Z_\xi)^\bullet}(\{-1\},\{1\})=0$, we put $\kappa=\id_1$.

\smallskip
We define a partial order on the component $Z'$ of $Z$ containing
$1$. We define $s<s'$ if there exists an admissible path
$\zeta:s'\to 1$ in $Z'$ whose support does not contain $s$.

\smallskip
We consider the map $\mu$ of \S\ref{se:decompositionz0} for the curve
$Z_\xi$ and its point $z_0=0$.

\medskip
Given $n\ge 0$, we put $G_n=E_{n,n}$.

\begin{lemma}
	\label{le:GnTi}
	Let $\alpha\in G_n-\{0\}$.

	Given $i\in (1,n-1)$, the following assertions are equivalent
	\begin{enumerate}
		\item $\alpha(i-n-1)>\alpha(i-n)$
\item $L(\alpha_{|\{i-n-1,i-n\}})\neq\emptyset$
\item $[i-n-1\to i-n]\in D(\alpha)$
\item $\alpha\in G_n T_i$
\item $\alpha T_i=0$.
	\end{enumerate}

	There exists $i\in (1,n-1)$ such that $\alpha\in G_n T_i$ if
	and only if $L(\alpha_{|(-n,-1)})\neq\emptyset$.
\end{lemma}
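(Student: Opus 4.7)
The right action of $T_i\in H_n^\bullet$ on $G_n=E_{n,n}$ is realized, via the definition of the $H_n\wedge H_n$-action on $E_{m,n}$, as post-composition with the braid $\id_S\boxtimes\tau_i:S\sqcup(-n,-1)\to S\sqcup(-n,-1)$, where $\tau_i$ swaps the adjacent points $i-n-1$ and $i-n$ through the short paths along the incoming ray $\xi_2^-$; thus $\alpha T_i=\alpha\cdot(\id_S\boxtimes\tau_i)$ in $\CS^\bullet(Z)$. The plan is to prove the cycle (1)$\Rightarrow$(2)$\Rightarrow$(3)$\Rightarrow$(4)$\Rightarrow$(5)$\Rightarrow$(1), grouped into two clusters: (1)$\Leftrightarrow$(2)$\Leftrightarrow$(3) is controlled by the short path $\zeta_0=[i-n-1\to i-n]$ and the intersection theory of Proposition \ref{pr:i=L}, while (3)$\Leftrightarrow$(4)$\Leftrightarrow$(5) combines the divisibility formalism of \S\ref{se:generation} with the nil relation $T_i^2=0$, deducing the final existence statement from (1)$\Leftrightarrow$(4) together with an adjacent-inversion argument.

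For the first cluster, I pull back via Lemma \ref{le:functorialityL} to the non-singular cover of $Z$, which behaves well for both $L$ and $D$ and reduces the component containing $\xi_2^-$ to an interval (since $\xi_2^-$ is incoming, this component cannot be a circle in the cover). In this simplified setting I identify $\zeta_0$ as the essentially unique element (up to $\mathrm{inv}$) of $L(\alpha_{|\{i-n-1,i-n\}})$: any competing class in $I(\alpha_{i-n-1},\alpha_{i-n})$ would need support straying off the short segment, which is excluded by Lemma \ref{le:supportI} together with the orientation data of $\xi_2^-$. Next, I verify that $\zeta_0\in L$ implies $\zeta_0\in D$ directly from the definition: condition (b) in the definition of $D$ is vacuous because the interior of $\supp(\zeta_0)$ contains no starting point of any strand of $\alpha$, and condition (a) reduces to uniqueness of smooth classes on a contractible segment. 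This establishes (2)$\Leftrightarrow$(3). For (1)$\Leftrightarrow$(2), I translate through Propositions \ref{pr:strandsnonsingular} and \ref{pr:2repline} into the nil affine Hecke algebra, where $\zeta_0\in I(\alpha_{i-n-1},\alpha_{i-n})$ corresponds to a simple transposition strictly reducing the length; Lemma \ref{le:lengthoneaffine} then gives precisely $\alpha(i-n-1)>\alpha(i-n)$.

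For the second cluster, (4)$\Rightarrow$(5) is immediate: if $\alpha=\beta T_i$, then $\alpha T_i=\beta T_i^2=0$. For (5)$\Rightarrow$(1), having $\alpha T_i=0$ in $\CS^\bullet$ forces the product $\alpha\circ(\id_S\boxtimes\tau_i)$ to have strictly smaller degree in $\CP^\bullet$ than $\deg(\alpha)+\deg(\tau_i)$; Lemma \ref{le:degproduct} applied at the adjacent bottom positions shows this degree defect arises only via cancellation of a crossing, equivalently the condition $\alpha(i-n-1)>\alpha(i-n)$. For (3)$\Rightarrow$(4), given $\zeta_0\in D(\alpha)$, I form $\alpha'=\alpha^{\zeta_0}$; by Lemma \ref{le:thetazeta}, $\overline{\deg}(\alpha')=\overline{\deg}(\alpha)-1$, and a direct check in $\CP^\bullet$ gives $\alpha'\circ(\id_S\boxtimes\tau_i)=\alpha$ as homotopy classes, with matching degree (the swap $\tau_i$ restores the cancelled crossing), whence $\alpha=\alpha'T_i\in G_nT_i$ in $\CS^\bullet(Z)$; this can also be obtained by applying Lemma \ref{le:middlebraids} with $\theta=\alpha$, $\theta'=\id$, $\theta''=\alpha$, and $\zeta=\zeta_0$. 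Finally, the existence statement: one direction is immediate from (1)$\Rightarrow$(4); for the converse, if $L(\alpha_{|(-n,-1)})\neq\emptyset$, choose $j_1<j_2$ in $(-n,-1)$ with $L(\alpha_{|\{j_1,j_2\}})\neq\emptyset$ and $j_2-j_1$ minimal, then pass to the non-singular cover and the nil Hecke picture (where endpoints are linearly ordered) to run the classical argument: any intermediate $k\in(j_1,j_2)$ would produce either a crossing pair $(j_1,k)$ or $(k,j_2)$ with smaller gap, forcing $j_2=j_1+1$ and an adjacent inversion, and hence $\alpha\in G_nT_i$ for $i=j_1+n+1$ via (1)$\Rightarrow$(4).

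The main obstacle is the rigorous verification of (1)$\Leftrightarrow$(2)$\Leftrightarrow$(3) when $Z$ has nontrivial topology or unoriented components: ruling out exotic winding classes in $I(\alpha_{i-n-1},\alpha_{i-n})$ and intrinsically comparing the orientations of $\zeta_0$ and $\bar\zeta_0$ both require care. These issues dissolve upon pullback to the non-singular cover via Lemma \ref{le:functorialityL} and subsequent identification with a nil Hecke calculation via Proposition \ref{pr:strandsnonsingular}, but the bookkeeping across the gluing data $\xi_1^+,\xi_2^-$ and the curve structure (oriented versus unoriented segments) must be handled with attention to the tangential-multiplicity and degree bookkeeping of \S\ref{se:strands}.
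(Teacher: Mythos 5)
Your proposal follows essentially the same route as the paper: the cycle of implications is the same, and you invoke the same key inputs -- the translation to the nil affine Hecke picture for $(1)\Leftrightarrow(2)$ (the paper packages this as Lemma~\ref{le:inversionpaths}), Lemma~\ref{le:middlebraids} for $(3)\Rightarrow(4)$, the nil relation $T_i^2=0$ for $(4)\Rightarrow(5)$, and Remark~\ref{re:productnonexc} together with Lemma~\ref{le:degproduct} for $(5)\Rightarrow(1)/(2)$. The final existence statement is handled the same way in spirit (adjacent inversion of a minimal inverting pair).

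One genuine slip: in your alternative derivation of $(3)\Rightarrow(4)$ via Lemma~\ref{le:middlebraids}, you instantiate $\theta=\alpha$, $\theta'=\id$, $\theta''=\alpha$. But the lemma requires $\zeta\in D(\theta)\setminus\bigl(D(\theta)\cap D(\theta'')\bigr)$, which with your choice is $D(\alpha)\setminus D(\alpha)=\emptyset$, so the lemma can never be applied. The paper instead writes $\alpha=\alpha\cdot 1$, i.e.\ $\theta'=\alpha$ and $\theta''=\id$, so that $D(\theta'')=\emptyset$ and any $\zeta\in D(\alpha)$ qualifies; the factor $\alpha''$ produced by the lemma is then exactly the right-multiplication by $T_i$. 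Your other route via $\alpha'=\alpha^{\zeta_0}$ and $\alpha'T_i=\alpha$ is sound, but you should carry out the degree verification (additivity of intersection number and of the $m,\llbracket\cdot\rrbracket$ components) rather than assert it, since this is exactly the non-degeneracy the graded composition requires. Finally, your treatment of condition (a) in $D$ for $(2)\Rightarrow(3)$ as "uniqueness on a contractible segment" needs more care when the component carrying $\xi_2^-$ closes into a circle in the cover; the paper's terse "hence" is implicitly using that $\zeta_0$ lives entirely in $Z_u$ along the incoming ray, where no alternative smooth class can satisfy the composition constraints, and your invocation of Lemma~\ref{le:supportI} should be made precise at that point.
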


\begin{proof}
	The equivalence between (1) and (2) follows from Lemma \ref{le:inversionpaths}.

	Assume (2). We deduce that $[i-n-1\to i-n]\in L(\alpha)$, hence
	$[i-n-1\to i-n]\in D(\alpha)$. So (3) holds.

	Assume (3). Writing $\alpha=\alpha\cdot 1$, we deduce from 
	Lemma \ref{le:middlebraids} that (4) holds.

	The implication (4)$\Rightarrow$(5) is immediate.

	Asssume (5). We have $\alpha_{|\{i-n-1,i-n\}}\cdot (
	[i-n-1\to i-n]\boxtimes [i-n\to i-n-1])=0$
	by Remark \ref{re:productnonexc}. Lemma \ref{le:degproduct} shows that
	$i(\alpha_{|\{i-n-1,i-n\}})\neq 0$, hence (2) holds.

	\smallskip
	Assume now $L(\alpha_{|(-n,-1)})\neq\emptyset$. It follows
	from Lemma \ref{le:inversionpaths} that there is $i\in (1,n-1)$ with 
	$\alpha(i-n-1)>\alpha(i-n)$, hence $\alpha\in G_n T_i$. This shows
	the last statement of the lemma.
\end{proof}

\medskip
There is a map
$\nu_n:R_{\xi_2^-}^\bullet(-,-,e^n)L^{\bullet}_{\xi_1^+}(-,-,e^n)\to G_n$ given by
$$\Hom(-\sqcup (-n,-1),T)\wedge\Hom(S,-\sqcup(1,n))\to
\Hom(S\sqcup(-n,-1),T\sqcup(1,n))$$
$$\beta\wedge\alpha\mapsto (\beta\boxtimes\id_{(1,n)})\cdot
(\alpha\boxtimes\id_{(-n,-1)})$$
We have
$$\nu_n\bigl((\beta\cdot T_b)\wedge (T_a\cdot\alpha)\bigr)=
T_a\cdot\nu_n(\beta\wedge \alpha)\cdot\iota_n(T_b)$$
for $a,b\in\GS_n$.

\smallskip
The multiplication map on $E$ defines
a map $\mu_n:(R_{\xi_2^-}^\bullet L^{\bullet}_{\xi_1^+})^n=(E_{0,1}E_{1,0})^n\to E_{n,n}=G_n$, hence gives
a morphism $T^*(R_{\xi_2^-}^\bullet L^{\bullet}_{\xi_1^+})\to G=\bigvee_{n\ge 0}G_n$
compatible with multiplication.

\medskip
We define $(\CS_M^\bullet(Z),\CS_M^\bullet(Z))$-subbimodules 
$A_n$, $B_n$, $C_n$, $D_n$, $E_n$ and $F_n$ of $G_n$. Let
$\sigma\in G_n(T,S)$.

We have
\begin{itemize}
\item $\sigma\in A_n$ if $\sigma(-i)\in T\sqcup (1,n-i)$ for $1\le i\le n$
	\item $\sigma\in B_n$ if there exists $1\le j\le i\le n$ with
$\sigma(-i)=n-j+1$
	\item $\sigma\in C_n$ if it is in the image of $\mu_n$
	\item $\sigma\in D_n$ if $\sigma(-i)\in T$ for $1\le i\le n$
	\item $\sigma\in E_n$ if $\sigma\in A_n$ and
		$L(\sigma_{|(-n,-1)})=\emptyset$.
	\item $\sigma\in F_n$ if $\sigma\in A_n$ and
		$L(\sigma_{|\sigma^{-1}(1,n)})=\emptyset$.
\end{itemize}

\medskip

We put $A=\bigvee_{n\ge 0}A_n$, $B=\bigvee_{n\ge 0}B_n$, etc.

Note that $G_n=A_n\vee B_n$.

We have $C_n,D_n,E_n,F_n\subset A_n$.

\begin{lemma}
	\label{le:nu}
	We have an isomorphism $\nu_n:R_{\xi_2^-}^\bullet (-,-,e^n)L^{\bullet}_{\xi_1^+}(-,-,e^n)\iso D_n$.

	In particular, we have an isomorphism $\nu_1:R_{\xi_2^-}^\bullet L^{\bullet}_{\xi_1^+} \iso D_1=A_1=C_1$
	and $C_n=C_1^{\ast n}=A_1^{\ast n}$.
\end{lemma}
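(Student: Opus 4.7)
The plan is to construct an explicit pre-image of each element of $D_n$ under $\nu_n$ and then identify both sides via compatible ``basis'' decompositions coming from Lemma \ref{le:decompositionnodiff}.

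First I would check that the image of $\nu_n$ lies in $D_n$. Given $\beta \in R^\bullet_{\xi_2^-}(T,U,e^n)$ and $\alpha \in L^\bullet_{\xi_1^+}(U,S,e^n)$, the construction $(\beta \boxtimes \id_{(1,n)}) \cdot (\alpha \boxtimes \id_{(-n,-1)})$ sends each $-i$ through the identity on the left factor and then through $\beta$, whose target is $T$; so $\sigma(-i) \in T$, confirming $\sigma \in D_n$.

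For surjectivity, given $\sigma \in D_n(T,S)$, I would let $S' = \bij{\sigma}^{-1}((1,n)) \subset S$ (this is defined since $\bij{\sigma}((-n,-1)) \subset T$, so $\bij{\sigma}^{-1}((1,n)) \cap (-n,-1) = \emptyset$), and set $S'' = S \setminus S'$. Taking $U = S''$ (which lies in $M$ since $S \subset M$), I would define $\alpha_s = \sigma_s$ for $s \in S'$, $\alpha_s = \id_s$ for $s \in S''$, $\beta_{-i} = \sigma_{-i}$, and $\beta_s = \sigma_s$ for $s \in S'' = U$. Then $\nu_n(\beta \wedge \alpha) = \sigma$ follows directly from the definition of $\boxtimes$ and $\cdot$, since composition with identity paths is trivial.

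For injectivity, the plan is to compute both sides via decompositions. By Lemma \ref{le:decompositionnodiff} and its $R^\bullet$-analog from \S\ref{se:rightaction}, one has functorial isomorphisms
$$L^\bullet_{\xi_1^+}(U,S,e^n) \cong \bigvee_{S' \subset S,\ |S'|=n} \Hom_{\CS^\bullet(Z)}(S', (1,n)) \wedge \Hom_{\CS^\bullet(Z)}(S \setminus S', U)$$
and similarly
$$R^\bullet_{\xi_2^-}(T,U,e^n) \cong \bigvee_{T'' \subset T,\ |T''|=n} \Hom_{\CS^\bullet(Z)}((-n,-1), T'') \wedge \Hom_{\CS^\bullet(Z)}(U, T \setminus T'').$$
Taking the wedge product over $\CS^\bullet_M(Z)$ in the middle variable and applying the Yoneda-type identity $\Hom_{\CS^\bullet(Z)}(-,V) \wedge_{\CS^\bullet_M(Z)} \Hom_{\CS^\bullet(Z)}(W,-) \cong \Hom_{\CS^\bullet(Z)}(W,V)$ (valid because $W$ and $V$ lie in $M$), one obtains
$$\bigl(R^\bullet_{\xi_2^-}(-,-,e^n) \wedge_{\CS^\bullet_M(Z)} L^\bullet_{\xi_1^+}(-,-,e^n)\bigr)(T,S) \cong \bigvee_{T'',S'} \Hom((-n,-1),T'') \wedge \Hom(S',(1,n)) \wedge \Hom(S \setminus S', T \setminus T'').$$
On the other side, decomposing $D_n(T,S)$ according to the underlying bijection $\bij{\sigma}$ (which must send $(-n,-1)$ into $T$, hence onto some $T'' \subset T$, and send $S' := \bij{\sigma}^{-1}((1,n))$ onto $(1,n)$, and send $S \setminus S'$ onto $T \setminus T''$) and using the definition of $S(-)$ in \S\ref{se:symmetricpowers} gives precisely the same wedge decomposition. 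Term-by-term $\nu_n$ matches these presentations via the explicit formula above, so it is an isomorphism.

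The particular statements follow quickly: $\nu_1 = \mu_1$ by definition, and $D_1 = A_1$ because the condition $\sigma(-1) \in T \sqcup (1,0) = T$ defining $A_1$ coincides with that defining $D_1$. Then $C_1 = \im(\mu_1) = \im(\nu_1) = D_1 = A_1$ by the main statement. Finally, the definition of $\mu_n$ as the iterated multiplication $(E_{0,1} E_{1,0})^n \to E_{n,n}$ factors through $n$-fold $*$-products of elements in $E_{1,1}$, giving $C_n = C_1^{*n} = A_1^{*n}$. The main obstacle is bookkeeping in the injectivity step: one must verify carefully that the Yoneda identity holds in the pointed setting over $\CS^\bullet_M(Z)$ and that the two independent decompositions (one via Lemma \ref{le:decompositionnodiff}, the other via the underlying bijection) really agree on corresponding summands under $\nu_n$.
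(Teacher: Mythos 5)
Your argument is correct and, while the surjectivity half is close in spirit to the paper's, the injectivity half follows a genuinely different route, so it is worth comparing. The paper's own proof is short and direct: given a general representative $\beta\wedge\alpha$ of the coequalizer, it first \emph{normalizes} it, replacing it by $\beta'\wedge\alpha'$ with $\beta'=\beta_{|(-n,-1)}\boxtimes\id$ (so all of $\beta_{|V}$ is slid onto the $L$-side), then notes that $\nu_n(\beta'\wedge\alpha')$ is the box-concatenation $\alpha'\boxtimes\beta'_{|(-n,-1)}$, which is zero only if $\alpha'$ or $\beta'$ is zero, and finally writes the explicit inverse $\sigma\mapsto(\id\boxtimes\sigma_{|(-n,-1)})\wedge\sigma_{|S}$. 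You instead identify both sides with a threefold wedge $\bigvee_{T'',S'}\Hom((-n,-1),T'')\wedge\Hom(S',(1,n))\wedge\Hom(S\setminus S',T\setminus T'')$ — one side via Lemma~\ref{le:decompositionnodiff}, its $R$-analogue, and a Yoneda collapse over the middle variable, and the other by splitting a braid in $D_n$ according to the three blocks of its underlying bijection — and check the match. This is a valid and arguably more conceptual packaging of the same content; the normalization move in the paper is exactly the Yoneda collapse done by hand. One point worth flagging: your surjectivity check invokes ``composition with identity paths is trivial'' to conclude that $(\beta\boxtimes\id_{(1,n)})\cdot(\alpha\boxtimes\id_{(-n,-1)})$ does not vanish in $\CS^\bullet(Z)$. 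That non-vanishing is genuinely a statement about degree-additivity (cf.\ Lemma~\ref{le:degproduct} and Remark~\ref{re:productnonexc}); it is the same fact the paper leans on when it cites the beginning of \S\ref{se:tensorstrands}, and it holds because the identity strands sit at $\xi_1^+(\BZ_{\geq 1})\sqcup\xi_2^-(\BZ_{\leq -1})\subset Z_u$, hence away from $Z_{exc}$, with $\xi_1^+$ outgoing and $\xi_2^-$ incoming. It would strengthen your write-up to make that dependence explicit rather than dismissing it as trivial. The ``in particular'' statements are handled correctly: $\nu_1=\mu_1$ tautologically, $D_1=A_1$ since the defining conditions coincide for $n=1$, and $C_n=C_1^{\ast n}$ since $\mu_n$ factors as an iterated $\ast$-product of $\mu_1$'s.
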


\begin{proof}
	Let $\beta\wedge\alpha\in \Hom(-\sqcup(-n,-1),T)\wedge
	\Hom(S,-\sqcup(1,n))$. We have
	$\beta\wedge\alpha=\beta'\wedge\alpha'$ where
	$\beta'=\beta_{|(-n,-1)}\boxtimes\id$ and
	$\alpha'=(\beta_{|-}\boxtimes\id_{(1,n)})\cdot\alpha$.
	If $\nu_n(\beta'\wedge\alpha')=0$, then 
	$\beta'=\alpha'=0$ (cf the beginning of \S \ref{se:tensorstrands}).
	Now $\nu_n$ has an inverse given by
	$\sigma\mapsto (\id\boxtimes\sigma_{|(-n,-1)})\wedge\sigma_{|S}$.
\end{proof}

\begin{rem}
	\label{re:oppositebimodules}
	Consider $\bar{\xi}_1^+:\BR_{>0}\to Z^{\opp},\ x\mapsto \xi_2^-(-x)$
	and 
	 $\bar{\xi}_2^-:\BR_{<0}\to Z^{\opp},\ x\mapsto \xi_1^+(-x)$.
	 There is an isomorphism $(Z^\opp)_{\bar{\xi}}\iso (Z_\xi)^\opp$
	 that is the identity on $Z$ and $x\mapsto -x$ on $\BR$. This
	 provides an isomorphism $(\CS^\bullet(Z_\xi))^\opp\iso
	 \CS^\bullet(Z^{\opp}_{\bar{\xi}})$.
	It induces isomorphisms
	$$\Hom_{\CS^\bullet(Z)}(S\sqcup (-n,-1),T\sqcup(1,n))\iso
	\Hom_{\CS^\bullet(Z^\opp)}(T\sqcup(-n,-1),S\sqcup (1,n)).$$
	This restricts to isomorphisms between $A_n$ (resp. $B_n$, $D_n$, $E_n$,
	$F_n$) for $Z$ and 
	$A_n$ (resp. $B_n$, $D_n$, $F_n$, $E_n$) for $Z^\opp$.
\end{rem}

\begin{lemma}
	\label{le:propertiesAB}
	\begin{itemize}
		\item
	$B_n$ and $D_n$ are stable under the action of
	$H_n^\bullet\wedge (H_n^\bullet)^\opp$.
\item $E_n$ is stable under the action of $H_n^\bullet$ and
	$F_n$ is stable under the action of $(H_n^\bullet)^\opp$.
\item $A$ and $C$ are stable under multiplication
\item Given $\alpha\in B$ and $\beta\in G$, we have $\alpha\ast\beta\in B$
	and $\beta\ast\alpha\in B$.
	\end{itemize}
\end{lemma}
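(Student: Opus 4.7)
The plan is to verify each of the four stability claims by analyzing how the actions and the $\ast$-product interact with the defining conditions of the subbimodules. Recall that the action of $H_n^\bullet \wedge (H_n^\bullet)^{\opp}$ on $G_n$ postcomposes with a permutation braid on the top $(1,n)$ strands (the $T_a$ action) and precomposes with a permutation braid on the bottom $(-n,-1)$ strands (the $T_b$ action), affecting only the endpoint labels within these two glued regions. For $D_n$, the defining condition $\sigma(-i)\in T$ for all $i$ says that no bottom strand reaches $(1,n)$; permuting labels in either $(1,n)$ or $(-n,-1)$ preserves the set of strands landing in $T$ (the latter just relabels which $-i$ goes to which endpoint in $T$). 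Thus $D_n$ is manifestly stable under the full Hecke action.

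For $B_n$, the condition is that some strand $-i$ reaches position $n-j+1$ with $j\le i$, i.e., position $\ge n-i+1$ in $(1,n)$. I would verify stability under the two actions separately: for the bottom action $T_b$, the new strand $-i$ is the original strand $b^{-1}(n+1-i)-n-1$, and a case analysis on $b$ combined with the inequality $j\le i$ shows the property is preserved (or the product vanishes by Lemma~\ref{le:degsemi}); for the top action $T_a$, one argues via Lemma~\ref{le:GnTi} that the inversion pattern forced by the $B_n$ condition is compatible with postcomposition by a permutation braid on $(1,n)$. For $E_n$ under $H_n^\bullet$ (the top action): since $E_n\subset A_n$, the inversions among the $(-n,-1)$ strands, when pushed through a top permutation braid, can only gain intersections \emph{above} the $\sigma$ diagram, and the condition $L(\sigma_{|(-n,-1)})=\emptyset$ controls only intersections \emph{inside} $\sigma$; analysis using Lemma~\ref{le:GnTi} shows that if the top action does not kill the product, no new inversions among $(-n,-1)$ strands are created. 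The stability of $F_n$ under the bottom $(H_n^\bullet)^{\opp}$ action is entirely symmetric, and follows formally via the isomorphism of Remark~\ref{re:oppositebimodules}.

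For stability of $A$ and $C$ under the $\ast$-product: using the explicit formula $\alpha\ast\beta = (\alpha\boxtimes [i\to i+m])\cdot(\beta\boxtimes[-n'-i\to -i])$, the strand labeled $-i$ in $\alpha\ast\beta$ traces through $\beta$ first (for $i>n$, with appropriate shift) and then through $\alpha$ (for $i\le n$). A direct indexing check shows that if $\alpha(-i)\in T\sqcup(1,n-i)$ and $\beta(-i')\in -\sqcup(1,n'-i')$ for all $i,i'$, then in $\alpha\ast\beta$ each bottom strand lands in $T\sqcup(1,n+n'-i)$, so $A$ is $\ast$-stable. For $C$: by Lemma~\ref{le:nu}, $C_n = C_1^{\ast n}$, so $C$ is closed under $\ast$ by construction.

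Finally, for the claim that $B$ is a two-sided $\ast$-absorbing sub-bimodule of $G$: if $\alpha\in B_n$ has $\alpha(-i)=n-j+1$ with $j\le i$, then in $\alpha\ast\beta$ the same strand (with relabeled indices) still satisfies the high-position property, and similarly for $\beta\ast\alpha$, by tracking the shifts in the $\ast$-formula. I expect the main obstacle will be the precise index bookkeeping required for the stability of $B_n$ under the Hecke action and the absorbing property of $B$: one must show that the inequality $j\le i$ survives the permutations and the shifts coming from $\ast$, and that the remaining cases (where it fails) give a zero product in the strand category, which uses the length additivity and Remark~\ref{re:productnonexc}.
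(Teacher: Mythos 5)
Your approach matches the paper's. The paper singles out the $B_n$ stability under the bottom action as the only step requiring argument, handling it by the dichotomy between a witness $i\neq n+1-r$ (which survives) and the forced inversion at $\{-(n+1-r),-(n-r)\}$ that kills the product via Lemma~\ref{le:GnTi}, then transfers to the top action via Remark~\ref{re:oppositebimodules} and declares the remaining bullets immediate; your proposal fills in the same cases with the same tools (the inversion-vanishing criterion, the opposite-curve symmetry, $C_n=C_1^{\ast n}$ from Lemma~\ref{le:nu}, and index bookkeeping for $\ast$). One point worth tightening: for the vanishing step in the $B_n$ argument, the precise citation is Lemma~\ref{le:GnTi} (or Remark~\ref{re:productnonexc}) rather than Lemma~\ref{le:degsemi} alone, which only gives the degree inequality and not the characterization of when it is strict; and the heuristic of ``intersections above vs.\ inside the diagram'' for the $E_n$/$F_n$ bullet should be replaced by the cleaner observation that $E_n$ for $Z$ corresponds to $F_n$ for $Z^{\opp}$ under Remark~\ref{re:oppositebimodules}, so one only needs to verify one of the two.
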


\begin{proof}
	Let $\sigma\in B_n$ and $r\in\{1,\ldots,n-1\}$.
	Assume $\sigma T_r\neq 0$.

	If there is $1\le j\le i\le n$ with $\sigma(-i)=n-j+1$
	and $i\neq n+1-r$, then $\sigma T_r\in B_n$.

	Assume now $\sigma(-i)\in T\sqcup (1,n-i)$ for all
	$i\neq n+1-r$. We deduce that 
	$L(\sigma_{|\{-(n+1-r),-(n-r)\}})\neq\emptyset$, hence
	$\sigma T_r=0$ (cf Lemma \ref{le:GnTi}), a contradiction.

	Using Remark \ref{re:oppositebimodules}, we deduce that
	$T_r\sigma\in B_n$.

	The other assertions of the lemma are immediate.
\end{proof}	

\subsubsection{Gluing map}
We define a morphism of $(\CS_M^\bullet(Z),\CS_M^\bullet(Z))$-bimodules
$q:G\to \Id_{\CS_M^\bullet(Z_\xi)}$:
$$\Hom_{\CS^\bullet(Z)}(S\sqcup (-n,-1),T\sqcup (1,n))\to
\Hom_{\CS^\bullet(Z_\xi)}(S,T).$$

Let $\alpha\in A_n$. We put $T_1=\alpha(S)\cap T$ and
$I_1=\alpha(S)\cap (1,n)$. We define inductively $T_m\subset T$ and 
$I_m\subset (1,n-m+1)$ for $1<m\le n+1$ by
$T_m=T_{m-1}\sqcup (\alpha(-n+I_{m-1}-1)\cap T)$ and
$I_m=\alpha(-n+I_{m-1}-1)\cap (1,n)$.

Note that $-n+I_{m-1}-1\subset (-n,-m+1)$, hence 
$I_m\subset (1,n-m+1)$ since $\alpha\in A_n$.

Note that $T_{n+1}=T$ and $I_{n+1}=\emptyset$.

Define 
$$\beta^m=\id_{T_m}\boxtimes\bigl(\bigboxtimes_{r\in I_m}(\alpha_{-n+r-1}\cdot
[r\to -n+r-1])\bigr):T_m\sqcup I_m\to T_{m+1}\sqcup I_{m+1}$$
for $1\le m\le n$.
We define $q(\alpha)=\beta^n\cdot\beta^{n-1}\cdots\beta^1\cdot\alpha_{|S}$
$$q(\alpha):S\xrightarrow{\alpha_{|S}} T_1\sqcup I_1\xrightarrow{\beta^1}
T_2\sqcup I_2\to\cdots\to T_n\sqcup I_n\xrightarrow{\beta^n} T.$$

We put $q(\alpha)=0$ if $\alpha\in B_n$.

\medskip
Assume now $q(\alpha)\neq 0$, hence $\alpha\in A_n$.
Let $S'=S\cap\alpha^{-1}(T)$ and $T'=T\cap\alpha(S)$.
Let $S''=S-S'$ and $T''=T-T'$. 

Given $s\in S'$, we have $q(\alpha)_s=\alpha_s$.

Note in particular that $S'=\{s\in S\ |\ \mu(q(\alpha)_s)=0\}$.

\smallskip
Let $s\in S''$, $t=q(\alpha)(s)$ and $i=\alpha^{-1}(t)$. Put $d_s=\mu(q(\alpha)_s)-1\ge 0$. We have
$$q(\alpha)_s= \alpha_i\cdot
[1\to i]\cdot \kappa^{d_s}\cdot [\alpha(s)\to 1] \cdot \alpha_s.$$

Given a decomposition $q(\alpha)_s= \xi\cdot [1\to -1]\cdot\kappa^{d_s}
\cdot\xi'$ with $\xi'\in\Hom_{\CS^\bullet(Z)}(\{s\},\{1\})$ and
$\xi\in\Hom_{\CS^\bullet(Z)}(\{-1\},\{t\})$, we have
$\alpha_i=\xi\cdot [i\to -1]$ and $\alpha_s=[1\to\alpha(s)] \cdot \xi'$.

\smallskip
The next lemma is immediate.
\begin{lemma}
	\label{le:propq}
The map $q$ defines a morphism of 
$(\CS_M^\bullet(Z),\CS_M^\bullet(Z))$-bimodules
$G\to \Id_{\CS_M^\bullet(Z_\xi)}$ and
	$q(\alpha\ast\alpha')=q(\alpha)\cdot q(\alpha')$.

	Given $h\in H^\bullet_n$ and $\alpha\in G_n$,
	we have $q(h\alpha)=q(\alpha h)$.
\end{lemma}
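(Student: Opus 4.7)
The plan is to verify the three assertions separately, relying on the dichotomy $G_n = A_n \sqcup B_n$ (sharing only $0$) together with Lemma \ref{le:propertiesAB}.

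For the bimodule compatibility, I would observe that pre-composition by $\alpha:S'\to S$ and post-composition by $\beta:T\to T'$ in $\CS_M^\bullet(Z)$ act outside the positive and negative rays: neither $\alpha$ nor $\beta$ involves the points $\pm i$ for $1\le i\le n$. This means the $A_n/B_n$ dichotomy is preserved (immediate from the definitions), so $q$ vanishes simultaneously on both sides whenever $\sigma\in B_n$. When $\sigma\in A_n$, the sets $T_m$ and $I_m$ appearing in the recursive definition of $q(\sigma)$ depend only on the restriction $\sigma_{|(-n,-1)}$, which is unchanged under these actions, and the $\boxtimes$-factors $\beta^m$ commute with $\alpha$ and $\beta$. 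Hence $q(\beta\cdot\sigma\cdot\alpha) = \beta\cdot q(\sigma)\cdot\alpha$.

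For the third assertion, I first note that on $B_n$ both $(h\wedge 1)$ and $(1\wedge h)$ land in $B_n$ by Lemma \ref{le:propertiesAB}, so both sides vanish. On $A_n$, by additivity it suffices to take $h = T_r$ for some $r\in\{1,\ldots,n-1\}$. The action $(T_r\wedge 1)\sigma$ inserts the transposition of adjacent top labels $r,r+1$ after $\sigma$, while $(1\wedge T_r)\sigma$ inserts the corresponding transposition of adjacent bottom labels before $\sigma$. Under the gluing identification $[r\to -n+r-1]$ built into the definition of $q$, a transposition of adjacent top positions is identified with the corresponding transposition of adjacent bottom positions in $Z_\xi$. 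Tracking this through the iterative pieces $\beta^m$, both insertions produce the same morphism in $\CS^\bullet(Z_\xi)$.

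For the multiplicativity $q(\alpha\ast\alpha') = q(\alpha)\cdot q(\alpha')$, I handle the degenerate cases first: if either $\alpha$ or $\alpha'$ lies in $B$, then $\alpha\ast\alpha'\in B$ by Lemma \ref{le:propertiesAB}, so both sides vanish. For $\alpha\in A_n$ and $\alpha'\in A_{n'}$, the $\ast$-operation juxtaposes them with the relabeling $[i\to i+n]$ on the top and $[-n'-i\to -i]$ on the bottom, precisely matching the gluing in $Z_\xi$. I would then argue by induction on $n+n'$, peeling off one strand at a time from the recursive construction of $q$: each iteration step of $q(\alpha\ast\alpha')$ either routes an output strand of $\alpha'$ through the input strands of $\alpha$ (contributing to the composition $q(\alpha)\cdot q(\alpha')$ via the target $S$ of $\alpha'$) or resolves it directly in $\alpha$, and the filtration $(T_m,I_m)$ is compatible with the juxtaposition. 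The main obstacle is precisely this multiplicativity: while the picture is clear geometrically, converting it into a clean inductive bookkeeping of how each positive strand of $\alpha'$ is routed through successive negative strands of $\alpha$, and matching the resulting recursion against the composition $q(\alpha)\cdot q(\alpha')$, will require the careful use of Lemma \ref{le:commutetensor} to commute the $\boxtimes$-factors past one another at each inductive step.
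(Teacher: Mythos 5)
The paper itself states the lemma with ``The next lemma is immediate'' and supplies no proof, so your write-up is already more detailed than what the paper offers. Your overall plan is sound: you correctly isolate the three assertions, invoke the $A_n/B_n$ dichotomy and Lemma~\ref{le:propertiesAB} for the degenerate cases, and reduce to the geometric picture where the $\ast$-product and the $T_r$-actions are matched by the gluing. The bimodule compatibility argument is essentially right (a small correction: $T_1$ and $I_1$ also depend on $\sigma_{|S}$, not just on $\sigma_{|(-n,-1)}$, but $T_1$ is simply relabelled by $\beta$ and unchanged by $\alpha$, so your conclusion stands). Your ``additivity'' in the $T_r$-reduction should read ``iterated application''; it works because the right $H_n$-action is built with $b \mapsto b^{-1}$ and the reversal $i \mapsto n+1-i$, so $q(T_{i_1}\cdots T_{i_l}\alpha) = q(\alpha\,T_{i_1}\cdots T_{i_l}) = q(\alpha T_w)$ after shuttling each $T_{i_j}$ across.

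The one step that needs more than a geometric picture is the degree/vanishing consistency in the pointed category. Showing that $T_r\alpha$ and $\alpha T_r$ glue to the same underlying braid in $Z_\xi$ does not yet show that the recursive composite $\beta^n \cdots \beta^1 \cdot (\cdot)_{|S}$ defining $q$ is degree-additive (Lemma~\ref{le:degsemi}) for one exactly when it is for the other, since the nil product in $\CS^\bullet(Z_\xi)$ can kill a composition that is nonzero as a bare braid. For $\alpha\in A_n$, one can check directly from the definitions that $T_r\alpha\in B_n$ iff $\alpha T_r\in B_n$ iff $\alpha(-(n-r))=r$, so the $A_n/B_n$ dichotomy remains aligned on both sides; but you still have to observe, say, that a cancelling pair of crossings forcing $T_r\alpha=0$ in $\CS^\bullet(Z)$ produces a cancelling pair in the glued braid and therefore a degree-drop in the recursion for $q(\alpha T_r)$, and vice versa. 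Your deferral of the multiplicativity bookkeeping is explicit and puts you no worse off than the paper; the inductive scheme you outline (peel one strand at a time, commute the $\boxtimes$-factors via Lemma~\ref{le:commutetensor}) is the right framework, and the same degree-consistency observation is what makes each inductive step actually close.
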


\begin{lemma}
	\label{le:qinjEF}
	The restrictions of $q$ to $E$ and to $F$ are injective.
\end{lemma}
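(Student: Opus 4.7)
By the isomorphism $(Z_\xi)^{\opp}\iso (Z^{\opp})_{\bar\xi}$ of Remark \ref{re:oppositebimodules}, which interchanges $E_n$ and $F_n$, it suffices to prove injectivity of $q|_E$. So let $\alpha,\alpha'\in E_n$ with $q(\alpha)=q(\alpha')\neq 0$; I aim to show $\alpha=\alpha'$. Since $q$ vanishes on $B$, both $\alpha$ and $\alpha'$ lie in $A_n$, and they induce the same underlying bijection $\bij{q(\alpha)}:S\to T$ in $Z_\xi$. Partition $S=S'\sqcup S''$ according to whether $\mu(q(\alpha)_s)=0$ or $\geq 1$; the defining formula for $q$ shows $\alpha_s=q(\alpha)_s=\alpha'_s$ for $s\in S'$, so the task reduces to reconstructing $\alpha_s$ for $s\in S''$ and $\alpha_{-i}$ for $i\in\{1,\ldots,n\}$ from the data of $q(\alpha)$ alone.

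For each $s\in S''$, iterated application of Lemma \ref{le:divisorlength1} in $\CS^\bullet_M(Z_\xi)$ at the gluing point $z_0=\xi(0)$ produces a canonical factorization of $q(\alpha)_s$ as an alternating composition of arcs avoiding $z_0$ and minimal bridge crossings of $z_0$, intrinsic to $q(\alpha)_s$. Any lift of $q(\alpha)$ to $A_n$ is determined by this decomposition together with, at each bridge crossing, a choice of labels $(\ell,\ell')\in\{1,\ldots,n\}^2$ specifying the outgoing endpoint $\xi_1^+(\ell)$ and the incoming endpoint $\xi_2^-(-\ell')$, plus labels $\alpha(-i)\in(1,n)$ for those $i$ with $\alpha(-i)\notin T$. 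The combined label ambiguity is parameterized by a pair of permutations in $\GS_n$. The plan is to show that the $A_n$-constraint $\alpha(-i)\in T\sqcup(1,n-i)$ together with the $E_n$-constraint--by Lemma \ref{le:GnTi} the condition $\alpha T_r\neq 0$ for all $r$, equivalently the absence of inversions among the strands $\alpha_{-i}$--pins down this labeling uniquely.

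To make this uniqueness precise, I would first reduce to the case of a connected non-singular curve by passing to the non-singular cover $\hat Z\to Z$ (to which $\xi_1^+$ and $\xi_2^-$ lift canonically), using the faithfulness results of Proposition \ref{pr:qprestrands} and Theorem \ref{th:strandsdifferential}, and then embedding in $S^1$. Via Proposition \ref{pr:strandsnonsingular}, the problem translates into statements about elements of $\hat{\GS}_n^+$ and its positive nil Hecke algebra $\hat H_n^+$: the labeling ambiguity corresponds exactly to the right action of $\GS_n\subset\hat{\GS}_n^+$, and the $E_n$-normal-form condition singles out a unique representative in each right $\GS_n$-coset, corresponding to the decomposition $\hat{\GS}_n^+=(\BZ_{\geq 0})^n\rtimes\GS_n$ established in \S\ref{se:positive}. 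This gives $\alpha=\alpha'$.

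The main obstacle is the bookkeeping required to verify that, under the translation via the non-singular cover and Proposition \ref{pr:strandsnonsingular}, the ``labels for bridge crossings'' in the geometric picture correspond precisely to the right $\GS_n$-orbit data for elements of $\hat{\GS}_n^+$, and that the $E_n$-condition translates exactly into the normal-form condition singling out the canonical representatives coming from $(\BZ_{\geq 0})^n$. This combines the canonicity of the Lemma \ref{le:divisorlength1} decomposition with the combinatorial structure of \S\ref{se:positive}; the checking is elementary in each connected non-singular component but slightly delicate globally due to the need to track label assignments consistently across components and through the quotient $\hat Z\to Z$.
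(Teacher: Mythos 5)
Your proposal misses a necessary half of the injectivity claim. Injectivity of the restriction of $q$ to $E_n$ (as a map of pointed sets) includes the assertion that $q(\alpha)\neq 0$ whenever $\alpha\in E_n\setminus\{0\}$: otherwise you could have $\alpha\neq 0$ with $q(\alpha)=0=q(0)$, so $q(\alpha)=q(\alpha')$ with $\alpha\neq\alpha'$. You explicitly start from the assumption $q(\alpha)=q(\alpha')\neq 0$, so the non-vanishing statement is silently assumed. In the paper this is in fact the first (and not entirely trivial) part of the argument: one must show, strand pair by strand pair, that the intersection counts satisfy $i(\theta^1\circ\theta^2\circ\theta^3\circ\alpha_{|\{s,s'\}})=i(\theta^1)+i(\theta^2)+i(\theta^3)+i(\alpha_{|\{s,s'\}})$, so that Remark~\ref{re:productnonexc} applies to conclude $q(\alpha)\neq 0$. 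Any correct proof has to do something of this flavor; yours does not.

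On the uniqueness half, your strategy (reduce to the non-singular cover, embed a component in $S^1$, invoke Proposition~\ref{pr:strandsnonsingular} to translate the $E_n$ normal form into the decomposition $\hat{\GS}_n^+=(\BZ_{\geq 0})^n\rtimes\GS_n$) is a genuinely different route from the paper's direct induction on $\mu(q(\alpha))$, and the idea that the $E_n$/$F_n$ conditions single out coset representatives for $\GS_n$ in $\hat{\GS}_n^+$ is a nice heuristic. But as written it is a plan, not an argument: you acknowledge as much in the last paragraph. The two gaps that matter are (i) $q$ is defined via the glued curve $Z_\xi$, so the reduction has to simultaneously control $\CS^\bullet(Z)$, $\CS^\bullet(Z_\xi)$, and the compatibility of $q$ with both non-singular covers and the $S^1$-embedding; when $Z(\xi_1^+)=Z(\xi_2^-)$ (the self-gluing case), the glued component is not a subcurve of the original $S^1$, so the translation to $\hat{\GS}_n^+$ is not a single-component statement; and (ii) Lemma~\ref{le:divisorlength1} is a decomposition of a \emph{braid}, not of a single strand $q(\alpha)_s$, so the "canonical factorization of $q(\alpha)_s$" you invoke is really the single-strand decomposition of \S\ref{se:decompositionz0}, and you have not shown that it, together with the $A_n$ and $E_n$ constraints, forces a unique lift $\alpha$ — that is precisely the content of the paper's inductive argument (reconstructing $\alpha(s)$ and $\alpha_s$ for $s\in S''$, then peeling off a strand with $\mu=1$ or compressing the whole outer layer). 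You would need to carry out the analogue of that induction in the $\hat{\GS}_n^+$ picture, which is not obviously shorter.

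In short: the vanishing direction is missing entirely, and the uniqueness direction is an outline whose key combinatorial step (the pinning-down of the labeling by the $E_n$-condition) is not established. The paper's proof avoids the detour through $S^1$ and $\hat{\GS}_n^+$ and works directly on $Z$ by an induction on $\mu(q(\alpha))$; your reduction strategy could conceivably be made to work, but it would need to be filled in at least as carefully as the paper's argument and handle the self-gluing case and the transport of $q$ across covers explicitly.
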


\begin{proof}
	Let $\alpha:S\sqcup(-n,-1)\to T\sqcup (1,n)$ be a non-zero element of
	$F_n$. 
Let $s\in S''$. Given $1\le m\le n$, we put
	$i_m(s)=\beta^{m-1}\circ\cdots\circ\beta^1\circ\alpha(s)$. We put
	$d_s=\min\{m| i_{m+1}(s)\in T_{m+1}\}$.

	Let $s,s'$ be two distinct elements of $S$ and
	let $\theta=\beta^n_{|\beta^{n-1}\circ\cdots\circ\beta^1\circ\alpha(\{s,s'\})}
	\cdots\beta^1_{|\alpha(\{s,s'\})}
	\cdot\alpha_{|\{s,s'\}}$.

	\smallskip
	$\bullet\ $If $s,s'\in S'$, then 
	$\theta=\alpha_{|\{s,s'\}}\neq 0$.

	$\bullet\ $Assume $s\in S'$ and $s'\in S''$.
	We have
	$\theta=\theta^1\cdot\alpha_{|\{s,s'\}}$ where
	$$\theta^1=(\id_{\alpha(s)}\boxtimes
	(\alpha_{-n+i_{d_{s'}}(s')-1}\cdot [1\to -n+i_{d_{s'}}(s')-1]
	\cdot\kappa^{d_{s'}-1} \cdot [i_{1}(s')\to 1])).$$
	We have 
	$$i(\theta^1\circ\alpha_{|\{s,s'\}})=i(\alpha_s,\alpha_{s'})+i(\alpha_s,
	\alpha_{-n+i_{d_{s'}}(s')-1})+d_{s'}-1=
			i(\alpha_{|\{s,s'\}})+i(\theta^1).$$
	Ir follows that $\theta\neq 0$.

	\smallskip
	$\bullet\ $Assume finally $s,s'\in S''$ and $d_{s'}\ge d_s$.
We have $\theta=\theta^1\cdot\theta^2\cdot\theta^3\cdot
	\alpha_{|\{s,s'\}}$ where
$$\theta^1=(\alpha_{-n+i_{d_{s'}}(s')-1}\cdot [1\to -n+i_{d_{s'}}(s')-1]
	\cdot\kappa^{d_{s'}-d_s}
	\cdot [i_{d_{s}+1}(s')\to 1])\boxtimes\id_{i_n(s)}$$
$$\theta^2=[-n+i_{d_s}(s')+1\to i_{d_s+1}(s')]\boxtimes(\alpha_{n-i_{d_s}(s)+1}\cdot[-n\to n-i_{d_s}(s)+1])$$
$$\theta^3=(([1\to -n+i_{d_s}(s')+1]\cdot\kappa^{d_s-1}\cdot [i_{1}(s')\to 1])\boxtimes
	([1\to -n]\cdot\kappa^{d_s-1}\cdot [i_{1}(s)\to 1])).$$
We have
	$$i(\theta^1\circ\theta^2\circ\theta^3\circ \alpha_{|\{s,s'\}})=
	i(\alpha_{i_{d_s}(s)},\alpha_{i_{d_{s'}}(s')})+
	d_{s'}-d_s+i(\alpha_s,\alpha_{s'})=
	i(\theta^1)+i(\theta^2)+i(\theta^3)+i(\alpha_{|\{s,s'\}}).
	$$
	It follows that $\theta\neq 0$.

	\smallskip
	It follows from Remark \ref{re:productnonexc} that $q(\alpha)\neq 0$.	
	
	\medskip
	Define $S'$ and $S''$ as above.
	Let $r=|S''|$. We have $\alpha(S'')=(n-r+1,n)$ and
	$\alpha^{-1}(i)<\alpha^{-1}(i')$ for $i<i'$ in
	$(n-r+1,n)$.

	Given $i<i'$ in $(-n,-1)$ with $\alpha(i),\alpha(i')\in (1,n)$, we have
	$\alpha(i)<\alpha(i')$.

	\smallskip
	Consider now $\tilde{\alpha}:S\sqcup(-n,-1)\to T\sqcup (1,n)$ another
	non-zero element of $F_n$ and assume $q(\alpha)=q(\tilde{\alpha})\neq0 $.
	We have $S\cap\tilde{\alpha}^{-1}(T)=S'$ and
	$T\cap \tilde{\alpha}(S)=T'$. The discussion above shows that
	$\alpha(s)=\tilde{\alpha}(s)$ for $s\in S''$.
	Note also that $\alpha_s=\tilde{\alpha}_s$ for $s\in S'$.
	As a consequence, $\alpha=\tilde{\alpha}$ if $\mu(q(\alpha))=0$.

	\smallskip

	Let $s\in S''$, $t=q(\alpha)(s)$, $\tilde{t}=q(\tilde{\alpha})(s)$, 
	$i=\alpha^{-1}(t)$ and $\tilde{i}=\tilde{\alpha}^{-1}(t)$.
	Since $q(\alpha)_s=q(\tilde{\alpha})_s$, it follows that
	$\tilde{t}=t$, $\mu(q(\alpha)_s)=\mu(q(\tilde{\alpha})_s)$,
	$[\alpha(s)\to 1]\cdot \alpha_s=[\tilde{\alpha}(s)\to 1]\cdot 
	\tilde{\alpha}_s$, and
	$\alpha_i\cdot [-1\to i]=\tilde{\alpha}_{\tilde{i}}\cdot
	[-1\to \tilde{i}]$. We deduce that $\tilde{\alpha}_s=\alpha_s$ for
	$s\in S''$.

	\medskip
	We proceed now by induction on $\mu(q(\alpha))$ to show
	that $q(\alpha)$ determines $\alpha$, for $\alpha\in F$.

	\smallskip
	Assume there is $s\in S''$ such that $\mu(q(\alpha)_s)=1$.
	Let $j=\alpha(s)\in (1,n)$ and $i=-n+j-1$. We have
	$t=\alpha(i)=q(\alpha)(s)\in T$. Define $\alpha':S\setminus\{s\}\sqcup
	(-n+1,-1)\to T\setminus\{t\}\sqcup (1,n-1)$ an element of
	$F_{n-1}$ as follows.
	Given $s'\in S\setminus\{s\}$, we put
	$\alpha'_{s'}=\alpha_{s'}$ if $\alpha(s')<j$,
	$\alpha'_{s'}=[\alpha(s')\to\alpha(s')-1]\cdot
	\alpha_{s'}$ if $\alpha(s')>j$. Given $i'\in (-i+1,-1)$, we
	put $\alpha'_{i'}=\alpha_{i'}$. Given $i'\in (-n+1,-i)$, we put
	$\alpha'_{i'}=\alpha_{i'-1}$. This defines an element of
	$F_{n-1}$. Furthermore, $q(\alpha')=q(\alpha)_{|S\setminus\{s\}}$.
 
	We define similarly $\tilde{i}$, $\tilde{j}$, $\tilde{t}$
	and $\tilde{\alpha}'$ starting with $\tilde{\alpha}$ and $s$.
	We have $\tilde{j}=j$ and $\tilde{t}=t$, hence also $\tilde{i}=i$.
	We have
	$q(\alpha')=q(\tilde{\alpha}')$, hence $\alpha'=\tilde{\alpha}'$ by
	induction. Since $\alpha_s=\tilde{\alpha}_s$ and
	$\alpha_i=\tilde{\alpha}_i$, it follows that $\alpha=\tilde{\alpha}$.

	\smallskip
	Assume $\mu(q(\alpha)_s)\ge 2$ for all $s\in S''$.
We have $\alpha^{-1}((1,n-r))=\{i_1<\cdots<i_{n-r}\}\subset (-n,-1)$.
	Note that $\alpha_{-i_d}=[-i_d\to d]$ for $1\le d\le n-r$.
Let $\varphi:(-r,-1)\to (-n,-1)\setminus\alpha^{-1}((1,n-r))$ be the unique
increasing bijection.
	We define $\alpha':S\sqcup (-r,-1)\to T\sqcup (1,r)$
	and an element of $F_r$ as follows.
	We put $\alpha'_{s}=\alpha_s$ for $s\in S'$, 
	$\alpha'_s=[\alpha(s)\to\alpha(s)-n+r]\cdot\alpha_s$ for
	$s\in S''$ and
	$\alpha_i=\alpha_{\varphi(i)}\cdot [i\to\varphi(i)]$ for $i\in (-r,-1)$.

Let $s\in S''$, $t=q(\alpha)(s)$ and $i=\alpha^{-1}(t)$. We have
$$q(\alpha')_s= \alpha_i\cdot
[1\to i]\cdot [\alpha(s)\to 1] \cdot \alpha_s.$$
Define $\tilde{\alpha}'$ similarly, starting with $\tilde{\alpha}$ instead
of $\alpha$.
We have $q(\alpha')=q(\tilde{\alpha}')$. By induction, we deduce
that $\alpha'=\tilde{\alpha}'$, hence $\alpha=\tilde{\alpha}$.

This completes the proof that the restriction of $q$ to $F$ is injective.

We deduce that the restriction of $q$ to $E$ is injective using
Remark \ref{re:oppositebimodules}
\end{proof}

\begin{lemma}
	\label{le:qsurj}
	The restrictions of $q$ to $E\cap C$ and to $F\cap C$ are surjective.
\end{lemma}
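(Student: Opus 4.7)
The plan is to prove surjectivity of $q|_{E\cap C}$ first; the surjectivity of $q|_{F\cap C}$ will then follow from the orientation-reversing duality of Remark \ref{re:oppositebimodules}, which replaces $Z$ by $Z^{\opp}$ and interchanges the subbimodules $E$ and $F$. The argument will proceed by induction on $n=\mu(\gamma)$ for $\gamma\in\Hom_{\CS^\bullet(Z_\xi)}(S,T)$.

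For the base case $n=0$, I will argue that a minimal oriented representative $\gamma_{\min}$ of $\gamma$ cannot meet the oriented open interval $\xi((-1,1))\subset Z_\xi$ at all: admissibility forces any such traversal to be monotone with respect to the orientation, so entering at one endpoint of $\xi((-1,1))$ without crossing $0$ would require exiting at the same endpoint, producing a nullhomotopic detour that contradicts minimality. Hence $\gamma_{\min}$ is supported in $Z\subset Z_\xi$, and I take $\alpha=\gamma_{\min}\in G_0=C_0\cap E_0$. For $n=1$, the single crossing is lifted directly: split the unique strand of $\gamma$ crossing $0$ into its before and after pieces, lift the before piece to a path in $Z$ ending at $\xi_1^+(1)=1$ and the after piece to a path starting at $\xi_2^-(-1)=-1$, lift all other strands to $Z$; this gives $\alpha\in D_1=C_1=E_1$ (the last equality being vacuous).

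For the inductive step $n\ge 2$, I apply Lemma \ref{le:divisorlength1} to obtain a decomposition $\gamma=r'(\gamma)\cdot r(\gamma)$ in $\CS^\bullet(Z_\xi)$ with $\mu(r(\gamma))=1$ and $\mu(r'(\gamma))=n-1$. Lifting $r(\gamma)$ as in the $n=1$ base case gives $\alpha''\in D_1$, and by the inductive hypothesis $r'(\gamma)=q(\alpha')$ for some $\alpha'\in E_{n-1}\cap C_{n-1}$. I set $\alpha=\alpha'\ast\alpha''$; it lies in $C_n$ by the closure of $C$ under $\ast$ (Lemma \ref{le:propertiesAB}), and multiplicativity of $q$ under $\ast$ (Lemma \ref{le:propq}) gives $q(\alpha)=q(\alpha')\cdot q(\alpha'')=r'(\gamma)\cdot r(\gamma)=\gamma$.

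The principal obstacle is to verify that $\alpha\in E_n$. Unfolding the definition of $\ast$, the strand at position $-1$ in $\alpha$ is $\alpha''$'s negative strand (starting at $\xi_2^-(-1)$ and passing directly into $Z$ before being continued through $\alpha'$), while for $k=2,\ldots,n$ the strand at position $-k$ consists of a walk along the unoriented tail $\xi_2^-$ from $\xi_2^-(-k)$ to $\xi_2^-(-(k-1))$ prepended to $\alpha'$'s $-(k-1)$-strand. The inductive hypothesis rules out $I$-classes intrinsic to $\alpha'$, and one must check both that these prepended nested tail walks introduce no new $I$-classes among the shifted strands at positions $-2,\ldots,-n$, and, more crucially, that no $I$-class links $\alpha_{-1}$ with $\alpha_{-k}$ for any $k\ge 2$. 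This last verification is the delicate step, and it is where the specific choice of $r(\gamma)$ from Lemma \ref{le:divisorlength1} is essential: the minimality condition on the $r$-support of the selected single-crossing strand precludes the existence of a candidate smooth class $\zeta$ between the relevant starting points that simultaneously satisfies the smoothness conditions for $\alpha_{-1}$ and $\alpha_{-k}$ and the opposite-orientation condition with $\bar\zeta=\alpha_{-k}\circ\zeta\circ\alpha_{-1}^{-1}$. I expect this orientation analysis to be cleanest after passing to the non-singular cover $\hat Z$ via Lemma \ref{le:coverI}, reducing the question to the non-singular setting where $I$-classes are controlled by the elementary geometry of paths on line segments and circles, as developed in \S\ref{se:StrandsS1}.
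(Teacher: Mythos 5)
Your overall strategy — proving $E\cap C$ first and deducing $F\cap C$ by the duality of Remark~\ref{re:oppositebimodules} — is perfectly symmetric to the paper's, which does it in the opposite order. The base cases and the reduction via Lemma~\ref{le:propq} and Lemma~\ref{le:propertiesAB} match the paper's argument. But the verification that $\alpha=\alpha'\ast\alpha''\in E_n$ is not just a ``delicate step''; as set up, it rests on the wrong property.

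Here is the issue. Lemma~\ref{le:divisorlength1} is an \emph{$r$-support} statement: it selects the strand whose $r$-support (from the source to the \emph{first} crossing of $z_0=0$) is minimal. In the $\ast$-product, the lifted single-crossing strand from $\alpha''$ ends up having the \emph{largest} output index $n$ in $(1,n)$, and the paper's verification of $F_n$ compares precisely this new output strand against the other strands $\gamma_{s'}$ with $s'\in\gamma^{-1}(1,n-1)$, using the containment $\supp(\theta_s^r)\subset\supp(\theta_{s'}^r)$. In other words, $r$-support minimality directly controls the strands whose \emph{targets} lie on the $\xi_1^+$-tail, which is exactly what $F_n$ is about. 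The $E_n$ condition, by contrast, is a statement about the strands on the inputs $(-n,-1)$, i.e.\ the strands beginning on the $\xi_2^-$-tail; these record the continuations \emph{after} the last crossing (the ``$\ell$-side''), and $r$-support minimality says nothing directly about them. The paper's deduction ``Since $\beta\in F_{n-1}$, we deduce that $\gamma\in F_n$'' uses $F_{n-1}$ of the inductive hypothesis in an essential way, and there is no parallel deduction producing $E_n$ from $E_{n-1}$ together with the $r$-support containment. Nothing in the construction as written ensures that the new $(-1)$-strand of $\alpha'\ast\alpha''$ is free of $I$-classes with the shifted $(-k)$-strands of $\alpha'$.

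The fix is to apply the orientation-reversing duality \emph{inside} the induction, not only at the top level: use Remark~\ref{re:oppositebimodules} to transport Lemma~\ref{le:divisorlength1} to $Z^\opp$, which yields an ``$\ell$-support'' version in $Z$ (peeling off a minimal last-crossing strand) whose minimality is precisely adapted to the $E_n$ condition. Equivalently, and more simply, run the induction for $F\cap C$ using Lemma~\ref{le:divisorlength1} as stated (this is the paper's route) and then deduce $E\cap C$ by duality. As written, your proposal swaps $E$ and $F$ in the inductive argument without swapping the corresponding decomposition lemma, and that leaves the key claim unsubstantiated; the assertion that ``the minimality condition on the $r$-support\dots precludes the existence of a candidate smooth class $\zeta$'' for the $E$-side intersections is what needs proof, and I do not see why it should hold.
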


\begin{proof}
	Let $\theta\in\Hom_{\CS^\bullet_M(Z_\xi)}(I,J)$. Let $n=\mu(\theta)$.
	We show by induction on $n$ that there exists
	$\alpha\in F_n\cap C_n$ such that $q(\alpha)=\theta$.

	Assume $n=1$. Let $s\in I$ such that $\mu(\theta_s)=1$.
	There is a decomposition $\theta_s=\theta_s^{r-}\cdot
	\theta_s^r$ as in \S\ref{se:decompositionz0}.
	We define $\alpha\in\Hom_{\CS^\bullet_M(Z)}(I\sqcup\{-1\},J\sqcup\{1\})$
	by
	$\alpha_{s'}=\theta_{s'}$ for $s'\neq s$,
	$\alpha_s=[0\to 1]\cdot\theta_s^r$ and
	$\alpha_{-1}=\theta_s^{r-}\cdot [-1\to 0]$.
	We have $\alpha\in A_1=F_1\cap C_1$ and $q(\alpha)=\theta$.

	\smallskip
	Assume now $n>1$. Consider a decomposition
	$\theta=r'(\theta)\cdot r(\theta)$ as in 
	Lemma \ref{le:divisorlength1}. There exists
	$\alpha\in A_1$ and $\beta\in F_{n-1}\cap C_{n-1}$ such that
	$q(\alpha)=r(\theta)$ and $q(\beta)=r'(\theta)$.
	Let $\gamma=\beta\ast\alpha\in C_n$. We have $q(\gamma)=\theta$.

	Let $s=\gamma^{-1}(n)=\alpha^{-1}(1)$. We have $\mu(r(\theta)_s)=1$.
	Let $i\in (1,n-1)$ and $s'=\gamma^{-1}(i)$. If $s'\in (-n,-1)$,
	then $I(\gamma_{|\{s',s\}})=\emptyset$. Assume $s'{\not\in}(-n,-1)$.
	We have $\theta_{s'}^r=[i\to 0]\cdot\gamma_{s'}$. Since
	$\mathrm{supp}(\theta_s^r)\subset\mathrm{supp}(\theta_{s'}^r)$,
	it follows that $I(\gamma_{|\{s',s\}})=\emptyset$.
Since $\beta\in F_{n-1}$, we deduce that $\gamma\in F_n$.

	\smallskip
	The case of $E\cap C$ follows from that of $F\cap C$ applied to
	$Z^\opp$, cf Remark \ref{re:oppositebimodules}.
\end{proof}

\subsubsection{Equivalence relation}
\label{se:equivrelation}
We define an equivalence relation $\sim$ on $G$ as the transitive, symmetric
and reflexive closure of the relation
$T_i\sigma\sim \sigma T_i$ for $\sigma\in G_n$ and $1\le i<n$ and
$\sigma\sim0$ if $\sigma\in B_n$.

\begin{lemma}
	\label{le:simtoEF}
	Let $\alpha\in G_n$. There exists $\sigma\in E_n$ and $\sigma'\in F_n$
	such that $\alpha\sim\sigma\sim\sigma'$.
\end{lemma}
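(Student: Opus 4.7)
My plan. By the symmetry of Remark~\ref{re:oppositebimodules} (swapping $Z$ with $Z^{\opp}$ interchanges the roles of $E_n$ and $F_n$ while preserving the defining relation on $G$), it suffices to establish the $E_n$ statement; the $F_n$ statement then follows by transporting along the isomorphism. If $\alpha\in B_n$, then by the definition of $\sim$ we have $\alpha\sim 0$, and $0\in E_n$. So we may assume $\alpha\in A_n$ and proceed by induction on the invariant $\nu(\alpha):=|L(\alpha|_{(-n,-1)})|$, which counts inversions among the strands coming in at the right end $\xi_2^-$.

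The base case $\nu(\alpha)=0$ is exactly $\alpha\in E_n$. For the inductive step $\nu(\alpha)\ge 1$, the second statement of Lemma~\ref{le:GnTi} gives some $i\in\{1,\ldots,n-1\}$ with $\alpha\in G_n T_i$, equivalently $\alpha(i-n-1)>\alpha(i-n)$ and $\alpha T_i=0$. Write $\alpha=\beta T_i$. Lemma~\ref{le:propertiesAB} implies $\beta\in A_n$ (otherwise $\beta\in B_n$ and then $\alpha=\beta T_i\in B_n$, contradicting $\alpha\in A_n\setminus B_n$); and $\beta$ is obtained from $\alpha$ by swapping the endpoints $\alpha(i-n-1),\alpha(i-n)$, so a direct inversion count (pairs involving a third position $j\notin\{i-n-1,i-n\}$ contribute equally to $\alpha$ and $\beta$, while the pair $(i-n-1,i-n)$ is an inversion of $\alpha$ but not of $\beta$) gives $\nu(\beta)=\nu(\alpha)-1$. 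The defining relation yields $\alpha\sim T_i\beta$. If $T_i\beta=0$ or $T_i\beta\in B_n$, then $\alpha\sim 0\in E_n$ and we are done.

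The remaining case $T_i\beta\in A_n$ is the main obstacle. Here $T_i$ acts on the output side, so the input endpoints of $T_i\beta$ agree with those of $\beta$; only the values $\beta(-j)\in\{i,i+1\}$ are permuted. A careful case analysis (the contributions to $\nu$ from pairs $(-j_2,-j_1)$ with $\{\beta(-j_1),\beta(-j_2)\}\not\subseteq\{i,i+1\}$ are preserved by the symmetry of $s_i$, while the single pair with $\{\beta(-j_1),\beta(-j_2)\}=\{i,i+1\}$ either gains or loses one inversion depending on its ordering) shows $\nu(T_i\beta)=\nu(\beta)+\epsilon=\nu(\alpha)-1+\epsilon$ with $\epsilon\in\{-1,0,1\}$. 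The key point is to choose $i$ so that $\epsilon\le 0$: I plan to choose the smallest $i$ with $\alpha(i-n-1)>\alpha(i-n)$ (the leftmost inversion on the input side). The $A_n$ condition $\alpha(-j)\in T\sqcup(1,n-j)$ constrains which values can appear at each input position; combined with minimality of $i$, I will show that either $\epsilon\le 0$ and the induction closes, or the bad configuration forces $T_i\beta\in B_n$ (handled above). In the edge case where $\beta^{-1}(i+1)\in(-n,-1)$ but $\beta^{-1}(i+1)=-j$ with $j$ so small that the $A_n$ constraint is violated for $T_i\beta$, Lemma~\ref{le:propertiesAB} and the $B_n$ characterization produce $T_i\beta\in B_n$, as illustrated by the computation in the scratch example where $(T_i\beta)(-i')=n-j'+1$ lands in the $B_n$ pattern.

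The hard step is verifying the sign bound on $\epsilon$ under the minimality choice of $i$, and checking that the escape to $B_n$ covers every situation not handled by the inductive decrease. This reduces to tracking $\beta^{-1}(i)$ and $\beta^{-1}(i+1)$ against the $A_n$ bound $\beta(-j)\le n-j$ (when not in $T$), and is essentially a combinatorial argument in the spirit of bubble-sorting the input positions, using that moves which would strictly increase $\nu$ correspond precisely to values violating the $A_n$ inequality at the appropriate index, sending the element into $B_n$.
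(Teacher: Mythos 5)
Your overall skeleton matches the paper's: dispose of $B_n$ via $\sim 0$, restrict to $A_n$, induct on inversions $\nu(\alpha)=|L(\alpha_{|(-n,-1)})|$, use Lemma~\ref{le:GnTi} to write $\alpha=\beta T_i$, pass to $\alpha\sim T_i\beta$, and transport $E_n\leftrightarrow F_n$ via Remark~\ref{re:oppositebimodules}. But there is a genuine gap in the inductive step, and your proposed fix (minimal $i$, hoping the bad case always lands in $B_n$) does not work.

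The bad case is exactly when both $\beta^{-1}(i)$ and $\beta^{-1}(i+1)$ lie in the input interval $(-n,-1)$, say at positions $-n+j-1$ and $-n+k-1$ with $j<k$ (so that $T_i\beta\neq 0$). Then $T_i$ swaps the values $i$ and $i+1$ there, creating a new input-side inversion, so $\nu(T_i\beta)=\nu(\beta)+1=\nu(\alpha)$ and a single induction on $\nu$ stalls. You claim this configuration forces $T_i\beta\in B_n$. That is false in general: $\beta\in A_n$ gives $j>i$ and $k>i+1$, and the $B_n$ condition at the affected positions computes out to forcing $j\le i+1$. So only the boundary case $j=i+1$ produces membership in $B_n$; whenever $j\ge i+2$, the element $T_i\beta$ stays in $A_n$ with $\nu$ unchanged. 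Choosing the \emph{minimal} $i$ does not exclude $j\ge i+2$, so your plan has no mechanism to terminate in this case.

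The paper resolves this with a \emph{double} induction and, crucially, the opposite choice: it picks $i$ \emph{maximal} with $\alpha\in G_nT_i$, so that the secondary invariant $N(\alpha)=n-\max\{i\mid[-n+i-1\to -n+i]\in L(\alpha)\}$ equals $n-i$. In the bad case, the newly created inversion $[j\to k]\in L(T_i\beta)$ with $j,k>i$ yields a consecutive inversion at some index $>i$, hence $N(T_i\beta)<N(\alpha)$ while $M=\tfrac12\nu$ is unchanged. The maximality of $i$ is what makes $N$ monotone; your minimality choice does not produce a decreasing secondary invariant. To repair your argument you would need to introduce a second invariant like $N$ and switch to the maximal choice of $i$.
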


\begin{proof}
	If $\alpha\in B_n$, then $\alpha\sim 0$ and we are done.
	Assume now $\alpha\in A_n$. We proceed by induction
	on $M(\alpha)=\frac{1}{2}|L(\alpha_{|(-n,-1)})|$ and then on
	$N(\alpha)=n-\max\{i\ |\ [-n+i-1\to -n+i]\in L(\alpha)\}$ if
	$M(\alpha)\neq 0$ to show that there exists
	$\sigma\in E_n$ with $\alpha\sim\sigma$.
	
	If $M(\alpha)=0$, then $\alpha\in E_n$ and we are
	done. Assume now $M(\alpha)>0$. By Lemma \ref{le:GnTi}, there are
	$i\in (1,n-1)$ and $\beta\in G_n$ such that $\alpha=\beta T_i$, and
	we choose $i$ maximal with this property, so that
	$N(\alpha)=n-i$.
	We have $\alpha\sim T_i\beta$. If $T_i\beta\in B_n$ then we are done.
	We assume now $T_i\beta{\not\in}B_n$.
	We have $L(\beta_{|(-n,-1)})=L(\alpha_{|(-n,-1)})
	\setminus\{[-n+i-1\to -n+i],[-n+i\to -n+i-1]\}$.

	If $\beta^{-1}(\{i,i+1\}){\not\subset}(-n,-1)$, then
	$L(T_i\beta_{|(-n,-1)})=L(\beta_{|(-n,-1)})$, hence
	$M(T_i\beta)<M(\alpha)$. By induction, there is $\sigma\in E_n$
	with $T_i\beta\sim\sigma$, hence $\alpha\sim\sigma$.

	Assume now there are $j,k\in (1,n)$ with
	$\beta(-n+j-1)=i$ and $\beta(-n+k-1)=i+1$.
	Since $T_i\beta\neq 0$, we have
	$j<k$. Since $\beta\in A_n$, we have
	$j>i$ and $k>i+1$.
	We have $M(T_i\beta)\le M(\beta)+1=M(\alpha)$. On the other
	hand, $[j\to k]\in L(T_i\beta)$ (cf Lemma \ref{le:inversionpaths}), hence
	$N(T_i\beta)<N(\alpha)$. We conclude by induction.

	The case of $F_n$ follows by applying Remark \ref{re:oppositebimodules}.
\end{proof}

\begin{lemma}
	\label{le:sim=q}
	Let $\alpha,\beta\in G_n$. We have $q(\alpha)=q(\beta)$ if
	and only if $\alpha\sim\beta$.
\end{lemma}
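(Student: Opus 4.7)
The plan is to derive Lemma \ref{le:sim=q} as a formal consequence of the four preceding results: Lemma \ref{le:propq} (compatibility of $q$ with the $H_n^\bullet$-action), the vanishing of $q$ on $B$, Lemma \ref{le:simtoEF} (every element of $G_n$ is $\sim$-equivalent to an element of $E_n$), and Lemma \ref{le:qinjEF} (injectivity of $q$ on $E$).

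For the forward direction, I would first reduce to verifying that $q$ is constant on generators of the equivalence relation. If $\alpha = T_i\sigma$ and $\alpha' = \sigma T_i$ for $\sigma \in G_n$ and $1 \le i < n$, then Lemma \ref{le:propq} gives $q(\alpha) = q(T_i \sigma) = q(\sigma T_i) = q(\alpha')$. If $\alpha \in B_n$, then by the definition of $q$ in \S\ref{se:Gbimodules} we have $q(\alpha) = 0$. Thus $\sim$ preserves $q$.

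For the backward direction, suppose $q(\alpha) = q(\beta)$ with $\alpha, \beta \in G_n$. By Lemma \ref{le:simtoEF}, there exist $\sigma_\alpha, \sigma_\beta \in E_n$ with $\alpha \sim \sigma_\alpha$ and $\beta \sim \sigma_\beta$. Applying the forward direction already established, we obtain $q(\sigma_\alpha) = q(\alpha) = q(\beta) = q(\sigma_\beta)$. The injectivity part of Lemma \ref{le:qinjEF}, applied to $E$, then yields $\sigma_\alpha = \sigma_\beta$, and hence $\alpha \sim \sigma_\alpha = \sigma_\beta \sim \beta$ by transitivity.

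There is essentially no hard step here: the entire technical content has been isolated in the earlier lemmas, with Lemma \ref{le:simtoEF} playing the role of a normal-form reduction to $E_n$, and Lemma \ref{le:qinjEF} guaranteeing uniqueness of that normal form. The only point to double-check is that Lemma \ref{le:propq} applies to every generating move of $\sim$ without restriction on whether the elements lie in $A_n$ or $B_n$, which is indeed the case since the statement $q(h\alpha) = q(\alpha h)$ holds for all $\alpha \in G_n$ (both sides vanish when $\alpha \in B_n$, using Lemma \ref{le:propertiesAB} which shows $B$ is stable under the $H_n^\bullet \wedge (H_n^\bullet)^\opp$-action).
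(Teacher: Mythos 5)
Your proof is correct and follows essentially the same route as the paper: Lemma \ref{le:propq} and the definition of $q$ on $B_n$ give the forward implication, and Lemma \ref{le:simtoEF} followed by Lemma \ref{le:qinjEF} gives the converse via reduction to the normal form in $E_n$. Your closing remark about verifying that $q(h\alpha)=q(\alpha h)$ holds even when $\alpha\in B_n$ is sound but unnecessary, since Lemma \ref{le:propq} already states this identity for all $\alpha\in G_n$ without restriction.
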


\begin{proof}
	Lemma \ref{le:propq} shows
	that if $\alpha\sim\beta$, then $q(\alpha)=q(\beta)$.
	Assume now $q(\alpha)=q(\beta)$. There are $\alpha',\beta'\in E_n$
	with $\alpha'\sim\alpha$ and $\beta'\sim\beta$ (Lemma
	\ref{le:simtoEF}) and we have $q(\alpha')=q(\alpha)=q(\beta)=
	q(\beta')$. It follows now from Lemma \ref{le:qinjEF} that
	$\alpha'=\beta'$, hence $\alpha\sim\beta$.
\end{proof}

\begin{proof}[Proof of Theorem \ref{th:gluing}]
Lemma \ref{le:sim=q} shows that $q$ factors through an isomorphism
$G/\!\sim\ \iso \Id_{\CS_M^\bullet(Z_\xi)}$. Since the restriction of $q$ to
	$C$ is surjective (Lemma \ref{le:qsurj}), it follows that $q$
	induces an isomorphism $C/\!\sim\ \iso \Id_{\CS_M^\bullet(Z_\xi)}$.

	Recall that $\mu_i:(R_{\xi_2^-}^\bullet L^{\bullet}_{\xi_1^+})^i\to G_i$
	has image $C_i$, hence $\mu_i$ induces an isomorphism
	$(R_{\xi_2^-}^\bullet L^{\bullet}_{\xi_1^+})^i/K_i\iso C_i/\!\sim$.
	As a consequence,
the canonical surjective map $T^*(R_{\xi_2^-}^\bullet L^{\bullet}_{\xi_1^+})\to
\Id_{\Delta_E(\CS^\bullet_{M}(Z))}$ factors through a surjective map
$C/\!\sim\ \to\Id_{\Delta_E(\CS^\bullet_{M}(Z))}$. Since the restriction
of $q$ to $C$ factors through $\Id_{\Delta_E(\CS^\bullet_{M}(Z))}$, we deduce that
we have an isomorphism $\Id_{\Delta_E(\CS^\bullet_{M}(Z))}\iso
\Id_{\CS_M^\bullet(Z_\xi)}$.
\end{proof}

\subsubsection{Complement}
We provide here a more direct description of the equivalence relation $\sim$ on $C$.

\begin{cor}
	\label{co:EFinC}
	We have $E\subset C$ and $F\subset C$.
\end{cor}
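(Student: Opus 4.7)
The plan is to deduce $E \subset C$ (and, symmetrically, $F \subset C$) by combining the injectivity of $q|_{E_n}$ from Lemma~\ref{le:qinjEF} with the surjectivity of $q|_{E \cap C}$ from Lemma~\ref{le:qsurj}. The latter provides a preimage at a specific level $\mu(\theta)$, so the whole argument hinges on a grading-matching observation: for every non-zero $\alpha \in A_n$ one has $\mu(q(\alpha)) = n$, where $\mu$ is the function of \S\ref{se:decompositionz0} for $Z_\xi$ and the point $z_0 = \xi(0)$.

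I would first establish this grading identity. Since $\alpha$ is a braid from $S \sqcup (-n,-1)$ to $T \sqcup (1,n)$, each of the $n$ gluing positions $j \in (1,n)$ is hit by exactly one strand, and the gluing connects that strand to the one starting at $-j$ via the segment $[j \to -j]$ lying inside $\xi(\mathbb{R}) \subset Z_\xi$. This segment crosses $\xi(0)$ exactly once, while the pieces $\alpha_s$ and $\alpha_{-i}$ live in $Z$ and avoid $\xi((-1,1))$. Summing the contributions of the $n$ gluings gives $\mu(q(\alpha)) = n$. The same count can be read off the formula
\[ q(\alpha)_s = \alpha_i \cdot [1\to i] \cdot \kappa^{d_s} \cdot [\alpha(s) \to 1] \cdot \alpha_s \]
using $\mu([1\to i]) = 1$, $\mu(\kappa) = 2$, and the identity $|S''| + 2\sum_{s \in S''} d_s = n$ built into the inductive definition of $q$.

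With this in hand, take $\alpha \in E_n$; the argument for $F_n$ is identical. We may assume $\alpha \neq 0$. Observe that $A_n \cap B_n = \emptyset$: if $\sigma \in A_n$ and $\sigma(-i) = n-j+1 \in (1,n)$, then the condition $\sigma(-i) \in (1,n-i)$ forces $n-j+1 \le n-i$, i.e.\ $i \le j-1 < j$, contradicting the requirement $j \le i$ in the definition of $B_n$. Hence $\alpha \notin B_n$, so $q(\alpha) \neq 0$. Setting $\theta = q(\alpha)$, the grading observation gives $\mu(\theta) = n$. Lemma~\ref{le:qsurj} then produces $\beta \in E_n \cap C_n$ with $q(\beta) = \theta$, and Lemma~\ref{le:qinjEF} forces $\alpha = \beta \in C_n$.

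The only substantive step is the grading identity $\mu(q(\alpha)) = n$ for $\alpha \in A_n$; without it the preimage provided by Lemma~\ref{le:qsurj} might land in some $E_m \cap C_m$ with $m \neq n$, where the injectivity statement of Lemma~\ref{le:qinjEF} does not apply. Everything else is formal bookkeeping once the correct level is matched.
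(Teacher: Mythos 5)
Your overall strategy is sound and, given that the paper supplies no proof, is the natural one: combine Lemma~\ref{le:qinjEF} (injectivity of $q$ on $E$ and $F$) with Lemma~\ref{le:qsurj} (surjectivity of $q$ on $E\cap C$ and $F\cap C$), using a grading observation to match levels. Your central claim $\mu(q(\alpha))=n$ for non-zero $\alpha\in A_n$ with $q(\alpha)\neq 0$ is correct, and you are right to flag it as the point that guarantees the preimage produced by Lemma~\ref{le:qsurj} lives in the same $E_n$ as $\alpha$.

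There is, however, a genuine reasoning gap in the step ``$\alpha\notin B_n$, so $q(\alpha)\neq 0$.'' The paper only stipulates that $q$ vanishes on $B_n$; it does not say that $q$ is non-zero on $A_n\setminus\{0\}$, and in fact this need not hold on all of $A_n$ (the relation $\sim$ can send $\alpha\in A_n$ to something in $B_n$, cf.\ Lemma~\ref{le:sim=q} together with the definition of $\sim$). The correct justification is simply the injectivity of $q|_{E}$ from Lemma~\ref{le:qinjEF}, which you already cite later: $q(\alpha)=0=q(0)$ with $\alpha\neq 0$ would contradict injectivity, so $q(\alpha)\neq 0$. The observation $A_n\cap B_n=\{0\}$ is true (and already implicit in the paper's decomposition $G_n=A_n\vee B_n$) but does not by itself yield the non-vanishing.

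Your numerical bookkeeping for the grading claim also contains small errors. Since $\kappa=\kappa'\cdot[1\to-1]$, the segment $[1\to-1]$ crosses $z_0=\xi(0)$ exactly once while $\kappa'$, which goes back through $Z$ against the orientation of the bridge, does not cross $z_0$ at all; thus $\mu(\kappa)=1$, not $2$. Correspondingly the identity built into the definition of $q$ is $|S''|+\sum_{s\in S''}d_s=n$, matching the paper's $d_s=\mu(q(\alpha)_s)-1$, rather than $|S''|+2\sum d_s=n$. Also the gluing segment pairs position $r$ with $-n+r-1=-(n+1-r)$, not $-r$. Since the conclusion $\mu(q(\alpha))=n$ follows simply from the fact that each of the $n$ positions in $(1,n)$ occurs in exactly one $I_m$ (which requires a brief chain argument using that $\alpha\in A_n$ forces the sequence of glued positions to be strictly increasing) and that each gluing segment crosses $z_0$ once, these slips are self-cancelling and do not invalidate the proof, but they should be corrected.
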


We define an  equivalence relation $\sim'$ on $C$ as the relation
generated by $\alpha'\ast(T_1 \alpha)\ast\alpha''\sim'
\alpha'\ast(\alpha T_1)\ast\alpha''$ for $\alpha',\alpha''\in C$ and
$\alpha\in D_2$.

\begin{lemma}
	\label{le:divisiblemiddle}
	Let $\sigma\in G_n$ and  $i\in\{1,\ldots,n-1\}$

	If $\sigma T_i\in C_n\setminus\{0\}$, then
$T_i\sigma\in C_n$ and $\sigma T_i\sim' T_i\sigma$.
	
	If $T_i\sigma\in C_n\setminus\{0\}$, then
	$\sigma T_i\in C_n$ and $\sigma T_i\sim' T_i\sigma$.
\end{lemma}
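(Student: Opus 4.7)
My plan is to localize the $T_i$-action on $G_n$ to a two-element window in a $C_1^{\ast n}$-decomposition of $\gamma := \sigma T_i$, and then appeal to the basic defining relation of $\sim'$ for $D_2$.

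By Remark \ref{re:oppositebimodules}, the two statements of the lemma become interchanged under passage to the opposite curve $Z^\opp$: the isomorphism $\CS^\bullet(Z_\xi)^\opp \iso \CS^\bullet(Z^\opp_{\bar\xi})$ swaps source and target (so composition reverses), sends $A_n, D_n$ to themselves, and exchanges $E_n \leftrightarrow F_n$; applied to the lemma it converts $\sigma T_i$ into $T_i \sigma^\opp$ and vice versa. Hence it is enough to prove the first statement. Assume $\gamma := \sigma T_i \in C_n \setminus \{0\}$.

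Using the description $C_n = C_1^{\ast n}$ from Lemma \ref{le:nu}, I would write $\gamma = \alpha_1 \ast \cdots \ast \alpha_n$ with $\alpha_j \in C_1$, and set $\gamma_L = \alpha_1 \ast \cdots \ast \alpha_{i-1}$, $\beta = \alpha_i \ast \alpha_{i+1} \in C_2$, $\gamma_R = \alpha_{i+2} \ast \cdots \ast \alpha_n$. Under the canonical embedding $H_{i-1} \wedge H_2 \wedge H_{n-i-1} \hookrightarrow H_n$ (which sends $1\wedge T_1 \wedge 1$ to $T_i$), the $(H_n, H_n)$-equivariance of $\ast$ (axiom (1) of the lax bi-$2$-representation structure in \S\ref{se:2birep}, applied to the construction of \S\ref{se:Gbimodules}) translates the right action of $T_i$ on $G_n$ into the right action of $T_1$ on the middle factor $\beta$, yielding $\sigma T_i = \gamma_L \ast (\beta T_1) \ast \gamma_R$. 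Because $\mu_n$ factors through $\nu_n : R^n L^n \iso D_n$ (Lemma \ref{le:nu}) via the braiding $\lambda$, we have $C_n \subseteq D_n$; in particular the nonzero factor $\beta$ lies in $D_2$, and the defining relation of $\sim'$ gives $\beta T_1 \sim' T_1 \beta$. Therefore
\[
\sigma T_i \;=\; \gamma_L \ast (\beta T_1) \ast \gamma_R \;\sim'\; \gamma_L \ast (T_1\beta) \ast \gamma_R,
\]
and the right-hand side lies in $C_n$ as a $\ast$-product of elements of $C$.

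The main obstacle I expect is establishing that $\gamma_L \ast (T_1\beta) \ast \gamma_R$ equals $T_i\sigma$ literally in $G_n$, not only up to $\sim$. Since $\sigma$ itself is not assumed to lie in $C_n$, one cannot directly decompose $\sigma$; instead the proof must compare $T_i\sigma$ with $T_i(\gamma_L \ast \beta \ast \gamma_R)$ using that $\sigma$ and $\gamma_L \ast \beta \ast \gamma_R$ can differ only by an element of the right-annihilator of $T_i$ (characterized by Lemma \ref{le:GnTi}(4)--(5)), and then show that applying $T_i$ on the left of this difference vanishes. Carrying this out requires unpacking the explicit braid formulas for the two $H_n$-actions in \S\ref{se:Gbimodules} (where the left action acts by the transposition $s_i$ on the top strands $(1,n)$ while the right action acts by the reversed transposition on the bottom strands), and tracking how they interact with the shifts $[i \to i+m]$ and $[-n'-i \to -i]$ in the definition of $\ast$. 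Non-vanishing of the intermediate braids $\beta T_1$ and $T_1\beta$ will follow from the criteria in Lemma \ref{le:GnTi} applied to the middle factor $\beta \in D_2$.
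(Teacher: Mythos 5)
Your reduction to the first statement via Remark \ref{re:oppositebimodules} and the idea to decompose $\gamma = \sigma T_i$ in $C_n = C_1^{\ast n}$ and zero in on a middle $C_2$-factor $\beta$ are exactly the paper's starting point. But the argument then collapses because of two false claims, both of which would short-circuit what is actually the hard part of the lemma.

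First, $C_n \not\subseteq D_n$. What Lemma \ref{le:propertiesAB} gives you is $C \subset A$ (and $D_n \subset A_n$), not $C_n \subset D_n$. Concretely, for $n=2$ take $\alpha_1,\alpha_2 \in C_1 = D_1$ with $\alpha_2(-1) = u \in U$ and $\alpha_1(u) = 1$; then $(\alpha_1 \ast \alpha_2)(-1) = 1$, so $\alpha_1 \ast \alpha_2 \in A_2 \setminus D_2$. The fact that the middle factor $\beta$ does lie in $D_2$ is true, but it is \emph{not} automatic: the paper deduces it from the hypothesis $\sigma T_i \neq 0$ by showing $[-n+i-1 \to -n+i] \in D(\sigma')$ via Lemma \ref{le:GnTi}, propagating this down to $\alpha\ast\beta$, and concluding $\beta(-1)\neq 1$. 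Dropping the nonvanishing hypothesis would invalidate $\beta\in D_2$.

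Second, and more seriously, the identity $\sigma T_i = \gamma_L \ast (\beta T_1) \ast \gamma_R$ is false. What the decomposition gives you is $\sigma T_i = \gamma_L \ast \beta \ast \gamma_R$, full stop. The $T_i$-action is already absorbed into $\beta$; it is not sitting in a $\ast$-factor slot where you can apply the defining relation of $\sim'$. The equivariance of $\ast$ you invoke would justify $\sigma T_i = \gamma_L\ast(\tilde\beta T_1)\ast\gamma_R$ only if you had a decomposition $\sigma = \gamma_L\ast\tilde\beta\ast\gamma_R$ — but $\sigma$ itself is not assumed to lie in $C_n$, which you note yourself at the end. So the one application of the $\sim'$-relation in your proof applies to an element $\gamma_L\ast(\beta T_1)\ast\gamma_R$ that is not $\sigma T_i$ (and in fact vanishes, since it equals $\sigma T_i^2$). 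The actual work of the lemma is to decide whether $\beta$ is $T_1$-divisible — i.e., whether $[-1\to -2]\in D(\beta)$ (giving $\beta = \beta'T_1$), or not; in the second case one must further split on $T_1\beta = 0$ vs.\ $T_1\beta\neq 0$, and the last subcase requires reshuffling the left tail $\alpha^{i-1}\ast\cdots\ast\alpha^1$ via Lemma \ref{le:middlebraids} so that a $T_1$-factor emerges elsewhere. Your "main obstacle" paragraph senses that something is missing, but it locates the gap at the final equality with $T_i\sigma$; the real gap is earlier, in the unjustified (indeed incorrect) factoring of $T_i$ out of $\sigma T_i$.
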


\begin{proof}
	Put $\sigma'=\sigma T_i$ and assume $\sigma'\in C_n\setminus\{0\}$.
	There are
	$\gamma\in C_{n-i-1}$, $\beta\in C_2$ and $\alpha\in C_{i-1}$
	such that $\sigma'=\alpha\ast\beta\ast\gamma$.

	Lemma \ref{le:GnTi} shows that $[-n+i-1\to -n+i]\in
	D(\sigma')$.
	We have $\sigma'_{|\{-n+i-1,-n+i\}}=(\alpha\ast\beta)_{|(-i-1,-i)}
	\circ ([-n+i-1\to -i-1]\boxtimes [-n+i\to -i])$.
	It follows from Lemma \ref{le:GnTi} that
	$[-i-1\to -i]\in D(\alpha\ast\beta)$.
	Since $[-i-1\to -i]\in L((\alpha\ast\beta)_{|(-i-1,-i)})$,
	it follows that $\beta(-1)\neq 1$, hence $\beta \in D_2$.

	\smallskip
	$\bullet\ $Assume $[-1\to -2]\in D(\beta)$. We have $\beta=\beta' T_1$ for some
	$\beta'\in G_2$ by Lemma \ref{le:GnTi}. 
	Since $\beta \in D_2$, we have
	$\beta'\in D_2\subset A_2$.  We deduce that $\beta'\in E_2$,
	hence $T_1\beta'\in E_2\subset C_2$ (Corollary \ref{co:EFinC}).
	So, $\sigma T_i=\alpha\ast(\beta' T_1)\ast\gamma\sim'
	\alpha\ast(T_1\beta')\ast\gamma=T_i\sigma$.

	\smallskip
	$\bullet\ $Assume now $[-1\to -2]{\not\in} D(\beta)$, i.e., $\beta\in E_2$. 
We have $T_1\beta,\beta T_1\in D_2\subset A_2$ and $T_1\beta\subset E_2\subset
C_2$ (Corollary \ref{co:EFinC}).

	\smallskip
	$\ \ \diamond\ $Assume $T_1\beta=0$. There is $\beta''\in G_2$ such that $\beta=T_1
	\beta''$ (Lemma \ref{le:GnTi}).
	Since $\beta\in E_2\cap D_2$, we have $\beta''\in E_2\cap D_2
	\subset C_2$, hence also $\beta''\in F_2$. As a consequence,
	$\beta''T_1\in F_2\subset C_2$. We deduce that
	$\alpha\ast\beta\sim' \alpha\ast (\beta'' T_1)$.
	We have $L(\alpha_{|\beta((-2,-1))})\neq\emptyset$ and
	$L((\beta'' T_1)_{|(-2,-1)})\neq\emptyset$, hence
	$(\alpha\ast (\beta'' T_1))_{|(-2,-1)}=0$ and
	$\alpha\ast (\beta'' T_1)=0$. We have 
	$\sigma T_i=\alpha\ast (T_1\beta'')\ast\gamma\sim'
	\alpha\ast(\beta'' T_1)\ast\gamma=0$.
	Since $T_i\sigma T_i=0$ and $\sigma T_i\neq 0$,
	it follows that $L((\sigma T_i)_{|(\sigma T_i)^{-1}(\{i,i+1\})})\neq
	\emptyset$, by
	applying Lemma \ref{le:GnTi} to $Z^{\opp}$.
	Since $\sigma T_i\in A_n$, we deduce that 
	$L(\sigma_{|\sigma^{-1}(\{i,i+1\})})\neq\emptyset$,
	hence $T_i\sigma=0\sim' \sigma T_i$ (using Lemma \ref{le:GnTi} for $Z^{\opp}$
	again).

	\smallskip
	$\ \ \diamond\ $Assume now $T_1\beta\neq 0$. It follows that $\beta\in F_2$,
	hence $\beta T_1\in F_2\subset C_2$.

	There are $\alpha^1,\ldots,\alpha^{i-1}\in C_1$
	with $\alpha=\alpha^{i-1}\ast\cdots\ast\alpha^1$.
	Let $s_i=\beta(-i)$ for $i\in\{1,2\}$.
	Consider $j\ge 1$ minimal such that
	$L((\alpha^j\ast\cdots\ast\alpha^1)_{|\{s_1,s_2\}})\neq\emptyset$.

	Define
	$u'=\alpha^j\boxtimes ([l\to l+1])_{1\le l\le j+1}$ and
	$u''=(\alpha^{j-1}\ast\cdots\alpha^1\ast\beta)
\boxtimes [-j-2\to -1]$.

	Let $\zeta=u''_{-2}\circ[-1\to -2]\circ (u''_{-1})^{-1}$. Define
	$I$ and $J$ to be the domain and codomain of $u''$, intersected 
	with $M$.
	Note that $\zeta(0),\zeta(1)\in M$. Let	
	$v'=(u')^\zeta=(\alpha^j)^{\zeta}\boxtimes ([l\to l+1])_{1\le l\le j+1}$ and define $v'':I\sqcup (-j-2,-1)\to
	J\sqcup (1,2)\sqcup \{-1\}\sqcup (1,j+1)$ by
	$$v''_s=\begin{cases}
		u''_{-2}\circ [-1\to -2] & \text { if } s=-1 \\
		u''_{-1}\circ [-2\to -1] & \text { if } s=-2 \\
		u''_s & \text{ otherwise.}
	\end{cases}$$
Lemma \ref{le:middlebraids} shows that $v'$ and $v''$ are braids and
$\alpha^j\ast\cdots\ast\alpha^1\ast\beta=u'\cdot u''=v'\cdot v''$. We
 have $v''=(\alpha^{j-1}\ast\cdots\alpha^1\ast(\beta T_1))
\boxtimes [-j-2\mapsto -1]$ and we deduce that
$\alpha\ast\beta=\alpha'\ast (\beta T_1)$, where
$\alpha'=\alpha^{i-1}\ast\cdots\ast\alpha^{j+1}\ast
(\alpha^j)^{\zeta}\ast\alpha^{j-1}\cdots\ast\alpha^1\in C_{i-1}$.
We have $\sigma T_i=\alpha'\ast (\beta T_1)\ast\gamma\sim'
\alpha'\ast (T_1\beta)\ast\gamma=T_i\sigma$. This completes the proof
of the first statement of the lemma.

The second statement of the lemma follows from the first one applied to
$Z^\opp$ thanks to Remark \ref{re:oppositebimodules}.
\end{proof}

\begin{prop}
	\label{le:sim'=sim}
	Let $\alpha,\beta\in C_n$. We have $\alpha\sim'\beta$ if and
	only if $\alpha\sim\beta$.
\end{prop}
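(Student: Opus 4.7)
The plan is to prove the two implications separately. The direction $\sim' \Rightarrow \sim$ on $C$ is immediate from $q$-invariance: given a generating relation $\alpha' \ast (T_1 \gamma) \ast \alpha'' \sim' \alpha' \ast (\gamma T_1) \ast \alpha''$ with $\gamma \in D_2$, Lemma \ref{le:propq} gives $q(T_1 \gamma) = q(\gamma T_1)$ and the multiplicativity of $q$ under $\ast$ (same lemma) yields $q(\alpha' \ast (T_1 \gamma) \ast \alpha'') = q(\alpha' \ast (\gamma T_1) \ast \alpha'')$. Hence any $\sim'$-equivalent pair in $C$ has equal $q$-image, and by Lemma \ref{le:sim=q} such a pair is $\sim$-equivalent in $G$.

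For the reverse direction, the strategy is to show that every $\alpha \in C_n$ is $\sim'$-equivalent to a canonical representative lying in $E_n$. Since $E_n \subseteq C$ by Corollary \ref{co:EFinC} and $q|_{E_n}$ is injective by Lemma \ref{le:qinjEF}, this representative is uniquely determined by $q(\alpha)$. Once established, for $\alpha, \beta \in C_n$ with $\alpha \sim \beta$ Lemma \ref{le:sim=q} yields $q(\alpha) = q(\beta)$, so the canonical representatives coincide, and $\alpha \sim' \beta$ follows by transitivity.

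The key auxiliary result I would prove is the inclusion $C \subseteq A$, which follows from $A_r \ast A_s \subseteq A_{r+s}$. This in turn is a direct case analysis: for $\alpha \in A_r$ and $\beta \in A_s$, one computes $(\alpha \ast \beta)(-i)$ from the explicit formula for $\ast$, splitting into the cases $s+1 \le i \le s+r$ (where the value is $\alpha(-(i-s))$) and $1 \le i \le s$ (further split by whether $\beta(-i)$ lies in $T_1$ or in $(1,s)$), and verifies in each case that the result lies in $T \sqcup (1, r+s-i)$ using the $A$-conditions for $\alpha$ and $\beta$. Since $C$ is generated by $C_1 = A_1$ under $\ast$, induction yields $C_n \subseteq A_n$ for all $n$, and hence $C \cap B = \{0\}$. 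This inclusion is the crux of the argument: it ensures that the ``problematic'' case $T_i \beta \in B_n$ arising in the proof of Lemma \ref{le:simtoEF}, where the original proof invokes the rule $\sigma \sim 0$ for $\sigma \in B$ (a move not directly available for $\sim'$), can only occur with $T_i \beta = 0$ in our setting, which is trivially $\sim' 0$.

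Producing the canonical form then proceeds by mimicking Lemma \ref{le:simtoEF}, replacing each $\sim$-move with a $\sim'$-move via Lemma \ref{le:divisiblemiddle}. The induction is on $(M(\alpha), N(\alpha))$ lexicographically. For $\alpha \in C_n \setminus E_n$ nonzero, the inclusion $C \subseteq A$ combined with Lemma \ref{le:GnTi} lets us write $\alpha = \beta T_i$ with $i$ maximal; Lemma \ref{le:divisiblemiddle} gives $T_i \beta \in C_n$ and $\alpha \sim' T_i \beta$. If $T_i \beta = 0$ we are done; otherwise $T_i \beta \in C_n \setminus \{0\} \subseteq A_n$ (so $T_i \beta \notin B_n$), and the lexicographic decrease of $(M, N)$ verified in the proof of Lemma \ref{le:simtoEF} lets us conclude by induction. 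The main obstacle is the verification of $C \subseteq A$: although elementary, it requires carefully unwinding the definitions of $\ast$ and $A_n$, and the whole argument hinges on this inclusion to bypass the generator $\sigma \sim 0$ of $\sim$.
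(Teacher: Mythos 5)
Your proof is correct and follows essentially the same route the paper intends, just spelled out in much more detail (the paper's own proof is two sentences: the forward direction is declared clear, and the converse is attributed to Lemma \ref{le:divisiblemiddle}). The key observations you make are exactly right: the inclusion $C\subseteq A$ (which in fact already follows from Lemma \ref{le:nu}, which gives $C_n=A_1^{\ast n}$, and Lemma \ref{le:propertiesAB}, which gives $A\ast A\subseteq A$, so there is no need to re-derive $A_r\ast A_s\subseteq A_{r+s}$ from scratch) guarantees that the intermediate elements in the $\sim'$-reduction never land in $B_n\setminus\{0\}$, so the problematic $\sigma\sim 0$ generator of $\sim$ is never genuinely needed once one uses Lemma \ref{le:divisiblemiddle} to replace each $T_i$-flip by a $\sim'$-move. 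The reduction to a representative in $E_n$, the use of Corollary \ref{co:EFinC} to place it back in $C$, and the injectivity of $q|_E$ (Lemma \ref{le:qinjEF}) together with Lemma \ref{le:sim=q} to force the canonical forms of $\sim$-equivalent elements to coincide, all match what the paper leaves implicit. One small simplification: for the forward implication your $q$-invariance argument works, but it is simpler to note directly that the $\sim'$-generator is literally a $\sim$-generator after unwinding $\ast$ (compatibility of $\ast$ with the $H^\bullet$-action gives $\alpha'\ast(T_1\gamma)\ast\alpha''=T_{m+1}\cdot(\alpha'\ast\gamma\ast\alpha'')$ and $\alpha'\ast(\gamma T_1)\ast\alpha''=(\alpha'\ast\gamma\ast\alpha'')\cdot T_{m+1}$ when $\alpha'\in C_m$), which is surely what the paper means by ``clear.''
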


\begin{proof}
	It is clear that $\alpha\sim'\beta$ implies $\alpha\sim\beta$.
The converse follows from Lemma \ref{le:divisiblemiddle}.
	\end{proof}

\begin{cor}
	\label{co:quotientC}
	We have $C/\!\sim'\ =G/\!\sim$.
\end{cor}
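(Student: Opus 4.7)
The plan is to establish a natural bijection $C/\!\sim'\ \iso G/\!\sim$ induced by the inclusion $C\hookrightarrow G$. Since $\sim'$ is defined as a special case of $\sim$ (namely, the relations generating $\sim'$ are among those generating $\sim$, because $T_1\alpha\sim\alpha T_1$ inside $G$ for any $\alpha\in D_2\subset G_2$), the inclusion $C\hookrightarrow G$ descends to a well-defined map of quotients $\phi:C/\!\sim'\ \to\ G/\!\sim$.

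For injectivity of $\phi$, I would invoke Proposition \ref{le:sim'=sim} directly: if $\alpha,\beta\in C$ satisfy $\alpha\sim\beta$ in $G$, then $\alpha\sim'\beta$ in $C$, so their classes in $C/\!\sim'$ agree. This is exactly the content of that proposition, so nothing further is required on this side.

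For surjectivity, I would show that every class in $G/\!\sim$ has a representative in $C$. Let $\alpha\in G_n$. By Lemma \ref{le:simtoEF}, there exists $\sigma\in E_n$ with $\alpha\sim\sigma$ (one could equally use $F_n$). By Corollary \ref{co:EFinC}, $E_n\subset C_n$, so $\sigma\in C_n$ and the class of $\alpha$ in $G/\!\sim$ lies in the image of $\phi$. Combining the two directions yields the corollary.

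I do not anticipate any obstacle here, as all three ingredients (the comparison $\sim'\Leftrightarrow\sim$ on $C$, the reduction to $E$ via iterated $T_i$-moves, and the inclusion $E\subset C$) have already been established in the immediately preceding results. The only point that requires minimal care is verifying that the relations defining $\sim'$ are instances of $\sim$: this is immediate since $T_1\alpha$ and $\alpha T_1$ lie in $G_2$ and the relation $T_1\alpha\sim\alpha T_1$ is one of the generating relations of $\sim$, and the concatenation $\alpha'\ast(-)\ast\alpha''$ preserves $\sim$-equivalence because $\sim$ is a congruence for the bimodule structure and for $\ast$.
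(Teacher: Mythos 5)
Your proof is correct and takes essentially the same approach as the paper: injectivity from Proposition~\ref{le:sim'=sim}, surjectivity by reducing to $E$ (or $F$) and using $E\subset C$. The only cosmetic difference is that the paper cites Lemma~\ref{le:qsurj} directly for surjectivity, whereas you route through Lemma~\ref{le:simtoEF} and Corollary~\ref{co:EFinC} (which itself ultimately rests on Lemmas~\ref{le:qinjEF}, \ref{le:qsurj} and \ref{le:sim=q}); the two formulations are equivalent, and your explicit check that $\sim'$ is a sub-relation of $\sim$ is a useful detail the paper leaves tacit.
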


\begin{proof}
	The surjectivity of $C/\!\sim'\ \to G/\!\sim\ $ is given by Lemma \ref{le:qsurj}.
	The injectivity follows from Lemmas \ref{le:sim=q} and
	\ref{le:sim'=sim}.
\end{proof}

\subsubsection{Large enough $M$}
\label{se:largeM}
We assume in \S \ref{se:largeM}
that $(\xi_1^+)^{-1}(M)$ has no maximum and $(\xi_2^-)^{-1}(M)$ has no minimum.
Fix an increasing sequence $(m^+_0,m^+_1,\ldots)$ of points of $(\xi_1^+)^{-1}(M)$ and 
a decreasing sequence $(m^-_0,m^-_1,\ldots)$ of points of $(\xi_2^-)^{-1}(M)$ such that
$\lim_i m^+_i>t$ for all $t\in(\xi_1^+)^{-1}(M)$ and
$\lim_i m^-_i<t$ for all $t\in(\xi_2^-)^{-1}(M)$.


\smallskip
\begin{lemma}
	We have a canonical isomorphism $LR_{\xi_2^-}^\bullet\iso G_1$.
\end{lemma}

\begin{proof}
Using (\ref{eq:Lascolim}) and (\ref{eq:Rascolim}), we have isomorphisms
\begin{multline*}
	L_{\xi_1^+}^\bullet(T,-)\wedge R_{\xi_2^-}^\bullet(-,S) \iso \\
	\colim_{r,s\to\infty}\Hom_{\CS^\bullet(Z)}(-,T\sqcup\{\xi_1^+(m^+_r)\})\wedge\Hom_{\CS^\bullet(Z)}(S\sqcup\{\xi_2^-(m^-_s)\},-)\\
	\iso
	\colim_{r,s\to\infty}\Hom_{\CS^\bullet(Z)}(S\sqcup\{\xi_2^-(m^-_s)\},T\sqcup\{\xi_1^+(m^+_r)\})\\
	\iso
	\Hom_{\CS^\bullet(Z)}(S\sqcup\{\xi_2^-(-1)\},T\sqcup\{\xi_1^+(1)\}).
\end{multline*}
and the lemma follows.
\end{proof}

	Let us define $\lambda:R_{\xi_2^-}^\bullet L\to LR_{\xi_2^-}^\bullet$ as the composition of the injective
	map $\mu_1:R_{\xi_2^-}^\bullet L\to G_1$ (cf Lemma \ref{le:nu})
	with the inverse of the isomorphism of the lemma above.

	\smallskip
	Under the assumptions above, we have a simpler version of Theorem 
	\ref{th:gluing}.

	\begin{thm}
	\label{th:gluing2}
	The functor $\tilde{\Xi}$ factors through $\Delta'_{\lambda}
	\CS^\bullet_{M}(Z)$
and induces an isomorphism of differential pointed categories
	$\Delta'_{\lambda}\CS^\bullet_{M}(Z)\iso \CS^\bullet_M(Z_\xi)$.\indexnot{Xi}{\Xi}
\end{thm}

	\begin{proof}
		Every element of $R_{\xi_2}^-(S,T)$ is of the form
		$(\id_T\boxtimes\zeta)\cdot(\alpha\boxtimes\id_{-1})$ for some
		$\zeta$ admissible class of paths starting at $-1$ and
		$\alpha$ a braid starting at $S$.
		
		Every element of $L_{\xi_1}^+(S,T)$ is of the form
		$([m_i^+\to 1]\boxtimes \id_T)\cdot \alpha$ for some
		braid $\alpha$ starting at $S$.

		It follows that every element of $(R_{\xi_2}^-L_{\xi_1}^+)^n$
		is of the form 
		$$(\id\boxtimes\zeta_1)\wedge ([m_i^+\to 1]\boxtimes\id)
		\wedge\cdots\wedge
		(\id\boxtimes\zeta_{n-1})\wedge ([m_{i+n-2}^+\to 1]\boxtimes\id)\wedge
		(\id\boxtimes\zeta_n)\wedge \alpha$$
		for some $i\ge 0$ and $\zeta_r$ an admissible class of paths
		starting at $-1$ for $1\le r\le n$.
		The image by $\mu_n$ of such an element is
		$$\bigl(([m_{i+r-1}^+\to r])_{1\le r\le n-1}\boxtimes\id\bigr)\circ
		\alpha)\boxtimes
		\zeta_1\boxtimes (\zeta_2\circ [-2\to -1])
		\boxtimes\cdots\boxtimes (\zeta_n\circ [-n\to -1]).$$
		It follows that $\mu_n$ is injective, hence
		it induces an isomorphism $(R_{\xi_2}^-L_{\xi_1}^+)^n\iso C_n$.

		\smallskip
		Let $L$ be the image of $\lambda\circ (T_1\otimes 1-1\otimes T_1)$.
		We have $\nu_2=\mu_2\circ\lambda$. It follows that
		$\mu_2(L)=(T_1\otimes 1-1\otimes T_1)(D_2)$,
		since $D_2$ is the image of $\nu_2$ (Lemma \ref{le:nu}).
		The theorem follows now from Corollary \ref{co:quotientC} and
		Theorem \ref{th:gluing}.
	\end{proof}

\begin{rem}
	Consider $Z$ the singular curve quotient of oriented $\BR$ by the identification of
two points. Take $M$ to be the single exceptional point of $Z$. The construction above 
	applied to $\CS_M^\bullet(Z)$ gives a category where going twice around the circle,
	avoiding the loop, is non-zero (cf picture below),
	while it is not represented by a smooth path in
	$Z_\xi$. Theorem \ref{th:gluing2} does not hold because $M$ is too small.
$$\includegraphics[scale=1.2]{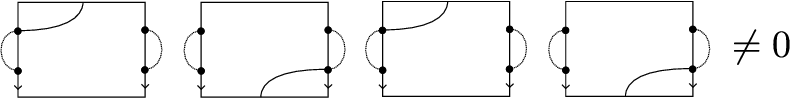}$$
\end{rem}

\subsubsection{Functoriality}
\label{se:functorialityDelta}
Consider $f:Z\to Z'$ a morphism of curves and
assume $f\circ\xi_1^+$ is outgoing for $Z'$ and $f\circ\xi_2^-$ is incoming for $Z'$.
 
The morphism $f$ extends uniquely to a morphism of curves $f:Z_\xi\to Z'_{f\circ\xi}$.

\smallskip
The functor $f:\CS^\bullet_{f,M}(Z)\to \CS^\bullet_{f(M)}(Z')$ can be equipped with a structure of morphism
of $2$-representations $L_{\xi_1^+}^\bullet\to L_{f\circ\xi_1^+}^\bullet$ and
of morphism of $2$-representations $R_{\xi_2^-}^\bullet\to R_{f\circ\xi_2^-}^\bullet$
(Lemma \ref{le:2repcurvesf} and \S\ref{se:rightaction}),
and it induces a differential pointed functor
(cf \S\ref{se:functorialitytensor})
$$\Delta f:\Delta_{R_{\xi_2^-},L_{\xi_1^+},\lambda}\CS^\bullet_{f,M}(Z)\to
\Delta_{R_{f\circ \xi_2^-},L_{f\circ \xi_1^+},\lambda'}\CS^\bullet_{f(M)}(Z'),$$
where $\lambda'$ is the analog of the map $\lambda$ for $Z'$.

We obtain a commutative diagram of differential pointed functors

\begin{equation}
	\label{eq:fDelat}
\xymatrix{
	\Delta_{R_{\xi_2^-},L_{\xi_1^+},\lambda}\CS^\bullet_{f,M}(Z)\ar[r]^-{\Xi}\ar[d]_{\Delta f} &
	\CS^\bullet_{f,M}(Z_\xi)
	\ar[d]^f\\
\Delta_{R_{f\circ \xi_2^-},L_{f\circ \xi_1^+},\lambda'}\CS^\bullet_{f(M)}(Z')\ar[r]_-{\Xi} &
\CS^\bullet_{f(M)}(Z'_{f\circ\xi}) \\
}
\end{equation}

Assume $f_{|Z}$ is strict. It follows that $f$ is strict.
The functor $f^\#:\add(\CS_{f(M)}(Z'))\to \add(\CS_{M}(Z))$ can be equipped with a structure of morphism
of $2$-representations $L_{f\circ\xi_1^+}\to L_{\xi_1^+}$ and
of morphism of $2$-representations $R_{f\circ\xi_2^-}\to R_{\xi_2^-}$
(Lemma \ref{le:2repcurvesfsharp} and \S\ref{se:rightaction}),
and it induces a differential functor
(cf \S\ref{se:functorialitytensor})
$$\Delta f^\#:\Delta_{R_{f\circ \xi_2^-},L_{f\circ \xi_1^+},\lambda'}\add(\CS_{f(M)}(Z'))\to
\Delta_{R_{\xi_2^-},L_{\xi_1^+},\lambda}\add(\CS_{M}(Z)).$$

We obtain a commutative diagram of differential functors
commuting with coproducts
\begin{equation}
	\label{eq:fsharpDelat}
\xymatrix{
	\Delta_{R_{\xi_2^-},L_{\xi_1^+},\lambda}\add(\CS_{M}(Z))\ar[r]^-\Xi & \add(\CS_{M}(Z_\xi)) \\
\Delta_{R_{f\circ \xi_2^-},L_{f\circ \xi_1^+},\lambda'}\add(\CS_{f(M)}(Z'))\ar[r]_-\Xi
\ar[u]^{\Delta f^\#} &
\add(\CS_{f(M)}(Z'_{f\circ\xi})))\ar[u]_{f^\#} \\
}
\end{equation}

\subsection{Diagonal action}
\label{se:actiontensor}
\subsubsection{Isomorphism Theorem}

Let $Z'=\BR$ be the smooth curve
with $Z'_o=(-\frac{1}{2},\frac{1}{2})$ with its standard orientation.
	Fix an increasing homeomorphism $\alpha:\BR_{>0}\iso\BR_{>\frac{1}{2}}$ fixing
	the positive integers and define $\alpha':\BR_{<0}\iso\BR_{<-\frac{1}{2}}$ by
	$\alpha'(t)=-\alpha(-t)$.

Assume $Z(\xi_1^+)\neq Z(\xi_2^-)$ and assume
there is a morphism $\tilde{\xi}_1:Z'\to Z$ with image $Z(\xi_1^+)$
and such that $\xi_1^+=\tilde{\xi}_1\circ\alpha$. Put
	$\xi_1^-=\tilde{\xi}_1\circ\alpha':\BR_{<0}\to Z$ and
	denote by $\xi^-$ the composition $\BR_{<0}\xrightarrow{\xi_1^-}Z\hookrightarrow
	Z_{\xi_1}$.

\smallskip
	Proposition \ref{pr:Ldual} gives an isomorphism of differential pointed bimodules
	$\hat{\kappa}_1:L_{\xi_1^+}(-_2,-_1)\iso R_{\xi_1^-}(-_1,-_2)^\vee$.

	\smallskip
	Since there is no admissible path from $\xi_2^-(-1)$ to $\xi_1^+(1)$
in $Z$, we have $A_n=D_n=G_n$ (with the notations of \S\ref{se:Gbimodules}),
hence we have an isomorphism (Lemma \ref{le:nu})
$$\nu_n:R_{\xi_2^-}^\bullet(T,-,e^n)\wedge L_{\xi_1^+}^\bullet(-,S,e^n) \iso
\Hom_{\CS^\bullet(Z)}(S\sqcup\{\xi_2^-(-n),\ldots,\xi_2^-(-1)\},T\sqcup\{\xi_1^+(1),
\ldots,\xi_1^+(n)\}).$$

Consider 
\begin{align*}
	\lambda:L_{\xi_1^+}^\bullet(T,-) R_{\xi_2^-}^\bullet(-,S)&\to
	R_{\xi_2^-}^\bullet(T,-) L_{\xi_1^+}^\bullet(-,S)\\
	\alpha\wedge\beta&\mapsto \nu_1^{-1}(\alpha\cdot\beta)=((\alpha\cdot\beta)_{\xi_2^-(-1)}\boxtimes
	\id_{T\setminus\{\chi(\alpha\circ\beta)(\xi_2^-(-1))\}})\wedge(\alpha\cdot\beta)_{|S}.
\end{align*}

Since $\nu_n$ is an isomorphism, the morphisms (\ref{eq:diagonalpower}) are
isomorphisms (cf proof of Theorem \ref{th:gluing}) and we obtain from 
Remark \ref{re:lambdatobirep}
an isomorphism of differential pointed categories
$$\Delta_E\CS_M^\bullet(Z)\iso\Delta_{\lambda}\CS_M^\bullet(Z).$$
Composing its inverse with $\Xi$, we deduce from Theorem \ref{th:gluing}
an isomorphism of differential pointed categories
$$\Xi':\Delta_\lambda\CS_M^\bullet(Z)\iso\CS_M^\bullet(Z_\xi).$$

	\begin{thm}
		\label{th:iso2rep}
		The isomorphism $\Xi'$ provides an isomorphism of $2$-representations,
		where $\Delta_\lambda\CS_M^\bullet(Z)$
		is equipped with the diagonal action and $\CS_M^\bullet(Z_\xi)$ with
		the action of $R_{\xi^-}$.
	\end{thm}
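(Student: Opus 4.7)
The strategy is to verify that the isomorphism $\Xi'$ intertwines the bimodule $2$-representation on $\Delta_\lambda\CS^\bullet_M(Z)$ (from the dual diagonal construction of \S\ref{se:leftdualbimod}) with the bimodule $2$-representation $R_{\xi^-}$ on $\CS^\bullet_M(Z_\xi)$. Since $\Xi'$ is already an isomorphism of differential pointed categories — this uses the invertibility of $\lambda$, which holds in our setup because $A_n=D_n=G_n$ so Lemma \ref{le:nu} gives the isomorphism $\nu_n$, whence the canonical map $\Delta_E\to\Delta_\lambda$ is an isomorphism by Remark \ref{re:lambdatobirep} — it remains to produce an isomorphism of $(\Delta_\lambda\CS^\bullet_M(Z),\Delta_\lambda\CS^\bullet_M(Z))$-bimodules $E_\Delta\iso R_{\xi^-}$ through $\Xi'\wedge\Xi'$ that sends the $\tau$ of the dual diagonal action to the $\tau$ of $R_{\xi^-}$.

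First I would unpack $E_\Delta$ of \S\ref{se:leftdualbimod}. With $F_1=L_{\xi_1^+}$ (left adjoint) and $E_1=R_{\xi_1^-}$ (right adjoint), related via the duality $\hat\kappa_1$ of Proposition \ref{pr:Ldual}, and with $E_2=R_{\xi_2^-}$, the bimodule $E_\Delta$ is the cone of $\pi\colon R_{\xi_2^-}\otimes A\to R_{\xi_1^-}\otimes A$ built from $\eta_1$, $\sigma$, and multiplication in $A$, with left $A$-action provided by the $\varsigma_i$. The underlying $(B,A)$-bimodule is therefore $R_{\xi_2^-}\otimes A\vee R_{\xi_1^-}\otimes A$ as a pointed set.

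On the other side, I would apply the twisted object decomposition dual to Proposition \ref{pr:descriptiononeside} to $(Z_\xi,\xi^-)$ to decompose $R_{\xi^-}$: an admissible path in $Z_\xi$ ending at $\xi^-(-1)$ either stays on the $\xi_1^-$-side of the gluing (contributing directly to $R_{\xi_1^-}\otimes A$) or crosses the glued neck and enters via $\xi_2^-$ (contributing to $R_{\xi_2^-}\otimes A$ after "unwinding" the crossing via $\nu_1^{-1}$, exactly as in the definition of $\tilde\Xi$ preceding Theorem \ref{th:gluing}); multiple crossings of the neck are absorbed into $A$ itself via Corollary \ref{co:quotientC}. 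This gives a natural $(B,A)$-bimodule isomorphism matching the decomposition of $E_\Delta$. Upgrading it to an $(A,A)$-bimodule isomorphism amounts to compatibility with the $\varsigma_i$, which is a reformulation of the gluing relations of \S\ref{se:gluing}: composition with the class $[1\to -1]$ in $Z_\xi$ implements exactly the formulas for $\varsigma_i$ after identifying $E_2^iF_1^i$ with $G_i$ via $\nu_i$ and modding out by the relations $T_r\wedge 1\sim 1\wedge T_r$.

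The main obstacle is checking that the $\tau$-endomorphisms correspond. On the $R_{\xi^-}$ side, $\tau$ is the single non-identity swap of the two strands ending at $\xi^-(-1)$ and $\xi^-(-2)$ in $Z_\xi$. Under the decomposition above, this swap must match formula (\ref{eq:firstdeftau}) block-by-block: the diagonal blocks $\tau_2$ and $\tau_1$ arise from the internal swaps of $R_{\xi_2^-}$ and $R_{\xi_1^-}$ respectively; the lower-left block vanishes because the swap cannot move a strand from the "crosses the neck" sector to the "stays on the $\xi_1^-$-side" sector; and the upper-right block $\sigma^{-1}$ matches the unique non-trivial reshuffle obtained when the swap exchanges one "near-$\xi_1^-$" strand with one "crosses the neck" strand. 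Verifying this last block requires tracing $\sigma$ through (\ref{eq:defsigma}), Lemma \ref{le:unitLR} (for $\eta_1$), $\kappa_1$ (for $\eps_1$), and $\lambda=\nu_1^{-1}\circ\mathrm{mult}$; the resulting composite is a single crossing of a "near-$\xi_1^-$" strand with a "through-the-neck" strand, which is precisely what the swap $\tau$ in $R_{\xi^-}$ produces in the mixed case. Compatibility of differentials will follow by an argument analogous to the proof of Proposition \ref{pr:descriptiononeside} applied to $Z_\xi$, combined with Theorem \ref{th:gluing}; while finicky, it is a routine strand-diagram computation once the geometric identifications are pinned down.
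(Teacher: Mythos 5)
Your overall strategy matches the paper's: transport the diagonal bimodule $E$ across the isomorphism $\Xi'$ from Theorem \ref{th:gluing}, decompose $R_{\xi^-}$ compatibly with the cone structure of $E_\Delta$, and check that the $\tau$-endomorphisms agree. The block structure you describe for $\tau$ and the role of $\sigma^{-1}$ are also correct in outline.

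There is, however, a genuine gap in the step where you decompose $R_{\xi^-}$. You propose to use "the twisted object decomposition dual to Proposition \ref{pr:descriptiononeside} applied to $(Z_\xi,\xi^-)$," and immediately describe the result as a two-part split into "stays on the $\xi_1^-$-side" versus "crosses the glued neck." These are not the same decomposition. Proposition \ref{pr:descriptiononeside} (and its dual for $R$) decomposes $R(S,-,e^n)$ as a twisted sum indexed by $n$-element subsets $S'\subset S$ — for $n=1$, indexed by $s\in S$ — with twists given by the maps $g_{S'',\zeta}$. The decomposition the argument actually requires has exactly two summands, independently of $|S|$, indexed by whether the single strand based at $\xi^-(-1)$ stays in the $Z(\xi_1^+)$-part of $Z_\xi$ or crosses the neck once; this is a distinct, ad hoc decomposition of $R_{\xi^-}(T,S)$ (the maps $f_1$ and $f_2$ in the paper's proof) with the cone relations $d(f_1)=0$ and $d(f_2)=f_1\circ u$ verified directly. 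If you tried to run Proposition \ref{pr:descriptiononeside}, the pieces would not line up with $R_{\xi_2^-}\otimes A\oplus R_{\xi_1^-}\otimes A$, and you would have to discover the correct split from scratch. Similarly, the later claim that "compatibility of differentials will follow by an argument analogous to the proof of Proposition \ref{pr:descriptiononeside}" rests on the same misattribution.

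Beyond that, the substantive verifications are left at the level of intuition: the explicit formulas for $\sigma$, $\sigma^{-1}$, and $\rho$; the upgrade from a $(\CS_M(Z),\CS_M(Z_\xi))$-bimodule isomorphism to an $(\CS_M(Z_\xi),\CS_M(Z_\xi))$-one via compatibility with the maps $w_{11},w_{12},w_{22}$ of \S\ref{se:leftdualbimod}; and the $\tau$-check with the indicator $d_{i,j}$. Note also that the vanishing of the lower-left block of (\ref{eq:firstdeftau}) is not because the swap "cannot move a strand from one sector to the other" — the swap can be attempted, but the resulting braid increases the intersection count and is therefore $0$ in the strand category, which is what makes the $\tau$-matrix asymmetric. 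Each of these is an explicit computation in the paper that the sketch does not reproduce.
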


	The remainder of \S\ref{se:actiontensor} is devoted to the proof of Theorem \ref{th:iso2rep}.

	\subsubsection{Setting}
Let $\sigma:R_{\xi_2^-}(T,-)\otimes R_{\xi_1^-}(-,S)\to
R_{\xi_1^-}(T,-)\otimes R_{\xi_2^-}(-,S)$ be defined as in (\ref{eq:defsigma}).

		\begin{lemma}
			The morphism $\sigma$ is invertible. Given
$\alpha\in R_{\xi_2^-}^\bullet(T,U)$ and $\beta\in R_{\xi_1^-}^\bullet(U,S)$, we have
			$$\sigma(\alpha\otimes\beta)=
			\delta_{\alpha_{|U}\cdot\beta\neq 0}
\bigl(\id\boxtimes
	(\alpha_{\bij{\beta}(\xi_1^-(-1))}\cdot\beta_{\xi_1^-(-1)})\bigr)\otimes
	\bigl(\alpha_{\xi_2^-(-1)}\boxtimes(\alpha_{|U\setminus\{\bij{\beta}(\xi_1^-(-1))\}}
	\cdot\beta_{|S})\bigr).$$
			Given $\alpha'\in R_{\xi_1^-}^\bullet(T,U')$ and $\beta'\in
		R_{\xi_2^-}^\bullet(U',S)$, we have
			$$\sigma^{-1}(\alpha'\otimes\beta')=
			\delta_{\alpha'_{|U'}\cdot\beta'\neq 0}
			\bigl(\id\boxtimes(\alpha'_{\bij{\beta'}(\xi_2^-(-1))}\cdot\beta'_{\xi_2^-(-1)})
			\bigr)\otimes\bigl(\alpha'_{\xi_1^-(-1)}\boxtimes(
			\alpha'_{|U'\setminus\bij{\beta'}(\xi_2^-(-1))}\circ\beta'_{|S})\bigr).
			$$
		\end{lemma}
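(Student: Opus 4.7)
The proof proceeds by direct computation using the definition (\ref{eq:defsigma}) of $\sigma$ as the composition
$$E_2 E_1 \xrightarrow{\eta_1 E_2 E_1} E_1 F_1 E_2 E_1 \xrightarrow{E_1 \lambda E_1} E_1 E_2 F_1 E_1 \xrightarrow{E_1 E_2 \eps_1} E_1 E_2$$
with $E_1 = R_{\xi_1^-}$, $F_1 = L_{\xi_1^+}$, and $E_2 = R_{\xi_2^-}$. The unit $\eta_1$ is described by Lemma \ref{le:unitLR}, the counit $\eps_1$ equals $\kappa_1 \circ \mathrm{mult}$ (cf.\ \S\ref{se:duality}), and $\lambda: L_{\xi_1^+} R_{\xi_2^-} \to R_{\xi_2^-} L_{\xi_1^+}$ is the canonical natural transformation associated with the two commuting bimodule $2$-representations. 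Under the standing hypothesis $Z(\xi_1^+) \neq Z(\xi_2^-)$, no admissible path in $Z$ joins $\xi_2^-(-1)$ to $\xi_1^+(1)$, so the $A_n = D_n = G_n$ simplification applies and $\lambda$ reduces to the plain swap that interchanges a $\xi_1^+(1)$ strand with a $\xi_2^-(-1)$ strand without interaction (by Lemma \ref{le:nu}); in particular $\lambda$ is invertible.

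Evaluating this composition on $\alpha \otimes \beta$ with $\alpha \in \Hom(U \sqcup \{\xi_2^-(-1)\}, T)$ and $\beta \in \Hom(S \sqcup \{\xi_1^-(-1)\}, U)$: the unit $\eta_1$ inserts an identity $\xi_1^+(1) \to \xi_1^-(-1)$ strand summed over all insertion points $x$; $\lambda$ swaps the inserted $\xi_1^+(1)$ strand past the $R_{\xi_2^-}$ factor; and $\eps_1 = \kappa_1 \circ \mathrm{mult}$ multiplies everything and kills all summands whose inserted strand does not correctly pair $\xi_1^+(1)$ with an existing $\xi_1^-(-1)$ endpoint (by definition of $\kappa_1$). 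Only the summand in which the inserted strand matches the $\xi_1^-(-1)$ endpoint of $\beta$ survives, and tracing through the combinatorics yields the claimed formula for $\sigma(\alpha \otimes \beta)$, with the $\delta$-factor arising from the nonvanishing condition on $\alpha_{|U} \cdot \beta$.

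To establish invertibility, I will verify directly that the claimed formula is a two-sided inverse: both formulas have the structure ``compose $\alpha$ and $\beta$, peel off an end-strand (either $\xi_2^-(-1)$ or $\xi_1^-(-1)$), and repackage''. For $\sigma^{-1} \circ \sigma$, apply $\sigma$ to $\alpha \otimes \beta$ to obtain some $\alpha' \otimes \beta'$ in $R_{\xi_1^-}(T, U') \otimes R_{\xi_2^-}(U', S)$; observe that $\alpha' \cdot \beta'$ yields the same underlying strand diagram on $S \sqcup \{\xi_1^-(-1), \xi_2^-(-1)\} \to T$ as $\alpha \cdot \beta$ (this is essentially the content of Lemma \ref{le:nu}), and that applying the $\sigma^{-1}$ formula then peels off the $\xi_2^-(-1)$ endpoint to recover $\alpha$ and $\beta$ separately. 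The composite $\sigma \circ \sigma^{-1}$ is handled symmetrically by exchanging the roles of $\xi_1^-$ and $\xi_2^-$, which is a clean symmetry in the setup since both are initial for $(Z, M)$ and live on distinct components.

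The main obstacle will be carefully tracking the combinatorics through the $\eta_1$ and $\eps_1$ steps: verifying that the sum defining $\eta_1$ collapses to a single summand after applying $\kappa_1$, and that the intermediate set $U'$ appearing in the output of $\sigma$ is exactly what the $\sigma^{-1}$ formula expects, requires meticulous bookkeeping of the correspondence between elements of $U$, their images in $T$ under $\bij{\alpha}$, and the endpoint $\bij{\beta}(\xi_1^-(-1))$ of the $\xi_1^-(-1)$ strand of $\beta$.
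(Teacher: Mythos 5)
Your proposal follows essentially the same route as the paper's proof: unwind the definition of $\sigma$ as the composite of the unit $\eta_1$ (given explicitly by Lemma~\ref{le:unitLR}), a $\lambda$-swap, the counit $\eps_1 = \kappa_1\circ\mathrm{mult}$, and multiplication; observe that the $\eta_1$-sum collapses to a single summand after $\eps_1$; and verify invertibility by checking directly that the stated formula for $\sigma^{-1}$ is a two-sided inverse. The paper streamlines the computation by first reducing, via the morphism-of-bimodules property, to the case $\alpha_{|U}=\id_U$ (so the $\delta$-factor becomes trivial), whereas you let the $\delta$-factor emerge at the end from the nonvanishing condition — this is a harmless reorganization of the same argument, not a genuinely different approach.
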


\begin{proof}
%
	We have
	$$\sigma=(R_{\xi_1^-}\circ \textrm{mult})\circ
	(R_{\xi_1^-}\otimes R_{\xi_2^-}\otimes \eps_{L_{\xi_1^+},R_{\xi_1^-}})\circ(R_{\xi_1^-}\otimes
	\lambda\otimes R_{\xi_1^-})\circ (\eta_{L_{\xi_1^+},R_{\xi_1^-}}\otimes\id).$$
	We have 
	$\alpha=(\id\boxtimes\alpha_{\xi_2^-(-1)})\cdot(\alpha_{|U}\boxtimes\id)$, hence
	$\alpha\otimes\beta=(\id\boxtimes\alpha_{\xi_2^-(-1)})
	\otimes(\alpha_{|U}\cdot\beta)$. As a consequence, it is enough to prove the
	first statement of the lemma assuming that $\alpha_{|U}=\id_U$.
In that case, the composition above is given by
\begin{align*}
	\alpha\otimes\beta&\mapsto \sum_{x\in\tilde{\xi}_1^{-1}(T)}
	(\id_{T\setminus\{\tilde{\xi}_1(x)\}}\boxtimes\tilde{\xi}_1([-1\to x]))\otimes
	(\id_{T\setminus\{\tilde{\xi}_1(x)\}}\boxtimes\tilde{\xi}_1([x\to 1]))\otimes
\alpha\otimes\beta \\
	&\mapsto\sum_{x\in\tilde{\xi}_1^{-1}(T)}
	(\id_{T\setminus\{\tilde{\xi}_1(x)\}}\boxtimes\tilde{\xi}_1([-1\to x]))\otimes
	(\alpha_{\xi_2^-(-1)}\boxtimes \id)\otimes
	(\id\boxtimes\tilde{\xi}_1([x\to 1]))\otimes\beta\\
	&\mapsto(\id\boxtimes
	\beta_{\xi_1^-(-1)})\otimes
	(\alpha_{\xi_2^-(-1)}\boxtimes\id)\otimes \beta_{|S}\\
	&\mapsto(\id\boxtimes \beta_{\xi_1^-(-1)})\otimes
	(\alpha_{\xi_2^-(-1)}\boxtimes\beta_{|S}).
\end{align*}
	It is immediate to check that the formula for $\sigma^{-1}$ does produce an inverse.
\end{proof}

Consider the map $\rho:L_{\xi_1^+}(T,-)\otimes R_{\xi_1^-}(-,S)\to R_{\xi_1^-}(T,-)\otimes
L_{\xi_1}^-(-,S)$ defined in \S\ref{eq:defrho}.

\begin{lemma}
	\label{le:calculationrho}
	Given $\alpha\in L_{\xi_1^+}^\bullet(T,U)$ and $\beta\in R_{\xi_1^-}^\bullet(U,S)$,
we have
	$$\rho(\alpha\otimes\beta)=\delta_1 \bigr(\alpha_{|U\setminus\chi(\alpha)^{-1}(\xi_1^+(1))}
	\cdot (\beta_{\xi_1^-(-1)}\boxtimes\id)\bigl)\otimes \bigl((\alpha_{\chi(\alpha)^{-1}(\xi_1^+(1))}\boxtimes\id)\cdot
	\beta_{|S}\bigr)$$
	where
	$\delta_1=1$ if $\chi(\alpha\circ\beta)(\xi_1^-(-1))\neq \xi_1^+(1)$ and
	$(\id_{\chi(\beta)(\xi_1^-(-1))}\boxtimes \alpha_{\chi(\alpha)^{-1}(\xi_1^+(1))})\cdot
	(\beta_{\xi_1^-(-1)}\boxtimes\id_{\chi(\alpha)^{-1}(\xi_1^+(1))})\neq 0$
	and $\delta_1=0$ otherwise.
\end{lemma}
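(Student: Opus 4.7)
The plan is to unwind directly the three-step composition defining $\rho$ in (\ref{eq:defrho}), applied to $\alpha \otimes \beta \in L_{\xi_1^+}(T,U) \otimes R_{\xi_1^-}(U,S)$. Each of the three constituent maps admits an explicit description: the unit $\eta_1$ is given by Lemma \ref{le:unitLR}; the endomorphism $\tau_1$ on $E_1^2 = R_{\xi_1^-}^{\otimes 2}$ corresponds under $\nu_2$ of Lemma \ref{le:nu} to the braid transposition acting on the two fresh $\xi_1^-$-strands; and the counit is $\varepsilon_1 = \kappa_1 \circ \mathrm{mult}$, as established in the proof of Lemma \ref{le:unitLR}.

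I would first apply $F_1 E_1 \eta_1$, which inserts on the right-hand side the sum $\sum_{y \in \tilde{\xi}_1^{-1}(S)} (\id \boxtimes \tilde{\xi}_1([-1 \to y])) \otimes (\tilde{\xi}_1([y \to 1]) \boxtimes \id)$. Next, $F_1 \tau_1 F_1$ acts on the middle pair of $R_{\xi_1^-}$-factors by the braid transposition on the two inserted $\xi_1^-$-strands. Finally, $\varepsilon_1 E_1 F_1$ first multiplies the leftmost $L_{\xi_1^+} R_{\xi_1^-}$ pair in the strand category, and then applies $\kappa_1$, which by definition is nonzero only when the resulting braid sends the $\xi_1^-(-1)$-input to the $\xi_1^+(1)$-output. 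Because of the swap from the second step, this requirement pins down a unique surviving term in the sum over $y$: the one for which $\tilde{\xi}_1(y)$ matches, after composition with $\alpha$ and $\beta$, the strand carrying $\xi_1^+(1)$ back to $u_0 := \chi(\alpha)^{-1}(\xi_1^+(1))$. Collecting the remaining data should produce the stated formula.

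The factor $\delta_1$ records the two ways in which this surviving term can still vanish. If $\chi(\alpha \circ \beta)(\xi_1^-(-1)) = \xi_1^+(1)$, then after multiplication in the leftmost pair the $\xi_1^-(-1)$-strand already terminates at $\xi_1^+(1)$, and $\kappa_1$ kills the result by producing an inadmissible small loop. Otherwise, the boxed composition $(\id_{\chi(\beta)(\xi_1^-(-1))} \boxtimes \alpha_{u_0}) \cdot (\beta_{\xi_1^-(-1)} \boxtimes \id_{u_0})$ may itself vanish in the strand category by the degree-multiplicativity criterion of Lemma \ref{le:degproduct}; this is the second part of the condition defining $\delta_1$.

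The main obstacle will be the careful bookkeeping of strand labels through the three successive operations, in particular verifying that the sum over $y$ truly collapses to a single term and that the two vanishing conditions assembled into $\delta_1$ emerge organically from the vanishing of $\kappa_1$ and of the boxed product respectively, rather than being independent constraints. Once this bookkeeping is in place, the remaining computation is a routine verification using Lemmas \ref{le:unitLR} and \ref{le:nu} together with the explicit form of $\tau_1$ from Proposition \ref{pr:2repline}.
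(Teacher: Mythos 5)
Your plan unwinds the composition \eqref{eq:defrho} in the same way the paper's own proof does: apply $F_1E_1\eta_1$ via Lemma~\ref{le:unitLR}, apply $F_1\tau_1F_1$ as the braid crossing on the two $\xi_1^-$-strands, and apply $\eps_1E_1F_1 = \kappa_1\circ\mathrm{mult}$ to collapse the sum. So the approach is the correct one and matches the paper.

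Two remarks that would tighten the execution. First, the paper's proof begins by reducing to the case $\alpha_{|U\setminus\{\chi(\alpha)^{-1}(\xi_1^+(1))\}}=\id$ and $\beta_{|S}=\id$ and only at the end invokes that $\rho$ is a bimodule homomorphism to get the general statement (using the decompositions $\alpha=(\alpha_{|U\setminus\{\chi(\alpha)^{-1}(\xi_1^+(1))\}}\boxtimes\id)\cdot(\id\boxtimes\alpha_{\chi(\alpha)^{-1}(\xi_1^+(1))})$ and similarly for $\beta$). Without this reduction, the strand-label bookkeeping you flag as the main obstacle becomes substantially harder, because the intermediate factors carry full braids rather than a single nontrivial strand; you should make this reduction the explicit first step. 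Second, your attribution of the two clauses of $\delta_1$ to distinct steps ($\kappa_1$ for the first, a degree-multiplicativity argument for the second) is a plausible reading but differs from the paper's bookkeeping: there the entire vanishing condition appears at once as membership in the set $I$ of indices $x$ for which $(\tilde{\xi}_1([-1\to x])\boxtimes(\beta_{\xi_1^-(-1)}\circ[\xi_1^-(-2)\to\xi_1^-(-1)]))\cdot\tau\neq 0$, and $\eps_1$ then selects (or fails to find) the unique relevant $x$. In the special case, your condition (a) indeed corresponds to $\eps_1$ finding no admissible $x$ at all (because $\chi(\alpha)^{-1}(\xi_1^+(1))=\chi(\beta)(\xi_1^-(-1))\notin S$), and condition (b) to the $\tau$-crossing vanishing; but you should check that these really unfold to the two displayed clauses rather than taking it on faith.
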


\begin{proof}
	Assume first $\alpha_{|U\setminus\{\chi(\alpha)^{-1}(\xi_1^+(1))\}}=\id$ and
	$\beta_{|S}=\id$.
We have
	\begin{align*}
		\rho(\alpha\otimes\beta)&=\eps_1 R_{\xi_1^-}L_{\xi_1^+}\circ L_{\xi_1^+}\tau L_{\xi_1^+}
	(\alpha\otimes\beta\otimes\eta_1(\id_S))\\
		&=\eps_1 R_{\xi_1^-}L_{\xi_1^+}\biggl(\sum_{x\in\tilde{\xi}_1^{-1}(S)}\alpha\otimes\tau\Bigl((\beta_{\xi_1^-(-1)}\boxtimes\id)
	\otimes \bigl(\id\boxtimes
	\tilde{\xi}_1([-1\to x])\bigr)\Bigr)\otimes \bigl(\tilde{\xi}_1([x\to 1])\boxtimes\id\bigr)\biggr)\\
		&=\eps_1 R_{\xi_1^-}L_{\xi_1^+}\biggl(\sum_{x\in I} \alpha\otimes
	\bigl(\id\boxtimes \tilde{\xi}_1([-1\to x])\bigr)\otimes (\beta_{\xi_1^-(-1)}\boxtimes\id)
	\otimes \bigl(\tilde{\xi}_1([x\to 1])\boxtimes\id\bigr)\biggr)\\
		&=\delta_1 (\beta_{\xi_1^-(-1)}\boxtimes\id)
	\otimes(\alpha_{\chi(\alpha)^{-1}(\xi_1^+(1))}\boxtimes\id)
	\end{align*}
	where $I=\{x\in\tilde{\xi}_1^{-1}(S)\ |\ 
	(\tilde{\xi}_1([-1\to x])\boxtimes(\beta_{\xi_1^-(-1)}\circ [\xi_1^-(-2)\to\xi_1^-(-1)])\cdot
	\tau \neq 0\}$.

	Since $\rho$ is a morphism of $(\CS_M(Z),\CS_M(Z))$-bimodules, the general
	result follows using the decompositions
	$\alpha=(\alpha_{|U\setminus\{\chi(\alpha)^{-1}(\xi_1^+(1))\}}\boxtimes
	\id_{\xi_1^+(1)})\cdot(\id\boxtimes\alpha_{\chi(\alpha)^{-1}(\xi_1^+(1))})$ and
	$\beta=(\id\boxtimes\beta_{\xi_1^-(-1)})\cdot (\beta_{|S}\boxtimes\id_{\xi_1^-(-1)})$.
\end{proof}

\subsubsection{Diagonal bimodule}
		Recall that we have a $(\Delta_\lambda\CS_M(Z),\Delta_\lambda\CS_M(Z))$-bimodule
		$E$. Its restriction to a $(\CS_M(Z),\Delta_\lambda\CS_M(Z))$-bimodule is the
		cone of $\pi:R_{\xi_2^-}\otimes_{\CS_M(Z)}\Id_{\Delta_\lambda\CS_M(Z)}\to
R_{\xi_1^-}\otimes_{\CS_M(Z)}\Id_{\Delta_\lambda\CS_M(Z)}$.

		The $(\CS_M(Z),\CS_M(Z_\xi))$-bimodule $E'=E\circ (1\otimes\Xi^{\prime -1})$ is the
		cone of the map $u$ defined as follows.

%

		Given $\alpha\in R_{\xi_2^-}^\bullet(T,U)$ with $\alpha_{|U}=\id$ and given $\beta\in
		\Hom_{\CS^\bullet(Z_\xi)}(S,U)$, we have
\begin{multline*}
u(\alpha\otimes\beta)=
		\sum_{x\in\tilde{\xi}_1^{-1}(T)}\bigl(\id
		\boxtimes \tilde{\xi}_1([-1\to x])\bigr)\otimes
		\Bigl(\bigl(\alpha_{\xi_2^-(-1)}\cdot [\xi_1^+(1)\to\xi_2^-(-1)]\cdot\tilde{\xi}_1([x\to 1])
		\bigr) \boxtimes\id\Bigr)\cdot\beta.
\end{multline*}

		\medskip
		We construct now an isomorphism between $E'$ and the restriction of
		$R_{\xi^-}$ to a $(\CS_M(Z),\CS_M(Z_\xi))$-bimodule.

		We define two morphisms of pointed sets

		\begin{align*}
			f_1:R_{\xi_1^-}^\bullet(T,-)\wedge\Hom_{\CS^\bullet(Z_\xi)}(S,-)&\to
		R_{\xi^-}^\bullet(T,S)\\
			(\alpha:U\sqcup\{\xi_1^-(-1)\}\to T)\wedge(\beta:S\to U)&\mapsto
			\alpha\cdot(\beta\boxtimes\id_{\xi_1^-(-1)})=
			(\alpha_{|U}\cdot\beta)\boxtimes\alpha_{\xi_1^-(-1)}
		\end{align*}
and
		\begin{align*}
			f_2:R_{\xi_2^-}^\bullet(T,-)\wedge\Hom_{\CS^\bullet(Z_\xi)}(S,-)&\to
		R_{\xi^-}^\bullet(T,S)\\
			(\alpha:U\sqcup\{\xi_2^-(-1)\}\to T)\wedge(\beta:S\to U)&\mapsto
			\alpha\cdot(\beta\boxtimes[\xi_1^-(-1)\to \xi_2^-(-1)])\\
			&\ \ \ \ =
			(\alpha_{|U}\cdot\beta)\boxtimes
			(\alpha_{\xi_2^-(-1)}\cdot[\xi_1^-(-1)\to \xi_2^-(-1)]).
		\end{align*}
		Note that we have an isomorphism of pointed sets
		$$f_2\vee f_1:\bigl(R_{\xi_2^-}^\bullet(T,-)\wedge
	\Hom_{\CS^\bullet(Z_\xi)}(S,-)\bigr)
	\vee \bigl( (R_{\xi_1^-}^\bullet(T,-)\wedge\Hom_{\CS^\bullet(Z_\xi)}(S,-)\bigr)\iso
R_{\xi^-}^\bullet(T,S).$$

		\begin{lemma}
			We have $d(f_1)=0$ and 
			$d(f_2)=f_1\circ u$.
	There is an isomorphism of differential modules
	$$(f_2,f_1):E'(T,S)\iso R_{\xi^-}(T,S)$$
	functorial in $T\in\CS_M(Z)$ and $S\in\CS_M(Z_\xi)$.
		\end{lemma}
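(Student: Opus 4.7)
The proof proposal has four items to verify: (i) that $(f_2, f_1)$ is bijective on underlying pointed sets (equivalently, on underlying $\BF_2$-modules), (ii) the identity $d(f_1)=0$, (iii) the identity $d(f_2)=f_1\circ u$, and (iv) functoriality in $T$ and $S$. Item (i) is essentially the isomorphism $f_2\vee f_1$ stated just above the lemma: a strand from $\xi^-(-1)=\xi_1^-(-1)$ to $T$ in $Z_\xi$ either has support entirely in $Z$ (the $f_1$-case, yielding $\alpha\in R_{\xi_1^-}$) or crosses through $\xi(0)$, in which case it factors uniquely as $\alpha_{\xi_2^-(-1)}\cdot[\xi_1^-(-1)\to\xi_2^-(-1)]$ with $\alpha\in R_{\xi_2^-}$ (the $f_2$-case). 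Item (iv) is immediate since $f_1$ and $f_2$ are defined by compositions and $\boxtimes$ operations, all manifestly natural in both arguments.

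The substantive content is (ii)-(iii). The differential on $R_{\xi^-}(T,S)$ is the strand-smoothing differential in $\CS(Z_\xi)$, computed by summing over smoothings of all crossings in the strand diagram. Two technical tools are used throughout. First, the inclusion $Z\hookrightarrow Z_\xi$ is an injective morphism of curves and induces a differential pointed functor $\CS^\bullet(Z)\to\CS^\bullet(Z_\xi)$ by Theorem~\ref{th:strandsdifferential}, so strand differentials of braids in $Z$ are preserved when viewed as braids in $Z_\xi$. Second, the colimit presentation (\ref{eq:Rascolim}) together with Remark~\ref{se:indep} allows one to represent an element of $R_{\xi_1^-}$ with the endpoint $\xi_1^-(-1)$ pushed arbitrarily far along $\xi_1^-(\BR_{<0})$; after such a shift, the strand $\alpha_{\xi_1^-(-1)}$ lies in a region of $Z(\xi_1)$ disjoint from the supports of any admissible paths between points of $M$, so Lemma~\ref{le:commutetensor} applies and the $\boxtimes$ with $\alpha_{\xi_1^-(-1)}$ commutes with the differential.

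For (ii): apply these tools to $f_1(\alpha\otimes\beta)=(\alpha_{|U}\cdot\beta)\boxtimes\alpha_{\xi_1^-(-1)}$. The single strand $\alpha_{\xi_1^-(-1)}$ has no internal crossings, so $d(\alpha_{\xi_1^-(-1)})=0$; crossings between $\alpha_{\xi_1^-(-1)}$ and other strands occur within $Z(\xi_1)\subset Z$ and are already accounted for in $d(\alpha)$ (viewed in $\CS(Z)$); crossings within the composition $\alpha_{|U}\cdot\beta$ are handled by Leibniz in $\CS(Z_\xi)$. This gives $d(f_1(\alpha\otimes\beta))=f_1(d\alpha\otimes\beta)+f_1(\alpha\otimes d\beta)$, i.e., $d(f_1)=0$.

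For (iii): apply the same analysis to $f_2(\alpha\otimes\beta)=\alpha\cdot(\beta\boxtimes[\xi_1^-(-1)\to\xi_2^-(-1)])$, whose distinguished strand now passes through $\xi(0)$. Smoothings split into three families: those within $\alpha$ (contributing $f_2(d\alpha\otimes\beta)$), those within $\beta$ (contributing $f_2(\alpha\otimes d\beta)$), and those arising from crossings of the path $[\xi_1^-(-1)\to\xi_2^-(-1)]$ with a $\beta$-strand near $\xi(0)$. A crossing of the third type is indexed by the point $x\in\tilde\xi_1^{-1}(T)$ where the offending $\beta$-strand, if continued past $\xi(0)$, would reach $\xi_1^+(1)$ and then $\tilde\xi_1(x)\in T$; the smoothing replaces $[\xi_1^-(-1)\to\xi_2^-(-1)]$ by $\tilde\xi_1([-1\to x])$ (staying in $Z$) and prepends to the $\beta$-strand the segment $[\xi_1^+(1)\to\xi_2^-(-1)]\cdot\tilde\xi_1([x\to 1])$, which is then post-composed with $\alpha_{\xi_2^-(-1)}$ after $\alpha$ acts. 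Summing over $x$ reproduces exactly the defining formula of $u$ and yields $f_1(u(\alpha\otimes\beta))$. The main obstacle is this local combinatorial identification at $\xi(0)$: one must verify the bijection between type-(c) smoothings and the index set $\tilde\xi_1^{-1}(T)$, and match the resulting strand diagram with $f_1\circ u$ as given. This is handled by the local model at a smooth point of $Z_\xi$, where two strands crossing in the oriented region admit a unique smoothing that disentangles them, combined with the factorization of paths in $Z(\xi_1)$ established in the analysis of $\xi_1^+$-outgoing ends.
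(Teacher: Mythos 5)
Your four-item decomposition matches the paper's: (i) and (iv) are dispatched identically (the pointed-set isomorphism $f_2\vee f_1$ stated above the lemma, and naturality by inspection), and (ii) is, as you note, genuinely "immediate" once one observes that the added strand $\alpha_{\xi_1^-(-1)}$ creates no new crossing-smoothings beyond those already present in $d$ of the factors. The real content is (iii), and here your route is a geometric/combinatorial reformulation of the paper's algebraic one. The paper exploits the admissible factorization $[\xi_1^-(-1)\to\xi_2^-(-1)]=[\xi_1^+(1)\to\xi_2^-(-1)]\cdot[\xi_1^-(-1)\to\xi_1^+(1)]$, applies the Leibniz rule in $\CS(Z_\xi)$, observes that $[\xi_1^+(1)\to\xi_2^-(-1)]$ is inert under $d$ (its support is disjoint from the $\beta$-strands in the relevant region), and then identifies $d(\id_U\boxtimes[\xi_1^-(-1)\to\xi_1^+(1)])$ explicitly as a sum over $x\in\tilde\xi_1^{-1}(U)$ of elementary smoothings --- exactly the "type-(c)" crossings you describe, but now as a one-line formula. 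Your version describes the same phenomenon as a crossing count, which is a valid picture, but at the step you flag as the "main obstacle" you gesture at a "local model" argument rather than actually verifying that the list of smoothings matches $f_1\circ u$ term by term; the paper's algebraic factorization is precisely what converts that verification into a mechanical Leibniz computation. So the two proofs buy different things: the paper's is shorter to check because it reduces to a known identity ($d$ of a single path is a sum over intersections with identity strands, essentially the unit-of-adjunction formula of Lemma \ref{le:unitLR}), whereas your version makes the geometry of what is happening near $\xi(0)$ transparent but would need the same careful bookkeeping to close the gap. One small point to watch: the index set $\tilde\xi_1^{-1}(T)$ you quote for the type-(c) terms matches the definition of $u$, but the intermediate calculation in the paper naturally produces $\tilde\xi_1^{-1}(U)$; reconciling these (using $\alpha_{|U}=\id$, so $U\subset T$ and the one extra point of $T$ cannot contribute) is a detail worth making explicit.
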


		\begin{proof}
		It is immediate that $d(f_1)=0$. For the second equality, 
consider $\alpha\in R^\bullet_{\xi_2^-}(T,U)$ and $\beta\in\Hom_{\CS^\bullet(Z_\xi)}(S,U)$.
			Since
			$\alpha\otimes\beta=(\id\boxtimes\alpha_{\xi_2^-(-1)})\otimes
			(\alpha_{|U}\cdot\beta)$, we can assume that $\alpha_{|U}=\id$. We have
		$$d(f_2)(\alpha\otimes\beta)=
		\alpha\cdot \bigl(d(\beta\boxtimes[\xi_1^-(-1)\to \xi_2^-(-1)])+d(\beta)\boxtimes[\xi_1^-(-1)\to \xi_2^-(-1)]
		\bigr).$$
		We have
			\begin{align*}
				d(\beta\boxtimes[\xi_1^-(-1)\to \xi_2^-(-1)])&=
				d\bigl((\id\boxtimes[\xi_1^-(-1)\to \xi_2^-(-1)])\cdot
		(\beta\boxtimes\id_{\xi_1^-(-1)})\bigr)\\
				&=(\id\boxtimes[\xi_1^+(1)\to \xi_2^-(-1)])\cdot
				d(\id_U\boxtimes[\xi_1^-(-1)\to \xi_1^+(1)])\cdot(\beta\boxtimes\id_{\xi_1^-(-1)})\\
				&\ \ \ \ +
		(\id\boxtimes[\xi_1^-(-1)\to \xi_2^-(-1)])\cdot(d(\beta)\boxtimes\id_{\xi_1^-(-1)})\\
				&=(\id_U\boxtimes[\xi_1^+(1)\to \xi_2^-(-1)])
				\sum_{x\in\tilde{\xi}_1^{-1}(U)}
				\bigl(\id\boxtimes\tilde{\xi}_1([x\to 1])\boxtimes
				\tilde{\xi}_1([-1\to x])\bigr)\\
				& \ \ \ \ \cdot(\beta\boxtimes\id_{\xi_1^-(-1)})+
			d(\beta)\boxtimes [\xi_1^-(-1)\to \xi_2^-(-1)]
			\end{align*}
hence
$$				d(f_2)(\alpha\otimes\beta)=
				\alpha\cdot \sum_{x\in\tilde{\xi}_1^{-1}(U)}
				\Bigl(\id\boxtimes\bigl([\xi_1^+(1)\to \xi_2^-(-1)]\cdot\tilde{\xi}_1([x\to 1])\bigr)\boxtimes
				\tilde{\xi}_1([-1\to x])\Bigr)\cdot (\beta\boxtimes\id_{\xi_1^-(-1)})$$
			\begin{align*}
				&=\sum_{x\in\tilde{\xi}_1^{-1}(U)}\Bigl(
				\id\boxtimes \tilde{\xi}_1([-1\to x])
				\boxtimes\bigl(\alpha_{\xi_2^-(-1)}\cdot[\xi_1^+(1)\to \xi_2^-(-1)]\cdot\tilde{\xi}_1([x\to 1])\bigr)\Bigr)
				\cdot (\beta\boxtimes\id_{\xi_1^-(-1)})\\
				&=f_1\circ u(\alpha\otimes\beta).
			\end{align*}
	The lemma follows.
	\end{proof}

	\subsubsection{Matching of extended action}
	Recall that $E'$ is the restriction of the $(\CS_M(Z_\xi),\CS_M(Z_\xi))$-bimodule
	$E\circ (\Xi^{\prime -1}\otimes\Xi^{\prime -1})$.

We show here that the previous isomorphism is functorial in
	$T\in\CS_M(Z_\xi)$. Consider the diagram
\begin{equation}
	\label{eq:commdiag}
\xymatrix{
	R_{\xi_2^-}(T,-)\otimes L_{\xi_1^+}(-,U)\otimes E(U,S) \ar[r]^-w \ar[d]_-{\Xi\otimes (f_2,f_1)} &
	E(T,S)\ar[d]^{(f_2,f_1)} \\
	\Hom_{\CS(Z_\xi)}(T,U)\otimes R_{\xi^-}(U,S)\ar[r]_-{\mathrm{action}} &
	R_{\xi^-}(T,S)
	}
\end{equation}
where $w=\left(\begin{matrix}w_{11}&w_{12}\\0&w_{22}\end{matrix}\right)$
	(cf \S\ref{se:leftdualbimod}) with
$$w_{11}=
	(R_{\xi_2^-}(\mathrm{mult}\circ \Xi\Hom))\circ(\tau L_{\xi_1^+} \Hom)\circ(R_{\xi_2^-}\lambda\Hom)$$
$$w_{12}=R_{\xi_2^-}\eps \Hom$$
$$w_{22}=(R_{\xi_1^-}(\mathrm{mult}\circ \Xi\Hom))\circ 
	(\sigma L_{\xi_1^+}\Hom)\circ(R_{\xi_2^-}\rho\Hom).$$

\begin{lemma}
	The diagram (\ref{eq:commdiag}) is commutative.
\end{lemma}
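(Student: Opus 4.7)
The plan is a direct verification by decomposing both sides along the matrix structure of $w$ and the cone decomposition of $E$. Write a general input as $\alpha\otimes\beta\otimes\gamma$ with $\alpha\in R_{\xi_2^-}^\bullet(T,-)$, $\beta\in L_{\xi_1^+}^\bullet(-,U)$, and $\gamma$ a component of $E^\bullet(U,S)=R_{\xi_2^-}^\bullet(U,S)\oplus R_{\xi_1^-}^\bullet(U,S)$. Because the map $w$ is strictly upper-triangular and $(f_2,f_1)$ respects the cone decomposition, commutativity reduces to three separate identities: (i) $f_2\circ w_{11}=(\mathrm{comp}_{\CS(Z_\xi)})\circ(\Xi\otimes f_2)$ on $R_{\xi_2^-}L_{\xi_1^+}R_{\xi_2^-}$; (ii) $f_2\circ w_{12}=(\mathrm{comp}_{\CS(Z_\xi)})\circ(\Xi\otimes f_1)$ restricted to the part where the $\xi_1^-(-1)$-strand of $\gamma$ gets ``capped off'' with the $\xi_1^+(1)$-strand of $\beta$; and (iii) $f_1\circ w_{22}=(\mathrm{comp}_{\CS(Z_\xi)})\circ(\Xi\otimes f_1)$ on the complementary part. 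By bimodule linearity, each verification may be done after normalizing so that $\alpha_{|-}=\id$ and $\gamma_{|U}=\id$, so that the non-identity information is concentrated in $\alpha_{\xi_2^-(-1)}$, $\beta$, and $\gamma_{\xi_\bullet(-1)}$.

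For (i), first apply the explicit formulas: $\Xi(\alpha\wedge\beta)=\beta\cdot(\id\boxtimes[\xi_1^+(1)\to\xi_2^-(-1)])\cdot\alpha$ and $f_2(\gamma\otimes\delta)=\gamma\boxtimes[\xi_1^-(-1)\to\xi_2^-(-1)]$ composed with the identification given by $\delta\in\Hom_{\CS(Z_\xi)}$; composing in $\CS(Z_\xi)$ yields a single braid in $Z_\xi$ with one strand traveling from $\xi_1^-(-1)$ through $\xi_1^+(1)\!\sim\!\xi_2^-(-1)$ and out via $\alpha_{\xi_2^-(-1)}$, while a second strand carries $\gamma_{\xi_2^-(-1)}$. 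On the other side, the sequence $R_{\xi_2^-}\lambda$, $\tau L_{\xi_1^+}$, $\mathrm{mult}\circ\Xi\Hom$, $f_2$ produces the same braid: $\lambda$ (using $\nu_1^{-1}$) decomposes $\beta\cdot\gamma$ into its $\xi_2^-(-1)$-component and the rest; $\tau$ swaps the two $R_{\xi_2^-}$-factors; and the final multiplication through $\Xi$ re-glues the $L_{\xi_1^+}$ factor through the cylinder to $\xi_2^-(-1)$. The $\tau$ contributes precisely the same crossing that appears in the composed braid on the other side.

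For (ii) and (iii), the key input is Lemma~\ref{le:calculationrho}: the value of $\rho$ on $\alpha\otimes\gamma$ vanishes exactly when the $\xi_1^-(-1)$-strand of $\gamma$ terminates at the same point of $U$ as the $\xi_1^+(1)$-strand of $\alpha$. This dichotomy matches precisely the two ways the composition $\Xi(\alpha\wedge\beta)\cdot f_1(\gamma\otimes\delta)\in R_{\xi^-}^\bullet(T,S)$ can realize a strand incoming at $\xi_1^-(-1)$: either that strand exits into $Z(\xi_1^+)$ and meets the $\xi_1^+(1)$-strand of $\beta$ (cap-off, contributing through the counit $w_{12}=R_{\xi_2^-}\eps\Hom$) or it continues past it (non-cap, contributing through $w_{22}$ built from $\rho$ and $\sigma$). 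Using the formulas for $\sigma$ and $\rho$ already established, one verifies term by term that the cap-off contribution equals $f_2\circ w_{12}$ and the non-cap contribution equals $f_1\circ w_{22}$, with the condition $\delta_1$ in Lemma~\ref{le:calculationrho} providing exactly the right bookkeeping.

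The main obstacle will be case (iii): the formula for $\rho$ has two vanishing modes (when the strands meet and when the intermediate strand crossing fails the oriented-braid condition), and these must be reconciled with the combinatorics of paths in $Z_\xi$ that thread past the gluing point. The cleanest argument is to reduce to normal-form inputs $\alpha=\id\boxtimes\alpha_{\xi_2^-(-1)}$, $\beta=\id\boxtimes \beta_{\xi_1^+(1)}$, $\gamma=\id\boxtimes\gamma_{\xi_1^-(-1)}$ and then invoke Lemma~\ref{le:unitLR} together with the defining identity $\varsigma=\phi(\pi)$ (the adjunction relating $\pi$ and $\varsigma_1$ in the bimodule dual construction of \S\ref{se:leftdualbimod}), which converts the $\rho$-computation into one about the unit/counit pair $(\eta_1,\eps_1)$ whose explicit form is given by Lemma~\ref{le:unitLR}. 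Once this adjunction identity is in hand, (iii) follows by direct computation in $\CS(Z_\xi)$.
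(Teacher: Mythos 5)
Your proposal follows essentially the same route as the paper's proof: reduce by bimodule linearity to normalized inputs, split along the cone decomposition of $E$ in the $\gamma$-variable, and verify the resulting two identities (\ref{eq:comm1}) and (\ref{eq:comm2}) by direct strand computation using the explicit formulas for $\Xi$, $\sigma$, and $\rho$. Your observation that the two terms of (\ref{eq:comm2}) track the cap-off/non-cap dichotomy, and that the $\delta_1$-factor in Lemma~\ref{le:calculationrho} does the bookkeeping, is exactly right. One small deviation: the final step invoking the identity $\varsigma=\phi(\pi)$ from \S\ref{se:leftdualbimod} is unnecessary and somewhat misplaced --- that adjunction concerns the diagonal algebra structure, not the computation of $\rho$; the paper simply applies Lemma~\ref{le:calculationrho} directly (which already unwinds $\rho$ through the unit/counit), and that is all that is needed.
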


\begin{proof}
	Note first that all the maps of the diagram are functorial with respect to
	$S\in\CS_M(Z_\xi)$. 

	\smallskip
	$\bullet\ $Let $\gamma\in R_{\xi_2^-}(U,S)$, $\beta\in L_{\xi_1^+}(V,U)$ and $\alpha\in R_{\xi_2^-}(T,V)$.
We will show that	
	
	\begin{equation}
		\label{eq:comm1}
	\mathrm{action}\circ (\Xi\otimes f_2)(\alpha\otimes\beta\otimes\gamma)=
	f_2\circ R_{\xi_2^-}(\mathrm{mult}\circ\Xi)\circ\tau L_{\xi_1^+}\circ R_{\xi_2^-}\lambda
	(\alpha\otimes\beta\otimes\gamma).
	\end{equation}

	Since $\gamma=(\id\boxtimes\gamma_{\xi_2^-(-1)})\cdot \gamma_{|S}$ and since
	$\mathrm{action}\circ (\Xi\otimes f_2)$ and $f_2\circ R_{\xi_2^-}(\mathrm{mult}\circ\Xi)\circ\tau L_{\xi_1^+}\circ R_{\xi_2^-}\lambda$ are morphisms of $\CS_M(Z)^\opp$-modules, we can assume $\gamma_{|S}=\id$.
	We have $\alpha\otimes\beta=(\id\boxtimes\alpha_{\xi_2^-(-1)})\otimes (\alpha_{|V}
	\boxtimes\id_{\xi_1^+(1)}\cdot \beta)$, hence we can assume $\alpha_{|V}=\id$.
	We can also assume that $\beta\otimes\gamma\neq 0$.

	\smallskip
	We have
	$$\mathrm{action}\circ (\Xi\otimes f_2)(\alpha\otimes\beta\otimes\gamma)=
	\bigl(\id_V\boxtimes (\alpha_{\xi_2^-(-1)}\cdot [\xi_1^+(1)\to\xi_2^-(-1)])\bigr)\cdot\beta$$
	$$\cdot \bigl(\id_S\boxtimes (\gamma_{\xi_2^-(-1)}\cdot [\xi_1^-(-1)\to\xi_2^-(-1)])\bigr)$$
	$$=\delta_1\beta_{|S\setminus\chi(\beta)^{-1}(\xi_1^+(1))}\boxtimes 
	(\alpha_{\xi_2^-(-1)}\cdot [\xi_1^+(1)\to\xi_2^-(-1)]\cdot\beta_{\chi(\beta)^{-1}(\xi_1^+(1))})$$
	$$\boxtimes\bigl((\beta\circ\gamma)_{\xi_2^-(-1)}\cdot [\xi_1^-(-1)\to\xi_2^-(-1)]\bigr)$$
where
	$\delta_1=\delta_{i(\alpha_{\xi_2^-(-1)},(\beta\circ\gamma)_{\xi_2^-(-1)})=0}$. On the other
	hand, we have
	$$f_2\circ R_{\xi_2^-}(\mathrm{mult}\circ\Xi)\circ\tau L_{\xi_1^+}\circ R_{\xi_2^-}\lambda
	(\alpha\otimes\beta\otimes\gamma)=$$
	\begin{align*}
		&=f_2\circ R_{\xi_2^-}(\mathrm{mult}\circ\Xi)\circ\tau L_{\xi_1^+}\Bigl(
	\alpha\otimes \bigl((\beta_{\chi(\gamma)(\xi_2^-(-1))}\cdot\gamma_{\xi_2^-(-1)})\boxtimes\id\bigr)
	\otimes\beta_{|S}\Bigr)\\
		&=\delta_1 f_2\circ R_{\xi_2^-}(\mathrm{mult}\circ\Xi)\Bigl(
	\bigl(\id\boxtimes (\beta_{\chi(\gamma)(\xi_2^-(-1))}\cdot\gamma_{\xi_2^-(-1)})\bigr)\otimes
	(\id\boxtimes\alpha_{\xi_2^-(-1)})\otimes\beta_{|S}\Bigr)\\
		&=\delta_1 f_2\biggl(\bigl(\id\boxtimes (\beta_{\chi(\gamma)(\xi_2^-(-1))}\cdot\gamma_{\xi_2^-(-1)})\bigr)\otimes
	\Bigl(\bigl(\id\boxtimes (\alpha_{\xi_2^-(-1)}\cdot [\xi_1^+(1)\to\xi_2^-(-1)])\bigr)\cdot\beta_{|S}
	\Bigr)\biggr)\\
		&=\delta_1 \bigl((\beta_{\chi(\gamma)(\xi_2^-(-1))}\cdot\gamma_{\xi_2^-(-1)})\cdot 
	[\xi_1^-(-1)\to\xi_2^-(-1)]\bigr)\boxtimes \Bigl(\bigl(\id\boxtimes (\alpha_{\xi_2^-(-1)}\cdot [\xi_1^+(1)\to\xi_2^-(-1)])\bigr)\cdot\beta_{|S}\Bigr)\\
		&=\mathrm{action}\circ (\Xi\otimes f_2)(\alpha\otimes\beta\otimes\gamma).
	\end{align*}

	We deduce that (\ref{eq:comm1}) holds.

\medskip
	$\bullet\ $Let $\gamma\in R_{\xi_1^-}(U,S)$, $\beta\in L_{\xi_1^+}(V,U)$ and $\alpha\in R_{\xi_2^-}(T,V)$.
We will show that	
	\begin{equation}
		\label{eq:comm2}
		\mathrm{action}\circ (\Xi\otimes f_1)(\alpha\otimes\beta\otimes\gamma)=
	\bigl(f_1\circ R_{\xi_1^-}(\mathrm{mult}\circ\Xi)\circ\sigma L_{\xi_1^+}\circ R_{\xi_2^-}\rho
	+f_2\circ R_{\xi_2^-}\eps\bigr)
	(\alpha\otimes\beta\otimes\gamma).
	\end{equation}

	As before, we can assume $\gamma_{|S}=\id$, $\alpha_{|V}=\id$ and $\beta\otimes\gamma\neq 0$.
	We put $u_1=\chi(\gamma)(\xi_1^-(-1))$ and $u_2=\chi(\beta)^{-1}(\xi_1^+(1))$.

		\smallskip
	We have

	$$\mathrm{action}\circ (\Xi\otimes f_1)(\alpha\otimes\beta\otimes\gamma)=
	\bigl(\id_V\boxtimes (\alpha_{\xi_2^-(-1)}\cdot [\xi_1^+(1)\to\xi_2^-(-1)])\bigr)\cdot\beta
	\cdot \bigl(\id_S\boxtimes \gamma_{\xi_1^-(-1)}\bigr)$$
	$$=\begin{cases}
		\delta_2(\alpha_{\xi_2^-(-1)}\cdot[\xi_1^-(1)\to\xi_2^-(-1)])
		\boxtimes\beta_{|S} & \text{ if }u_1=u_2 \\
		\delta_3 (\alpha_{\xi_2^-(-1)}\cdot[\xi_1^+(1)\to\xi_2^-(-1)]\cdot\beta_{u_2})\boxtimes
		(\beta_{u_1}\circ\gamma_{\xi_1^-(-1)}) \boxtimes\beta_{|S\setminus\{u_2\}} & \text{ otherwise}
	\end{cases}$$
where 
\begin{itemize}
	\item $\delta_2=1$ if $\gamma_{\xi_1^-(-1)}(1-)=\iota(\beta_{u_2}(0+))$ and $\delta_2=0$ otherwise
	\item $\delta_3=1$ if $\beta_{|U}\cdot(\id_{S}\boxtimes
\gamma_{\xi_1^-(-1)})\neq 0$ and $\delta_3=0$ otherwise.
\end{itemize}

\smallskip
	We have
	$$f_2\circ R_{\xi_2^-}\eps(\alpha\otimes\beta\otimes\gamma)=
	\delta_2 f_2(\alpha\otimes \beta_{|S})=
	\delta_2 (\alpha_{\xi_2^-(-1)}\cdot[\xi_1^-(1)\to\xi_2^-(-1)])\boxtimes \beta_{|S}.
	$$

\smallskip
	We have
	$$f_1\circ R_{\xi_1^-}(\mathrm{mult}\circ\Xi)\circ\sigma L_{\xi_1^+}\circ R_{\xi_2^-}\rho
	(\alpha\otimes\beta\otimes\gamma)=$$
	\begin{align*}
		&=\delta'_3f_1\circ R_{\xi_1^-}(\mathrm{mult}\circ\Xi)\circ\sigma L_{\xi_1^+}\Bigl(\alpha\otimes 
		\bigr(\beta_{|U\setminus\{u_2\}}
	\cdot (\gamma_{\xi_1^-(-1)}\boxtimes\id)\bigl)\otimes (\beta_{u_2}\boxtimes\id)\Bigr)\\
		&=\delta'_3\delta''_3f_1\circ R_{\xi_1^-}(\mathrm{mult}\circ\Xi)\circ\sigma L_{\xi_1^+}\Bigl(\alpha\otimes 
		\bigr(\beta_{|S\setminus\{u_2\}}\boxtimes(\beta_{u_1}\circ\gamma_{\xi_1^-(-1)})
	\bigl)\otimes (\beta_{u_2}\boxtimes\id)\Bigr)\\
		&=\delta'_3\delta''_3f_1\circ R_{\xi_1^-}(\mathrm{mult}\circ\Xi)\Bigl(\bigl((\beta_{u_1}\circ\gamma_{\xi_1^-(-1)})
	\boxtimes\id\bigr)\otimes (\alpha_{\xi_2^-(-1)}\boxtimes\beta_{|S\setminus\{u_2\}})
	\otimes (\beta_{u_2}\boxtimes\id)\Bigr)\\
		&=\delta'_3\delta''_3(\beta_{u_1}\circ\gamma_{\xi_1^-(-1)})
	\boxtimes(\alpha_{\xi_2^-(-1)}\cdot [\xi_1^+(1)\to\xi_2^-(-1)]\cdot\beta_{u_2})
	\boxtimes\beta_{|S\setminus\{u_2\}}
	\end{align*}
	where
	\begin{itemize}
		\item $\delta'_3=1$ if $u_1\neq u_2$ and
	$(\id_{u_1}\boxtimes\beta_{u_2})\cdot(\gamma_{\xi_1^-(-1)}\boxtimes\id_{u_2})\neq 0$
	and $\delta'_3=0$ otherwise
\item
$\delta''_3=1$ if $\beta_{|U\setminus\{u_2\}}\cdot(\id_{S\setminus\{u_2\}}\boxtimes
\gamma_{\xi_1^-(-1)})\neq 0$ and $\delta''_3=0$ otherwise.
	\end{itemize}

	Since $\delta_3=\delta'_3\delta''_3$, we deduce that (\ref{eq:comm2}) holds and the lemma follows.
\end{proof}

\subsubsection{Action of $\tau$}
The action of $\tau$ on $E(T,-)\otimes E(-,S)$ corresponds to an endomorphism
of $R_{\xi_2^-}^2\oplus R_{\xi_2^-}R_{\xi_1^-}\oplus R_{\xi_1^-}R_{\xi_2^-}\oplus
R_{\xi_1^-}^2$ given in (\ref{eq:deftau}).

\begin{lemma}
	We have $\tau\circ ((f_2,f_1)\otimes (f_2,f_1))=((f_2,f_1)\otimes(f_2,f_1))\circ\tau$.
\end{lemma}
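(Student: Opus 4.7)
The plan is to verify the identity blockwise, using the four-by-four decomposition of $E^2$ as $R_{\xi_2^-}^2 \oplus R_{\xi_2^-}R_{\xi_1^-} \oplus R_{\xi_1^-}R_{\xi_2^-} \oplus R_{\xi_1^-}^2$ and the analogous decomposition of $R_{\xi^-}^2$ obtained from the isomorphism $f_2\vee f_1$ applied twice. Concretely, the map $(f_2,f_1)\otimes (f_2,f_1)$ is the direct sum of four block isomorphisms $R_{\xi_i^-}(T,-)\otimes R_{\xi_j^-}(-,S) \iso R_{\xi^-}(T,-)\otimes R_{\xi^-}(-,S)$, and we need to check that the matrix $\tau$ of \eqref{eq:deftau} computes the natural swap endomorphism $\tau$ on $R_{\xi^-}^2$. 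Since both sides are morphisms of $(\CS_M(Z),\CS_M(Z_\xi))$-bimodules and all tensor factors distribute over reductions to $\id$ on auxiliary strands (using the decompositions $\gamma=(\id\boxtimes\gamma_{\xi^-(-1)})\cdot\gamma_{|S}$, etc., as in the previous lemma), the equality reduces to a check on a single pair of generating strands.

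With that reduction, the diagonal blocks are immediate from the strand picture. For the $R_{\xi_2^-}^2$ block, a pair of strands both travelling through the gluing to $\xi_2^-$ gets image under $f_2\otimes f_2$ that is two strands to $\xi^-$ passing through the glued interval; the swap $\tau$ on the target matches $\tau_2\otimes 1$ by functoriality of $\boxtimes$ and Theorem \ref{th:Ufromstrands}. The $R_{\xi_1^-}^2$ block is handled symmetrically via $f_1\otimes f_1$ and gives $\tau_1\otimes 1$. For the $R_{\xi_1^-}R_{\xi_2^-}$ block (outer strand ending at $\xi_1^-$, inner at $\xi_2^-$), the image under $f_1\otimes f_2$ is a pair of strands in $Z_\xi$ ending at $\xi^-(-1)$ and $\xi^-(-2)$ such that the swap is obstructed; indeed the image lies in the $B_2$ component of the gluing bimodule and dies under $\tau$. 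This matches the zero block at position $(3,\cdot)$.

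The nontrivial case is the $R_{\xi_2^-}R_{\xi_1^-}$ block, which under the swap must land in the $R_{\xi_1^-}R_{\xi_2^-}$ block via $\sigma^{-1}\otimes 1$. Applying $f_2\otimes f_1$ produces a pair of strands where the outer goes through the gluing and the inner stops at $\xi_1^-(-1)$; after swapping on $R_{\xi^-}^2$, the outer ends at $\xi_1^-(-1)$ and the inner is forced through the gluing to $\xi_2^-(-1)$. Using the explicit formula for $\sigma^{-1}$ established in the lemma (i.e.\ $\sigma^{-1}(\alpha'\otimes\beta')$ as a product of components $(\alpha'_{\xi_1^-(-1)}$, $\alpha'_{\bij{\beta'}(\xi_2^-(-1))}\cdot\beta'_{\xi_2^-(-1)}$, and the rest)), one checks that $(f_1\otimes f_2)\circ (\sigma^{-1}\otimes 1)$ equals the composition of $f_2\otimes f_1$ with the swap of strand endpoints in $R_{\xi^-}^2$, by tracking where each admissible path component is routed through the glued interval.

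The main obstacle is the bookkeeping for the $\sigma^{-1}$ block: one must verify that the ``tensor-product'' swap $\tau$ on $R_{\xi^-}^2$, when composed with the block isomorphism, reproduces not just the composition $\alpha_{|U}\cdot\beta$ appearing in $\sigma^{-1}$, but also correctly assigns each piece to the right factor under $f_1\otimes f_2$. Once this bookkeeping is performed carefully for a generator, the other three blocks follow by the same strand-diagram analysis, and the compatibility is then lifted to all of $E^2$ by bimodule-naturality. Combined with the commutativity of the diagram \eqref{eq:commdiag}, this completes the identification of $(\Delta_\lambda\CS_M^\bullet(Z),E,\tau)$ with $(\CS_M^\bullet(Z_\xi),R_{\xi^-},\tau)$ as $2$-representations, proving Theorem \ref{th:iso2rep}.
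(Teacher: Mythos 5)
Your overall strategy matches the paper's: reduce to generators by bimodule-equivariance (normalizing the auxiliary strands to $\id$) and then verify the identity block by block on the four summands $R_{\xi_2^-}^2\oplus R_{\xi_2^-}R_{\xi_1^-}\oplus R_{\xi_1^-}R_{\xi_2^-}\oplus R_{\xi_1^-}^2$. However, you have the two off-diagonal blocks swapped, and this is not just notational. You claim that the $R_{\xi_1^-}R_{\xi_2^-}$ block (outer $\xi_1^-$, inner $\xi_2^-$, i.e.\ the image of $f_1\otimes f_2$) dies under $\tau$, and that the $R_{\xi_2^-}R_{\xi_1^-}$ block survives via $\sigma^{-1}\otimes 1$. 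The matrix (\ref{eq:deftau}) says the opposite: the ordering of blocks is $E_2^2, E_2E_1, E_1E_2, E_1^2$ with $E_2=R_{\xi_2^-}$ and $E_1=R_{\xi_1^-}$; the nonzero off-diagonal entry is at position $(2,3)$, so block $3 = E_1E_2 = R_{\xi_1^-}R_{\xi_2^-}$ is sent to block $2 = E_2E_1 = R_{\xi_2^-}R_{\xi_1^-}$ by $\sigma^{-1}\otimes 1$, while column $2$ (input from $R_{\xi_2^-}R_{\xi_1^-}$) is zero. The paper's computation confirms this: $d_{1,2}=1$ and $d_{2,1}=0$. Your phrase ``matches the zero block at position $(3,\cdot)$'' confuses the vanishing of the \emph{output} row $3$ with the vanishing of the \emph{input} column (which would be column $2$, not $3$, and corresponds to the other off-diagonal block). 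So the geometric reasoning you give for which strand configuration is ``obstructed'' is applied to the wrong block.

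A secondary issue: you call the diagonal blocks ``immediate from the strand picture,'' identifying them with $\tau_2\otimes 1$ and $\tau_1\otimes 1$. This is too quick. The paper shows that even on the diagonal blocks, the coefficient $d_{i,i}$ is not automatically $1$: it equals $1$ precisely when $\bigl((\alpha_i)_{\xi_i^-(-1)}\cdot[\xi_i^-(-2)\to\xi_i^-(-1)]\boxtimes(\beta_i)_{\xi_i^-(-1)}\bigr)\cdot\tau\neq 0$, i.e.\ exactly when $\tau_i\otimes 1$ itself does not kill the corresponding element. So the diagonal blocks require the same explicit check as the off-diagonal ones; they simply happen to match because the vanishing conditions on both sides coincide. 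You would need to make this comparison explicit rather than assert it.
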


\begin{proof}
	Consider $\alpha_i\in R_{\xi_i^-}^\bullet(T,U)$ and 
	$\beta_i\in R_{\xi_i^-}^\bullet(U,S)$. In order to prove that
	the equality of the lemma holds when applied to $((\alpha_2,\alpha_1)\otimes(\beta_2,\beta_1))$,
	we can assume that $(\alpha_i)_{|T}=\id$ and $(\beta_i)_{|S}=\id$, since the morphisms
	involved in the equality commute with the right action of $\CS_M(Z)$.

	We have
	$$\tau\circ (f_i\otimes f_j)(\alpha_i\otimes\beta_j)=$$
	\begin{align*}
		&=
	\tau\Bigl(\bigl((\alpha_i)_{\xi_i^-(-1)}\cdot[\xi_1^-(-1)\to\xi_i^-(-1)]\boxtimes\id\bigr)\otimes
	\bigl(\id \boxtimes(\beta_j)_{\xi_j^-(-1)}\cdot[\xi_1^-(-1)\to\xi_j^-(-1)]\bigr)\Bigr)\\
		&=d_{i,j}\bigl((\beta_j)_{\xi_j^-(-1)}\cdot[\xi_1^-(-1)\to\xi_j^-(-1)]\boxtimes\id\bigr)\otimes
	\bigl(\id \boxtimes(\alpha_i)_{\xi_i^-(-1)}\cdot[\xi_1^-(-1)\to\xi_i^-(-1)]\bigr)
	\end{align*}
	where
	$d_{2,1}=0$, $d_{1,2}=1$ and
	$d_{i,i}=1$ if $\bigl((\alpha_i)_{\xi_i^-(-1)}\cdot [\xi_i^-(-2)\to\xi_i^-(-1)]
	\boxtimes (\beta_i)_{\xi_i^-(-1)}\bigr)\cdot\tau\neq 0$ and $d_{i,i}=0$ otherwise.
	We deduce that the lemma holds when applied to $((\alpha_2,\alpha_1)\otimes(\beta_2,\beta_1))$,
	hence it holds in general.
\end{proof}

\printindex
\printindex[ter]


\begin{thebibliography}{AnChePeReiSu}
\bibitem[AnChePeReiSu]{AnChePeReiSu}
	J.~E.~Andersen, L.O.~Chekhov, R.~C.~Penner, C.~M.~Reidys and P.~Sulkowski,
	{\em Topological recursion for chord diagrams,
	RNA complexes, and cells in moduli spaces},
	Nuclear Phys. B {\bf 866} (2013), 414--443.
\bibitem[ArMa]{ArMa} R.~Arian and A.~Manion,
	{\em Toward the Heegaard Floer homology of a point},
	in preparation.
\bibitem[Au1]{Au1} D.~Auroux,
	{\em Fukaya categories of symmetric products and bordered Heegaard–Floer homology},
		J. G\"okova Geom. Topol. {\bf 4} (2010), 1--54.
\bibitem[Au2]{Au2} D.~Auroux,
	{\em Fukaya categories and bordered Heegaard Floer homology},
	in "Proc. International Congress of Mathematicians (Hyderabad, 2010)", Vol. II,
	Hindustan Book Agency, 917--941, 2010.
\bibitem[BjBr]{BjBr} A.~Bj{\"o}rner and F.~Brenti,
	"Combinatorics of Coxeter groups",
	Springer Verlag, 2005.
\bibitem[ChRou]{ChRou} J.~Chuang and R.~Rouquier,
        {\em Derived equivalences for symmetric groups and
        $\Gsl_2$-categorification},
        Annals of Math. {\bf 167} (2008), 245--298
\bibitem[CrFr]{CrFr} L.~Crane and I.~Frenkel,
        {\em Four-dimensional topological quantum field theory, Hopf
        categories, and the canonical bases},
         J. Math. Phys. {\bf 35}, (1994), 5136--5154.
\bibitem[DouLiMa]{DouLiMa} C.~Douglas, R.~Lipshitz and C.~Manolescu,
	{\em Cornered Heegaard Floer homology},
		Mem. Amer. Math. Soc. vol. 262, no. 1266 (2019).
\bibitem[DouMa]{DouMa} C.~Douglas and C.~Manolescu,
	{\em On the algebra of cornered Floer homology},
	J. of Topology {\bf 7} (2014), 1--68.
\bibitem[ElPeVe]{ElPeVe} A.~Ellis, I.~Petkova and V.~Vertesi,
	{\em Quantum $\Ggl(1|1)$ and tangle Floer homology},
	Adv. Math. {\bf 350} (2019), 130--189.
\bibitem[GaPaShe]{GaPaShe} S.~Ganatra, J.~Pardon and V.~Shende,
	{\em Covariantly functorial wrapped Floer theory on Liouville sectors},
	Publ. Math. IH\'ES {\bf 131} (2020), 73--200.
\bibitem[GePf]{GePf} M.~Geck and G.~Pfeiffer,
	"Characters of finite Coxeter groups and Iwahori-Hecke algebras",
	Oxford Univ. Press, 2000.
\bibitem[GuPuVa]{GuPuVa} S.~Gukov, P.~Putrov and C.~Vafa,
	{\em Fivebranes and 3-manifold homology},
		J. High Energy Phys. 2017, {\bf 7}, 071, front matter+80 pp.
\bibitem[HaiKaKon]{HaiKaKon} F.~Haiden, L.~Katzarkov and M.~Kontsevich,
	{\em Flat surfaces and stability structures},
		Publ. Math. IH\'ES {\bf 126} (2017, 247--318.
\bibitem[Hu]{Hu} J.E.~Humphreys,
	"Reflection Groups and Coxeter groups",
	Cambridge Univ. Press, 1990.
\bibitem[KaSal]{KaSal} L.~H.~Kauffman and H.~Saleur,
	{\em Free fermions and the Alexander-Conway polynomial},
		Comm. Math. Phys. {\bf 141} (1991), 293--327.
\bibitem[Kho1]{Kho1} M.~Khovanov, 
	{\em A categorification of the Jones polynomial},
		Duke Math. J. {\bf 101} (2000), 359--426.
\bibitem[Kho]{Kho2} M.~Khovanov, 
	{\em How to categorify one-half of quantum $\Ggl(1|2)$},
		in "Knots in Poland III", Part III, 211-232, Banach Center Publ., {\bf 103},
		2014  (Preprint arXiv:1007.3517).
\bibitem[KhoLau]{KhoLau} M.~Khovanov and A.~Lauda,
	{\em A diagrammatic approach to categorification of quantum groups, I},
	Represent. Theory {\bf 13} (2009), 309--347.
\bibitem[LePo]{LePo} Y.~Lekili and A.~Polishchuk,
{\em Homological mirror symmetry for higher dimensional pairs
		of pants}, Compos. Math. {\bf 156} (2020), 1310--1347.
\bibitem[LiOzTh1]{LiOzTh1} R.~Lipshitz, P.~Ozsv{\'a}th and D.~Thurston,
	"Bordered Heegaard Floer homology",
		Memoirs of the Amer. Math. Soc. {\bf 1216}, 2018.
\bibitem[LiOzTh2]{LiOzTh2} R.~Lipshitz, P.~Ozsv{\'a}th and D.~Thurston,
	{\em A bordered HF-algebra for the torus}, preprint arXiv:108.12488.
\bibitem[LiOzTh3]{LiOzTh3} R.~Lipshitz, P.~Ozsv{\'a}th and D.~Thurston,
	{\em Bordered HF- for three-manifolds with torus boundary},
	preprint arXiv:2305.07754.
\bibitem[Lus]{Lus} G.~Lusztig,
	{\em Some examples of square integrable representations of semisimple
	$p$-adic groups},
	Trans. Amer. Math. Soc. {\bf 277} (1983), 623--653.
\bibitem[Man]{Man} A.~Manion, 
	{\em Trivalent vertices and bordered knot Floer homology in the standard basis},
	preprint arXiv:2012.07184.
\bibitem[ManMarWi]{ManMarWi} A.~Manion, M.~Marengon and M.~Willis,
	{\em Strands algebras and Ozsv\'ath-Szab\'o's Kauffman--states functor},
		preprint arXiv:1903.05655.
\bibitem[OsSz1]{OsSz1} P.~Ozsv\'ath and Z.Szab\'o,
	{\em Holomorphic disks and three-manifold invariants: properties and applications},
	Ann. of Math. {\bf 159} (2004), 1159--1245.
\bibitem[OsSz2]{OsSz2} P.~Ozsv\'ath and Z.Szab\'o,
	{\em Holomorphic disks and topological invariants for closed three-manifolds},
		Ann. of Math. {\bf 159} (2004), 1027--1158.
\bibitem[OsSz3]{OsSz3} P.~Ozsv\'ath and Z.Szab\'o,
	{\em Holomorphic triangles and invariants for smooth four-manifolds},
	Adv.  Math. {\bf 202} (2006) 326--400.
\bibitem[OsSz4]{OsSz4} P.~Ozsv\'ath and Z.Szab\'o,
	{\em Kauffman states, bordered algebras, and a bigraded knot
	invariant},
	Adv. Math. {\bf 328} (2018), 1088--1198. 
\bibitem[OsSz5]{OsSz5} P.~Ozsv\'ath and Z.Szab\'o,
	{\em Bordered knot algebras with matchings},
	Quantum Topol. {\bf 10} (2019), 481--592.
\bibitem[OsSz6]{OsSz6} P.~Ozsv\'ath and Z.Szab\'o,
	{\em Algebras with matchings and knot Floer homology},
	preprint arXiv:1912.01657.
\bibitem[ReTu]{ReTu} N.~Reshetikhin and V.~Turaev,
	{\em Invariants of $3$-manifolds via link polynomials and quantum groups},
	Inv. Math. {\bf 103} (1991), 547--598.
\bibitem[Rou1]{Rou1} R.~Rouquier,
	{\em $2$-Kac-Moody algebras}, preprint arXiv:0812.5023.
\bibitem[Rou2]{Rou2} R.~Rouquier,
	{\em Quiver Hecke algebras and 2-Lie algebras},
	Alg. Colloquium {\bf 19} (2012), 359--410.
\bibitem[Rou3]{Rou3} R.~Rouquier,
	{\em Hopf categories}, in preparation.
\bibitem[Rou4]{Rou4} R.~Rouquier,
	{\em $2$-representations of $\Ggl(1|1)$}, in preparation.
\bibitem[Sar]{Sar} A.~Sartori,
		{\em The Alexander polynomial as quantum invariant of links},
		Ark. Math. {\bf 53} (2015), 177--202.
\bibitem[Sh]{Sh} J.~Shi,
	"The Kazhdan-Lusztig cells in certain affine Weyl groups",
		Lecture Notes in Math. vol. {\bf 1179}, Springer Verlag, 1986.
\bibitem[Ti]{Ti} Y.~Tian,
		{\em A categorification of $U_T(\Gsl(1|1)))$ and its tensor
		product representations},
		Geom. Topol. {\bf 18} (2014), 1635--1717. 
\bibitem[We]{We} B.~Webster,
	{\em Knot Invariants and Higher Representation Theory},
	Memoirs of the Amer. Math. Soc. {\bf 1191}, 2017
\bibitem[Za]{Za} R.~Zarev,
	{\em Bordered Floer homology for sutured manifolds},
		preprint, arXiv:0908.1106.
\end{thebibliography}
\end{document}